\numberwithin{equation}{section}
\newtheorem{theorem}{Theorem}[section]
\newtheorem{proposition}[theorem]{Proposition}
\newtheorem{conjecture}[theorem]{Conjecture}
\newtheorem{corollary}[theorem]{Corollary}
\newtheorem{lemma}[theorem]{Lemma}
\theoremstyle{definition}
\newtheorem{remark}[theorem]{Remark}
\newtheorem{example}[theorem]{Example}
\newtheorem{definition}[theorem]{Definition}
\newtheorem{problem}[theorem]{Problem}
\def\endproof{\hfill$\square$\medskip}
\newcommand{\act}{\mathop{\triangleright}}
\def\AA{\mathbb{A}}
\def\FF{\mathcal{F}}
\def\ZZ{\mathbb{Z}}
\def\QQ{\mathbb{Q}}
\def\CC{\mathbb{C}}
\def\RR{\mathbb{R}}
\def\gg{\mathfrak{g}}
\def\nn{\mathfrak{n}}
\def\hh{\mathfrak{h}}
\def\ii{\mathbf{i}}
\def\jj{\mathbf{j}}
\def\ZZ{\mathbb{Z}}
\def\kk{\Bbbk}
\begin{document}

\title{Valuations, bijections, and bases}

\author{Arkady Berenstein}
\address{Department of Mathematics, University of Oregon,
Eugene, OR 97403}
\email{arkady@math.uoregon.edu}

\author{Dima Grigoriev}
\address{\noindent CNRS, Math\'ematiques, Universit\'e de Lille, Villeneuve d'Ascq, 59655, France}
\email{dmitry.grigoryev@univ-lille.fr}
 
\thanks{This work was partially supported by the Simons Foundation Collaboration Grant for Mathematicians no.~636972 (AB)}

\begin{abstract} The aim of this paper is to build a theory of commutative and noncommutative {\it injective} valuations of various algebras (including algebras with zero divisors). The targets of our valuations are (well-)ordered commutative and noncommutative (partial and entire) semigroups including any sub-semigroups of the free monoid $F_n$ on $n$ generators and various quotients. When the range of a valuation of an algebra $A$ is a finitely generated  (partial) semigroup, we construct a generalization of the standard monomial bases in $A$, which seems to be new in noncommutative case. Quite remarkably, for any pair of  well-ordered valuations one has a canonical bijection between the valuation semigroups, which serves as an analog of the celebrated Jordan-H\"older correspondences and these bijections are ``almost" homomorphisms of the involved  semigroups. 
A spectacular demonstration of this remarkable property of JH-bijections for quantum Schubert cells $A=U_q(w)$ results in  mysterious "symplectomorphisms" of involved skew symmetric forms.
    
\end{abstract}

\maketitle

{\bf keywords}: valuations on algebras with zero divisors, injective valuations, adapted bases

{\bf AMS classification}: 16W60, 16Z10, 13F30

\tableofcontents

\section{Introduction}

The aim of this paper is twofold:

$\bullet$ To initiate and systematically study {\it injective} valuations of algebras into (partial) semigroups.

$\bullet$ For pairs of such injective valuations of a given algebra establish and study a bijection between their images, which we refer to as {\it Jordan-H\"older} (JH) bijection. 

In this paper,  valuation $\nu$ on a $\kk$-vector space $V$ is a map $V\setminus \{0\}\to P$ where $P$ is a totally ordered set with an order $\preceq$ such that $\nu(a v)=\nu(v)$ for all $v\in V$, $a\in \kk^\times$ and
\begin{equation}
\label{eq:max}
\nu(u+v)\preceq \max(\nu(u),\nu(v))
\end{equation}
whenever $u+v \neq 0$. 
It is immediate that \eqref{eq:max} becomes an equality if $\nu(u)\ne \nu(v)$. This slightly differs with the convention (see e.g., \cite{Waerden}~Ch. 18) that  maximum is replaced by minimum in \eqref{eq:max} as well as the inequality is opposite. This discrepancy can be explained by such classical examples of valuations as the $p$-adic ones and the lowest degree valuation of Laurent series, which use the opposite convention to \eqref{eq:max}, while our valuations stem from maximal degrees of polynomials.

In addition, if  $V$ is a $\kk$-algebra, we require $P$ to be a (partial) semigroup (cf. e.g., \cite{Arbieto}) with the operation $\circ$ (see Section~\ref{four} for details)
and 
$$\nu(uv)=\nu(u)\circ \nu(v)$$
whenever $uv\ne 0$ and $\nu(u)\circ \nu(v)$ is defined in $P$.

In particular, if $(P,\circ)$ is an entire (rather than partial) semigroup then the algebra $V$ is necessarily a domain, and conversely if $V$ is not a domain, then $(P,\circ)$ is necessarily a partial semigroup.

Also we impose the following condition on the order in $P$: if $c,c',d,d' \in P$ satisfy inequalities $c\preceq d, c'\preceq d'$
then 
\begin{equation}
\label{eq:axiom order}
c\circ c'\preceq d\circ d'    
\end{equation}
 provided that $c\circ c', d\circ d' \in P$. 

If $P$ was an entire  semigroup
then the axiom \eqref{eq:axiom order} would follow from weaker ones: $a\preceq b$ implies $c\circ a\preceq c\circ b$ and $a\circ b\preceq b\circ c$ (for ordered entire semigroups one can look in \cite{Blyth}). However, in partial semigroups,  \eqref{eq:axiom order} is not always derived (see, e.g., Example~\ref{monoid_matrix}).

 Note that this axiomatic is rather strong: if $P$ is an (entire) ordered monoid then for any non-unital invertible element $c$, the unit is strictly between $c$ and $c^{-1}$.

Additionally, we say that the order $\prec$ on a partial semigroup $P$ satisfies the {\it strict property} if $c\prec d$ or $c'\prec d'$ implies that
$c\circ c'\prec d\circ d'$ in \eqref{eq:axiom order}.

One can show (see e.g., Remark~\ref{image_valuation}) that for any valuation $\nu$ of $V$ the image $P_\nu:=\nu(V\setminus \{0\})$ is always a (partial) subsemigroup of $P$.

Following \cite{Kaveh-Khovanskii,Ku} we say that a valuation $\nu$ on (a vector space or an algebra) $V$ is {\it injective} if there exists a basis ${\bf B}$ of $V$ such that $\nu|_{\bf B}$ is an injective map ${\bf B}\hookrightarrow P$ (we refer to such a basis as {\it adapted} to $\nu$).

 Note also that for some finitely generated commutative  domains $A$ there is no injective valuation to an entire semigroup  (see Lemma~\ref{slope_injective}, Theorem~\ref{curve_injective}).

For contrast, we claim that injective valuations to reasonable partial semigroups always exist (in Section~\ref{four} we construct a class of {\it coideal} partial semigroups, see Definition~\ref{monoid_partial}, which will provide such reasonable valuations).

\begin{theorem} [Theorem~\ref{graded_partial}]
\label{th:commutative valued} Any finitely generated commutative algebra $A$ admits an injective valuation onto a coideal partial subsemigroup of $\ZZ_{\ge 0}^m$. Thereby, the order in the coideal partial subsemigroup satisfies the strict property.
\end{theorem}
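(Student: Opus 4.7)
The plan is to realize $\mathcal{A}$ as a quotient $\kk[x_1,\ldots,x_m]/I$ of a polynomial ring and transfer the standard term-order valuation on $\kk[x_1,\ldots,x_m]$ to $\mathcal{A}$ via Gr\"obner-basis reduction. First I would fix a term order $\prec$ on $\ZZ_{\ge 0}^m$ (for instance a lexicographic order), let $\ex(f)\in \ZZ_{\ge 0}^m$ denote the exponent of the $\prec$-leading monomial of a nonzero polynomial $f$, and observe that $\ex$ is the familiar injective valuation of $\kk[x_1,\ldots,x_m]$ onto the entire monoid $\ZZ_{\ge 0}^m$, adapted to the monomial basis.

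Next, I would choose a Gr\"obner basis $G$ of $I$ with respect to $\prec$, set $J := \ex(I\setminus\{0\}) \subseteq \ZZ_{\ge 0}^m$ (an upward-closed subset, by the ideal property of $I$), and put $S := \ZZ_{\ge 0}^m \setminus J$, a downward-closed coideal. Equip $S$ with the partial operation $\alpha \circ \beta := \alpha+\beta$ whenever $\alpha+\beta \in S$ and undefined otherwise; this realizes $S$ as a coideal partial subsemigroup of $\ZZ_{\ge 0}^m$ in the sense of Definition~\ref{monoid_partial}. Now define $\nu: \mathcal{A}\setminus\{0\}\to S$ by $\nu(\bar f) := \ex(\bar f \downarrow_G)$, where $\bar f \downarrow_G$ denotes the Gr\"obner normal form modulo $G$ of any polynomial representative of $\bar f$. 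By Macaulay's basis theorem, the standard monomials $\{x^\alpha : \alpha \in S\}$ project to a $\kk$-basis $\mathbf{B}$ of $\mathcal{A}$, and $\nu(x^\alpha)=\alpha$, so $\nu$ is surjective onto $S$ and injective on $\mathbf{B}$; that is, $\mathbf{B}$ is adapted to $\nu$.

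The axioms $\nu(c\bar f) = \nu(\bar f)$ for $c \in \kk^\times$ and $\nu(\bar f+\bar g) \preceq \max(\nu(\bar f),\nu(\bar g))$ are straightforward consequences of the definition of leading term, together with the $\kk$-linearity of the reduction modulo $G$. The strict property of the order on $S$ is also automatic: every term order on $\ZZ_{\ge 0}^m$ satisfies $\alpha \prec \beta \Rightarrow \alpha+\gamma \prec \beta+\gamma$, so \eqref{eq:axiom order} holds strictly whenever both sides lie in $S$.

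The main point, and the only place where the Gr\"obner-basis machinery is really needed, is the multiplicative axiom. Here I would argue that the membership $\alpha+\beta \in S$ is by definition equivalent to $\alpha+\beta$ not being divisible (in the monoid $\ZZ_{\ge 0}^m$) by any $\ex(g)$ with $g \in G$. Hence if $\nu(\bar f)+\nu(\bar g) \in S$, the leading monomial $x^{\nu(\bar f)+\nu(\bar g)}$ of the product $fg$ is already standard and survives the normal-form computation, so that $\nu(\bar f\bar g) = \nu(\bar f)+\nu(\bar g) = \nu(\bar f)\circ \nu(\bar g)$ (and in particular $\bar f\bar g \ne 0$). This is exactly what is required for $\nu$ to be an injective valuation of $\mathcal{A}$ onto the coideal partial subsemigroup $S \subseteq \ZZ_{\ge 0}^m$ satisfying the strict property.
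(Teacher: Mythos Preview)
Your proposal is correct and takes essentially the same approach as the paper. The paper derives Theorem~\ref{th:commutative valued} as the special case $P=\ZZ_{\ge 0}^m$ of Theorem~\ref{graded_partial}, where the quotient valuation is defined by $\nu(a+I):=\min\nu_0(a+I)$; in the commutative polynomial setting this minimum is attained exactly at the Gr\"obner normal form, so your definition $\nu(\bar f)=\ex(\bar f\downarrow_G)$ coincides with the paper's formula, and your verification of the axioms (in particular the multiplicative one via survival of the standard leading monomial under reduction) is the concrete Gr\"obner-basis incarnation of the abstract argument given in the proof of Theorem~\ref{graded_partial}.
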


In view of above, this means that  finite-dimensional algebras are also ``domains." 

Our next result extends Theorem~\ref{th:commutative valued} to the noncommutative case.

\begin{theorem} [Theorem~\ref{graded_partial}]
\label{th:noncommutative valued}
Any finitely generated algebra $A$ admits an injective valuation onto a coideal partial subsemigroup of $F_m$ 
(the free monoid on $m$ generators) with respect to (strict) deglex order (Lemma \ref{deglex} and Remark \ref{rem:deglex}).
    
\end{theorem}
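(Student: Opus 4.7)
The plan is to mirror the commutative construction of Theorem~\ref{th:commutative valued} by realizing $\mathcal{A}$ as a quotient of the free monoid algebra and pulling the deglex valuation back through a ``Gröbner shadow.'' Fix a generating set $x_1,\dots,x_m$ and present $\mathcal{A}\cong \kk F_m/I$ with projection $\pi:\kk F_m\twoheadrightarrow \mathcal{A}$ and $I=\ker\pi$ a two-sided ideal. Equip $F_m$ with deglex $\preceq$ (Lemma~\ref{deglex}), which is a well-order satisfying the strict compatibility of Remark~\ref{rem:deglex}. For $0\ne f\in\kk F_m$, write $\mathrm{top}(f)$ for its deglex-largest support word, and define
\[
\nu(a)\;:=\;\min_{f\in\pi^{-1}(a)}\mathrm{top}(f),\qquad a\in\mathcal{A}\setminus\{0\},
\]
where the minimum exists because $\preceq$ is a well-order.

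The first step is to identify the image. Set $L(I):=\{\mathrm{top}(g):g\in I\setminus\{0\}\}$. Because deglex on $F_m$ multiplies cleanly — $\mathrm{top}(pq)=\mathrm{top}(p)\,\mathrm{top}(q)$ in the free algebra — the set $L(I)$ is a two-sided monoid ideal of $F_m$: for $u,v\in F_m$ and $g\in I$, one has $ugv\in I$ with $\mathrm{top}(ugv)=u\,\mathrm{top}(g)\,v$. I then claim $P_\nu=F_m\setminus L(I)$: if $\nu(a)=\mathrm{top}(g)$ for some $g\in I$, subtracting $g$ from a minimal lift of $a$ produces a strictly smaller lift, contradicting minimality; conversely, for $w\notin L(I)$, the element $\pi(w)$ satisfies $\nu(\pi(w))=w$. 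Moreover, the family $\mathbf{B}:=\{\pi(w):w\in F_m\setminus L(I)\}$ is the standard-monomial basis of $\mathcal{A}$, and $\nu|_\mathbf{B}$ is the identity map to $P_\nu$; hence $\mathbf{B}$ is adapted to $\nu$, delivering injectivity.

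Next, endow $P_\nu=F_m\setminus L(I)$ with the coideal partial semigroup structure of Definition~\ref{monoid_partial}: the product $w_1\circ w_2$ is defined precisely when $w_1w_2\notin L(I)$, in which case $w_1\circ w_2=w_1w_2\in F_m$. The valuation axiom to verify is: if $\nu(a)\circ\nu(b)$ is defined, then $ab\ne 0$ and $\nu(ab)=\nu(a)\circ\nu(b)$. Choose minimal lifts $f,g$ with $\mathrm{top}(f)=w_1=\nu(a)$ and $\mathrm{top}(g)=w_2=\nu(b)$; then $\mathrm{top}(fg)=w_1w_2$, so $\nu(ab)\preceq w_1w_2$. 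Any strictly smaller lift $h$ of $ab$ would satisfy $fg-h\in I$ with $\mathrm{top}(fg-h)=w_1w_2$, forcing $w_1w_2\in L(I)$, contrary to the hypothesis that $w_1\circ w_2$ is defined. Therefore $\nu(ab)=w_1w_2=\nu(a)\circ\nu(b)$.

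The strict property is inherited directly from $F_m$: the deglex comparison of $cc'$ with $dd'$ is decided first by total degree and, at equal degree, lexicographically by the prefix, so strictness in either factor under $c\preceq d$, $c'\preceq d'$ passes to strictness of the concatenations, which still lies in $P_\nu$ by hypothesis. The main technical step — and the only place where noncommutativity could have bitten — is the cleanness of leading terms under concatenation, which is needed both to show that $L(I)$ is a two-sided monoid ideal and to establish multiplicativity of $\nu$. Once that leading-term arithmetic is in hand, the remainder is bookkeeping fully parallel to the commutative case of Theorem~\ref{th:commutative valued}.
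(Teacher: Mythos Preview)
Your proof is correct and follows essentially the same route as the paper. The paper obtains this statement as the special case $P=F_m$ of the general quotient construction (Theorem~\ref{graded_partial}): start from the tautological valuation $\nu_0$ on $\kk F_m$, set $\nu(a)=\min\nu_0(a+I)$, show that $J=\nu_0(I\setminus\{0\})$ is a two-sided ideal of $F_m$, and verify injectivity and multiplicativity exactly as you do; your argument is that proof written out directly for the free-algebra case, with the same leading-term cancellation step establishing $\nu(ab)=\nu(a)\circ\nu(b)$.
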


We can refine Theorem \ref{th:commutative valued} by employing Gr\"obner basis-like approach (cf.  \cite{Kaveh-Manon}). Namely,  consider a commutative algebra $A$ generated by a finite set $X$ and a finite set ${\bf S}$ of monomials in $X$. We say that a monomial $b$ in $X$ is {\it standard} if $b$ does not contain  elements of ${\bf S}$ as submonomials. If the set ${\bf B}$ of all standard monomials is a basis of $A$, it is referred as a {\it standard monomial} one.

The following is a restatement of a well-known result (asserting that any finitely generated commutative algebra admits a  standard 
monomial basis) in terms of injective valuation into partial semigroups.

\begin{theorem} 
\label{th:adapted standard monomials}
Let $A$ be a finitely generated commutative algebra. Then 

(a) (Proposition~\ref{monomial_partial}) Any standard monomial basis ${\bf B}$ of $A$ defines a structure of an ordered partial semigroup on $\ZZ_{\ge 0}^X$ and an injective  valuation $\nu:A\setminus \{0\}\to \ZZ_{\ge 0}^X$ such that ${\bf B}$ is adapted to $\nu$. 

(b) (Theorem~\ref{monomial_basis_partial}).  Conversely, let  $\nu:A\setminus \{0\}\to \ZZ_{\ge 0}^m$ be an injective valuation such that $\nu(A\setminus \{0\})=\ZZ_{\ge 0}^m\setminus  [{\bf S}]$, where $[{\bf S}]:=\bigcup\limits_{s\in {\bf S}} (s+\ZZ_{\ge 0}^m)$ for some finite subset ${\bf S}$ of $\ZZ_{\ge 0}^m$.
Then there exists 
a standard 
monomial basis ${\bf B}$ of $A$ adapted to $\nu$.

\end{theorem}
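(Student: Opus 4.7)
I would equip $\mathbb{Z}_{\ge 0}^X$ with the deglex order $\preceq$ (a strict monomial order by Lemma~\ref{deglex}) and restrict addition to the coideal $P := \mathbb{Z}_{\ge 0}^X \setminus [\mathbf{S}]$ of standard exponents, declaring $\alpha \circ \beta := \alpha + \beta$ whenever $\alpha,\beta,\alpha+\beta \in P$ and undefined otherwise; associativity and the strict property descend from $(\mathbb{Z}_{\ge 0}^X, \preceq, +)$ automatically. For $f \in \mathcal{A}\setminus\{0\}$ expanded uniquely in $\mathbf{B}$ as $f = \sum_{\beta \in P} c_\beta x^\beta$, set $\nu(f) := \max_\preceq\{\beta : c_\beta \ne 0\}$. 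The axioms $\nu(af) = \nu(f)$, $\nu(f+g) \preceq \max(\nu(f),\nu(g))$, and injectivity of $\nu|_\mathbf{B}$ are immediate, so the essential content is multiplicativity on $\circ$. I would establish this via a reduction lemma: every monomial $x^\gamma$ has $\mathbf{B}$-expansion with leading exponent $\preceq \gamma$ in deglex, proved by induction on $\gamma$ by writing $x^\gamma = x^\delta \cdot x^s$ when $\gamma = \delta + s$ with $s \in \mathbf{S}$ and substituting the inductive expansion of $x^s$. Granted the lemma, the strict property applied to $fg = \sum c_\alpha d_\beta x^{\alpha+\beta}$ leaves $c_{\nu(f)} d_{\nu(g)} \ne 0$ as the sole contribution to $x^{\nu(f)+\nu(g)}$ when that exponent lies in $P$, which is multiplicativity. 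The main obstacle is the base of the reduction lemma: each $x^s$ ($s \in \mathbf{S}$) must expand in $\mathbf{B}$ with leading exponent strictly $\prec s$, which holds when $\mathbf{S}$ is the set of deglex leading monomials of a Gr\"obner basis; for the general statement one chooses the monomial order on $\mathbb{Z}_{\ge 0}^X$ compatibly with the given $\mathbf{B}$.

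\textbf{Part (b).} Constructively: after discarding any $i$ with $e_i \in [\mathbf{S}]$, surjectivity of $\nu$ onto $P := \mathbb{Z}_{\ge 0}^m \setminus [\mathbf{S}]$ yields $b_i \in \mathcal{A}$ with $\nu(b_i) = e_i$, and I set $b^\alpha := b_1^{\alpha_1} \cdots b_m^{\alpha_m}$ for $\alpha \in P$. Since $P$ is closed under coordinate-wise descent, every partial product has valuation in $P$, so multiplicativity of $\nu$ on the coideal partial semigroup gives $\nu(b^\alpha) = \alpha$ by induction on $|\alpha|$. The family $\mathbf{B} := \{b^\alpha : \alpha \in P\}$ therefore has pairwise distinct valuations and is linearly independent. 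For spanning I would argue by well-founded descent: choose an adapted basis $\{a_\alpha\}_{\alpha \in P}$ for $\nu$ (existing by injectivity), so for $f \in \mathcal{A}\setminus\{0\}$ both $f$ and $b^{\nu(f)}$ have nonzero $a_{\nu(f)}$-coefficient; picking $c \in \kk^\times$ to cancel this coefficient gives $\nu(f - c b^{\nu(f)}) \prec \nu(f)$ (or $f = c b^{\nu(f)}$). Since $\preceq$ is a well-order on $P$, iterating terminates in finitely many steps and expresses $f$ as a finite $\kk$-linear combination of $b^\beta$'s. By construction $\mathbf{B}$ consists of exactly the monomials in $b_1,\dots,b_m$ whose exponent avoids $\mathbf{S}$, so it is a standard monomial basis adapted to $\nu$. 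The chief difficulty is producing the strict decrease $\nu(f - c b^{\nu(f)}) \prec \nu(f)$, which is precisely what the adapted basis $\{a_\alpha\}$ supplies by allowing pointwise cancellation of leading coefficients.
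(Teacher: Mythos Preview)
Your approach is correct and aligns with the paper's treatment, which is minimal: Proposition~\ref{monomial_partial} is stated without proof (``one can verify''), and Theorem~\ref{monomial_basis_partial} is deduced in one line from Corollary~\ref{adapted_arbitrarily} (any $\nu$-adapted set for a well-ordered injective $\nu$ is automatically a basis). Your part~(b) descent argument is precisely the content of that corollary (equivalently, of Theorem~\ref{filtration}(ii)), so you have unpacked what the paper merely invokes; the observation that $P=\ZZ_{\ge 0}^m\setminus[{\bf S}]$ is downward-closed, so that every partial product $b_1^{\alpha_1}\cdots b_k^{\alpha_k}$ stays in the domain where multiplicativity applies, is exactly the point needed and matches the paper's ``canonical factorization'' device.

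For part~(a) you correctly isolate the only nontrivial step---the reduction lemma guaranteeing that $x^\gamma$ expands in ${\bf B}$ with leading exponent $\preceq \gamma$---and correctly flag that this requires the monomial order to be chosen so that each $x^s$ ($s\in{\bf S}$) expands with strictly smaller leading term. Your caution is warranted: deglex does \emph{not} always work. For instance, with $\mathcal A=\kk[x,y]/(y^2-x^3-x)$ and ${\bf S}=\{y^2\}$, one has $y^2=x^3+x$ in $\mathcal A$, whose deglex-leading term $x^3$ satisfies $x^3\succ y^2$; then $f=x+y$, $g=-x+y$ have $\nu(f)=\nu(g)=(1,0)$ with $(2,0)\in P$, yet $\nu(fg)=(3,0)\ne(2,0)$, so multiplicativity fails. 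Lex with $y>x$ repairs this example. The paper is entirely silent on this compatibility issue, so your caveat is an improvement rather than a gap; the honest statement of~(a) presupposes that ${\bf S}$ arises as the leading-term set of a Gr\"obner basis for the chosen order (as the paper's surrounding discussion of Gr\"obner bases suggests is intended).
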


A notable example is the (generalized) 
Stanley-Reisner algebra $A_{\bf S}$ defined for any finite subset ${\bf S}\subset \ZZ_{\ge 0}^m$ as follows. $A_{\bf S}$ is generated by $X=\{x_1,\ldots,x_m\}$  and has a presentation $x^s=0$ for all $s\in {\bf S}$. Clearly, the set $x^c$, $c\in \ZZ_{\ge 0}^n\setminus  [{\bf S}]$ form a standard monomial basis ${\bf B}$.

The complement $\ZZ_{\ge 0}^n \setminus [{\bf S}]$ has a structure of a partial semigroup via: for $c,c'\in \ZZ_{\ge 0}^n \setminus [{\bf S}]$ their sum $c+c'$ is defined unless it belongs to $[{\bf S}]$, which is an ideal of $\ZZ_{\ge 0}^n$.
Then the lexicographic (or deglex) order on $\ZZ_{\ge 0}^n$ gives rise to a (unique) injective valuation $\nu_{\bf S}$ on $A_{\bf S}$ via
$$\nu_{\bf S}(x^a)=a$$
for all $a\in \ZZ_{\ge 0}^n\setminus [{\bf S}]$. Clearly, ${\bf B}$ is adapted to $\nu_{\bf S}$.

A theory generalizing
 Buchberger's algorithm producing  
Gr\"obner bases of  ideals  was developed for certain classes of noncommutative algebras (see e.g. \cite{Li,Mora}). 
In our version we construct bases 
for injectively valued (noncommutative) algebras  as follows (some  difference in approach is that we focus mainly on bases of quotient algebras rather than of ideals). 

First, we will replace $\ZZ_{\ge 0}^n\setminus [{\bf S}]$ by a (not necessarily commutative) partial semigroup $(P,\circ)$, second, fix an injective valuation $\nu:A\setminus\{0\}\to P$ (and denote by $P_\nu$ the image of $\nu$, which is automatically a partial sub-semigroup of $P$), and, third, construct a linearly independent set ${\bf B}:=\{b_c,c\in P\}$ of certain monomials in $A$ which is {\it adapted} to $\nu$, i.e., $\nu({\bf B})=P_\nu$. It is critical for the construction that ${\bf B}$ is a basis of $A$ which is guaranteed for any adapted set whenever $P$ is well-ordered, see Corollary \ref{adapted_arbitrarily} (the assumption of well-orderness seems to be indispensable, see Remark \ref{rem:adapted is not basis}, this is yet another reason for using $\max$ in \eqref{eq:max}).

More precisely, let $P$ be a partial semigroup  and $P_0$ be a generating set of $P$.
A {\it factorization}  of $c\in P$     is any sequence $\vec{c}=(c_1,\ldots,c_\ell)\in P_0^\ell$ such that 
$$c=c_1\circ \cdots \circ c_\ell$$
in $P$; denote by $F(c)$ the set of all factorizations of $c$ of the shortest length (to be denoted $\ell(c)$). Now suppose that $P$ is well-ordered.
We refer to $\vec c\in R(c)$ as {\it standard} if it is the 
smallest in the lexicographic order on $R(c)$ and denote it by $\vec c^{\,st}$.

Then fix an injective valuation $\nu:A\setminus \{0\}\to P$, a generating set $(P_\nu)_0$ of $P_\nu$,  and choose  $x_{c_0}\in A\setminus \{0\}$ for all $c_0\in (P_\nu)_0$ such that $\nu(x_{c_0})=c_0$ for all $c_0\in (P_\nu)_0$. For any $c\in P_\nu$ and any $\vec c=(c_1,\ldots,c_\ell)\in D(c)$ denote $x_{\vec c}:=x_{c_1}\cdots x_{c_\ell}$. 

\begin{theorem} 
[Theorem~\ref{monomial_basis_partial}]
\label{th:adapted standard monomials NC} In assumptions as above, the set of all $x_c:=x_{{\vec c}^{st}}$, $c\in P$ is a basis of $A$ adapted to $\nu$.

\end{theorem}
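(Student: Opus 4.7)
The plan is to argue in three steps: well-definedness of the family $\{x_c\}_{c\in P_\nu}$, verification that it is adapted to $\nu$, and reduction of the basis property to the well-ordered adapted-set criterion recorded as Corollary~\ref{adapted_arbitrarily}.

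First I would confirm that the standard factorization exists. For any $c\in P_\nu$, since $(P_\nu)_0$ generates $P_\nu$, the set $F(c)$ of shortest-length factorizations is nonempty and lies in $\bigl((P_\nu)_0\bigr)^{\ell(c)}$. The well-order on $P$ restricts to one on $(P_\nu)_0$, and the lexicographic order on any fixed finite cartesian power of a well-order is again a well-order, so $F(c)$ has a smallest element ${\vec c}^{st}$; hence $x_c:=x_{{\vec c}^{st}}$ is well defined.

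Second I would check that $\nu(x_c)=c$ for every $c\in P_\nu$. Writing ${\vec c}^{st}=(c_1,\ldots,c_\ell)$, the statement $c_1\circ\cdots\circ c_\ell=c$ in the partial semigroup $P$ presupposes that every initial composition $c_1\circ\cdots\circ c_k$ is defined. Applying the valuation axiom --- if $\nu(u)\circ\nu(v)$ exists in $P$ then $uv\ne 0$ and $\nu(uv)=\nu(u)\circ\nu(v)$ --- iteratively to the known $\nu(x_{c_i})=c_i$ yields that $x_{c_1}\cdots x_{c_\ell}$ is nonzero and satisfies $\nu(x_c)=c_1\circ\cdots\circ c_\ell=c$. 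Thus $\nu$ is injective on $\{x_c\}$ with image $P_\nu=\nu(\mathcal A\setminus\{0\})$, i.e., $\{x_c\}$ is adapted to $\nu$.

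Third I would invoke Corollary~\ref{adapted_arbitrarily}, which asserts that any adapted set is automatically a basis of $\mathcal A$ provided $P$ is well-ordered. For transparency, the underlying argument combines two standard uses of the well-order: in a hypothetical finite dependence $\sum_i\alpha_i x_{c_i}=0$ with distinct $c_i$ the valuation axiom \eqref{eq:max} forces $\nu$ of the sum to equal $\max_i c_i$ (attained uniquely), contradicting the fact that $\nu$ is undefined at $0$; and if some $v\in\mathcal A\setminus\{0\}$ were not in the span, then choosing one with $\nu(v)$ minimal and subtracting $\alpha x_{\nu(v)}$ for suitable $\alpha\in\kk^\times$ would produce a smaller $\nu$-value element with the same obstruction, contradicting the well-order. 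The main obstacle I foresee is Step~2, which requires chaining the valuation axiom along the entire factorization $c_1,\ldots,c_\ell$ --- in particular, knowing each left-initial composition lies in $P$. This is built into the convention that writing $c_1\circ\cdots\circ c_\ell\in P$ entails all intermediate products being defined; once that is granted, Step~3 is an immediate citation.
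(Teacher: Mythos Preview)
Your proposal is correct and matches the paper's approach exactly: the paper's proof of Theorem~\ref{monomial_basis_partial} consists of the single sentence ``The following theorem can be easily deduced from Corollary~\ref{adapted_arbitrarily},'' and your three steps spell out precisely that deduction --- verifying well-definedness of ${\vec c}^{st}$, checking $\nu(x_c)=c$ via iterated application of the valuation axiom along the factorization, and then citing Corollary~\ref{adapted_arbitrarily} for the basis conclusion. Your added sketch of why Corollary~\ref{adapted_arbitrarily} holds (independence from the uniqueness of the maximum, spanning by minimal-counterexample descent) is a faithful unpacking of the argument behind Proposition~\ref{valuation_injective_partial} and Theorem~\ref{filtration}(ii).
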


This is a generalization of Theorem \ref{th:adapted standard monomials}(b). Note that  unlike in the commutative case, $P$ can be a small category, in particular, with countably many arrows. In that case, its generating set $P_0$ is a quiver (possibly with relations) so that $P$ is the set of all paths in $P_0$ (cf. Examples~\ref{monoid_ideal}, ~\ref{paths}).

A large class of injective valuations on $A$ can be constructed by restricting  a given injective valuation $\hat \nu$ on a larger algebra $B$ containing $A$ 
with some care. We say that a valuation $\nu:V\setminus\{0\}\to P$ is {\it well-ordered} if its range $P_\nu$ is a well-ordered set.

\begin{lemma} 
[Proposition~\ref{pr:injective subalgebra}]
\label{le:subalgebra valued} 
 Suppose that $\hat \nu$ is an injective well-ordered valuation of an algebra $B$. Then the restriction $\hat \nu$ to any  subalgebra $A$ is also a well-ordered injective valuation.  
\end{lemma}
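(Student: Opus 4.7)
The plan is to verify, in order, that $\nu := \hat\nu|_{\mathcal{A}\setminus\{0\}}$ satisfies the valuation axioms, is well-ordered, and is injective. The valuation axioms \eqref{eq:max} and the multiplicativity requirement for $\nu$ on $\mathcal{A}\setminus\{0\}$ are immediate, since they already hold for $\hat\nu$ on the larger set $\mathcal{B}\setminus\{0\}$ and any sum or product of elements of $\mathcal{A}$ stays in $\mathcal{A}$. Well-orderedness of $P_\nu := \nu(\mathcal{A}\setminus\{0\})$ is likewise immediate, because $P_\nu \subseteq P_{\hat\nu}$ and every subset of a well-ordered set is well-ordered.

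The bulk of the argument is to produce a basis $\mathbf{B}$ of $\mathcal{A}$ on which $\nu$ is injective. For each $c\in P_\nu$ fix (arbitrarily) some $b_c\in \mathcal{A}\setminus\{0\}$ with $\nu(b_c)=c$, and set $\mathbf{B} := \{b_c : c \in P_\nu\}$; then $\nu|_\mathbf{B}\colon \mathbf{B}\to P_\nu$ is bijective by construction. Linear independence of $\mathbf{B}$ follows in the usual way from the valuation axioms: given a would-be relation $\sum_{i=1}^k \lambda_i b_{c_i}=0$ with pairwise distinct $c_i$ and all $\lambda_i\in \kk^\times$, pick the index $i_0$ with $c_{i_0}=\max_i c_i$ and apply \eqref{eq:max} iteratively, using that \eqref{eq:max} is an equality when the two valuations differ; this forces the sum to have valuation $c_{i_0}$, contradicting its vanishing.

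The spanning property is the main obstacle, and it is precisely where the injectivity of $\hat\nu$ and the well-ordering of $P_{\hat\nu}$ enter in an essential way. Fix a basis $\hat{\mathbf{B}}$ of $\mathcal{B}$ adapted to $\hat\nu$; by injectivity, for each $c\in P_{\hat\nu}$ there is a unique element $\hat b_c\in \hat{\mathbf{B}}$ with $\hat\nu(\hat b_c)=c$. For any nonzero $v\in \mathcal{B}$, expanding in $\hat{\mathbf{B}}$ and running the linear-independence argument of the previous paragraph shows that the coefficient $\sigma(v)\in \kk^\times$ of $\hat b_{\hat\nu(v)}$ is nonzero and is a well-defined ``leading coefficient''; moreover, for any $w\in \mathcal{B}\setminus\{0\}$ with $\hat\nu(w)=\hat\nu(v)$, the element $v - (\sigma(v)/\sigma(w))\,w$ either vanishes or has $\hat\nu$ strictly smaller than $\hat\nu(v)$. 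Given $a\in \mathcal{A}\setminus\{0\}$, apply this with $w=b_{\nu(a)}\in \mathcal{A}$: then $a_1 := a - (\sigma(a)/\sigma(b_{\nu(a)}))\,b_{\nu(a)}$ lies in $\mathcal{A}$ and is either $0$ or has $\nu(a_1)\prec \nu(a)$. Iterating produces a strictly $\prec$-decreasing sequence of valuations in $P_\nu$, which by well-ordering must terminate at some $a_N=0$ after finitely many steps. Unwinding the iteration expresses $a$ as a finite $\kk$-linear combination of elements of $\mathbf{B}$, proving that $\mathbf{B}$ spans $\mathcal{A}$ and completing the argument.
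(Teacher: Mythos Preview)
Your argument is correct and is essentially the paper's approach. The paper packages the key step as a separate ``Euclidean property'' characterization (Proposition~\ref{pr:injective}): a well-ordered valuation is injective iff for any $x,y$ with $\nu(x)=\nu(y)$ there is $c\in\kk^\times$ with $\nu(x-cy)<\nu(x)$; since this property visibly passes to subspaces, Corollary~\ref{cor:new valuations} follows in one line. Your leading-coefficient map $\sigma$ built from $\hat{\mathbf B}$ is precisely a concrete witness to this Euclidean property, and your spanning-by-iteration argument is the content of Corollary~\ref{adapted_arbitrarily} (any adapted set is a basis when the order is a well-order). So the two proofs coincide up to repackaging; the paper's version is a touch cleaner in that once the Euclidean criterion is isolated, one never needs to go back to the ambient adapted basis $\hat{\mathbf B}$.
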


The well-orderness of $P$ seems to be indispensable (Remark \ref{rem: subalgebra injective}).

In fact, for any ordered partial semigroup $P$ we can construct a ``default" invective valuation onto $P$ as follows.

First, recall that for any partial semigroup $P$ and any field $\kk$  we equip the linear span $\kk P=\bigoplus\limits_{c\in P} \kk\cdot [c]$ with an associative algebra structure via
\begin{equation}
\label{eq:kP}
[c][c']=\begin{cases}
[c\circ c'] &\text{if $c\circ c'$ is defined in $P$}\\
0 &\text{otherwise}
\end{cases}
\end{equation}
for all $c,c'\in P$. In particular, if $P$ is a group then $\kk P$ is the group algebra of $P$. In the case when $P=\ZZ_{\ge 0}^n\setminus[{\bf S}]$ in notation as above, then $\kk P$ is the Stanley-Reisner algebra $A_{\bf S}$.

Second, suppose that $P$ is ordered and $\kk$ is of characteristic $0$. Clearly, the assignments $[c]\mapsto c$, $c\in P$ define an injective valuation 
\begin{equation}
\label{eq:nu_P}
\nu_P:\kk P\setminus\{0\}\to P 
\end{equation}
(see e.g., Remark~\ref{monoidal_partial}). 
We refer to $\nu_P$ as the {\it tautological valuation} of $\kk P$. For instance, if $P=\ZZ_{\ge 0}^n$, i.e., $\kk P=\kk[x_1,\ldots,x_n]$ and the order is lexicographic, then $\nu_P$ is just the usual leading degree valuation of $\kk[x_1,\ldots,x_n]$. This also works if lex is replaced by deglex. Similarly, for any finite set $X$ denote  by $Ord(X)$ the set of all total orderings of $X$  and let $\kk<X>$ be the free algebra freely generated by $X$. Since $\kk<X>=\kk P_X$, where $P_X$ is the free monoid freely generated by $X$ and in view of Lemma \ref{deglex} which guarantees  a unique deglex order $\prec_\sigma$ on $P_X$ for any   $\sigma\in Ord(X)$, one obtains tautological valuations $\nu_\sigma:\kk<X>\setminus \{0\}\to P_X$. 

Using such tautological valuations, we refine Theorems \ref{th:commutative valued}, \ref{th:noncommutative valued} to the following important case of Theorem \ref{graded_partial}.

\begin{theorem}
\label{graded_partial intro}
Let $P$ be a well-ordered partial semigroup and  $I$ be a two-sided ideal of $\kk P$. 

i) Then $J:=\nu_P(I \setminus \{0\})$ is an ideal in $P$ (i.e., it is invariant under left and right compositions), thus $P\setminus J$ is a well-ordered (coideal) partial semigroup. Moreover,  the assignments 
$\nu(a+I):= \min\limits_{j\in I} \nu_P(a+j)$    
define an injective well-ordered valuation $\nu:\kk P/I\setminus \{0\}\twoheadrightarrow P\setminus J$.

ii)  Then ${\bf B}:= \{[c]+I\, :\, c\in P\setminus J\}$ is a standard monomial basis of $\kk P/I$ with respect to $\nu$.
\end{theorem}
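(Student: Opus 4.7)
The plan is to establish (i) in two steps---first that $J$ is a two-sided ideal of $P$ giving $P\setminus J$ a well-ordered coideal structure, and second that $\nu$ is a well-defined injective valuation onto $P\setminus J$---and then deduce (ii) by combining part (i) with Corollary~\ref{adapted_arbitrarily}. For the ideal property of $J$: given $c=\nu_P(f)\in J$ with $f\in I\setminus\{0\}$ and $d\in P$ with $d\circ c$ defined in $P$, the two-sidedness of $I$ gives $[d]\cdot f\in I$, while the valuation axiom for $\nu_P$ forces $[d]\cdot f\ne 0$ with $\nu_P([d]\cdot f)=d\circ c$, so $d\circ c\in J$; the symmetric argument handles $c\circ d$. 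Hence $P\setminus J$ inherits from $P$ a well-ordered coideal partial semigroup structure.

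To construct $\nu$, observe that for $a+I\ne 0$ (equivalently $a\notin I$, so $a+j\ne 0$ for every $j\in I$) the set $\{\nu_P(a+j):j\in I\}\subseteq P$ is nonempty, so the well-ordering yields a minimum; independence of the coset representative is immediate after relabeling $j$. This minimum lies in $P\setminus J$: otherwise $c:=\nu(a+I)=\nu_P(a+j_0)=\nu_P(g)$ for some $g\in I$, and matching the leading $[c]$-coefficients via a suitable $\lambda\in\kk^\times$ gives $\nu_P(a+j_0-\lambda g)\prec c$ with $j_0-\lambda g\in I$, contradicting minimality.

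The valuation axioms for $\nu$ are then routine except for multiplicativity. Setting $c_a:=\nu(a+I)$, $c_b:=\nu(b+I)$ and fixing $j_a,j_b\in I$ realizing the minima, $(a+j_a)(b+j_b)=ab+j'$ for some $j'\in I$; whenever $c_a\circ c_b$ is defined in $P\setminus J$, the valuation axiom for $\nu_P$ yields $\nu_P(ab+j')=c_a\circ c_b\ne 0$ (hence $ab\notin I$), so $\nu(ab+I)\preceq c_a\circ c_b$. If this inequality were strict, witnessed by $\nu_P(ab+j)\prec c_a\circ c_b$ for some $j\in I$, then $j'-j\in I\setminus\{0\}$ would satisfy $\nu_P(j'-j)=c_a\circ c_b$ by the equality case of subadditivity of $\nu_P$, forcing $c_a\circ c_b\in J$---contrary to hypothesis. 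The same leading-coefficient cancellation argument yields $\nu([c]+I)=c$ for every $c\in P\setminus J$, which simultaneously establishes surjectivity onto $P\setminus J$ and exhibits an adapted set, hence injectivity.

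Part (ii) is then immediate: the set ${\bf B}=\{[c]+I:c\in P\setminus J\}$ is adapted to $\nu$ by the previous computation, and since $P\setminus J$ is well-ordered, Corollary~\ref{adapted_arbitrarily} upgrades any adapted family into a basis of the valued algebra, so ${\bf B}$ is a basis of $\kk P/I$. The main obstacle throughout is the strict equality in the multiplicativity of $\nu$: a hypothetical strict inequality must be converted into an element of $I$ whose $\nu_P$-value equals $c_a\circ c_b$, thereby violating $c_a\circ c_b\in P\setminus J$. This ``ideal-closure'' maneuver, and its miniature version establishing $\nu(a+I)\in P\setminus J$, form the conceptual core of the argument.
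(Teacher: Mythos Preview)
Your proof is correct and follows essentially the same route as the paper's proof of Theorem~\ref{graded_partial}. The paper likewise shows $\nu(a+I)\in P\setminus J$ by leading-term cancellation, derives the upper bound $\nu(ab+I)\preceq c_a\circ c_b$ from $(a+j_a)(b+j_b)$, and rules out strict inequality by producing an ideal element with $\nu_P$-value $c_a\circ c_b$; it then proves injectivity via the Euclidean property and concludes that ${\bf B}$ is a basis by Proposition~\ref{valuation_injective_partial} (which is equivalent to your use of Corollary~\ref{adapted_arbitrarily}). One small presentational point: your phrase ``exhibits an adapted set, hence injectivity'' is a bit compressed---you should make explicit that the leading-coefficient cancellation shows each leaf $(\kk P/I)_c$ is one-dimensional (any element of $\nu^{-1}(c)$ differs from a scalar multiple of $[c]+I$ by something of strictly smaller $\nu$-value), which is exactly what the paper spells out; otherwise the appeal to Corollary~\ref{adapted_arbitrarily} in part~(ii) would be circular, since that corollary presupposes injectivity.
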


Thus, combining Theorem \ref{graded_partial intro} with Lemma \ref{le:subalgebra valued} and \eqref{eq:nu_P}, we obtain a large class of injective valuations of commutative and noncommutative algebras into various entire and partial semigroups.

In particular, we obtain a rather spectacular consequence  for any finitely generated monoid or group $M$. Namely, if $X$ is a finite generating set of $M$, then one has a surjective homomorphism of monoids $P_X\twoheadrightarrow M$,  where $P_X$ is the monoid freely generated by $X$ as above. In turn, this induces a surjective homomorphism of algebras $\kk P_X\twoheadrightarrow \kk M$; denote by $I$ its kernel. Then for any $\sigma\in Ord(X)$ Theorem \ref{graded_partial intro} applied to $P=P_X$ and $I$ guarantees an embedding of $M$ into $P_X$ together with a new partial multiplication and entire order $M_\sigma$ on $M$ (even for finite groups $M$!). As a free bonus, one obtains the tautological valuation $\nu_\sigma:\kk M_\sigma \setminus \{0\}\to M_\sigma$ (see e.g., Theorem~\ref{rem:coideal of projection} and Remark~\ref{rem:order_min}, Remark~\ref{valuation_min}).

In example \ref{hecke} we construct a partial monoid $M(W)$ for any Coxeter group $W=<s_i,i\in I:s_i^2=1,(s_is_j)^{m_{ij}}=1\}$. Namely we require that $s_i\circ s_j$ is undefined iff $i=j$ or $m_{ij}=2$. If Coxeter graph is linear, $M(W)$ is frequently ordered unlike $W$.

Another surprising example (Example \ref{monoid_matrix}) is a partial semigroup $M_n$, the complete groupoid on $\{1,\ldots,n\}$, i.e., it consists of all pairs $(i,j)$ with $(i,j)\circ (j,k)=(i,k)$. By the construction, $\kk M_n=Mat_n(\kk)$.
It turns out that $M_n$ has many orders satisfying the strict property, the most notable of which is
\begin{equation}
\label{eq:order M_n}    
(1,n)\prec \ldots \prec(1,1)\prec(2,n)\prec\ldots \prec(2,1)\prec\cdots \prec(n,n)\prec\ldots \prec(n,1)\ .
\end{equation}
Thus, according to \eqref{eq:nu_P} we obtain an injective tautological valuation (Example \ref{monoid_matrix_algebra})
$$\nu_n:Mat_n(\kk)\setminus\{0\} \to M_n\ .$$
Therefore, restricting $\nu_n$ to any subalgebra $A\subset M_n$ we obtain an injective valuation $A\to M_n$. In particular, for any faithful representation $\rho:G\to GL_n(\kk)$ of a finite group $G$, we obtain an ``injective valuation" $\nu:G\to M_n$ which takes $1$ to $(n,n)$ under the order \eqref{eq:order M_n}. Under an appropriate partial semigroup structure on $G$, this $\nu$ becomes an injective homomorphism of partial monoids.

Returning to general partial semigroups, we can construct new valuations from   $P$-filtered algebras and vice versa. We say that $A$ is filtered by an ordered (partial) semigroup $P$ if $A=\sum\limits_{c\in P} A_{\preceq c}$, $A_{\preceq c'}\subset A_{\preceq c}$ whenever $c'\preceq c$, and  $A_{\preceq c} A_{\preceq c'}\subset A_{\preceq c\circ c'}$ whenever $c\circ c'$ is defined in $P$. Proposition \ref{partial_filtration} asserts that this is essentially a valuation $A\setminus \{0\}\to P$.
In particular, if $P$ is well-ordered, by the standard procedure $gr$ this defines a $P$-graded algebra (recall that $A$ is graded by a (partial) semigroup $P$ if $A=\bigoplus\limits_{c\in P} A_c$ so that $A_c A_{c'}\subset A_{c\circ c'}$ whenever $c\circ c'$ is defined in $P$). 
Thus, Proposition \ref{partial_filtration} applied to $A:=\kk P$ recovers the tautological valuation $\nu_P:\kk P\setminus\{0\}\to P$.

It turns out that having a pair of injective valuations $\nu, \nu'$ of an algebra (or even a vector space) $A$ to partial semigroups $P$ and $P'$ gives an interesting information about both the algebra and the pair $P_\nu=\nu(A\setminus\{0\}), P'_{\nu'}=\nu'(A\setminus\{0\})$. 

\begin{theorem}
[Theorem \ref{th:generalized JH}] 
\label{th:JH}
Suppose that $\nu:A\setminus \{0\}\to P$ and $\nu':A\setminus \{0\}\to P'$ are injective valuations and $P$ and $P'$ are well-ordered. Then the assignments $a\mapsto \min\{\nu'(\nu^{-1}(a))\}$ define a bijection ${\bf K}_{\nu',\nu}:P_\nu\widetilde \to P'_{\nu'}$. Moreover, ${\bf K}_{\nu',\nu}^{-1}={\bf K}_{\nu,\nu'}$.
\end{theorem}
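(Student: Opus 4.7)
The plan is to prove that ${\bf K}_{\nu',\nu}$ is well-defined, verify the identity ${\bf K}_{\nu,\nu'}\circ {\bf K}_{\nu',\nu}=\mathrm{id}_{P_\nu}$, and invoke symmetry for the reverse composition. Well-definedness is immediate: since $P'$ is well-ordered and $\nu^{-1}(a)\subseteq \mathcal{A}\setminus\{0\}$ is nonempty for each $a\in P_\nu$, the set $\nu'(\nu^{-1}(a))\subseteq P'_{\nu'}$ admits a minimum that is moreover attained, so there exists $x_0\in \mathcal{A}\setminus\{0\}$ realizing $\nu(x_0)=a$ and $\nu'(x_0)={\bf K}_{\nu',\nu}(a)$; an analogous realizer exists for ${\bf K}_{\nu,\nu'}$.

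The engine of the proof is a cancellation lemma for the injective well-ordered valuation $\nu'$. Fix a basis $\mathbf{B}'=\{b'_{a'}\}_{a'\in P'_{\nu'}}$ adapted to $\nu'$, so $\nu'(b'_{a'})=a'$. Expanding any nonzero $x\in \mathcal{A}$ uniquely as $x=\sum_{a''\in I}\lambda_{a''}b'_{a''}$ over a finite set $I\subseteq P'_{\nu'}$ with $\lambda_{a''}\in \kk^\times$, the equality case of \eqref{eq:max} applied inductively shows $\nu'(x)=\max I$. Consequently $x$ admits a unique decomposition
\[
x=\alpha_x\, b'_{\nu'(x)}+r_x,\qquad \alpha_x\in\kk^\times,
\]
with either $r_x=0$ or $\nu'(r_x)\prec \nu'(x)$. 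Hence, for any two nonzero elements $x,y$ with $\nu'(x)=\nu'(y)=a'$, the combination $x-(\alpha_x/\alpha_y)\,y$ is either zero or has $\nu'$-value strictly below $a'$.

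Now fix $a\in P_\nu$, put $a':={\bf K}_{\nu',\nu}(a)$ and $b:={\bf K}_{\nu,\nu'}(a')$. The realizer $x_0$ above satisfies $\nu'(x_0)=a'$, so $b\preceq \nu(x_0)=a$. Assume for contradiction that $b\prec a$, and pick $y_0\in \mathcal{A}\setminus\{0\}$ realizing the second minimum, $\nu'(y_0)=a'$ and $\nu(y_0)=b$. Form $\tilde x:=x_0-(\alpha_{x_0}/\alpha_{y_0})\,y_0$. This is nonzero, for otherwise $x_0$ would be a nonzero scalar multiple of $y_0$ and hence satisfy $\nu(x_0)=\nu(y_0)$, contradicting $a\ne b$. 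Since $\nu(x_0)=a\succ b=\nu\bigl((\alpha_{x_0}/\alpha_{y_0})\,y_0\bigr)$, the equality case of \eqref{eq:max} gives $\nu(\tilde x)=a$, while the cancellation lemma yields $\nu'(\tilde x)\prec a'$. These contradict the minimality defining $a'={\bf K}_{\nu',\nu}(a)$, so $b=a$ and ${\bf K}_{\nu,\nu'}\circ {\bf K}_{\nu',\nu}=\mathrm{id}_{P_\nu}$; interchanging the roles of $(\nu,P)$ and $(\nu',P')$ yields ${\bf K}_{\nu',\nu}\circ {\bf K}_{\nu,\nu'}=\mathrm{id}_{P'_{\nu'}}$, so the two maps are mutually inverse bijections.

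The only delicate step is the cancellation lemma: it is the place where injectivity of $\nu'$ (which makes the filtration quotients one-dimensional, so that leading $\nu'$-coefficients are unique up to nonzero scalar) combines essentially with well-orderedness of $P'$ (ensuring $\max I$ exists and is realized by a single basis vector). The remainder is a bookkeeping application of the valuation axioms and the minimality defining the JH maps.
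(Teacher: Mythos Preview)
Your proof is correct and follows essentially the same route as the paper's. The paper picks realizers $x$ and $y$ exactly as you do, invokes Proposition~\ref{pr:injective}(b) (the Euclidean property, which is precisely your ``cancellation lemma'') to drop $\nu'(x-cy)$ strictly below $\nu'(x)$, and then uses minimality of ${\bf K}_{\nu',\nu}(a)$ to force $\nu(y)=\nu(x)$; your version packages the same step as a proof by contradiction under the hypothesis $b\prec a$, which has the mild advantage of making the nonvanishing of $\tilde x$ automatic. One small comment on your closing remark: the existence of $\max I$ for a \emph{finite} support $I$ needs only a total order; well-orderedness of $P'$ enters earlier, in guaranteeing an adapted basis for $\nu'$ (Lemma~\ref{le:valuation grading}(c)) and in making the defining minima of ${\bf K}_{\nu',\nu}$ and ${\bf K}_{\nu,\nu'}$ exist.
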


We call ${\bf K}_{\nu',\nu}$
a {\it  Jordan-H\"older
bijection (JH-bijection)}. It can be reformulated in terms of the generalized Jordan-H\"older correspondence
on matroids developed by Abels in 1991 \cite{BGW}, cf. also Remark~\ref{flag}.

In addition, under the same assumptions there is a common adapted basis for both valuations.

\begin{theorem}
[Theorem~\ref{th:generalized JH}]
\label{intro:JH_basis}
Under assumptions of Theorem~\ref{th:JH},  there exists a basis ${\bf B}_{\nu,\nu'}$ of $A$ adapted to both $\nu$ and $\nu'$ and such that
${\bf K}_{\nu',\nu}(\nu(b))=\nu'(b)$ for all $b\in {\bf B}_{\nu,\nu'}$.
\end{theorem}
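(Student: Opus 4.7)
The plan is to build $\mathbf{B}_{\nu,\nu'}$ directly as a set of minimum-achieving witnesses for the JH minimization. For each $a \in P_\nu$, the set $\nu'(\nu^{-1}(a)) \subseteq P'_{\nu'}$ is nonempty and has a minimum because $P'$ is well-ordered; this minimum is, by definition, $\mathbf{K}_{\nu',\nu}(a)$. I would choose any $b_a \in \nu^{-1}(a)$ with $\nu'(b_a) = \mathbf{K}_{\nu',\nu}(a)$ and set $\mathbf{B}_{\nu,\nu'} := \{b_a\}_{a \in P_\nu}$. The required identity $\mathbf{K}_{\nu',\nu}(\nu(b_a)) = \nu'(b_a)$ is then built into the construction.

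With this construction, the two adaptedness properties reduce to the single assertion that $\mathbf{B}_{\nu,\nu'}$ is a basis of $\mathcal{A}$. Indeed, $\nu(b_a) = a$ means $\nu$ is injective on $\mathbf{B}_{\nu,\nu'}$ with image $P_\nu$; and by bijectivity of $\mathbf{K}_{\nu',\nu}$ (Theorem~\ref{th:JH}), $a \mapsto \nu'(b_a) = \mathbf{K}_{\nu',\nu}(a)$ is a bijection $P_\nu \to P'_{\nu'}$, so $\nu'$ is injective on $\mathbf{B}_{\nu,\nu'}$ with image $P'_{\nu'}$.

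The main obstacle is therefore showing that $\mathbf{B}_{\nu,\nu'}$ is a $\kk$-basis. Linear independence is routine: for any nontrivial finite combination $\sum_i c_i b_{a_i}$ with distinct $a_i$ and all $c_i \in \kk^\times$, iterated application of the strict equality case of \eqref{eq:max} (applicable because $\nu(c_i b_{a_i}) = a_i$ are distinct) gives $\nu(\sum_i c_i b_{a_i}) = \max_i a_i \in P_\nu$, so the combination is nonzero. Spanning uses well-orderedness of $P_\nu$ decisively: if some $x \in \mathcal{A} \setminus \mathrm{span}(\mathbf{B}_{\nu,\nu'})$ existed, I would pick one with minimal $\nu(x) = a_0$. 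Fixing any adapted basis $\{e_a\}_{a \in P_\nu}$ for $\nu$ (which exists by injectivity), the successive filtration quotient $V_{\preceq a_0}/V_{\prec a_0}$ is $1$-dimensional and spanned by the class of $e_{a_0}$; since $\nu(b_{a_0}) = a_0$, the class of $b_{a_0}$ also spans it, so some $\lambda \in \kk$ satisfies $x - \lambda b_{a_0} \in V_{\prec a_0}$. Then either $x = \lambda b_{a_0}$, or $\nu(x - \lambda b_{a_0}) \prec a_0$, forcing $x - \lambda b_{a_0}$ (and hence $x$) into $\mathrm{span}(\mathbf{B}_{\nu,\nu'})$ by minimality of $a_0$ --- a contradiction. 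This step is precisely the content of Corollary~\ref{adapted_arbitrarily} and may simply be invoked as a black box, so the theorem follows by combining it with the construction above and the JH bijectivity.
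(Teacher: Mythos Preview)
Your proof is correct and follows essentially the same route as the paper: construct $\mathbf{B}_{\nu,\nu'}$ by choosing, for each $a\in P_\nu$, a witness $b_a\in\nu^{-1}(a)$ realizing the minimum $\nu'(b_a)=\mathbf{K}_{\nu',\nu}(a)$, observe that $\nu|_{\mathbf{B}}$ is a bijection onto $P_\nu$ so that $\mathbf{B}$ is a basis by the well-ordered adapted-set criterion (the paper cites Lemma~\ref{le:valuation grading}(c), you cite Corollary~\ref{adapted_arbitrarily}, which amounts to the same thing), and then deduce $\nu'$-adaptedness from the injectivity of $\mathbf{K}_{\nu',\nu}$ established in Theorem~\ref{th:JH}. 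Your spelled-out spanning argument is just an unpacking of that black box.
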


We sometimes refer to such a  basis as an {\it JH-basis} of $A$. 

The following result asserts that any JH-bijection is almost a homomorphism of partial semigroups.

\begin{theorem}
[Proposition~\ref{sub-multiplicative}] 
\label{sub-multiplicative intro}Under assumptions of Theorem \ref{th:JH}
the JH-bijection ${\bf K}:={\bf K}_{\nu',\nu}$ is   sub-multiplicative in the following sense:
$${\bf K}(c\circ c')\preceq {\bf K}(c)\circ {\bf K}(c')$$
whenever $c\circ c'$ and ${\bf K}(c)\circ {\bf K}(c')$ are defined in $P$ and $P'$, respectively.
\end{theorem}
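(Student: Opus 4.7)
The plan is to produce, for any $c, c' \in P_\nu$ with $c \circ c'$ defined in $P$ and $\mathbf{K}(c) \circ \mathbf{K}(c')$ defined in $P'$, an explicit witness element $z \in \mathcal{A}\setminus\{0\}$ satisfying $\nu(z) = c \circ c'$ and $\nu'(z) = \mathbf{K}(c) \circ \mathbf{K}(c')$. Once such a $z$ is in hand, sub-multiplicativity is immediate: by the very definition
$$\mathbf{K}(c \circ c') = \min\{\nu'(u) : u \in \mathcal{A}\setminus\{0\},\ \nu(u) = c \circ c'\},$$
the value $\nu'(z)$ is an upper bound on $\mathbf{K}(c \circ c')$, so $\mathbf{K}(c \circ c') \preceq \nu'(z) = \mathbf{K}(c) \circ \mathbf{K}(c')$.

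The witness will be a product $z = xy$ of minimizers. Because $P'$ is well-ordered, the nonempty set $\nu'(\nu^{-1}(c)) \subseteq P'$ attains its minimum, so I can pick $x \in \mathcal{A}\setminus\{0\}$ with $\nu(x) = c$ and $\nu'(x) = \mathbf{K}(c)$; similarly choose $y$ with $\nu(y) = c'$ and $\nu'(y) = \mathbf{K}(c')$. Now I apply the defining multiplicativity axiom of a valuation twice. From the side of $\nu$: the hypothesis that $c \circ c' = \nu(x) \circ \nu(y)$ is defined in $P$ forces $xy \neq 0$ and $\nu(xy) = c \circ c'$. From the side of $\nu'$: the hypothesis that $\mathbf{K}(c) \circ \mathbf{K}(c') = \nu'(x) \circ \nu'(y)$ is defined in $P'$ forces (using the same nonzero $xy$) $\nu'(xy) = \mathbf{K}(c) \circ \mathbf{K}(c')$. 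So $z := xy$ has both desired properties, and the inequality in the first paragraph closes the argument.

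I do not expect a serious obstacle here, since the statement is really just a packaging of the valuation axioms together with the definition of $\mathbf{K}$ as a pointwise minimum. The only two points that deserve care are (i) invoking well-orderedness of $P'$ to ensure the minima in the definitions of $\mathbf{K}(c)$ and $\mathbf{K}(c')$ are actually attained by some $x$ and $y$, and (ii) observing that $c \circ c'$ automatically lies in $P_\nu$ (so $\mathbf{K}(c \circ c')$ makes sense), which is guaranteed by the same computation $\nu(xy) = c \circ c'$. As a bonus remark, applying the statement to $\mathbf{K}_{\nu,\nu'} = \mathbf{K}^{-1}$ yields a symmetric sub-multiplicativity on the other side, reflecting the symmetric role played by $\nu$ and $\nu'$ in Theorem~\ref{th:JH}.
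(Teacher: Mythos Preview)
Your proof is correct and follows essentially the same approach as the paper's: pick minimizers $x,y$ realizing $\mathbf{K}(c)$ and $\mathbf{K}(c')$, apply the multiplicativity axiom for both valuations to the product $xy$, and conclude via the definition of $\mathbf{K}$ as a minimum. Your write-up is more detailed (explicitly invoking well-orderedness for attainment of minima and noting $c\circ c'\in P_\nu$), but the argument is the same.
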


This  implies that ${\bf K}_{\nu,\nu'}={\bf K}_{\nu',\nu}^{-1}$ is also sub-multiplicative, which will, in particular, allow to stratify both $P_\nu$ and $P'_{\nu'}$ into ``multiplicativity domains" (see Examples~\ref{ex:mod 2},
and ~\ref{2.13 non-commutative}). 

In fact, this happens frequently when $P$ and $P'$ are quantum cones, i.e,  central extensions of cones (being finitely generated submonoids of $\ZZ_{\ge 0}^n$), see Example \ref{ex:quantum plane} for details. 

We conclude with a spectacular application of our JH theory to quantum Schubert cells $U_q(w)$ (for any symmetrizable Kac-Moody algebra $\gg$) viewed as algebras over $\AA=\ZZ[q^{\frac{1}{2}},q^{\frac{1}{2}}]$, which we abbreviate as $U^\AA(w)$ (Section \ref{3.13}). Namely, using injectivity of Feigin's homomorphism $U_q(w)\to A_\ii$ for any reduced word $\ii=(i_1,\ldots,i_m)$ of $w$ (Theorem \ref{th:Feigin}) we obtain an injective valuation $\nu_\ii$ from $U^\AA(w)$ to a quantum cone $P_\ii$ determined by a skew-symmetric bilinear form $\Lambda_\ii$ (Corollary \ref{cor:nu_ii}). Replacing $\ii$ by another reduced word $\ii'$ for $w$, we obtain  JH-bijections ${\bf K}^-_{\ii',\ii}: C_\ii\to C_{\ii'}$ of quantum cones, in the notation of \eqref{eq:ii,iip JHq}. 

Passing to underlying string valuations $\underline \nu_\ii$, $\underline \nu_{\ii'}$, we recover the underlying bijection of ``thin" string cones (Remark \ref{rem:thin thick}) $\underline{\bf K}^-_{\ii',\ii}: \underline C_\ii\to \underline C_{\ii'}$, in the notation of \eqref{eq:ii,iip JH}. Based on the formula \eqref{eq:underline K-} in Example \ref{ex:quantum A2} we expect that $\underline{\bf K}^-_{\ii',\ii}$ generalize string transition maps from \cite[Theorem 2.2]{bz}. It turns out that each linearity domain of (hopefully, piecewise linear) bijection $\underline{\bf K}^-_{\ii',\ii}$ becomes a ``symplectomorphism" between $\Lambda_\ii$ and $\Lambda_{\ii'}$
on their linearity domains (Corollary \ref{cor:Lambda-equivariange}(a)). In fact, this linear map $\underline{\bf K}^-_{\ii',\ii}$ becomes an "honest" symplectomorphism of $\Lambda_\ii$ in view of Remark \ref{rem:beru}. 

Likewise, we construct another class of injective valuations $\nu^\ii$ from $U^\AA$ to a quantum octant $P^\ii$ determined by a skew-symmetric bilinear form $\Lambda^\ii$ together with JH-bijections ${\bf K}^+_{\ii',\ii}:P^{\ii}\to P^{\ii'}$
together with their underlying counterparts $\underline {\bf K}^+_{\ii',\ii}:\ZZ_{\ge 0}^m\to \ZZ_{\ge 0}^m$ (Theorem \ref{th:Qii} and formulas \eqref{eq:ii,iip JH}, \eqref{eq:ii,iip JHq}). Based on the formula \eqref{eq:underline K+} in Example \ref{ex:quantum A2} we expect that $\underline{\bf K}^+_{\ii',\ii}$ generalize celebrated Lusztig's transition maps from \cite[Section 2]{lu0}. And  $\underline{\bf K}_{\ii',\ii}^+$ also becomes a``symplectomorphism" between $\Lambda^\ii$ and $\Lambda^{\ii'}$ on their linearity domains (Corollary \ref{cor:Lambda-equivariange}(b)).

Finally, and most importantly, we obtain JH-bijection ${\bf K}_{\ii',\ii}:P^\ii\to C_{\ii'}$ which interpolates between valuations $\nu^\ii:U^\AA(w)\to P^\ii$ and $\nu_{\ii'}:U^\AA(w)\to P_{\ii'}$ together with its underlying version $\underline {\bf K}_{\ii',\ii}:\ZZ_{\ge 0}^m\to C_{\ii'}$. Based on the formula \eqref{eq:underline K} from Example \ref{ex:A3}, we expect that the latter one is closely related to the twist map  between the reduced double Bruhat cells (see e.g., \cite[Section 4.3]{BZm}). And we should expect that the quantum JH-bijection ${\bf K}_{\ii',\ii}$ is related to the quantum twist (\cite{KO}). And, of course,  $\underline{\bf K}_{\ii',\ii}$ becomes a``symplectomorphism" between $\Lambda^\ii$ and $\Lambda_{\ii'}$ on their linearity domains (Corollary \ref{cor:Lambda-equivariange}(c)).

\subsection*{Acknowledgments}
 Part of this work was done during visits by A.B. to Heidelberg University, Max Planck Institute for Mathematics in the Sciences, IHES, and University of Geneva. He thanks these institutions for hospitality and Anna Wienhard, Maxim Kontsevich, and Anton Alekseev for fruitful discussions.

\section{Injective valuations on algebras with zero divisors}\label{four}

In this section we extend the concept of valuations to algebras with (possibly) zero divisors.

\subsection{Partial semigroups}

\begin{definition}
\label{monoid_partial}
We say that $(P,\circ)$ is a (not necessary commutative) {\it partial semigroup} if for some elements $c,d\in P$ their composition $c\circ d\in P$ is defined (in this case we say that $c, d$ are composable) satisfying the following property (of associativity). If two elements $c\circ c', (c\circ c')\circ c''\in P$ are defined then $c'\circ c'', c\circ (c'\circ c'')\in P$ are also defined and $(c\circ c')\circ c''=c\circ (c'\circ c'')$. Vice versa also holds (we refer to such a triple as {\it associative}). 

We say that a subset $J\subset P$ is an ideal in $P$ if for any elements $c\in P, d\in J$ it holds $c\circ d\in J$, provided that $c\circ d\in P$, and similarly, $d\circ c\in J$, provided that $d\circ c \in P$. Then $P$ becomes a partial semigroup via $c\circ_J d$ defined iff $c\circ d$ is defined and belongs to $P\setminus J$. Clearly,   $P\setminus J$ is a partial semigroup of $P$ under $circ_J$. If $M$ is a semigroup and $J\subset M$ is an ideal we call $M\setminus J
$ a {\it coideal partial semigroup}.

 We assume that $P$ is endowed with a linear order $\prec$ satisfying the following property. For $c,c',d,d'\in P$ the inequalities $c\preceq d, c'\preceq d$ 
imply that $c\circ c'\preceq d\circ d'$ (sometimes we consider partial semigroups without apriori linear order which we introduce afterwards).

We say that the order  fulfills a {\it strict property} if for any elements $a,b,c,d \in P$ such that $a\prec b, c\preceq d$ it holds $a\circ c \prec b\circ d$ (respectively, $c\circ a \prec d\circ b$), provided that $a\circ c, b\circ d \in P$ (respectively, provided that $c\circ a, d\circ b \in P$).
  \end{definition}
  
\begin{remark} 
\label{rem:absorbent}
A partial semigroup $P$ can contain a {\it left absorbing} element $\bf 0$, i.e., such that ${\bf 0}\circ P=\{{\bf 0}\}$.
Existence of a left (or right) absorbing element implies that $P$ is entire because if ${\bf 0}\circ c,\ {\bf 0}\circ d$ are defined then both $c\circ d,\ d\circ c$ are defined since $({\bf 0}\circ c)\circ d={\bf 0}\circ (c\circ d)$ due to Definition~\ref{monoid_partial}. Therefore, if $P$ is not entire, then it {\bf cannot} contain absorbing elements.

If $P$ is entire with a left absorbing element ${\bf 0}$ and $P$ is strictly ordered, then it is easy to see that  ${\bf 0}$ is minimal in $P$.
In some cases this results in a trivial composition in $P$. For instance, consider a coideal partial monoid $P':=\{x^iy^j\ :\ 0\le i,j,\ ij=0\}$ with deglex order and natural composition (see Remark~\ref{monoid_ideal}~i)). We claim that it cannot be extended to a semigroup let $P=P'\sqcup \{\bf 0\}$ with the condition that ${\bf 0} \circ P=\{\bf 0\}$. Indeed, 
$x^i \circ x^j =\bf 0$ in $P$ for all $i,j$ because $x^i\circ x^j \prec x^i \circ y^{j+1}=\bf 0$. Note, however, that if one replaces deglex with lex with all $y^i\prec x$, then such an ordered $P$ exists if the composition of elements of $P'$ was given by $x^i\circ x^j=x^{i+j}$ and all others were undefined.

Note, however, that if $P$ is ordered and contains a (left) absorbing element ${\bf 0}$, this imposes severe restrictions on the order. For instance, if ${\bf 0}$ is minimal in $P$ and $a\circ a={\bf 0}$ for some $a\in P$, then  $b\circ c={\bf 0}$ for all $b\prec a$, $c\prec a$, in particular, if $a$ was maximal, then $P\circ P=\{{\bf 0}\}$.

We can always convert a partial semigroup $P$ by adjoining to it 
a {\it two-sided} absorbing element, i.e., both left- and right-absorbing. Namely, we define a composition $\bullet$ on 
$P\sqcup \{\bf 0\}$ via
$c\bullet c'=\begin{cases} 
c\circ c' &\text{if $c,c'\in P$ and they are composable} \\
{\bf 0} &\text{otherwise} \\    
\end{cases}$. Clearly, $(P\sqcup \{\bf 0\},\bullet)$ is an entire semigroup iff $P$ is a (partial) semigroup.

\end{remark}

\begin{definition}
 \label{def:entire_homomorphism}   

We say that a
map $f:P\to Q$ of partial semigroups $P,Q$ is a homomorphism if 
for any $c,d\in P$ it holds that  $c\circ d$ is defined iff $f(c)\circ f(d)$ is defined as well, and in this case  the equality
$f(c\circ d)=f(c)\circ f(d)$ is true.
\end{definition}

\begin{example} Let $f:C:=\QQ^n \to D:=\QQ^n$ be a piecewise linear bijection (cf. \cite{GR}). This induces a partition $C=\sqcup_i P_i$ into (open in their closures) polyhedra such that a mapping $f|_{P_i}$ is linear for each $i$. One can define an order $\prec_C$ on $C$ by means of a vector $u\in \RR^n$ with $\QQ$-linear independent coordinates:  $v_1\prec_C v_2,\ v_1, v_2 \in C$ iff $(u, v_1-v_2)<0$. This induces an order on $D$ by $v_1\prec_C v_2$ iff $f(v_1)\prec_D f(v_2)$.

Endow a structure of a partial monoid $(C, \circ_C)$ as follows: for $v_1, v_2 \in P_i$ the composition $v_1\circ_C v_2:= v_1+v_2$ is defined iff $v_1+v_2\in P_i$. Then the partition $D=\sqcup_i f(P_i)$ with the induced order makes $(D, \circ _D)$ a partial monoid: $f(v_1)\circ_D f(v_2)= f(v_1)+f(v_2)$ is defined for $v_1, v_2 \in P_i$ iff $v_1+v_2\in P_i$. In the latter case it holds $f(v_1)+f(v_2)=f(v_1+v_2)\in f(P_i)$.

Thus, $f$ is an isomorphism of the ordered partial monoids $C, D$.

More generally, if $f:C\to D$ is an isomorphism of partial semigroups, and $C$ is ordered then $f$ induces an order on $D$ such that $f$ is an isomorphism of ordered partial semigroups.

\end{example}

\begin{remark}
i) For a partial semigroup $P$ we call $P_0\subset P$ a subsemigroup of $P$ if for any $c,d\in P_0$ it holds $c\circ d\in P_0$ whenever $c\circ d\in P$. Any subset $R\subset P$ generates the uniquely defined minimal subsemigroup ${\overline R}\subset P$ such that $R\subset \overline R$. 

ii) If $f:P\to Q$ is a homomorphism of partial semigroups then the image $f(P)$ is a subsemigroup of $Q$.
\end{remark}

The following are immediate applications of two-sided absorbing elements in notation of Remark \ref{rem:absorbent}.

\begin{lemma} The following are equivalent for a given map $f:P\to Q$ one has

(i) $f:P\to Q$ is a homomorphism of partial semigroups

(ii) the extension $\hat f:P\sqcup \{{\bf 0}\}\to Q\sqcup \{{\bf 0}\}$ of $f$ via $\hat f({\bf 0})={\bf 0}$ is a homomorphism of entire semigroups.
    
\end{lemma}

\begin{lemma}
\label{le:absorbent}
Let $P$ be an (entire) semigroup $P$ with a two-sided absorbing element ${\bf 0}$ and let $J$ be an ideal $P$. Then 

(a) the assignments  $c\mapsto 
\begin{cases}
{\bf 0} &\text{if $c\in J$}\\
c &\text{otherwise}
\end{cases}$ 
define an epimorphism $\pi_J:P\twoheadrightarrow (P\setminus (J\setminus\{{\bf 0}\})$ of entire semigroups.

(b) Suppose additionally that $P$ is ordered so that ${\bf 0}$ is minimal (which is automatic if $\prec$ is strict). Then $\pi_J$ is a homomorphism of ordered 
semigroups iff $J$ is an interval, i.e. for any $c\in J$ the set $\{d\in P:d\preceq c\}$ is contained in $J$.
\end{lemma}

As it happens in other aspects of algebra, $\pi_J$ plays a role of the quotient homomorphism, e.g., any homomorphism $f:P\to Q$ of semigroups with two-sided absorbing elements factors through $P\to (P\setminus (J\setminus\{{\bf 0}\})\to Q$, where $J=Ker~f=f^{-1}({\bf 0})$.

For any partial semigroup $P$ we say that a subset $S\subset P\times P$ is {\it admissible} if it defines a partial semigroup on $P$. The following is immediate.

\begin{lemma} The intersection of any family of admissible subsets of $P$ is admissible
    
\end{lemma}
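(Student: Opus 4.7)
The plan is to check the associativity axiom of Definition~\ref{monoid_partial} directly for the intersection. Let $\{S_\alpha\}_{\alpha\in A}$ be a family of admissible subsets of $P\times P$, and set $S:=\bigcap_{\alpha\in A} S_\alpha$. Since each $S_\alpha$ is contained in the domain of the ambient partial operation $\circ$ on $P$, so is $S$; thus for any $(c,c')\in S$ the composition $c\circ c'$ makes sense in $P$, and its value is independent of which $S_\alpha$ one uses to witness admissibility. The associativity equality $(c\circ c')\circ c''=c\circ(c'\circ c'')$, whenever both sides are defined in $S$, will then be inherited verbatim from the corresponding equality in $P$, so the only thing left to verify is that $S$ is closed under the domain-closure implication that appears in the definition of a partial semigroup.

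First I would check the forward direction. Suppose $(c,c')\in S$ and $(c\circ c',c'')\in S$. Then both pairs lie in every $S_\alpha$, and by admissibility of each $S_\alpha$ one has $(c',c'')\in S_\alpha$ and $(c,c'\circ c'')\in S_\alpha$ for every $\alpha\in A$. Intersecting over $\alpha$ gives $(c',c'')\in S$ and $(c,c'\circ c'')\in S$, as required. The ``vice versa'' clause of Definition~\ref{monoid_partial} is handled by the symmetric argument: if $(c',c'')\in S$ and $(c,c'\circ c'')\in S$, then the same pairs lie in every $S_\alpha$, so admissibility of each $S_\alpha$ forces $(c,c')\in S_\alpha$ and $(c\circ c',c'')\in S_\alpha$ for all $\alpha$, hence in $S$.

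I do not foresee a genuine obstacle: admissibility is specified by a list of universally quantified Horn-type implications (``if certain pairs are in $S$, then certain other pairs are in $S$, with compatible composites''), and any family of such axioms is automatically preserved under arbitrary intersection of domains. The only minor care required is to note that the values $c\circ c'$ and $c'\circ c''$ appearing in the implications refer to the fixed operation on the ambient partial semigroup $P$, so they are unambiguously defined whenever the corresponding pairs lie in $S\subset S_\alpha$ for any fixed $\alpha$.
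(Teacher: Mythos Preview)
Your proof is correct and is exactly the natural argument; the paper itself declares the lemma ``immediate'' and gives no proof, so there is nothing to compare beyond noting that your verification of the Horn-type closure under intersection is precisely what the authors had in mind.
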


This defines an admissible closure of any $X\subset P\times P$ to be the intersection of all admissible subsets containing $X$. This means that any $X\subset P\times P$ defines a canonical partial semigroup on $P$ so that pairs $(a,b)\in X$ are composable which we denote by $P_X$.

\begin{definition} We say that a mapping $f:P\to Q$ of partial semigroups $P,Q$  is a {\it partial} homomorphism if the set $S_f$  of all pairs $(a,b)\in P\times P$ such that $a,b$ are composable in $P$ and  $f(a),f(b)$ are composable in $Q$ is admissible,  
in addition we require that $f(a\circ b)=f(a)\circ f(b)$ for $(a,b)\in S_f$.
    \end{definition}

\begin{remark}\label{admissible}
If $f:P \to Q$ is a partial homomorphism of partial semigroups then $f:P_{S_f}\to Q$ is a homomorphism (see Definition~\ref{def:entire_homomorphism}).    
\end{remark}

\begin{remark}
    If $\prec$ is a linear order on a semigroup $M$ then it induces a linear order on a coideal partial semigroup $P\subset M$.
\end{remark}

\begin{proposition}\label{monoid_epimorphism}
    Let $P$ be  an ordered (partial) semigroup and $Q$ be a partial semigroup. Suppose that there is an order on $Q$  viewed as a set and let  $f$ be a surjective homomorphism $P\twoheadrightarrow Q$ of partial semigroups and an order preserving map at the same time. Then $Q$ is an ordered partial semigroup and $f$ is an ordered homomorphism.
 \end{proposition}

  Given ordered partial semigroups $P$ and $Q$, we say that a map $f:P\to Q$ is sub-multiplicative if $f(c\circ c')\preceq f(c)\circ f(c')$ whenever $c,c'$ are composable in $P$  and $f(c),f(c')$ are composable in $Q$.

  \begin{proposition} 
  \label{pr:strict partial homomorphism}
  Let $P,Q$ be ordered partial semigroups and $f:P\to Q$ be sub-multiplicative.
  Suppose that the order on $Q$ has a strict property. Then $f$ is a partial homomorphism.

  \end{proposition}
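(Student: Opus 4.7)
The goal is to verify the two defining conditions of a partial homomorphism for $f$: first, that the set $S_f$ is admissible, i.e.\ the composition of $P$ restricted to pairs in $S_f$ is itself a partial semigroup structure on $P$; and second, that $f(a\circ b)=f(a)\circ f(b)$ for every $(a,b)\in S_f$. My plan is to extract the equality from sub-multiplicativity together with the strict property on $Q$, and then to obtain admissibility by a routine associativity chase in $P$ and $Q$.

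For the equality, I would fix $(a,b)\in S_f$ and assume, toward a contradiction, that the sub-multiplicative bound $f(a\circ b)\preceq f(a)\circ f(b)$ is strict. The strict property in $Q$ propagates such a strict gap under composition on either side with any element for which the relevant products remain defined: composing on the right with some image $f(c)$ would give $f(a\circ b)\circ f(c)\prec \bigl(f(a)\circ f(b)\bigr)\circ f(c)$, while an additional application of sub-multiplicativity to the same three-fold product $a\circ b\circ c$ (regrouped via associativity in $P$) yields an alternative upper bound for $f(a\circ b\circ c)$ that clashes with the propagated strict gap. The \emph{main obstacle} is precisely choosing the auxiliary element $c$: the strict property only fires when all four ambient $Q$-compositions exist, so one must either work inside the image subsemigroup or produce a suitable witness from the structure of $P$ to activate the strict property. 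A cleaner route, when available, is to exploit any two-sided sub-multiplicativity (as in the JH-bijection setting, where both $f$ and its inverse are sub-multiplicative) to squeeze $f(a\circ b)=f(a)\circ f(b)$ directly.

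Once the equality is in hand, admissibility of $S_f$ is formal. Suppose $(a,b),(a\circ b,c)\in S_f$. Then $a\circ b$ and $(a\circ b)\circ c$ are defined in $P$, while $f(a\circ b)\circ f(c)$ is defined in $Q$; by the equality the latter equals $\bigl(f(a)\circ f(b)\bigr)\circ f(c)$. Associativity in $Q$ (Definition~\ref{monoid_partial}: if $(x\circ y)\circ z$ is defined then so are $y\circ z$ and $x\circ(y\circ z)$) forces $f(b)\circ f(c)$ and $f(a)\circ\bigl(f(b)\circ f(c)\bigr)$ to be defined; together with the parallel associativity in $P$, which supplies $b\circ c$ and $a\circ(b\circ c)$, this gives $(b,c),(a,b\circ c)\in S_f$. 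The reverse bracketing direction is symmetric, and iterating the equality matches the two three-fold values $f(a\circ b\circ c)=f(a)\circ f(b)\circ f(c)$, so $S_f$ genuinely equips $P$ with a partial semigroup structure and $f$ is a partial homomorphism.
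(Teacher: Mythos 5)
There is a genuine gap, and it sits exactly where you flagged the ``main obstacle'': the claim that sub-multiplicativity plus the strict property on $Q$ force $f(a\circ b)=f(a)\circ f(b)$ for \emph{every} composable pair is not just hard to establish --- it is false. Take $P=\{a,b,p\}$ with the single defined product $a\circ b=p$, and $Q=\{x,y,z,w\}$ with the single defined product $x\circ y=z$ and an order in which $w\prec z$; the strict property on $Q$ holds vacuously, and $f(a)=x$, $f(b)=y$, $f(p)=w$ is sub-multiplicative with a strict inequality $f(a\circ b)\prec f(a)\circ f(b)$. No auxiliary witness $c$ exists to run your contradiction, and none can be manufactured. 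The resolution is that the notion of partial homomorphism operative here (the one spelled out in the Introduction, and the one the paper's own proof of Proposition~\ref{pr:strict partial homomorphism} silently uses) builds the equality into the operation: $c\circ_f c'$ is declared only when $c\circ c'$ is defined \emph{and} $f(c\circ c')=f(c)\circ f(c')$. Under that reading there is nothing to prove about the equality on $S_f$ itself; the entire content of the proposition is the admissibility of this set, i.e.\ its closure under the associativity axiom of Definition~\ref{monoid_partial}.

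That relocation changes where the strict property does its work, and your admissibility step misses it. Given $(c,c')\in S_f$ and $(c\circ c',c'')\in S_f$, the equalities $f(c\circ c')=f(c)\circ f(c')$ and $f((c\circ c')\circ c'')=f(c)\circ f(c')\circ f(c'')$ hold by definition of $S_f$; associativity in $P$ and $Q$ then yields composability of $c',c''$ and of $c,\,c'\circ c''$ and of the relevant images. The paper then sandwiches $f(c\circ(c'\circ c''))\preceq f(c)\circ f(c'\circ c'')\preceq f(c)\circ f(c')\circ f(c'')$, observes that the two ends coincide so both inequalities collapse to equalities, and only \emph{then} invokes the strict property: were $f(c'\circ c'')\prec f(c')\circ f(c'')$ strict, the second inequality would be strict, a contradiction. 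That is how $(c',c'')$ and $(c,c'\circ c'')$ are shown to lie in $S_f$ \emph{with} the equality. In your write-up the strict property never enters the admissibility argument because you have already assumed the (unprovable) global equality; in particular, your conclusion that $(a,b\circ c)\in S_f$ quietly uses $f(b\circ c)=f(b)\circ f(c)$ in order to know that $f(a)$ is composable with $f(b\circ c)$ rather than merely with $f(b)\circ f(c)$, and that is precisely what the strict-property step must supply. (You are right that the reverse bracketing direction also needs checking; the paper omits it, but it is symmetric.)
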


\begin{proof} It suffices to show that that $S_f$ is admissible. Indeed, suppose that $(c,c')\in S_f$ and $(c\circ c', c'')\in S_f$. Then 
$$f(c\circ c')=f(c)\circ f(c'),\ f((c\circ c')\circ c'')=f(c\circ c')\circ f(c'')=f(c)\circ f(c')\circ f(c'').$$
\noindent Thus, $c', c''$ and $c, c'\circ c''$ are composable, as well as $f(c'), f(c'')$ and $f(c), f(c')\circ f(c'')$ are composable. Therefore
$$f((c\circ c')\circ c'')=f(c\circ (c'\circ c''))\preceq f(c)\circ f(c'\circ c'') \preceq f(c)\circ f(c')\circ f(c'').$$
\noindent Since both the latter inequalities are actually, equalities, we get that $(c, c'\circ c'')\in S_f$, finally the strict property of $Q$ implies that $f(c'\circ c'')=f(c')\circ f(c'')$, thus $(c',c'')\in S_f$. The admissibility of $S_f$ is established.
\end{proof}
  
We say that a partial semigroup $P$ is free if the set $S_P$ of indecomposable elements of $P$
generates $P$ freely.  In this case there is a canonical embedding (which is {\bf not} a homomorphism, cf. Definition~\ref{def:entire_homomorphism}) $\iota_P:P\hookrightarrow <S_P>$, where $<S>$ is the free monoid freely generated by a set $S$. This defines the set $M_P$ of monomials in $S_P$ forbidden in $P$.
Note that $\iota_P$ turns $P$ into a coideal subsemigroup of $<S_P>$ under $\circ_{J_P}$ where $J_P$ is the ideal of $<S_P>$ generated by $M_P$ (cf. Definition \ref{monoid_partial}). Conversely, any set $M$ of monomials (with no singletons) in a given set $S$ defines a unique free partial semigroup $P$ with $S_P=S$.

Note, however, that there is no homomorphism from  $<S_P>$ to $P$ in contrast with the
canonical homomorphism $\kk<S_P>\twoheadrightarrow \kk P$ (see \eqref{eq:kP})
whose kernel is the two-sided ideal generated by $M_P$.

The following is immediate. 

\begin{lemma}\label{free_partial_semigroup} In the assumptions as above, any total order $\prec'$ on  the free monoid $<S_P>$ induces a unique order on $P$ via $c\prec d$ iff $\iota_P(c)\prec' \iota_P(d)$.
\end{lemma}

\begin{problem}
Can any given order on $P$  be obtained in this way? 
\end{problem}

\subsection{Quivers and homomorphisms of ordered partial semigroups}

Given a quiver $\Gamma$ with the vertex set $S$, denote by $\hat P_\Gamma$ the partial semigroup whose only relations are as follows: $(c,d)$ is composable iff $(c,d)$ is an edge of $\Gamma$. We sometimes refer to $\hat P_\Gamma$ as a $\Gamma$-free (partial) semigroup.

The following is immediate.

\begin{lemma}\label{path_realization} [Path realization]

For any quiver $\Gamma$ the $\Gamma$-free semigroup $\hat P_\Gamma$ is exactly the semigroup (under concatenation) of all paths in $\Gamma$ i.e., sequences of elements of $S$. Furthermore, any total ordering of $S$ defines a unique order on the (partial) semigroup $\hat P_\Gamma$. In particular,  $\hat P_\Gamma$ is finite iff $\Gamma$ is acyclic.

\end{lemma}

For any partial semigroup $P$ its generating set $S$ defines a simple quiver $\Gamma(P)$ (possibly with loops) whose vertex set is $S$ and edges are composable pairs of $S$.

Furthermore, we say that an element $(c_1,c_2,\ldots,c_k)\in \hat P_{\Gamma(P)}$ is $P$-admissible if $c_1\circ c_2\circ \cdots \circ c_k$ is defined in $P$; denote by $\hat P=(\hat P,S)$ the set of all $P$-admissible elements in  $\hat P_{\Gamma(P)}$. 

\begin{theorem}
\label{th:universal covering semigroup}  For any generating set of any (partial) semigroup and $P$ one has:

(a) $\hat P$ is naturally a partial semigroup generated by $S$.

    (b) (universal cover) The assignments $s\mapsto s$, $s\in S$ define a surjective homomorphism of partial semigroups $\pi_P:\hat P\twoheadrightarrow P$.
\end{theorem}

\begin{proof}
Note that a composition $s_1 \circ_{\hat P} \cdots \circ_{\hat P} s_l$ is defined in $\hat P$, where $s_1,\dots,s_l\in S$ iff $\pi_P(s_1)\circ \cdots \circ \pi_P(s_l)$ is defined in $P$.   
\end{proof}

The following result provides a converse statement to Proposition~\ref{monoid_epimorphism} under the strict property of the order.

 \begin{theorem}\label{semigroup_epimorphism} Let $P$ be any ordered partial semigroup whose order $\prec$ satisfies the strict property. Then in the notation of Theorem \ref{th:universal covering semigroup}, $\hat P$ has an order such that $\pi_P:\hat P\twoheadrightarrow P$ is an ordered homomorphism.
 \end{theorem}

 \begin{proof} 
  
Introduce an order $\triangleleft$ on $\hat P$ as follows. We say that $\hat P\ni v:=s_{i_1}\circ_{\hat P} \cdots \circ_{\hat P} s_{i_m} \triangleleft w:= s_{j_1}\circ_{\hat P} \cdots \circ_{\hat P} s_{j_p}\in \hat P,\ s_{\cdot}\in S$ iff either

 $\bullet$ $\pi_P(v)\prec \pi_P(w)$, either

 $\bullet$ $\pi_P(v)=\pi_P(w)$ and $m<p$, or

 $\bullet$ $\pi_P(v)=\pi_P(w), m=p$ and the word $v$ is less than $w$ in the lexicographical order (defined on $u_1,\dots,u_k$ in an arbitrary way). Denote the length $l(v):=m$.

 We claim that the epimorphism $\pi_P$ fulfills Definition~\ref{def:entire_homomorphism}. Indeed, let $v,w,v_1,w_1\in \hat P; v\trianglelefteq w, v_1\trianglelefteq w_1; v\circ_{\hat P} v_1, w\circ_{\hat P} w_1\in \hat P$. If either $\pi_P(v)\prec \pi_P(w)$ or $\pi_P(v_1)\prec \pi_P(w_1)$ then $\pi_P(v\circ_{\hat P} v_1)=\pi_P(v)\circ \pi_P(v_1) \prec \pi_P(w)\circ \pi_P(w_1)=\pi_P(w\circ_{\hat P} w_1)$ due to the assumption in the theorem. If $\pi_P(v)=\pi_P(w), \pi_P(v_1)=\pi_P(w_1)$ and either $l(v)<l(w)$ or $l(v_1)<l(w_1)$ then $l(v\circ_{\hat P} v_1)<l(w\circ_{\hat P} w_1)$. Finally, if $l(v)=l(w), l(v_1)=l(w_1)$ then the word $v\circ_{\hat P} v_1$ is less than $w\circ_{\hat P} w_1$ in the lexicographical order, unless $v=w, v_1=w_1$.
 \end{proof}

Using an argument similar to that of the proof of Theorem \ref{semigroup_epimorphism}, we establish the following.

\begin{lemma}\label{semigroups_product} Let $P_1$ and $P_2$ be any partial semigroups. Then 

(a) their direct product $P_1\times P_2$ is also a (partial) semigroup. 
Moreover, if $P_1$ and $P_2$ are ordered so that the ordering on $P_1$ fulfills the strict property
then 
$P_1\times P_2$ is ordered as well via $(p_1,p_2)\preceq (p'_1,p'_2)$ iff either $p_1\prec p'_1$ or $p_1=p'_1$ and $p_2\preceq p'_2$. 

(b) Moreover, if $P_2$ also fulfills the strict property then $P_1\times P_2$ fulfills the strict property as well.
\end{lemma}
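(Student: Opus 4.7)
The plan is to first note that the partial semigroup structure on $P_1\times P_2$ is the obvious coordinatewise one: declare $(p_1,p_2)\circ(p_1',p_2')$ to be defined precisely when both $p_1\circ p_1'\in P_1$ and $p_2\circ p_2'\in P_2$ are defined, and in that case set it equal to $(p_1\circ p_1',\,p_2\circ p_2')$. Associativity transfers immediately from the two factors, so the first sentence of (a) is free. The ordering defined in (a) is just lex on two totally ordered sets, hence a total order on $P_1\times P_2$, and the only substantive content is the compatibility axiom \eqref{eq:axiom order}.

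For the compatibility in part (a), I would fix $(p_1,p_2)\preceq(q_1,q_2)$ and $(p_1',p_2')\preceq(q_1',q_2')$ with both composites defined in $P_1\times P_2$ and split into cases according to the lex definition. If $p_1\prec q_1$, then since $p_1'\preceq q_1'$ and the order on $P_1$ has the strict property, I get $p_1\circ p_1'\prec q_1\circ q_1'$, which already places $(p_1\circ p_1',p_2\circ p_2')$ below $(q_1\circ q_1',q_2\circ q_2')$ in lex, regardless of what happens in the second coordinate. If $p_1=q_1$, I split further: if $p_1'\prec q_1'$ apply the strict property with the roles swapped to again get $p_1\circ p_1'\prec q_1\circ q_1'$; if $p_1'=q_1'$, then $p_1\circ p_1'=q_1\circ q_1'$ and I fall back on the (ordinary, non-strict) order axiom in $P_2$ applied to $p_2\preceq q_2$ and $p_2'\preceq q_2'$ to conclude $p_2\circ p_2'\preceq q_2\circ q_2'$, giving $\preceq$ in lex. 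Note that this is the precise reason the strict property on $P_1$ is required but not on $P_2$: whenever the first coordinates agree after composing, this forces $p_1=q_1$ and $p_1'=q_1'$, so strictness in $P_1$ is never exercised in vain.

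For (b), assume additionally the strict property in $P_2$ and suppose $(p_1,p_2)\prec(q_1,q_2)$ and $(p_1',p_2')\preceq(q_1',q_2')$. Either $p_1\prec q_1$, in which case the argument of Case~1 above already yields the strict inequality $p_1\circ p_1'\prec q_1\circ q_1'$ and hence strict lex inequality on the product; or $p_1=q_1$ and $p_2\prec q_2$. In the latter situation I again branch on whether $p_1'\prec q_1'$ (handled by strict property in $P_1$) or $p_1'=q_1'$, in which case the first coordinates of the composites coincide and the strict property of $P_2$ applied to $p_2\prec q_2$ and $p_2'\preceq q_2'$ delivers $p_2\circ p_2'\prec q_2\circ q_2'$. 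The symmetric hypothesis $(p_1,p_2)\preceq(q_1,q_2)$, $(p_1',p_2')\prec(q_1',q_2')$ is handled identically by swapping the roles of the two pairs in each case.

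I do not expect any real obstacle: the content is a clean case analysis, and the only point worth flagging is the asymmetry in (a), where strictness is needed only on $P_1$ because lex ``reads'' the first coordinate first; strictness on $P_2$ is precisely what is missing to upgrade (a) to (b), which is exactly why it is hypothesized there.
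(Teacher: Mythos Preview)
Your proof is correct and matches the paper's approach: the paper does not give a standalone proof of this lemma but simply says it follows by an argument similar to the proof of Theorem~\ref{semigroup_epimorphism}, which is precisely the lex-order case split you carry out. Your explanation of why the strict property is needed only on $P_1$ for part~(a) but on both factors for part~(b) is a nice clarification that the paper leaves implicit.
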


Now we study an inverse issue of inducing an order on the image of an ordered partial semigroup under an epimorphism.

\begin{theorem}\label{intervals_ordered}  Let $P$ be an ordered (partial) semigroup and $Q$ be a partial semigroup. Suppose that there is an order on $P$  viewed as a set and let  $f$ be a surjective homomorphism $P\twoheadrightarrow Q$.  of partial semigroups. 

Then the following are equivalent

 (a) $Q$ admits an order so that $f$ is a homomorphism of ordered (partial) semigroups.

 (b) Any fiber $f^{-1}(q)$ is an interval in the order of $P$.
    
\end{theorem}

The proof relies on the following easy lemma.

 \begin{lemma} 
 \label{le:ordered projections} Let $P$ be an ordered set, $Q$ be  a set, and $f:P\twoheadrightarrow Q$. 

 Then the following are equivalent.

 (a) $Q$ admits an order so that $f$ is an ordered map.

 (b) Any fiber $f^{-1}(q)$ is an interval in the order of $P$.
     
 \end{lemma}

 \begin{theorem}
Let $(P, \circ)$ be an entire ordered semigroup, and $f:P \twoheadrightarrow Q$ be an epimorphism onto an entire semigroup $Q$. Then there exist  unique ordered entire semigroup $P_f$, epimorphism $f_1: P_f \twoheadrightarrow Q$ and an epimorphism of ordered semigroups $f_0: P\twoheadrightarrow P_f$ such that $f=f_1 \circ f_0$, and $P_f$ is the minimal possible with these properties. Namely, for any ordered semigroup $R$, epimorphism $g_1: R \twoheadrightarrow Q$ and an epimorphism of ordered semigroups $g_0 : P\twoheadrightarrow R$ such that $f=g_1 \circ g_0$, there exist epimorphisms of ordered semigroups $h_1: R\twoheadrightarrow P_f,\ h_0: P\twoheadrightarrow R$ for which it holds $f_0=h_1 \circ h_0$.
 \end{theorem}

 \begin{proof} For any element $q\in Q$ consider its preimage $f^{-1} (q) \subset P$, and partition it into intervals (with respect to the order in $P$). Further we consecutively refine this (initial) partition of $P$ into intervals by transfinite induction. For inductive step we suppose that a (current) partition of $P$ into intervals is given.

 Fix for the time being an element $a\in P$. Consider an element $I\subset P$ of the current partition. Note that $f(I)\in Q$ is a singleton. Denote the map $\varphi_a :P\to P$ by $\varphi_a (b):=a\circ b$. For each interval $J\subset P$ from the current partition take the partition into intervals of the set $I\cap \varphi_a^{-1} (\varphi_a (I)\cap J)$. Performing the described procedure for all pairs $I,J$, results in a refinement of the current partition of $P$ into intervals. Thus, we have performed a step of the induction.

 Also, we perform another similar step of induction corresponding to the map $\psi_a:P \to P,\ \psi_a(b):= b\circ a$. The steps of induction are performed for all possible $a\in P$.

 The result of the induction (when no further steps produce proper refinements of partitions) is a final partition of $P$ into intervals. One can verify that for any pair of intervals $I,J\subset P$ from the final partition their composition $I\circ J$ is contained in a suitable interval of the final partition. Therefore, the intervals of the final partition  form a semigroup $P_f$, being ordered due to
Theorem~\ref{intervals_ordered}. The required epimorphisms $f_0, f_1$ are naturally induced.

If an ordered semigroup $R$ satisfies the conditions of the Theorem, then for any its element $r\in R$ the preimage $g_0^{-1}(r)\subset P$ is an interval (due to Theorem~\ref{intervals_ordered}). Moreover, $g_0^{-1}(r)\subset I$ for an appropriate interval $I$ of the final partition of $P$ (see above). This implies existence of the required epimorphisms $h_0, h_1$.  
 \end{proof}

\begin{remark}\label{order_image}
Let $P,Q$ be partial semigroups, and $\prec$ be an order on $P$. Assume that $f:P\to Q$ is an epimorphism of partial semigroups. We say that a pair of elements $u,v \in Q$ is {\it colliding} if there exist $a_1, a_2, b_1, b_2 \in P$ such that $f(a_1)=f(a_2)=u,\ f(b_1)=f(b_2)=v$ and $a_1\prec b_1,\ a_2\succ b_2$. Let $I\subset Q$ be a subset such that any pair of elements $u,v\notin I$ is not colliding. Denote by $\overline{I}\subset Q$  an ideal generated by $I$. Then the co-ideal partial semigroup $Q\setminus \overline I$ is ordered as follows: for $u,v\in Q,\ a,b\in P,\ f(a)=u,\ f(b)=v$ we define $u\triangleright v$ iff $a\prec b$.
\end{remark}

Now we suggest an alternative (to Remark~\ref{order_image}) way of inducing an order on the image under a homomorphism of an ordered partial semigroup.

\begin{theorem}\label{rem:coideal of projection}

Given an ordered partial semigroup $P$, for any homomorphism $f:P\to Q$ of partial semigroups denote by $P(f)$ the set of all $c\in P$ such that $c\preceq f^{-1}(f(c))$, i.e., $c$ is minimal in the fiber $f^{-1}(f(c))$.

(a) If $P$ is entire and the order satisfies the strict property, then the operation $c\circ_f d:=cd$ whenever $c,d,c d\in P(f)$ defines an ordered partial semigroup structure on $P(f)$. Moreover, the restriction of $f$ to $P(f)$ is an injective map $P(f)\hookrightarrow Q$ which is almost a homomorphism in a sense that  $f(c\circ_f d)=f(c)f(d)$ whenever $c,d,cd\in P(f)$. 

(b) The map $P(f)\hookrightarrow Q$ is a homomorphism iff $f(P)$ is ordered and $f:P\to f(P)$ is an ordered homomorphism.

\end{theorem}

\begin{proof} It suffices to prove (a). For $b\in Q$ denote $b_P:= \min _{\prec} f^{-1}(b)\in P(f)$, provided that $b_P$ is defined. Note that for $b,d\in Q$ it holds $(b\circ_Q d)_P \preceq b_P \circ d_P$, and $b\circ _f d$ is defined iff $(b\circ_Q d)_P = b_P \circ d_P$. 

It suffices to verify associativity of $\circ _f$. Suppose the contrary. Then there exist 
$b,d,e \in Q$ such that the compositions $b\circ _f d$ and $(b\circ _f d)\circ _f e$ are defined, while $d\circ _f e$ is not defined. Taking into account that $\prec$ satisfies the strict property, we get that
$$b_P\circ  (d\circ _Q e)_P \prec b_P \circ (d_P \circ e_P) = (b_P \circ d_P)\circ e_P = (b\circ _Q d)_P \circ e_P =((b\circ _Q d)\circ _Q e)_P,$$
\noindent which leads to a contradiction. Therefore, $d\circ _f e$ is defined, and $b_P \circ (d\circ _Q e)_P = (b\circ _Q (d\circ _Q e))_P$, hence $b\circ _f (d\circ _f e)$ is also defined and
$$b\circ _f (d\circ_f e)= b_P \circ (d_P \circ e_P)= (b_P \circ d_P)\circ e_P = (b\circ _f d)\circ _f e.$$ 
The theorem is proved. \end{proof}

\begin{example}
We follow the notations from Theorem~\ref{rem:coideal of projection}.
Let $Q:=D_n=\langle s,t\ |\ s^2=t^2=(st)^n=1\rangle$ be the dihedral group, $P:=F_2=\langle s,t\rangle$ be the free monoid with deglex ordering $\prec$ in which $t\prec s$, and $f:P\twoheadrightarrow Q$ be the natural epimorphism. Denote
$$a_k:=\underbrace{stst\dots}_k,\ c_k:=\underbrace{tsts\dots}_k \in P.$$
\noindent Then $P(f)= \{a_k\ :\ 0\le k<n\} \sqcup \{c_k\ :\ 1\le k\le n\}$. It holds in $P(f)$:
$$a_{2k}\circ a_l=a_{2k+1}\circ c_{l-1}=a_{2k+l}\ \text{when}\ 2k+l<n;$$
$$c_{2k}\circ c_l=c_{2k+1}\circ a_{l-1}=c_{2k+l}\ \text{when}\ 2k+l\le n.$$
\noindent All other compositions in $P(f)$ are undefined.
\end{example}

The following is immediate.

\begin{lemma}\label{basis_partial_monoid} Let $f:P\twoheadrightarrow Q$ be an (ordered) epimorphism of (ordered) partial semigroups. Suppose that the fibers of $f$ are well-ordered. Then in the notation of \eqref{eq:kP} the assignments $x\mapsto \min\{f^{-1}(x)\}$ define a section $f^*:Q\hookrightarrow P$ of $f$. In turn, this defines a vector space decomposition $\kk P=\kk f^*(Q)\oplus I$ where $I$ is the kernel of the canonical homomorphism $\kk P\to \kk Q$. Also, all elements $y-f^*(x)$, $y\in f^{-1}(x)$, $y\ne f^*(x)$ form a basis ${\bf B}_I$ of $I$. 
    \end{lemma}

We say that a function $\ell:P\to \ZZ_{>0}$ is {\it length} if $\ell(c\circ c')=\ell(c)+\ell(c')$ for all composable $c,c'\in P$. We say that $(P,\ell)$ is a {\it graded} partial semigroup if $\ell$ is a length on $P$ (sometimes we omit $\ell$).

We say that an order $\prec$ on a graded partial semigroup is {\it length compatible} if $\ell(c)<\ell(c')$ implies that $c\prec c'$.

The following is immediate

\begin{lemma}\label{deglex} (Generalized deglex) Let $P$ be a free semigroup freely generated by a set $X$. Then 

(a) Any function $f:X\to \ZZ_{>0}$ defines (unique) length function on $P$ and vice versa.

(b) For any length function $\ell:P\to \ZZ_{>0}$ any total order $\prec$  of $X$ such that $\ell(x)<\ell(y)$ implies $x\prec y$, determines a unique length compatible order (fulfilling the strict property) on $P$ such that $xa\prec yb$ whenever $x,y\in X$, $\ell(xa)=\ell(yb)$ and $x\prec y$ or $x=y$ and $a\prec b$.

(c) If for any $m\in \ZZ_{>0}$ the preimage $f^{-1}(m) \subset X$ is finite then $\prec$ is a well ordering.

    \end{lemma}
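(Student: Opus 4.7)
The plan is to proceed part by part, with the bulk of the work in (b).

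For part (a), use unique factorization in the free semigroup. Every $c \in P$ has a unique expression $c = x_1 \circ \cdots \circ x_n$ with $x_i \in X$, so the formula $\ell(c) := f(x_1) + \cdots + f(x_n)$ is a well-defined function $P \to \ZZ_{>0}$, and additivity $\ell(c \circ c') = \ell(c) + \ell(c')$ follows by concatenating factorizations. Conversely, any length function restricts to $X$ as some $f$, and additivity forces the formula above, giving uniqueness in both directions.

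For part (b), I would define the deglex order explicitly: for $w = x_1 \circ \cdots \circ x_n$ and $w' = y_1 \circ \cdots \circ y_m$ in $P$, declare $w \prec w'$ if either $\ell(w) < \ell(w')$, or $\ell(w) = \ell(w')$ and at the smallest position $i$ with $x_i \ne y_i$ one has $x_i \prec y_i$ in $X$. Totality follows by induction on common length, length compatibility is built into the definition, and the recursive rule ``$xa \prec yb$ iff $x \prec y$, or $x = y$ and $a \prec b$'' (when $\ell(xa) = \ell(yb)$) is read off directly from the definition. Uniqueness of any order with these properties is then forced by induction on length, since the recursive rule together with length compatibility determines every comparison.

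The main obstacle will be the strict property: $a \prec b$ and $c \preceq d$ imply $a \circ c \prec b \circ d$ (and the right-concatenation mirror). I would split on the relationships among $\ell(a), \ell(b), \ell(c), \ell(d)$. If $\ell(a) < \ell(b)$ or $\ell(c) < \ell(d)$, then $\ell(ac) < \ell(bd)$ and length compatibility closes the case. Otherwise all four lengths balance and $\ell(ac) = \ell(bd)$; since $a \prec b$ at equal length, there is a first position $k$ where the letters of $a$ and $b$ disagree with $a_k \prec b_k$, and because the first $k-1$ letters agree and sit entirely inside the left factors, this disagreement position is inherited verbatim by $ac$ and $bd$, giving $ac \prec bd$. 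The right-mirror argument is identical, with the disagreement position now in the right factor.

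For part (c), finiteness of $f^{-1}(m)$ for every $m$ forces finiteness of $\ell^{-1}(n)$ for every $n$, since any element of $\ell^{-1}(n)$ is a word of at most $n$ letters drawn from the finite alphabet $\bigcup_{m \leq n} f^{-1}(m)$. Given a nonempty $S \subset P$, the set $\{\ell(w) : w \in S\}$ has a minimum $n \in \ZZ_{>0}$, and $S \cap \ell^{-1}(n)$ is finite and nonempty, hence has a $\prec$-minimum which, by length compatibility, is the $\prec$-minimum of $S$. Thus $\prec$ is a well-ordering.
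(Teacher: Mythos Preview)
Your proof is correct in all three parts. The paper itself states the lemma as ``immediate'' and provides no proof, so your argument simply fills in the details the authors omitted; there is no alternative approach in the paper to compare against.

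One small point worth making explicit in part (b): when $\ell(a)=\ell(b)$ with $a\prec b$, the words $a$ and $b$ need not have the same number of letters (since $\ell$ is a weighted length), but because neither can be a proper prefix of the other (that would force $\ell(a)\ne\ell(b)$), the first differing position $k$ does lie within both words, and so your inheritance argument for $ac\prec bd$ goes through. You implicitly use this, and it is fine.
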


\begin{remark} 
\label{rem:deglex}
If $\ell(x)=1$ for any generator of $P$ (e.g., when $P=F^n_+$ or $P=\ZZ^n_{\ge 0}$) this becomes an ordinary deglex on $P$.
    
\end{remark}

Denote by $P_1* P_2=P_2*P_1$
the free product of (partial) semigroups $P_1$ and $P_2$. 

Suppose that $P_1$ and $P_2$ are ordered. We say that that an order on the partial semigroup $P_1*P_2$ is  {\it compatible} if $p\prec p'$ implies $p*c\prec p'*c$ and $c*p\prec c*p'$ 
for any $c\in P_1*P_2$ and any $p,p'\in P_i$, $i=1,2$.

If $P_1$ and $P_2$ are entire then there are several constructions of the order on $P_1*P_2$ (see e.g. \cite{Bergman}). In particular, for an ideal $J_1$ in $P_1$ and an ideal $J_2$  of $P_2$, any such order restricts to an order on the free product $(P_1\setminus J_1)*(P_2\setminus J_2)$ of coideal partial semigroups.

The following immediate fact gives another construction of an order on free products of graded partial semigroups (making use of Lemma~\ref{deglex}). 

\begin{lemma}
    
\label{le:semigroups*product} Let $P_1$ and $P_2$ be any graded partial semigroups. Then their free product $P_1* P_2=P_2*P_1$ is also a graded (partial) semigroup. 

\end{lemma}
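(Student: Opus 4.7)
The plan is to put a length function on $P_1 * P_2$ by summing the lengths of its constituents. Every element $w \in P_1 * P_2$ admits a reduced alternating expression $w = a_1 a_2 \cdots a_n$ in which consecutive letters lie in different $P_i$'s; such a reduced expression is unique up to the partial compositions taking place inside each $P_i$. With this in mind, I would define
$$\ell(w) := \sum_{i=1}^n \ell_{k(i)}(a_i),$$
where $\ell_i : P_i \to \mathbb{Z}_{>0}$ is the given length on $P_i$ and $k(i) \in \{1,2\}$ records which partial semigroup the letter $a_i$ belongs to.

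First I would verify that $\ell$ is well-defined on $P_1 * P_2$. The only ambiguity between two reduced expressions of the same element arises when two adjacent letters lying in the same $P_i$ are replaced by their composition $a_i \circ a_{i+1}$. Since $P_i$ itself is graded, $\ell_i(a_i \circ a_{i+1}) = \ell_i(a_i) + \ell_i(a_{i+1})$, so $\ell$ is invariant under such reductions and descends to a well-defined function on the free product.

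Next I would verify additivity $\ell(w \circ w') = \ell(w) + \ell(w')$ whenever $w \circ w'$ is defined. The composition in $P_1 * P_2$ either concatenates the reduced expressions (when the last letter of $w$ and first letter of $w'$ lie in different $P_i$'s, in which case the composition is automatically defined) or merges the two boundary letters via the partial operation of the common $P_i$ (in which case the composition is defined exactly when those two letters are composable in $P_i$). In either situation, the sum of the $\ell_{k(i)}$ values over the reduced expression of $w \circ w'$ equals $\ell(w) + \ell(w')$, thanks once more to additivity of each $\ell_i$.

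The one genuinely structural point — and the main obstacle — is checking that the inherited partial operation on $P_1 * P_2$ really does satisfy the "two-sided" associativity axiom of Definition~\ref{monoid_partial}. I would argue this by induction on the total word length, separating the cases where the junction between the last letter of one factor and the first letter of the next lies in the same $P_i$ (where one invokes associativity of the partial semigroup $P_i$) from the cases where they lie in different $P_i$'s (where the junction is pure concatenation and associativity is automatic). Once this structural verification is complete, the function $\ell$ constructed above exhibits $P_1 * P_2$ as a graded partial semigroup, and the symmetry $P_1 * P_2 = P_2 * P_1$ is immediate from the definition.
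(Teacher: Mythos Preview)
Your proposal is correct and supplies the natural argument the paper omits: the paper states this lemma as an ``immediate fact'' with no proof, so there is no alternative approach to compare against. Your construction of $\ell$ as the sum of the component lengths over a reduced alternating word, together with the verification that this is invariant under the defining relations of the free product and additive under composition, is exactly what is needed.

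One small clarification worth tightening: in the free product of \emph{partial} semigroups, the elements are precisely the reduced alternating words (consecutive letters in different factors), and the reduced form of each element is in fact unique, not merely unique up to further reductions. The well-definedness check you describe is really about showing that $\ell$, initially defined on arbitrary words in $P_1 \sqcup P_2$, descends to the quotient by the relation $a\,b \sim a \circ b$ for composable $a,b$ in the same $P_i$; once you pass to reduced representatives there is no residual ambiguity. Your associativity discussion correctly identifies that the only nontrivial case is when the junction letters lie in the same $P_i$, where the partial-semigroup axiom for $P_i$ does the work.
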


\begin{remark}\label{rem:order_min}
For an entire monoid $M$ generated by a finite set $X$ denote by $f: \langle X \rangle \twoheadrightarrow M$ the natural epimorphism, where $\langle X \rangle$ denotes the free monoid generated by $X$. Any order $\sigma$ on $X$ induces deglex order on $\langle X \rangle$ (see Lemma~\ref{deglex}). Then $M_{\sigma}:= (M, \circ _f)$ is a partial monoid produced in Theorem~\ref{rem:coideal of projection}. Clearly, $(M, \circ _f)$ depends on $\sigma$.

Denote by $\underline M$ a partial monoid having $M$ as the set of its elements, and defined composition of a pair of elements $m_1, m_2 \in \underline M$ iff their composition is defined in $M_\sigma$ for every order $\sigma$.
\end{remark}

\subsection{Constructions of ordered partial semigroups}

\begin{example}\label{monoid_ideal}
i) For a monoid $M:=\ZZ_{\ge 0}^n=\{x_1^{i_1}\cdots x_n^{i_n}\, :\, i_1,\dots,i_n\ge 0\}$ and a family of monomials $u_1,\dots,u_s$ in the variables $x_1,\dots,x_n$, the set of monomials not dividing any of  $u_1,\dots,u_s$, forms a coideal partial monoid $P(u_1,\dots,u_s)$. Then $P(u_1,\dots,u_s)$ coincides with the complement to the monomial ideal $J(u_1,\dots,u_s):=\bigcup_{1\le j\le s} (u_j+\ZZ_{\ge 0}^n)$. \vspace{1mm}

ii) For a free monoid $F_n:=\langle x_1,\dots, x_n\rangle$ consider an ideal $J:=\langle x_ix_jx_k\, :\, 1\le i,j,k\le n\rangle$. We define a well-ordering $\prec$ on $M$ as follows. For a pair of words $u,v \in M$ we say that $u\prec v$ if either $u$ is shorter than $v$ or they have the same length and $u$ is lower than $v$ with respect to the lexicographical order in which $x_n \prec \cdots \prec x_1$ (see Lemma~\ref{deglex}). 
Then $P:=M\setminus J$ is a finite coideal partial monoid. \vspace{1mm}

iii) For a free monoid $F_n:=\langle x_1,\dots, x_n\rangle$ consider an ideal $J_n:=\langle x_jx_i\, :\, j\ge i\rangle$. Then $P_n:=F_n\setminus J_n$ is a finite coideal partial monoid consisting of $2^n$ elements of the form $u:=x_{i_1}\cdots x_{i_k}, 1\le i_1<\cdots < i_k\le n$. For an element $v:=x_{j_1}\cdots x_{j_l}, 1\le j_1<\cdots < j_l\le n$ the composition $u\circ v\in P_n$ iff $i_k<j_1$.

\end{example}

\begin{example}\label{paths}
i) Now we modify the construction of Example~\ref{monoid_ideal}~iii) and produce a partial monoid $Q_n$ coinciding as a set with $P_n$ and equipped with the same ordering $\prec$. The composition law in $Q_n$ differs  from the one in $P_n$: let 
$u:=x_{i_1}\cdots x_{i_k}, 1\le i_1<\cdots < i_k\le n, v:=x_{j_1}\cdots x_{j_l}, 1\le j_1<\cdots < j_l\le n$ be two elements of $Q_n$, then
$$u\circ v= x_{i_1}\cdots x_{i_k} \circ x_{j_2}\cdots x_{j_l}$$
\noindent iff $i_k=j_1$; otherwise the composition is not defined.

The partial monoid $Q_n$ is isomorphic to the following partial monoid $R_n$. The generators of $R_n$ are $\{y_{i,j}\, :\, 1\le i<j\le n\}$. The composition $y_{i,j}\circ y_{k,l}$ is defined iff $j=k$. The isomorphism of $R_n$ and $Q_n$ is established by mapping of $y_{i_1,i_2}\circ y_{i_2,i_3}\circ \cdots \circ y_{i_{k-1},i_k}$ to $x_{i_1}\circ \cdots \circ x_{i_k}$.  \vspace{1mm}

ii) One can yield a family of partial submonoids of $R_n$ as follows. Consider a directed acyclic graph $G$ with $n$ vertices numbered by $\{1,\dots,n\}$ in such a way that for any arrow $(i,j)$ of $G$ it holds $i<j$. Then one can consider a partial submonoid $R_G$ of $R_n$ generated by the elements $\{y_{k,l}\}$ for which there is a path from a vertex $k$ to a vertex $l$ in $G$. \vspace{1mm}

iii) Alternatively, one can consider a partial submonoid $T_G$ of $R_G$ of paths in $G$ (cf. Lemma~\ref{path_realization}). A partial monoid $T_G$ is generated by the elements $\{z_{k,l}\}$ where $(k,l)$ is an arrow in $G$ (cf. \cite{Pin}).

More generally, one can consider a partial monoid $T_G$ of paths in an arbitrary directed graph $G$ (when $G$ contains cycles, $T_G$ is infinite). One can treat $T_G$ as a coideal partial submonoid of the free monoid $F_G$ generated by  $\{z_{k,l}\}$ where $(k,l)$ is an arrow in $G$. Then $T_G=F_G \setminus J_G$ where the ideal $J_G$ is generated by all compositions of the form $z_{k,l}\circ z_{i,j}$ where $l\neq i$.
\end{example}

\begin{example}\label{Coxeter}
Denote by $W_{\bf 0}(m), m\ge 1$ the (partial) nil-Coxeter semigroup generated by $s_1,s_2$ satisfying the following relations: 
$$s_1\circ s_1, s_2\circ s_2 \notin W_{\bf 0}(m),\, \underbrace{s_1\circ s_2\circ s_1\circ s_2\dots}_{m}=\underbrace{s_2\circ s_1\circ s_2\circ s_1\dots}_{m}.$$
\noindent Then $W_{\bf 0}(m)$ consists of $2m-1$ elements: for each $1\le k<m$ it contains two elements 
$$c_k:=\underbrace{s_1\circ s_2\circ s_1\circ s_2\dots}_{k},\, d_k:=\underbrace{s_2\circ s_1\circ s_2\circ s_1\dots}_{k}$$
\noindent of length $k$, and in addition the element $c_m=d_m$. 

The following compositions are defined in $W_{\bf 0}(m)$:
$$c_{2k}\circ c_l= c_{2k+l}, c_{2k+1}\circ d_l= d_{2k+l}, d_{2k}\circ d_l=d_{2k+l}, d_{2k+1}\circ c_l= d_{2k+l+1},$$
\noindent provided that $2k+l\le m$ or respectively, $2k+l+1\le m$. All other compositions are not defined. One can verify that $W_{\bf 0}(m)$ is a partial semigroup with an order defined by $d_k\prec c_k\prec d_{k+1}, 1\le k<m$. 

Observe that $\prec$ does not satisfy the strict property since $c_m=c_{m-1}\circ c_1=d_{m-1}\circ d_1$ when $m$ is odd, and $c_m=c_{m-2}\circ c_2=d_{m-2}\circ d_2$ in case of an even $m$.

However (cf. Theorem~\ref{semigroup_epimorphism}), one can construct a
partial semigroup $\overline{W_{\bf 0}(m)}$ generated by two elements $\overline{s_1}, \overline{s_2}$ such that $\overline{s_1}\circ \overline{s_1},\ \overline{s_2}\circ \overline{s_2}$ and $\underbrace{\overline{s_1}\circ \overline{s_2}\circ \cdots}_{m+1},\ \underbrace{\overline{s_2}\circ \overline{s_1}\circ \cdots}_{m+1}$ are not defined. Thus, $\overline{W_{\bf 0}(m)}$ consists of $2m$ elements of the form either $\overline{c_k}=\underbrace{s_1\circ s_2\circ \cdots}_k$ or $\overline{d_k}=\underbrace{s_2\circ s_1\circ \cdots}_k, 1\le k\le m$. The epimorphism $f:\overline{W_{\bf 0}(m)} \twoheadrightarrow W_{\bf 0}(m)$ sends $f(\overline{c_k})=c_k, f(\overline{d_k})=d_k, 1\le k\le m$. Thus, $f$ is not injective just on two elements: $f(\overline{c_m})=f(\overline{d_m})=c_m=d_m$. The order satisfying the strict property on $\overline{W_{\bf 0}(m)}$ is defined by $\overline{c_k} \triangleleft \overline{d_k}\triangleleft \overline{c_{k+1}}, 1\le k<m$ and in addition, $\overline{c_m} \triangleleft \overline{d_m}$. \vspace{1mm}

Note also  that in case of $W_{\bf 0}(m)$ one cannot replace the axiom from Definition~\ref{monoid_partial} by weaker axioms that $c\preceq d$ implies that $b\circ c \preceq b\circ d, c\circ b\preceq d\circ b$, provided that $b\circ c, b\circ d, c\circ b, d\circ b$ are defined.
\end{example}

\begin{remark}\label{non-strict} Note however, that there is no order on the corresponding braid monoid $Br_3^+=<T_1,T_2:T_1T_2T_1=T_2T_1T_2>$. Indeed, suppose without loss of generality that $T_1\prec T_2$. 

If  $T_1T_2 \prec T_2T_1$ then $(T_1T_2)T_1 \preceq T_2T_1^2\preceq (T_2T_1)T_2=T_1T_2T_1$, we get a contradiction.

If $T_2T_1 \prec T_1T_2$ then $ T_1(T_2T_1) \preceq T_1^2T_2\preceq T_2(T_1T_2)=T_1T_2T_1$, again we get a contradiction.

Thus, the braid monoid $Br^+_n$ and hence the braid group $Br_n$ are not orderable.

Analogously, it is easy to see that there is no order on the corresponding Garside submonoid of $Br_3$ generated by $u_{12}=T_1$, $u_{23}=T_2$, $u_{13}=T_1T_2T_1^{-1}=T_2^{-1}T_1T_2$ and thus subject to
$u_{13}u_{12}=u_{12}u_{23}=u_{23}u_{13}$.

Similarly, one can show that there is no order $\prec$ on $W_{\bf 0}(3)$ (as well as on  $W_{\bf 0}(m), m\ge 3$) satisfying the strict property (cf. Example~\ref{Coxeter}). Indeed, otherwise, let $s_1\prec s_2$ for definiteness. If $s_1\circ s_2\prec s_2\circ s_1$ then $(s_1\circ s_2)\circ s_1 \prec (s_2\circ s_1)\circ s_2$, otherwise if $s_2\circ s_1 \prec s_1\circ s_2$ then $s_1\circ (s_2\circ s_1)\prec s_2\circ (s_1\circ s_2)$, in both cases we get a contradiction.    
\end{remark}

\begin{example}\label{hecke}
i) For a Coxeter group $W$ given by generators $s_1,\dots, s_n$ satisfying the relations $s_i\circ s_i=1, s_i\circ s_j=s_j\circ s_i, |i-j|\ge 2$ and
\begin{equation}\label{braid}
\underbrace{s_i\circ s_{i+1}\circ s_i \cdots}_{m_i}= \underbrace{s_{i+1}\circ s_i\circ s_{i+1} \cdots}_{m_i},\ 1\le i<n
\end{equation}
denote by $M:=M(W)$ a partial monoid given by the generators $s_1,\dots, s_n$ for which $s_i\circ s_i, s_i\circ s_j, |i-j|\ge 2$ are undefined and relations \eqref{braid} hold. 

Then one can represent $M=M_{reg} \sqcup M_{exc}$. Here $M_{reg}$ is isomorphic to a partial monoid of paths (cf. Example~\ref{paths}) in the graph with vertices $v_1,\dots, v_n$, edges $(v_i,v_{i+1}),\ (v_{i+1},v_i),\ 1\le i<n$ such that subpaths
$$\underbrace{v_i v_{i+1} v_i \cdots}_{m_i},\ \underbrace{v_{i+1} v_i v_{i+1} \cdots}_{m_i},\ 1\le i<n$$
\noindent are avoided. The set $M_{exc}$ consists of all elements 
$$U_i:=\underbrace{s_i\circ s_{i+1}\circ s_i \cdots}_{m_i},\ 1\le i<n.$$ \vspace{1mm}

ii) Consider the case $n=3, m_1=m_2=4$. Then $M$ is infinite since it contains an arbitrary prefix of an infinite word $(s_1\circ s_2\circ s_3\circ s_2)^{\infty}$. Clearly, $M$ does not admit an order satisfying the strict property because $(s_1\circ s_2)\circ (s_1\circ s_2)= (s_2\circ s_1)\circ (s_2\circ s_1)$. 

On the other hand, one can introduce an order $\prec$ on $M$ as follows. First, if $|c|<|d|,\ c,d\in M$ then $c\prec d$. Secondly, when $|c|=|d|,\ c,d\in M_{reg}$, we impose $c\prec d$ iff $c$ is less than $d$ in lex ordering in which $s_1\prec \cdots \prec s_n$. Finally, $U_1\prec c\prec U_2$ for any word $c\in M_{reg}, |c|=4$. \vspace{1mm}

iii) Now let $m_1=\cdots= m_{n-1}=3$. This corresponds to the symmetric group $W=S_{n+1}$. In this case $M$ is finite. Indeed, $M_{reg}$ consists of the words
$$C_{kl}:=s_k\circ s_{k+1}\circ \cdots \circ s_l,\ D_{kl}:= s_l\circ s_{l-1}\circ \cdots \circ s_k,\ 1\le k\le l\le n.$$
\noindent Clearly, $C_{kk}=D_{kk}=s_k$. In particular, $|W|=n(n+1)$.

Remark~\ref{non-strict} shows that $M$ does not admit an order satisfying the strict property. However, one can introduce an order $\prec$ on $M$ as follows. First, if $|c|<|d|,\ c,d\in M$ then $c\prec d$. Secondly, it holds $s_1\prec\ \cdots \prec s_n$ and 
$$D_{kl}\succ C_{kl} \succ D_{k-1, l-1},\ 1\le k<l\le n$$
\noindent (whenever it is defined). Finally, we impose
$$C_{k, k+2}\succ U_k\succ D_{k-1, k+1},\ 1\le k<n$$
\noindent (whenever it is defined). \vspace{1mm}

iv) One can verify that $M(W)$ is infinite iff there exist $1\le k<l<n$ such that $m_k\ge 4, m_l\ge 4,\ m_p=3, k<p<l$ (cf. items ii), iii)).

\end{example}

{\bf Question}. For which Coxeter groups $W$ the partial monoid $M(W)$ admits an order? \vspace{2mm}

The following two propositions provide constructions of extending partial semigroups.

\begin{proposition}\label{glue}
Let $P$ and $Q$ be partial semigroups.
 Then $P'=P\sqcup Q$ is a partial semigroup with the composition inherited from $P$ and $Q$ and $pq=qp=q$ for all $p\in P$, $q\in Q$.
Suppose that $P$ and $Q$ are ordered such that 

i) $q\preceq qq'$ and $q\preceq q'q$ for all $q,q'\in Q$ (this property is called positive ordering, see, e.g. \cite{SA}, \cite{Pin}). Then the assignments $p\prec q$ for $p\in P$ and $q\in Q$ turn $P'$ into an ordered partial semigroup;

ii) $q\succeq qq'$ and $q\succeq q'q$ for all $q,q'\in Q$. Then the assignments $p\succ q$ for $p\in P$ and $q\in Q$ turn $P'$ into an ordered partial semigroup.
    \end{proposition}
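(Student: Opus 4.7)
The plan is to first confirm that $P'$ is a partial semigroup by verifying associativity in the sense of Definition~\ref{monoid_partial}, then to verify the order axiom~\eqref{eq:axiom order} for each of the two prescribed orders.

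For associativity, take $a,b,c\in P'$ and split on how many of them lie in $Q$. The cases where all three lie in $P$ (respectively, in $Q$) are inherited from the partial semigroup structure on $P$ (resp.\ on $Q$). In each mixed case, the rule $pq=qp=q$ makes any $p\in P$ composed with a $Q$-element disappear, so that both $(a\circ b)\circ c$ and $a\circ(b\circ c)$ depend only on the $Q$-factors among $a,b,c$: if exactly one $Q$-factor is present, both parenthesizations evaluate to that element; if exactly two are present, both parenthesizations evaluate to their $Q$-composition, defined under the same condition in $Q$. This gives simultaneous definedness and equality in every case.

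For the order in case (i), fix $c\preceq d$ and $c'\preceq d'$ with $c\circ c',\, d\circ d'\in P'$. The assignment $p\prec q$ for $p\in P$, $q\in Q$ forces each of $(c,d)$ and $(c',d')$ to fall into one of three patterns: both in $P$; both in $Q$; or first in $P$ and second in $Q$. The plan is then to enumerate the resulting nine configurations. Configurations in which $c\circ c'\in P$ while $d\circ d'\in Q$ are settled at once by $p\prec q$. Configurations in which both products reduce, via $pq=qp=q$, to elements of a single piece are settled by the order on $P$ or on $Q$. The remaining two configurations are those in which $d\circ d'$ is a genuine $Q$-composition while $c\circ c'$ collapses to a single $Q$-factor; there the positive-ordering hypothesis $q\preceq qq'$, $q\preceq q'q$ supplies the missing comparison inside $Q$, via a chain of the form $c\preceq d\preceq d\circ d'$ (or $c'\preceq d'\preceq d\circ d'$).

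Case (ii) is handled by the dual analysis: the assignment $p\succ q$ restricts the admissible patterns to (both in $Q$), (both in $P$), and (first in $Q$, second in $P$), and the negative-ordering hypothesis $q\succeq qq'$, $q\succeq q'q$ plays the symmetric role in the two configurations in which a $Q$-composition on the smaller side must dominate an extra $Q$-factor on the larger side. The main obstacle is the combinatorial bookkeeping across the nine configurations; the substantive content in each nontrivial configuration reduces to a single invocation of the positive (resp.\ negative) ordering hypothesis inside $Q$, together with the already-given inequalities in $P$, $Q$, or between the two pieces.
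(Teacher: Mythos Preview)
The paper states this proposition without proof, so there is no argument to compare against. Your treatment of the order axiom in parts (i) and (ii) is correct: the nine-configuration enumeration, together with the positive (resp.\ negative) ordering hypothesis on $Q$, handles every case, and you correctly isolate the two configurations in which that hypothesis is actually invoked.

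Your associativity argument, however, has a real gap. You assert that when exactly one $Q$-factor is present among $a,b,c$, ``both parenthesizations evaluate to that element''; this silently assumes that any intermediate $P$-composition is defined. Take $p,p'\in P$ with $p\circ p'$ undefined in $P$, and any $q\in Q$. Then $p'\circ q=q$ and $p\circ(p'\circ q)=p\circ q=q$ are both defined in $P'$, while $p\circ p'$ is not, so $(p\circ p')\circ q$ is undefined. This violates the ``vice versa'' clause of Definition~\ref{monoid_partial}; equivalently, in $P'\sqcup\{{\bf 0}\}$ one gets $(p\cdot p')\cdot q={\bf 0}\ne q=p\cdot(p'\cdot q)$. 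The symmetric triple $q,p,p'$ fails in the same way. Thus $P'$ satisfies Definition~\ref{monoid_partial} only when every product in $P$ is already defined, i.e., when $P$ is an entire semigroup. This is not a repairable omission in your write-up but an issue with the statement as written, which your case analysis did not detect.
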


Note that when $P,Q$ are commutative partial semigroups, the resulting $P'$ is commutative as well. The next proposition allows one to construct non-commutative partial semigroups from arbitrary (in particular, commutative) ones.

\begin{proposition}\label{buffer}
Let $P,Q$ be partial semigroups. Consider a partial semigroup $R:=Q\sqcup \{x\} \sqcup \{y\} \sqcup P$ defined as follows:
$$xz=x, yz=y, z\in \{P,Q,x,y\};\, Pz_1=y, z_1\in \{x,y,Q\};\, Qz_2=x, z_2\in \{x,y,P\}.$$
\noindent Then $R$ is a partial semigroup with an ordering $Q\prec x \prec y\prec P$.
\end{proposition}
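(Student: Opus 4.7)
My plan is to verify two things: that $R$ equipped with the given rules really is a partial semigroup, i.e.\ that $\circ$ is associative, and that the ordering $Q\prec x\prec y\prec P$ is compatible with $\circ$ in the sense of Definition~\ref{monoid_partial}.

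For associativity, the organizing idea is to collapse the four layers of $R$ onto a four-element quotient. Define a type map $\tau:R\to T$ onto $T=\{*_Q,x,y,*_P\}$ by sending every element of $P$ to $*_P$, every element of $Q$ to $*_Q$, and fixing $x,y$. The rules in the proposition are designed precisely so that, whenever $a\circ b$ is defined in $R$, the value of $\tau(a\circ b)$ depends only on $(\tau(a),\tau(b))$: one reads off that $x\circ\bullet$ stays in the $x$-fiber, $y\circ\bullet$ in the $y$-fiber, $P\circ P\subset P$, $P\circ(\text{non-}P)\subset\{y\}$, and symmetrically with $Q$ and $x$. This endows $T$ with a semigroup structure whose $4\times 4$ table is associative by direct inspection. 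Given any triple $a,b,c\in R$, both $(a\circ b)\circ c$ and $a\circ(b\circ c)$ therefore carry the same image in $T$; if that image is $x$ or $y$, both products are forced to equal the constant $x$ or $y$. If the common image is $*_P$, tracing the rules forces $a,b,c$ all to lie in $P$, and the equality follows from associativity in $P$; the $*_Q$ case is symmetric. This reduces the potentially $4^3=64$-case check to associativity of $T$ together with associativity of $P$ and $Q$.

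For order compatibility, I would verify that $a\preceq a'$ and $b\preceq b'$ with both $a\circ b$ and $a'\circ b'$ defined imply $a\circ b\preceq a'\circ b'$. The layered order forces $a'$ to sit in the layer of $a$ or in a strictly higher one, and similarly for $b'$, which trims the case split. If $a\in\{x,y\}$, then $a\circ b$ equals the constant $a$ and one checks that $a'\circ b'$ lies in a layer $\succeq a$; if $a\in P$ then $a'\in P$, and the subcase $b,b'\in P$ reduces to the compatibility of the order on $P$, while the remaining subcases give products in $\{y\}\cup P$, so the inequality $y\prec P$ settles them; the case $a\in Q$ is symmetric, using $Q\prec x$.

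The main obstacle is the bookkeeping, but the reduction to the four-element quotient $T$ makes the associativity argument essentially mechanical, and the layered order makes the compatibility check routine.
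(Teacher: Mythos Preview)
The paper states this proposition without proof, leaving the verification to the reader. Your approach via the type map $\tau:R\to T$ onto a four-element quotient semigroup is a clean and efficient way to organize the associativity check --- it correctly collapses the $4^3$ case analysis to associativity of $T$ together with associativity of $P$ and $Q$ --- and your layered case analysis for order compatibility is correct.

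There is, however, a gap concerning definedness. The associativity axiom for partial semigroups (Definition~\ref{monoid_partial}) demands more than equality when both bracketings happen to exist: it requires that if $a\circ b$ and $(a\circ b)\circ c$ are defined then $b\circ c$ and $a\circ(b\circ c)$ must also be defined, and vice versa. Your argument only addresses the equality $(a\circ b)\circ c=a\circ(b\circ c)$ under the tacit assumption that both sides exist. When $P$ is genuinely partial this definedness condition can fail: take $q\in Q$ and $p,p'\in P$ with $p\circ p'$ undefined in $P$; then $q\circ p=x$ and $(q\circ p)\circ p'=x\circ p'=x$ are both defined in $R$, yet $p\circ p'$ is not. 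Thus the statement as written holds only when $P$ and $Q$ are entire semigroups (the case the paper singles out in the sentence immediately following the proposition), and in that setting your proof is complete since every product in $R$ is then defined.
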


When in Propositions~\ref{glue},~\ref{buffer} $P,Q$ are entire semigroups, the results of constructions are entire semigroups as well. In contrast, there are no entire finite semigroups satisfying the strong property. 

\begin{proposition}
There are no entire finite semigroups (with more than one element) satisfying the strong property.    
\end{proposition}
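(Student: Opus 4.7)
The plan is to derive a contradiction from the assumption that a finite entire semigroup $M$ with $|M|>1$ carries a linear order $\prec$ satisfying the strict property (I assume ``strong'' here is a typo for ``strict'' as in Definition~\ref{strict}, since the property has not been defined otherwise).

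First, I would observe that both left and right multiplications by an arbitrary fixed element are strictly order-preserving self-maps of $M$. Indeed, specializing Definition~\ref{strict} to the pair $(c,c)$ with $c\preceq c$, the ``forward'' clause gives $a\prec b \Rightarrow a\circ c\prec b\circ c$, so right multiplication $R_c:M\to M$ is strictly order-preserving; the ``respectively'' clause gives $a\prec b \Rightarrow c\circ a\prec c\circ b$, so left multiplication $L_c:M\to M$ is strictly order-preserving as well. (Since $M$ is entire, no compositions need to be checked for being defined.)

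Second, I would invoke the elementary fact that any strictly order-preserving map $f:M\to M$ on a finite linearly ordered set is the identity. Listing $M = \{x_1\prec x_2\prec\cdots\prec x_n\}$, strict order-preservation makes $f$ injective, hence bijective on the finite set $M$, and the image $f(x_1)\prec f(x_2)\prec\cdots\prec f(x_n)$ must then be the unique increasing enumeration of $M$, forcing $f(x_i)=x_i$ for every $i$.

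Applying this to both $L_c$ and $R_c$ for every $c\in M$ yields $c\circ a=a$ for all $a$ (so every $c$ is a left identity) and simultaneously $c\circ a=c$ for all $c,a$ (so every $a$ is a right absorbing element). Combining these gives $a=c\circ a=c$ for all $a,c\in M$, hence $|M|=1$, contradicting $|M|>1$. The argument is short and self-contained; the only subtlety is making sure to invoke both clauses of the strict property so that left and right multiplication are both controlled, and to double-check that the entire semigroup hypothesis removes all ``provided the composition is defined'' caveats.
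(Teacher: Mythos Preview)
Your proof is correct, and it is in fact cleaner than the paper's argument. Both proofs exploit the same underlying principle---that a strictly order-preserving self-map of a finite linear order must be the identity---but they deploy it differently. The paper first argues that every element is idempotent (if $a\prec a^2$ then iterating gives an infinite ascending chain $a\prec a^2\prec a^3\prec\cdots$, impossible in a finite set; similarly for $a\succ a^2$), and then for $a\prec b$ derives $a=a^2\prec ab$, whence $ab\prec ab^2=ab$, a contradiction. Your route is more direct: rather than chasing powers, you apply the ``strictly monotone $\Rightarrow$ identity'' fact once and for all to every $L_c$ and $R_c$, immediately forcing $c\circ a=a=c$ and collapsing $M$ to a singleton. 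This avoids the separate idempotence step and the somewhat ad~hoc final manipulation in the paper; the only cost is stating explicitly the elementary lemma about finite linear orders, which you do.
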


\begin{proof}
Suppose the contrary. If for some element $a$ of the semigroup it holds $a\prec a^2$ then $a^i\prec a^{i+1}, i\ge 1$, which leads to a contradiction. By the same token an assumption $a\succ a^2$ leads to a contradiction as well. Thus, $a=a^2$ for any element $a$.

For any pair of elements $a\prec b$ it holds $a=a^2\prec ab$, hence $ab\prec ab^2=ab$. The obtained contradiction completes the proof.
\end{proof}

Now we concoct a construction for extending partial monoids satisfying the strict property.

\begin{proposition}\label{strict_extend}
Let $P$ be a partial monoid with an order $\prec^0$ satisfying the strict property. For $x\notin P$ construct a partial monoid 
$$Q:= P \sqcup P\circ x \sqcup \dots \sqcup P\circ x^{\circ k}$$
\noindent such that $x\circ P, x^{\circ (k+1)}$ are not defined. We set the order $\prec$ on $Q$ as follows:
$$P\circ x^{\circ i} \prec P\circ x^{\circ (i+1)}, 0\le i<k\ \text{and}$$
\noindent
$$c\circ x^{\circ j} \prec d\circ x^{\circ j} \text{iff}\ c\prec^0 d,\ c,d\in P,\ 0\le j\le k.$$
\noindent Then $Q$ is  a partial monoid satisfying the strict property.

Alternatively, one could set the order as $P\circ x^{\circ i} \succ P\circ x^{\circ (i+1)}$.
\end{proposition}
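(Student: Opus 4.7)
The plan is to exploit the unique representation of each element of $Q$ as a pair $(c,j)$ with $c\in P$ and $0\le j\le k$, where $(c,0)$ is identified with $c\in P$ and $(e_P,j)$ with $x^{\circ j}$, $e_P$ denoting the identity of $P$. In this notation the definitions of the operation translate to
\[
(a,i)\circ (b,j)=\begin{cases} (a\circ b,\,j) & \text{if } i=0 \text{ and } a\circ b \text{ is defined in } P,\\ (a,\,i+j) & \text{if } i>0,\ b=e_P,\ i+j\le k,\\ \text{undefined} & \text{otherwise,} \end{cases}
\]
(consistent in the overlap $i=0$, $b=e_P$), while $(a,i)\prec (b,j)$ becomes the lexicographic order $i<j$, or $i=j$ and $a\prec^0 b$. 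The main obstacle will be the bookkeeping of cases when checking the strict property, especially the interaction between the ``$i_1=0$'' and ``$i_1>0$'' regimes.

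To prove associativity, observe that $((a_1,i_1)\circ (a_2,i_2))\circ (a_3,i_3)$ is defined in exactly the following two regimes: (a) $i_1=i_2=0$ and $a_1\circ a_2\circ a_3$ is defined in $P$, or (b) $i_1>0$, $a_2=a_3=e_P$, and $i_1+i_2+i_3\le k$. The same two regimes give definedness on the right-hand side, and the two sides agree because of associativity of $\circ$ in $P$ in regime (a) and of $+$ in $\ZZ$ in regime (b). So $Q$ is a partial monoid with identity $(e_P,0)$.

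For the left strict property, assume $(a_1,i_1)\prec (b_1,l_1)$, $(a_2,i_2)\preceq (b_2,l_2)$ and both compositions are defined, so $i_1\le l_1$ and $i_2\le l_2$. When $i_1=l_1=0$ the products are $(a_1\circ a_2,i_2)$ and $(b_1\circ b_2,l_2)$; if $i_2<l_2$ the inequality is immediate, and if $i_2=l_2$ it follows from the strict property of $\prec^0$ on $P$ applied to $a_1\prec^0 b_1$ and $a_2\preceq^0 b_2$. When $i_1=0<l_1$, the products are $(a_1\circ a_2,i_2)$ and $(b_1,l_1+l_2)$, and $i_2\le l_2<l_1+l_2$ does it. When $0<i_1\le l_1$, definedness forces $a_2=b_2=e_P$; the products are $(a_1,i_1+i_2)$ and $(b_1,l_1+l_2)$, and either $i_1+i_2<l_1+l_2$ (when $i_1<l_1$, or when $i_1=l_1$ and $i_2<l_2$), or else $i_1=l_1$, $i_2=l_2$, $a_1\prec^0 b_1$, from which the inequality follows via the first coordinate. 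The right-multiplicative strict property is verified by an entirely parallel argument using the right-handed version of the strict property on $P$; the alternative order in the last sentence of the statement is handled symmetrically by reversing the role of the second coordinate.
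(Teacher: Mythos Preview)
Your approach is the same as the paper's: represent elements of $Q$ as pairs $(c,j)$ (with the order being lexicographic in $(j,c)$) and verify the strict property by a case analysis on the $x$-exponents. Your treatment of the strict property is in fact more thorough than the paper's---the paper asserts that in the ``Otherwise'' branch one necessarily has $v\in P$, which overlooks the possibility that both left factors already carry positive $x$-exponent (your sub-case $0<i_1\le l_1$, where definedness forces $a_2=b_2=e_P$); you handle this correctly.

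Your associativity check, however, is incomplete. The claim that the left-associated triple product is defined ``in exactly the following two regimes'' misses a third: $i_1=0<i_2$, with $a_1\circ a_2$ defined in $P$, $a_3=e_P$, and $i_2+i_3\le k$; here both associations yield $(a_1\circ a_2,\,i_2+i_3)$, so the repair is immediate. A subtler issue, not addressed by the paper either, is that the converse direction of the partial-monoid associativity axiom can fail under your composition rule whenever $P$ contains $a_2\ne e_P$ with $a_2\circ a_3=e_P$: taking $i_1>0$ and $i_2=i_3=0$, the right-associated product $(a_1,i_1)\circ\bigl((a_2,0)\circ(a_3,0)\bigr)=(a_1,i_1)$ is defined while $(a_1,i_1)\circ(a_2,0)$ is not. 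This is a defect of the construction rather than of your argument; for $Q$ to be a partial monoid in the sense of Definition~\ref{monoid_partial} one needs an extra hypothesis on $P$ (for instance, that $e_P$ has no non-trivial factorization in $P$).
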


\begin{proof}
To verify the strict property consider elements $u,v,w,t\in Q$ such that $u\preceq v, w\preceq t$ and at least one of two latter inequalities is strict. We assume that $u\circ w, v\circ t\in Q$. When  $u,v,w,t\in P$, the strict property follows from the strict property for $P$. Otherwise, $v\in P,\ t=t_0\circ x^{\circ i}$ for some $1\le i\le k$. Therefore, $u\in P, w=w_0\circ x^{\circ j}$ for suitable $0\le j\le i$. If $j<i$ then $u\circ w \prec v\circ t$. Otherwise, if $j=i$ then it holds $w_0\preceq^0 t_0$. Since one of two inequalities  $u\preceq^0 v,\ w_0\preceq^0 t_0$ is strict, we deduce from the strict property for $P$ that $u\circ w_0\prec^0 v\circ t_0$, which implies the strict property for $Q$:
$$u\circ w=u\circ w_0\circ x^{\circ i}\prec v\circ t_0\circ x^{\circ i}=v\circ t.$$

By the same token one considers an alternative order $P\circ x^{\circ i} \succ P\circ x^{\circ (i+1)}$.
\end{proof}

\begin{remark}
One can generalize Proposition~\ref{strict_extend}  to partial semigroups (rather than monoids). For a partial semigroup $P$ satisfying the strict property consider a partial semigroup $Q:= \bigsqcup_{0\le i\le k} (i,P)$ (where $(i,P)$ is a copy of $P$), in which the product is defined as $(0,p_0)\circ (i,p):= (i,p_0\circ p),\ p_0,p\in P,\ 0\le i\le k$, and $(j,P)\circ (i,P)$ is not defined when $j>0$. The order in $Q$ is lexicographical with respect to $i$ and to the order in $P$. As in the proof of Proposition~\ref{strict_extend} one can verify that $Q$ fulfills the strict property.     
\end{remark}

We conclude the section with orderings on the complete groupoid $M_k$ om $\{1,\ldots,k\}$

\begin{example}
\label{monoid_matrix}
Consider a partial semigroup $M_k:=\{(i,j)\, :\, 1\le i, j\le k\}$
where $(i,j)\circ (j,l):= (i,l)$ and $(i,j)\circ (m,l)$ is not defined when $m\neq j$. We define a linear order $\prec$ on $(i,j)\in M_k$ being lexicographical with respect to a vector

i) $(j-i,-i)$ or

ii) $(-i, j)$.

In both cases $M_k$ endowed with $\prec$ satisfies Definition~\ref{monoid_partial}. Moreover, $\prec$ satisfies the strict property (see Definition~\ref{monoid_partial}).

Note that one cannot take a vector $(i,j)$ in place of vectors from either i) or ii) since the induced ordering does not satisfy Definition~\ref{monoid_partial}. 

Observe that the axiom of the order from Definition~\ref{monoid_partial} for $M_k$ cannot be deduced from weaker axioms:  $c\preceq d$ implies that $a\circ c \preceq a\circ d$ and $c\circ a \preceq d\circ a$, provided that $a\circ c, a\circ d, c\circ a, d\circ a\in M_k$. 

It would be interesting to clarify whether $M_k$ can be represented as a coideal semigroups (with a compatible ordering). 

\end{example}

\begin{example} 
\label{ex:block matrices}
More generally, let ${\bf d}=(d_1,\ldots,d_m)\in \ZZ_{>0}^m$. This defines a new order on $M_n$, $n=d_1+\cdots+d_k$ as follows. For any $i\in [1,n]$ denote by $k_i$ the only $k$ such that $d_1+\cdots+d_{k-1}< i\le d_1+\cdots+d_k$. Also denote $r_i:=i-(d_1+\cdots+d_{k_i-1})$. Clearly, $r_i\in [1,d_{k_i}]$.

Then we write $(i,j)\prec (i',j')$ iff $(k_i,k_j)\prec_1 (k_{i'},k_{j'})$ whenever $(k_{i'},k_{j'})\ne  (k_i,k_j)$ and $(r_i,r_j)\preceq_2 (r_{i'},r_{j'})$ otherwise, where $\preceq_1, \preceq_2$ are any orders from Example \ref{monoid_matrix}.
    
\end{example}

\subsection{Valuations of algebras in partial semigroups}

\begin{definition}\label{valuation_partial}
For a (not necessarily unital) $\kk$-algebra $A$ a mapping $\nu:A\setminus \{0\} \to P$ to a partial semigroup $P$
is a {\it valuation} if for any $a,b\in A\setminus \{0\}$ it holds the following:

i) $\nu(\kk^\times a)=\nu(a)$;

ii) $\nu(a+b)\preceq \max \{\nu(a), \nu(b)\}$, provided that $a+b\neq 0$;

iii) $\nu(ab)=\nu(a)\circ \nu(b)$, provided that  $\nu(a)\circ \nu(b)\in P$ (in particular, in this case it holds $ab\neq 0$).
\end{definition}

\begin{remark}
\label{image_valuation}

Clearly, iii) asserts that $\nu$ is a homomorphism of partial semigroups where  $ A\setminus \{0\}$ is naturally a partial semigroup under multiplication. In particular, $\nu( A\setminus \{0\})$ is a (partial) sub-semigroup of $P$.
For example, if $A=\kk Q$ for some (partial) semigroup $Q$, then the restriction $\nu|_Q$ is an epimorphism of (partial) semigroups $Q\twoheadrightarrow P_\nu$. 

\end{remark}

A homomorphism $\underline f:P\to Q$  of partial semigroups induces a natural homomorphism of algebras $f:\kk P \to \kk Q$ by $f([c]):=[\underline f(c)]$ (note however, that if $\underline f$ was a partial homomorphism $P\to Q$, then $f:\kk P\to \kk Q$ is {\bf not} an algebra homomorphism). 

The following  fact is an extension of the above assertion and its converse when the codomain contains  a {\it two-sided} absorbing element ${\bf 0}$ in notation of Lemma 
\ref{le:absorbent}, (e.g., it is necessarily entire). 

\begin{lemma}
\label{le:monomial homomorphism}
Let $f:\kk P\to \kk Q$ be a homomorphism such that $f([P])\subset [Q]\sqcup\{0\}$. Then the assignments $[c]\mapsto 
\begin{cases}
{\bf 0} & \text{if $f([c])= 0$} \\
f([c]) &\text{otherwise}
\end{cases}
$ define a homomorphism of partial semigroups $\underline f:P\to Q\sqcup\{{\bf 0}\}$, where ${\bf 0}$ is the two-sided absorbing element.
Also, set $J_f=\{c:f([c])=0\}=\underline f^{-1}({\bf 0})$ 
is an ideal of $P$ and the restriction of $\underline f$ to $P\setminus J_f$ is a homomorphism of partial semigroups $P\setminus J_f\to Q$.

\end{lemma}

Note however that if $P$ is a partial semigroup and $J$ is an ideal of $P$ 
then $\kk J$ is an ideal of $\kk P$ and 
for the coideal $Q=P\setminus J$ the semigroup algebra $\kk Q$ is naturally isomorphic to the quotient $\kk P/ \kk  J$.

We can illustrate Lemma \ref{le:monomial homomorphism} by the natural projection $f_{n,k}:Mat_{n,k}(\kk)\to Mat_k(\kk)\oplus Mat_{n-k}(\kk)$ given by 
$\begin{pmatrix}
 A & C\\
 0 & B
\end{pmatrix}\mapsto \begin{pmatrix}
 A & 0\\
 0 & B
\end{pmatrix}$
for any $A\in Mat_k(\kk)$, $B\in Mat_{n-k}(\kk)$, $C\in Mat_{k\times (n-k)}$, where $Mat_{n,k}(\kk)$ stands for upper $2\times 2$ block-triangular matrices. 

Clearly, $Mat_{n,k}(\kk)=\kk M_{n,k}$ where $M_{n,k}$ is the set of $(ij)\in M_n$ with $(i,j)\in [1,n]\times [1,n]\setminus ([k+1,n]\times [1,k])$ 
in notation of Example \ref{monoid_matrix}. Clearly, this is a (partial) sub-semigroup of $M_n$. Then one has the underlying homomorphism $\underline {f_{n,k}}:M_{n,k}\to M_k\sqcup M_{n-k}\sqcup \{{\bf 0}\}$.

\begin{remark} 
If $f:A\to B$  a (not necessary injective)   homomorphism of algebras and let $\nu:B\setminus \{0\}\to Q$ be a valuation, then we can extend it to  $\nu_f:A\setminus\{0\}\to Q$ if $Q$ contains a two-sided absorbing element ${\bf 0}$ ( e.g., $Q$ is necessarily entire) by $\nu_f(a):=\begin{cases}
\nu(f(a)) & \text{if $f(a)\ne 0$}\\
{\bf 0} & \text{otherwise}
\end{cases}$.

Furthermore, applying this argument to situation of Lemma \ref{le:monomial homomorphism}, we obtain the valuation $\nu_f:\kk P\to \kk (Q\setminus \{{\bf 0}\})$ whose restriction to $[P]$ is the homomorphism
$\underline f:P\to Q$ whose kernel $\underline f^{-1}({\bf 0})$ is ideal $J_f$ of $P$.

Assume that  $P$ is ordered and $\underline f$ is order-preserving. Then Lemma \ref{le:absorbent} implies that  $J_f$ is an interval.

 We introduce an order $\prec$ on $M_{n,k}$ being a modification of the order from Example~\ref{monoid_matrix}. 

 $\bullet$ For $(i,j)\in ([1,k]\times [1,k])\sqcup ([k+1,n]\times [k+1,n])$ we stick with ii) in Example \ref{monoid_matrix}, i.e. $(i_1,j_1)\prec (i_2,j_2)$ if either $i_1<i_2$ or $i_1=i_2$ and $j_1>j_2$. 

 $\bullet$ For $(t,s)\in [1,k] \times [k+1,n]$ we also stick with ii) in Example \ref{monoid_matrix}, i.e. $(t_1,s_1)\prec (t_2,s_2)$ if either $t_1<t_2$ or $t_1=t_2$ and $s_1>s_2$.

 $\bullet$ $(t,s) \prec (i,j)$.

The latter condition states that the kernel of the above homomorphism $\underline{f_{n,k}}$ is an interval in the partial semigroup $M_{n,k}$. However, $\underline{f_{n,k}}$ is not a homomorphism of ordered partial semigroups (cf. Lemma~\ref{le:absorbent}~(b)).

More generally, in notation of Example \ref{ex:block matrices}
let $M_{\bf d}$ be a partial sub-semigroup $M_n$ of which consists of block upper triangular matrices, that is, 
$M_{\bf d}$ consists of all $(ij)$ with $k_i\le k_j$. Then any order on $M_n$ defined in Example \ref{ex:block matrices} restricts to $M_{\bf d}$. The above homomorphism $f_{n,k}$ naturally 
generalize to $f_{\bf d}:\kk M_{\bf d}\to Mat_{d_1}(\kk)\oplus \cdots \oplus Mat_{d_m}(\kk)$ 
as well as $\underline f_{n,k}$ to 
$\underline f_{\bf d}:M_{\bf d}\to (M_{d_1}\times \cdots \times M_{d_m})\cup {\bf 0}$.

\end{remark}

\begin{example}
Following Example~\ref{monoid_ideal}~i) one can consider the monoidal algebra $\kk P(u_1\dots,u_s)$. It is called a Stanley-Reisner algebra in case when all $u_1,\dots,u_s$ are square-free.

\end{example}

The following result allows to construct new (not necessarily injective) valuation.

\begin{proposition}
\label{pr:valuation after homomorphism}    
 Let $\nu:A\setminus \{0\}\to P$ be a valuation and let $f:P\to Q$ be an ordered homomorphism of partial semigroups. Then $f(\nu):A\setminus \{0\}\to Q$ given by $a\mapsto f(\nu(a))$ is also a valuation. 
    
\end{proposition}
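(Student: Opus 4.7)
The plan is to verify, in turn, the three axioms of Definition~\ref{valuation_partial} for $\mu:=f\circ\nu$ and then to check that a basis of $A$ adapted to $\nu$ is automatically adapted to $\mu$.

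Axiom (i) is purely formal: since $\nu(\kk^* a)=\nu(a)$, applying $f$ gives $\mu(\kk^* a)=f(\nu(a))=\mu(a)$. For axiom (ii), start with the valuation inequality $\nu(a+b)\preceq\max(\nu(a),\nu(b))$ in $P$ (when $a+b\ne 0$), and apply $f$. Because $f$ is a homomorphism of \emph{ordered} partial semigroups (Definition~\ref{monoid_partial}) it preserves $\preceq$, so $\mu(a+b)=f(\nu(a+b))\preceq f\bigl(\max(\nu(a),\nu(b))\bigr)$. Since the orders on $P$ and $Q$ are linear and $f$ is order-preserving, one checks immediately that $f(\max(x,y))=\max(f(x),f(y))$ for any $x,y\in P$; this yields $\mu(a+b)\preceq\max(\mu(a),\mu(b))$, which is (ii).

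The interesting step is axiom (iii). Suppose $\mu(a)\circ\mu(b)=f(\nu(a))\circ f(\nu(b))$ is defined in $Q$. Here I would invoke the crucial ``iff'' clause built into the definition of a homomorphism of partial semigroups: $c\circ d$ is defined in $P$ \emph{iff} $f(c)\circ f(d)$ is defined in $Q$. Applied to $c=\nu(a)$, $d=\nu(b)$, this forces $\nu(a)\circ\nu(b)$ to be defined in $P$. Now axiom (iii) for $\nu$ gives $\nu(ab)=\nu(a)\circ\nu(b)$, and then the homomorphism identity yields
\[
\mu(ab)=f(\nu(a)\circ\nu(b))=f(\nu(a))\circ f(\nu(b))=\mu(a)\circ\mu(b),
\]
which is (iii) for $\mu$. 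Note in particular that $ab\ne 0$ is automatic from axiom (iii) for $\nu$.

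For injectivity, let ${\bf B}\subset A$ be a basis adapted to $\nu$, so that $\nu|_{\bf B}\colon {\bf B}\to P$ is a bijection onto the image $P_\nu$. The claim is that the same ${\bf B}$ is adapted to $\mu$. Indeed, $\mu|_{\bf B}=f\circ(\nu|_{\bf B})$, so injectivity of $\mu|_{\bf B}$ reduces to injectivity of $f$ on $P_\nu$. The main obstacle — and the only delicate point — is showing that the order-preserving homomorphism $f$ is injective on $P_\nu$: given distinct $c,c'\in P_\nu$ with, say, $c\prec c'$ (using linearity of $\preceq$ on $P$), we must exclude $f(c)=f(c')$. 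This is where I would use that $f$ preserves the linear order in the strict sense compatible with Definition~\ref{monoid_partial} (equivalently, that $f$ is order-reflecting on linearly ordered targets); then $c\prec c'$ implies $f(c)\prec f(c')$ in $Q$, hence $f(c)\ne f(c')$, and $\mu|_{\bf B}$ is injective. Therefore ${\bf B}$ is adapted to $\mu$, completing the verification that $\mu=f\circ\nu$ is an injective valuation.
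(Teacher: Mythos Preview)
The paper gives no proof of this proposition beyond the remark that it is ``straightforward,'' and your verification of axioms (i)--(iii) together with the adapted-basis argument is precisely the natural elaboration. Your identification of the one nontrivial point is exactly right: injectivity of $f\circ\nu$ hinges on $f$ preserving the \emph{strict} order (equivalently, being injective on the image $P_\nu$), which is what the phrase ``preserving the orders'' in Definition~\ref{monoid_partial} must be taken to mean for the proposition to hold as stated; since the paper asserts the result without qualification, this reading is implicit.
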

\begin{proof} For $a_1,a_2\in A\setminus \{0\}$ assume that the composition of $f(\nu(a_1)), f(\nu(a_2))$ is defined in $Q$. Then the composition of $\nu(a_1), \nu(a_2)$ is defined in $P$, hence $\nu(a_1a_2)=\nu(a_1)\circ \nu(a_2)$ and $f(\nu(a_1a_2))=f(\nu(a_1)\circ \nu(a_2))=f(\nu(a_1))\circ f(\nu(a_2))$.

Now assume also that $a_1+a_2\neq 0$. Then $\nu(a_1+a_2)\preceq \max\{\nu(a_1), \nu(a_2)\}$. Let $\nu(a_1+a_2) \preceq \nu(a_1)$ for definiteness. Therefore, $f(\nu(a_1+a_2))\preceq f(\nu(a_1))$.

\end{proof}

\begin{proposition}  
\label{pr:partial homomorphism} Consider a partial semigroup $P$ and an ordered partial semigroup $Q$.
Let $\nu:\kk P\setminus\{0\}\to Q$ be a valuation.
Then the mapping $c\mapsto \nu([c])$ is a partial homomorphism $P\to Q$. In particular, $P$ acquires a new structure of a partial semigroup $P_{S_f}$ in notation of Remark \ref{admissible}.

    \end{proposition}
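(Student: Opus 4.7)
\smallskip\noindent\textbf{Proof plan.} The assertion has two parts: showing the multiplicativity identity $f(a\circ b)=f(a)\circ f(b)$ on $S_f$, and showing that $S_f\subset P\times P$ is admissible. I would handle the multiplicativity first. Suppose $(a,b)\in S_f$, so by definition $a\circ b$ is defined in $P$ and $f(a)\circ f(b)=\nu([a])\circ\nu([b])$ is defined in $Q$. Since $a\circ b$ is defined in $P$, the product $[a][b]$ in $\kk P$ equals $[a\circ b]\neq 0$. Axiom (iii) of Definition~\ref{valuation_partial} then applies and yields $\nu([a\circ b])=\nu([a][b])=\nu([a])\circ\nu([b])$, i.e.\ $f(a\circ b)=f(a)\circ f(b)$.

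For admissibility I would verify directly the associativity condition from Definition~\ref{monoid_partial}. Assume $(a,b)\in S_f$ and $(a\circ b,c)\in S_f$. Then $a\circ b$ and $(a\circ b)\circ c$ are defined in $P$, hence by associativity in $P$ the elements $b\circ c$ and $a\circ (b\circ c)$ are defined and equal to $(a\circ b)\circ c$. On the $Q$-side, the first paragraph gives $f(a\circ b)=f(a)\circ f(b)$, so $f(a\circ b)\circ f(c)=(f(a)\circ f(b))\circ f(c)$ is defined in $Q$; by associativity in $Q$ this forces $f(b)\circ f(c)$ and $f(a)\circ(f(b)\circ f(c))$ to be defined. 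Combining these observations, $b\circ c$ is defined in $P$ and $f(b)\circ f(c)$ is defined in $Q$, so $(b,c)\in S_f$; similarly, $a\circ(b\circ c)$ is defined in $P$ and $f(a)\circ f(b\circ c)=f(a)\circ f(b)\circ f(c)$ is defined in $Q$, so $(a,b\circ c)\in S_f$. The two triple products coincide by associativity in $P$. The reverse implication is entirely symmetric, interchanging the roles of $(a,b),(a\circ b,c)$ with $(b,c),(a,b\circ c)$.

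\smallskip\noindent\textbf{Main obstacle.} The only delicate point is to keep straight what is given and what must be inferred: axiom (iii) of a valuation asserts equality $\nu(uv)=\nu(u)\circ\nu(v)$ only \emph{conditionally on} $\nu(u)\circ\nu(v)$ being defined in $Q$, and gives no information when only $a\circ b$ is defined in $P$. This is precisely why the target map is merely a \emph{partial} homomorphism, and why the definition of $S_f$ bakes in both conditions simultaneously. Once one is disciplined about this asymmetry, the rest is a straightforward unwinding of associativity in the two partial semigroups $P$ and $Q$.
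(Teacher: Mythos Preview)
Your proof is correct and follows essentially the same approach as the paper's: verify $f(a\circ b)=f(a)\circ f(b)$ on $S_f$ via axiom~(iii) of Definition~\ref{valuation_partial}, then check admissibility by unwinding associativity in $P$ and in $Q$. The paper is slightly terser because it observes (implicitly, via axiom~(iii)) that composability of $\nu([a]),\nu([b])$ in $Q$ already forces $[a][b]\neq 0$ and hence $a\circ b$ defined in $P$, so it only tracks the $Q$-side; your explicit bookkeeping of both sides is harmless and arguably clearer.
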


\begin{proof} 
Take $c, c', c'' \in P$ such that $\nu([c]), \nu([c'])$ are composable and that $\nu([c])\circ \nu([c']), \nu([c''])$ are also composable. Let $S_\nu$ be the set of all $(c,c')\in P\times P$ such that $\nu([c]), \nu([c'])$ are composable.
Then due to Definition~\ref{monoid_partial} it holds that $\nu([c']), \nu([c''])$ are composable and that $\nu([c]),  \nu([c'])\circ \nu([c''])$ are composable as well. Thus, $S_{\nu}$ is admissible due to Definition~\ref{valuation_partial}.

If $(c,c')\in S_{\nu}$ then $c, c'$ are composable in $P_{S_\nu}$ and $\nu([c\circ c'])=\nu([c])\circ \nu([c'])$ again due to Definition~\ref{valuation_partial}. 

\end{proof}

Applying Proposition~\ref{pr:lex tensor product}~(b) we obtain the following immediate

\begin{corollary} \label{tensor}
Let $A_i$, $i=1,2$ be algebras and $\nu_i:A_i\setminus \{0\}\to P_i$ be their valuations to respective partial semigroups. Then the assignments $a_1\otimes a_2\mapsto (\nu_1(a_1),\nu_2(a_2))$ extend to a valuation $\nu:A_1\otimes A_2 \setminus \{0\} \to P_1\times P_2$ (an order in $P_1\times P_2$ is defined in Lemma~\ref{semigroups_product}). If both valuations $\nu_1, \nu_2$ are injective then $\nu$ is injective as well.
     
 \end{corollary} 

\begin{remark} 
\label{rem:Takeuchi}

This can be generalized to Takeuchi products $A\otimes_H B$ where $H$ is a bialgebra acting on $A$ and $B$. Namely, if we denote the action by $h\act a$ and the coaction $\delta(b)=b^{(-1)}\otimes b^{(0)},\ b^{(-1)}\in H,\ b^{(0)}\in B$ (in Sweedler notation), the defining relations in $A\otimes_H B$ is
 $ba=(b^{(-1)}\act a)b^{(0)}$ for all $a\in A$, $b\in B$.

Clearly, $A\otimes B=A\otimes_H B$, where $H=\kk$ and $k\act a=ka$ for all $a\in A$, $k\in \kk$ $\delta(b)=1\otimes b$ for all $b\in B$.
Suppose additionally that there are valuations $\nu_1:A\setminus \{0\}\to P_1$ and $\nu_2:B\setminus \{0\}\to P_2$  such that

$\bullet$ For any nonzero $b\in B$ there exist unique elements $b'\in B$ and $h_b\in H$ such that $\nu_2(b')=\nu_2(b)$ and $\delta(b)-h_b\otimes b'$ belongs to $H\otimes B_{\prec \nu_2(b)}$.

$\bullet$ For any  $a\in A$ and $h\in H$ such that $h\act a\ne 0$ one has $\nu_1(h\act a)\preceq \nu_1(a)$ and $\nu_1(h_b\act a)=\nu_1(a)$ for any nonzero $b\in B$.

Then the assignments $ab\mapsto (\nu_1(a),\nu_2(b))$ define a valuation $\nu:A\otimes_H B\setminus\{0\}\to P_1\times P_2$ thus generalizing Corollary \ref{tensor}.

In particular,  The Weyl algebra $A_1$ with the presentation $yx=xy+1$ is a Takeuchi product with 
 $A=\kk[x]$, $B=\kk[y]$, 
 $H=\kk[d]$ is a Hopf algebra when viewed as the enveloping algebra of $1$-dimensional Lie algebra, 
 $\delta(y)=1\otimes y+d\otimes 1$, $H$ acts on  $\kk[x]$ by $d(p(x))=p'(x)$.

Let $C=\kk_q[x,y]$ with presentation $yx=qxy$. Then $C$ is a Takeuchi product with  $A=\kk[x]$, $B=\kk[y]$, $H=\kk[g]$ is a bialgebra with the coproduct $\Delta(g)=g\otimes g$ and the counit $\varepsilon(g)=1$, $\delta(y)=g\otimes y$, $H$ acts on  $\kk[x]$ by $g(p(x))=p(qx)$.

Taking in the above example $\nu_1,\nu_2$ tautological valuations to $\ZZ_{\ge 0}$, we obtain an injective valuation $\nu:C\setminus\{0\}\to \ZZ_{\ge0}^2$.

 \end{remark}

\subsection{Quantum cones}
In this section we define (generalized)  quantum monoids.

\begin{lemma}
\label{le:crossed product of monoids}
Let $\underline P$ be a partial semigroup,  $\Gamma$ be  a monoid, and $\chi:\underline P\times \underline P\to Z(\Gamma)$ be (a central) two-cocycle, i.e., 
$\chi(c\circ c',c'')\chi(c,c')=\chi(c,c'\circ c'') \chi(c',c'')$ for any associative triple $(c,c',c'')$
in $P$.
Then $\underline P\times \Gamma$ has a partial semigroup structure via
$(c,\gamma)\circ (c',\gamma')=(c\circ c',\chi(c,c')\gamma\gamma')$, we denote this partial semigroup by $P=\underline 
 P\times_\chi \Gamma$. Moreover, the projection to the first factor is a homomorphism of partial semigroups $P\twoheadrightarrow \underline P$. 

\end{lemma}

\begin{proof} Indeed, associativity reads

\noindent$((c,\gamma)\circ (c',\gamma')) \circ (c'',\gamma'')=(c\circ c',\chi(c,c')\gamma\gamma')\circ (c'',\gamma'')=(c\circ c'\circ c'',\chi(c\circ c',c'')\chi(c,c')\gamma\gamma'\gamma'')$,

\noindent $(c,\gamma)\circ ((c',\gamma') \circ (c'',\gamma''))=(c,\gamma)\circ(c'\circ c'',\chi(c',c'')\gamma'\gamma'')=(c\circ c'\circ c'',\chi(c,c'\circ c'')\gamma \chi(c',c'')\gamma'\gamma'')$.

These expressions are equal because 

$\chi(c\circ c',c'')\chi(c,c')\gamma\gamma'\gamma''=\chi(c,c'\circ c'')\gamma \chi(c',c'')\gamma'\gamma''=\chi(c,c'\circ c'') \chi(c',c'')\gamma\gamma'\gamma''$.

The lemma is proved.
\end{proof}

From now on suppose that  $\underline P$ is a monoid. Clearly, $(1,\Gamma)$ is a submonoid  of $\underline P\times_\chi \Gamma$ with the multiplication $(1,\gamma)\circ(1,\gamma')=(1,\chi(1,1)\gamma\gamma')$. It is also clear that the assignments $(c,\gamma)\mapsto c$ define a surjective homomorphism of (partial) semigroups $P=\underline  P\times_\chi \Gamma\to \underline  P$, that is, $P$ is a central extension of $\underline  P$.

 Also, any bi-character $\chi:\underline P\times \underline P\to Z(\Gamma)$ is automatically a two-cocycle.

\begin{lemma}
\label{le:anti-involution quantum plane} In the assumptions of Lemma \ref{le:crossed product of monoids}, suppose that $\Gamma$ is a group and fix an anti-involution $c\mapsto c^*$ on $\underline P$.  Then the assignments $\overline{(c,\gamma)}:= (c^*,\gamma^{-1})$ define an anti-involution $\overline {\cdot}$ on $P:=\underline P\times_\chi \Gamma$ iff  $\chi(d^*,c^*)=\chi(c,d)^{-1}$ for all $c,d\in \underline P$.

\end{lemma}

\begin{proof} Indeed,
$$(d^*,{\gamma'}^{-1})\circ (c^*,\gamma^{-1})=(d^*\circ c^*,\chi(d^*,c^*){\gamma'}^{-1}\gamma^{-1})
=((c\circ d)^*,(\chi(c,d)\gamma\gamma')^{-1})$$
    
This is equivalent to 
$\overline{(d,\gamma')}\circ \overline{(c,\gamma)}=\overline{(c\circ d,\chi(c,d)\gamma\gamma')}$, that is, $\overline {\cdot}$ is an anti-involution.

The lemma is proved.
\end{proof}

The following is immediate.
 \begin{proposition}
 \label{quantum_order} In the assumptions of Lemma \ref{le:crossed product of monoids}, suppose additionally that $\underline P$ and $\Gamma$ are ordered and the order on $\underline  P$ satisfies the strict property. Then $P:=\underline P\times_\chi \Gamma$ is ordered lexicographically, i.e., 
 $(c,\gamma)\prec (c',\gamma')~\text{iff $c\prec c'$ or $c=c'$, $\gamma \prec \gamma'$}$. Moreover, the homomorphism $P\twoheadrightarrow \underline P$ from Lemma \ref{le:crossed product of monoids} is ordered. 

 \end{proposition}

\begin{example}
\label{ex:quantum plane}
    
For instance, if $\underline P=\ZZ^m$, $\Gamma$ is any abelian  multiplicative group, and $\chi$ is uniquely determined by $\chi_{k\ell}:=\chi(e_k,e_\ell)\in \Gamma, 1\le k, \ell \le m$. 
Then $P:=\underline P\times_\chi \Gamma$ is a group generated by central elements $\chi_{k\ell}$, and elements $t_k=(e_k,1)$, $k=1,\ldots,m$ subject to 
\begin{equation}
\label{eq:quantum plane general}
t_\ell t_k=q_{k\ell}t_kt_\ell
\end{equation}
for $k<\ell$, 
where $q_{k\ell}:=\frac{\chi_{\ell k}}{\chi_{k\ell}}$ (its linearization is known as a quantum torus).
Note that this monoid admits an anti-involution as in Lemma \ref{le:anti-involution quantum plane} (with trivial anti-involution $*$ on $P$) 
iff $\chi_{\ell k}=\chi_{k\ell}^{-1}$, e.g., $q_{k\ell}=\chi_{\ell k}^2$. In that case, the element $(a,1)\in P\times_\chi \Gamma$ is given by
$$(a,1)=q_at_1^{a_1}\cdots t_m^{a_m}$$
where $q_a=\prod\limits_{1\le k<\ell\le m} q_{k \ell}^{\frac{1}{2}a_k a_\ell}=\prod\limits_{1\le k<\ell\le m} \chi_{\ell k}^{a_k a_\ell}$. Clearly, $\overline{(a,1)}=(a,1)$. Also
\begin{equation}
\label{eq:Lambda-multiplication}
 (a,1)\cdot (a',1)=g_{a,a'}\cdot (a+a',1)  
\end{equation}
where $g_{a,a'}=\prod\limits_{1\le k<\ell\le m} q_{k \ell}^{\frac{1}{2}(a_\ell a'_k-a_k a'_\ell)}=\prod\limits_{1\le k<\ell\le m} \chi_{\ell k}^{a_\ell a'_k-a_k a'_\ell}$ (e.g., $g_{e_\ell,e_k}=q_{k\ell}$). In particular, $g_{a',a}=g_{a,a'}^{-1}$.

For any submonoid $\underline P$ of $\ZZ^m$ we consider a submonoid of  $\ZZ^m\times_\chi \Gamma$ generated by $\underline P$ and $\gamma$. Clearly, $P=\underline P\times_{\chi'} \Gamma$ where $\chi'=\chi|_{P\times P}$. We informally refer to $P$ as a {\it quantum cone} (e.g., if $\underline P=\ZZ_{\ge 0}^m$, $P$ is a quantum octant, whose linearization is also known as a quantum plane).  

\end{example}

\begin{remark}
An order on quantum cones does not necessarily satisfy the property $\Gamma\prec t_k$ for $k=1,\ldots,m$ (cf. Proposition~\ref{quantum_order}). Namely, let $q_{12}, q_{13}, q_{23}$ generate a free abelian group $\Gamma$ and let $P$ be generated by $t_1,t_2,t_3$ and  $\Gamma$ subject to \eqref{eq:quantum plane general}. Then there is a lex order $\prec$ on $P$ with $q_{12}\prec q_{13}\prec t_1\prec q_{23}\prec t_2\prec t_3$. In fact, it is necessary and sufficient that $q_{kl}^i\prec t_k, t_l, i\ge 0$. To prove the necessity suppose on the contrary that  $t_k\prec q_{kl}^i$. Then $q_{kl}t_kt_l=t_lt_k\prec t_l q_{kl}^i$, hence $t_k\prec q_{kl}^{i-1}$, and induction on $i$ leads to a contradiction.
\end{remark}

\begin{example} 

One has an "exterior" analog $P$ of the quantum octant which is a partial semigroup generated by $t_1,\ldots,t_m$, all $q_{k\ell}^{\pm 1}$, $1\le k< \ell\le m$ subject to:

$\bullet$ $t_k\circ t_k$ is undefined for $k=1,\ldots,m$

$\bullet$ $t_\ell \circ t_k=q_{k\ell}t_k \circ t_\ell$
for all $k<\ell$.

The extension $\mathbb{K}\otimes \kk P$ of linearization $\kk P$ by any field $\mathbb{K}$ containing all $q_{k\ell}$ (which an exterior analogue of the quantum plane) has an injective $\mathbb{K}$-valuation  to the Boolean partial monoid: $I\circ J=I\cup J$ whenever $I\cap J=\emptyset$ and undefined otherwise 
    via $\nu(t^\varepsilon):=I_\varepsilon$ for any $\varepsilon\in \{0,1\}^m$ endowed with the lexicographical ordering (cf. Example~\ref{monoid_ideal}~iii).)
\end{example}

\subsection{String valuations of algebras}
Generalizing approach of \cite[Section 2]{bz} and \cite[Section 3.2]{BZm} and \cite{L}, we  construct in this section a large class of string valuations (cf. Lemma~\ref{le:nilpotent valuation}) of algebras based on their skew derivations. 

For an algebra $A$ we say that a $\kk$-linear map $\partial:A\to A$ is a {\it skew derivation} with respect to an endomorphism $\varphi$ of $A$ if the skew Leibniz rule holds
$$\partial(ab)=\partial(a)b+\varphi(a)\partial(b)$$
for all $a,b\in A$.

Furthermore, we say that the pair $(\partial,\varphi)$ of linear maps $A\to A$ is $r$-compatible if $\partial \circ \varphi=r\cdot \varphi\circ \partial$ for some $r\in \kk^\times$.

\begin{proposition}
\label{skew_quantum}
Let $\varphi$ be an monomorphism of $A$ and $\partial$ be a skew derivation of $A$ with respect to $\varphi$. 
Suppose that $\partial$ is locally nilpotent and $(\partial,\varphi)$ is $r$-compatible with a non-root of unity $r\in \kk^\times$. Then 
\begin{equation}
\label{eq:nil Leibniz rule}
\partial^{(m+n)}(ab)= \varphi^n(\partial^{(m)}(a))\partial^{(n)}(b)\ 
\end{equation}
for all $a,b\in A$  where we abbreviate $m:=\nu_\partial(a)$, $n:=\nu_\partial(b)$ in the notation of Lemma \ref{le:nilpotent valuation}, $\partial^{(k)}:=\frac{1}{[k]_r!}\partial^k$, the $k$-th divided power of $\partial$, and $[k]_r!:=\prod\limits_{\ell=1}^k \frac{1-r^\ell}{1-r}$.

\end{proposition} 

\begin{proof} We need the following 

\begin{lemma}
$\partial^k(ab)=\sum\limits_{J\subset [1,k]} \partial^J(a)\partial^{k-|J|}(b)$ for all $k\ge 0$, $a,b\in A$,   
where we abbreviate $\partial^J(a)=d_{1,J}\circ \cdots \circ d_{k,J}$ where $d_{j,J}=
\begin{cases} \partial & \text{if $j\in J$}\\
\varphi & \text{otherwise}
\end{cases}$.

\end{lemma}

In particular, 
$\partial^2(ab)=\partial^2(a)b+\varphi(\partial(a))\partial(b)  +\partial(\varphi(a))\partial(b)+\varphi^2(a)\partial^2(b)$.

The compatibility implies that $\partial^J=r^{\sum\limits_{i\notin J}|J_{<i}|}\varphi^{k-|J|}\circ \partial^{|J|}$, where $J_{<i}=J\cap [1,i-1]$.

In turn, this implies that \begin{equation}
\label{eq:r-binomial}
\partial^k(ab)=\sum\limits_{i=0}^k \binom{k}{i}_r\varphi^{k-i}(\partial^i(a))\partial^{k-i}(b)
\end{equation}
where $\binom{k}{i}_r$ is the $r$-binomial coefficient $\frac{(1-r^{i+1})\cdots (1-r^k)}{(1-r)\cdots (1-r^{k-i})}=\frac{[k]_r!}{[i]_r![k-i]_r!}$, 
in particular, it is not $0$ if $r$ is not a root of unity.

Finally, taking $k=m+n$ as in the proposition, all summands with $i\ne m$ in the right hand side of \eqref{eq:r-binomial} are $0$.

The proposition is proved. 
\end{proof}

\begin{theorem} 
\label{th:string valuations general}
Let 
$\varphi_k$, $k=1\ldots,m$
be  automorphisms of $A$ and $\partial_k$, $k=1,\ldots,m$ be  skew derivations of $A$ with respect to 
$\varphi_k$. 
Suppose that 

$\bullet$ $A$ is a domain. 

$\bullet$ Each $\partial_k$ is locally nilpotent. 

$\bullet$ 
$\partial_\ell\circ \varphi_k = q_{k \ell} \varphi_k \circ \partial_\ell$ for any $q_{k\ell}\in \kk^\times$ for all $k\le  \ell$.
such that  
$q_{ii}$ are non-roots of unity (e.g., $(\partial_k,\varphi_k)$ is $q_{kk}$-compatible).

Then the string valuation $\nu_{(\partial_1,\ldots,\partial_m)}$ from \eqref{eq:string valuation vector space} is a valuation of the algebra $A$ to the 
additive semigroup $\ZZ_{\ge 0}^m$.

\end{theorem}

\begin{proof} Iterating \eqref{eq:nil Leibniz rule}, we obtain for any $a,b\in A\setminus \{0\}$:
\begin{equation}
\label{eq:multi-Leibniz}
\partial_m^{(k_m+\ell_m)}\cdots \partial_1^{(k_1+\ell_1)}(ab)= \varphi_m^{\ell_m}\partial_m^{(k_m)}\cdots \varphi_1^{\ell_1}\partial_1^{(k_1)} (a)\cdot \partial_m^{(\ell_m)}\cdots \partial_1^{(\ell_1)}(b)
\end{equation}
where $k_1,\ldots,k_m$ and $\ell_1,\ldots,\ell_m$ are defined recursively via

$$k_{i+1}=\nu_{\partial_{i+1}}(\varphi_i^{\ell_i}\partial_i^{(k_i)}\cdots \varphi_1^{\ell_1}\partial_1^{(k_1)}(a)),~\ell_{i+1}=\nu_{\partial_{i+1}}(\partial_i^{(\ell_i)}\cdots \partial_1^{(\ell_1)}(b))\ .$$
Moreover, the third assumption of the theorem implies (once again, recursively) that
$$\varphi_m^{\ell_m}\partial_m^{(k_m)}\cdots \varphi_1^{\ell_1}\partial_1^{(k_1)}=
q\cdot \varphi \partial_m^{(k_m)}\cdots \partial_1^{(k_1)}\ ,$$
where $\varphi:=\varphi_m^{\ell_m}\cdots\varphi_1^{\ell_1}$and  $q:=\prod\limits_{1\le i<j\le m} q_{ij}^{\ell_ik_j}$.

Using this (and replacing $m$ by $i$), we see that
$k_{i+1}=\nu_{\partial_{i+1}}(\partial_i^{(k_i)}\cdots \partial_1^{(k_1)}(a))$.

Taking into account that $(k_1,\ldots,k_m)=\nu_{\partial_1,\ldots,\partial_m}(a)$ and $(\ell_1,\ldots,\ell_m)=\nu_{\partial_1,\ldots,\partial_m}(b)$ we conclude that 
$$\nu_{\partial_1,\ldots,\partial_m}(ab)=\nu_{\partial_1,\ldots,\partial_m}(a)+\nu_{\partial_1,\ldots,\partial_m}(b)\ .$$

Note that the formula \eqref{eq:multi-Leibniz} refines as well:
$$\partial_1^{(k_m+\ell_m)}\cdots \partial_1^{(k_1+\ell_1)}(ab)= q\cdot \varphi\partial_i^{(k_i)}\cdots \partial_1^{(k_1)} (a)\cdot \partial_m^{(\ell_m)}\cdots \partial_1^{(\ell_1)}(b)\ .$$

 Therefore, the leading term $\lambda_{\partial_1,\ldots,\partial_m}$ (see Corollary \ref{cor:nilpotent valuation and leading coefficient}) is almost multiplicative:
$\lambda_{\partial_1,\ldots,\partial_m}(ab)=q\cdot\varphi\lambda_{\partial_1,\ldots,\partial_m}(a)\cdot \lambda_{\partial_1,\ldots,\partial_m}(b)$.

Theorem \ref{th:string valuations general} is proved.

\end{proof}

Let ${\bf q}=(q_{k\ell},1\le k< \ell\le m)$ be an array in $\kk^\times$.
Denote by 
$A_{\bf q}$ the $\kk$-algebra generated by $t_1,\ldots,t_m$ subject to 
\eqref{eq:quantum plane general}. This algebra is usually referred to as a quantum plane.

\begin{theorem}
\label{th:generalized Feigin}
In the assumption of Theorem \ref{th:string valuations general}, suppose additionally that 
$$\partial_\ell\circ \varphi_k = q_{k \ell} \varphi_k \circ \partial_\ell$$ for all $1\le k,\ell\le m$, 
where ${\widehat {\bf q}}=(q_{k\ell})$ is an $m\times m$-matrix over $\kk^\times$ whose upper part is ${\bf q}$.
Fix a homomorphism $\varepsilon:A\to \kk$ of $\kk$-algebras such that $\varepsilon \circ \varphi_k=\varepsilon$ for $k=1,\ldots,m$.

Then the assignment
$x\mapsto \sum\limits_{a\in \ZZ_{\ge 0}^m} \varepsilon(\partial_m^{(a_m)} \cdots \partial_1^{(a_1)}(x))t_1^{a_1}\cdots t_m^{a_m}$ for any $x\in A$ define a homomorphism of algebras $\Phi_\partial:A\to A_{\bf q}$.

\end{theorem}

We prove the theorem in Section \ref{Proof of Theorem th:generalized Feigin}.

\begin{example}  Let $A=\kk[t]$.
Denote by $\varphi$ the automorphism of $A$ sends $t$ to $qt$.
Define a linear map $\partial:A\to A$ by 
$\partial(t^n)=[n]_q t^{n-1}$ for $n\ge 0$,
where $[n]_q:=1+q+\cdots+q^{n-1}$ is the $q$-number (e.g., $[0]=0$).

Then $\partial$ is a (locally nilpotent) skew derivation of $A$ with respect to $\varphi$.

Suppose that $q$ is non-root of unity. 
Then all hypotheses of Theorem \ref{th:string valuations general} hold and the assignments $t^n\mapsto n=\nu_{\partial}(t^n)$ define a (standard) valuation $\nu_\partial:A\setminus\{0\}\to \ZZ_{\ge 0}$.

More generally,  in the notation of Example \ref{ex:quantum plane}, the assignments $t_\ell\mapsto q_{k\ell}t_\ell$ define an automorphism $\varphi_k$ of $A_{\bf q}$. All these automorphisms commute.

Also for $k\in [1,m]$ define a linear map $\partial_k:A_{\bf q}\to A_{\bf q}$ by 
$$\partial_k(t_1^{a_1}\cdots t_m^{a_m}):=[a_k]_{q_{kk}}\cdot \varphi_k(t_1^{a_1}\cdots t_{k-1}^{a_{k-1}})t_k^{a_k-1}t_{k+1}^{a_{k+1}}\cdots t_m^{a_m}$$ is a skew derivation  with respect to $\varphi_k$. One can show that
$\partial_k\circ \varphi_\ell=q_{\ell k} \varphi_\ell \circ \partial_k$ for all $k,\ell\in [1,m]$. Thus, the produced $\partial_i, \varphi_i, 1\le i\le m$ fulfill the conditions of Theorem~\ref{th:string valuations general}. Therefore $\nu_{(\partial_1,\dots, \partial_m)}$ is a valuation of the algebra $A_{\bf q}$. Moreover, the valuation is injective due to Proposition~\ref{commutative_injective}. 
\end{example}

Even more generally, let $V$ be a vector space, $\Psi:V\otimes V\to V\otimes V$ be a linear map. Then the tensor algebra $T(V)=\oplus_{n\ge 0} V^{\otimes n}$ has a coproduct homomorphism $\Delta:T(V)\to T(V)\otimes_\Psi T(V)$ given by $\Delta(v)=v\otimes 1+ 1\otimes v$ for $v\in V$. For any linear function $f\in V^*$ define a linear map  $\varepsilon_f:T(V)\to \kk$ by $\varepsilon_f|_V=f$, $\varepsilon_f|_{V^{\otimes k}}=0$ if $k\ne 1$.
Then define a homogeneous of degree $-1$ linear map $\partial_f:T(V)\to T(V)$ by
$\partial_f=(\varepsilon_f\otimes Id_{T(V)})\circ \Delta$,
that is, using Sweedler notation $\Delta(x)=x_{(1)}\otimes x_{(2)}$, we obtain
$$\partial_f(x)=\varepsilon_f(x_{(1)})\cdot x_{(2)}\ .$$

\begin{remark} 
\label{rem:Psi}
Usually one assumes that $\Psi$ is {\it braiding} of $V$, i.e.,  satisfies the braid equation $\Psi_{12}\circ \Psi_{23}\circ\Psi_{12}=\Psi_{23}\circ\Psi_{12}\circ\Psi_{23}$ in $End_\kk(V\otimes V\otimes V)$, where $\Psi_{12}=\Psi\otimes Id_V$ and $\Psi_{23}=Id_V\otimes \Psi$.

In that case, $T(V)$ is a braided Hopf algebra and the common kernel $J_\Psi(V)$ of all $\partial_f$, $f\in V^*$ on $T(V)_{\ge 2}=\oplus_{n\ge 2} V^{\otimes n}$ is a graded Hopf ideal  which is maximal among all Hopf ideals $J$
of $T(V)$ such that $J\cap V=\{0\}$.
Then the algebra
 $B_\Psi(V):=T(V)/J_\Psi(V)$ (known as Nichols algebra, see e.g., \cite[Sections 2 and 3]{B}) is a (braided) Hopf algebra.
    
\end{remark}

$\Psi$ is called diagonal if there is a basis $e_i$, $i\in I$ such that $\Psi(e_i\otimes e_j)=p_{ij} e_j\otimes e_i$ for some $I\times I$ matrix ${\bf p}=(p_{ij})$ over $\kk^\times$.
The following is immediate
\begin{lemma} (a) If $\Psi=\tau$ is the permutation of factors, then $\partial_f$ is a derivation of $T(V)$

(b) If $\Psi$ is a diagonal braiding, then $\partial_i:=\partial_{e_i^*}$ is skew derivation of $T(V)$ with respect to the automorphism $\varphi_i$ of $T(V)$ given by $\varphi_i(e_j)=p_{ij}e_j$.
Moreover, 
$\partial_i\circ \varphi_j=p_{ji}  \varphi_j\circ \partial_i$.
    
\end{lemma}

Note that for any ideal $J\subset T(V)$ invariant under all $\partial_i$ and $\varphi_i$ the quotient algebra $A=T(V)/J$ inherits all $\partial_i$ and $\varphi_i$.

In particular, if $C=(c_{ij})$ is a symmetrized Cartan matrix for some Kac-Moody Lie algebra $\gg=\nn_-\oplus \hh\oplus \nn_+$, then  $J_\Psi(V)$  is the quantum Serre ideal, i.e., $T(V)/J_{max}=U_q(\nn_+)$.

The following is an immediate 

\begin{corollary} In notation of Theorem~\ref{th:string valuations general}, for any sequence $\ii=(i_1,\ldots,i_m)\in I^m$ one has a  string $\ii$-valuation of $T(V)$ (or any of its quotients by any ideal under all $\partial_i$ and $\varphi_i$) given by 
$$\nu_\ii:=\nu_{(\partial_{i_m},\ldots,\partial_{i_1})}$$  
\end{corollary}

\subsection{Proof of Theorem~\ref{th:generalized Feigin}}

\label{Proof of Theorem th:generalized Feigin}

Let $\kk$ be a field and ${\mathcal U}$ be the free algebra freely generated by $E_1,\ldots, E_m$. Clearly, ${\mathcal U}$ is
$\ZZ_{\ge 0}^m$-graded via $\deg E_k=\alpha_k$, where $\alpha_1,\ldots,\alpha_m$ is the basis of $\ZZ_{\ge 0}^m$.

Let $\widehat {\bf q}=(q_{kl})$, $1\le k,l\le m$ be a  
$m\times m$-matrix with all
$q_{k l}\in \kk^\times$. Following G. 
Lusztig (\cite{lu}) we turn 
${\mathcal U}\bigotimes
{\mathcal U}$ into an algebra via (see also Remark \ref{rem:Psi}):
\begin{equation}  
\label{(1.2)}
(a\otimes b)(c\otimes d):=Q(deg(b),deg(c))(ac\otimes bd)
\end{equation}\
for any   homogeneous elements $b,c$ of ${\mathcal
U}$ and any $a,d\in {\mathcal U}$, where  
\begin{equation}
\label{(1.1)}
Q((n_1,\ldots,n_m),(n'_1,\ldots,n'_m))=
\prod_{k,l=1}^m q_{k l}^{n_k n'_l} \ . 
\end{equation}

This makes  
${\mathcal U}\bigotimes {\mathcal U}$ into a 
$\ZZ_{\ge 0}^m$-graded associative algebra 
(with the standard grading $({\mathcal
U}\bigotimes_\kk {\mathcal U})(\gamma)=\bigoplus_{\gamma'}~ {\mathcal
U}(\gamma')\bigotimes {\mathcal U}(\gamma-\gamma')$). 
This algebra will be denoted  by ${\mathcal U}
{\bigotimes}_{\widehat {\bf q}}{\mathcal U}$ and called the 
${\widehat {\bf q}}$-{\it braided} tensor square of ${\mathcal U}$.

Denote by 
$\Delta:{\mathcal U}\to {\mathcal U} {\bigotimes}_{\widehat {\bf q}}
{\mathcal U}$ 
the algebra homomorphism determined by $\Delta(E_k)=E_k\otimes 1+1\otimes E_k$, $k=1,\ldots,m$.

By definition,
\begin{eqnarray}\label{(1.4)}
\Delta(u)=u\otimes 1+1\otimes u+\sum_n u_n\otimes u'_n\
\end{eqnarray}
for any $u\in {\mathcal U}$, 
where all $u_n,u'_n$ are homogeneous elements of nonzero degrees.

Let $\widehat {\mathcal U}$ be
the completion of ${\mathcal U}$ with respect to the grading, 
that is, the space
of all formal series $\widehat u=\sum\limits_{\gamma\in \ZZ_{\ge 0}^m} 
u_\gamma$,
where $u_\gamma\in {\mathcal U}(\gamma)$.  
Clearly, $\widehat {\mathcal U}$ is an algebra. 
The coproduct $\Delta$ uniquely extends to  
$\widehat \Delta: \widehat {\mathcal U}\to \widehat {\mathcal U}\widehat 
{\bigotimes}_{\widehat {\bf q}} 
\widehat {\mathcal U}$.

Recall that $A_{\bf q}$ a ${\bf
k}$-algebra  generated by 
$t_1,\ldots,t_m$ subject
to the  relations \eqref{eq:quantum plane general}
for all $1\le k<l\le m$. 
  
Define $\widehat {\mathcal U}_{\widehat {\bf q}}=A_{\bf q}\widehat \bigotimes_\kk 
{\mathcal U}$, the space of formal series of the  
form $\sum_\gamma t_\gamma 
\otimes u_\gamma$, 
where $t_\gamma\in A_{\bf q}$ and $u_\gamma\in {\mathcal U}(\gamma)$.
We consider $\widehat {\mathcal U}_{\widehat {\bf q}}$ with the standard 
algebra structure (so we can write $tu=ut=t\otimes u$).

Consider the completed tensor square 
${\mathcal V}_{\widehat {\bf q}}={\mathcal U}_{\bf
q}\widehat {\bigotimes\limits_{A_{\bf q}}}  {\mathcal U}_{\widehat {\bf q}}$,  
where the left factor is regarded as a 
right $A_{\bf q}$-module and the right 
factor as a left
$A_{\bf q}$-module. Note that ${\mathcal V}_{\widehat {\bf q}}$ is a 
$A_{\bf q}$-bimodule. In ${\mathcal V}_{\widehat {\bf q}}$, we can write 
$t(u\otimes v)=(tu)\otimes v=u\otimes (tv)=(u\otimes v)t$
for any $u,v\in {\mathcal U}, t\in A_{\bf q}$. 
Under the standard identification 
${\mathcal V}_{\widehat {\bf q}}\cong A_{\bf q} \widehat  \bigotimes  
\left ({\mathcal U}{\bigotimes}_{\widehat {\bf q}}  {\mathcal U}\right )$ 
this bimodule ${\mathcal V}_{\widehat {\bf q}}$ becomes an algebra.

There is a  natural morphism of $A_{\bf q}$-bimodules 
$$\widehat \Delta_{\widehat {\bf q}}:\widehat {\mathcal U}_{\widehat {\bf q}}\to {\mathcal V}_{\widehat {\bf q}}$$
which is the $A_{\bf q}$-linear extension of 
the coproduct $\widehat \Delta$ on  $\widehat {\mathcal U}$. Clearly, 
$\widehat \Delta_{\widehat {\bf q}}$ is an algebra homomorphism.

Define 
an element ${\bf e}  
\in \widehat {\mathcal U}_{\widehat {\bf q}}$ as follows:
\begin{eqnarray}\label{(1.6)}
{\bf e}={\rm exp}_{q_1}(t_1E_1)
{\rm exp}_{q_2}(t_2E_2)\cdots
{\rm exp}_{q_m}(t_mE_m)
\end{eqnarray}
where $q_k=q_{k k}$ for $k=1,\ldots,m$, 
and ${\rm exp}_q(x):=\sum\limits_{k\ge 0} \frac{1}{[k]_q!}x^k$ is 
the quantum exponential (in the notation of Proposition \ref{skew_quantum}).

\begin{theorem} \label{1.1}
The element 
${\bf e}$ is 
 {\rm grouplike} in $\widehat {\mathcal U}_{\widehat {\bf q}}$, 
i.e., $\widehat \Delta_{\widehat {\bf q}}({\bf e})=
{\bf e}\otimes {\bf
e}$.

\end{theorem}

\begin{proof} We need the following  

\begin{lemma}\label{1.2} 

(a) Each factor ${\bf e}_k=
{\rm exp}_{q_k}(t_kE_k)$ of ${\bf
e}$ is a group-like element in $\widehat {\mathcal U}_{\widehat {\bf q}}$. 

(b) $(1\otimes {\bf e}_k)({\bf e}_l
\otimes 1)=({\bf e}_l\otimes 1) (1 \otimes
{\bf e}_k)$ for any $1\le k<l\le m$.

\end{lemma}

\begin{proof}  Prove (a). Denote $E=t_kE_k$. Since 
$\Delta(E_k)=
E_k\otimes 1+1\otimes E_k$, for each $k$ we have 
$$\widehat \Delta_{\widehat {\bf q}}(E)=t_k(E_k\otimes 1+1\otimes E_k)=
E\otimes 1+1\otimes E.$$ 
Denote $x=E\otimes 1, y=1\otimes E$. Let us show that  
$yx=qxy$ where
$q:=q_k$. 
Indeed, 
$$yx=(1\otimes E)(E\otimes 1)=(1\otimes t_kE_k)(t_kE_k\otimes 1)=
t_k^2(1\otimes E_k)(E_k\otimes 1)$$
$$=Q(E_k,E_k)t_k^2(E_k\otimes E_k)=
q_k (t_kE_k\otimes t_kE_k)=qxy \ .$$ 
Further, we obtain  
$$\widehat \Delta_{\widehat {\bf q}}({\rm exp}_q(E))
={\rm exp}_q(\widehat \Delta_{\widehat {\bf q}}(E))
={\rm exp}_q(x+y)$$ and 
$${\rm exp}_q(E)\otimes
{\rm exp}_q(E)=({\rm exp}_q(E)\otimes 1)(1\otimes {\rm exp}_q(E))$$
$$=({\rm exp}_q(E\otimes 1))({\rm exp}_q(1\otimes
E))={\rm exp}_q(x) {\rm exp}_q(y) \ . $$ Then the well-known rule 
for the quantum exponentials. 
$${\rm exp}_q(x+y)={\rm exp}_q(x) {\rm exp}_q(y)$$
(provided that $yx=qxy$) implies that 
$\widehat \Delta({\rm exp}_q(E))=
{\rm exp}_q(E)\otimes {\rm exp}_q(E)$. Part (a) is proved. 

\smallskip

Prove (b).  Denote $E=t_kE_k$ and $E'=t_lE_l$. By definition of ${\mathcal
U} {\bigotimes}_{\widehat {\bf q}} {\mathcal U}$,
$$(1\otimes E)(E'\otimes 1)=t_kt_l(1\otimes E_k)(E_l\otimes
1)=q_{k l}t_kt_l(E_l\otimes E_k)$$
$$
=t_lt_k(E_l\otimes E_k)=E'\otimes E=(E'\otimes 1) (1\otimes E) $$
by 
\eqref{eq:quantum plane general}.
Therefore,  $(1\otimes f(E))(g(E')\otimes 1)=f(E')\otimes 
g(E)=(f(E')\otimes 1) (1\otimes g(E))$
for any polynomials $f$ and $g$ in one variable. Passing to the 
completion, we see that 
$f$ and $g$ can also be power series in the above formula.  
Taking  $f(E):={\bf e}_k={\rm exp}_{q_k}(E)$ and $g(E'):={\bf e}_l={\rm
exp}_{q_l}(E')$ completes the proof of  (b).  
The lemma is proved.   
\end{proof}

\medskip 

We are ready to complete the proof of Theorem~\ref{1.1} now. 
Recall that we use the
shorthand ${\bf e}_k={\rm exp}_{q_k}(t_kE_k)$ so that
${\bf e}={\bf
e}_1{\bf e}_2\cdots {\bf e}_m$. 

Using Lemma~\ref{1.2} and the fact that 
$(a\otimes 1)(1\otimes b)=
a\otimes b$ for any  $a,b\in \widehat {\mathcal U}_{\widehat {\bf q}}$,
we obtain 
$$\widehat \Delta_{\widehat {\bf q}}({\bf e})=
\widehat \Delta_{\widehat {\bf q}}({\bf e}_1{\bf
e}_2\cdots {\bf e}_m)=
\widehat \Delta_{\widehat {\bf q}}({\bf e}_1)
\widehat \Delta_{\widehat {\bf q}}({\bf e}_2)\cdots \widehat
\Delta_{\widehat {\bf q}}({\bf e}_m)=
({\bf e}_1\otimes {\bf e}_1)({\bf e}_2\otimes {\bf e}_2)\cdots 
({\bf e}_m\otimes {\bf e}_m)$$
$$=({\bf e}_1\otimes 1)(1\otimes {\bf e}_1)({\bf e}_2\otimes 1)
(1\otimes
{\bf e}_2)\cdots ({\bf e}_m\otimes 1)(1\otimes {\bf e}_m)\ . $$
Using the commutativity property  in Lemma~\ref{1.2}~(b), we obtain
$$\widehat \Delta_{\widehat {\bf q}}({\bf e})
=\big(({\bf e}_1\otimes 1)({\bf e}_2\otimes 1)\cdots
({\bf e}_m\otimes 1)\big)\big((1\otimes {\bf e}_1)(1\otimes {\bf
e}_2)\cdots 
(1\otimes {\bf e}_m)\big)$$
Finally, using the identities $(u\otimes 1)(v\otimes 1)=uv\otimes 1,
(1\otimes u)(1\otimes v)=1\otimes uv$ for any $u,v\in \widehat {\mathcal U}$,
we obtain 
$\widehat \Delta_{\widehat {\bf q}}({\bf e})=({\bf e}\otimes 1)
(1\otimes {\bf e})={\bf e}\otimes 
{\bf e}$. Theorem~\ref{1.1} is proved.\end{proof}

Denote by ${\mathcal U}'$ the subspace of  ${\mathcal U}$ spanned by all monomials $E_1^{a_1}\cdots E_m^{a_m}$. Clearly, ${\mathcal U}'$ is a sub-coalgebra of ${\mathcal U}'$. Denote  $\widehat {\mathcal U}'_{\widehat {\bf q}}:=A_{\bf q}\widehat \bigotimes_\kk 
{\mathcal U}'$. Clearly, $\widehat {\mathcal U}'_{\widehat {\bf q}}$ is a subcoalgebra of $\widehat {\mathcal U}_{\widehat {\bf q}}$ containing ${\bf e}$.

\medskip 

Now we define a pairing between $A$ and ${\mathcal U}$ by 
$$\langle a,E_{k_1} \cdots E_{k_\ell}\rangle :=\varepsilon(\partial_{k_\ell}\cdots \partial_{k_1}(a))$$
for any $k_1,\ldots,k_\ell\in [1,m]$.

\begin{proposition}

\label{pr:coalgebra homomorphism}
$\langle a b,u\rangle =\langle a\otimes b,\Delta(u)\rangle $
where $\langle a\otimes b,u_1\otimes u_2\rangle =\langle a,u_1\rangle \langle b,u_2\rangle $ for any $a,b\in A$, $u_1, u_2\in {\mathcal U}$. 
\end{proposition} 

\begin{proof} We need the following

\begin{lemma} $\langle a,E_ku\rangle=\langle \partial_k(a),u\rangle$ and  
$\langle \varphi_k(a),u\rangle=\langle a,\varphi_k^\vee(u)\rangle$ in the notation of Theorem \ref{th:generalized Feigin}, where $\varphi_k^\vee$ is an automorphism of ${\mathcal U}$ given by $\varphi_k^\vee(E_l)=q_{k l}E_l$.
    
\end{lemma}

\begin{proof} The first assertion is immediate. We prove the second assertion by induction in length of a homogeneous element $u$. Indeed,
$$\langle \varphi_k(a),E_l u'\rangle=\langle \partial_l\varphi_k(a),u'\rangle=q_{kl}\langle \varphi_k(\partial_\ell(a)),u'\rangle=q_{kl}\langle \partial_l(a),\varphi^\vee_k(u')\rangle$$
$$=q_{kl}\langle a,E_l\varphi^\vee_k(u')\rangle=\langle a,\varphi^\vee_k(E_lu')\rangle\ .$$   
\end{proof}

Now we can finish the proof of the proposition by induction in the length of a homogeneous element $u$. Indeed, 
$$\langle a b,E_ku'\rangle=\langle \partial_k(ab), u'\rangle=\langle \partial_k(a)b+\varphi_k(a)\partial_k(b), u'\rangle=\langle \partial_k(a)\otimes b+\varphi_k(a)\otimes \partial_k(b), \Delta(u')\rangle$$
$$=\langle \partial_k(a),u'_{(1)}\rangle\langle  b,u'_{(2)}\rangle +\langle \varphi_k(a),u'_{(1)}\rangle \langle\partial_k(b),u'_{(2)}\rangle $$
$$=\langle a,E_ku'_{(1)}\rangle\langle  b,u'_{(2)}\rangle +\langle a,\varphi_k^\vee(u'_{(1)})\rangle \langle b,E_k u'_{(2)}\rangle =\langle a\otimes b,E_ku'_{(1)}\otimes u'_{(2)} +\varphi_k^\vee(u'_{(1)})\otimes E_ku'_{(2)}\rangle$$
by the inductive assumption.

On the other hand, 
$$\Delta(E_ku')=\Delta(E_k)\Delta(u')=(E_k\otimes 1+1\otimes E_k)(u'_{(1)}\otimes u'_{(2)})$$
$$=E_ku'_{(1)}\otimes u'_{(2)}+(1\otimes E_k)(u'_{(1)}\otimes u'_{(2)})=E_ku'_{(1)}\otimes u'_{(2)}+\varphi_k^\vee (u'_{(1)})\otimes E_ku'_{(2)}$$

Thus, 
$\langle a b,E_ku'\rangle=\langle a\otimes b,\Delta(E_ku')\rangle$.
\end{proof}

Furthermore, we define the pairing 
$A\times \widehat {\mathcal U}'_{\widehat {\bf q}}\to A_{\bf q}$ by the formula
$\langle a,\sum_\gamma t_\gamma u_\gamma\rangle =\sum_\gamma \langle a,u_\gamma\rangle t_\gamma$
(The sum is finite by the local nilpotence of $\partial_k$.) 

We need the following general fact.
\begin{lemma} 
\label{le:grouplike homomorphism}Let $U$ be a coalgebra over an algebra $P$ (i.e., $U$ is a coalgebra in the monoidal category of $P$-bimodules) and $A$ be an algebra. Suppose that $\langle\cdot,\cdot\rangle$ is a pairing $A\otimes U\to P$ such that
$$\langle ab,u\rangle=\langle a,u_{(1)}\rangle \cdot \langle b,u_{(2)}\rangle$$
in Sweedler notation $\Delta(u)=u_{(1)}\otimes u_{(2)}$.

Then for any grouplike element ${\bf g}\in U$ the assignments $a\mapsto \langle a,{\bf g}\rangle$ define a homomorphism $\Phi:A\to P$.
    
\end{lemma}

\begin{proof} Since $\Delta({\bf g})={\bf g}\otimes {\bf g}$ it holds
$$\Phi(ab)=\langle ab,{\bf g}\rangle=\langle a,{\bf g}\rangle\langle b,{\bf g}\rangle=\Phi(a)\Phi(b)$$
for all $a,b\in A$.
\end{proof}

Define a map 
$\psi_{\widehat {\bf q}}:
A \to A_{{\widehat {\bf q}}^+}$ by 
$\psi_{\widehat {\bf q}}(x):=\langle x,{\bf e}\rangle$.
Expanding ${\bf e}$ into a power series we obtain 
\begin{eqnarray}\label{(1.7)}
\psi_{\widehat {\bf q}}(x)=\sum_{a_1,\ldots,a_m\in \ZZ_{\ge 0}}
\langle x,E_1^{(a_1)}E_2^{(a_2)}\cdots
E_m^{(a_m)}\big\rangle t_1^{a_1}t_2^{a_2}\cdots t_m^{a_m}
\end{eqnarray}
where $E_k^{(n)}=\displaystyle{{E_k^n\over [n]_{q_k}!}}$.
Note that the sum in \eqref{(1.7)} is always finite 
because ${\bf e}\in \widehat {\mathcal U}'_{\widehat {\bf q}}$. 

The following is an immediate consequence of Lemma \ref{pr:coalgebra homomorphism}  Theorem \ref{1.1}, and Lemma \ref{le:grouplike homomorphism} (taken with $P=A_{\bf q}$ and ${\bf g}={\bf e}$).

\begin{corollary}
\label{1.3} 
The map
$\psi_{{\widehat {\bf q}}}:A \to A_{{\bf q}}$ defined by \eqref{(1.7)} 
is a homomorphism of $\ZZ_{\ge 0}^m$-graded algebras.
\end{corollary}

\medskip

This finishes the proof of Theorem \ref{th:generalized Feigin} \endproof

\subsection{Injective valuations of algebras}
\begin{definition}\label{def_injective}
We say that a valuation $\nu:A\setminus \{0\} \twoheadrightarrow P$ onto a partial semigroup $P$ is {\it injective} if there exists a $\kk$-basis ${\bf B}\subset A$ of $A$ such that $\nu: {\bf B} \to P$ is a bijection. A basis fulfilling the latter property is called {\it adapted} with respect to $\nu$.
\end{definition}

\begin{remark} In the notation of the second part of Remark \ref{image_valuation},  if $\nu$ was injective and $Q$ was adapted to $\nu$, i.e., $\nu|_Q$ injective, then $\nu|_Q$ is an isomorphism. 
\end{remark}

\begin{remark}(Tautological valuation)
\label{monoidal_partial}
For a partial semigroup $P$, the tautological valuation $\nu_P:\kk P\setminus \{0\}\to P$ (see \eqref{eq:nu_P}) is injective and satisfies
$$\nu(\sum_{u\in P} \alpha_u [u]):= \max_{\alpha_u\in \kk^\times}\{u\} $$
Clearly, the basis $\{[u]\, :\, u\in P\}$ of $\kk P$ is naturally adapted to $\nu_P$ (even though the order on $P$ is not necessarily a well-order).

\end{remark}

\begin{remark}
In Proposition~\ref{pr:valuation after homomorphism}  if $\nu$ and $f$ are both injective then the valuation $f(\nu)$ is injective as well.  Indeed, if ${\bf B} \subset A$ is an adapted basis with respect to $\nu$ then $\bf B$ is an adapted basis with respect to $f(\nu)$ as well. 
\end{remark}

Now we consider behavior  of injective valuations under field extensions.

 Let $A$ be a $\kk$-algebra, and $\Gamma\subset A^\times$ be a subgroup. Suppose that $\Gamma$ is finitely generated free abelian group, denote by $\mathbb{K}$ the field of fractions of $\kk \Gamma$.

 \begin{proposition} 
 \label{pr:lifting of valuation}
 In the assumptions as above suppose that $\nu$ is a valuation $A\setminus \{0\}\to P$ for some well-ordered semigroup $P$ and 
 
 $\bullet$ $\nu(\Gamma)$ is in the center of $P$, and $\nu$ is injective on $\Gamma$, 

 $\bullet$ $\nu(\Gamma)\prec c$ for any $c\in P\setminus \nu(\Gamma)$.
 
 Then the assignments
 $a\mapsto \nu(a)\cdot\nu(\Gamma)$
 define a ($\mathbb{K}$-linear) valuation $\underline \nu:\mathbb{K}\otimes A\to \underline P=P/\nu(\Gamma)$.

 Moreover, $\nu$ is injective iff $\underline \nu$ is injective.
    
\end{proposition}

\begin{proof} Clearly, $\underline P$ is naturally ordered (see Lemma \ref{le:ordered projections}) so that the canonical homomorphism $P\twoheadrightarrow \underline P$ is ordered. Then $\underline \nu$
is a valuation, see Proposition~\ref{pr:valuation after homomorphism}.  

If $\nu$ is injective, then Proposition \ref{pr:injective} implies that $\underline \nu$ is also injective.

Conversely, suppose that $\underline \nu$ is injective. Let $\underline {\bf B}$ be any $\mathbb{K}$-basis of $\mathbb{K}\otimes A$ adapted to $\underline \nu$.
Clearly ${\bf B}=\Gamma\cdot \underline {\bf B}$ is a basis of $A$ adapted to $\nu$.

The proposition is proved.
\end{proof}

\begin{remark}
In the conditions of Lemma~\ref{basis_partial_monoid}  ${\bf B}=f^*(Q)\sqcup {\bf B}_I$ is a basis of $\kk P$ adapted to the tautological valuation of $\nu_P:\kk  P\setminus \{0\} \to P$.
  
\end{remark}
The proof of the following proposition is similar to the proof of Theorem~\ref{filtration}~ii),~iv).

\begin{proposition}\label{valuation_injective_partial}
Let $\nu:A\setminus \{0\} \twoheadrightarrow P$ be a valuation onto a partial semigroup $P$. When $P$ is well-ordered and $\dim (A_u)=1$ for any $u\in P$, the valuation $\nu$ is injective. Every set ${\bf B}\subset A$ such that the mapping $\nu:{\bf B} \to P$ is a bijection, is an adapted basis of $A$ (with respect to $\nu$). 

Vice versa, if $\nu$ is injective then $\dim (A_u)=1$ for any $u\in P$.
\end{proposition}

The following is a direct consequence of Corollary \ref{cor:new valuations}.

\begin{proposition}
\label{pr:injective subalgebra}
Let $\nu:A\setminus \{0\} \to P$ be an injective valuation of an algebra $A$ into a well-ordered (partial or entire) semigroup $P$, and $B$ be a subalgebra of $A$. Then the restriction of $\nu$ on $B\setminus \{0\}$ is also an injective valuation.    
\end{proposition}

\begin{remark} 
\label{rem: subalgebra injective}
In view of Remark \ref{rem: subspace injective}, it is interesting whether an analog of Proposition \ref{pr:injective subalgebra} holds without assumption of well-orderness of $P$.
    
\end{remark}

In the following theorem we consider different words in  generators of a partial semigroup representing the same element of the partial semigroup, among them we choose the minimal with respect to deglex (also for non-commutative partial semigroups), cf. Lemma~\ref{deglex},
and call this word canonical. The following theorem can be easily deduced from Corollary~\ref{adapted_arbitrarily}.

\begin{theorem}\label{monomial_basis_partial} Let $\nu:A\setminus \{0\} \twoheadrightarrow P_{\nu} \subset P$ be an injective well-ordered valuation into a partial semigroup $P$, generated by $P_0$,
and let $X_0$ be a generating set of $A$ such that $\nu|_{X_0}$ is a bijection $X_0\widetilde \to P_0$. 
Then the set of all monomials $x_u:=\vec {\prod\limits_{x\in X_0}} x$ corresponding to the canonical factorization of $u\in P_\nu$ is  an adapted to $\nu$ basis in $A$, and $\nu(x_u)=u$ (we will refer to the elements $x_u$ as {\it standard monomials}).
    
\end{theorem}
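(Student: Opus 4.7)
The plan is to break the proof into two steps: first verify that each standard monomial $x_u$ is nonzero and satisfies $\nu(x_u)=u$, and then invoke the general principle (Corollary~\ref{adapted_arbitrarily}, together with Proposition~\ref{valuation_injective_partial}) that any family of nonzero elements whose valuations biject onto $P_\nu$ forms a basis adapted to $\nu$, given that $P_\nu$ is well-ordered.

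For the first step, I would fix $u\in P_\nu$ and its canonical factorization $\vec u=(u_1,\ldots,u_\ell)$, so that $u=u_1\circ\cdots\circ u_\ell$ in $P$ with each $u_i\in P_0$. The corresponding generators $x_{u_i}\in X_0$ satisfy $\nu(x_{u_i})=u_i$ by the hypothesis on $X_0$. Applying the associativity clause in Definition~\ref{monoid_partial} (if $(c\circ c')\circ c''$ is defined, so are $c'\circ c''$ and $c\circ(c'\circ c'')$) inductively, I would argue that every initial and internal subproduct $u_{i}\circ\cdots\circ u_{j}$ is defined in $P$. This lets the multiplicativity axiom of Definition~\ref{valuation_partial}(iii) be iterated: at each stage $\nu$ of the partial product equals the corresponding composite, and in particular the product $x_u:=x_{u_1}\cdots x_{u_\ell}$ is nonzero with $\nu(x_u)=u_1\circ\cdots\circ u_\ell=u$.

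For the second step, I would observe that the map $u\mapsto x_u$ produces a family $\{x_u\}_{u\in P_\nu}\subset A\setminus\{0\}$ such that $\nu$ sends it bijectively onto $P_\nu$. Since $P_\nu$ is well-ordered, Proposition~\ref{valuation_injective_partial} (or equivalently Corollary~\ref{adapted_arbitrarily}) guarantees that any such family is automatically a basis of $A$ adapted to $\nu$: linear independence follows because distinct $x_u$ have distinct valuations and any nontrivial linear relation $\sum\alpha_i x_{u_i}=0$ would, upon isolating a single largest-valuation term and invoking Definition~\ref{valuation_partial}(ii), force $\alpha_i=0$; spanning follows by transfinite induction along the well-order, subtracting off the leading term of any nonzero $a\in A$ to produce an element of strictly smaller valuation.

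The main technical point — and the step I expect to require the most care — is the inductive verification that the valuation multiplicativity can be applied along the entire factorization. The subtlety is that in a partial semigroup the multiplicativity axiom only applies when the composition is \emph{a priori} defined in $P$; thus one needs the associativity-propagation property of Definition~\ref{monoid_partial} to conclude that definedness of $u_1\circ\cdots\circ u_\ell$ propagates to all parenthesizations. Once that is in place, both steps reduce to standard arguments already encoded in the earlier results of the paper, and the role of ``canonical'' (lex-minimal shortest) factorization is only to single out a preferred representative $x_u$ — any shortest factorization would yield a basis by the same reasoning.
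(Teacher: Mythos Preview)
Your proposal is correct and follows essentially the same approach as the paper, which simply says the theorem ``can be easily deduced from Corollary~\ref{adapted_arbitrarily}.'' You have filled in precisely the details the paper omits: the associativity-propagation argument from Definition~\ref{monoid_partial} that makes the iterated application of axiom~(iii) legitimate, and then the appeal to Proposition~\ref{valuation_injective_partial}/Corollary~\ref{adapted_arbitrarily} to conclude that a set mapping bijectively onto $P_\nu$ under $\nu$ is automatically an adapted basis. Your closing remark that the canonical (lex-minimal shortest) factorization merely singles out a representative is also accurate.
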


\begin{remark}  $X_0$ is not always a minimal generating set for $A$. The same applies to $P_0$. In principle we can require that $P_0$ is minimal by inclusion. In some cases, including submonoids of $\ZZ_{\ge 0}^m$, $P^{ind}$ of indecomposable elements of $P$ generate $P$, in which case we can choose $P_0:=P^{ind}$.  
    
\end{remark}

In the following theorem we show that given an injective valuation on an algebra, how one can define it on its quotient algebra.

\begin{theorem}\label{graded_partial}
Let $A$ be a $\kk$-algebra and $\nu_0:A\setminus \{0\} \twoheadrightarrow P$ be an injective valuation onto a well-ordered partial semigroup $P$. Let $I\subset A$ be an ideal. 

i) Then $\nu_0(I\setminus \{0\})$ is an ideal in $P$.
For $a\in (A/I)\setminus \{0\}$ the formula from Proposition  \ref{pr:surjective nu}, i.e.,
\begin{equation}\label{48}
\nu(a):= \min \nu_0(a+I)    
\end{equation}
defines an injective valuation $\nu:(A/I)\setminus \{0\} \twoheadrightarrow (P\setminus \nu_0(I\setminus \{0\}))$. If $\nu_0(a)\in P\setminus \nu_0(I\setminus \{0\})$ then $\nu(a)=\nu_0(a)$.

ii) If $u\in P\setminus \nu_0(I\setminus \{0\})$ is indecomposable then $u$ is also indecomposable in $P$.

iii) Let $\{x_u\, :\, u\in P\}$ be a standard monomial basis of $A$ with respect to $\nu_0$ (cf. Theorem~\ref{monomial_basis_partial}). Then ${\bf B}:= \{q(x_u)\, :\, u\in P\setminus \nu_0(I\setminus \{0\})\}$ is a standard monomial basis of $A/I$ with respect to $\nu$ where $q:A\twoheadrightarrow A/I$ is the natural projection.
\end{theorem}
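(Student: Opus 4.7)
The proof decomposes naturally into proving (i), deducing (ii) as an easy corollary, and establishing (iii), the last of which supplies the adapted basis needed to finish the injectivity in (i). First I would show that $J:=\nu_0(I\setminus\{0\})$ is a two-sided ideal of $P$: if $c\in P$ and $j\in J$ are composable, lift them to $x\in A\setminus\{0\}$ with $\nu_0(x)=c$ and $y\in I\setminus\{0\}$ with $\nu_0(y)=j$. Definition~\ref{valuation_partial}(iii) then gives $xy\ne 0$ and $\nu_0(xy)=c\circ j$, and since $I$ is a two-sided ideal $xy\in I$, whence $c\circ j\in J$; the right-ideal case is symmetric. Well-orderedness of $P$ makes $\nu(a+I):=\min\nu_0(a+I)$ a legitimate definition on $(A/I)\setminus\{0\}$. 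The critical step is to show the minimum avoids $J$: if $\nu_0(a+x)=j\in J$ with $x\in I$, pick $y\in I\setminus\{0\}$ with $\nu_0(y)=j$. Because $\nu_0$ is injective, the graded piece at $j$ is one-dimensional (Proposition~\ref{valuation_injective_partial}), so there is $\kappa\in\kk^*$ with $\nu_0((a+x)-\kappa y)\prec j$ or $(a+x)-\kappa y=0$. Since $x-\kappa y\in I$, this exhibits a representative of $a+I$ with strictly smaller $\nu_0$-value, contradicting minimality.

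The valuation axioms for $\nu$ are then checked using minimal representatives $a_0\in a+I$, $b_0\in b+I$. The inequality $\nu(a+b)\preceq\max(\nu(a),\nu(b))$ inherits directly from the same inequality for $\nu_0$ applied to $a_0+b_0$ (which is nonzero because $a+b\notin I$). For multiplicativity, assume $\nu(a)\circ\nu(b)$ is defined in $P\setminus J$; then $a_0b_0\ne 0$ and $\nu_0(a_0b_0)=\nu(a)\circ\nu(b)$, giving $\nu(ab)\preceq\nu(a)\circ\nu(b)$, and if this inequality were strict, subtracting a representative of $ab+I$ of smaller valuation from $a_0b_0$ would produce a nonzero element of $I$ of valuation $\nu(a)\circ\nu(b)$, forcing $\nu(a)\circ\nu(b)\in J$, a contradiction. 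The same leading-term argument rules out $ab\in I$. For surjectivity onto $P\setminus J$ (and the statement $\nu(a)=\nu_0(a)$ when $\nu_0(a)\in P\setminus J$), given $u\in P\setminus J$ lift to $x\in A\setminus\{0\}$ with $\nu_0(x)=u$; then $x\notin I$, and if $\nu(x+I)\prec u$ the same cancellation argument places $u$ in $J$, a contradiction, so $\nu(x+I)=u$.

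Part (ii) is immediate from the ideal property of $J$: any decomposition $u=v\circ w$ in $P$ of an element $u\in P\setminus J$ forces $v,w\in P\setminus J$ (else $u\in J$), so indecomposability in $P\setminus J$ upgrades to indecomposability in $P$. For (iii), the same observation shows that every factorization of $u\in P\setminus J$ by generators of $P$ lies entirely within $P\setminus J$, so the canonical (shortest, lex-minimal) factorization coincides in both partial semigroups; hence $x_u$ (the standard monomial produced by Theorem~\ref{monomial_basis_partial} for $\nu_0$) is a standard monomial for $\nu$ relative to the induced generating set of $P\setminus J$. Linear independence of $\{q(x_u):u\in P\setminus J\}$ follows by contradiction: a finite relation $\sum\kappa_u q(x_u)=0$ lifts to $\sum\kappa_u x_u\in I$, and if nonzero its $\nu_0$-leading term equals some $u_0\in P\setminus J$ (because $\{x_u\}$ is adapted to $\nu_0$), contradicting membership in $I$. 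Spanning follows from the standard transfinite leading-term reduction using $\nu$, well-orderedness of $P\setminus J$, and the already established surjectivity. This simultaneously supplies the adapted basis required for injectivity of $\nu$, completing (i). The main obstacle, in my view, is the leading-term cancellation that appears twice --- forcing $\nu(a+I)\in P\setminus J$ and forcing equality in $\nu(ab)=\nu(a)\circ\nu(b)$ --- both relying essentially on injectivity of $\nu_0$ to give one-dimensional graded pieces; without this hypothesis the argument collapses and $\nu$ could at best be sub-multiplicative.
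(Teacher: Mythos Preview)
Your proof is correct and follows essentially the same route as the paper: the same leading-term cancellation (via injectivity of $\nu_0$) drives the claim that $\nu(a+I)\in P\setminus J$, the multiplicativity equality, and the reduction argument for the basis. The only organizational difference is that the paper proves injectivity of $\nu$ directly in part~(i) via the Euclidean property and then invokes Proposition~\ref{valuation_injective_partial} for~(iii), whereas you build the adapted basis first and read injectivity off of it---the underlying computation is identical (your ``transfinite reduction using $\nu$'' is really reduction via injectivity of $\nu_0$ on minimal representatives, which is exactly the paper's Euclidean step).
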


\begin{proof} i) First, we note that if  $\nu_0(a)\in P\setminus \nu_0(I\setminus \{0\})$ then $\nu(a)=\nu_0(a)$. Indeed, suppose that on the contrary it holds $\nu_0(a+f)\prec \nu_0(a)$ (cf.  \eqref{48}). Then $\nu_0(f)=\nu_0(a)$ which contradicts the supposition.

Observe that for any $a\in (A/I)\setminus \{0\}$ it holds $\nu(a)\notin \nu_0(I\setminus \{0\})$. Indeed, otherwise $\nu(a)=\nu_0(a+f)\in \nu_0(I\setminus \{0\})$ for suitable $f\in I\setminus \{0\}$. Then there exists $g\in I\setminus \{0\}$ such that $\nu_0(g)=\nu_0(a+f)$. Due to the injectivity of $\nu_0$ there exists $\alpha \in \kk^\times$ for which holds $\nu_0(a+f+\alpha g)\prec \nu_0(a+f)$, this contradicts to the equality $\nu(a)=\nu_0(a+f)$ and to \eqref{48}.

Now let $a,b\in (A/I)\setminus \{0\}$ and $f,g\in I$ be such that $\nu(a)=\nu_0(a+f), \nu(b)=\nu_0(b+g)$ according to \eqref{48}. Then 
$$\nu(a+b)\preceq \nu_0(a+f+b+g) \preceq \max\{\nu_0(a+f), \nu_0(b+g)\} =\max\{\nu(a+f), \nu(b+g)\}$$
\noindent which justifies Definition~\ref{valuation_partial}~ii) for $\nu$. 

To verify Definition~\ref{valuation_partial}~iii) for $\nu$ assume that $\nu(a)\circ \nu(b) \in P\setminus \nu_0(I\setminus \{0\})$. Since 
$$\nu(ab)\preceq \nu_0(ab+ag+fb+fg)=\nu_0(a+f)\circ \nu_0(b+g)$$
\noindent due to \eqref{48} and to Definition~\ref{valuation_partial}~iii) for $\nu_0$, we get $\nu(ab)\preceq \nu(a)\circ \nu(b)$. Suppose that $\nu(ab)\prec \nu(a)\circ \nu(b)$. Let $\nu(ab)=\nu_0((a+f)(b+g)+f_0)$ for appropriate $f_0\in I\setminus \{0\}$ (see \eqref{48}). Hence 
$$\nu_0((a+f)(b+g)+f_0) \prec \nu(a)\circ \nu(b)=\nu_0((a+f)(b+g))$$
\noindent and thereby, $\nu_0((a+f)(b+g))=\nu_0(f_0)\in \nu_0(I\setminus\{0\})$. The obtained contradiction shows that $\nu(ab)=\nu(a)\circ \nu(b)$.

Finally, we prove that $\nu$ is injective. Let $a,b\in (A/I)\setminus \{0\}$ and $f,g\in I$ be such that $\nu_0(a+f)=\nu(a)=\nu(b)=\nu_0(b+g)$ (see  \eqref{48}). Since $\nu_0$ is injective there exists $\alpha \in \kk^\times$ such that either $\nu_0((a+f)+\alpha (b+g))\prec \nu_0(a+f)$ or $a+f+\alpha (b+g)=0$. In the former case $\nu(a+\alpha b)\preceq \nu_0((a+f)+\alpha (b+g))\prec \nu(a)$, while in the latter case $(A/I)\ni a+\alpha b=0$. 
\vspace{2mm}

ii) Suppose the contrary, then $u=u_1u_2$ for suitable $u_1, u_2\in P$. It holds $u_1, u_2 \notin \nu_0(I\setminus \{0\})$, this contradicts to that $u\in P\setminus \nu_0(I\setminus \{0\})$ is indecomposable. \vspace{2mm}

iii) Due to i) it holds $\nu(q(x_u))=\nu_0(x_u)=u$ for $x_u \in \bf B$ (cf. Theorem~\ref{monomial_basis_partial}) and $\nu((A/I)\setminus \{0\})=P\setminus \nu_0(I\setminus \{0\})=\nu(\bf B)$. Therefore, Proposition~\ref{valuation_injective_partial} implies that $\bf B$ is an adapted basis of $A/I$ with respect to $\nu$. Finally, ii) entails that $\bf B$ is a standard monomial basis. 

The theorem is proved. \end{proof}

\vspace{2mm}

\begin{example}
Let an algebra $A:=\kk[x,y]/(x^2-y^3)$. Following Theorem~\ref{quotient}  one produces an injective valuation $\nu: A\setminus \{0\} \twoheadrightarrow C$ onto a semigroup $C:=\{(i,j)\, :\, 0\le i<\infty, j=0,1\}$ where $(0,1)\circ (0,1)=(3,0)$, and $\nu(y^i)=(i,0), \nu(xy^i)=(i,1)$ (cf. Example~\ref{cusp}).

On the other hand, applying Theorem~\ref{graded_partial} one obtains an injective valuation ${\underline \nu} :A\setminus \{0\} \twoheadrightarrow P\subset \ZZ_{\ge 0}^2$ onto a partial semigroup $P$ which coincides with $C$ as a set, while $(0,1)\circ (0,1)$ is not defined in $P$. The values of ${\underline \nu}$ coincide with the corresponding values of $\nu$, i.e. ${\underline \nu}(y^i)=(i,0), {\underline \nu}(xy^i)=(i,1)$.
\end{example}

\begin{remark}\label{valuation_min}
For an entire monoid $M$ generated by a finite set $X$ denote by $f:\langle X\rangle \twoheadrightarrow M$ the natural epimorphism, where $\langle X\rangle$ denotes the free monoid generated by $X$. Then $\kk M=\kk \langle X\rangle /I$ where ideal $I$ is generated by differences $c-d,\ c,d\in \langle X\rangle,\ f(c)=f(d)$. Any order $\sigma$ on $X$ induces deglex order on $\langle X\rangle$ (see Lemma~\ref{deglex}). Denote by $\nu'_{\sigma}: \kk \langle X\rangle \setminus \{0\} \twoheadrightarrow \langle X\rangle$ the tautological valuation.  

Theorem~\ref{graded_partial} implies the valuation
$$\nu_{\sigma} : \kk M \twoheadrightarrow \langle X\rangle \setminus \nu'_{\sigma} (I\setminus \{0\}).$$
\noindent Observe that the coideal monoid $\langle X\rangle \setminus \nu'_{\sigma} (I\setminus \{0\})$ coincides with the partial monoid $M_{\sigma}:= (M, \circ _f)$ produced in Theorem~\ref{rem:coideal of projection}.
\end{remark}

\begin{remark} In the notation of Theorem~\ref{rem:coideal of projection}, let $I_f$ be the kernel of the corresponding algebra homomorphism $\kk P\twoheadrightarrow \kk Q$. Denote $J_f:=\nu_P(I_f)$. Then $P(f)= P\setminus J_f$,  provided that $P$ is well-ordered. Therefore, the restriction of the injective valuation of $\kk Q$ prescribed by Theorem \ref{graded_partial} to $Q$ is inverse of the bijection $P(f)\widetilde \to Q$.
\end{remark}

\begin{remark}
 One can study the following inverse issue to Theorem~\ref{graded_partial}. Let $J\subset A$ be an ideal in a commutative algebra $A$, and let $\nu: J\setminus \{0\} \to P$ be a valuation (not necessary injective) in a partial semigroup $P$ whose ordering $\prec$ fulfills the strict property. Assume in addition that for any element $a\in A$ it holds $aJ\neq \{0\}$ and that for any $c\in P$ there exists $d\in P$ such that $c,d$ are composable.

 When one can extend the valuation $\overline{\nu}:A\setminus \{0\} \to Q$ for a suitable partial semigroup $Q\supset P$ such that $\overline{\nu}|_{J\setminus \{0\}}=\nu$ ? To define $Q$ consider a set $P\times P$ with the component-wise composition $(c_1,c_2)\circ_Q (d_1,d_2):=(c_1\circ d_1, c_2\circ d_2)$ (provided that both $c_1, d_1$ and $c_2, d_2$ are composable) and impose the following relations (the idea is to treat $Q$ as a set of "fractions" with numerators and denominators from $P$). Firstly, we identify pairs $(c\circ d_1,d_1),\ (c\circ d_2,d_2)\in P\times P$ (provided that both $c,d_1$ and $c,d_2$ are composable). 
 Secondly, we identify in $Q$ pairs $(c_1,c_2),\ (d_1,d_2)$ if $c_1\circ d_2=c_2\circ d_1$ (provided that both $c_1,d_2$ and $c_2,d_1$ are composable in $P$). Thirdly, for any $a\in A$ if $ab_1=b_2,\ ab_3=b_4$ for non-zero $b_1,b_2,b_3,b_4\in J$, we identify in $Q$ the pairs $(\nu(b_2), \nu(b_1))$ and $(\nu(b_4), \nu(b_3))$. We define an order on $Q$ as follows: $(c_1,c_2)\prec_Q (d_1,d_2)$ iff $c_1\circ d_2 \prec c_2\circ d_1$ (provided that both  $c_1, d_2$ and $c_2,d_1$ are composable). 

 If the resulting semigroup $Q$ is ordered and contains $P$ embedded for $c\in P$ by $(c\circ d, d)\in Q$ such that $c,d$ are composable, then one can define an extension $\overline{\nu}(a):=(\nu(b_2), \nu(b_1))$. Moreover, in this case the order $\prec_Q$ fulfills the strict property.
\end{remark}

\subsection{Constructions of valuations in partial semigroups}

Denote by $A_1* A_2=A_2*A_1$
the free product of algebras $A_1$ and $A_2$.

The following is immediate.

\begin{lemma} Let $A_i$, $i=1,2$ be  algebras and let $\nu_i:A_i\setminus \{0\}\to P_i$, $i=1,2$ be a valuation of $A_i$ to a (partial) semigroup $P_i$. Suppose that $P_1*P_2$ has a compatible order (see the definition after Remark~\ref{rem:deglex}). Then 
 the free product $A_1*A_2$ has a natural valuation $\nu_1*\nu_2:A_1*A_2\setminus \{0\}\to P_1*P_2$.
    
\end{lemma}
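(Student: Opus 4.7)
The plan is to realize $\nu := \nu_1 * \nu_2$ as the valuation associated with a natural $P_1 * P_2$-filtration of the free product, via the filtration--valuation correspondence of Proposition~\ref{pr:filtration=valuation}. By part~(a) of that proposition, each $\nu_i$ yields a $P_i$-filtration $\{(A_i)_{\preceq c}\}_{c \in P_i}$ of $A_i$. For $w \in P_1 * P_2$, I would define $(A_1 * A_2)_{\preceq w}$ as the $\kk$-span of all alternating products $a_1 \cdots a_k$ with $a_l \in A_{i_l} \setminus \{0\}$, $i_l \neq i_{l+1}$, whose ``word valuation'' $\nu_{i_1}(a_1) * \cdots * \nu_{i_k}(a_k) \in P_1 * P_2$ is $\preceq w$. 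That this family is increasing, exhausts $A_1 * A_2$ (since the free product is spanned by alternating products), and restricts on each $A_i \hookrightarrow A_1 * A_2$ to the original $P_i$-filtration is immediate from the definitions.

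The only non-tautological check is multiplicativity, $(A_1*A_2)_{\preceq w} \cdot (A_1*A_2)_{\preceq w'} \subset (A_1*A_2)_{\preceq w \circ w'}$, which reduces to multiplying two alternating monomials $m = a_1 \cdots a_k$ of word valuation $u \preceq w$ and $m' = b_1 \cdots b_l$ of word valuation $u' \preceq w'$. If $i_k \neq j_1$, then $mm'$ is again alternating with word valuation exactly the concatenation $u * u'$, and compatibility of the order on $P_1 * P_2$ gives $u * u' \preceq w \circ w'$. If instead $i_k = j_1$, then $a_k b_1$ lies in $A_{i_k}$: should it vanish the claim is trivial, and otherwise $mm'$ is the alternating product of length $k+l-1$ obtained by replacing the colliding pair with $a_k b_1$; its word valuation is bounded above by $u \circ u'$ by the valuation axiom for $\nu_{i_k}$ (which gives $\nu_{i_k}(a_k b_1) \preceq \nu_{i_k}(a_k) \circ \nu_{i_k}(b_1)$) together with order-compatibility applied ``inside'' one letter of the free product. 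This ``collapsing'' case, where alternating words shorten at the junction, is the main point of the verification; everywhere else things are essentially formal.

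Finally, under the standing assumption that $P_1 * P_2$ is well-ordered (as needed for the next step and typically built into the constructions of such orders, e.g. via the deglex of Lemma~\ref{deglex}), I would invoke Proposition~\ref{pr:filtration=valuation}(b) to extract the desired valuation
\[
\nu_1 * \nu_2 \colon (A_1 * A_2) \setminus \{0\} \to P_1 * P_2, \qquad x \mapsto \min\{w \in P_1 * P_2 : x \in (A_1 * A_2)_{\preceq w}\}.
\]
By construction, $\nu_1 * \nu_2$ restricts to $\nu_i$ on each $A_i$ and sends an alternating product $a_1 \cdots a_k$ to $\nu_{i_1}(a_1) * \cdots * \nu_{i_k}(a_k)$, which is the ``natural'' assignment suggested by the statement of the lemma.
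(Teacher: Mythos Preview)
The paper declares this lemma ``immediate'' and offers no proof, so there is nothing to compare against directly. Your filtration route is a plausible way to make the construction precise, but two points deserve attention.

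The main one: you add a well-orderedness hypothesis on $P_1*P_2$ that is not in the lemma. The statement only requires a compatible order, and your extraction of $\nu$ as a minimum over the filtration genuinely needs the well-order. A more direct definition sidesteps this: choose bases ${\bf B}_i$ of $A_i$ adapted to $\nu_i$; then alternating words in ${\bf B}_1\cup{\bf B}_2$ form a basis of $A_1*A_2$, and one sets $\nu_1*\nu_2$ on such a basis word equal to its word valuation in $P_1*P_2$, extending by the max rule. The valuation axioms then follow from the compatible order together with axiom~(iii) for each $\nu_i$, with no well-order needed. This is presumably the ``natural'' valuation the lemma has in mind.

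A smaller point: your final sentence asserts ``by construction'' that $\nu_1*\nu_2$ sends an alternating product $a_1\cdots a_k$ to $\nu_{i_1}(a_1)*\cdots*\nu_{i_k}(a_k)$, but the filtration definition only gives $\preceq$; equality requires checking that such a product does not lie in a strictly smaller filtration piece, which you have not done (and which is not automatic from Proposition~\ref{partial_filtration}(b)). Relatedly, you quote the valuation axiom for $\nu_{i_k}$ as $\nu_{i_k}(a_kb_1)\preceq\nu_{i_k}(a_k)\circ\nu_{i_k}(b_1)$, whereas Definition~\ref{valuation_partial}(iii) gives equality --- and it is precisely this equality that one needs to pin down $\nu$ on alternating products and to verify the multiplicativity axiom for $\nu_1*\nu_2$, not merely sub-multiplicativity.
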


\begin{example}\label{monoid_matrix_algebra}
In the notation of Example \ref{monoid_matrix}, clearly, $Mat_k(\kk)=\kk M_k$ and the tautological valuation $\nu:Mat_k(\kk)\setminus \{0\} \to M_k$
is injective and given by $\nu(e_{ij})=(i,j)$ from Definition~\ref{monoidal_partial}. 

Observe that the valuation of the unit of the algebra $Mat_k(\kk)$ equals $\nu(e_{1,1}+\cdots +e_{k,k})=(1,1)$ in both cases in Example~\ref{monoid_matrix}~i), ii).
\vspace{1mm}

Consider a partial semigroup $P'$ with an ordering $\triangleleft$. One can construct (see Lemma~\ref{semigroups_product}) a partial semigroup $M_k \times P'$ in which the ordering is given by the lexicographical pair $(\prec, \triangleleft)$, where $\prec$ is one of the described above orderings on $M_k$. If $\triangleleft$     fulfills the strict property then the resulting ordering fulfills the strict property as well (cf. Lemma~\ref{semigroups_product}). Thus, if an algebra $A$ admits an injective valuation onto $P'$ then the matrix algebra $Mat_k(\kk)\otimes A$ admits an injective valuation onto $M_k \times P'$, see Corollary~\ref{tensor}. \vspace{1mm} 

Denote by $T_k$ the partial monoid of paths in the complete directed graph having $k$ vertices and loops (cf. Example~\ref{paths}~iii)). So, $T_k$ is generated by the set $\{z_{i,j}\, :\, 1\le i,j\le k\}$. Following the construction in the proof of Theorem~\ref{semigroup_epimorphism} we produce an epimorphism $f:T_k \twoheadrightarrow M_k$ such that $f(z_{i,j})=(i,j)$, thus $f(z_{i_1,i_2}\circ z_{i_2,i_3}\circ \cdots \circ z_{i_{s-1},i_s})=(i_1,i_s)$. Denote by $\prec$ one of the introduced above orders on $M_k$ (say, i) or ii)). Now define an order $\vartriangleleft$ on $T_k$ as follows. We say that $z_{i_1,i_2}\circ \cdots \circ z_{i_{s-1},i_s} \vartriangleleft z_{j_1,j_2}\circ  \cdots \circ z_{j_{l-1},j_l}$ if either

$\bullet$ $f(z_{i_1,i_2}\circ \cdots \circ z_{i_{s-1},i_s}) \prec f(z_{j_1,j_2}\circ  \cdots \circ z_{j_{l-1},j_l})$, either

$\bullet$ $f(z_{i_1,i_2}\circ \cdots \circ z_{i_{s-1},i_s}) = f(z_{j_1,j_2}\circ  \cdots \circ z_{j_{l-1},j_l})$ and $s<l$, or

$\bullet$  $f(z_{i_1,i_2}\circ \cdots \circ z_{i_{s-1},i_s}) = f(z_{j_1,j_2}\circ  \cdots \circ z_{j_{l-1},j_l}),\, s=l$ and the vector $(i_1,\dots,i_s)$ is less than $(j_1,\dots,j_l)$ in the lexicographical order (in which, say, $1<\cdots < n$).

\noindent One can verify that $f$ satisfies Proposition~\ref{monoid_epimorphism}.

Note that $f$ induces a natural epimorphism of semigroup algebras $\kk T_k \twoheadrightarrow \kk M_k=Mat_k(\kk)$. \vspace{1mm}

Consider $M_\infty:=\{(i,j)\, :\, 1\le i,j <\infty\}$, this is naturally an ordered partial semigroup of infinite matrices, and inclusions $M_k\subset M_\infty$ are ordered, moreover $M_\infty$ is their injective limit. Despite $\prec$ is not a well ordering, the semigroup algebra $\kk M_\infty$ is the algebra 
$Mat_\infty(\kk)$ of infinite matrices with finite support, and the
injective valuation onto $P_\infty$ provides the tautological valuation $Mat_\infty(\kk)\setminus \{0\} \to M_\infty$
with an adapted basis $\{e_{i,j}\, :\, 1\le i,j<\infty\}$. \vspace{2mm}

Denote by $F:=\kk\langle \{e_{i,j}\ :\ 1\le i,j\le k\} \rangle$ the free algebra with the natural injective valuation $\nu_0$ onto the free semigroup $P:= < (i,j),\ 1\le i,j\le k >$. We assume that $P$ is equipped with the well ordering produced in Lemma~\ref{deglex}.
Denote by 
$$I:= \langle \{e_{i,j}e_{p,q}\ : \ j\neq p,\ 1\le i,j,p,q\le k\} \cup \{e_{i,l}-e_{i,j}e_{j,l}\ :\ 1\le i,j,l\le k\} \rangle$$
\noindent an ideal in $F$. When we apply Theorem~\ref{graded_partial} we obtain an injective valuation $\nu : (F/I)\setminus \{0\} = Mat_k(\kk)\setminus \{0\} \twoheadrightarrow P\setminus \nu_0(I\setminus \{0\})$. Observe that $P\setminus \nu_0(I\setminus \{0\})$ is a partial semigroup consisting of $k^2$ elements $\{(i,j)\ :\ 1\le i,j\le k\}$ such that no composition of them is defined since $\nu_0(e_{i,j}e_{p,q})=(i,j)\circ (p,q)$ and $\nu_0(e_{i,l}-e_{i,j}e_{j,l})=(i,j)\circ (j,l)$. Thus, $P\setminus \nu_0(I\setminus \{0\})$ differs from $M_k$.
\end{example}

\begin{remark}
Recall from Theorem \ref{semigroup_epimorphism}
that any finitely generated partial semigroup $P$ can be covered by a coideal of an entire semigroup $\widehat P$. For instance, we can take $\widehat M_n$ to be generated by all $\widehat{(ij)}$, $i,j=1,\ldots,n$ subject to $\widehat{(ij)}\circ \widehat{(jk)}=\widehat{(ik)}$. However, in a contrast with free coideal semigroup in Theorem \ref{semigroup_epimorphism}, we do not know whether (an appropriate coideal of) $\widehat M_n$ is ordered in a compatible way.  It would be interesting to classify all ordered partial semigroups $P$ which admit such a lifting to (coideals of)  ordered entire semigroups $\widehat P$.

On the other hand, one can apply Theorem~\ref{graded_partial} to the free semigroup $\widehat P$ freely generated by $(ij)$, $ 1\le i,j\le n$, tautological valuation $\nu_0$ on $\kk \widehat P$, and the ideal
$$I:=\langle \{e_{ij}e_{pq}\ :\ p\neq q\} \cup \{e_{ij}e_{jl}-e_{il}\}\rangle \subset \kk \widehat P.$$
\noindent Then one obtains an injective valuation $\nu : (\kk \widehat P/I)\setminus \{0\}=A\setminus \{0\} \twoheadrightarrow \widehat P\setminus J$, where $J=\nu_0(I\setminus\{0\})$ is the corresponding ideal of $\widehat P$. By definition,  $\widehat P\setminus J$ is a partial semigroup consisting of $n^2$ elements $\{(ij)\ :\ 1\le i,j\le n\}$ such that no composition of them is defined. Thus, $\widehat P\setminus J$ differs from $M_n$.

\end{remark}

\begin{example} Let $W$ be a finite reflection group on the space $V$, recall that its coinvariant algebra $A_W=S(V)/<S(V)^W_+>$ has dimension $|W|$. Also if $W$ is a Weyl group of a complex semisimple group $G$, then $A_W\cong H^*(G/B)$, where $B$ is the Borel subgroup of $G$. In this case, $A_W$ has a canonical Schubert basis $X_w$, $w\in W$.

If $W=<s_1,s_2| s_1^2=s_2^2=1, (s_1s_2)^n=1>$ is dihedral of order $2n$, then $A_W=\CC[z,\overline z]/<z\overline z,z^n+\overline z^n>$ because $W$ acts on $V=\CC\cdot z\oplus \CC\cdot \overline z$ by $s_1(z)=\overline z$, $s_1s_2(z)=\zeta z,s_1s_2(\overline z)=\zeta^{-1} \overline z$, where $\zeta=e^{\frac{2\pi i}{n}}$, therefore $z\overline z$ and $z^n+\overline z^n$ are basic $W$-invariants. Writing $z=x+iy$ we expect that the Schubert basis is  $\{Re~z^k=\frac{z^k+\overline z^k}{2},Im~z^k=\frac{z^k-\overline z^k}{2i},k=0,\ldots,n\}\setminus\{0\}$. Note that in this case $A_W\cong  \CC[z,\overline z]/<z\overline z,z^n-\overline z^n>=\CC P$, where $P$ is a partial additive monoid on $I_n\sqcup_{0,n} I_n$ where $I_n$ is the partial monoid on $[0,n]$ with  $a\circ b$ defined iff $a+b\le n$, in which case the composition is $a+b$ and $\sqcup_{0,n}$ stands for disjoint union with identified unit $0$ and identified $n$. Namely, the first copy of $I_n$ consists of $z^k, 0\le k\le n$, while the second copy consists of $\overline z^k, 0\le k\le n$, note that $z^{n+1}=\overline z^{n+1}=0$. \vspace{1mm}

Note also that $A_{S_3}\cong \CC[x_1,x_2,x_3]/<e_1,e_2,e_3>=\CC[x_1,x_2]/<x_1^2+x_1x_2+x_2^2,x_1x_2(x_1+x_2)>$ where $e_1=x_1+x_2+x_3$, $e_2=x_1x_2+x_1x_3+x_2x_3$, $e_3=x_1x_2x_3$. An $S_3$-equivariant isomorphism  is given by $z=x_1-\zeta x_2$, $\overline z=x_2-\zeta x_1$. The latter algebra has a Schubert basis $\{1,x_1,x_1+x_2,x_1^2,x_1x_2,x_1^2x_2\}$.

Note also that $A_{I_2(4)}\cong \CC[x_1,x_2]/<x_1^2+x_2^2,x_1^2x_2^2>$ with the action given by $s_1(x_1)=x_2$, $s_2(x_2)=-x_2$, $s_2(x_1)=x_1$. An $I_2(4)$-equivariant isomorphism  is given by $z=x_1-i x_2$, $\overline z=x_1+i x_2$. The latter algebra has a Schubert basis $\{1,x_1,x_1+x_2,x_1^2,x_1x_2,x_1^2x_2+x_1x_2^2,x_1^3,x_1^3x_2\}$. \vspace{1mm}

When $n$ is odd $P$ admits the following ordering fulfilling the strict property (see Definition~\ref{monoid_partial}):
$$1\prec z\prec \cdots \prec z^{(n-1)/2}\prec \overline z \prec \cdots \prec \overline z^{n-1}\prec z^{(n+1)/2} \prec \cdots \prec z^{n-1} \prec z^n (=\overline z^n).$$
\noindent In contrast, when $n$ is even there is no ordering of $P$ fulfilling the strict property since if $z^{n/2}\prec \overline z^{n/2}$ (or, respectively $z^{n/2}\succ \overline z^{n/2}$) then $z^n \prec \overline z^n$ (respectively, $z^n \succ \overline z^n$). On the other hand, any ordering on $P$ merging the orderings $1\prec z\prec \cdots \prec z^n$ and $1\prec \overline z\prec \cdots \prec \overline z^n$ satisfies Definition~\ref{monoid_partial}. \vspace{2mm}

One can consider another representation $A_W=\CC Q$ where $Q$ is a partial monoid
$$Q:=\{c_k:=z^k+\overline z^k\ :\ 0\le k\le n\} \sqcup \{d_k:= z^k-\overline z^k\ :\ 0< k<n\}$$
\noindent with the following composition rules:

$\bullet$ $c_k\circ c_l=d_k\circ d_l=c_{k+l}$ iff $k+l\le n$;

$\bullet$ $c_k\circ d_l=d_{k+l}$ iff $k+l<n$.

\noindent Then $Q$ satisfies Definition~\ref{monoid_partial} with an ordering
$$1\prec c_1\prec d_1\prec \cdots \prec c_{n-1}\prec d_{n-1}\prec c_n,$$
\noindent while $Q$ does not admit an ordering fulfilling the strict property. \vspace{2mm}

Now we construct a common adapted basis of $A_W$ for a pair of injective valuations $\nu_P:A_W\setminus \{0\}(=\CC P\setminus \{0\})\twoheadrightarrow P$ and $\nu_Q:A_W\setminus \{0\}(=\CC Q\setminus \{0\})\twoheadrightarrow Q$ (see Theorem~\ref{intro:JH_basis}). When $n$ is odd, the  common basis consists of
$$\{1,z^n\}\sqcup \{z^k, z^k+\overline z^k\ :\ 1\le k<n/2\}\sqcup \{\overline z^k, z^k+\overline z^k\ :\ n/2 <k<n\}.$$
\noindent When $n$ is even, fix the following ordering in $P$:
$$1\prec z\prec \overline z\prec \cdots \prec z^k\prec \overline z^k\prec \cdots \prec z^{n-1}\prec \overline z^{n-1} \prec z^n(= \overline z^n).$$
\noindent Then the common basis consists of
$$\{1,z^n\}\sqcup \{z^k, z^k+\overline z^k\ :\ 1\le k<n\}.$$ \vspace{1mm}

Consider the tautological injective valuation $\nu_0:\CC[z,\overline z]\setminus \{0\}\twoheadrightarrow \ZZ_{\ge 0}^2$ where $\ZZ_{\ge 0}^2$ is endowed with lex ordering in which $\overline z \prec z$. If we apply Theorem~\ref{graded_partial} to an ideal $I:=\langle z\overline z, z^n-\overline z^n\rangle \subset \CC[z,\overline z]$ then we obtain an injective valuation $\nu: A_W\setminus \{0\} \twoheadrightarrow P'$, where $P'=\{1,z,\dots, z^{n-1}, \overline z,\dots, \overline z^n\}$. Thus, $P'$ contains also $2n$ elements as $P$, but differs from $P$ as a partial monoid since $z^n$ is not defined in $P'$ unlike $P$. Nevertheless, $\nu$ coincides with $\nu_P$ element-wise.

\end{example}

\begin{example}\label{Hecke} Recall that the nil Hecke algebra ${\mathcal H}_{S_2}$ of $S_2$ is generated by $\alpha$, $x$ subject to 
$$x^2=0,x\alpha+\alpha x=-2$$
In particular, $s=\alpha x+1=-x\alpha-1$ is an involution. One can show that ${\mathcal H}_{S_2}$ is the Takeuchi product of $A=\kk[\alpha]$, $B=\kk[y]/(y^2)$ is the semigroup algebra of the nil-Coxeter monoid $\{1,x\}$ with $H$ is an algebra generated by $d,s$ subject to $ds=-sd$, $s^2=1$, $d^2=0$. Then $H$ is a $4$-dimensional Hopf algebra with $\Delta(s)=s\otimes s$, $\Delta(d)=d\otimes 1+s\otimes d$, i.e., the celebrated Sweedler Hopf algebra with $s\act \alpha=-\alpha$, $d$ acts on $\kk[\alpha]$ as an $s$-derivation via $d(\alpha)=-2$, $d(pq)=d(p)q+s(p)d(q)$, 
$\delta(x)=s\otimes x+d\otimes 1$. The (bullet) conditions in the end of Remark~\ref{rem:Takeuchi}  hold with the tautological valuations $\nu_1:\kk[\alpha]\setminus \{0\}\to P_1=\ZZ_{\ge 0}$, $\nu_2:\kk[y]/y^2 \setminus \{0\} \to P_2=\{1,x\}$; $h_x=s$, $h_1=1$.

More generally, let $W=<s_i,i\in I|s_i^2=1, (s_is_j)^{m_{ij}}=1>$ be a Coxeter group and $V=\oplus \kk \alpha_i$ be its reflection representation with a basis $\alpha_i$ so that the action is given by $$s_i(\alpha_j)=\alpha_j-a_{ij}\alpha_i$$
where $A$ is the corresponding Cartan matrix.

Then ${\mathcal H}_W$ is generated by $x_i,\alpha_i$ subject to 
$$x_i^2=0, x_i\alpha_j=s_i(\alpha_j)x_i-a_{ij}$$
and the braid relation
$$\underbrace{x_ix_jx_i\cdots}_{m_{ij}}=\underbrace{x_jx_ix_j\cdots}_{m_{ij}}$$

It is easy to see that $\kk W$ embeds into ${\mathcal H}_W$ via $s_i\mapsto \alpha_ix_i+1$.

This embedding extends to an embeddings $S(V)\rtimes \kk W \hookrightarrow {\mathcal H}_W$ and ${\mathcal H}_W\hookrightarrow Frac(S(V))\rtimes \kk W$ and

$${\mathcal H}_W\cong (\kk W)_0\otimes S(V)$$
as a vector space
where $(\kk W)_0=<x_i>$ is the nil-Coxeter algebra.

${\mathcal H}_W$ admits a quotient $\underline {\mathcal H}_W$ by the ideal in $S(V)$ generated by $W$-invariants so that 
$$\underline {\mathcal H}_W\cong (\kk W)_0\otimes A_W$$

It is proved in \cite{Demazure}, \cite{Brundan} that  if $|W|=N$, then the algebra ${\mathcal H}_W$ is isomorphic to $Mat_N(S(V)^W)$, hence $\underline {\mathcal H}_W\cong Mat_N(\kk)$. Applying here Example~\ref{monoid_matrix} and Corollary \ref{tensor}, we obtain an injective valuation on ${\mathcal H}_W$.

We also expect that similarly to ${\mathcal H}_{S_2}$, ${\mathcal H}_W$ is a Takeuchi product of $S(V)$ and the semigroup algebra of $W_{\bf 0}$ over some Hopf algebra $H$ so that the conditions in the end of Remark \ref{rem:Takeuchi} hold.

\end{example}

\begin{example}\label{Galois} [Galois extensions] Let $\mathbb{K}$ be a finite Galois extension of $\kk$ and let $G=Gal(\mathbb{K}/\kk)$. Then the assignments $g\otimes a\mapsto g\circ L_a$ for all $g\in G$, $a\in \mathbb{K}$ define an isomorphism of algebras $\mathbb{K}\rtimes \kk G\widetilde \to End_\kk(\mathbb{K})$ (where $L_a:\mathbb{K}\to \mathbb{K}$ is the multiplication by $a\in \mathbb{K})$, this again follows from \cite{Demazure}, \cite{Brundan}. In particular, any choice of basis $\kk$-basis $\{b_1,\ldots,b_n\}$ of $\mathbb{K}$ canonically identifies the algebra
$\mathbb{K}\rtimes \kk G$ with $Mat_n(\kk)$.  Similarly to Example~\ref{Hecke} one can  produce an injective valuation on $\mathbb{K}\rtimes \kk G$.
\end{example}

The following result gives a large class of noncommutative injective valuations.

\begin{corollary} In the assumptions of Theorems \ref{th:string valuations general},  \ref{th:generalized Feigin} in addition
suppose that:

$\bullet$ $\kk_0$ is a subfield of $\kk$ and $q_{k\ell}$ generate a free abelian subgroup $\Gamma\subset \kk^\times$ such that $\Gamma\cap \kk_0=\{1\}$, 

$\bullet$ $A'$ is a $\kk_0$-subalgebra of $A$  containing all $q_{k\ell}$ such that  that the restriction of $\Phi_\partial$ to $A'$ is injective and $\Phi_\partial(A')\subset \kk_0 P_{\bf q}$, where $P_{\bf q}$ is a multiplicative sub-semigroup of $A_{\bf q}$ generated by $t_1,\ldots,t_m$ and by $\Gamma$ (by the suppositions it holds $\kk_0 P_{\bf q}\subset A_{\bf q}$).

Then the assignments 
$x \mapsto \nu_{P_{\bf q}}(\Phi_\partial(x))$ is an injective valuation $A'\setminus \{0\}\to P_{\bf q}$ (here $\nu_{P_{\bf q}}:\kk_0 P_{\bf q}\setminus\{0\} \to P_{\bf q}$ is the tautological valuation of $\kk_0 P_{\bf q}$).

\end{corollary}

One can verify the following proposition.

\begin{proposition}\label{monomial_partial}
For a commutative $\kk$-algebra $A$ let a set $P$ of monomials in elements $a_1,\dots,a_n \in A$ form a $\kk$-basis in $A$, and $P$ be a partial semigroup (in particular, $P$ is endowed with a linear order).  
For an element
$$a=\sum _{u\in P} \alpha_u u\in A\setminus \{0\},\, \alpha_u\in \kk^\times$$
\noindent define $\nu(a):=\max\{u\}$ where $\max$ ranges over $u$ from the latter sum. Then $\nu:A\setminus \{0\} \twoheadrightarrow P$ is an injective valuation onto $P$.
Moreover, $P$ is adapted with respect to $\nu$.
\end{proposition}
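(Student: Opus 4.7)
The plan is to verify the three axioms of Definition~\ref{valuation_partial} for $\nu$ and then to observe that $P$ itself serves as the adapted basis. Axioms (i) and (ii) are immediate consequences of the basis property: if $a=\sum_{u\in P}\alpha_u u$, then $\beta a=\sum(\beta\alpha_u) u$ for $\beta\in\kk^*$ has the same support as $a$, hence the same maximum; and $\mathrm{supp}(a+b)\subseteq \mathrm{supp}(a)\cup\mathrm{supp}(b)$ (possibly strict when leading terms cancel), so the maximum over the union dominates, giving $\nu(a+b)\preceq\max\{\nu(a),\nu(b)\}$ whenever $a+b\ne 0$.

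The principal step is axiom (iii). Set $u_0=\nu(a)$ and $v_0=\nu(b)$, and assume that $u_0\circ v_0$ is defined in $P$. Expanding the product as $ab=\sum \alpha_u\beta_v(uv)$ in $A$, where $(u,v)$ ranges over $\mathrm{supp}(a)\times\mathrm{supp}(b)$, I would first use the compatibility of the partial semigroup structure on $P$ with multiplication in $A$ to identify $u_0 v_0$ (computed in $A$) with the basis element $u_0\circ v_0$; this contributes $\alpha_{u_0}\beta_{v_0}(u_0\circ v_0)$ to $ab$. For every other pair $(u,v)$ in the product of supports one has $u\preceq u_0$ and $v\preceq v_0$ with at least one strict inequality; the order axiom in Definition~\ref{monoid_partial} then forces $u\circ v\preceq u_0\circ v_0$ whenever $u\circ v$ is defined, and I would argue that every basis element appearing in the $P$-expansion of $uv$ (whether or not $u\circ v$ is itself defined) is strictly below $u_0\circ v_0$. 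Granting this, the coefficient of $u_0\circ v_0$ in the basis expansion of $ab$ equals $\alpha_{u_0}\beta_{v_0}\in\kk^*$, while all strictly larger basis elements have coefficient zero, yielding $\nu(ab)=u_0\circ v_0=\nu(a)\circ\nu(b)$.

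Injectivity is then immediate: for any single monomial $u\in P$ the support of $u$ is $\{u\}$, so $\nu(u)=u$, and $\nu|_P\colon P\to P$ is the identity bijection. Since $P$ is a $\kk$-basis of $A$ by hypothesis, it is an adapted basis for $\nu$. The main technical obstacle lies in the strict-decrease step in axiom (iii): one must rule out that a subdominant product $uv$ contributes to the coefficient of $u_0\circ v_0$ in $ab$. This rests on the implicit compatibility between commutative monomial multiplication in $A$ and the partial semigroup operation on $P$---available because the monomials commute and the order on $P$ is assumed compatible with $\circ$ in the sense of Definition~\ref{monoid_partial}, so that a cancellation-type argument on exponent vectors should suffice.
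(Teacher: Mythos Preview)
The paper does not provide a proof here, stating only that ``one can verify the following proposition.'' Your verification of axioms (i), (ii) and of injectivity is correct, and you have correctly located the real content in axiom (iii).

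You are also right that the strict-decrease step is the genuine obstacle, and that the bare hypothesis ``$P$ is a partial semigroup'' does not by itself close it. Two things are needed that are only implicit in the statement. First, the order on $P$ must come from (or behave like) a monomial order on exponent vectors in $\ZZ_{\ge 0}^n$, so that cancellativity of $\ZZ_{\ge 0}^n$ forces $u\circ v\prec u_0\circ v_0$ strictly whenever $(u,v)\ne(u_0,v_0)$ and both compositions are defined; the non-strict axiom in Definition~\ref{monoid_partial} alone does not yield this. Second, when $u\circ v$ is undefined (the monomial $uv\in A$ is not itself a basis element), its $P$-expansion must involve only elements strictly below $u_0\circ v_0$; nothing in Definition~\ref{monoid_partial} constrains how a non-basis monomial decomposes. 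In the paper's intended applications---Stanley--Reisner algebras, where such a product is zero, and standard monomial bases arising from a Gr\"obner reduction, where non-standard monomials reduce to strictly smaller ones (cf.\ Theorem~\ref{graded_partial} and Proposition~\ref{basis_partial})---both conditions hold automatically. So your reading is accurate: the proposition is stated loosely and relies on an implicit compatibility between multiplication in $A$ and the order on $P$ that is present in all the examples of interest but is not a formal consequence of the stated hypotheses.
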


\subsection{Injective valuations onto coideal partial semigroups via tropical geometry and adapted bases}

\begin{proposition}\label{partial_filtration}
i) Let $\nu:A\setminus \{0\} \to P$ be a valuation in a partial semigroup $P$. For $u\in P$ denote a $\kk$-linear space $A_{\preceq u}:=\{a\in A\setminus \{0\}\, :\, \nu(a)\preceq u\}\cup \{0\}$ (see Definition~\ref{valuation_partial}~i), ii)). Then $A_{\preceq u}A_{\preceq v} \subset A_{\preceq u\circ v}$, provided that $u\circ v \in P$. Thus the family $\{A_{\preceq u}\, :\, u\in P\}$ forms a filtration of $A$.

ii) Let $P$ be a well-ordered partial semigroup and $\{A_u\, :\, u\in P\}$ be a filtration of an algebra $A$. For $a\in A\setminus \{0\}$ setting $\nu(a)$ to be the minimal $u\in P$ such that $a\in A_u$, defines a valuation $\nu:A\setminus \{0\} \to P$.
\end{proposition}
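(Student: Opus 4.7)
The approach is direct verification of the valuation and filtration axioms in both directions. For part (i), first I would show each $A_{\preceq u}$ is a $\kk$-linear subspace: scalar closure is immediate from axiom (i) of Definition~\ref{valuation_partial}, while closure under addition follows from axiom (ii), since for nonzero $a, b \in A_{\preceq u}$ with $a+b \neq 0$ one has $\nu(a+b) \preceq \max\{\nu(a),\nu(b)\} \preceq u$. Monotonicity $A_{\preceq u'} \subseteq A_{\preceq u}$ whenever $u' \preceq u$ is a direct consequence of transitivity of $\preceq$. For the product inclusion, take nonzero $a \in A_{\preceq u}$, $b \in A_{\preceq v}$ with $u \circ v$ defined in $P$; if $ab = 0$ there is nothing to show, and otherwise axiom (iii) yields $\nu(ab) = \nu(a)\circ \nu(b)$, which is $\preceq u \circ v$ by the order axiom \eqref{eq:axiom order}.

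For part (ii), the well-ordering of $P$ guarantees that $\nu(a) := \min\{u \in P : a \in A_u\}$ is well-defined for every nonzero $a$. Axiom (i) of Definition~\ref{valuation_partial} is immediate because each $A_u$ is a $\kk$-subspace, so $a \in A_u$ iff $\lambda a \in A_u$ for $\lambda \in \kk^{\times}$. For axiom (ii), setting $u_0 := \max\{\nu(a), \nu(b)\}$, both $a$ and $b$ lie in $A_{u_0}$, hence so does $a+b$, forcing $\nu(a+b) \preceq u_0$ by minimality. For axiom (iii), whenever $\nu(a) \circ \nu(b)$ is defined in $P$, the multiplicative property of the filtration yields $ab \in A_{\nu(a)} \cdot A_{\nu(b)} \subseteq A_{\nu(a) \circ \nu(b)}$, whence $\nu(ab) \preceq \nu(a) \circ \nu(b)$ whenever $ab \neq 0$.

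The main obstacle is completing axiom (iii) in direction (ii) to the full equality, together with the non-vanishing $ab \neq 0$: the bare filtration axioms deliver only the one-sided bound $\nu(ab) \preceq \nu(a) \circ \nu(b)$. I expect this to require either a strictness hypothesis on the filtration (so that the associated graded pieces have no zero divisors in composable degrees) or reading axiom (iii) as an inequality when the valuation is reconstructed from an abstract filtration. Once this point is handled, directions (i) and (ii) are mutually inverse constructions: starting from a valuation $\nu$, applying (i) and then (ii) returns $\nu$ on the nose by the minimality formula, and starting from a filtration, applying (ii) and then (i) recovers the original subspaces $A_{\preceq u}$ by definition.
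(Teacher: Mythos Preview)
The paper offers no proof of this proposition; it is stated as self-evident. Your direct axiom-by-axiom verification is precisely the argument the authors have in mind, and your diagnosis of part~(ii) is correct: from an abstract $P$-filtration one recovers only $\nu(ab)\preceq\nu(a)\circ\nu(b)$ when $ab\ne0$, with no guarantee that $ab\ne0$ in the first place, so Definition~\ref{valuation_partial}(iii) is obtained only as an inequality. The paper is silently reading (iii) in this weaker sense here (compare Theorem~\ref{filtration}(i), where the full equality is tied to the associated graded algebra being a domain).

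There is a parallel subtlety in your argument for~(i) that you pass over. You invoke axiom~(iii) once $ab\ne0$, but (iii) is conditioned on $\nu(a)\circ\nu(b)$ being \emph{defined} in $P$, not on $ab\ne0$. From $\nu(a)\preceq u$, $\nu(b)\preceq v$ and $u\circ v\in P$ one cannot in general conclude $\nu(a)\circ\nu(b)\in P$, and the order axiom~\eqref{eq:axiom order} requires both compositions to exist before it compares them. This is exactly the looseness you flagged in~(ii), now appearing in the forward direction; it disappears for entire semigroups and for valuations onto coideal partial semigroups under the additional axiom of Definition~\ref{valuation_coideal}.
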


\begin{problem}
Describe all possible orderings on $M_k$.    
\end{problem}

\begin{example}
Let us apply the construction from Proposition~\ref{pr:partial homomorphism} to the symmetric group $P:=S_k$ and $Q:=M_k$ taking as $\nu$ the valuation from Example~\ref{monoid_matrix_algebra}~ii). We consider the standard representation of $S_k$ in $GL_k$. This provides a partial homomorphism $f:S_k \to M_k$. Then following Remark~\ref{admissible} one obtains a partial semigroup $R_k:= (S_k)_{S_f}$ and a homomorphism from $R_k$ to $M_k$. One can explicitly describe $R_k$ as follows. Two permutations $p,q\in M_k$ are composable in $R_k$ iff $p(1)=1$ taking into account that $\nu(p)=(1,p^{-1}(1))$. 
\end{example}

In case of a coideal partial semigroup $P$ the following construction allows one to obtain a stronger property of filtrations.

\begin{definition}\label{valuation_coideal}
For an algebra $A$ we say that $\nu:A\setminus \{0\} \twoheadrightarrow P$ is a {\it valuation onto a coideal partial semigroup} $P\subset M$ if in addition to Definition~\ref{valuation_partial} for any elements $a,b\in A\setminus \{0\}$ an inequality $\nu(ab)\preceq \nu(a)\circ \nu(b)\in M$ holds, provided that $ab\neq 0$.
\end{definition}

Recall (cf. the definition prior to Remark~\ref{monoid_archimedian}) that an order $\prec$ on a semigroup $M$ is {\it archimedian} if for any $u\in M$ the set all elements of $M$ less than $u$ is finite.
 
\begin{proposition}
Let $\nu:A\setminus \{0\} \twoheadrightarrow P$ be a valuation on a $\kk$-algebra onto a coideal partial semigroup $P\subset M$. We assume that $M$ is endowed with an archimedian order $\prec$. In this case for
the filtration: $A_{\preceq u}$
it holds
$$A_{\preceq u} A_{\preceq v} \subset A_{\max\{P\ni w \preceq u\circ v\}}$$
\noindent owing to Definition~\ref{valuation_coideal}. Observe that the latter maximum exists since $\prec$ is archimedian.
\end{proposition}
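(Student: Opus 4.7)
The plan is to chase the definitions directly. Take nonzero $a\in A_{\preceq u}$ and $b\in A_{\preceq v}$; the cases where $a$, $b$, or $ab$ equals $0$ are trivial, since then $ab=0$ automatically lies in every $A_{\preceq w}$. So assume $ab\neq 0$. By Definition~\ref{valuation_coideal}, the fact that $P\subset M$ is a coideal valuation target forces
$$\nu(ab)\preceq \nu(a)\circ \nu(b)$$
as an inequality in the ambient entire semigroup $M$, even when $\nu(a)\circ \nu(b)$ lies outside $P$.

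Next, I invoke the order axiom \eqref{eq:axiom order} in $M$. Since $\nu(a)\preceq u$ and $\nu(b)\preceq v$, and since the composition $\nu(a)\circ \nu(b)$ and $u\circ v$ are both defined in $M$ (which is entire), \eqref{eq:axiom order} yields $\nu(a)\circ \nu(b)\preceq u\circ v$ in $M$. Chaining with the previous display gives
$$\nu(ab)\preceq u\circ v\quad\text{in }M.$$
But $\nu(ab)\in P$, so $\nu(ab)$ belongs to the set $S:=\{w\in P\colon w\preceq u\circ v\}$.

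It remains only to observe that $S$ has a maximum: the archimedean assumption on $\prec$ means that the set of elements of $M$ below $u\circ v$ is finite, hence $S$ is finite, hence has a largest element $w^\ast:=\max S$. From $\nu(ab)\in S$ we conclude $\nu(ab)\preceq w^\ast$, i.e.\ $ab\in A_{\preceq w^\ast}$, which is exactly the desired inclusion.

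I do not anticipate a genuine obstacle here; the only point that requires a moment of care is to use Definition~\ref{valuation_coideal} rather than Definition~\ref{valuation_partial}~iii), since the latter only gives equality under the hypothesis that $\nu(a)\circ\nu(b)$ already lies in $P$, whereas in the present situation that composition may well live in $M\setminus P$, and it is precisely the coideal-valuation axiom that supplies the needed bound in $M$. Once that is in place, the archimedean hypothesis is used only at the last step to guarantee that the maximum defining the filtration level exists.
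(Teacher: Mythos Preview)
Your proof is correct and is precisely the unpacking of the paper's own justification, which treats the proposition as an immediate consequence of Definition~\ref{valuation_coideal} together with the archimedean hypothesis (the paper gives no further argument beyond the phrases ``owing to Definition~\ref{valuation_coideal}'' and ``the latter maximum exists since $\prec$ is archimedian''). Your explicit invocation of the order axiom~\eqref{eq:axiom order} in the entire semigroup $M$ and the finiteness of $S$ is exactly what those phrases encode.
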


One can also denote $A_u:= A_{\preceq u}/A_{\prec u}$ and the natural projection $p_u:A_{\preceq u} \twoheadrightarrow A_u$. The following remark extends Remark~\ref{length} to coideal partial semigroups.

\begin{remark}
Let $A$ be a (not necessary commutative) $\kk$-algebra and $\nu:A\twoheadrightarrow P\subset M$ be an injective valuation onto a coideal partial semigroup $P$. We assume that $M$ is endowed with a linear order $\prec$ and a function $f:M\to \ZZ_{\ge 0}$ such that $c_1\prec c_2$ implies that $f(c_1)\le f(c_2)$, and $f(c_1+c_2)\le f(c_1)+f(c_2)$ for $c_1,c_2\in M$, moreover the set $C_n:=\{c\in M\, :\, f(c)\le n\}$ is finite for any $n\in \ZZ_{\ge 0}$. Note that the latter implies that the order $\prec$ is archimedian. Then the $\kk$-subspaces $A_n:=\{a\in A\setminus \{0\}\, :\, f(\nu(a))\le n\} \cup \{0\},\, n\in\ZZ_{\ge 0}$ provide a filtration of $A$ such that $\dim (A_n)=|C_n|$.     
\end{remark}

\begin{proposition}\label{direct}
When $\nu:A\setminus \{0\} \twoheadrightarrow P$ is a valuation onto a coideal partial semigroup $P\subset M$,  one can define a graded associated algebra $A:=\bigoplus_{u\in P} A_u$ as follows. Let $u,v\in P, c\in A_u, d\in A_v$. If  $u+v\in P, a\in  A_{\preceq u}, b\in A_{\preceq v}$ such that $p_u(a)=c, p_v(b)=d$ then we define the product $cd:= p_{u+v}(ab)\in A_{u+v}$. It holds $cd\neq 0$. Otherwise, if $u+v\notin P$ then we define $cd:=0$.
\end{proposition}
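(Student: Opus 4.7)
The plan is to verify three things: (i) the product $cd := p_{u+v}(ab)$ is independent of the choice of lifts $a \in A_{\preceq u}$ of $c$ and $b \in A_{\preceq v}$ of $d$; (ii) when $u+v \in P$ and $c,d$ are both nonzero, one actually has $cd \neq 0$; and (iii) the bilinear extension to ${\mathcal A} = \bigoplus_{u \in P} {\mathcal A}_u$ is associative. Throughout, the two main ingredients are Definition~\ref{valuation_coideal} (giving $\nu(ab) \preceq \nu(a) \circ \nu(b)$ in $M$ whenever $ab \neq 0$, together with the equality $\nu(ab)=\nu(a)\circ\nu(b)$ from Definition~\ref{valuation_partial}~iii) when $\nu(a)\circ\nu(b) \in P$), and the order axiom of Definition~\ref{monoid_partial} strengthened by the strict property (Definition~\ref{strict}) for the cross-term estimates.

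For (ii), since $c = p_u(a) \neq 0$ one has $\nu(a) = u$ exactly, and similarly $\nu(b) = v$. As $u + v \in P$ by hypothesis, Definition~\ref{valuation_partial}~iii) forces $\nu(ab) = u + v$, so $ab \in A_{\preceq u+v} \setminus A_{\prec u+v}$ and $p_{u+v}(ab) \neq 0$. For (i), let $a' \in A_{\preceq u}$ be another lift of $c$ and $b' \in A_{\preceq v}$ another lift of $d$. Then
\[
a'b' - ab \;=\; (a'-a)\,b \;+\; a\,(b'-b) \;+\; (a'-a)(b'-b),
\]
where $a'-a \in A_{\prec u}$ and $b'-b \in A_{\prec v}$. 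For each nonzero summand, Definition~\ref{valuation_coideal} bounds its valuation by the $\circ$-composition of the valuations of its factors; the strict property applied to one strict and one nonstrict inequality of those factors then places this bound strictly below $u+v$ in $M$, so each summand lies in $A_{\prec u+v}$. Hence $p_{u+v}(a'b') = p_{u+v}(ab)$, proving well-definedness.

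After bilinear extension to ${\mathcal A}$, associativity $(cd)e = c(de)$ for homogeneous $c \in {\mathcal A}_u$, $d \in {\mathcal A}_v$, $e \in {\mathcal A}_w$ follows from associativity in $A$ applied to any triple of lifts, read off modulo the appropriate filtration piece. The biconditional form of the partial-semigroup associativity axiom (Definition~\ref{monoid_partial}) ensures that $(u+v)+w$ is defined in $P$ if and only if $u+(v+w)$ is, so the ``defined'' and ``zero'' cases on the two sides match and the convention $cd=0$ when $u+v\notin P$ is consistent. Distributivity is inherited from $A$ and the bilinearity of the lifts.

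The main obstacle is step (i): forcing each cross term to lie strictly inside $A_{\prec u+v}$. The basic order axiom of Definition~\ref{monoid_partial} only delivers $\preceq$, so without the strict property one could a priori have $\nu((a'-a)b) = u+v$ and different lifts would produce genuinely different images under $p_{u+v}$. Thus the crux is to ensure the strict property is available on $M$; in the coideal-of-an-ordered-semigroup setting this is the natural hypothesis, and once granted, the rest of the argument is the routine bookkeeping above.
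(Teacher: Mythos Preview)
Your proof is correct and follows the same approach as the paper, which gives only a one-line sketch: ``The correctness of the definition of the product $cd$ and that $cd\neq 0$ in case when $u+v\in P$ follows from Definitions~\ref{valuation_partial}~iii), \ref{valuation_coideal}. The associativity of $\mathcal A$ can be verified taking into account Definition~\ref{monoid_partial}.'' You have unpacked exactly these citations into the three checks (ii), (i), (iii), and your case analysis for associativity via the biconditional associativity axiom of Definition~\ref{monoid_partial} is what the paper has in mind.

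Your flagging of the strict property is a genuine contribution beyond the paper's sketch. The paper does not explicitly invoke it here, but you are right that the bare order axiom of Definition~\ref{monoid_partial} only yields $\nu((a'-a)b)\preceq u+v$ for the cross terms, not the strict inequality needed to land in $A_{\prec u+v}$; without cancellativity or the strict property on $M$, well-definedness could fail in principle. In the paper's intended setting ($M$ a monoid like $\ZZ_{\ge 0}^n$ or a free monoid, where the strict property holds automatically) this is harmless, and the authors evidently treat it as implicit, but you have correctly isolated the hypothesis that makes the argument go through.
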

    
\begin{proof} The correctness of the definition of the product $cd$ and that $cd\neq 0$ in case when $u+v\in P$ follows from Definitions~\ref{valuation_partial}~iii), ~\ref{valuation_coideal}. The associativity of $A$ can be verified taking into account Definition~\ref{monoid_partial}. 

The theorem is proved. \end{proof}

\vspace{2mm}

Sometimes we consider valuations of  algebras $A$ over domains $R$ which are free as $R$-modules. Namely, let $R$ be a subring of a field $\kk$, we can view any $\kk$-algebra $\hat A$ as an $R$-algebra. Then for an   $R$-subalgebra $A$, any $\kk$-valuation of $\hat A$ is the $R$-valuation of $A$. 

And conversely,  any $R$-valuation of $A$ extends to a $\kk$-valuation of $\kk\otimes_R A$. 

\begin{example}\label{Laurent}
Consider a polynomial algebra 
$\kk[x_1,\dots,x_n]$ endowed with a (non-injective when $n\ge 2$) valuation $\deg$ to $\ZZ_{\ge 0}$. Denote by $A_i, i\ge 0$ the linear span of all Laurent monomials in $x_1,\dots,x_n$ of degree $i$. Then the algebra $A:=\bigoplus _{i\ge 0}A_i$ is endowed with the injective $A_0$-valuation $\deg$ (see Definition~\ref{def_injective}). As an adapted basis of $\mathcal A$ one can take $\{x_1^i, i\ge 0\}$. 
\end{example}

One can generalize the construction from Example~\ref{Laurent} which provides a way to produce injective valuations. 

\begin{proposition}
Let $A=\bigoplus _{c\in P} A_c$ be a graded commutative domain, where $(P,+)$ is an ordered monoid in which  its neutral element $0$ is minimal. One can treat $\nu_0: A\setminus \{0\} \twoheadrightarrow P$ as a (possibly non-injective) valuation (cf. Proposition~\ref{direct}). 

For $d\in P$ denote by $A_d$ the set of all fractions of the form $f/g$, where $f\in A_{d_1}, g\in A_{d_2}, d=d_1-d_2$. The graded algebra $A:=\bigoplus_{d\in P} A_d$ (being a subalgebra of the quotient field of $A$) admits the natural valuation $\nu:A\setminus \{0\} \twoheadrightarrow P$. Then the $A_0$-valuation $\nu$ is injective. As an adapted basis one can take a set $\{b_d\ :\ b_d\in A_d\}$.
\end{proposition}

\begin{remark}
One can take a smaller ring $A'\subset A$ satisfying the same properties as $A$. Let $\nu_0$ be $R$-valuation where a subalgebra $R\subset A$ (e.g. one can take $R=A_0$). Consider homogeneous generators ${\bf B}$ of $A$ as $R$-module, i.e. for each $b\in \bf B$ there is (a unique) $d\in P$ such that $b\in A_d$, in other words, $\nu_0(b)=d$. It holds $A_d=R\cdot \{b\in {\bf B}\ :\ b\in A_d\}$. Then one can take $A'_d$ to consist of all fractions of the form $f/g, f\in A_{d_1}, g\in A_{d_2}, d=d_1-d_2$, where $g$ is a product of some elements from $\bf B$. Define the algebra $A':=\bigoplus_{d\in P} A'_d$. As an adapted basis of $A'$ with respect to $\nu$ for each $d\in P$ one can take an element $b_d\in \bf B$ such that $\nu_0(b_d)(=\nu(b_d))=d$.
\end{remark}

\vspace{2mm}

The following proposition generalizes Theorem~\ref{quotient} to partial semigroups. We utilize the notations from Theorem~\ref{quotient}.

\begin{proposition}\label{quotient_partial}
Let $A=\kk[x_1,\dots,x_n]/I$ be an algebra and $I_{trop}\subset I$ be a $(n-d)$-dimensional subideal satisfying the following properties. Assume that there exists a common $(n-d)$-dimensional rational plane $H\subset \RR^n$ of the tropical variety $Trop (I_{trop})$ such that $H$ is prop and $I_{trop}$ is saturated with respect to $H$. Then there exists a coideal partial monoid $P\subset \ZZ_{\ge 0}^n/H_{\ZZ}$ and an injective valuation $\nu:A\setminus \{0\}\twoheadrightarrow P$.  A linear order on $P$ is induced by a linear order on $\ZZ_{\ge 0}^n/H_{\ZZ}$ which in its turn, is determined by a hyperplane from $ETrop(I_{trop})$.   
\end{proposition}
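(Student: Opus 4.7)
The plan is to follow the strategy of Theorem~\ref{quotient} adapted to the partial semigroup setting, reducing the problem to an application of Theorem~\ref{graded_partial}. First I would use the rational plane $H$ and the projection $\pi:\ZZ^n\twoheadrightarrow \ZZ^n/H_\ZZ$ to single out the image $M:=\pi(\ZZ_{\ge 0}^n)\subset \ZZ^n/H_\ZZ$, which inherits a (commutative, additive) semigroup structure from $\ZZ_{\ge 0}^n$. The properness (``prop'') hypothesis on $H$ is what ensures this image sits inside the ambient quotient lattice in a well-behaved way: it prevents $M$ from collapsing and lets us identify $M$ with a coideal partial submonoid of $\ZZ^n/H_\ZZ$ (cf.\ Example~\ref{monoid_ideal}). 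The saturation of $I_{trop}$ with respect to $H$ is the algebraic counterpart of this: it implies that monomials $x^a$ and $x^b$ with $\pi(a)=\pi(b)$ are compatibly related modulo $I_{trop}\subset I$.

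Second, I would use a hyperplane from $ETrop(I_{trop})$ to produce a linear functional on $\ZZ^n/H_\ZZ$ which induces a linear order $\prec$ on $M$. Exactly as in Theorem~\ref{quotient}, the choice of a hyperplane from the exterior of (the relevant stratum of) the tropical variety ensures that $\prec$ is compatible with the semigroup operation and archimedian, hence in particular a well-ordering. Armed with this, I would construct the tautological injective valuation $\tilde\nu:\kk[x_1,\dots,x_n]\setminus\{0\}\twoheadrightarrow M$ by sending a nonzero polynomial to $\pi(a)$, where $a$ is the exponent of the monomial of largest $\prec$-class appearing in its support; the linearity of $\prec$ makes $\tilde\nu$ well defined, and the verification of the valuation axioms and injectivity is standard (cf.\ Proposition~\ref{monomial_partial} and Definition~\ref{monoidal_partial}).

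Third, with $\tilde\nu$ in hand, the statement becomes a direct application of Theorem~\ref{graded_partial}: set $J:=\tilde\nu(I\setminus\{0\})$, which by part~(i) of that theorem is an ideal in the semigroup $M$, so $P:=M\setminus J$ is a coideal partial semigroup, automatically well-ordered by $\prec$. The prescription
\[
\nu(a+I):=\min\tilde\nu(a+I)
\]
then defines the sought injective valuation $\nu:A\setminus\{0\}\twoheadrightarrow P$, and the order on $P$ is the one inherited from $M$, which by construction comes from the chosen hyperplane in $ETrop(I_{trop})$.

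The main obstacle is the tropical input in steps one and two: one must check that ``$H$ is prop'' and ``$I_{trop}$ is saturated with respect to $H$'' are precisely the conditions needed to guarantee that $M$ is a genuine (though possibly partial) semigroup in $\ZZ^n/H_\ZZ$ and that a hyperplane from $ETrop(I_{trop})$ cuts out an archimedian linear order compatible with addition. This is exactly the content inherited from Theorem~\ref{quotient}; the novelty here is that we no longer require the quotient map to be defined everywhere on the semigroup, since passing to the coideal $P=M\setminus J$ absorbs the failures of composition. Once these tropical ingredients are in place, the algebraic reduction to Theorem~\ref{graded_partial} is immediate, and no further delicate calculation is needed.
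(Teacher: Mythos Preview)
Your overall strategy---reduce to Theorem~\ref{graded_partial} after setting up an intermediate valuation landing in $M=\pi(\ZZ_{\ge 0}^n)\subset \ZZ^n/H_\ZZ$---is the right one and matches the paper. But there is a genuine gap in your second step: the map $\tilde\nu:\kk[x_1,\dots,x_n]\setminus\{0\}\to M$ you describe is \emph{not} injective. Whenever $a,b\in\ZZ_{\ge 0}^n$ satisfy $a-b\in H_\ZZ$ with $a\ne b$, the monomials $x^a$ and $x^b$ receive the same value $\pi(a)=\pi(b)$, yet no $\alpha\in\kk^\times$ makes $\tilde\nu(x^a-\alpha x^b)$ strictly smaller inside $\kk[x_1,\dots,x_n]$. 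Theorem~\ref{graded_partial} requires the starting valuation $\nu_0$ to be injective, so you cannot feed it your $\tilde\nu$.

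The fix is exactly where the saturation hypothesis on $I_{trop}$ does its real work, and it is how the paper proceeds: first apply Theorem~\ref{quotient} to the ideal $I_{trop}$ (not to $I$) to obtain an injective valuation
\[
\nu_0:\bigl(\kk[x_1,\dots,x_n]/I_{trop}\bigr)\setminus\{0\}\twoheadrightarrow \ZZ_{\ge 0}^n/H_\ZZ,
\]
the saturation condition being precisely what forces the collisions $\pi(a)=\pi(b)$ to become honest valuation drops modulo $I_{trop}$. Only then do you apply Theorem~\ref{graded_partial} to the ideal $I/I_{trop}$ in the intermediate quotient $\kk[x_1,\dots,x_n]/I_{trop}$, obtaining $\nu$ on $A$ and $P=(\ZZ_{\ge 0}^n/H_\ZZ)\setminus\nu_0\bigl((I/I_{trop})\setminus\{0\}\bigr)$. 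Your step~3 then goes through verbatim with $\nu_0$ in place of $\tilde\nu$ and $I/I_{trop}$ in place of $I$.
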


\begin{proof} Apply Theorem~\ref{quotient} to the ideal $I_{trop}$ and obtain an injective valuation 
$$\nu_0:(\kk[x_1,\dots,x_n]/I_{trop}) \twoheadrightarrow \ZZ_{\ge 0}^n/H_{\ZZ}.$$
\noindent Then apply Theorem~\ref{graded_partial} to $\nu_0$ which results in $\nu$. Observe that $P=(\ZZ_{\ge 0}^n/H_{\ZZ}) \setminus \nu_0((I/I_{trop})\setminus \{0\})$.
\end{proof}

\vspace{2mm}

The following proposition is inverse to Proposition~\ref{monomial_partial} and extends Theorem~\ref{th:basis} to valuations onto coideal partial monoids.

\begin{proposition}\label{basis_partial}
Let $A$ be a commutative $\kk$-algebra and $\nu:A\setminus \{0\} \twoheadrightarrow P\subset M$ be an injective valuation onto a finitely-generated coideal partial commutative monoid $P$, where the monoid $M$ is endowed with a linear well-ordering $\prec$. Assume that $c_1,\dots,c_s \in P$ is a  family of generators of $P$. Take $a_1,\dots,a_s \in A$ such that $\nu(a_i)=c_i, 1\le i\le s$. Then $A=\kk[a_1,\dots,a_s]/I$ for a suitable ideal $I\subset \kk[a_1,\dots,a_s]$. Consider a linear ordering $\vartriangleleft$ on monomials in $a_1,\dots,a_s$ such that $a_1^{i_1}\cdots a_s^{i_s} \vartriangleleft a_1^{j_1}\cdots a_s^{j_s}$ if either  $M\ni i_1\nu(a_1)\circ \cdots \circ i_s\nu(a_s) \prec j_1\nu(a_1)\circ \cdots \circ j_s\nu(a_s)\in M$ or $i_1\nu(a_1)\circ \cdots \circ i_s\nu(a_s) = j_1\nu(a_1)\circ \cdots \circ j_s\nu(a_s)$ and the monomial $a_1^{i_1}\cdots a_s^{i_s}$ is less than $a_1^{j_1}\cdots a_s^{j_s}$ in deglex. Then the monomials in $a_1,\dots,a_s$ belonging to the complement of the monomials ideal $J$ of leading monomials of the Gr\"obner basis of $I$ (relatively to $\vartriangleleft$), constitute an adapted basis of $A$ with respect to $\nu$.   
\end{proposition}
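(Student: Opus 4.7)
The plan is: (i) exhibit $A$ as a quotient $\kk[a_1,\ldots,a_s]/I$ via an evaluation map $\phi\colon\kk[a_1,\ldots,a_s]\twoheadrightarrow A$; (ii) verify that $\vartriangleleft$ is a well-ordered monomial order, so that the set $\mathbf{B}'$ of monomials outside the initial ideal $J:=\mathrm{in}_\vartriangleleft(I)$ descends to a $\kk$-basis of $A$ by standard Gr\"obner theory; and (iii) show that $\nu|_{\mathbf{B}'}$ is a bijection onto $P$, the key auxiliary claim being that $\nu(m)$ coincides with the \emph{formal} valuation $\mu(m):=i_1c_1\circ\cdots\circ i_sc_s\in M$ whenever $m=a_1^{i_1}\cdots a_s^{i_s}\in\mathbf{B}'$.

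For (i): given $a\in A\setminus\{0\}$ with $c:=\nu(a)\in P$, write $c=i_1c_1\circ\cdots\circ i_sc_s$ in $P$ using that $c_1,\ldots,c_s$ generate $P$. Iterating Definition~\ref{valuation_partial}~iii) gives $\nu(a_1^{i_1}\cdots a_s^{i_s})=c$, and since $\dim\mathcal{A}_c=1$ by Proposition~\ref{valuation_injective_partial}, there is a unique $\lambda\in\kk^*$ with $a-\lambda a_1^{i_1}\cdots a_s^{i_s}$ of strictly smaller $\nu$-value (or zero). Well-orderedness of $P$ makes the descent terminate, so $a\in\kk[a_1,\ldots,a_s]$; setting $I:=\ker\phi$ yields $A=\kk[a_1,\ldots,a_s]/I$.

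For (ii): $\vartriangleleft$ is the lexicographic composite of the pullback well-ordering $\mu^*\prec$ and deglex on $\ZZ_{\ge 0}^s$, hence itself a well-ordering (the composite of two well-orderings is a well-ordering); its compatibility with multiplication follows from axiom~\eqref{eq:axiom order} on $M$ combined with the translation-invariance of deglex. Standard Gr\"obner basis theory then identifies $J$ as a monomial ideal and $\mathbf{B}'$ as a $\kk$-basis of $A$.

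For (iii): \emph{Auxiliary claim.} For $m\in\mathbf{B}'$, Definition~\ref{valuation_coideal} gives $\nu(m)\preceq\mu(m)$ in $M$; if the inequality were strict, the descent of (i) applied to $m\in A$ would produce $R\in\kk[a_1,\ldots,a_s]$ with $\phi(R)=m$ and $\mu(\mathrm{LM}(R))=\nu(m)\prec\mu(m)$, whence $m-R\in I$ has $\vartriangleleft$-leading monomial $m$, contradicting $m\in\mathbf{B}'$. Thus $\mu(m)\in P$ and $\nu(m)=\mu(m)$ by Definition~\ref{valuation_partial}~iii). \emph{Injectivity of $\nu|_{\mathbf{B}'}$:} distinct $m_1,m_2\in\mathbf{B}'$ with $\nu(m_1)=\nu(m_2)=c$ share a common $\mu$-value by the claim, and one-dimensionality of $\mathcal{A}_c$ produces $\alpha\in\kk^*$ with $m_1-\alpha m_2$ of $\nu$-value $c'\prec c$ (or zero); a descent for $m_1-\alpha m_2$ yields $R$ with $\phi(R)=m_1-\alpha m_2$ and $\mu(\mathrm{LM}(R))=c'$, so $(m_1-\alpha m_2)-R\in I$ has leading monomial the deglex-larger of $m_1,m_2$, forcing it into $J$, a contradiction. \emph{Surjectivity:} for $c\in P$ and $a\in A$ with $\nu(a)=c$, expanding $a=\sum\lambda_im_i$ in $\mathbf{B}'$ gives distinct $\nu(m_i)$ by injectivity, so the equality case of \eqref{eq:max} yields $c=\nu(m_{i_0})$ for some $i_0$.

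The principal obstacle is reconciling the formal $M$-valuation $\mu$ on monomials (used to define $\vartriangleleft$) with the actual $P$-valuation $\nu$ of their images in $A$: these agree on standard monomials but can drop on others (exactly when $\mu(m)\in M\setminus P$). The coideal inequality of Definition~\ref{valuation_coideal} is what makes such drops visible as elements of $J$, and it is the bridge between the Gr\"obner-basis side and the valuation-theoretic side.
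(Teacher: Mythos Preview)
Your proof is correct and follows essentially the same approach as the paper's, which likewise hinges on the coideal inequality $\nu(m)\preceq\mu(m)$ from Definition~\ref{valuation_coideal} to force monomials with $\mu(m)\in M\setminus P$ (or non-deglex-minimal ones with a given $\mu$-value in $P$) into $J$. The paper's proof is terser---it describes $\mathbf{B}'$ directly as $\{a_v:v\in P\}$ with $a_v$ the deglex-minimal monomial of formal valuation $v$, deferring the mechanics to the proof of Theorem~\ref{th:basis}~ii)---whereas you work in the other direction, taking $\mathbf{B}'$ as given and verifying that $\nu|_{\mathbf{B}'}$ is a bijection; the underlying ideas and the role of the descent argument are identical.
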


\begin{proof} For a monomial $a:=a_1^{i_1}\cdots a_s^{i_s}$ such that $i_1\nu(a_1)\circ \cdots \circ i_s\nu(a_s) \in M\setminus P$ it holds $\nu(a) \prec i_1\nu(a_1)\circ \cdots \circ i_s\nu(a_s)$ due to Definitions~\ref{valuation_partial}~iii), ~\ref{valuation_coideal}. Therefore $J$ contains all monomials $a_1^{i_1}\cdots a_s^{i_s}$ for which $i_1\nu(a_1)\circ \cdots \circ i_s\nu(a_s) \in M\setminus P$.

On the other hand, among all monomials $a_1^{j_1}\cdots a_s^{j_s}$ with a fixed valuation $v:= j_1\nu(a_1)\circ \cdots \circ j_s\nu(a_s)\in P$ all these monomials belong to $J$ except of a single one being minimal in deglex since $\nu$ is injective (cf. the proof of Theorem~\ref{th:basis}~ii) and remark~\ref{minimal}). Denote the latter monomial by $a_v$. Then the set $\{a_v\, :\, v\in P\}$ constitutes an adapted basis of $A$ with respect to $\nu$. 

The proposition is proved. \end{proof}

\vspace{2mm}

\begin{definition}\label{def_rank}
For a commutative partial monoid $P$ we define its rank $rk(P)$ to be the maximal number of elements $c_1,\dots, c_r\in P$ such that all the elements $i_1c_1\circ \cdots \circ i_rc_r\in P, i_1,\dots,i_r \ge 0$ are pairwise distinct. In this case we call  elements $c_1,\dots, c_r$ independent. 
\end{definition}

The following extends Corollary~\ref{valuation_rank} to coideal partial monoids.

\begin{proposition}\label{rank_partial}
Let $A$ be a commutative $\kk$-algebra and $\nu:A\setminus \{0\} \twoheadrightarrow P\subset M$ be an injective valuation onto a finitely-generated coideal partial commutative monoid, where monoid $M$ is endowed with a linear well-ordering.  Then $\dim (A) = rk (P)$.    
\end{proposition}

\begin{proof} Denote $r:=rk(P)$ and let $c_1,\dots, c_r \in P$ be independent. Take $a_1,\dots, a_r \in A$ for which $\nu(a_i)=c_i, 1\le i\le r$. Then monomials $a_1^{i_1}\cdots a_r^{i_r}, i_1,\dots, i_r\ge 0$ are $\kk$-linearly independent, hence $d:=\dim(A)\ge r$.

Conversely, among monomials belonging to the complement of $J$ (see Proposition~\ref{basis_partial}) there are monomials $b_1,\dots, b_d$ such that all monomials in $b_1,\dots,b_d$ belong to the complement of $J$ taking into account the property of the Gr\"obner basis (cf. the proof of Corollary~\ref{valuation_rank}). Then $\nu(b_1), \dots, \nu(b_d) \in P$ are independent due to Proposition~\ref{basis_partial}, hence $r\ge d$. 

The proposition is proved. \end{proof} 

\subsection{Sub-multiplicative maps and Jordan-H\"older bijections for algebras}

We say that a map ${\bf K}:P\to P'$ of ordered partial semigroups is  {\it sub-multiplicative} if it satisfies the following:

${\bf K}(u\circ v)\preceq {\bf K}(u)\circ {\bf K}(v)$
whenever $u\circ v$ and ${\bf K}(u)\circ {\bf K}(v)$ are defined in $P$ and in $P'$, respectively.

The following justifies this definition.

\begin{theorem}
    
\label{sub-multiplicative}
Let $\nu:A\setminus \{0\} \to P, \nu':A\setminus \{0\} \to P'$ be a pair of injective valuations of an algebra $A$ to partial semigroups $P$ and $P'$, respectively. Then JH-bijection ${\bf K}_{\nu',\nu}$ from $\nu(A\setminus \{0\})$ to $\nu'(A\setminus \{0\})$ is sub-multiplicative.
\end{theorem}

\begin{proof} For any elements $u,v\in P$ for which $u\circ v$ and ${\bf K}(u)\circ {\bf K}(v)$  are defined take $a,b\in A\setminus \{0\}$ such that $\nu(a)=u, \nu(b)=v$ and $\nu'(a)={\bf K}(u), \nu'(b)={\bf K}(v)$. Then $\nu(a b)=u\circ v$ and ${\bf K}(u\circ v)\preceq \nu'(a b)={\bf K}(u)\circ {\bf K}(v)$.
\end{proof}

The following result asserts that sub-multiplicative maps are frequently homomorphisms.
\begin{theorem} 
\label{th:lifting of homomorphisms}
In assumptions on (entire or partial) semigroups $P$ and $P'$ as in 
Lemma~\ref{le:crossed product of monoids}, 
let ${\bf K}$ be a map $P:=\underline P\times_\chi \Gamma\to P':=\underline P'\times_{\chi'} \Gamma$ commuting with the multiplication by elements of $\Gamma$. Then

(a) There is a unique map $\underline {\bf K}:\underline P\to \underline P'$ such that ${\bf K}(c,1)=(\underline {\bf K}(c),\underline  \gamma(c))$ for all $c\in \underline P$ where $\underline \gamma$ is a (unique) suitable map $P\to \Gamma$. 

(b) Suppose that ${\bf K}$ is a sub-multiplicative bijection and ${\bf K}^{-1}$ is sub-multiplicative as well. Suppose also that 
 $\underline \gamma(\underline  P)=\{1\}$.
Then $\underline {\bf K}$ and $\underline {\bf K}^{-1}$ are also sub-multiplicative bijections and
$$\chi'(\underline {\bf K}(c),\underline {\bf K}(d))=\chi(c,d) $$
whenever $c\circ d$ is defined and $\underline {\bf K}(c\circ d)=\underline {\bf K}(c)\circ \underline {\bf K}(d)$. 

(c) Consider partial semigroup structure $\widetilde P$ on $P$ and $\widetilde P'$ on $P'$ such that $\underline {\bf K}$ becomes an isomorphism of semigroups $\underline P\widetilde \to \underline P'$, then  ${\bf K}$ becomes an isomorphism of partial semigroups $\widetilde P\to \widetilde P'$.

\end{theorem}

\begin{proof} Part (a) is immediate because of the canonical projection $P'\twoheadrightarrow \underline P'$ and the embedding $\underline P\hookrightarrow 
P$, $c\mapsto (c,1)$, which send any  map ${\bf K}:P \to P'$ to a well-defined map $\underline {\bf K}:\underline  P\to \underline P'$.

Applying ${\bf K}$ and ${\bf K}^{-1}$  to the multiplication tables
$(c,1)\circ (d,1)=(c\circ d,\chi(c,d))$, $c,d\in \underline P$,  $(c',1)\circ (d',1)=(c'\circ d',\chi'(c',d'))$, $c',d'\in \underline P'$ respectively, 
we obtain using sub-multiplicativity
$${\bf K}(c\circ d,\chi(c,d))\preceq {\bf K}(c,1)\circ {\bf K}(d,1)\ ,
$$
$${\bf K}^{-1}(c'\circ d',\chi'(c',d'))\preceq {\bf K}^{-1}(c',1)\circ {\bf K}^{-1}(d',1)
\ .$$

Using (a) and $\Gamma$-equivariance of ${\bf K}$, we rewrite the above inequalities as
$$(\underline{\bf K}(c\circ d), \chi(c,d))\preceq (\underline  {\bf K}(c),1)\circ (\underline  {\bf K}(d),1)=(\underline  {\bf K}(c)\circ \underline  {\bf K}(d), \chi'(\underline  {\bf K}(c),\underline  {\bf K}(d)))\ ,
$$
$$(\underline {\bf K}^{-1}(c'\circ d'),\chi'(c',d'))\preceq (\underline {\bf K}^{-1}(c'),1)\circ (\underline {\bf K}^{-1}(d'),1)$$
$$=(\underline {\bf K}^{-1}(c')\circ \underline {\bf K}^{-1}(d'),\chi(\underline {\bf K}^{-1}(c'),\underline {\bf K}^{-1}(d')))\ .
$$

It holds that
$\underline {\bf K}(c)\circ \underline {\bf K}(d)=\underline {\bf K}(c\circ d)$.
Then the first inequality simplifies:
$$\chi(c,d)\preceq \chi'(\underline  {\bf K}(c),\underline  {\bf K}(d))\ .
$$

It holds that
$\underline {\bf K}^{-1}(c')\circ \underline {\bf K}^{-1}(d')=\underline {\bf K}^{-1}(c'\circ d')$.
Then the second inequality simplifies
$$\chi'(c',d')\preceq  \chi(\underline {\bf K}^{-1}(c'),(\underline {\bf K}^{-1}(d'))\ .
$$
Finally, taking $c'=\underline  {\bf K}(c)$, $d'=\underline  {\bf K}(d)$, the latter inequality simplifies

$$\chi'(\underline  {\bf K}(c),\underline  {\bf K}(d))\preceq  \chi(c,d)\ .
$$
Combining two obtained opposite inequalities, we finish the proof of (b).

Part (c) is immediate.
The theorem is proved. \end{proof}

The following is an immediate consequence of Theorem~\ref{th:lifting of homomorphisms}(b)(c) for quantum cones.

\begin{corollary}\label{JH_multiplicativity} Let ${\bf K}:P\to P'$ be a sub-multiplicative bijection of quantum cones
lifting the underlying bijection  $\underline {\bf K}:\underline P\to \underline P'$  of the corresponding  abelian semigroups $\underline P$ and $ \underline P'$. Suppose that ${\bf K}^{-1}$ is also sub-multiplicative.
Then in the notation \eqref{eq:Lambda-multiplication} it holds
\begin{equation}
\label{eq:g-equivariance}
g_{\underline {\bf K}(a),\underline {\bf K}(a')}=g_{a,a'}
\end{equation}
for all $a,a'\in \underline P$ such that $\underline {\bf K}(a+a')=\underline {\bf K}(a)+\underline {\bf K}(a')$.
In particular, ${\bf K}$ is an isomorphism of partial monoids where
the partial multiplication in $P$ is given by $(a,\gamma)\circ (a',\gamma')$   defined (and equal to $(a+a',\gamma\gamma')$) iff $\underline {\bf K}(a+a')=\underline {\bf K}(a)+\underline {\bf K}(a')$.

 \end{corollary}

\begin{remark} 
\label{rem:linearity domain}
Given a map $\underline {\bf K}:\underline P\to \underline P'$ we say a submonoid $\underline C$ of $\underline P$ is a {\it linearity domain} for $\underline {\bf K}$ if $\underline {\bf K}(a+a')=\underline {\bf K}(a)+\underline {\bf K}(a')$ for all $a,a'\in \underline C$. 
Thus, we can think of \eqref{eq:g-equivariance} as a ``symplectomorphism."  Moreover, the restriction of ${\bf K}$ to a preimage $C\subset P$ of a linearity  domain $\underline C$ extends to an isomorphism of groups  $<C>\widetilde \to <{\bf K}(C)>$, where $<\bullet>$ denotes the group generated by a given submonoid. In particular, if $C$ is of the same dimension as $P$, then $<C>=<P>$ and ${\bf K}(C)=<P'>$ and  we have an isomorphism of groups $<P>\widetilde \to <{\bf K}(P)>$.
    
\end{remark}

 \begin{remark} It is natural to expect that the converse also holds, that is,  \eqref{eq:g-equivariance} implies that $a$ and $a'$ belong to the same linearity domain.
\end{remark}

\begin{example}
Consider a partial semigroup $P$ endowed with two different orders. They provide two injective valuations $\nu_1, \nu_2: \kk P\setminus \{0\} \twoheadrightarrow P$. Then $\{[u]\, :\, u\in P\}\subset \kk P$ (Remark ~\ref{monoidal_partial}) is a common adapted basis for $\nu_1, \nu_2$, and the JH-bijection ${\bf K}_{\nu_1,\nu_2}$ is identity map $Id_P$.
\end{example} 

\begin{proposition}
Let $\nu:A\setminus \{0\} \to P,\ \nu':A\setminus \{0\} \to P'$ be injective valuations of an algebra $A$ to partial semigroups $P, P'$, respectively. If JH-bijection ${\bf K}:= {\bf K}_{\nu', \nu}$ is monotone (i.e. $p_1\preceq_P p_2$ implies ${\bf K}(p_1)\preceq_{P'} {\bf K}(p_2)$) then $\bf K$ is a homomorphism (thereby, an isomorphism) of partial semigroups.    
\end{proposition}

\begin{proof}
Take $a,b\in A\setminus \{0\}$ such that $\nu(a)\circ_P \nu(b) \in P$ is defined. Due to Proposition~\ref{sub-multiplicative}
 it holds
 $${\bf K}(\nu(a)\circ_P \nu(b)) \preceq_{P'} {\bf K} (\nu(a)) \circ_{P'} {\bf K} (\nu(b)),$$
 \noindent provided that the right-hand side is defined. Therefore, the monotonicity of ${\bf K}^{-1}$ and Proposition~\ref{sub-multiplicative} imply that
 $$\nu(a)\circ_P \nu(b) \preceq_P {\bf K}^{-1} ({\bf K} (\nu(a)) \circ_{P'} {\bf K} (\nu(b))) \preceq_P \nu(a)\circ_P \nu(b)\ .$$ 
 The proposition is proved.     
\end{proof} 
 
\begin{proposition}\label{two_semigroups}
    Let $P$ be a set with two (partial) operations $(a,b)\mapsto a\circ b$ and $(a,b)\mapsto a\bullet b$ so that $P$ has partial semigroup structures $P_{\circ}$ and $P_{\bullet}$ respectively.  Define a new operation 
$$ab:=a\circ b+a\bullet b$$
on the vector space $\kk P$
(with the convention if a summand is not defined it is replaced by zero) and denote this algebra by $A_{\circ,\bullet}$.

(a) $A_{\circ,\bullet}$ is associative iff
$\circ$ and $\bullet$ are mutually associative:
$$(a\circ b)\bullet c=a\bullet (b\circ c),~(a\bullet b)\circ c=a\circ(b\bullet c)$$
for all $a,b,c\in P$. We say that $(a\circ b)\bullet c$ is defined if both $a\circ b$ and $(a\circ b)\bullet c$ are defined (here we assume that $(a\circ b)\bullet c$ and $a\bullet (b\circ c)$ are defined or not defined simultaneously
(as well as $(a\bullet b)\circ c$ and $a\circ(b\bullet c)$).
\vspace{2mm}

(b) Suppose additionally that both $P_\circ$ and $P_\bullet$ are ordered  with $\preceq^\circ$ and $\preceq^\bullet$, respectively, and it holds
$$a\circ b\preceq^\bullet a\bullet b\preceq^\circ a\circ b,$$
\noindent provided that $a\circ b$ and $a\bullet b$ are defined (cf. Proposition~\ref{sub-multiplicative}).
Then the assignment $[a]\mapsto a$
define injective valuations $\nu_\circ$ and $\nu_\bullet$ on $A_{\circ,\bullet}$ to $P_{\circ}$ and to $P_{\bullet}$, respectively
Moreover, identity map $P\mapsto P$
is the corresponding JH-bijection, and $[P]\subset A_{\circ,\bullet}$ is a common adapted basis of valuations $\nu_\circ$ and $\nu_\bullet$.
    
\end{proposition}

\begin{proof} Prove (a). Indeed,
$$(ab)c=(a\circ b+a\bullet b)c=(a\circ b)c+(a\bullet b)c$$
$$=(a\circ b)\circ c+(a\circ b)\bullet c+(a\bullet b)\circ c+(a\bullet b)\bullet c$$
On the other hand, 
$$a(bc)=a(b\circ c+b\bullet c) $$
$$=a\circ (b\circ c)+a\bullet (b\circ c)+a\circ(b\bullet c)+a\bullet(b\bullet c)$$

This gives associativity because  $(a\circ b)\bullet c=a\bullet (b\circ c)$ and $(a\bullet b)\circ c=a\circ(b\bullet c)$. \vspace{2mm}

(b) We claim that $\nu_{\bullet}$ is a valuation. Take $a,b\in P\subset A_{\circ,\bullet}$ such that $a\bullet b$ is defined. If $a\circ b \prec^{\bullet} a\bullet b$ (or $a\circ b$ is not defined) then $\nu_{\bullet} (ab) = a\bullet b = \nu_{\bullet}(a) \bullet \nu_{\bullet} (b)$. Otherwise, if $a\circ b = a\bullet b$ then $ab=2a\bullet b$, and again we get that $\nu_{\bullet}(ab)=a\bullet b = \nu_{\bullet}(a) \bullet \nu_{\bullet} (b)$. The claim is proved.

In a similar manner we establish that $\nu_{\circ}$ is a valuation as well. \end{proof}

We mention that an issue of whether a sum of two associative products form again an associative product similarly to (a) is widely studied (see, e.g. \cite{Sokolov}), while not in the context of semigroup algebras.      

\begin{remark}\label{basis_semigroup}
Let $A$ be a $\kk$-algebra with a basis $B$ equipped with a linear order $\prec$. For $b_1,b_2\in B$ define $b_1\circ b_2\in B$ to be the highest (with respect to $\prec$) element in the decomposition in $B$ of $b_1b_2$, provided that $b_1b_2\neq 0$, otherwise $b_1\circ b_2$ is not defined. Assume that the following properties are fulfilled:

i) if $b_1\prec b_2$ then $b_1\circ b_0\preceq b_2\circ b_0$, provided that $b_1\circ b_0, b_2\circ b_0$ are defined (respectively, $b_0\circ b_1\preceq b_0\circ b_2$, provided that $b_0\circ b_1, b_0\circ b_2$ are defined);

ii) $(b_1\circ b_2)\circ b_3=b_1\circ (b_2\circ b_3)$, moreover, $b_1\circ b_2, (b_1\circ b_2)\circ b_3$ are defined iff $b_2\circ b_3, b_1\circ (b_2\circ b_3)$ are defined, $b_1,b_2,b_3\in B$.

Then $(B,\circ)$ is a partial semigroup. For any $a\in A\setminus \{0\}$ consider its decomposition $a=\lambda b+\cdots, \lambda\in \kk^\times$ in $B$ where $b\in B$ is the highest element of $B$ in this decomposition, we define $\nu(a):=b$. Then $\nu:A\setminus \{0\}\twoheadrightarrow B$ is an injective valuation.

Conversely, having an injective valuation  $\nu:A\setminus \{0\}\twoheadrightarrow P$ and an adapted basis $B$ one defines (as above) the (partial) operation $\circ$ on $B$ such that the partial semigroup $(B,\circ)$ is isomorphic to $P$.
\end{remark}

Note that for two injective valuations $\nu, \nu'$ on $A\setminus \{0\}$ the images $\nu(A\setminus \{0\})$ and $\nu'(A\setminus \{0\})$ are not necessarily isomorphic as (partial) semigroups. We will demonstrate this  Remark~\ref{cusp_JH} below we give here more examples in Section \ref{subsec:Examples Jordan-Holder bijections}.

\begin{theorem}\label{JH_lifting}
    
Let $\nu:A\setminus\{0\} \to P$ and $\nu':A\setminus\{0\}\to P'$ be injective valuations satisfying assumptions of Proposition \ref{pr:lifting of valuation}. 
Then ${\bf K}_{\nu',\nu} : C_\nu\widetilde \to C_{\nu'}$ is well-defined, unique, and commutes with multiplication by elements of $\Gamma$ (here we identify $\Gamma$ with its respective images $\nu(\Gamma)$ and $\nu'(\Gamma)$).

\end{theorem} 

\begin{proof} The assumptions of the theorem and Corollary \ref{cor:injective definition} imply an existence of a $\mathbb {K}$-linear basis $\underline {\bf B}$ of $\mathbb {K}\otimes A$ adapted to both $\underline \nu$ and $\underline \nu'$. 

The argument from the proof of Proposition \ref{pr:lifting of valuation} implies that ${\bf B}=\Gamma\cdot\underline {\bf B}$ is a basis of $A$ adapted to both $\nu$ and $\nu'$.

This proves the existence of ${\bf K}_{\nu',\nu}$. Its $\Gamma$-equivariance follows.

The uniqueness follows from Proposition~\ref{pr:common}.
    
\end{proof}

\section{Injective valuations to entire semigroups}\label{three}

In this section we consider (more familiar) valuations of algebras in (entire) semigroups (rather than in partial semigroups as in section~\ref{four}), e.g., each such algebra is a (commutative or noncommutative) domain.

\subsection{Valuations of domains into semigroups}

Let $C$ be a semigroup endowed with a linear ordering $<$ compatible with the semigroup operation $+$ (not necessary commutative). For a $\kk$-algebra $A$ its valuation we define as a mapping $\nu:A\setminus \{0\} \to C$ such that
$$\nu(\alpha a)=\nu(a), \nu(a+a_0)\le \max\{\nu(a), \nu(a_0)\}, \nu(aa_0)=\nu(a)+\nu(a_0),$$
$$a,a_0,a+a_0\in A\setminus \{0\}, \alpha \in \kk^\times.$$
\noindent Denote $C_{\nu}:=\nu(A\setminus \{0\})$.
An example is provided by a semigroup algebra $\kk C$ with a valuation (see Proposition~\ref{monomial_partial}) 
$$\nu(\alpha_1c_1+\cdots +\alpha_kc_k):=\max\{c_1,\dots,c_k\}, c_1,\dots,c_k\in C, \alpha_1,\dots,\alpha_k\in \kk^\times.$$

Let $C:=\langle c_1,\dots,c_n\rangle$ be a free semigroup generated by $c_1,\dots,c_n$. One can define a linear ordering on $C$ as follows (see Lemma~\ref{deglex}): $c_{i_1}\cdots c_{i_m} < c_{j_1}\cdots c_{j_s}, 1\le i_1,\dots,i_m,j_1,\dots,j_s\le n$ iff either $m<s$ or $m=s$ and the vector $(i_1,\dots,i_m)$ is less than the vector $(j_1,\dots,j_m)$ with respect to lex. Note that this provides a well-ordering on $C$.

We say that a valuation $\nu$ is {\it injective} if there exists a $\kk$-basis $\{a_c\, :\, c\in C_{\nu}\}$ of $A$, where $\nu(a_c)=c, c\in C_{\nu}$
(such a basis we call {\it adapted} with respect to $\nu$). 
Then $\nu$ has one-dimensional leaves (\cite{Kaveh}).
Observe that $$a_{c_1}a_{c_2}=\alpha (c_1,c_2) a_{c_1+c_2}+\sum_{c<c_1+c_2}\alpha_c c$$
\noindent for suitable $\alpha (c_1,c_2) \in \kk^times, \alpha_c \in \kk$. Observe that due to the associativity in $A$ the following relations are fulfilled:
$$\alpha(c_1,c_2)\alpha(c_1+c_2,c_3)=\alpha(c_2,c_3)\alpha(c_1,c_2+c_3).$$
For example, $\{c\in C\}$ is an adapted basis of $\kk C$ with respect to the tautological valuation (see Definition~\ref{monoidal_partial}).

More generally, for an arbitrary valuation $\nu$ on $A$ we say that $\{a_i \in A\}_i$ is an adapted basis \cite{Kaveh-Manon} (with respect to $\nu$) if for any $a=\sum_j \alpha_j a_j\in A\setminus \{0\},\, \alpha_j \in \kk^\times$ it holds $\nu (a)=\max_j \{\nu(a_j)\}$. In particular, if $A$ has a Khovanskii basis \cite{Kaveh-Manon} then one can produce relying on it an adapted basis.

\begin{theorem}\label{filtration}
Let $A$ be a $\kk$-algebra, $\nu:A\setminus \{0\}\twoheadrightarrow C_{\nu}$ be a mapping onto a linearly ordered semigroup $C_{\nu}$ such that 
$\nu(\alpha a)=\nu(a), \nu(a+a_0)\le \max\{\nu(a), \nu(a_0)\}, a,a_0,a+a_0\in A\setminus \{0\}, \alpha \in \kk^\times$. Denote 
$$A_c:=\{a\in A\setminus \{0\}\, :\, \nu(a)\le c\} \cup \{0\}$$
\noindent and $G_c:=A_c/A_{<c}$. Consider an associated graded algebra $G:=\bigoplus_{c\in C_{\nu}}G_c$.

i) $\nu$ is a valuation iff $G$ is a domain. In this case $\nu_0(g)=c$ for $g\in G_c^\setminus\{0\}$ defines a valuation on $G$.

ii) Let $C_{\nu}$ be well-ordered and $\nu$ be a valuation. Then $\nu$ is an injective valuation iff $\dim_{\kk}(G_c)=1$ for any $c\in C_{\nu}$.

iii) Let $\nu$ be an injective valuation and ${\mathcal C}=\{c_i\}\subset C_{\nu}$ be a set of generators of a well-ordered $C_{\nu}$. Then $A$ has an adapted basis of the form $\{a_{i_1}\cdots a_{i_k}\, :\, (i_1,\dots,i_k)\in S\}$ for an appropriate set $S$, where $\nu(a_i)=c_i \in {\mathcal{C}}$ and $C_{\nu}=\{c_{i_1}\cdots c_{i_k}\, :\, (i_1,\dots,i_k)\in S\}$. Note that $\mathcal{C}$ can be infinite.

iv) For a valuation $\nu$ on $A$ and a well-ordered $C$ there is an adapted basis of $A$.
\end{theorem}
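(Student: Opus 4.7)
The plan is to treat the graded object $G=\bigoplus_{c\in C_\nu} G_c$ as the central device and prove all four parts by unwinding what each hypothesis says at the level of leading terms. A preliminary observation I would make is that the sub-multiplicative inequality $\nu(ab)\le \nu(a)+\nu(b)$ is precisely what ensures $A_cA_{c'}\subset A_{c+c'}$, hence it is what makes the multiplication on $G$ well-defined; without it one cannot even speak of $G$ as an algebra, so in (i) this inequality is implicit and I would derive it or assume it as part of the setup.

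For (i), the forward direction is immediate: the equality $\nu(ab)=\nu(a)+\nu(b)$ sends $\bar a\in G_{\nu(a)}\setminus\{0\}$ and $\bar b\in G_{\nu(b)}\setminus\{0\}$ to a nonzero product in $G_{\nu(a)+\nu(b)}$, so $G$ is a domain. Conversely, sub-multiplicativity gives $ab\in A_{\nu(a)+\nu(b)}$; if $\nu(ab)<\nu(a)+\nu(b)$ then $\bar a\bar b=0$ in $G_{\nu(a)+\nu(b)}$, contradicting the domain property. The assertion that $\nu_0(g)=c$ for $g\in G_c\setminus\{0\}$ is a valuation on $G^\times$ is then tautological from the grading.

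For (ii), the forward direction uses that any adapted basis element $a_c$ satisfies $\nu(a_c)=c$, and any $b\in A_c\setminus A_{<c}$ expanded in the basis must have a nonzero coefficient at some $a_d$ with $d=c$ by the adapted/max property, so $\bar a_c$ spans $G_c$; hence $\dim G_c=1$. Backward: lift any nonzero $\bar a_c\in G_c$ to $a_c\in A_c$. Linear independence of $\{a_c\}$ follows by taking the maximal index $c_0$ in any finite linear relation and reducing mod $A_{<c_0}$. Spanning uses well-ordering: for $a\in A$ with $\nu(a)=c$, subtract the unique scalar multiple of $a_c$ that kills the class in $G_c$; iterate. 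Because strictly decreasing sequences in a well-ordered set are finite, the procedure terminates in finitely many steps, producing a finite expansion. Part (iii) is then obtained by fixing for each $c\in C_\nu$ a factorization $c=c_{i_1}+\cdots+c_{i_k}$ in the generating set $\mathcal{C}$ (possible because $\mathcal{C}$ generates), collecting these tuples into $S$, and setting $a_c:=a_{i_1}\cdots a_{i_k}$; the multiplicative axiom from (i) gives $\nu(a_c)=c$, and (ii) applies.

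Part (iv) follows the same blueprint but without the dimension-one constraint: for each $c\in C_\nu$, pick a $\kk$-basis of $G_c$ and lift each of its elements to $A_c$; the resulting collection $\{a_{c,\alpha}\}$ is linearly independent (reduce a finite relation mod $A_{<c_0}$ at its maximal valuation $c_0$, and use that the chosen lifts form a basis of $G_{c_0}$), spans $A$ by the same well-ordered descent argument, and is adapted because any nonzero finite combination has top-valuation $c_0$ equal to the maximum of the $\nu$-values of its summands. The principal obstacle is making the descent rigorous when $C_\nu$ is infinite; this is exactly why well-ordering (rather than mere total ordering) is indispensable, since it is precisely the hypothesis that rules out infinite strictly descending chains of residuals and guarantees the expansion is a finite sum.
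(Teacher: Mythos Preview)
Your proposal is correct and follows essentially the same route as the paper: the graded algebra $G$ serves as the central device, part (i) is proved via lifts and the equivalence between $\nu(ab)=\nu(a)+\nu(b)$ and nonvanishing of graded products, parts (ii) and (iv) use the identical lift-and-descend argument terminating by well-orderedness, and (iii) is reduced to (ii) by choosing factorizations and invoking multiplicativity. Your explicit remark that submultiplicativity is needed to make $G$ an algebra is a welcome clarification the paper leaves implicit, but otherwise the arguments coincide step for step.
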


\begin{proof}
i) Let $\nu$ be a valuation. Denote by $p_c:A_c\twoheadrightarrow G_c$ the projection. For any $g\in G_c\setminus \{0\}, g_0\in G_{c_0}\setminus \{0\}$ take $a\in p_c^{-1}(g), a_0\in p_{c_0}^{-1}(g_0)$. Then $\nu(a)=c, \nu(a_0)=c_0$. Since $\nu(aa_0)=\nu(a)+\nu(a_0)$, it holds $aa_0\notin A_{<(c+c_0)}$. Therefore $gg_0=p_{c+c_0}(aa_0)\neq 0$, i.e. $G$ is a domain.

In a similar manner one can verify the inverse statement. \vspace{2mm}

ii) Let $\dim(G_c)=1$ for any $c\in C_{\nu}$. For each $c\in C_{\nu}$ pick $a_c\in A_c$ such that $\nu(a_c)=c$. We claim that the elements $\{a_c\, :\, c\in C_{\nu}\}$ constitute an adapted basis of $A$ with respect to $\nu$ (this implies the injectivity of $\nu$). 

Clearly, the elements $\{a_c\, :\, c\in C_{\nu}\}$ are linearly independent. For any element $a\in A\setminus \{0\}$ with $\nu(a)=c_0$ there exists (and unique) $\alpha \in \kk^\times$ such that $\nu(a-\alpha a_{c_0})<c_0$ since $\dim (G_{c_0})=1$. Applying a similar argument to $a-\alpha a_{c_0}$ (in place of $a$), unless  $a-\alpha a_{c_0}=0$, and continuing in this way, we arrive eventually at a decomposition of $a$ in a linear combination of the elements from $\{a_c\, :\, c\in C_{\nu}\}$, taking into account that $C_{\nu}$ is well-ordered. The claim is proved.

In a similar manner one can verify the inverse statement. \vspace{2mm}

iii) follows from ii). \vspace{2mm}

iv) Choose a basis $\{b_{c,i}\, :\, i\in I_c\}$ of $G_c$ and elements $a_{c,i} \in A_c$ such that $p_c(a_{c,i})=b_{c,i}$. We claim that ${\bf A}:=\{a_{c,i}\, :\, c\in C, i\in I_c\}$ constitute an adapted basis of $A$.

Indeed, consider $a=\sum_{c,j} \alpha_{c,j} a_{c,j} \in A\setminus \{0\},\, \alpha_{c,j} \in \kk^\times$. Denote a subsum $e:=\sum_l \alpha_{c_0,l} a_{c_0,l}$ which ranges over all $l$ such that $c_0:=\nu(a_{c_0,l})=\max_{c,j} \{\nu(a_{c,j})\}$. Then $\nu(e)=c_0$ owing to the choice of $a_{c,i}$. Hence $\nu(a)=c_0$. In particular, the elements of ${\bf A}$ are independent.

Similarly to the proof above of ii) one can express any element of $A$ as a linear combination of elements from ${\bf A}$ which proves the claim.

The theorem is proved.     
\end{proof} 

\begin{remark}
If $\nu$ is an injective valuation on an algebra $A$ over an radically closed field onto a well-ordered finitely-generated monoid $C$ 
then one can treat $A$ as a deformation of $\kk C$ (see Proposition~\ref{isomorphism} below).    
\end{remark}

Now we describe a construction which starting with a valuation on an algebra, produces a valuation on its quotient (cf. Theorem~\ref{graded_partial}). Let $A=\bigoplus_{c\in C} A_c$
be a domain over a field $\kk$ graded by an ordered monoid 
$C$. For  $a\in A\setminus \{0\}$ denote by $lt(a)\in A_{c_0}$ the leading term of $a$ for suitable $c_0\in C$, i.e. $a-lt(a)\in \bigoplus_{c<c_0} A_c$. Note that $\nu_0(a):=c_0$ defines a valuation on $A\setminus \{0\}$ (not necessary injective). For an ideal $J\subset A$ denote by $lt(J)\subset A$ the homogeneous ideal generated by $lt(f)$ for $f\in J$.

\begin{theorem}\label{graded}
Let $A=\bigoplus_{c\in C} A_c$
be a domain over a field $\kk$ graded by an ordered monoid $C$. 
For an ideal $J\subset A$ one can define a mapping $\nu$ on the algebra $(A/J)\setminus \{0\}$ filtered by $C$ as follows. For $g\in (A/J)\setminus \{0\}$ denote 
$$\nu(g):= \min\{\nu_0(g+J)\}\in C.$$

i) $\nu(\alpha g_1)=\nu(g_1),  \nu(g_1+g_2)\le \max\{\nu(g_1), \nu(g_2)\};$

ii) $\nu(g_1g_2)=\nu(g_1)+\nu(g_2)$ for any $g_1,g_2\in (A/J)\setminus \{0\}$ iff the ideal $lt(J)\subset A$ is prime;

iii) $\nu$ is injective iff $C$ is well-ordered and $\dim_{\kk} (A_c/(lt(J)\cap A_c))=1$ for each $c\in C$.

Thus, when the conditions in ii), iii) are satisfied, $\nu$ is an injective well-ordered valuation of $(A/J)\setminus \{0\}$.
    \end{theorem}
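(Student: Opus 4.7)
Part (i) is a direct consequence of the definition of $\nu$ as a minimum; parts (ii) and (iii) are reduced to properties of the associated graded algebra $gr(A/J)$ with respect to the $\nu$-filtration, by means of the canonical identification $gr(A/J)\cong A/lt(J)$, after which Theorem~\ref{filtration} is invoked.

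For (i): scalar invariance uses $\alpha J=J$ for $\alpha\in\kk^*$; for the ultrametric bound, if $j_i\in J$ attain $\nu_0(g_i+j_i)=\nu(g_i)$, then $\nu(g_1+g_2)\le\nu_0((g_1+j_1)+(g_2+j_2))\le\max\{\nu(g_1),\nu(g_2)\}$.

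For (ii), I would establish the graded isomorphism $gr(A/J)_c\cong A_c/(lt(J)\cap A_c)$ (with $A_c$ the degree-$c$ piece of $A$). The natural map $A_c\to gr(A/J)_c$, $\alpha\mapsto[\bar\alpha]$, is surjective because every class in $(A/J)_{\le c}/(A/J)_{<c}$ is represented by the degree-$c$ part of some lift. For the kernel: if $\nu(\bar\alpha)<c$, then some $j\in J$ satisfies $\nu_0(\alpha+j)<c$; since $\alpha$ is homogeneous of degree $c$, this forces the degree-$c$ part of $j$ to be $-\alpha$ and all components of $j$ of degree strictly greater than $c$ to vanish, whence $lt(j)=-\alpha$ and $\alpha\in lt(J)\cap A_c$. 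Conversely, given $\alpha\in lt(J)\cap A_c$, write $\alpha=\sum_k a_k\cdot lt(p_k)$ with $a_k\in A$ homogeneous, $p_k\in J$, and every summand of degree $c$; then $h:=\sum_k a_k p_k\in J$ satisfies $lt(h)=\alpha$, and $\alpha-h\in A_{<c}$ represents $\bar\alpha$ with valuation $<c$. With this identification, Theorem~\ref{filtration}~i) says $\nu$ is a valuation iff $gr(A/J)\cong A/lt(J)$ is a domain, which, since $lt(J)$ is homogeneous, is equivalent to $lt(J)$ being prime.

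For (iii), under the hypothesis of (ii) that $\nu$ is already a valuation, Theorem~\ref{filtration}~ii) says $\nu$ is injective iff $C_\nu$ is well-ordered and $\dim_{\kk} gr(A/J)_c=1$ for every $c\in C_\nu$; substituting the identification from (ii) yields the stated criterion. The main obstacle is the delicate step ``$\alpha\in lt(J)\cap A_c\Rightarrow\alpha=lt(h)$ for some $h\in J$'': verifying that the construction $h=\sum_k a_k p_k$ has leading term exactly $\alpha$, with no cancellation among its degree-$c$ pieces and no spurious higher-degree contribution, is where the hypothesis that $A$ is a graded \emph{domain} enters crucially, via $lt(a_k p_k)=a_k\cdot lt(p_k)$ and additivity of leading degrees.
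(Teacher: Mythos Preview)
Your approach is correct and takes a genuinely different route from the paper's. The paper argues elementwise: it first isolates a Lemma (Lemma~\ref{leading_unique}) asserting that for any $g\in(A/J)\setminus\{0\}$ the leading term of a \emph{minimal} lift $g+f_0$ lies outside $lt(J)$, while the leading term of any non-minimal lift lies in $lt(J)$. With this in hand, part (ii) is checked directly by multiplying minimal lifts and using primeness of $lt(J)$ to see that $lt(g_1+f_1)lt(g_2+f_2)\notin lt(J)$, hence the product is itself a minimal lift; part (iii) is checked by the one-dimensionality forcing $lt(g_1+f_1)-\alpha\,lt(g_2+f_2)\in lt(J)$, which after subtracting a suitable $f\in J$ yields the Euclidean property. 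Your argument instead packages all of this into a single graded isomorphism $gr(A/J)\cong A/lt(J)$ and then appeals to Theorem~\ref{filtration}. The content is the same---your kernel computation is precisely Lemma~\ref{leading_unique} rephrased, and the delicate step you flag (producing $h\in J$ with $lt(h)=\alpha$ from $\alpha\in lt(J)\cap A_c$) is exactly what the paper uses implicitly when it writes ``there exists $f\in J$, $lt(f)=lt(g_1+f_1)-\alpha\cdot lt(g_2+f_2)$''. What your framing buys is a cleaner separation of concerns: the structural fact $gr(A/J)\cong A/lt(J)$ is proved once, and the valuation-theoretic consequences are read off from the already-established Theorem~\ref{filtration}. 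The paper's approach is slightly more self-contained but repeats, in effect, the reasoning of Theorem~\ref{filtration} inside the proof.

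One small point: Theorem~\ref{filtration}~ii) takes well-ordering of $C_\nu$ as a \emph{hypothesis}, not part of the biconditional, and the dimension condition there is stated for $c\in C_\nu$ rather than all $c\in C$. You correctly note this; the paper's statement of (iii) is slightly loose on both counts, but your reading matches the intended content.
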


\begin{proof}  i) is straightforward.

One can verify the following lemma.

\begin{lemma}\label{leading_unique}
Assume that for $g\in (A/J)\setminus \{0\}$ it holds $\nu(g)=\nu_0(g+f_0)=c_0, f_0\in J$. Then $lt(g+f_0)\notin lt(J)$. In addition, for any $c>c_0$ and $lt(g+f)\in A_c, f\in J$ it holds $lt(g+f)\in lt(J)$.
    \end{lemma}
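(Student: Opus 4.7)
My plan is to prove each half of Lemma~\ref{leading_unique} by exploiting the minimality in $\nu(g) = \min \nu_0(g+J)$ together with the graded-domain structure of $A$, which guarantees that $lt(bh) = b \cdot lt(h)$ whenever $b$ is homogeneous and $b,h \ne 0$.

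For the first assertion I would argue by contradiction: assume $x := lt(g+f_0) \in lt(J)$. The key step is to lift this ideal membership to a single element of $J$ with the same leading term. Since $lt(J)$ is the homogeneous ideal generated by $\{lt(f) : f \in J\}$, the homogeneous element $x \in A_{c_0}$ can be written as $x = \sum_i b_i \cdot lt(f_i)$ with each $b_i$ homogeneous of degree $c_0 - \nu_0(f_i)$ and $f_i \in J$. Because $A$ is a domain, $lt(b_i f_i) = b_i \cdot lt(f_i)$, so $h := \sum_i b_i f_i$ lies in $J$ and satisfies $lt(h) = x$ with $\nu_0(h) = c_0$. Then $f_0 - h \in J$, and I would examine $g + (f_0 - h)$: either it vanishes, forcing $g \in J$ and contradicting $g \ne 0$ in $A/J$; or its $A_{c_0}$-component is $x - x = 0$, so $\nu_0(g + (f_0 - h)) < c_0$, which contradicts $\nu(g) = c_0$ via the definition of $\nu$.

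For the second assertion I would argue directly by subtraction. Given $f \in J$ with $lt(g+f) \in A_c$ and $c > c_0$, consider $(g+f) - (g+f_0) = f - f_0 \in J$. Since $g + f_0 \in \bigoplus_{d \le c_0} A_d \subset \bigoplus_{d < c} A_d$, its $A_c$-component vanishes, so the $A_c$-component of $f - f_0$ coincides with the nonzero element $lt(g+f)$. Hence $\nu_0(f - f_0) = c$ and $lt(f - f_0) = lt(g+f)$, placing $lt(g+f)$ in $lt(J)$ directly.

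The main obstacle is the lifting step in part one: elements of $lt(J) \cap A_{c_0}$ are a priori only sums of products of leading terms, and one must invoke the domain hypothesis on $A$ (so that cross-multiplication cannot destroy leading terms) to collapse such a sum into a single $lt(h)$ with $h \in J$ of degree exactly $c_0$. Once that technicality is secured, both halves of the lemma reduce to the graded cancellation principle driven by the minimality of $c_0$ in the definition of $\nu(g)$.
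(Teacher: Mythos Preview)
Your argument is correct. The paper itself does not give a proof of this lemma; it simply asserts that ``one can verify'' it, so there is no alternative approach to compare against, and your write-up supplies exactly the verification the paper omits.

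One small remark: your lifting step writes $x=\sum_i b_i\cdot lt(f_i)$, which is the commutative form. Since the surrounding Theorem~\ref{graded} allows $A$ to be noncommutative (cf.\ the remark immediately after it, which takes $A=\kk\langle x_1,\dots,x_n\rangle$), you should in principle write two-sided products $a_i\cdot lt(f_i)\cdot b_i$ and set $h=\sum_i a_i f_i b_i$; the rest of the argument goes through verbatim. Also, the domain hypothesis is slightly less essential than you suggest for the lifting itself: the $A_{c_0}$-component of $h$ equals $\sum_i a_i\,lt(f_i)\,b_i=x$ purely by degree bookkeeping and order-compatibility, regardless of whether individual summands vanish. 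Where the domain hypothesis truly enters is in guaranteeing that $\nu_0$ is a valuation (i.e., that $lt(ab)=lt(a)\cdot lt(b)$), which underlies the whole framework.
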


ii) Let $lt(J)$ be prime, and $\nu(g_1)=\nu_0(g_1+f_1), \nu(g_2)=\nu_0(g_2+f_2)$ for appropriate $f_1,f_2\in J$. It holds $lt(g_1+f_1)lt(g_2+f_2)\notin lt(J)$ since $lt(J)$ is prime and employing Lemma~\ref{leading_unique}. Therefore $\nu(g_1g_2)=\nu(g_1)+\nu(g_2)$ again due to Lemma~\ref{leading_unique}.

One can prove ii) in the opposite direction in a similar way. \vspace{2mm}

iii) Let $\dim (A_c/(lt(J)\cap A_c))=1$ for any $c\in C$. Then for every $g_1,g_2\in (A/J)\setminus \{0\}$ such that $\nu(g_1)=\nu(g_2)=c, lt(g_1+f_1),lt(g_2+f_2)\in A_c$, we have $lt(g_1+f_1)-\alpha \cdot lt(g_2+f_2)\in lt(J)$ for a suitable $\alpha \in \kk$. Hence there exists $f\in J, lt(f)=lt(g_1+f_1)-\alpha \cdot lt(g_2+f_2)$ for which $\nu_0((g_1+f_1)-\alpha \cdot (g_2+f_2)-f)<c$ that establishes the injectivity of $\nu$, taking into account that $C$ is well-ordered (cf. the proof of Theorem~\ref{filtration}~ii)).

One can prove iii) in the opposite direction in a similar way. 

The theorem is proved.     
\end{proof} 

\vspace{2mm}

\begin{remark}
Assume that $A=\kk\langle x_1,\dots,x_n\rangle /J$ for a prime ideal $J\subset \kk\langle x_1,\dots,x_n\rangle$ such that $x_i\notin J, 1\le i\le n$, and $\nu$ is a valuation (not necessary injective) on $A\setminus \{0\}$. Then one can define $\nu_0(x_i):=\nu(x_i), 1\le i\le n$ which provides a grading on $\kk\langle x_1,\dots,x_n \rangle$. Now if we apply the construction from Theorem~\ref{graded} to the latter graded algebra $\kk\langle x_1,\dots,x_n \rangle$ and to the ideal $J$, we arrive at the initial valuation $\nu$ on $A\setminus \{0\}$.
\end{remark}

Let $\nu:A\setminus \{0\}\to C$ be a well-ordered injective valuation of an algebra $A$.

Given an ideal $J$ in $A$, we say that a generating set $B$ of $J$ is a $\nu$-Gr\"obner basis of $J$ if $(AbA,b\in B)$ is a $\nu$-ensemble, that is, 
$$\nu(J\setminus \{0\})=\bigcup\limits_{b\in B} C_\nu\cdot \nu(b)\cdot C_\nu$$

\subsection{Examples of injective valuations on algebras of dimension 2}

\begin{example}\label{skew}
Consider the following injective valuation on the ring $A:=\kk[x,y] \setminus \{0\}$. One can uniquely represent an arbitrary polynomial $f\in A$  as $f=g(y,y^3-x^2)+xh(y,y^3-x^2)$ for some polynomials $g,h$. Define $\nu$ and its adapted basis as follows:
$$\nu(y^k(y^3-x^2)^l):=(2k,l),\, \nu(xy^k(y^3-x^2)^l):=(2k+3,l),\, k,l\ge 0.$$
\noindent Therefore, $\nu(f)=\max\{\nu(g(y,y^3-x^2)),\nu(xh(y,y^3-x^2))\}$.
The valuation monoid is $\{(u,v)\in \ZZ_{\ge 0}^2\, :\, u\neq 1\}$. We consider its linear ordering with respect to deglex, say, with $u$ being higher than $v$.
Thus, $\nu$ is not induced by a minimal generating set of $\kk[x,y]$.
\end{example}

One can straightforwardly verify the following proposition.

\begin{proposition}\label{basis_partition}
Let $\nu$ be an injective valuation $\nu$ on an algebra $A\setminus \{0\}$ with a finitely generated valuation in a well-ordered semigroup $C$. Consider a partition of $C$ according to \cite{K}. Namely, each element of the partition has a form $c+D$ where $c\in C$ and a semigroup $D\subseteq C$ is isomorphic to $\ZZ_{\ge 0}^k$ for some $k$ with basis vectors $c_1,\dots,c_k\in D$. Let $a,a_1,\dots,a_k\in A$ be such that $\nu(a)=c,\nu(a_1)=c_1,\dots,\nu(a_k)=c_k$. Then the elements $aa_1^{i_1}\cdots a_k^{i_k},\, i_1,\dots,i_k\in \ZZ_{\ge 0}$ for all the elements of the partition of $C$ form an adapted basis of $A$ with respect to $\nu$.  
\end{proposition}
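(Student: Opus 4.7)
The plan is to reduce the statement to Proposition~\ref{valuation_injective_partial}, which asserts that, once $\nu$ is an injective valuation onto a well-ordered $C$, every subset ${\bf B}\subset A$ for which $\nu\colon{\bf B}\to C$ is a bijection is automatically an adapted basis (because $\dim({\mathcal A}_u)=1$ for every $u$). So the whole task reduces to checking that the proposed collection ${\bf B}$, consisting of all products $aa_1^{i_1}\cdots a_k^{i_k}$ as $c+D$ runs over the partition and $(i_1,\dots,i_k)$ runs over $\ZZ_{\ge 0}^k$, is mapped bijectively onto $C$ by $\nu$.

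First I would compute the valuation of a typical element of ${\bf B}$. Since $C$ is an entire semigroup, Definition~\ref{valuation_partial}(iii) gives $\nu(uv)=\nu(u)+\nu(v)$ for all nonzero $u,v$, and iterating this yields
\[
\nu(aa_1^{i_1}\cdots a_k^{i_k})\;=\;c+i_1c_1+\cdots+i_kc_k.
\]
Thus the elements of ${\bf B}$ indexed by one partition piece $c+D$ have valuations landing in exactly $c+D$.

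Next, I would use the two defining features of the Kaveh-type partition: the pieces $c+D$ are pairwise disjoint and together exhaust $C$, and inside each piece $D$ is free abelian on $c_1,\dots,c_k$, so that the map $(i_1,\dots,i_k)\mapsto c+i_1c_1+\cdots+i_kc_k$ is a bijection $\ZZ_{\ge 0}^k\widetilde\to c+D$. Assembling these piece-by-piece bijections produces a bijection between ${\bf B}$ (as an index set) and $C$, which by the computation above coincides with $\nu|_{\bf B}$.

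Applying Proposition~\ref{valuation_injective_partial} then concludes that ${\bf B}$ is an adapted basis; in particular the distinctness of valuations forces the elements of ${\bf B}$ to lie in distinct one-dimensional leaves ${\mathcal A}_u/{\mathcal A}_{<u}$, giving their linear independence for free. The only genuinely nontrivial input is the partition property quoted from \cite{K}; once it is granted, everything else is bookkeeping and a direct appeal to Proposition~\ref{valuation_injective_partial}.
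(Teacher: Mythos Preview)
Your proposal is correct and matches the paper's approach: the paper simply states that Proposition~\ref{basis_partition} ``can be straightforwardly verified'' without giving details, and your argument is precisely that straightforward verification---compute $\nu(aa_1^{i_1}\cdots a_k^{i_k})=c+i_1c_1+\cdots+i_kc_k$ by multiplicativity, observe that the partition structure from \cite{K} makes $\nu|_{\bf B}$ a bijection onto $C$, and then invoke Proposition~\ref{valuation_injective_partial} (or equivalently Theorem~\ref{filtration}(ii) or Corollary~\ref{adapted_arbitrarily}) to conclude that ${\bf B}$ is an adapted basis.
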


Let $A$ be a finitely generated $\kk$-algebra of dimension $d$. Let $\nu : A\setminus \{0\} \to C$ be a valuation on $A\setminus \{0\}$ and $C$ be a well-ordered semigroup of a rank $r$.

For each $c\in C$ pick an arbitrary element $a_c\in A$ such that $\nu(a_c)=c$. Then the elements $\{a_c\, :\, c\in C\}$ are $\kk$-linearly independent. Therefore, $r\le d$. Indeed, otherwise take linearly independent $c_0,\dots,c_d\in C$ (in Groth\'endieck group of $C$), then all the monomials in the elements $a_{c_0},\dots, a_{c_d}$ are linearly independent. The obtained contradiction justifies the inequality $r\le d$. Note that for the latter inequality we did not use the injectivity of $\nu$.

Obviously, one can yield a well-ordered injective valuation on $\kk[x_1,\dots,x_d]\setminus \{0\}$ (in a unique manner) by means of assigning linearly independent vectors $\nu(x_1),\dots,\nu(x_d)\in \ZZ_{\ge 0}^d$ and defining a well-ordering on $\ZZ_{\ge 0}^d$.

Below we produce a different family of well-ordered injective valuations of rank 2 on the polynomial ring $\kk[x,y]\setminus \{0\}$ generalizing Example~\ref{skew} in which $\nu(x),\nu(y)$ are linearly dependent.

\begin{proposition}\label{partition}
Let $f=x^n+\sum_{0\le i<n} f_ix^i,\, f_i\in \kk[y]$ be a polynomial such that $m:=deg_y (f_0)$ is relatively prime with $n$, and $mi+ndeg_y (f_i)\le mn$ for $0\le i\le n$. Then there is a well-ordered injective valuation $\nu : (\kk[x,y]\setminus \{0\}) \to \ZZ_{\ge 0}^2$ defined as follows on its adapted basis:
\begin{equation}\label{4}
\nu(x^iy^kf^l):= (mi+nk,l),\, 0\le i<n,\, 0\le k,l.
\end{equation}
\end{proposition}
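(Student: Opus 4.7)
The plan is to fix a well-order on $\ZZ_{\ge 0}^2$, show that $\mathbf{B}:=\{x^iy^kf^l\, :\, 0\le i<n,\,k,l\ge 0\}$ is a $\kk$-basis of $\kk[x,y]$, extend $\nu$ from $\mathbf{B}$ to $\kk[x,y]\setminus\{0\}$ by taking the maximum $\nu$-value over the basis expansion, and then verify the valuation axioms. I take $\prec$ on $\ZZ_{\ge 0}^2$ to be the lexicographic well-order in which the first coordinate is more significant.

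First I would establish that $\mathbf{B}$ is a basis: since $f$ is monic in $x$ of degree $n$, iterated division in $x$ yields a unique decomposition $g=\sum c_{i,k,l}\,x^iy^kf^l$ with $0\le i<n$. Next, the map $\mathbf{B}\to\ZZ_{\ge 0}^2$, $x^iy^kf^l\mapsto(mi+nk,l)$ is injective: equality of values with $0\le i_1,i_2<n$ forces $l_1=l_2$ and $m(i_1-i_2)=n(k_2-k_1)$, whence $\gcd(m,n)=1$ together with $|i_1-i_2|<n$ gives $(i_1,k_1)=(i_2,k_2)$. So $\nu$ is well-defined with $\mathbf{B}$ as an adapted basis, and the axioms $\nu(\lambda a)=\nu(a)$ for $\lambda\in\kk^\times$ and $\nu(a+b)\preceq\max\{\nu(a),\nu(b)\}$ hold by construction.

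The main work is the multiplicativity $\nu(ab)=\nu(a)+\nu(b)$. I would reduce it to the key claim that $\nu(x^iy^kf^l)=(mi+nk,l)$ for \emph{all} $i,k,l\ge 0$, proved by induction on $i$. For $i\ge n$, the relation $x^n=f-\sum_{j<n}f_jx^j$ gives
\[
x^iy^kf^l\;=\;x^{i-n}y^kf^{l+1}\;-\;\sum_{j<n}f_j(y)\,x^{i-n+j}y^kf^l,
\]
and the unique basis vector of target valuation $(mi+nk,l)$ is $b^\ast:=x^{i\bmod n}y^{k+m\lfloor i/n\rfloor}f^l$, which carries $f$-exponent exactly $l$. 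The first term of the reduction, once expanded in $\mathbf{B}$, contributes only to basis vectors of $f$-exponent $\ge l+1$, so it does not touch the coefficient at $b^\ast$. Using the inductive hypothesis and the injectivity of $\nu|_\mathbf{B}$, the only branch of the sum reaching $b^\ast$ comes from $j=0$ combined with the top $y^m$-term of $f_0=\alpha y^m+\cdots$ (here $\alpha\ne 0$ since $\deg_yf_0=m$), and tracking the recursion yields coefficient $(-\alpha)^{\lfloor i/n\rfloor}\ne 0$ at $b^\ast$. The hypothesis $mj+n\deg_yf_j\le mn$, strict for $j\ne 0$ thanks to $\gcd(m,n)=1$, guarantees that no other branch of the reduction produces a basis vector whose valuation exceeds $(mi+nk,l)$. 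Multiplicativity on arbitrary $a,b$ then follows from this key claim and from the injectivity of $\nu|_\mathbf{B}$: expanding $ab$ by distributing in $\mathbf{B}$, there is a unique pair of basis factors realizing $\nu(a)+\nu(b)$, and this pair contributes non-trivially to the unique top-$\nu$ basis vector while all other pairs have strictly smaller combined valuation.

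The hard part will be the non-cancellation in the inductive step—certifying that the unique basis vector of maximal $\nu$-value indeed receives the non-zero coefficient $(-\alpha)^{\lfloor i/n\rfloor}$. Both hypotheses of the proposition play essential roles here: $\gcd(m,n)=1$ makes the top-$\nu$ basis vector unique at every recursion stage, while $mi+n\deg_yf_i\le mn$ is what prevents lower-order branches of the reduction from overtaking the target valuation.
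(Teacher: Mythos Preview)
Your proof is correct and follows the same strategy as the paper's: both establish that $\{x^iy^kf^l:0\le i<n,\ k,l\ge 0\}$ is a basis (the paper phrases this as $\kk[x,y]$ being a free $\kk[f,y]$-module on $1,x,\dots,x^{n-1}$), both deduce injectivity of $\nu|_{\mathbf B}$ from $\gcd(m,n)=1$, and both reduce multiplicativity to the single relation $x^n=f-\sum_{i<n}f_ix^i$ together with the strict inequality $mi+n\deg_yf_i<mn$ for $0<i<n$. The paper's proof is terser—it only records the computation $\nu(x^n)=(mn,0)=\nu(x^j)+\nu(x^{n-j})$ and leaves the passage to full multiplicativity implicit—whereas your inductive argument with the explicit leading coefficient $(-\alpha)^{\lfloor i/n\rfloor}$ makes that passage fully rigorous.
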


\begin{proof} We observe that $\kk[x,y]$ is a finite $\kk[f,y]$-module with a basis $1,x,\dots,x^{n-1}$ with an irreducible monic polynomial $f(x,y)-f$ defining $x^n$. This justifies that in \eqref{4} we have a basis of $\kk[x,y]$. The right-hand sides of \eqref{4} are pairwise distinct due to relative primality of $m,n$.

To verify the multiplicativity of $\nu$ note that $mi+ndeg_y (f_i)<mn$ for $0<i<n$, hence 
$$\nu (x^j)+\nu (x^{n-j})=(mj+m(n-j),0)=\nu(y^m)=\nu(\sum_{0\le i<n} f_i(y)x^i-f)=\nu(x^n)\ .$$ 
\end{proof}
\vspace{2mm}

One can extend this construction.

\begin{corollary}\label{finite_module}
Let a ring $B$ be a finite $A$-module with an integral basis $1,x,\dots,x^{n-1}$. Let $\nu$ be a well-ordered injective valuation on $A\setminus \{0\}$ with a valuation semigroup $C\subseteq \ZZ_{\ge 0}^d$. Assume that $x^n$ satisfies a polynomial $f=x^n+\sum_{0\le i<n} f_ix^i$ where $f_i\in A,\, 0\le i<n$ such that $$\frac{i\nu(f_0)}{n} \notin G(C),\, 0<i<n, \, n\nu(f_i)<(n-i)\nu(f_0),\, 0\le i\le n$$ 
\noindent where $G(C)$ denotes Groth\'endieck group of $C$.
Then one can uniquely extend $\nu$ to a well-ordered injective valuation $\nu_1$ on $B\setminus \{0\}$ such that $\nu_1(x)=\nu(f_0)/n$. Clearly, the valuation semigroup of $\nu_1$ has the same rank as of $\nu$.   
\end{corollary}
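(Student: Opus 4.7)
Every element of $B$ has a unique expression $b = \sum_{i=0}^{n-1} b_i x^i$ with $b_i\in A$. Setting $\gamma := \nu(f_0)/n$ in the divisible hull $G(C)\otimes\QQ$, which is linearly ordered via the canonical extension of the order on $C$, I define
$$
\nu_1(b) \;:=\; \max_{0\le i<n,\ b_i\ne 0}\bigl(\nu(b_i)+i\gamma\bigr).
$$
The proposed target is $C_1 := \bigcup_{i=0}^{n-1}(C+i\gamma) \subset G(C)\otimes\QQ$. The first step is to verify that $C_1$ is a sub-semigroup (its cosets combine modulo $n$ because $n\gamma=\nu(f_0)\in C$), that it is well-ordered as a finite union of well-ordered subsets of a totally ordered set, and that its Groth\'endieck group $G(C)+\ZZ\gamma$ has the same rank as $G(C)$, since $\gamma$ has torsion order exactly $n$ modulo $G(C)$ by the hypothesis $i\gamma\notin G(C)$ for $0<i<n$.

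That same hypothesis delivers the crucial \emph{uniqueness of the maximum}: if $\nu(b_i)+i\gamma = \nu(b_j)+j\gamma$ with $0\le i<j<n$, then $(j-i)\gamma\in G(C)$, contradicting the assumption. Scalar invariance and the ultrametric inequality are then immediate, so the real work is to establish multiplicativity. I would reduce this to a single key identity: $\nu_1(bx) = \nu_1(b)+\gamma$ for every nonzero $b$. Expanding
$$
b\cdot x \;=\; -b_{n-1}f_0 \;+\; \sum_{l=1}^{n-1}\bigl(b_{l-1}-b_{n-1}f_l\bigr)\,x^l
$$
and separating the cases $i_0<n-1$ and $i_0=n-1$ for the unique maximising index $i_0$ of $\nu_1(b)$, the estimates $\nu(f_0)=n\gamma$ and $\nu(f_l)<(n-l)\gamma$ for $0<l<n$ force exactly one of the above coefficients to attain valuation $\nu_1(b)+\gamma$ while every other is strictly smaller, yielding the identity without cancellation. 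Induction then gives $\nu_1(bx^k)=\nu_1(b)+k\gamma$, the trivial relation $\nu_1(ab)=\nu(a)+\nu_1(b)$ for $a\in A$ upgrades to $\nu_1(b\cdot b'_jx^j)=\nu_1(b)+\nu(b'_j)+j\gamma$, and expanding $bb'=\sum_j b\cdot b'_jx^j$ with a second application of uniqueness of the max gives $\nu_1(bb')=\nu_1(b)+\nu_1(b')$. In particular $bb'\ne 0$, so $B$ is automatically a domain.

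Injectivity is then immediate: starting from a basis $\{a_c:c\in C\}$ of $A$ adapted to $\nu$, the set $\{a_c x^i:c\in C,\ 0\le i<n\}$ is a $\kk$-basis of $B$ sent bijectively onto $C_1$ by $\nu_1$. Uniqueness of $\nu_1$ subject to $\nu_1|_A=\nu$ and $\nu_1(x)=\gamma$ is forced by the valuation axioms together with the unique-maximum property, which promotes the ultrametric inequality to an equality on the basis expression. The main obstacle I expect is the case $i_0=n-1$ in the proof of $\nu_1(bx)=\nu_1(b)+\gamma$: the leading term switches from the $x^{n-1}$-component of $b$ to the new $x^0$-component $-b_{n-1}f_0$ of $bx$, and one must verify simultaneously that this new term dominates both the shifted coefficients $b_{l-1}$ in positions $l\ge 1$ and the corrections $-b_{n-1}f_l$. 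This is exactly where all three hypotheses $\nu(f_0)=n\gamma$, $\nu(f_l)<(n-l)\gamma$, and $i\gamma\notin G(C)$ conspire.
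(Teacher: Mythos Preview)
Your proposal is correct and follows exactly the approach the paper intends: the paper states this corollary without a separate proof, merely remarking ``One can extend this construction'' after proving Proposition~\ref{partition} by the same method (define $\nu_1$ on the adapted basis $\{a_c x^i\}$, use the coset condition to guarantee the values are pairwise distinct, and check multiplicativity via the relation $x^n=-\sum f_i x^i$). Your write-up is in fact considerably more detailed than the paper's treatment of the preceding proposition, and your explicit case analysis for $\nu_1(bx)=\nu_1(b)+\gamma$ (distinguishing $i_0<n-1$ from $i_0=n-1$) is a clean way to isolate where each hypothesis is used.
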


\begin{remark}\label{simplicial}
Let $\nu:\kk[x,y]\setminus \{0\} \to C$
be a well-ordered injective valuation. When the values $\nu(x),\nu(y)$ are independent, the semigroup $C$ is isomorphic to $\ZZ_{\ge 0}^2$, while  in Proposition~\ref{partition} the semigroup of the produced valuation consists of $n$ copies of (shifted) $\ZZ_{\ge 0}^2$.
\end{remark} 

\begin{example}
In Corollary~\ref{finite_module}  we have provided a construction of an extension of a domain with an injective valuation. In the course of this construction the Groth\'endieck group of the valuation monoid is also extended. Now we give an example of an extension of a domain with an injective valuation when the Groth\'endieck group of monoids does not change.

Let a domain $A_0:=\kk[x,y]$ and $\nu$ be its valuation onto $\ZZ_{\ge 0}^2$ such that $\nu(x)=(1,0), \nu(y)=(0,1)$ (one can take an arbitrary linear well-ordering on $\ZZ_{\ge 0}^2$). Consider polynomials $a,b\in A_0$ such that the leading monomial (with respect to $\nu$) of $a$ equals $x^k$ for some $k\ge 1$, while the leading monomial of $b$ equals $y^l$ for some $l\ge 1$. Denote $A:=A_0[b/a]\subset \kk(x,y)$. Therefore, the extension of $\nu$ on $A\setminus \{0\}$ is inherited uniquely from $\nu$. Observe that $\nu(A\setminus \{0\})\subset \ZZ^2$ is well-ordered since $l\ge 1$. The following set forms an adapted basis of $A$:
$$\{x^iy^j\, :\ i,j\ge 0\} \bigsqcup \{(b/a)^sx^iy^j\, :\, s\ge 1, 0\le i<k, 0\le j\}.$$
\noindent Indeed, this set spans $A$. On the other hand, $\nu(x^iy^j)=(i,j),\,  \nu(b/a)^sx^iy^j=(-sk+i, sl+j)$, and these values are pairwise distinct for different $i,j,s$.
\end{example}

\begin{example}\label{skew_extension}
Consider an injective homomorphism $\kk[x,y]\hookrightarrow \kk[x-y^{3/2}, y^{1/2}]$ and an injective well-ordered valuation $\nu_1$ on the latter algebra defined by $\nu_1(x-y^{3/2}):=(-3,1),\, \nu_1(y^{1/2}):=(1,0)$. Then $\nu_1(x-y^{3/2}), \nu_1(y^{1/2})$ are linearly independent (cf. Remark~\ref{simplicial}). One can verify that the restriction of $\nu_1$ to $\kk[x,y]\setminus \{0\}$ coincides with $\nu$.    
\end{example}

\subsection{Injective well-ordered valuations on varieties based on tropical geometry}\label{subsection_3.4}

In the sequel we provide a realization of the construction from Theorem~\ref{graded}. Let $I\subseteq \kk[X_1,\dots,X_n]$ be a prime ideal where $\kk$ is a field 
of zero characteristic. Our purpose is to construct injective well-ordered valuations on the quotient ring $A\setminus \{0\}=(\kk[X_1,\dots,X_n]/I)\setminus \{0\}$ which are induced from the tautological valuation $\nu_0(X_1^{j_1}\cdots X_n^{j_n}):=(j_1,\dots,j_n)\in \ZZ_{\ge 0}^n$ on $\kk[X_1,\dots,X_n]\setminus \{0\}$. Note that to determine $\nu_0$ completely, one has to fix also a linear ordering on $\ZZ_{\ge 0}^n$.

For the sake of convenience we need to describe linear orders $\prec$ on monomials $X^J=X_1^{j_1}\cdots X_n^{j_n}$ compatible with the product, i.e. $X^J\prec X^K$ implies $X^{J+L}\prec X^{K+L}$, in a different language than in section~\ref{tame_valuations}. To this end, we introduce an infinitesimal $\varepsilon$, i.e. $0<\varepsilon <y$ for any $0<y\in \RR$. Then $\RR[\varepsilon]$ is an ordered ring. Assign weights $0\neq w_i=w(X_i)\in \RR_{\ge 0}[\varepsilon], 1\le i\le n$. This induces a linear (non-strict) order on monomials $X^J$ according to the value of $w_1j_1+\cdots +w_nj_n\in \RR_{\ge 0}[\varepsilon]$. This determines a well-ordering, in other words, there does not exist a strictly decreasing infinite sequence of monomials. It is proved in \cite{R} and in Theorem 9 \cite{Kh} (in a different language) that any linear order on monomials can be obtained in the described manner. For instance, for two variables lex corresponds to the vector of weights $(1, \varepsilon)$, and deglex corresponds to $(1+\varepsilon, 1)$. 

Generalizing Lemma~\ref{deglex}, one can produce well orderings on the free monoid $\langle X_1,\dots, X_n\rangle$ as follows. For weights $0\neq w_i\in \RR_{\ge 0}[\varepsilon], 1\le i\le n$ define a (non-strict) well ordering on words $X\in \langle X_1,\dots, X_n\rangle$ according to the value of $w_1u_1+\cdots + w_nu_n$, where $u_i$ denotes the number of occurrences of $X_i$ in the word $X$. To make this ordering strict, one can in addition, lexicographically order all the words with a given value of $w_1u_1+\cdots + w_nu_n$ (with respect to $X_1\succ \cdots \succ X_n$).

\begin{definition}\label{common}
Denote $d:=\dim A$. Consider the tropical variety $T:=Trop(I)\subseteq \RR^n$ \cite{MS}. 
One can view each element of $T$ as a hyperplane in $\RR^n$ which supports from above Newton polytope $N(f)\subset \RR^n$ at least at two points (thus, at least at an edge) for every $f\in I$. In such a case we say that this edge is located on the roof of $N(f)$. Then $T$ is equidimensional of dimension $d$ \cite{MS} being a finite union of polyhedra each of dimension $d$. Every polyhedron corresponds to a union of hyperplanes containing a (unique) common subplane of dimension $n-d$ which is dual to the polyhedron (we call these subplanes {\bf common} for the tropical variety $T$). Every such common subplane $H\subset \RR^n$ is supporting to $N(f)$ for any $f\in I$ and is definable by linear equations with rational coefficients.

We extend $Trop(I)$ considering $$ETrop(I):= Trop(I)\bigotimes_{\RR} \RR[\varepsilon]\subset (\RR[\varepsilon])^n$$
\noindent where $ETrop(I)$ satisfies the same linear inequalities as $Trop(I)$. Thus, one can view $Etrop(I)$ still as a finite union of polyhedra. Each hyperplane from $ETrop(I)$ contains $H\bigotimes_{\RR} \RR[\varepsilon]$ for some common subplane $H$ of $Trop(I)$ and supports $N(f)\bigotimes_{\RR} \RR[\varepsilon]$ at least at two points.
\end{definition}

We call a subplane $H$ {\bf prop} if $H_0\cap \RR_{\ge 0}^n = \{X_{l_1}=\cdots =X_{l_m}=0\} \cap \RR_{\ge 0}^n$ for suitable $1\le l_1,\dots,l_m\le n$ where $H_0$ is parallel to $H$ and contains the origin $(0,\dots,0)$.

Let us fix a common subplane $H$ for the time being. We say that the ideal $I$ is {\bf saturated (with respect to $H$)} if for any pair of integer points $u,v\in \ZZ_{\ge 0}^n$ such that $v-u\in H$ there exists a polynomial $f\in I$ whose Newton polytope $N(f)$ possesses an edge $(u,v)$ on its roof. Below (see Theorem~\ref{quotient}) under  
the condition of saturation we obtain an injective valuation, so this condition is stronger than the property that an
initial ideal corresponding to $H$ is prime in $\kk[X_1,\dots,X_n]$ (cf. Theorem~\ref{graded}~ii) and \cite{Kaveh-Manon}).

\begin{remark}\label{finite-condition}
In fact, one can reduce the condition of saturation to a finite number of conditions. Indeed, consider a semigroup 
$$G:=\{(u,v)\, :\, u,v\in \ZZ_{\ge 0}^n,\, u-v\in H\}\subset \ZZ_{\ge 0}^{2n}.$$
\noindent Due to Gordan's lemma \cite{DT} $G$ is finitely generated. Among its generators select all $(u,v)$ such that $u\neq v$. Denote a vector $(w_1',\dots,w_n')=: u-v$ and a vector $w:=(w_1,\dots,w_n)=:(w_1',\dots,w_n')/GCD(w_1',\dots,w_n')$. Introduce points
\begin{equation}\label{3}
u_0:=(\max\{w_1,0\},\dots,\max\{w_n,0\}),\, v_0:=(\max\{-w_1,0\},\dots,\max\{-w_n,0\}) \in \ZZ_{\ge 0}^n.
\end{equation}
Then $u_0-v_0=w$. 

One can verify that it suffices for the saturation to impose for all the constructed pairs of points $u_0,v_0$ \eqref{3} the existence of a polynomial $f\in I$ such that $N(f)$ has an edge $(u_0,v_0)$ on its roof. 
\end{remark}

From now on we assume that the subplane $H$ is prop and $I$ is saturated (with respect to $H$). Our aim is to produce a valuation $\nu:=\nu_H$ on $A\setminus \{0\}$. Denote $H_{\ZZ}:= H\cap \ZZ^n$. The following construction of a valuation is similar to \cite{Kaveh-Manon}.

Consider an epimorphism $\varphi : \ZZ^n \to \ZZ^n/H_{\ZZ}$. Then the image $C:=\varphi (\ZZ^n_{\ge 0})$ is a semigroup cone (since $H$ is prop). The valuation $\nu$ under production will have $C$ as its valuation cone. Choose some linear ordering $<$ on $C$ for definiteness by fixing a prop hyperplane determined by a vector $(w_1,\dots, w_n)\in (\RR_{\ge 0}[\varepsilon])^n$ from $ETrop(I)$ which contains $H\bigotimes_{\RR} \RR[\varepsilon]$. 

Take $0\neq a\in A$. Assume that there exists $f\in a+I$ such that its Newton polytope $N(f)$ contains no edge in $H$. Then there is a unique vertex $v$ of $N(f)$ with the maximal value of the ordering of $\varphi(v)\in C$. Put $\nu(a):=\varphi(v)$. 

Let us establish the correctness of this definition. If otherwise, for some $f_1\in a+I$ its Newton polytope $N(f_1)$ has a unique vertex $v_1$ with the maximal value of the ordering of $\varphi(v_1)$, then $\varphi(v)=\varphi(v_1)$ 
taking into account that $f-f_1\in I$.

Next we show that for any $0\neq a\in A$ there exists $f\in a+I$ for which $N(f)$ contains no edge in $H$. Indeed, take $f\in a+I$ such that the vertices $v$ of $N(f)$ with the maximal value of the ordering of $\varphi(v)\in C$ are minimal among all $f\in a+I$. If $u$ is another vertex of $N(f)$ for which $\varphi(v)=\varphi(u)$, i.~e. an interval $(u,v)$ lies in $H$, then due to the saturation condition there exists $g\in I$ whose Newton polytope $N(g)$ contains an edge $(u,v)$ on its roof. Therefore, for a suitable $\alpha \in \kk$ the support of the polynomial $f+\alpha g$ does not contain $u$. Continuing in this way, we arrive eventually to a polynomial $f_1\in a+I$ such that its Newton polytope $N(f_1)$ contains a single vertex $w_0$ with the maximal ordering of $\varphi(w_0)\in C$ greater than the orderings of $\varphi(w)$ for all other vertices $w$ of $N(f_1)$. Clearly, $\varphi(w_0)=\varphi(v)$ due to the choice of $f$ satisfying the minimality property. 

Observe that we have proved at the same time that one can equivalently define
\begin{equation}\label{2}
\nu(a)=\min_{f\in a+I} \max_{v\in N(f)} \{\varphi(v)\}    
\end{equation}
where $v\in N(f)$ means that $v$ is a vertex of $N(f)$.

Thus, the valuation $\nu$ on $A\setminus \{0\}$ is defined correctly. If $0\neq a_1,a_2\in A$ then take polynomials $f_1\in a_1+I,\, f_2\in a_2+I$ such that Newton polytope $N(f_1)$ (respectively, $N(f_2)$) contains a unique vertex $v_1$ (respectively, $v_2$) such that $\varphi(v_1)$ (respectively, $\varphi(v_2)$) has a greater ordering than $\varphi(w)$ for all other vertices $w$ of $N(f_1)$ (respectively, $N(f_2)$). Then $\nu(a_1+a_2)\le \max\{\varphi(v_1),\varphi(v_2)\}=\max\{\nu(a_1),\nu(a_2)\}$ because of \eqref{2}. In addition, for a polynomial $f_1f_2\in a_1a_2+I$ its Newton polytope $N(f_1f_2)$ contains a unique vertex $v_1+v_2$ such that $\varphi(v_1+v_2)\in C$ has a greater ordering than all other vertices of $N(f_1f_2)$, hence $\nu(a_1a_2)=\varphi(v_1+v_2)=\varphi(v_1)+\varphi(v_2)$.

Now we verify the injectivity of $\nu$. Let $\nu(a_1)=\nu(a_2)$ for $0\neq a_1,a_2\in A$. Take $f_1\in a_1+I,\, f_2\in a_2+I$ with vertices $v_1\in N(f_1),\, v_2\in N(f_2)$ as above. Thus, $\varphi(v_1)=\varphi(v_2)$. Therefore, there exists $g\in I$ such that its Newton polytope $N(g)$ contains an edge $(v_1,v_2)$ on its roof due to the saturation condition. Hence, for any vertex $w\in N(f_1+\alpha f_2+ \beta g)$ we have $\varphi(w)<\varphi(v_1)=\nu(a_1)$ for appropriate $\alpha, \beta \in \kk$. Thus, $\nu(a_1+\alpha a_2)<\nu(a_1)$, see \eqref{2}, which justifies the injectivity of $\nu$.

We summarize the proved above in the following theorem.

\begin{theorem}\label{quotient}
Let $A=\kk[X_1,\dots,X_n]/I$ be a domain of dimension $d$. Let $H\subset \RR^n$ be one of a finite number of (rationally definable) common subplanes of dimension $n-d$ dual to a (highest dimensional) polyhedron of dimension $d$ of the tropical variety  $Trop (I)\subset \RR^n$ (see Definition~\ref{common}). Assume that $H$ is prop and $I$ is saturated with respect to $H$. Consider a natural epimorphism
$\varphi : (\RR[\varepsilon])^n \to (\RR[\varepsilon])^n/(H\bigotimes_{\RR} \RR[\varepsilon])$.
\noindent Fix a prop hyperplane from $ETrop(I)$ which contains $H\bigotimes_{\RR} \RR[\varepsilon]$, it determines a linear order on $\varphi(\ZZ_{\ge 0}^n)$.
Then \eqref{2} defines a well-ordered injective valuation $\nu$ on $A\setminus \{0\}$ having a valuation cone $\varphi(\ZZ_{\ge 0}^n)$.
\end{theorem}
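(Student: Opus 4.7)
The plan is to take the minimax formula \eqref{2} as the definition of $\nu$, and then to check the valuation axioms, injectivity, and well-orderedness in that order. The saturation hypothesis will do all the real work: it is the device that lets us kill off unwanted vertices lying in a common $\varphi$-fiber of a Newton polytope. Propness of $H$ is what guarantees that $C:=\varphi(\ZZ_{\ge 0}^n)$ is actually a cone in which we have a linear well-order coming from the chosen prop hyperplane in $ETrop(I)$.

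First I would show that for every nonzero class $a\in A$ there is a representative $f\in a+I$ whose Newton polytope $N(f)$ has a \emph{unique} vertex $v$ on which $\varphi$ attains its maximum in $C$. Starting from any representative, the maximizing vertices are precisely those that pairwise differ by a lattice vector in $H$; saturation of $I$ with respect to $H$ then furnishes, for any such pair $u,v$, a polynomial $g\in I$ whose Newton polytope carries the edge $(u,v)$ on its roof, and subtracting a suitable scalar multiple of $g$ eliminates one of the two competing vertices without raising the $\varphi$-maximum. Using the well-order on $C$ (and the finiteness of the number of top vertices at each stage) this reduction must terminate, so $\nu(a):=\varphi(v)$ is well defined. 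One should also check that the value does not depend on the representative: any two reduced representatives differ by an element of $I$ whose $\varphi$-maximum is strictly below the common value, and this is what \eqref{2} encodes.

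Granted the existence of such distinguished representatives, the valuation axioms drop out. Scaling invariance is trivial; sub-additivity $\nu(a_1+a_2)\preceq \max\{\nu(a_1),\nu(a_2)\}$ follows since $f_1+f_2$ represents $a_1+a_2$ and its Newton polytope is contained in $N(f_1)\cup N(f_2)$. For multiplicativity, the product $f_1 f_2\in a_1 a_2 + I$ has the unique top vertex $v_1+v_2$ (uniqueness in the product polytope comes from uniqueness in each factor together with the order axiom \eqref{eq:axiom order} for $C$), giving $\nu(a_1a_2)=\varphi(v_1)+\varphi(v_2)=\nu(a_1)+\nu(a_2)$. For injectivity, if $\nu(a_1)=\nu(a_2)$ then $\varphi(v_1)=\varphi(v_2)$, so $v_1-v_2\in H$ and saturation again supplies $g\in I$ with edge $(v_1,v_2)$ on the roof of $N(g)$; choosing $\alpha,\beta\in\kk$ appropriately, the class $a_1+\alpha a_2$ has a representative $f_1+\alpha f_2+\beta g$ whose $\varphi$-maximum drops strictly below $\nu(a_1)$, so either $a_1+\alpha a_2=0$ or $\nu(a_1+\alpha a_2)\prec\nu(a_1)$, which is exactly injectivity in the sense of Theorem \ref{filtration}(ii).

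Finally, the well-orderedness of the image is inherited from the ordering on $C\subset (\RR[\varepsilon])^n/(H\otimes\RR[\varepsilon])$ determined by the prop hyperplane from $ETrop(I)$: since $H$ is prop, $C$ lies inside a half-space, and the weight vector in $\RR_{\ge 0}[\varepsilon]^n$ extracted from the hyperplane induces a compatible linear well-order in the sense of Lemma~\ref{deglex} and the Robbiano-type description recalled in Section \ref{subsection_3.4}. The one step I expect to be delicate is the termination of the vertex-elimination procedure used to produce a distinguished representative: one must argue, relying on Gordan's lemma (cf. Remark~\ref{finite-condition}) together with the well-order on $C$, that the descent in the set of top vertices terminates rather than producing infinitely many new ones lower down. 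Everything else is a formal consequence of \eqref{2} and the axioms already in place.
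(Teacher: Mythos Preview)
Your proposal is correct and follows essentially the same route as the paper's own argument (which is developed in the paragraphs immediately preceding the theorem statement and then ``summarized'' as Theorem~\ref{quotient}). One small remark: the termination of the vertex-elimination step is actually easier than you fear, and Gordan's lemma is not needed for it. Once you have fixed the top $\varphi$-level, each application of saturation uses a $g\in I$ whose only monomials at that level are the two endpoints $u,v$ of the roof edge, so replacing $f$ by $f+\alpha g$ kills $u$, possibly perturbs the coefficient at $v$, and adds only monomials of strictly smaller $\varphi$-value; hence the (finite) number of monomials at the top level strictly decreases and the process stops. Gordan's lemma in Remark~\ref{finite-condition} serves a different purpose---reducing the saturation hypothesis to finitely many checks---not termination of this descent.
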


\begin{remark}
The constructions of injective valuations on $\kk[x,y]\setminus \{0\}$ from Example~\ref{skew} and Proposition~\ref{partition} are particular cases of Theorem~\ref{quotient} when one represents $\kk[x,y]\simeq \kk[x_1,\dots, x_n]/I$ for suitable ideals $I\subset \kk[x_1,\dots, x_n]$.   
\end{remark}

Let $(M,\cdot)$ be a (not necessary commutative) monoid. We say that an equivalence relation $\sim$ is admissible if $u\sim v$ implies $wu\sim wv, uw\sim vw$ for any $u,v,w \in M$. Then one can define a quotient monoid $M/\sim$ on equivalence classes. A linear order $\prec$ on equivalence classes $U\prec V$ (or on $M/\sim$) is defined as $u\prec v$ for any $u\in U, v\in V$, we require that this linear order on $M/\sim$ is correct. The latter linear order
is admissible if $U\prec V$ implies $UW\prec VW, WU\prec WV$ (cf. Definition~\ref{monoid_partial}). Below we consider only admissible equivalence relations and linear orders.

Denote by $M:= \langle a_1,\dots,a_s\rangle$ the free monoid generated by $a_1,\dots,a_s$. Let $A:=\kk\langle a_1,\dots,a_s\rangle / I$ be a (not necessary commutative) algebra where $I\subset \kk\langle a_1,\dots,a_s\rangle$ is an ideal. We say that an equivalence relation $\sim$ on $M$ and a linear order $\prec$ on $M/\sim$ are {\bf compatible with $I$} if for any element 
\begin{equation}\label{99}
f=\sum_{u\in supp(f)\subset M} \alpha_u u \in I,\, \alpha_u\in \kk^\times    
\end{equation}
there are elements $u_1,u_2 \in supp(f)$ such that $u_1\sim u_2$ and for every $u\in supp(f)$ it holds $u\preceq u_1$. One can treat this concept as a generalization of the tropical variety of $I$ to the non-commutative case.

We say that {\bf $I$ is saturated with respect to $\sim, \prec$} if for any pair $u_1\sim u_2, u_1\neq u_2$ there exists $f$ of the form \eqref{99} such that $u_1,u_2\in supp(f)$ and for every $u\in supp(f), u\neq u_1, u_2$ it holds $u\prec u_1$. Similarly to the proof of Theorem~\ref{quotient}
one can verify the following proposition.

\begin{proposition}
Let $A:=\kk\langle a_1,\dots, a_s \rangle /I$ be an algebra, $\sim$ be an admissible equivalence relation on the free monoid $M:=\langle a_1,\dots, a_s \rangle$, and $\prec$ be an admissible well order on $M/\sim$. Assume that $\sim, \prec$ are compatible with $I$, and $I$ is saturated with respect to $\sim, \prec$. Then there is an injective valuation $\nu:A\setminus \{0\} \twoheadrightarrow M/\sim$ defined as follows: for $f\in A\setminus \{0\}$ put $\nu(f)$ as the minimal equivalence class $U_0$ such that 
$$f=\alpha_{u_0} u_0 +\sum_{u\in M} \alpha_u u,\, \alpha_{u_0},\alpha_u \in \kk^\times,$$
\noindent where $u_0 \in U_0$ and $u\prec u_0$ for all $u$ (cf. \eqref{2}). In addition, one can pick an adapted basis among monomials from $M$.
\end{proposition}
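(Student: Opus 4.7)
The plan is to imitate the proof of Theorem~\ref{quotient}, replacing Newton polytopes and tropical hyperplanes by equivalence classes of $\sim$ and the admissible well order $\prec$. Everything hinges on establishing that for every nonzero $a\in A$ some lift $\tilde a\in \kk\langle a_1,\dots,a_s\rangle$ of $a$ modulo $I$ has a \emph{unique} $\prec$-maximal element $u_0$ in its support; we then set $\nu(a):=[u_0]\in M/\!\sim$.

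Well-definedness of $\nu(a)$ rests on a two-step well-ordered minimization. For a lift $\tilde a$, let $U(\tilde a)$ be the class of its $\prec$-maximal support element and $n(\tilde a)$ the number of support elements in that class. Since $M/\!\sim$ is well-ordered we first pick a lift minimizing $U(\tilde a)$; call the minimum $U_0$. Among lifts attaining $U_0$ we further pick $\tilde a_0$ minimizing the nonnegative integer $n(\tilde a)$. Suppose $n(\tilde a_0)\ge 2$ and choose distinct $u_1,u_2\in\mathrm{supp}(\tilde a_0)\cap U_0$. Saturation furnishes $g\in I$ with $u_1,u_2\in\mathrm{supp}(g)$ and all other terms $\prec u_1$; compatibility applied to $g\in I$ confirms $U(g)=U_0$. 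Subtracting the scalar multiple $\alpha g$ that cancels $u_1$ in $\tilde a_0$ yields a new lift whose only change in the top class is the removal of $u_1$ and the possible adjustment of $u_2$'s coefficient, since $g$'s other terms lie in strictly smaller classes. Hence the new lift either has max class $\prec U_0$ (contradicting minimality of $U_0$) or max class $U_0$ with strictly smaller $n$ (contradicting minimality of $n(\tilde a_0)$). So $n(\tilde a_0)=1$, proving $\nu(a)$ is well defined and equals $U_0$.

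The valuation axioms are then routine. Scalar invariance is immediate; subadditivity follows because summing optimal lifts of $a,b$ produces a lift of $a+b$ with max class $\preceq\max(\nu(a),\nu(b))$. For multiplicativity, let $\tilde a=\alpha_a u_a+R_a$, $\tilde b=\alpha_b u_b+R_b$ be optimal lifts with unique tops $u_a,u_b$. Admissibility of $\prec$ forces every cross term in $R_a u_b$, $u_a R_b$, and $R_a R_b$ to lie in a class strictly below $[u_au_b]$, so $\tilde a\tilde b$ lifts $ab$ with unique top $u_au_b$, giving $\nu(ab)\preceq[u_a][u_b]$. Equality follows from compatibility: any lift of $ab$ with strictly smaller max class, subtracted from $\tilde a\tilde b$, would produce an element of $I$ whose top class $[u_au_b]$ contained only the single support element $u_au_b$, contradicting the compatibility axiom. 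For the adapted basis, pick a monomial $u_U\in U$ for each $U\in\nu(A\setminus\{0\})$. Linear independence of $\{\bar u_U\}$ in $A$ is immediate from compatibility: a putative nontrivial relation $\sum\alpha_U u_U\in I$ has monomials in pairwise distinct classes, so its maximal class carries only one support element. Spanning is proved by transfinite induction on the well-order of $\nu(A\setminus\{0\})$: given $a$ with $\nu(a)=U^*$ and optimal lift $\tilde a$ with unique top $u_0\in U^*$, if $u_0\neq u_{U^*}$ saturation supplies $g\in I$ relating them, so an appropriate combination replaces $u_0$ by a nonzero multiple of $u_{U^*}$ plus strictly lower terms; subtracting the right $\kk$-multiple of $\bar u_{U^*}$ from $a$ then strictly reduces $\nu$.

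The main obstacle is the well-definedness of $\nu$: producing a lift with a unique top monomial. A single minimization over $U(\tilde a)$ alone does not deliver uniqueness, and it is precisely the saturation hypothesis --- inserted to strictly lower $n(\tilde a)$ at each step --- that makes the inner minimization terminate at $n=1$, turning what would otherwise be merely a subvaluation into an honest injective one.
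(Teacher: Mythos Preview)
Your proof is correct and follows essentially the same route as the paper, which simply says the result is verified ``similarly to the proof of Theorem~\ref{quotient}.'' Your two-step minimization (first over the top class, then over the count $n(\tilde a)$) is exactly the mechanism used in that proof to produce a lift with a unique $\prec$-maximal support element, and your use of compatibility to rule out a strictly smaller value of $\nu(ab)$ mirrors the paper's argument. The one organizational difference is that you establish injectivity by exhibiting an adapted monomial basis via transfinite induction, whereas the proof of Theorem~\ref{quotient} checks the Euclidean property directly (given $\nu(a_1)=\nu(a_2)$, use saturation to find $\alpha$ with $\nu(a_1+\alpha a_2)\prec\nu(a_1)$); both approaches are equivalent here, and yours has the advantage of simultaneously delivering the adapted basis claimed in the proposition.
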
 

\subsection{Injective well-ordered valuations on algebraic curves}

\begin{remark}
Assume that the field $\kk$ is algebraically closed and $A=\kk[X_1,\dots,X_n]/I$ is a domain. Let $\nu: A\setminus \{0\} \to C$ be a valuation and let $\psi:C \to \QQ$ be a homomorphism preserving the order. There exist Puiseux series $(x_1,\dots,x_n)\in \kk((\varepsilon ^{1/\infty}))^n$ satisfying $I$ of the form $\alpha_0 \varepsilon^{s_0/q} + \alpha_1 \varepsilon^{s_1/q}+\cdots \in \kk((\varepsilon ^{1/\infty}))^n$ where integers $s_0>s_1>\cdots$ decrease, such that $ord (x_i)=\psi \circ \nu (X_i), 1\le i\le n$
\cite{MS}.     
\end{remark}

\begin{proposition}\label{curve_valuation}
Let a valuation $\nu$ on an irreducible curve  $A\setminus \{0\}:=\kk[x,y]/(f)\setminus \{0\}, f\in \kk[x,y]$ fulfill Theorem~\ref{quotient}, i.e. $\nu$ is injective and $\nu(a)\ge 0$ for any $a\in A\setminus \{0\}$, where $\kk$ is algebraically closed. Then there exists an injective homomorphism $\eta: A \hookrightarrow \kk((\varepsilon^{1/\infty}))$ such that for every $a\in A\setminus \{0\}$ it holds $\nu(a)=ord(\eta (a))$, provided that $\nu(x)=1$ for normalization.      
\end{proposition}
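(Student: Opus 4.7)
My plan is to construct $\eta$ from a Newton--Puiseux parametrization of the branch of $f=0$ singled out by $\nu$. Since $\dim A=1$ and $\nu$ is injective, the valuation semigroup $C$ has rank one (by the curve analogue of Corollary~\ref{rank_partial}), so $C$ embeds order-preservingly into $\QQ$; the normalization $\nu(x)=1$ fixes this embedding. In the setup of Theorem~\ref{quotient}, the chosen common subplane $H\subset\RR^2$ is the line through the origin dual to a ray of $\operatorname{Trop}(f)$, equivalently to an edge $E$ of the Newton polygon of $f$. The slope of $E$ encodes the ratio $\nu(y)/\nu(x)$, so under the identification $C\hookrightarrow\QQ$ the value $\nu(y)$ is a specific rational number determined by $E$. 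The overall strategy is to exhibit a Puiseux root $\phi\in\kk((\varepsilon^{1/\infty}))$ of $f(\varepsilon,y)=0$ with $\operatorname{ord}(\phi)=\nu(y)$, set $\eta(x):=\varepsilon$, $\eta(y):=\phi$, and then verify the required identity on the monomial adapted basis supplied by Theorem~\ref{quotient}.

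First, I would produce the Puiseux series. Viewing $f$ as a polynomial in $y$ over $\kk((\varepsilon))$, the Newton--Puiseux theorem (applicable since $\kk$ is algebraically closed of characteristic zero) factors $f$ completely over $\kk((\varepsilon^{1/\infty}))$ as $\prod_i(y-\phi_i(\varepsilon))$. Classically, the orders $\operatorname{ord}(\phi_i)$ are precisely the slopes read off from the edges of the Newton polygon, so the edge $E$ produces at least one root $\phi=\phi_i$ with $\operatorname{ord}(\phi)=\nu(y)$; fix any such $\phi$. Then $\eta(x):=\varepsilon$, $\eta(y):=\phi$ extends uniquely to a $\kk$-algebra homomorphism $\kk[x,y]\to\kk((\varepsilon^{1/\infty}))$, and since $f(\varepsilon,\phi)=0$ this homomorphism descends to the desired $\eta:A\to\kk((\varepsilon^{1/\infty}))$.

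Next comes the compatibility check. By Theorem~\ref{quotient} (or Proposition~\ref{basis_partial}) there is a monomial adapted basis ${\bf B}=\{x^iy^j:(i,j)\in S\}$ of $A$ with pairwise distinct $\nu$-values. For each such monomial one computes $\nu(x^iy^j)=i+j\,\nu(y)$ and $\operatorname{ord}(\eta(x^iy^j))=i+j\,\operatorname{ord}(\phi)=i+j\,\nu(y)$, so the two functions agree on ${\bf B}$; since the $\nu(b)$ with $b\in{\bf B}$ are all distinct, so are the orders $\operatorname{ord}(\eta(b))$. Consequently, for a nonzero $a=\sum_{b\in{\bf B}}\alpha_b b$ the leading Puiseux terms of the contributing $\eta(b)$ cannot cancel, and therefore $\operatorname{ord}(\eta(a))=\max_{\alpha_b\neq 0}\operatorname{ord}(\eta(b))=\max_{\alpha_b\neq 0}\nu(b)=\nu(a)$. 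Injectivity of $\eta$ is then automatic: $\eta(a)=0$ would force $\operatorname{ord}(\eta(a))=+\infty$, contradicting the finiteness of $\nu(a)\in C\subset\QQ_{\ge 0}$.

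I expect Step~1 --- identifying the correct branch --- to be the main obstacle. One must translate the abstract datum of the common subplane $H$ (and the accompanying hyperplane from $\operatorname{ETrop}(f)$) into a concrete edge of the Newton polygon of $f$, and then confirm, via Newton--Puiseux, that a root with exactly the prescribed leading order exists. A minor related subtlety is that the chosen edge may produce several Galois-conjugate branches of the same order; any one of them yields a valid $\eta$, and different choices give conjugate embeddings of $A$. The potential cancellation issue in Step~3 is defused by the injectivity of $\nu$ together with the distinct-$\nu$-value property of the adapted basis, which is precisely why the injectivity hypothesis on $\nu$ is essential for the argument.
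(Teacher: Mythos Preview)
Your proposal is correct and follows essentially the same route as the paper: identify the edge of the Newton polygon singled out by the hypotheses of Theorem~\ref{quotient}, pick a Puiseux root $\phi$ of $f(\varepsilon,y)=0$ with $\operatorname{ord}(\phi)=\nu(y)$, set $\eta(x)=\varepsilon$, $\eta(y)=\phi$, and verify on the monomial adapted basis that $\operatorname{ord}\circ\eta=\nu$. The paper is slightly more concrete---it writes the edge explicitly as $\{(p,0),(0,q)\}$ with $\gcd(p,q)=1$ (which is exactly what ``prop and saturated'' forces for a plane curve), uses the explicit basis $\{x^iy^j:0\le j<q\}$, and deduces injectivity of $\eta$ directly from irreducibility of $f$---whereas you work with the abstract adapted basis and recover injectivity from the order computation; both arguments are valid and the underlying idea is the same.
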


\begin{proof}
According to Theorem~\ref{quotient} Newton polygon $N_f$ has an edge with endpoints $(p,0), (0,q)$ for relatively prime $p,q\ge 1$. Let $p\le q$ for definiteness. Then the equation $f(x,y)=0$ has a Puiseux series solution of the form $y(x)=\alpha x^{p/q}+\dots$ where $\alpha \in \kk^\times$ and the terms in dots contain powers of $x$ less than $p/q$.

One can define $\eta (x):= \varepsilon, \eta(y):= y(\varepsilon)$. Then $\eta$ is injective since $f$ is irreducible. The monomials ${\bf B}:= \{x^iy^j : 0\le i<\infty, 0\le j<q\}$ constitute a basis of $A$. The orders $ord (\eta (x^iy^j))=i+jp/q=\nu(x^iy^j)$ are pairwise distinct for the monomials from $\bf B$. 

The proposition is proved.
\end{proof} 

\vspace{2mm}

One can prove a certain converse statement to Proposition~\ref{curve_valuation}.

\begin{remark}
Let for a polynomial $f\in \kk[x,y]$ where $\kk$ is algebraically closed, its Newton polygon $N_f\subset \RR^2$ is not of the shape from Proposition~\ref{curve_valuation}, i.e. $N_f$ does not contain an edge with vertices $(p,0), (0,q)$ with relatively prime $p, q$. Then the imbedding $\eta :A:=\kk[x,y]/(f)\hookrightarrow \kk((x^{1/\infty}))$ into the field of Puiseux series induces a valuation $\nu :A\setminus \{0\} \to \QQ$ by a formula
$\nu(a):=ord (\eta (a))$, being not an injective well-ordered.     
\end{remark}

Any automorphism $\varphi$ of $\kk[x,y]$ produces an injective valuation on the algebra $\kk[x,y]/(f\circ \varphi)\setminus \{0\}$.

Consider an algebra $A:= \kk[x,y]/(f)$ of a curve where $f$ is irreducible. Let $A\hookrightarrow \kk((x^{1/\infty}))$ be an injective homomorphism into the field of Newton-Puiseux series. We investigate when this  induces an injective well-ordered valuation $\nu(=ord)$ on $A\setminus \{0\}$. W.l.o.g. one can suppose that $ord(x)=1$ and $f=y^d +f_1$ is normalized, i.e. $deg_y (f_1)<d$.  

\begin{lemma}\label{puiseux_imbedding}
Let $M\subset \kk((x^{1/\infty}))$ be a free $\kk[x]$-module of a rank $d$. Then $M\setminus \{0\}$ admits a $\kk[x]$-basis $s_1,\dots,s_d$ such that $ord(s_1),\dots,ord(s_d)$ are non-negative and $ord(s_i)-ord(s_j)\notin \ZZ$ for each pair $1\le i\neq j\le d$ iff for any $s\in M\setminus \{0\}$ it holds $ord(s)\ge 0$.  \end{lemma}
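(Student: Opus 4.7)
The $(\Rightarrow)$ direction is a direct consequence of the non-Archimedean behavior of $\mathrm{ord}$ on $\kk((x^{1/\infty}))$. Given a basis $s_1,\dots,s_d$ as in the statement, any $s\in M\setminus\{0\}$ writes uniquely as $s=\sum_{i=1}^d p_i(x)\,s_i$ with $p_i\in\kk[x]$. For each $i$ with $p_i\ne 0$ one has $\mathrm{ord}(p_i\,s_i)=\mathrm{ord}(p_i)+\mathrm{ord}(s_i)$, and since $\mathrm{ord}(p_i)\in\ZZ_{\ge 0}$ while the $\mathrm{ord}(s_i)$ have pairwise distinct residues modulo $\ZZ$, the nonzero summands have pairwise distinct orders in $\tfrac1N\ZZ$ (for any $N$ with $M\subset\kk((x^{1/N}))$). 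The ultrametric inequality is therefore an equality at the uniquely achieved minimum, giving $\mathrm{ord}(s)=\min_{i:\,p_i\ne 0}(\mathrm{ord}(p_i)+\mathrm{ord}(s_i))\ge \min_i\mathrm{ord}(s_i)\ge 0$.

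For the $(\Leftarrow)$ direction, fix $N\in\ZZ_{>0}$ such that $M\subset \kk((x^{1/N}))$ (this exists because $M$ is finitely generated). By hypothesis, all orders of nonzero elements of $M$ lie in $\tfrac1N\ZZ_{\ge 0}$, which is well-ordered. Introduce the finite set $\bar S:=\{\mathrm{ord}(s)\bmod\ZZ:s\in M\setminus\{0\}\}\subset\tfrac1N\ZZ/\ZZ$. The idea is to produce the desired basis as a family of order-minimal representatives of the residue classes in $\bar S$: for each $\bar\alpha\in\bar S$, set $m(\bar\alpha):=\min\{\mathrm{ord}(s):s\in M\setminus\{0\},\ \mathrm{ord}(s)\equiv\bar\alpha\!\pmod\ZZ\}$ (the minimum exists by well-ordering) and pick $s_{\bar\alpha}\in M$ attaining it.

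The non-trivial claim is that these $s_{\bar\alpha}$ form a $\kk[x]$-basis of $M$. Linear independence is automatic: a relation $\sum p_{\bar\alpha}(x)\,s_{\bar\alpha}=0$ would have nonzero summands with pairwise distinct orders modulo $\ZZ$ (since $\mathrm{ord}(p_{\bar\alpha})\in\ZZ$ does not change the residue), contradicting cancellation; in particular $|\bar S|\le d$. To establish spanning I would start from an arbitrary $\kk[x]$-basis $v_1,\dots,v_d$ of $M$ and apply the following Gaussian-elimination step: whenever $\mathrm{ord}(v_i)\equiv\mathrm{ord}(v_j)\pmod\ZZ$ with $\mathrm{ord}(v_j)\ge\mathrm{ord}(v_i)$ and difference $k\in\ZZ_{\ge 0}$, replace $v_j$ by $v_j-c\,x^k v_i\in M$, with $c\in\kk^\times$ chosen to kill the leading term. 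This is an elementary $\kk[x]$-basis operation (so the new tuple is again a $\kk[x]$-basis of $M$), it keeps every order non-negative by the hypothesis, and it strictly increases $\mathrm{ord}(v_j)$.

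The main obstacle is proving termination of this elimination in finitely many steps and simultaneously deducing $|\bar S|=d$. Well-ordering alone is insufficient because orders can a priori grow unboundedly. The key additional input, which I would prove by induction on $d$ combined with a Hilbert-series count comparing the $\kk$-dimension $dn$ of $M/x^nM$ with the length of the order filtration $M=M_{\ge 0}\supset M_{\ge 1/N}\supset\cdots\supset M_{\ge n}$, is that under the hypothesis the orders attained by $M\setminus\{0\}$ asymptotically fill exactly $d$ cosets of $\ZZ$ inside $\tfrac1N\ZZ$; this forces $|\bar S|=d$ and shows that once the elimination procedure has produced $d$ representatives of the $d$ distinct residue classes, no further step is available, so the procedure terminates and the resulting $v_i$ realize the minima $m(\bar\alpha)$, hence coincide up to $\kk^\times$ with the $s_{\bar\alpha}$. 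The induction step itself is the delicate point: after peeling off a generator of the smallest order, one must argue that the quotient structure still carries a rank $d-1$ free $\kk[x]$-submodule of $\kk((x^{1/\infty}))$ to which the induction hypothesis applies.
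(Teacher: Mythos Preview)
Your $(\Rightarrow)$ direction is fine and matches the paper (which simply calls it evident). The $(\Leftarrow)$ direction, however, has a genuine problem rooted in a sign convention: throughout the paper $\mathrm{ord}$ is the \emph{highest} exponent of a Puiseux series (consistent with the $\max$ in the valuation axiom and with the expansions $\sum_j \beta_j x^{p_j}$ with $p_0>p_1>\cdots$ used just before the lemma), not the lowest. With that convention your Gaussian elimination step $v_j\mapsto v_j-c\,x^{k}v_i$ strictly \emph{decreases} $\mathrm{ord}(v_j)$; since by hypothesis every element of $M$ has $\mathrm{ord}\ge 0$ and all orders lie in $\tfrac1N\ZZ$, the sum $\sum_i \mathrm{ord}(v_i)$ drops by at least $\tfrac1N$ at each step and stays nonnegative, so termination is immediate. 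That is exactly the paper's argument, and it removes entirely the need for your Hilbert-series count, the induction on $d$, and the separate analysis of $|\bar S|$.

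Your worry that ``well-ordering alone is insufficient because orders can a priori grow unboundedly'' is an artifact of using the $\min$ convention; in fact with that convention the lemma is \emph{false}. Take $M=\kk[x]\cdot 1+\kk[x]\cdot \exp(x)\subset \kk[[x]]\subset \kk((x^{1/\infty}))$ (characteristic $0$): it is free of rank $2$, every element has nonnegative lowest-order, yet all orders are integers, so no basis with residues distinct modulo $\ZZ$ can exist. This shows your proposed induction/Hilbert argument cannot be completed as stated. Once you switch to the paper's $\max$-convention, the same example is excluded (the series $\exp(x)$ is not in the paper's $\kk((x^{1/\infty}))$), and the one-line termination argument above finishes the proof.
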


\begin{proof}
In one direction the lemma is evident, so assume that $ord(s)\ge 0$ for any $s\in M\setminus \{0\}$. Let $p_1,\dots,p_d \in M$ be a $\kk[x]$-basis of $M$. If $ord(p_i)-ord(p_j)\in \ZZ_{\ge 0}$ for some $1\le i\neq j\le d$ and $ord(p_i-\alpha x^{ord (p_i)}) < ord (p_i), \, ord (p_j-\beta x^{ord (p_j)} < ord (p_j)$ for suitable $\alpha, \beta \in \kk^\times$, one can replace $p_j$ by $p_j':= p_j-(\beta/\alpha) x^{ord (p_i)-ord (p_j)} p_i$. Clearly, $ord (p_j')<ord (p_j)$. Continuing in this way, we arrive to a required basis $s_1,\dots,s_d$.
\end{proof}

\begin{remark}\label{algorithm}
Let $f=y^d+f_1\in \ZZ[x,y]$ be normalized. Assume that the bit-sizes of the integer coefficients of $f$ do not exceed $L$. Here we agree that the field $\kk=\overline{\QQ}$. 

For a root $Y\in \kk((x^{1/\infty}))$ of $f$ consider a free $\kk[x]$-module $M\subset \kk((x^{1/\infty}))$ with a basis $1,Y,\dots,Y^{d-1}$. Then the algorithm designed in the proof of Lemma~\ref{puiseux_imbedding} either yields a basis $s_1,\dots, s_d$ of $M$ such that $ord (s_i)\ge 0$ and $ord(s_i)-ord(s_j)\notin \ZZ$ for every pair $1\le i\neq j\le d$ or the algorithm discovers an element $s\in M$ such that $ord (s)<0$.

The complexity of the algorithm is polynomial in $d, deg_x (f), L$. It follows from the polynomial complexity bound for developing Newton-Puiseux series \cite{Chistov}.
\end{remark}

Now we are able to summarize the obtained above in the following corollary.

\begin{corollary}\label{puiseux_adapted}
Let $A=\kk[x,y]/(f)$ be an algebra of an irreducible curve. Let $Y\in \kk((x^{1/\infty}))$ be a root of $f$ in the field of Newton-Puiseux series. Denote by $M\subset \kk((x^{1/\infty}))$ the $\kk[x]$-module generated by $1,Y,\dots, Y^{d-1}$. The valuation $ord$ on $A\setminus \{0\}$ induced by means of an injective homomorphism $A\hookrightarrow \kk((x^{1/\infty}))$ where $y\to Y$, is injective and well-ordered iff for any $s\in M\setminus \{0\}$ it holds $ord (s)\ge 0$ (agreeing $ord(x)=1$).

In the case of $f\in \ZZ[x,y]$ and $\kk= \overline{\QQ}$ there is an algorithm which either yields an adapted (with respect to $ord$) $\kk[x]$-basis of $A$ or discovers an element $s\in M\setminus \{0\}$ such that $ord (s)<0$. 
\end{corollary}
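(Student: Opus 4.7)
\smallskip\noindent\textbf{Proof plan.} The plan is to combine Lemma~\ref{puiseux_imbedding} with the observation that $M=\eta(A)$ is a \emph{subring} of $\kk((x^{1/\infty}))$ (since $\eta$ is an injective ring homomorphism), together with the fact that the Newton-Puiseux expansion of $Y$ has denominators bounded by some fixed $q\le d$, so that $ord(s)\in\frac{1}{q}\ZZ$ for every $s\in M\setminus\{0\}$. For the forward direction, suppose some $s\in M\setminus\{0\}$ has $ord(s)<0$; then $s^k\in M$ for every $k\ge 1$ and $ord(s^k)=k\cdot ord(s)\to -\infty$, so the range $\nu(A\setminus\{0\})=ord(M\setminus\{0\})$ is not bounded below and hence cannot be well-ordered (injectivity plays no role in this implication).

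For the converse, assume $ord(s)\ge 0$ for every $s\in M\setminus\{0\}$. Lemma~\ref{puiseux_imbedding} supplies a $\kk[x]$-basis $s_1,\dots,s_d$ of $M$ with $ord(s_i)\ge 0$ and $ord(s_i)-ord(s_j)\notin\ZZ$ whenever $i\neq j$. Then $\mathcal{B}=\{x^i s_j:\, i\ge 0,\ 1\le j\le d\}$ is clearly a $\kk$-basis of $M$, and the orders $ord(x^i s_j)=i+ord(s_j)$ are pairwise distinct, since any coincidence would force $ord(s_j)-ord(s_{j'})\in\ZZ$ for some $j\neq j'$, contradicting the second property of the $s_j$. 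Thus $\mathcal{B}$ is adapted to $\nu$ in the sense of Proposition~\ref{valuation_injective_partial}, so $\nu$ is injective; moreover its values all lie in $\frac{1}{q}\ZZ_{\ge 0}$, which is well-ordered (being order-isomorphic to $\ZZ_{\ge 0}$), so $\nu$ is well-ordered as well.

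For the algorithmic assertion I would implement the reduction procedure extracted from the proof of Lemma~\ref{puiseux_imbedding}: starting from the $\kk[x]$-basis $1,Y,\dots,Y^{d-1}$, whenever two current basis elements $p_i,p_j$ satisfy $ord(p_i)-ord(p_j)\in\ZZ_{\ge 0}$ one subtracts an appropriate monomial multiple of $p_i$ from $p_j$ to strictly lower $ord(p_j)$, as in the proof of the lemma. The loop terminates either with a basis having pairwise incongruent non-negative orders (in which case we are done by the converse argument above) or by exhibiting an element $s\in M$ with $ord(s)<0$. The main obstacle I anticipate is turning this into a polynomial-time algorithm: one must bound, in terms of $d$, $\deg_x f$ and $L$, both the total number of reduction steps and the precision of the truncated Newton-Puiseux expansion needed to decide the sign of $ord(s)$ and the relation $ord(s_i)-ord(s_j)\in\ZZ$. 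Here I would appeal to Chistov's polynomial-complexity algorithm~\cite{Chistov} for truncated Newton-Puiseux expansions, and exploit that all orders arising lie in the discrete set $\frac{1}{q}\ZZ$ with $q$ bounded in terms of $d$; a strict decrease in $ord(p_j)$ at each step then bounds the number of iterations.
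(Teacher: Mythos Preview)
Your proof is correct and follows essentially the same route as the paper, which presents this corollary as a direct summary of Lemma~\ref{puiseux_imbedding} and Remark~\ref{algorithm}. Your explicit argument for the forward direction (using that $M=\eta(A)$ is a ring, so powers of a negative-order element give an infinite descending chain) is a detail the paper leaves implicit; note also that the corollary as stated only asserts the existence of an algorithm, so the polynomial-complexity obstacle you anticipate is not needed here (Remark~\ref{algorithm} records that separately).
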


\begin{remark}
Due to Lemma~\ref{puiseux_imbedding} an adapted basis yielded in Corollary~\ref{puiseux_adapted} has a form $\{s_ix^j\, :\, 1\le i\le d, 0\le j\}$ for appropriate elements $s_i\in M, 1\le i\le d$.  \end{remark}

\begin{example}
Let $f:=(y^2-x)^3-8x^2$. The Newton-Puiseux expansion of its root is $Y=x^{1/2}+x^{1/6}+\cdots$. Denote $a:=y^2-x$. 
Newton polygon $N_f$ has an edge with the endpoints $(3,0),\, (0,6)$. Therefore, it does not fulfill the conditions of Theorem~\ref{quotient}. Nevertheless, the algebra $A:=\overline{\QQ}[x,a]\setminus \{0\}$ admits an injective well-ordered valuation $ord$ with an adapted basis of a form
$$x^j,\, yx^j,\, ax^j,\, ayx^j,\, a^2x^j,\, a^2yx^j,\, j\ge 0$$
\noindent due to Corollary~\ref{puiseux_adapted}.
It holds $ord(y)=1/2, ord(a)=2/3, ord(ay)=7/6, ord(a^2)=4/3, ord(a^2y)=11/6$.
\end{example}

Now we proceed to a proof of a converse statement to Corollary~\ref{puiseux_adapted}: if an algebra $A:=\kk[x,y]/(f)\setminus \{0\}$ of an irreducible curve admits an injective well-ordered valuation $\nu$, then $\nu$ is inherited from an injective homomorphism $A\hookrightarrow \kk((x^{1/\infty}))$ under which $y$ is mapped to a root of $f$ (and in addition, $\nu$ does not depend on a choice of a root). We agree that $\nu (x)=1$. Denote $f=y^d+f_1, deg_y (f_1)<d$.

We will repeatedly make use of the following easy observation. Let $a=\sum_{0\le i<d} \alpha_i y^i\in A$ and $g(a)=0$ for a suitable polynomial $g\in \kk[x,z], deg_z (g)\le d$. Then the value $\nu (a)$ is among the slopes of the edges of Newton polygon $N_g$. 

First, we recall some properties of Newton-Puiseux expansions of the roots in $\kk((x^{1/\infty}))$ of $f$ (see e.g. \cite{Walker}). There is a partition of the roots of $f$ into classes of cardinalities $d_1,\dots,d_k$ where $d_1+\cdots d_k=d$. For each class of a cardinality $d_i$ every root from this class has a form
\begin{eqnarray}\label{37}
Y=\sum_{j\ge 0} \beta_j x^{p_j/d_i} \in \kk((x^{1/\infty}))    
\end{eqnarray}
where integers $p_0>p_1>\cdots$ decrease. Moreover, all the roots from this class are exhausted by Newton-Puiseux series
\begin{eqnarray}\label{38}
\sum_{j\ge 0} \beta_j \omega^{p_j}x^{p_j/d_i}    
\end{eqnarray}
where $\omega$ ranges over the roots of unity of the degree $d_i$. In the process of Newton-Puiseux expanding of $Y$ for any intermediate current polynomial $h\in \kk[x,y]$ for the slope $p/q\in \QQ$ of each edge of Newton polygon $N_h$ it holds $q|d_i$.

\begin{lemma}\label{slope_injective}
If an algebra $A=\kk[x,y]/(f)\setminus \{0\}$ of a curve admits an injective well-ordered valuation $\nu$ then the roots of $f$ in the field of Newton-Puiseux series constitute a single class. 
\end{lemma}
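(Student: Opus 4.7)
The plan is to exploit the arithmetic structure of the orders available in a single Puiseux branch. Normalize $\nu(x)=1$ and assume $f$ is monic in $y$ of degree $d$; then Proposition~\ref{curve_valuation} produces an injective $\kk[x]$-algebra map $\eta:A\hookrightarrow\kk((x^{1/\infty}))$ with $\eta(y)=Y$ for some Puiseux root $Y$ of $f$ such that $\nu(a)=\operatorname{ord}(\eta(a))$ for every nonzero $a\in A$. The root $Y$ belongs to exactly one of the Newton--Puiseux classes, say class $j_0$ of cardinality $d_{j_0}$, and since $f$ is monic we have $Y\in\kk[[x^{1/d_{j_0}}]]$.

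Consequently the $\kk[x]$-module $M:=\kk[x]\langle 1,Y,\ldots,Y^{d-1}\rangle$ is contained in $\kk[[x^{1/d_{j_0}}]]$, so every $s\in M\setminus\{0\}$ satisfies $\operatorname{ord}(s)\in\tfrac{1}{d_{j_0}}\ZZ_{\ge 0}$. Because $\nu$ is injective and well-ordered, Corollary~\ref{puiseux_adapted} guarantees $\operatorname{ord}(s)\ge 0$ on all of $M$, so the hypothesis of Lemma~\ref{puiseux_imbedding} is met and yields a $\kk[x]$-basis $s_1,\ldots,s_d$ of $M$ with $\operatorname{ord}(s_i)-\operatorname{ord}(s_j)\notin\ZZ$ for all $i\ne j$.

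This produces $d$ pairwise distinct residues $\operatorname{ord}(s_i)\pmod{\ZZ}$; but these residues live in $\tfrac{1}{d_{j_0}}\ZZ/\ZZ$, a group of order exactly $d_{j_0}$. Hence $d\le d_{j_0}$, and since $d=d_1+\cdots+d_k\ge d_{j_0}$ with equality only when $k=1$, we conclude $k=1$.

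The main obstacle to articulate cleanly is the identification of the correct arithmetic host for $\nu$: one has to recognize that although $\eta$ a priori lands in $\kk((x^{1/\infty}))$, the image of $M$ is in fact trapped in the narrower $\kk((x^{1/d_{j_0}}))$ determined by the class of $Y$, so that only $d_{j_0}$ residues mod $\ZZ$ are ever available to the basis guaranteed by Lemma~\ref{puiseux_imbedding}. Once this observation is in place, the residue-class pigeonhole is essentially forced.
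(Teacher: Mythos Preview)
Your argument is circular. You invoke Proposition~\ref{curve_valuation} to identify the given valuation $\nu$ with $\operatorname{ord}\circ\eta$ for a Puiseux embedding $\eta$, but look at the hypothesis of that proposition: it applies to a valuation that ``fulfills Theorem~\ref{quotient}'', and its proof begins with ``According to Theorem~\ref{quotient} Newton polygon $N_f$ has an edge with endpoints $(p,0),(0,q)$ for relatively prime $p,q$.'' In other words, Proposition~\ref{curve_valuation} is only stated for valuations constructed via Theorem~\ref{quotient}, which already presupposes a specific edge on $N_f$. For an \emph{arbitrary} injective well-ordered valuation $\nu$, the identification $\nu=\operatorname{ord}\circ\eta$ is exactly the content of Theorem~\ref{curve_injective}, and Lemma~\ref{slope_injective} is a lemma used in the proof of that theorem (see the paragraph immediately preceding the lemma: ``Now we proceed to a proof of a converse statement to Corollary~\ref{puiseux_adapted}\ldots''). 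So you are assuming the conclusion you are trying to reach. The same objection applies to your appeal to Corollary~\ref{puiseux_adapted}: it speaks about the valuation \emph{induced by} an embedding, and you have no right yet to assume that your $\nu$ arises this way.

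The paper avoids this by never passing through an embedding. It uses instead the elementary observation recorded just before the lemma: for any $a\in A$ with minimal polynomial $g\in\kk[x,z]$, the equality $g(a)=0$ forces $\nu(a)$ to be a slope of the Newton polygon $N_g$ (ultrametric tie). Since the roots of $g$ are the elements $\sum_i\alpha_iY^i$ as $Y$ ranges over \emph{all} Puiseux roots of $f$, every such slope has denominator dividing some $d_l$. Hence $\nu(A\setminus\{0\})\subset\bigcup_l\tfrac{1}{d_l}\ZZ_{\ge 0}$, and a dimension count on the finite-dimensional space $L_N=\operatorname{span}\{y^ix^j:0\le i<d,\,0\le j<N\}$ shows that for $k\ge 2$ there are at most $(N+\text{const})(d-k+1)<Nd=\dim L_N$ available values, contradicting injectivity. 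Your residue-class pigeonhole is morally the same counting, but you cannot get to it without first proving what the lemma is a step toward.
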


\begin{proof}
Denote by $d_1,\dots,d_k$ the cardinalities of the classes of the roots of $f$. Consider an element $a=\sum_{0\le i<d} \alpha_i y^i \in A\setminus \{0\}$. Let $g(a)=0$ for an appropriate polynomial $g\in \kk[x,y],\, deg_y (h)\le d$. Then $g(\sum_{0\le i<d} \alpha_i Y^i)=0$ for any root $Y\in \kk((x^{1/\infty}))$ of $f$. Therefore, for the slope $p/q$ of every edge of Newton polygon $N_g$ it holds $q|d_l$ for suitable $1\le l\le k$.

Hence the values of $\nu$ on $A\setminus \{0\}$ are contained in a set 
$$\ZZ_{\ge 0}/d_1 \cup \cdots \cup \ZZ_{\ge 0}/d_k.$$
\noindent Here we use that $\nu$ is well-ordered, so non-negative on $A\setminus \{0\}$. Denote by $L_N\subset A$ for an integer $N\ge 0$ the $\kk$-linear space  with a basis $y^ix^j\, :\, 0\le i<d, 0\le j<N$. Then $\dim (L_N)=Nd$. On the other hand, $\nu$ attains on $L_N$ the values from a set
$$\{0,\dots,N+const\} \cup \bigcup_{1\le l\le k, 1\le p<d_l} (\{0,\dots,N+const\}+p/d_l).$$
\noindent The cardinality of the latter set does not exceed $(N+const)(d-k+1)$. Thus, if $k\ge 2$ then the valuation $\nu$ attains on $L_N$ less than $\dim (L_N)$ values, which contradicts to the injectivity of $\nu$. This completes the proof of the lemma.
\end{proof} 

For any $a=\sum_{0\le i< d} \alpha_i y^i \in A\setminus \{0\}$ due to Lemma~\ref{slope_injective} we have
$$\sum_{0\le i< d} \alpha_i Y^i=\gamma x^{p/q}+\cdots \in \kk((x^{1/\infty}))$$
\noindent where $Y$ is a root of $f$ \eqref{37}, $p/q$ is the leading exponent of Newton-Puiseux expansion, and $\gamma \in \kk^\times,\, p\in \ZZ$. Let $g(a)=0$ for a polynomial $g\in \kk[x,y], deg_y (g)\le d$. All the roots of $g$ have an expansion of the form $\gamma \omega^p x^{p/d}+\cdots$, where $\omega$ ranges over the roots of unity of the degree $d$. Hence Newton polygon $N_g$ has a unique edge with the slope $p/d$, thus $\nu(a)=p/d$.

Summarizing, we have established the following theorem.

\begin{theorem}\label{curve_injective}
If an algebra $A=\kk[x,y]/(f)\setminus \{0\}$ of an irreducible curve admits an injective well-ordered valuation $\nu$ then $\nu$ is inherited from the valuation $ord$ on $\kk((x^{1/\infty}))$ by means of an injective homomorphism $A\hookrightarrow \kk((x^{1/\infty}))$ where $y$ is mapped to a root $Y\in \kk((x^{1/\infty}))$ of $f$. The value of $\nu$ does not depend on a choice of a root.
\end{theorem}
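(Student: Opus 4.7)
The plan is to assemble the previously developed machinery into a single clean argument: combining Lemma~\ref{slope_injective} (the single‑class property) with a careful analysis of leading Puiseux exponents. First I would normalize so that $\nu(x)=1$ and $f=y^d+f_1$ with $\deg_y f_1<d$; then $A$ is free of rank $d$ over $\kk[x]$ with basis $1,y,\ldots,y^{d-1}$, and every element has a unique representation $a=\sum_{0\le i<d}\alpha_i(x)\,y^i$ with $\alpha_i\in\kk[x]$.

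Second, I would invoke Lemma~\ref{slope_injective}: the hypothesis that $A\setminus\{0\}$ admits an injective well‑ordered valuation forces all $d$ Newton–Puiseux roots of $f$ in $\kk((x^{1/\infty}))$ to belong to a single class of cardinality $d$. Concretely, fixing one root $Y=\sum_{j\ge 0}\beta_j x^{p_j/d}$, the other $d-1$ roots are obtained by $x^{1/d}\mapsto \omega x^{1/d}$ as $\omega$ ranges over the $d$-th roots of unity. This at once gives a candidate embedding $\eta:A\hookrightarrow \kk((x^{1/\infty}))$ defined by $x\mapsto x$, $y\mapsto Y$, well‑defined because $f(x,Y)=0$.

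Third, I would verify that $\mathrm{ord}\circ\eta=\nu$ on $A\setminus\{0\}$. Take $a\in A\setminus\{0\}$ and, using finiteness of $A$ as a $\kk[x]$-module, choose a nonzero polynomial $g\in\kk[x,z]$ with $\deg_z g\le d$ such that $g(x,a)=0$ in $A$. Applying $\eta$ yields $g\bigl(x,\eta(a)\bigr)=0$, so $\eta(a)$ is a root of $g(x,z)$. The other roots of $g(x,z)$ are (up to multiplicity) the conjugates $\sum_i\alpha_i(x)(\omega Y)^i$. Because the single‑class property means these conjugates all share the same leading exponent $p/d=\mathrm{ord}(\eta(a))$, the Newton polygon $N_g$ has a \emph{unique} edge, of slope $p/d$. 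Since $\nu(a)$ must lie among the edge slopes of $N_g$ (from $g(x,a)=0$ together with the $\nu$-additivity and ultrametric properties, applied to the equation expressing $a^d$ in terms of lower powers), it follows that $\nu(a)=p/d=\mathrm{ord}\bigl(\eta(a)\bigr)$. Injectivity of $\eta$ is then automatic since $\mathrm{ord}\circ\eta=\nu$ takes values in $\QQ$ on nonzero inputs, and independence of the choice of root $Y$ follows because all conjugates $\omega Y$ produce $\eta(a)$'s with the same leading exponent.

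The main obstacle, and the point requiring the most care, is the uniqueness of the Newton‑polygon slope of $g$. One must be sure that each of the (possibly repeated, possibly coming from a reducible $g$) roots of $g(x,z)$ really does have leading exponent exactly $p/d$, rather than some smaller exponent that would introduce spurious slopes in $N_g$. This is precisely where Lemma~\ref{slope_injective} is indispensable: without the single‑class property, the conjugates of $\eta(a)$ could land in Puiseux classes of different denominators, yielding several distinct slopes in $N_g$ and thereby obstructing the identification $\nu(a)=p/d$. Once this uniqueness is in hand, the theorem follows by simply summarizing the above steps.
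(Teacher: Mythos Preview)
Your proposal is correct and follows essentially the same route as the paper: invoke Lemma~\ref{slope_injective} to force a single Puiseux class, pick any root $Y$ to define the embedding, and then for arbitrary $a$ use its minimal polynomial $g$ over $\kk[x]$, observe that all roots of $g$ are Galois conjugates sharing the same leading exponent $p/d$, whence $N_g$ has a unique slope and $\nu(a)$ is forced to equal it. One cosmetic point: your notation $(\omega Y)^i$ for the conjugates is ambiguous (literal scalar multiplication versus the substitution $x^{1/d}\mapsto\omega x^{1/d}$ you correctly described earlier); writing $Y_\omega$ or spelling out the substitution would avoid confusion, but the argument itself is sound.
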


\begin{remark}
Corollary~\ref{puiseux_adapted} and Theorem~\ref{curve_injective} together describe all the injective  well-ordered valuations on a curve, and moreover, provide an algorithm to yield all such valuations.   
\end{remark}

\begin{example}
We provide a complete description when an algebra $A=\kk[x,y]/(g)$ where $g$ is a quadratic polynomial, admits an injective well-ordered valuation $\nu$. 

First, if $g=xy+px+qy+t$ then either $\nu(x)=0$ or $\nu(y)=0$ (cf. Theorem~\ref{quotient}).
In both cases we get a contradiction with the injectivity of $\nu$.

Now we assume that $g=x^2+exy+by^2+px+qy+t$ and either $b\neq 0$ or $e\neq 0$. Then $\nu(x)=\nu(y)$ (unless $b=0, e\neq 0$ when one should consider in addition, another possibility $\nu(x)=0$, which contradicts to the injectivity, cf. above). Therefore, due to the injectivity, there exists $\alpha \in \kk$ such that for $u:=x+\alpha y$ it holds $\nu(u)<\nu(y)$. Substituting $u-\alpha y$ for $x$ in $g$, we deduce that $\alpha^2 -\alpha e+b=0$ (being the coefficient at the highest monomial $y^2$ in $g$) and $2\alpha -e=0$ (being the coefficient at the next highest monomial $uy$ in $g$). Hence $\alpha=e/2$ and $e^2-4b=0$ (being the discriminant of the highest form of $g$). Thus, $g=u^2+pu+(q-ep/2)y+t$.

If $q-ep/2\neq 0$, we fall in the conditions of Theorem~\ref{quotient}, therefore $A$ admits an injective well-ordered valuation $\nu$, and the monomials in $u$ constitute an adapted basis of $A$ with respect to $\nu$. By the same token this arguments covers also the case $b=e=0$.

Else if $q-ep/2=0$, we have $g=u^2+pu+t$, hence $\nu(u)=0$ which contradicts to the injectivity of $\nu$ (cf. above).

Thus, $A$ admits an injective well-ordered valuation iff (the discriminant of the highest form of $g$) $e^2-4b=0$, while $q-ep/2\neq 0$.

\end{example}

Consider a domain $A=\kk[x,y]/(g)$ where $g\in \kk[x,y]$. We study necessary conditions when $A\setminus \{0\}$ admits an injective well-ordered valuation $\nu$ (cf. the sufficient conditions from Theorem~\ref{quotient}). There exists an edge $e$ of the roof of Newton polygon ${\mathcal N}(g)$ such that for any points $(i,j), (k,l)$ from the edge $e$ it holds $\nu(x^iy^j)=\nu(x^ky^l)$. In this case we say that $\nu$ {\it goes along the edge $e$}.

\begin{proposition}
Let $\nu$ be an injective well-ordered valuation on $\kk[x,y]/(g) \setminus \{0\}$ which goes along an edge of ${\mathcal N}(g)$ being parallel to the line $\{x=-y\}$, and $\deg (g)>1$. Then the discriminant of the leading homogeneous form of $g$ vanishes.    \end{proposition}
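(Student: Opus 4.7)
My approach is to combine the hypothesis on the edge of $\mathcal{N}(g)$ with injectivity of $\nu$, and then exploit two competing estimates on the valuation of the leading form inside $A$. If the edge parallel to $\{x=-y\}$ joins lattice points $(i,j)$ and $(k,l)$, then $i+j=k+l$, so the equality $\nu(x^iy^j)=\nu(x^ky^l)$ forces $\nu(x)=\nu(y)$; call this common value $a$. Because the edge lies on the roof of $\mathcal{N}(g)$, its supporting line is $\{i+j=d\}$ with $d=\deg g$, and the leading homogeneous form $g_d$ is precisely the sum of the monomials of $g$ on this edge.

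Next I invoke injectivity. Theorem~\ref{filtration}~ii) says each graded piece $G_c=A_c/A_{<c}$ is one-dimensional; applied to $c=a$ this forces the classes of $x$ and $y$ in $G_a$ to be proportional, so there exist $\gamma\in\kk^*$ and $u\in A$ with $y=\gamma x+u$ and $\nu(u)<a$. Moreover $u$ is not a scalar in $A$: otherwise $y-\gamma x$ would be congruent to a constant modulo $(g)$, forcing $g$ to divide a linear polynomial, which is impossible since $\deg g>1$. Since a well-ordered valuation is automatically non-negative (a negative value would yield an infinite descending chain through powers) and $\nu^{-1}(0)=\kk^*$ by injectivity, one concludes $\nu(u)>0$.

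Then I set up two estimates of $\nu(g_d)$ inside $A$. Since $g=0$ in $A$, we have $g_d=-g_{<d}$, and every monomial of $g_{<d}$ has total degree at most $d-1$ and hence $\nu$-value at most $(d-1)a$; thus $\nu(g_d)\le(d-1)a$. On the other hand, expanding by the binomial formula using $y=\gamma x+u$,
\[
g_d(x,y)=g_d(1,\gamma)\,x^d+\partial_2 g_d(1,\gamma)\,x^{d-1}u+\sum_{k\ge 2} C_k(\gamma)\,x^{d-k}u^k \quad\text{in }A,
\]
where $C_k(\gamma):=\sum_{i+j=d,\ j\ge k} c_{ij}\binom{j}{k}\gamma^{j-k}$; the $\nu$-values of the summands are $da$, $(d-1)a+\nu(u)$, and $(d-k)a+k\nu(u)$ for $k\ge 2$, which strictly decrease in $k$ because $0<\nu(u)<a$.

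Combining the two bounds rules out $\nu(g_d)=da$ and rules out $\nu(g_d)=(d-1)a+\nu(u)$ (both strictly exceed $(d-1)a$ once $\nu(u)>0$). Hence the first two coefficients must vanish: $g_d(1,\gamma)=\partial_2 g_d(1,\gamma)=0$, so $\gamma$ is at least a double root of $g_d(1,t)$, and $(y-\gamma x)^2\mid g_d(x,y)$. Consequently the discriminant of the binary form $g_d$ vanishes. The step I expect to require the most care is the strict positivity $\nu(u)>0$, since this is exactly what promotes the single vanishing $g_d(1,\gamma)=0$ (forced by the crude bound $\nu(g_d)\le(d-1)a$) to the double vanishing required for discriminant zero, and it is the place where the hypothesis $\deg g>1$ enters essentially.
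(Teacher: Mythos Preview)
Your argument is correct and essentially identical to the paper's: both deduce $\nu(x)=\nu(y)$ from the edge hypothesis, use injectivity to produce a linear shift $u=y-\gamma x$ (the paper uses $z=x-\alpha y$) with strictly smaller but positive valuation, and then force two successive coefficients of the leading form under that shift to vanish by comparing valuations. The only cosmetic difference is that the paper substitutes into the full $g$ and uses $\tilde g=0$ in $A$, whereas you bound $\nu(g_d)$ via $g_d=-g_{<d}$; the paper's phrasing happens to cover the borderline case $g=g_d$ homogeneous (where $g_{<d}=0$ and your inequality $\nu(g_d)\le(d-1)a$ is vacuous) without a separate remark, but your expansion $0=\sum_k C_k(\gamma)x^{d-k}u^k$ still forces $C_0=C_1=0$ there by exactly the same valuation comparison.
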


\begin{proof}
We have $\nu(x)=\nu(y)$ because $\nu$ goes along the edge parallel to the line $\{x=-y\}$. The injectivity implies the existence of $0\neq \alpha \in \kk$ such that for $z:= x-\alpha y \in A$ it holds $\nu (z)<\nu(x)$. Since $d:=\deg (g) >1$ the element $z\notin \kk$, hence $\nu (z)>0$. 

Denote by $h(x,y):= b_0 x^d + b_1x^{d-1}y +\cdots + b_d y^d$ the leading homogeneous form of $g$, where $b_0,\dots,b_d \in \kk$. Replace $x$ in $g$ by $z+\alpha y$ and the resulting polynomial denote by ${\tilde g} \in \kk[z,y]$. In ${\tilde g}$ the monomial $y^d$ has the higher valuation than the other monomials. Therefore, the coefficient in ${\tilde g}$ at this monomial, which equals $h(\alpha,1)$, vanishes. The monomial $zy^{d-1}$ has the higher valuation than the other monomials in ${\tilde g}$ (except of the monomial $y^d$). Therefore, the coefficient in ${\tilde g}$ at the monomial  $zy^{d-1}$ which equals  the derivative $h_x (\alpha, 1)$, vanishes as well. Since $h$ and its derivative have a common root, its discriminant vanishes.
\end{proof}

\subsection{Adapted bases in domains with injective well-ordered valuations}

\begin{remark}
In case when $A=\kk[X_1,\dots,X_n]/(g)$ is a ring of regular functions on an irreducible hypersurface, we consider an edge of Newton polytope $N(g)$ with the endpoints $u=(u_1,\dots,u_n),\, v=(v_1,\dots, v_n)\in \ZZ_{\ge 0}^n$. Denote by $H$ the line passing through $u,v$. The principal ideal $(g)$ is saturated with respect to $H$ iff $\min\{u_i,v_i\}=0,\, 1\le i\le n$ and in addition, $u_1,\dots,u_n,v_1,\dots, v_n$ have no nontrivial common divisor, cf. Remark~\ref{finite-condition} and \eqref{3}. Moreover, $H$ is prop iff either $0\neq u,v$ or one of vectors $u,v$ equals $0$ and the other one has a single non-zero coordinate equal $1$. When $H$ is prop and $I$ is saturated with respect to $H$, there exists a well-ordered injective valuation $\nu$ on $A\setminus \{0\}$ with a valuation cone $\varphi(\ZZ_{\ge 0}^n)\subset \ZZ^n/H_{\ZZ}$ according to Theorem~\ref{quotient}. 

Observe that in this way one can obtain a well-ordered valuation $\nu$ on $\kk[x,y]\setminus \{0\}\simeq (\kk[x,y,z]/(z-y^3+x^2))\setminus \{0\}$ produced in Example~\ref{skew} (see also Proposition~\ref{partition}). Indeed, Newton polytope of the polynomial $f:=z-y^3+x^2$ is a triangle. As  $H$ we take the line passing through the edge $(2,0,0), (0,3,0)$. The principal ideal $(f)$ is saturated with respect to $H$ (cf. Proposition~\ref{curve_valuation} and Example~\ref{cusp}). Therefore, Theorem~\ref{quotient} provides just the valuation $\nu$ as in Example~\ref{skew}. 
\end{remark}

\begin{example}\label{cusp}
Let $g\in X^3+Y^2+{\mathcal L}\{1,\, Y,\, X,\, XY,\, X^2\}$ where $\mathcal L$ denote the linear span. The domain $A:=\kk[X,Y]/(g)$ defines a curve. Then the line $H=\{2X+3Y=0\}$ and $\varphi :\ZZ^2 \to \ZZ$ is given by $\varphi(i,j)=2i+3j$, the valuation cone $\varphi(\ZZ_{\ge 0}^2)=\ZZ_{\ge 0}\setminus \{1\}$. The valuation $\nu(X^{i_0}Y^{j_0}+{\mathcal L}\{X^iY^j\, :\, 2i+3j<2i_0+3j_0\})=2i_0+3j_0$ on $A\setminus \{0\}$ is well-ordered and injective.
\end{example}

\begin{theorem} 
\label{th:basis}
Let $A$ be a $\kk$-algebra. 

i) Then for any finite set of its generators $x_1,\ldots,x_m$ there is a finite set of vectors $S\subset \ZZ_{\ge 0}^m$ such that all monomials $x^w$, $w\in \ZZ_{\ge 0}^m$ for which holds  $(w-S)\cap \ZZ_{\ge 0}^m=\emptyset$ form a basis $\bf B$ of $A$;

ii) let $\nu :A\setminus \{0\} \twoheadrightarrow C$ be an injective valuation onto a monoid $C$ generated by $c_1,\dots,c_m$ endowed with a linear well ordering $\prec$. Let $a_1,\dots,a_m \in A$ be such that $\nu(a_i)=c_i,\, 1\le i\le m$. Similarly to i) there exists a finite set $S$ of monomials in $a_1,\dots,a_m$ such that $\bf B$ consisting of monomials off the monomial ideal generated by $S$, form an adapted basis of $A$ with respect to $\nu$. 
\end{theorem}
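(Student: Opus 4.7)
The plan is to treat (i) as the bare Gr\"obner-basis fact and (ii) as a refinement in which the term order on $\ZZ_{\ge 0}^m$ is built from $\nu$ itself; in both parts finiteness of $S$ will come from Dickson's lemma applied to a suitably defined upper set.

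For (i), present $A=\kk[x_1,\dots,x_m]/I$ and fix any term order on $\kk[x_1,\dots,x_m]$, for instance deglex. The initial ideal $\mathrm{in}(I)$ is monomial, and by Dickson's lemma its set of exponent vectors has finitely many minimal elements; take $S$ to be this finite set. Standard Gr\"obner theory identifies the images of $\{x^w\, :\, (w-S)\cap \ZZ_{\ge 0}^m=\emptyset\}$ with a $\kk$-basis of $A$.

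For (ii), observe first that the hypotheses force $a_1,\dots,a_m$ to generate $A$: each $c\in C$ is the $\nu$-value of some monomial $a^w$, so the image of the subalgebra $B:=\kk[a_1,\dots,a_m]$ in $G_c:=A_{\preceq c}/A_{\prec c}$ is all of $G_c$, which is one-dimensional by Theorem~\ref{filtration}(ii); a well-founded induction on the well-ordered $C$ then yields $A_{\preceq c}\subseteq B$ for every $c$, i.e.\ $B=A$. Now refine deglex by $\nu$: put $u\triangleleft v$ iff either $\nu(a^u)\prec \nu(a^v)$, or $\nu(a^u)=\nu(a^v)$ and $u$ precedes $v$ in deglex (for a fixed ordering of the generators). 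Well-foundedness of $\prec$ on $C$ and of deglex on $\ZZ_{\ge 0}^m$ together make $\triangleleft$ a well-ordering. For each $c\in C$ set $u(c):=\min_\triangleleft\{u:\nu(a^u)=c\}$ and $\mathbf{B}:=\{a^{u(c)}:c\in C\}$; I will identify $\mathbf{B}$ with the claimed adapted basis.

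Put $S':=\ZZ_{\ge 0}^m\setminus\{u(c):c\in C\}$. I claim $S'$ is upward closed: for $u\in S'$ the witness $v:=u(\nu(a^u))\triangleleft u$ has $\nu(a^v)=\nu(a^u)$, so the comparison $v\triangleleft u$ is forced through the deglex tiebreaker, giving $v<_{\mathrm{deglex}} u$; for any $w$, multiplicativity of $\nu$ yields $\nu(a^{v+w})=\nu(a^{u+w})$ and compatibility of deglex with addition yields $v+w<_{\mathrm{deglex}} u+w$, whence $v+w\triangleleft u+w$ and $u+w\in S'$. Dickson's lemma then gives the desired finite $S$. To finish, one runs a strong induction on $c\in C$ proving simultaneously (a) $A_{\preceq c}\subseteq \mathrm{span}_\kk\{a^{u(c')}:c'\preceq c\}$ and (b) for every $u\in S'$ with $\nu(a^u)\preceq c$, $x^u\in \mathrm{in}_\triangleleft(I)$, where $I=\ker(\kk[x_1,\dots,x_m]\twoheadrightarrow A)$; the inductive step uses $\dim G_c=1$ to write $a^u\equiv \alpha a^{u(c)}\pmod{A_{\prec c}}$, hypothesis (a) at lower $c'$ to rewrite the residue in $\mathbf{B}$, and the inequalities $u(c)\triangleleft u$, $u(c_i)\triangleleft u$ to identify $x^u$ as the leading $\triangleleft$-term of the resulting element of $I$. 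Then $\mathrm{in}_\triangleleft(I)$ is exactly the monomial ideal generated by $S$, so $\mathbf{B}$ is a $\kk$-basis of $A$ by standard Gr\"obner theory, with adaptedness immediate from $\nu(a^{u(c)})=c$. The main obstacle is making this tiebreaker honest, i.e.\ verifying upward closure of $S'$; the key observation is that non-minimality in a $\nu$-fiber is always witnessed by the deglex tiebreaker, so that deglex's own $\ZZ_{\ge 0}^m$-compatibility takes over and no strict property of $\prec$ on $C$ is needed.
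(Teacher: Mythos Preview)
Your proposal is correct and follows essentially the same approach as the paper: for (i) both invoke standard Gr\"obner theory, and for (ii) both build a refined term order on $\ZZ_{\ge 0}^m$ that compares $\nu$-values first and breaks ties by a secondary monomial order, then identify the standard monomials off the initial ideal as the adapted basis. The only visible differences are cosmetic: the paper uses a generic $\QQ$-linearly-independent weight $\alpha_1 j_1+\cdots+\alpha_m j_m$ as tiebreaker where you use deglex, and the paper argues adaptedness by contradiction (two basis monomials with the same $\nu$-value would produce an element of $J$ whose leading monomial lies in $\mathbf{B}$) while you argue directly via the upward-closure of $S'$. Your final remark---that non-minimality in a $\nu$-fiber is always witnessed by the deglex tiebreaker, so only deglex's additive compatibility is needed---is a nice touch: it makes explicit why the argument does not rely on strict compatibility of $\prec$ on $C$, a point the paper passes over when it simply says ``consider a Gr\"obner basis of $J$ with respect to $q$''.
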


\begin{proof} i) Choose a finite  presentation $A=\kk[x_1,\ldots,x_m]/J$ and fix an injective linear weight function $q:\ZZ_{\ge 0}^m\to \RR$
inducing a well-ordering on $\ZZ_{\ge 0}^m$ and being compatible with the addition: if $q(v_1)<q(v_2)$ then $q(v_1+v)<q(v_2+v)$ for any $v,v_1,v_2\in \ZZ_{\ge 0}^m$. In particular, one can take $q(u_1,\dots,u_m)=\alpha_1u_1+\cdots+ \alpha_mu_m$ for $0<\alpha_1,\dots,\alpha_m \in \RR$ being $\QQ$-linearly independent.

Take a Gr\"obner basis of $J$ (with respect to the ordering $q$). In each element $a=\sum_i \beta_i x^{v_i},\, \beta_i\in \kk$ of the basis choose $v_{i_0}$ with the biggest value of $q(v_{i_0})$ among $q(v_i)$. We call $v_{i_0}:=lev(a)$ the leading exponent vector of $a$. Put $S$ to consist of the leading exponent vectors of all the elements of the basis.

First, we verify that the elements of ${\bf B}:=\{x^w\, :\, w\notin S+\ZZ_{\ge 0}^m\}$ are $\kk$-linearly independent in $A$. Indeed, otherwise let $$\sum_j \gamma_j x^{w_j} \in J,\, \gamma_j\in \kk, w_j\notin S+\ZZ_{\ge 0}^m.$$
\noindent This contradicts to the property of Gr\"obner bases that the monomial ideal $S+\ZZ_{\ge 0}^m$ coincides with the ideal of the leading monomials of all the elements of $J$.

Now we show that any element of the form $x^v,\, v\in \ZZ_{\ge 0}^m$ is a $\kk$-linear combination of the elements of $\bf B$. If $x^v \notin {\bf B}$ then, again due to the property of Gr\"obner bases, there exists an element $a_0\in J$ such that its leading monomial coincides with $x^v$. Consider a linear combination $x^v+\alpha a_0$ for an appropriate (unique) $\alpha \in \kk$ for which $q(lev(x^v+\alpha a_0))<q(v)$. Then we continue in a similar way, taking the biggest monomial in $x^v+\alpha a_0$ which does not belong to $\bf B$, provided that it does exist. This process terminates due to the well-ordering with respect to $q$. i) is proved. \vspace{2mm}

ii) Again pick positive reals $\alpha_1,\dots,\alpha_m$ being $\QQ$-linearly independent. Introduce a well-ordering $q$ on the monomials in $a_1,\dots,a_m$ as follows. We say that $q(a_1^{j_1}\cdots a_m^{j_m})< q(a_1^{i_1}\cdots a_m^{i_m})$ iff either $\nu(a_1^{j_1}\cdots a_m^{j_m}) \prec \nu(a_1^{i_1}\cdots a_m^{i_m})$ or $\nu(a_1^{j_1}\cdots a_m^{j_m}) = \nu(a_1^{i_1}\cdots a_m^{i_m})$ and $\alpha_1 j_1+\cdots +\alpha_m j_m < \alpha_1 i_1+\cdots +\alpha_m i_m$. 

The elements $a_1,\dots,a_m$ are generators of $A$ since $\nu$ is injective and $\prec$ is well-ordered. Therefore, $A=\kk[a_1,\dots,a_m]/J$ for certain ideal $J$.
Consider a Gr\"obner basis of $J$ with respect to $q$. 

We claim that the basis $\bf B$ of $A$ (consisting of some monomials in $a_1,\dots,a_m$) produced in i), is adapted with respect to $\nu$. Suppose the contrary. Let $\nu(a_1^{i_1}\cdots a_m^{i_m}) = \nu(a_1^{j_1}\cdots a_m^{j_m})$ for two different monomials from the basis $\bf B$. Let $\alpha_1 i_1+\cdots+\alpha_m i_m > \alpha_1 j_1+\cdots+\alpha_m j_m$ for definiteness. There exists (and unique) $\beta \in \kk$ for which 
$\nu(a_1^{i_1}\cdots a_m^{i_m} +\beta a_1^{j_1}\cdots a_m^{j_m}) \prec \nu (a_1^{i_1}\cdots a_m^{i_m})$ holds, because $\nu$ is injective. There exists an element $a_1^{l_1}\cdots a_m^{l_m} \in {\bf B}$ such  that $\nu(a_1^{i_1}\cdots a_m^{i_m} +\beta a_1^{j_1}\cdots a_m^{j_m}) = \nu (a_1^{l_1}\cdots a_m^{l_m})$. We continue the process this way. Due to well-ordering of $\nu$ the process terminates, and we arrive at an element of the form
\begin{eqnarray}\label{9}
a_1^{i_1}\cdots a_m^{i_m} +\beta a_1^{j_1}\cdots a_m^{j_m} + \sum_K \beta_K a^K \in J 
\end{eqnarray}
for appropriate $\beta_K \in \kk$, where for all the monomials from the latter sum in \eqref{9} it holds $\nu(a^K)\prec \nu(a_1^{i_1}\cdots a_m^{i_m})$. Thus, $a_1^{i_1}\cdots a_m^{i_m}$ is the highest (with respect to $q$) monomial in \eqref{9}, hence $a_1^{i_1}\cdots a_m^{i_m} \notin {\bf B}$ due to the construction of $\bf B$ in i). The obtained contradiction proves the claim and ii).     
\end{proof}

\begin{remark}
The elements $a_1,\dots,a_m$ produced in the proof of Theorem~\ref{th:basis}~ii) constitute a Khovanskii basis of $A$ \cite{Kaveh-Manon}.
\end{remark}

\begin{remark}
i) The proof of Theorem~\ref{th:basis} provides an inverse to the construction from Theorem~\ref{graded}. Namely, let $\nu$ be an injective well-ordered valuation on $A\setminus \{0\}$ with a valuation semigroup $C$, and a set of generators $a_1,\dots,a_m$ of $A$ be produced as in the proof of Theorem~\ref{th:basis}. One can represent the polynomial algebra $\kk[a_1,\dots,a_m]=\bigoplus _{c\in C} D_c$ as a graded domain where a $\kk$-basis of $D_c$ consists of all the monomials $p=a_1^{i_1}\cdots a_m^{i_m}$ such that $\nu(p)=c$.  Then we fall in the conditions of Theorem~\ref{graded}. \vspace{2mm}

ii) Theorem~\ref{th:basis} implies that one can view $A$ as a deformation of $\kk C$.

\end{remark}

\begin{definition} 
\label{def:prevaluation}
Given a basis ${\bf B}$ of an algebra $A$ we say that a map $\nu:{\bf B}\to \ZZ^m$ is a ${\bf B}$-prevaluation of $A$ if 

$\bullet$ $\nu(bb')=\nu(b)+\nu(b')$ for any $b,b'\in {\bf B}$ such that $bb'\in {\bf B}$.

$\bullet$ $\nu({\bf B})$ is a submonoid in $\ZZ^m$.

\end{definition}

If $\nu$ is injective, then, clearly, the basis ${\bf B}$ is naturally labeled by the monoid $\nu({\bf B})$. It is also clear that if $\nu:A\setminus \{0\}\to \ZZ^m$ is a valuation, then $\nu|_{\bf B}$ is a ${\bf B}$-prevaluation of $A$.

The following are immediate

\begin{lemma}
\label{le:JH prevaluation}
If $\nu,\nu'$ are injective ${\bf B}$-prevaluations, then the assignments $a\to \nu(\nu^{-1}(b))$ define a bijection ${\bf K}_{\nu,\nu'}:\nu({\bf B})\widetilde \to \nu'({\bf B})$ (we refer to it as a generalized JB bijection).
    
\end{lemma}

\begin{problem} Suppose that $\kk$ is a ring and $A$ is a finitely generated and finitely presented commutative algebra over $\kk$. If $A$ is a free $\kk$-module, does it admit a standard basis ${\bf B}$ (i.e., as in Theorem~\ref{th:basis}~i)? 
    
\end{problem}

\begin{problem} Using an adapted basis ${\bf B}$ of Theorem \ref{th:basis}~i), we can define a multivariate Hilbert series of $A$ by
$$Hilb(A)=\sum_{b\in {\bf B}} b$$
By definition, this is a rational function with denominator being the product of $(1-x_i)$.

Therefore, we can define a multivariate Hilbert polynomial of $A$ as the ``numerator" of $Hilb(A)$. 
The question is whether this definition 
gives more information about $Gr~A$ and $A$ than the ordinary Hilbert series $Hilb(A,t)$.

\end{problem}

\begin{remark}\label{minimal}
The adapted basis $\bf B$ produced in Theorem~\ref{th:basis}~ii) consists of the following elements: for each $c\in C$ take the monomial $M$ in $a_1,\dots,a_m$ being minimal (with respect to $f$) among the monomials for which $\nu(M)=c$ holds. 

Another description is that $\bf B$ consists of all the monomials being $\kk$-linearly independent (in $A$) from less (with respect to $f$) monomials. For any monomial $M_0 \in \bf B$ consider the next (with respect to $f$) monomial $M_1 \in \bf B$. Then for any monomial $M$ such that $f(M_0)\le f(M)<f(M_1)$ it holds $\nu (M)=\nu(M_0)$.
\end{remark}

\begin{corollary}\label{valuation_rank}
Let $A$ be a commutative $\kk$-algebra, $\nu:A\setminus \{0\} 
\twoheadrightarrow C$ be an injective valuation onto a finitely-generated monoid $C$ of rank $r$ (see Definition~\ref{def_rank}) endowed with a linear well ordering. Then $r= d:=\dim(A)$.  
\end{corollary}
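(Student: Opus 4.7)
The plan is to prove the two inequalities $r \le d$ and $d \le r$ separately, invoking the adapted basis from Theorem~\ref{th:basis} in both directions.

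For the lower bound $r \le d$: since $C$ is an entire commutative semigroup, part iii) of Definition~\ref{valuation_partial} forces $A$ to be a domain (given $a,b \in A\setminus \{0\}$ one has $\nu(ab) = \nu(a)+\nu(b) \in C$, so $ab \ne 0$), hence the standard dimension theorem gives $d = \operatorname{trdeg}_\kk A$. I would pick $r$ independent elements $c_1, \ldots, c_r \in C$ (guaranteed by the definition of rank) and lifts $a_k \in \nu^{-1}(c_k)$. For any nonzero polynomial $P(T_1, \ldots, T_r) = \sum_w \alpha_w T^w$, independence makes the valuations $\nu(a^w) = \sum_k w_k c_k$ pairwise distinct, so $\nu(P(a_1, \ldots, a_r))$ is attained at the unique largest monomial, and in particular $P(a_1, \ldots, a_r) \ne 0$. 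Hence $a_1, \ldots, a_r$ are algebraically independent over $\kk$, giving $d \ge r$.

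For the reverse inequality $d \le r$, I would apply Theorem~\ref{th:basis}~ii) to obtain generators $a_1, \ldots, a_m$ of $A$ with $\nu(a_i) = c_i$ generating $C$, together with an adapted basis $\mathbf{B} = \{a^w : w \in \ZZ_{\ge 0}^m \setminus [S]\}$, where $[S] := S + \ZZ_{\ge 0}^m$ is the monomial ideal formed from the leading-exponent vectors of a Gr\"obner basis of the defining ideal of $A$ in $\kk[T_1, \ldots, T_m]$ relative to the monomial order $q$ built in that proof. Since $q$ is a genuine multiplicative well-ordering on monomials, standard Gr\"obner theory gives $\dim A = \dim \kk[T]/[S]$. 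For a monomial ideal this Krull dimension is classically the maximal $|\sigma|$ for $\sigma \subseteq \{1, \ldots, m\}$ such that no element of $S$ has support entirely inside $\sigma$ (equivalently, the largest coordinate subspace contained in the zero set of $[S]$). I would choose such a $\sigma$ with $|\sigma| = d$ and observe that every monomial $\prod_{i \in \sigma} a_i^{w_i}$ has exponent vector supported in $\sigma$, hence not bounded below by any $s \in S$, hence belongs to $\mathbf{B}$. Injectivity of $\nu$ on $\mathbf{B}$ then forces the values $\sum_{i \in \sigma} w_i c_i$ to be pairwise distinct as $w$ ranges over $\ZZ_{\ge 0}^\sigma$, which is precisely the statement that $\{c_i : i \in \sigma\}$ is an independent subset of $C$ of size $d$. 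Therefore $d \le \operatorname{rk}(C) = r$.

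The main obstacle will be the passage from the adapted basis of Theorem~\ref{th:basis}~ii) to the existence of the independent subset $\sigma$: it rests on the Gr\"obner-theoretic fact that initial monomial ideals preserve Krull dimension (which applies because the order $q$ constructed in that proof is a bona fide monomial order), combined with the classical combinatorial description of the Krull dimension of a monomial quotient as the maximal coordinate subspace in the associated variety. Once $\sigma$ is in hand, the rest of the upper bound follows mechanically from injectivity of $\nu$ on $\mathbf{B}$.
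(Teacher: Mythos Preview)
Your proof is correct and follows essentially the same route as the paper: both directions match the paper's argument, with the upper bound $d\le r$ hinging on the same Gr\"obner-theoretic fact (which the paper cites as Propositions~3 and~4 in Chapter~9 of Cox--Little--O'Shea) that the complement of the initial monomial ideal contains a full $d$-dimensional coordinate cone, whence the corresponding $\nu(a_i)$ are independent by injectivity of $\nu$ on ${\bf B}$. Your explicit identification of the key step---that the order $q$ from Theorem~\ref{th:basis}~ii) is a genuine multiplicative monomial order, so that initial ideals preserve Krull dimension---is a useful clarification the paper leaves implicit.
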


\begin{proof} First we show that $r\le d$. Pick independent elements $c_1,\dots, c_r\in C$ and $a_1,\dots,a_r \in A$ such that $\nu(a_i)=c_i, 1\le i\le r$. Then all monomials in $a_1,\dots,a_r$ have pairwise distinct valuations $\nu$, therefore $a_1,\dots,a_r$ are algebraically independent, thus $r\le d$. Now we prove the opposite inequality.

Due to Theorem~\ref{th:basis}  $\bf B$ is the complement of a monomial ideal generated by the leading monomials of Gr\"obner basis of the ideal $J$ in the representation $A=\kk[a_1,\dots,a_m]/J$. Therefore, there exist $1\le l_1<\cdots < l_d\le m$ such that all the monomials in $a_{l_1},\dots,a_{l_d}$ belong to $\bf B$, see Proposition 3 in Chapter 9.1 and Proposition 4 in Chapter 9.3 \cite{C}. Hence the elements $\nu(a_{l_1}),\dots,\nu(a_{l_d})$ are independent in $C$, taking into account that the basis $\bf B$ is adapted to $\nu$ due to Theorem~\ref{th:basis} ii). Thus, $r\ge d$.      
\end{proof}

\begin{remark}
Assume that $C$ is a (not necessary commutative) monoid generated by $c_1,\dots, c_r$. We call the length $|c|$ of $c\in C$ the minimal length of words in $c_1,\dots, c_r$ equal $c$. Let $C$ be endowed with a linear well-ordering $\prec$ compatible with the length, i.e. $|c_0|<|c|, c_0,c \in C$ implies $c_0\prec c$. For example, the ordering described prior to Theorem~\ref{filtration} of the free monoid is compatible with the length.

Consider an algebra $A$ having an injective valuation $\nu :A \twoheadrightarrow C$, and pick elements $a_1,\dots, a_r \in A$ such that $\nu(a_i)=c_i, 1\le i\le r$. For each $c\in C$ choose a monomial $a_c$ in $a_1,\dots, a_r$ for which $\nu(a_c)=c$. Then $\{a_c\, :\, c\in C\}$ form an adapted basis of $A$ (and $a_1,\dots, a_r$ form a Khovanskii basis of $A$). Then the linear subspaces $A_k:=\{a\in A\, :\, |\nu(a)|\le k\}, k\ge 0$ constitute a filtration of $A$, and $\dim A_k$ coincides with the cardinality of the set $C_k:=\{c\in C\, :\, |c|\le k\}$, moreover $\nu(A_k)=C_k$. We recall that in the commutative case the latter cardinality grows polynomially in $k$ (being a Hilbert polynomial, see e.g. \cite{K}). 
\end{remark}

The following remark is inverse to Theorem~\ref{th:basis}~ii) and to Remark~\ref{minimal}. According to Theorem~\ref{th:basis}~ii) and to Remark~\ref{minimal} every injective well-ordered valuation on an algebra can be obtained as described in the remark.

\begin{remark}
Let $a_1,\dots,a_m$ be generators of a commutative $\kk$-algebra $A$ endowed with a linear order $f$ on monomials in $a_1,\dots,a_m$ (compatible with the product). 

Consider the family $\bf B$ of all the monomials being $\kk$-linearly independent in $A$ from the less ones (with respect to $f$). Then $\bf B$ forms a basis of $A$. For each $M_1,M_2 \in \bf B$ denote by $h(M_1,M_2)(=h(M_2,M_1)) \in \bf B$ the leading monomial in the $\kk$-linear expansion of the product $M_1M_2$ in $\bf B$. Assume that for any pair of monomials $M_0,M_1 \in \bf B$ fulfilling $f(M_0)<f(M_1)$ it holds $f(h(M_0,M_2))<f(h(M_1,M_2))$. Then one can introduce a monoid $C$ being in a bijective correspondence with $\bf B$ determined by the monoid operation $h$ and the linear ordering $f$.

This induces also an injective well-ordered valuation $\nu :A\setminus \{0\} \twoheadrightarrow C$ defined by the leading monomial from $\bf B$ in the $\kk$-linear expansion. Then $\bf B$ is an adapted basis of $\nu$.

One can reorder the monomials in $a_1,\dots,a_m$ as follows to make the new ordering $\lhd$ similar to the one in Theorem~\ref{th:basis}~ii) and in Remark~\ref{minimal}. We say that for a pair of monomials it holds $a_1^{i_1}\cdots a_m^{i_m} \lhd a_1^{j_1}\cdots a_m^{j_m}$ if either $f(\nu (a_1^{i_1}\cdots a_m^{i_m})) < f(\nu (a_1^{j_1}\cdots a_m^{j_m}))$ or $f(\nu (a_1^{i_1}\cdots a_m^{i_m})) = f(\nu (a_1^{j_1}\cdots a_m^{j_m}))$ and $f(a_1^{i_1}\cdots a_m^{i_m}) < f(a_1^{j_1}\cdots a_m^{j_m})$. Then the construction from Theorem~\ref{th:basis}~ii) applied to $\lhd$ produces the same basis $\bf B$ which now satisfies the properties from Remark~\ref{minimal}.
\end{remark}

Observe that the valuation produced in Theorem~\ref{quotient} fulfills the conditions of Theorem~\ref{th:basis} ii) and of Remark~\ref{minimal}. In particular, $\nu$ admits an adapted basis of monomials in $X_1,\dots,X_n$. The monomials with equal values of $\nu$ lie in the planes parallel to $H$.

\subsection{Injective valuations,  filtrations and deformations}

Now one can establish an inverse statement to Theorem~\ref{quotient}.

\begin{theorem}\label{inverse}
Let $A$ be a commutative domain of dimension $d$ endowed with an injective well-ordered valuation $\nu$ onto a finitely-generated monoid. Then there exist a Khovanskii basis  $X_1,\dots,X_n$ of $A$ such that $A=\kk[X_1,\dots,X_n]/I$, and $\nu$ is obtained as in Theorem~\ref{quotient}. 

In other words, there is a prop subplane $H\subset \RR^n$ of dimension $n-d$, being a common subplane for the tropical variety $Trop (I)\subset \RR^n$. Moreover, the ideal $I$ is saturated with respect to $H$. There exists a hyperplane $Q\in ETrop(I)\subset (\RR[\varepsilon])^n$ which contains the subplane $H\bigotimes_{\RR} \RR[\varepsilon]$, and $Q$ is determined by a suitable vector $(w_1,\dots,w_n)\in (\RR_{\ge 0}[\varepsilon])^n$.
Then $\nu$ is defined by \eqref{2}, the valuation monoid $\nu(A\setminus \{0\}) = \varphi (\ZZ_{\ge 0}^n)$ where $\varphi : \RR^n \twoheadrightarrow \RR^n /H$, and the linear order on $\varphi(\ZZ_{\ge 0}^n)\ni \varphi(i_1,\dots,i_n)$ is determined by the value of $w_1i_1+\cdots +w_ni_n$.
\end{theorem}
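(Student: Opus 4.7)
The plan is to invert Theorem~\ref{quotient} by producing the presentation $A=\kk[X_1,\dots,X_n]/I$ from a Khovanskii basis and then reading off $H$, $Q$ and $\varphi$ intrinsically from the valuation monoid. First, apply Theorem~\ref{th:basis}(ii) to a finite generating set $c_1,\dots,c_n$ of $C:=\nu(A\setminus\{0\})$, choosing $X_i\in A$ with $\nu(X_i)=c_i$; this yields a surjection $\kk[X_1,\dots,X_n]\twoheadrightarrow A$, hence an ideal $I$, together with an adapted standard monomial basis $\mathbf B$. Let $\pi:\ZZ^n\to G(C)$ be the group homomorphism $e_i\mapsto c_i$, with kernel $H_\ZZ$; set $H:=H_\ZZ\otimes_\ZZ\RR\subset\RR^n$, so $\dim H=n-d$ by Corollary~\ref{valuation_rank}, and $\varphi:\RR^n\twoheadrightarrow\RR^n/H$ is the natural projection. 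Encode the well-ordering on $C$ by a weight vector $w=(w_1,\dots,w_n)\in(\RR_{\ge 0}[\varepsilon])^n$ as in the discussion preceding Theorem~\ref{quotient}; dropping any generators with $c_i=0$, each $w_i$ is strictly positive in $\RR[\varepsilon]$. Let $Q$ be the hyperplane in $(\RR[\varepsilon])^n$ with normal $w$ (so $Q\supset H\otimes\RR[\varepsilon]$).

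The three required properties of $(H,Q)$ are then verified as follows. For the \emph{prop} condition, if $v\in H\cap\RR_{\ge 0}^n$ then $\sum_i w_iv_i=0$ in $\RR_{\ge 0}[\varepsilon]$ forces $v=0$, so $H\cap\RR_{\ge 0}^n=\{0\}$ is the coordinate face $\{X_1=\cdots=X_n=0\}\cap\RR_{\ge 0}^n$. For \emph{commonality in $Trop(I)$}, observe that for any nonzero $f=\sum_K\alpha_KX^K\in I$, at least two exponent vectors must realise $\max_K\pi(K)$ in $C$; otherwise the ultrametric inequality for $\nu$ would yield $\nu(f\bmod I)=\max\pi(K)$, contradicting $f\in I$. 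Hence the hyperplane $\{x:w\cdot x=w\cdot K_{\max}\}$ supports $N(f)$ along a face of dimension $\ge 1$ parallel to $H$, placing an edge of $N(f)$ on the roof and certifying $Q\in ETrop(I)$ as well as $H$ as a common subplane. For \emph{saturation}, fix $u,v\in\ZZ_{\ge 0}^n$ with $v-u\in H$, so $\pi(u)=\pi(v)=:c$; injectivity together with Theorem~\ref{filtration}(ii) yields a unique $\alpha\in\kk^{\ast}$ with $\nu(X^u+\alpha X^v)\prec c$, and the adapted basis expansion gives $X^u+\alpha X^v\equiv\sum_j\beta_jX^{w_j}\pmod I$ with all $\pi(w_j)\prec c$. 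Then $f:=X^u+\alpha X^v-\sum_j\beta_jX^{w_j}\in I$ has $(u,v)$ as a genuine edge on the roof of $N(f)$, since $w\cdot w_j<w\cdot u=w\cdot v$ strictly separates the remaining exponents from the edge.

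Finally, for any $a\in A\setminus\{0\}$, the ultrametric inequality gives $\nu(a)\preceq\max_K\pi(K)$ for every preimage $f=\sum_K\alpha_KX^K\in a+I$; expanding $a$ itself in $\mathbf B$ produces a preimage achieving equality, so $\nu(a)=\min_{f\in a+I}\max_{K\in N(f)}\varphi(K)$, which is exactly formula~(\ref{2}). The identities $\nu(A\setminus\{0\})=C=\varphi(\ZZ_{\ge 0}^n)$ and compatibility of the order with $w$ are built in by construction. The main obstacle will be the full \emph{commonality} clause of Definition~\ref{common}: one must check that $H$ is dual to a $d$-dimensional polyhedron of $Trop(I)$, not merely contained in some hyperplanes of $Trop(I)$. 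I would handle this by perturbing $w$ within the $d$-dimensional space transverse to $H$ and observing that the resulting weights define the same maximising face on each $N(f)$, so the entire $d$-dimensional cell of $Trop(I)$ containing $w$ has $H$ as its common subplane.
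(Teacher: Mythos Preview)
Your proposal is correct and follows essentially the same route as the paper's proof: apply Theorem~\ref{th:basis} to obtain Khovanskii generators $X_1,\dots,X_n$ and a presentation $A=\kk[X_1,\dots,X_n]/I$, realise the order on $C$ via a Robbiano weight $w\in(\RR_{\ge 0}[\varepsilon])^n$, define $H$ as the kernel of the induced map $\ZZ^n\to G(C)$ (equivalently, the linear hull of exponent-differences with equal valuation, which is how the paper phrases it), and then check prop, saturation, and formula~(\ref{2}) in turn. Your saturation and formula~(\ref{2}) arguments match the paper's almost verbatim; your prop argument is slightly more explicit than the paper's one-line ``$H$ is prop since $w_i\in\RR_{\ge 0}[\varepsilon]$'', but amounts to the same thing. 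The appeal to Corollary~\ref{valuation_rank} for $\dim H=n-d$ is equivalent to the paper's appeal to Theorem~\ref{th:basis}, since the corollary is derived from it.

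The one point where you go beyond the paper is your final paragraph about the ``full commonality clause'' (that $H$ be dual to a top-dimensional cell of $Trop(I)$, not merely contained in some supporting hyperplane). The paper's proof is equally terse here: it asserts ``Hence $H$ is a common subplane of $Trop(I)$'' after showing $Q$ supports every $N(g)$ and $\dim H=n-d$, without separately verifying the polyhedron is $d$-dimensional. Your perturbation argument is a reasonable way to fill this in, and is more than the paper itself supplies.
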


\begin{proof}
Applying Theorem~\ref{th:basis} one can find generators $X_1,\dots,X_n$ of $A$ such that the valuation monoid $C:=\nu(A\setminus \{0\})$ equals the set of values $\nu(M)$ over all the monomials $M=X_1^{i_1}\cdots X_n^{i_n}$ in $X_1,\dots,X_n$. Then $A=\kk[X_1,\dots,X_n]/I$ for an appropriate ideal $I$. 

Due to \cite{R} there exist elements $w_1,\dots,w_n \in \RR_{\ge 0}[\varepsilon]$ such that the linear order in $C$ of  $\nu(X_1^{i_1}\cdots X_n^{i_n})$ coincides with the order of the values of  $w_1i_1+\cdots+w_ni_n$ in the semi-ring $\RR_{\ge 0}[\varepsilon]$.

Denote by $H\subset \RR^n$ a plane being the linear span of all the vectors of the form $(l_1,\dots,l_n)-(j_1,\dots,j_n)\in \ZZ^n$ where $w_1l_1+\cdots + w_nl_n = w_1j_1+\cdots + w_nj_n$, the latter is equivalent to $\nu(X_1^{l_1}\cdots X_n^{l_n})=
\nu(X_1^{j_1}\cdots X_n^{j_n})$. Due to Theorem~\ref{th:basis} the hyperplane $Q$ contains $H\bigotimes_{\RR} \RR(\varepsilon)$ and supports the Newton polytope $N(g)\bigotimes_{\RR} \RR[\varepsilon]$ for any $g\in I$. Theorem~\ref{th:basis} also implies that $\dim(H)=n-d$. Hence $H$ is a common subplane of $Trop(I)$. In addition, $H$ is prop since $w_1,\dots,w_n \in \RR_{\ge 0}[\varepsilon]$.

Take two arbitrary points $(l_1,\dots,l_n),(j_1,\dots,j_n) \in \ZZ_{\ge 0}^n$ such that $(l_1,\dots,l_n)-(j_1,\dots,j_n)\in H$. Then $w_1l_1+\cdots +w_nl_n= w_1j_1+\cdots +w_nj_n$. Therefore $\nu (X_1^{l_1}\cdots X_n^{l_n})=\nu (X_1^{j_1}\cdots X_n^{j_n})$, and due to injectivity and well-ordering of $\nu$ there exist $\beta \in \kk^\times$ and $g_1=\sum_S \gamma_S X^S \in \kk[X_1,\dots,X_n]$ such that $\nu(X^S)<\nu(X_1^{j_1}\cdots X_n^{j_n})$ for every $X^S$ occurring in 
$g_1$, and it holds $X_1^{l_1}\cdots X_n^{l_n} - \beta X_1^{j_1}\cdots X_n^{j_n} -g_1 \in I$. Hence $I$ is saturated with respect to $H$ (cf. Theorem~\ref{quotient}).

Finally, for any $a\in A\setminus\{0\}$ one can uniquely express $a=\sum_{b\in B_0} \alpha_b b$ in a basis ${\bf B}\supset B_0\ni b$ produced in Theorem~\ref{th:basis}, $\alpha_b \in \kk^\times$. Then $\nu(a)=\max_{b\in B_0} \{\nu(b)\}$, thus $\nu$ is defined by \eqref{2}.

\end{proof}

We say that the linear order $\prec$ defined by $w_1,\dots,w_n\in \RR_{\ge 0}[\varepsilon]$  is {\it archimedian} if among $w_1,\dots, w_n$ there are no infinitesimals. This is equivalent to that for any pair of monomials $m_1, m_2\neq 1$ there exists an integer $N$ such that $m_1\prec m_2^N$. The linear order on $\ZZ_{\ge 0}^n$ is archimedian iff this order is isomorphic to $\ZZ_{\ge 0}$. For instance, deglex is archimedian, while lex is not.

More generally, we say that a linear order $\prec$ on a commutative monoid $C$ is archimedian if for any elements $1\neq c_1, c_2\in C$ there exists an integer $N$ such that $c_2 \prec Nc_1$. Note that if for any $c\in C$ there is at most a finite number of elements $c_0\in C$ such that $c_0\prec c$ then $C$ is archimedian and well-ordered.
Conversely, if a commutative monoid  $C$ is finitely-generated and $\prec$ is an archimedian linear order on $C$ then for any $c\in C$ there is at most a finite number of elements of $C$ less than $c$. In particular, in this case $C$ is well-ordered. For a not necessary commutative monoid $C$ we also say that a linear order $\prec$ on it is archimedian if for any $c\in C$ there is at most a finite number of elements of $C$ less than $c$. 

Let $c_1,\dots, c_k\in C$ be a set of generators of a commutative monoid $C$. Due to \cite{R} there exist elements $w_1,\dots, w_k\in \RR_{\ge 0}[\varepsilon]$ not being infinitesimals such that 
$$\nu(j_1c_1+\cdots +j_kc_k)\preceq \nu(l_1c_1+\cdots +l_kc_k) \Leftrightarrow (w_1j_1+\cdots +w_kj_k \le w_1l_1+\cdots +w_kl_k)$$
\noindent for any $j_1,\dots, j_k, l_1,\dots, l_k \in \ZZ_{\ge 0}$. Define a function $W:C\to \RR_{\ge 0}[\varepsilon]$ as follows:
$$W(j_1c_1+\cdots +j_kc_k):=w_1j_1+\cdots +w_kj_k.$$

\begin{remark}\label{monoid_archimedian}
Let $A$ be a commutative domain endowed with a  valuation (not necessary injective) onto a finitely-generated monoid $C$ with an archimedian linear order $\prec$. 
For each $s\in \ZZ_{\ge 0}$ consider the set
$$A_s:=\{a\in A\setminus\{0\}\, :\, W(\nu(a))\le s\} \cup \{0\}.$$
\noindent The sequence $A_0\subset A_1\subset \cdots$ constitutes a filtration of $A$. Observe that $\dim(A_s)$ is finite since $\prec$ is archimedian. 
\end{remark}

\begin{remark}\label{length}
Now let $A$ be a (not necessary commutative) $\kk$-algebra endowed with an injective valuation $\nu$ to a (not necessary commutative) monoid $C$ with a linear order $\prec$. Assume also that there is a function $f:C \to \ZZ_{\ge 0}$ such that $(c_1\preceq c_2) \Rightarrow (f(c_1)\le f(c_2)),\, c_1,c_2\in C$ and $f(c_1+c_2)\le f(c_1)+f(c_2)$ satisfying the property that 
the set $C_n:= \{c\in C\, :\, f(c)\le n\}$ is finite for any $n\in \ZZ_{\ge 0}$. Note that the latter implies that the order $\prec$ is archimedian. Then the subspaces $A_n:=\{a\in A\, :\, f(\nu(a))\le n\}\cup \{0\},\, n\in \ZZ_{\ge 0}$ provide a filtration of $A$ (cf. Remark~\ref{monoid_archimedian}) such that $\dim A_n=|C_n|$.      
\end{remark}

Now we assume that the field $\kk$
is radically closed (i.e.
each root of an arbitrary degree of any element of $\kk$ also belongs to $\kk$). Let $A$ be a $d$-dimensional 
$\kk$-algebra with an injective valuation $\nu:A\setminus \{0\}\twoheadrightarrow C$ where $C$ is a finitely-generated monoid endowed with a linear well-ordering. Applying Theorem~\ref{th:basis} construct a Khovanskii basis $x_1,\dots,x_n \in A$, then $A=\kk[x_1,\dots,x_n]/I$ for a suitable ideal $I\subset \kk[x_1,\dots,x_n]$.

Denote by $S\subset \kk[x_1,\dots,x_n]$ a binomial ideal generated by  elements of the form 
\begin{eqnarray}\label{10}
s:=\alpha x_1^{i_1}\cdots x_n^{i_n}- \beta x_1^{j_1}\cdots x_n^{j_n},\, \alpha,\beta \in \kk^\times \end{eqnarray}
such that $\nu(x_1^{i_1}\cdots x_n^{i_n})=\nu(x_1^{j_1}\cdots x_n^{j_n})$,
and there exists an element $g\in I$  which
is the sum of $s$ and of monomials in $x_1,\dots,x_n$ having valuation $\nu$ less than  $\nu(x_1^{i_1}\cdots x_n^{i_n})$.
Denote a binomial algebra $M:=\kk[x_1,\dots,x_n]/S$.
Observe that for any pair of vectors $(i_1,\dots,i_n), (j_1,\dots,j_n)\in \ZZ_{\ge 0}^n$ there exist $s$ and $g$ as above iff $\nu(x_1^{i_1}\cdots x_n^{i_n})
=\nu(x_1^{j_1}\cdots x_n^{j_n})$ due to Theorem~\ref{th:basis}. In addition, $\nu(x_1^{i_1}\cdots x_n^{i_n})
=\nu(x_1^{j_1}\cdots x_n^{j_n})$ is equivalent to that $(i_1,\dots,i_n)-(j_1,\dots,j_n)\in H$ where an $(n-d)$-dimensional subplane $H$ was constructed in the proof of Theorem~\ref{inverse}.

\begin{proposition}\label{isomorphism}
(cf. \cite{Kaveh}). Let a $\kk$-algebra $A$ have an injective valuation $\nu:A\setminus\{0\}\twoheadrightarrow C$ where a finitely-generated monoid $C$ is endowed with a linear well-ordering. Assume that $\kk$ is radically closed. Then both the associated graded algebra $gr A:=\bigoplus _{c\in C} A_{\le c}/A_{<c}$ and the binomial algebra $M$ are 
isomorphic to the monoidal algebra $\kk C$.    
\end{proposition}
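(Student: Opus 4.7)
The plan is to establish the chain $gr A\cong M\cong \kk C$. First, since $\nu$ is injective and well-ordered, Theorem~\ref{filtration}~ii) yields $\dim_\kk (gr A)_c=1$ for every $c\in C$, and the multiplicativity $\nu(aa')=\nu(a)+\nu(a')$ makes each induced map $(gr A)_c\otimes(gr A)_{c'}\to(gr A)_{c+c'}$ nonzero. Choosing nonzero $\xi_c\in (gr A)_c$ produces structure constants $\gamma(c,c')\in \kk^*$ with $\xi_c\xi_{c'}=\gamma(c,c')\xi_{c+c'}$, exhibiting $gr A$ as a twisted monoid algebra on $C$. Fix the Khovanskii generators $x_1,\dots,x_n$ provided by Theorem~\ref{th:basis} and write $A=\kk[x_1,\dots,x_n]/I$. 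The surjection $\kk[x_1,\dots,x_n]\twoheadrightarrow gr A$ sending $x_i$ to its class in $(gr A)_{c_i}$ annihilates every generator $\alpha x^I-\beta x^J$ of $S$ (by construction such a binomial is the $\nu$-leading part of some $g\in I$, and $g=0$ in $A$ forces the leading part to vanish in $gr A$), so it factors through a surjection $M\twoheadrightarrow gr A$. Both algebras are $C$-graded with one-dimensional pieces, so this surjection is an isomorphism.

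It remains to prove $M\cong \kk C$. Let $H_\ZZ\subset \ZZ^n$ be the kernel of the canonical map onto the Grothendieck group $G(C)$. Define $\chi:H_\ZZ\to \kk^*$ by $\chi(h)=\beta/\alpha$, where $h=I-J$ with $I,J\in \ZZ_{\ge 0}^n$ and $\alpha[x^I]=\beta[x^J]$ holds in $M$. The ratio $\beta/\alpha$ is well-defined by one-dimensionality of the common graded piece $M_c$, and the multiplicativity $[x^{I_1+I_2}]=[x^{I_1}][x^{I_2}]$ in $M$ shows simultaneously that $\chi(h)$ depends only on $h=I-J$ and that $\chi$ is a group homomorphism.

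The crux of the proof is to lift $\chi$ to a character $\Lambda:\ZZ^n\to \kk^*$. Applying $\mathrm{Hom}(-,\kk^*)$ to the short exact sequence $0\to H_\ZZ\to \ZZ^n\to G(C)\to 0$ places the obstruction in $\mathrm{Ext}^1_\ZZ(G(C),\kk^*)$; since $\kk$ is radically closed, $\kk^*$ is divisible and hence injective as a $\ZZ$-module, so this Ext group vanishes and the desired $\Lambda$ exists. Rescaling $y_i:=\Lambda(e_i)^{-1}x_i$ then converts each relation $\alpha x^I-\beta x^J\in S$ into $\alpha\Lambda(I)(y^I-y^J)$, using $\alpha\Lambda(I)=\beta\Lambda(J)$; consequently, in the $y$-coordinates $S$ becomes the standard toric ideal of $C$, and $M\cong \kk[y_1,\dots,y_n]/\langle y^I-y^J:I-J\in H_\ZZ\rangle\cong \kk C$. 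The principal obstacle is precisely this lifting step: the hypothesis that $\kk$ is radically closed is exactly what is needed to kill the Ext obstruction and identify the twisted binomial ideal $S$ with the untwisted toric one.
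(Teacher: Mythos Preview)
Your proof is correct and follows essentially the same strategy as the paper: both arguments reduce to finding scalars $\gamma_i\in\kk^*$ (your $\Lambda(e_i)^{-1}$) that untwist the binomial relations in $S$ into the standard toric ideal of $C$. The paper simply asserts the existence of such $\gamma_i$ ``taking into account that $\kk$ is radically closed,'' whereas you supply the missing justification cleanly via the divisibility of $\kk^*$ and the vanishing of $\mathrm{Ext}^1_\ZZ(G(C),\kk^*)$; this is exactly the content the paper leaves implicit. A minor organizational difference: the paper first builds $\mu:M\to\kk C$ and then a separate map $\sigma:gr\,A\to\kk C$ using leading terms, while you instead establish $M\twoheadrightarrow gr\,A$ directly and use the one-dimensionality of graded pieces on both sides to conclude it is an isomorphism. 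Your route is slightly more transparent, since it makes explicit that $M$ and $gr\,A$ are both $C$-graded with one-dimensional components (the paper's observation just before the proposition, that a binomial in $S$ exists for \emph{every} pair $I,J$ with $\nu(x^I)=\nu(x^J)$, is what guarantees $\dim M_c\le 1$).
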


\begin{proof}
There exist $\gamma_1,\dots,\gamma_n\in \kk \setminus \{0\}$ such that the mapping $$\mu :x_1^{i_1}\cdots x_n^{i_n} \to \gamma_1^{i_1}\cdots \gamma_n^{i_n} \cdot \nu(x_1^{i_1}\cdots x_n^{i_n})$$
\noindent provides an isomorphism of $M$ and $\kk C$ because of Theorem~\ref{th:basis}, taking into account that $\kk$ is radically closed. 
In other words, if $g\in I$ equals the sum of a binomial \eqref{10} and of monomials with the valuation $\nu$ less than $\nu(x_1^{i_1}\cdots x_n^{i_n})=\nu(x_1^{j_1}\cdots x_n^{j_n})$ then $\mu (\alpha x_1^{i_1}\cdots x_n^{i_n})=\mu (\beta x_1^{j_1}\cdots x_n^{j_n})$, i.e. $\alpha\gamma_1^{i_1}\cdots \gamma_n^{i_n}=\beta \gamma_1^{j_1}\cdots \gamma_n^{j_n}$.

Any element $a\in A\setminus \{0\}$ with the valuation $\nu(a)=c$ can be represented uniquely as a $\kk$-linear combination of elements of a basis $\bf B$ constructed in Theorem~\ref{th:basis}, among which $\alpha x_1^{i_1}\cdots x_n^{i_n}, \alpha \in \kk^\times, x_1^{i_1}\cdots x_n^{i_n} \in {\bf B}$ has the maximal valuation $\nu(x_1^{i_1}\cdots x_n^{i_n})=c$. 
We define a mapping $\sigma (a):=\mu (\alpha x_1^{i_1}\cdots x_n^{i_n}) \in \kk C$. Then $\sigma$ defines a correct mapping on $gr A$.

To verify that $\sigma$ is a homomorphism on $gr A$
take monomials $u:=x_1^{i_1}\cdots x_n^{i_n}, v:=x_1^{k_1}\cdots x_n^{k_n}\in A$. Due to Theorem~\ref{th:basis} there exists a unique monomial $x_1^{l_1}\cdots x_n^{l_n}\in {\bf B}$ such that $\nu(x_1^{l_1}\cdots x_n^{l_n})=\nu(uv)$. Hence there exists $\alpha\in \kk^\times$ such that $\nu(\alpha x_1^{l_1}\cdots x_n^{l_n} - uv)<\nu(uv)$ due to Theorem~\ref{th:basis}. Therefore
$$\sigma(uv)=\mu(\alpha x_1^{l_1}\cdots x_n^{l_n})=\alpha \gamma_1^{l_1}\cdots \gamma_n^{l_n} \nu(uv)=\gamma_1^{i_1+k_1}\cdots \gamma_n^{i_n+k_n} \nu(u)\nu(v)=\sigma(u) \sigma(v)$$
\noindent (we use the product notation for the monoid operation). 

Finally, one can check that $\sigma$ is an isomorphism. 
\end{proof}

\begin{corollary}
Let $A$ be a $\kk$-domain of a dimension $d$ over a radically closed field $\kk$, and $\nu$ be an injective valuation of $A\setminus\{0\}$ onto a well-ordered finitely-generated monoid. Then the variety $Spec (gr (A, \nu))$ is toric of dimension $d$.    
\end{corollary}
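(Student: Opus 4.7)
The plan is to reduce the corollary directly to Proposition~\ref{isomorphism} together with Corollary~\ref{valuation_rank}. First I would observe that under the hypotheses of the corollary, all the assumptions of Proposition~\ref{isomorphism} are satisfied: $A$ is a $\kk$-algebra (in fact a domain) over a radically closed $\kk$, and $\nu:A\setminus\{0\}\twoheadrightarrow C$ is an injective valuation onto a finitely generated, well-ordered monoid $C$. Hence Proposition~\ref{isomorphism} yields a canonical isomorphism of $\kk$-algebras
\[
gr(A,\nu)\ \cong\ \kk C.
\]
Thus $\Spec(gr(A,\nu)) \cong \Spec(\kk C)$, and it suffices to argue that the right-hand side is a toric variety of dimension $d$.

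Next I would appeal to the classical fact that, for any finitely generated commutative cancellative monoid $C$, the affine scheme $\Spec(\kk C)$ is (by definition, or by standard theory) a toric variety whose torus is $\Spec \kk[G(C)]$, where $G(C)$ denotes the Grothendieck group of $C$. In our setting $C$ embeds into its Grothendieck group since $A$ is a domain (so the multiplication in $\kk C$ has no zero divisors among basis elements, by the valuation property $\nu(ab)=\nu(a)+\nu(b)$, forcing $C$ to be cancellative). A finite generating set of $C$ as a monoid supplies a finite generating set of $G(C)$ as a group, so $G(C)$ is a finitely generated abelian group, and the toric interpretation of $\Spec(\kk C)$ applies.

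For the dimension statement, I would invoke Corollary~\ref{valuation_rank}: under the present assumptions, $rk(C)=\dim A=d$. Since the Krull dimension of the semigroup algebra $\kk C$ of a finitely generated cancellative commutative monoid equals the rank of $C$ (equivalently, the free rank of $G(C)$), we obtain
\[
\dim \Spec(gr(A,\nu)) \;=\; \dim \kk C \;=\; rk(C) \;=\; d.
\]

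The main obstacle, if any, is the verification that $C$ is cancellative (so that $\kk C$ is honestly a toric coordinate ring rather than the coordinate ring of some degenerate monoid scheme); this follows easily since an injective valuation on a domain satisfies $\nu(a)+\nu(b)=\nu(ab)\neq \nu(0)$ for nonzero $a,b$, whence addition in $C$ is cancellative. Everything else is a direct bookkeeping citation of the earlier results \ref{isomorphism} and \ref{valuation_rank} together with the general theory of affine semigroup schemes.
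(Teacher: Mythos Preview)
Your proposal is correct and matches the paper's intended argument: the corollary is stated without proof immediately after Proposition~\ref{isomorphism}, so it is meant to follow directly from that isomorphism $gr(A,\nu)\cong \kk C$ together with Corollary~\ref{valuation_rank} for the dimension count, exactly as you outline. Your added remarks on cancellativity of $C$ and the standard fact that $Spec(\kk C)$ is an affine toric variety of dimension $rk(C)$ simply make explicit what the paper leaves to the reader.
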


\begin{example}
 Consider a domain $A:=\kk[x,y]/(x^6-2y^4-1)$. Take a common line $H$ from the tropical variety $Trop (x^6-2y^4-1)$ defined by the equation $2i+3j=6$ (cf.  Theorem~\ref{quotient}) and a corresponding map $\nu : \{x^iy^j\, :\, i,j\ge 0\} \to \ZZ_{\ge 0}^2$ for which $\nu(x)=2, \nu(y)=3$. We obtain that the graded algebra $gr (A, \nu)$ has a zero divisor iff the polynomial $x^6-2y^4$ is reducible over $\kk$. In the latter case $\nu$ does not provide a valuation on $A\setminus \{0\}$ (see Theorem~\ref{filtration}), and the variety $Spec (gr(A, \nu))$ is reducible. Otherwise, if $x^6-2y^4$ is irreducible over $\kk$ then $\nu$ provides a valuation on $A$ being not injective since $\nu(x^3)=\nu(y^2)=6$, so $\dim (A_{\nu\le 6}/A_{\nu<6}) =2$.   
\end{example}

The following is a generalization of the well-known commutative result 
on localization of valuations.

We say that a submonoid $Q$  of a  monoid $P$ is Ore if
for any $p\in P$, $q\in Q$ there exist $p',p''\in P$, $q,q''\in Q$ such that $q'p=p'q$ and $pq''=qp''$.

\begin{proposition}\label{ore} Let $P$ be an ordered  monoid and $\nu:A\setminus \{0\}\to P$ a valuation. Let $M$ be an  Ore submonoid of the multiplicative semigroup of $A$, and denote $Q=\nu(M)$ (hence $Q$ is an Ore submonoid of $P_\nu$). Then  the monoid $P_\nu\cdot Q^{-1}$ is  ordered. The assignments $\nu(am^{-1}):=\nu(a)\nu(m)^{-1}$ define a valuation $\nu_M:A[M^{-1}]\setminus \{0\}\to P_\nu\cdot Q^{-1}$.
    \end{proposition}

\begin{proof} 
Let $\prec$ be an order on $P$. One can define an order $\trianglelefteq$ on $P_\nu\cdot Q^{-1}$ as follows: $pq^{-1}\trianglelefteq q_0^{-1}p_0$ iff $q_0p\preceq p_0q$. 

Recall that for any $b\in A$ and $n\in M$ there exist $a\in A$ and $m\in M$ such that $n^{-1}b=am^{-1}$ (equivalently, $na=bm$). Then

$$\nu(n^{-1}b)=\nu(am^{-1})=\nu(a)\nu(m)^{-1}=\nu(n)^{-1}\nu(b)$$ 

since $\nu(n)\nu(a)=\nu(b)\nu(m)$.

Now 
$u=n^{-1}b$, $v=am^{-1}$. Then $u+v=n^{-1}(bm+na)m^{-1}$ and assume w.l.o.g. that $\nu(bm+na)\preceq \nu(na)$. It holds
$$\nu(u+v)=\nu(n)^{-1}\nu(bm+na)\nu(m)^{-1}\trianglelefteq \nu(n)^{-1}\nu(na)\nu(m)^{-1}=\nu(n)^{-1}\nu(n)\nu(a)\nu(m)^{-1}=\nu(v).$$

\end{proof}

\begin{example} 
We give an example of a non-commutative algebra admitting an injective valuation onto $\ZZ_{\ge 0}^2$ (endowed with say, lex ordering) such that its graded algebra (with respect to the valuation) is non-commutative (unlike Proposition~\ref{isomorphism}). Denote by $A_q$ the $\kk[q^{\frac{1}{2}},q^{-\frac{1}{2}}]$-algebra generated by $x,y$ satisfying a relation $yx=qxy$ (see Example~\ref{ex:quantum plane}), i.e., $A_q$ is the quantum plane. 

Denote by $M_q\subset A_q$ the set $q^{\ZZ_{\ge 0}} x^{\ZZ_{\ge 0}}y^{\ZZ_{\ge 0}}$. Clearly, this is a submonoid of $A_q^\times$ because
$$(q^r x^m y^n)(q^{r'}x^{m'}y^{n'})=q^{r+r'+m'n}x^{m+m'}y^{n+n'}$$
 It is easy to see that $A_q=\kk M_q$  and $M_q$ is isomorphic to a submonoid a group of all $3\times 3$ unipotent matrices over $\frac{1}{2}\ZZ_{\ge 0}$ via $q^r x^m y^n\mapsto 
 \begin{pmatrix}
  1 & n &r\\
  0 & 1 & m\\
  0 & 0 & 1
 \end{pmatrix}$.

Applying Proposition \ref{ore}, we see that $M_q$ is naturally (lexicographically on triples $(m,n,r)$) ordered and obtain the tautological valuation of $A_q$ to $M_q$. Clearly, the assignments $(m,n,r)\mapsto (m,n)$ define a surjective ordered homomorphism of monoids $f:M_q\to \ZZ_{\ge 0}^2$. In view of Proposition \ref{pr:valuation after homomorphism}, this defines a  valuation $\nu:A_q\setminus \{0\}\to \ZZ_{\ge 0}^2$ via $\nu(q^{\frac{1}{2}})=0$,  $\nu(x)=(0,1),\, \nu(y)=(1,0)$.

This valuation is non-injective  if $A_q$ is viewed as a $\kk$-algebra and it becomes injective if $A_q$ is viewed as a $\kk[q^{\frac{1}{2}},q^{-\frac{1}{2}}]$-algebra with an adapted basis $x^{\ZZ_{\ge 0}}y^{\ZZ_{\ge 0}}$.

It is well-known (see e.g., \cite[Proposition A.1]{BZq}) that $\mathbb{K}\otimes A_q$ is an Ore domain, where $\mathbb{K}:=\kk(q^{\frac{1}{2}})$.
Applying Proposition~\ref{ore} to any submonoid $M\subset A_q$, we obtain a valuation $\nu_M: A_q [M^{-1}]\setminus \{0\} \to \Gamma_q$, where $\Gamma_q$ is the (Heisenberg) group generated by $M_q$.

 Recall that Ore extension $R_{\varphi,\partial}[y]$ of a ring $R$ is a flat deformation of $R[y]$
given by an endomorphism $\varphi$ and a $\varphi$-derivation $\partial$ via 
$$yr=\varphi(r)y+\partial(r)$$
In particular, if $R=\mathbb{K}[x]$, $\partial=0$ and $\varphi(x)=qx$, $\varphi(q)=q$ then $R_{\varphi,\partial}[y]=\mathbb{K}\otimes A_q$ (cf. Proposition~\ref{skew_quantum}).

On the other hand, if $\varphi=Id_R$, and  $\partial$ is a derivation of $R$, then $R_{\varphi,\partial}[y]$ is a generalized Weyl algebra:
$[y,r]:=yr-ry=\partial(r)$
for all $r\in R$.

Suppose that $R$ is a $\mathbb{K}$-algebra and $\nu_R:R\setminus \{0\}\to P$ is a valuation over $\mathbb{K}$.
Suppose also that $\varphi_0$ is an ordered automorphism of $P$ such that $\nu_R(\varphi(r))=\varphi_0(\nu_R(r))$. 
Let
$\hat P:= P \times_\varphi \ZZ_{\ge 0}$ be an Ore extension of $P$, i.e., $(p,k)\circ (p',k')=(p\circ \varphi_0^k(p'),k+k')$.
Clearly, $\hat P$ is lexicographically ordered (with $\ZZ_{\ge 0}$ higher than $P$), and
the assignments $ry^k\mapsto (\nu_R(r),k)$ extend $\nu_R$ to a valuation $\nu:R_{\varphi,\partial}[y]\setminus \{0\}\to \hat P$.
This valuation is, clearly, injective if $\nu_R$ was injective. If ${\bf B}\subset R$ is an adapted basis with respect to $\nu_R$ then ${\bf B}\times y^{\ZZ_{\ge 0}}\subset R_{\varphi,\partial}$ is an adapted basis with respect to $\nu$.

Note that assignments $(p,k)\mapsto k$ define an ordered (surjective) homomorphism $f:\hat P\to \ZZ_{\ge 0}$. In view of Proposition \ref{pr:valuation after homomorphism}, this defines a (non-injective) valuation of $R_{\varphi,\partial}[y]$ to $\ZZ_{\ge 0}$ via $ry^k\mapsto k$.

In particular, the Weyl $A_1$ algebra generated by $x,y$ satisfying a relation $xy=yx+1$ can be viewed as an Ore extension of $R=\kk[x]$ with $\varphi=Id_R$, $\partial=\frac{d}{dx}$. Then $\hat P=\ZZ_{\ge 0}^2$ and thus $A_1$
admits an injective valuation to $\ZZ_{\ge 0}^2$ defined  by $\nu(x)=(1,0),\, \nu(y)=(0,1)$ onto $\ZZ_{\ge 0}^2$, and its graded algebra is isomorphic to the polynomial ring $\kk[x,y]$.
\end{example}

Let $A=\kk[x_1,\dots,x_n]/J$ be a $\kk$-algebra with $\dim A=1$ and $\nu:A\setminus \{0\} \twoheadrightarrow C\subset \ZZ_{\ge 0}$ be an injective valuation (cf. Corollary~\ref{valuation_rank}). Then there exist non-negative integers $r_1,\dots,r_n$ such that $\nu(x_1^{i_n}\cdots x_n^{i_n}) \preceq \nu(x_1^{j_1}\cdots x_n^{j_n})$ iff $i_1r_1+\cdots +i_nr_n \le j_1r_1+\cdots +j_nr_n$ (cf. Theorem~\ref{inverse}).

Take $t\in \kk^\times$, and for any polynomial $g\in \kk[x_1,\dots,x_n]$ with a monomial $x_1^{i_n}\cdots x_n^{i_n}$ of the highest valuation $\nu(x_1^{i_n}\cdots x_n^{i_n})=c$ among the monomials of $g$, replace $x_i$ by $x_i t^{-r_i}, 1\le i\le n$. The resulting polynomial has the form $(g_0+tg_1)t^{-i_1r_1-\cdots - i_nr_n}$ where $g_0$ coincides with the sum of the monomials of $g$ having their valuation equal $c$, while every monomial in $g_1$ has the valuation less than $c$. The resulting ideal we denote by $J_t\subset \kk[x_1,\dots,x_n]$ and denote $A_t:=\kk[x_1,\dots,x_n]/J_t$. 

Thus, one can view the family $A_t,\, t\in \kk^\times$ as a deformation of $A_0$ being a binomial algebra  isomorphic to $\kk C$ (see Proposition~\ref{isomorphism}) when the field $\kk$ is radically closed. Summarizing, we have established the following proposition.

\begin{proposition}\label{deformation}
Let $A=\kk[x_1,\dots,x_n]/J$ be a $\kk$-algebra with $\dim A=1$ and $\nu:A\setminus \{0\} \twoheadrightarrow C\subset \ZZ_{\ge 0}$ be an injective valuation. Then there exist non-negative integers $r_1,\dots,r_n$ such that for any $t\in \kk^\times$ the transformation $x_i \to x_i t^{-r_i}, 1\le i\le n$ provides an ideal $J_t\subset \kk[x_1,\dots,x_n]$ and an algebra  $A_t:=\kk[x_1,\dots,x_n]/J_t$. Obviously, $A_1=A$. The associated graded algebra $gr A_t \simeq \kk C, t\in \kk^\times$ (see Proposition~\ref{isomorphism}) when $\kk$ is radically closed. One can view the family $A_t,\, t\in \kk^\times$ as a deformation of $A_0$ being a binomial algebra  isomorphic to $\kk C$ (see Proposition~\ref{isomorphism}) when the field $\kk$ is radically closed.    
\end{proposition}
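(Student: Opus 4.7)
The plan is to verify the claims step by step, leveraging Theorem~\ref{inverse} and Proposition~\ref{isomorphism}; since the paper itself has already described the construction in the paragraphs preceding the proposition, the proof is mostly bookkeeping. First I would extract the weight vector $(r_1,\ldots,r_n) \in \ZZ_{\ge 0}^n$: because $\dim A = 1$ and $C \subset \ZZ_{\ge 0}$, the valuation $\nu$ takes values in $\ZZ_{\ge 0}$ with the standard order, and setting $r_i := \nu(x_i)$ the multiplicativity $\nu(x^v) = \sum_k r_k v_k$ yields
\[
\nu(x_1^{i_1}\cdots x_n^{i_n}) \preceq \nu(x_1^{j_1}\cdots x_n^{j_n}) \iff \sum_k r_k i_k \le \sum_k r_k j_k,
\]
exactly the situation of Theorem~\ref{inverse} with $H = \{\sum_k r_k \xi_k = 0\}$.

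Next, for each $g = \sum_v \alpha_v x^v \in J$ set $c(g) := \max_{\alpha_v \neq 0} \sum_k r_k v_k$ and define
\[
\tilde g(x,t) := t^{c(g)} \cdot g\bigl(x_1 t^{-r_1},\ldots,x_n t^{-r_n}\bigr) \in \kk[x_1,\ldots,x_n][t].
\]
By construction $\tilde g(x,t) = g_0(x) + t\cdot g_1(x,t)$, where $g_0$ collects the monomials of $g$ of maximal $\nu$-valuation. Declaring $J_t$ to be the ideal of $\kk[x_1,\ldots,x_n]$ generated by the specializations $\{\tilde g(x,t) : g \in J\}$ makes sense for every $t \in \kk$ (the negative powers of $t$ from the substitution are absorbed by the prefactor $t^{c(g)}$), and assembles the algebras $A_t = \kk[x_1,\ldots,x_n]/J_t$ into a flat family over $\AA^1$. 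For $t = 1$ the scaling is the identity, so $A_1 = A$. For $t \in \kk^{\times}$, the map $x_i \mapsto x_i t^{-r_i}$ is a $\kk$-algebra automorphism of $\kk[x_1,\ldots,x_n]$ carrying $J$ onto $J_t$, so $A_t \cong A$; consequently $gr A_t \cong gr A \cong \kk C$, the last isomorphism being Proposition~\ref{isomorphism} (assuming $\kk$ is radically closed).

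At $t = 0$, each generator $\tilde g|_{t=0} = g_0$ is a $\kk$-linear combination of monomials of one common $\nu$-value. I would then identify $J_0$ with the binomial ideal $S$ from the construction preceding Proposition~\ref{isomorphism}: any generator of $S$ is by definition a binomial $\alpha x^u - \beta x^v$ that arises as $g_0$ for some $g \in J$, so $S \subseteq J_0$; conversely, since $\dim A = 1$ any $g_0$ of valuation $c$ is a $\kk$-linear combination of monomials $x^v$ with $\sum_k r_k v_k = c$, and pairwise differences of such scaled monomials lie in $S$ by the saturation property of Theorem~\ref{inverse}. Hence $J_0 = S$ and $A_0 = \kk[x_1,\ldots,x_n]/S \cong \kk C$ via the isomorphism $\mu$ of Proposition~\ref{isomorphism}.

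The main obstacle is the identification $J_0 = S$: one needs to ensure that taking the leading parts $g_0$ over \emph{all} $g \in J$ yields exactly the binomial ideal $S$, not a larger or smaller ideal. This reduces to the saturation of $I$ (resp.\ $J$) with respect to the hyperplane $H$ guaranteed by Theorem~\ref{inverse}; saturation produces, for every pair $(u,v)$ with $\sum_k r_k u_k = \sum_k r_k v_k$, an element of $J$ whose leading part is $\alpha x^u - \beta x^v$, giving enough binomials to exhaust $S$. Once this identification is in place, the remaining claims of the proposition are formal consequences of the construction.
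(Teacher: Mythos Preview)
Your approach mirrors the paper's: the entire argument there is the two paragraphs immediately preceding the proposition (the paper ends with ``Summarizing, we have established the following proposition''), and you reproduce that construction with $r_i = \nu(x_i)$, the rescaling $\tilde g(x,t) = t^{c(g)} g(x_i t^{-r_i})$, the observation that $A_t\cong A$ for $t\in\kk^\times$, and the appeal to Proposition~\ref{isomorphism}. You in fact supply more detail than the paper does.

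There is, however, one gap in your identification $J_0 = S$. Your ``conversely'' step shows that for $g \in J$ the initial form $g_0 = \sum_j \alpha_j x^{v_j}$ (all $v_j$ of common weight $c$) reduces modulo $S$ to a single term $\gamma\, x^{v_0}$, because saturation supplies a binomial in $S$ relating any two monomials of equal weight. But you then conclude $g_0 \in S$ without arguing that $\gamma = 0$; your final paragraph likewise only re-establishes $S\subseteq J_0$. To close the gap: if $\gamma \neq 0$, subtract from $g$ the corresponding binomial elements of $J$ used in the reduction to obtain $g' \in J$ with initial form $\gamma\, x^{v_0}$; then $g' = \gamma\, x^{v_0} + (\text{terms of strictly smaller weight})$ in $J$ forces, in $A$, the element $x^{v_0}$ to equal a combination of strictly lower weight, so $\nu(x^{v_0}) < c$. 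This contradicts $\nu(x^{v_0}) = \sum_k r_k (v_0)_k = c$, which holds by multiplicativity of $\nu$ on the domain $A$. Hence $\gamma = 0$, $g_0\in S$, and $J_0 = S$ as you want. The paper itself does not spell this out; it simply asserts that $A_0$ is binomial and isomorphic to $\kk C$.
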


\begin{conjecture} For any domain $A$ with injective well-ordered valuation $\nu$ there is a family $A_t$ such that $A_1=A$ and $A_0=gr A$ with respect to the filtration on $A$ induced by $\nu$ (as in Proposition~\ref{deformation}).
\end{conjecture}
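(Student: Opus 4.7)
The plan is to generalize the one-parameter rescaling of Proposition~\ref{deformation} to higher-dimensional valuation monoids by combining three ingredients: a Khovanskii presentation (Theorem~\ref{th:basis}), the weight-vector description of the valuation (Theorem~\ref{inverse}), and a standard Gröbner-basis degeneration.

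First I would invoke Theorem~\ref{th:basis} to choose a Khovanskii basis $x_1,\ldots,x_n$ for $A$ with $\nu(x_i)=c_i\in C$, and write $A=\kk[x_1,\ldots,x_n]/J$. By the Hilbert basis theorem pick a finite generating set $g_1,\ldots,g_k$ of $J$, and, using Buchberger's algorithm with respect to the well-ordering induced by $\nu$ on monomials, refine it to a Gröbner basis; denote the enlarged (still finite) set again by $g_1,\ldots,g_k$. Then by Theorem~\ref{inverse} the valuation $\nu$ on monomials is computed by a weight vector $(w_1,\ldots,w_n)\in (\RR_{\ge 0}[\varepsilon])^n$: namely $\nu(x^\alpha)\preceq \nu(x^\beta)$ iff $w\cdot\alpha\le w\cdot\beta$.

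Next I would reduce from the weight $w$ (which a priori involves the infinitesimal $\varepsilon$ and real entries) to an integer weight vector. The crucial observation is that only finitely many monomials — namely those appearing in $g_1,\ldots,g_k$ — need to be compared in order to carry out the degeneration. Hence there exists $\delta>0$ and rational vectors close to $w$ (obtained by specializing $\varepsilon$ to a sufficiently small positive rational and rounding the real coordinates within tolerance $\delta$) that produce the same order on the finite set of pairs of monomials appearing in the supports of $g_1,\ldots,g_k$. Clearing denominators yields $r=(r_1,\ldots,r_n)\in \ZZ_{\ge 0}^n$ agreeing with $w$ on all relevant comparisons.

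Now I would set up the family. For each $j$ let $M_j:=\max_{\alpha\in \operatorname{Supp}(g_j)} r\cdot\alpha$ and define
\begin{equation*}
\tilde g_j(x,t):=t^{M_j}\,g_j(t^{-r_1}x_1,\ldots,t^{-r_n}x_n)=\sum_{\alpha}c_{j,\alpha}\,t^{M_j-r\cdot\alpha}\,x^{\alpha}\in\kk[x_1,\ldots,x_n][t].
\end{equation*}
For each $t_0\in\kk$ let $J_{t_0}\subset\kk[x_1,\ldots,x_n]$ be the ideal generated by $\tilde g_1(x,t_0),\ldots,\tilde g_k(x,t_0)$ and set $A_{t_0}:=\kk[x_1,\ldots,x_n]/J_{t_0}$. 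At $t_0=1$ each $\tilde g_j$ specializes to $g_j$, giving $A_1=A$. At $t_0=0$ each $\tilde g_j(x,0)$ is the sum of those monomials of $g_j$ with $r\cdot\alpha=M_j$, i.e.\ exactly the initial form of $g_j$ with respect to the weight $r$, which by construction agrees with the $\nu$-leading form.

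The last and most delicate step is to identify $A_0$ with $\operatorname{gr} A$. This is where the Gröbner hypothesis on $g_1,\ldots,g_k$ is essential: it guarantees that $\langle \operatorname{in}_r(g_1),\ldots,\operatorname{in}_r(g_k)\rangle$ equals the whole initial ideal $\operatorname{in}_r(J)$, so that $A_0=\kk[x]/\operatorname{in}_r(J)$; and by the choice of $r$ to be compatible with $\nu$ on the finite list of relevant monomials, $\operatorname{in}_r(J)$ coincides with the ideal of $\nu$-leading forms of $J$, i.e.\ with the defining ideal of $\operatorname{gr} A$ relative to the filtration induced by $\nu$. In combination with Theorem~\ref{th:basis} and Proposition~\ref{isomorphism} this identifies $A_0$ with $\operatorname{gr} A$, proving the conjecture. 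The principal obstacle is precisely this last identification: the passage from a merely finite generating set to a Gröbner basis is what prevents the naive scaling argument of Proposition~\ref{deformation} from working in higher dimensions, and it is the step that genuinely uses the assumption that the valuation is well-ordered and that the ambient ring is Noetherian.
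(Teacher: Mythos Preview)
The statement you are trying to prove is listed in the paper as a \emph{conjecture}; the authors give no proof. So there is nothing in the paper to compare your argument to --- the relevant question is simply whether your argument settles the conjecture.

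Your argument is essentially the standard Gr\"obner/Rees degeneration, and under the hypotheses you actually use it goes through: once $\{g_j\}$ is a Gr\"obner basis for the term order $q$ of Theorem~\ref{th:basis}(ii) (primary comparison by $\nu$, tie-break by a generic weight), one has $\operatorname{in}_\nu(J)=\langle \operatorname{in}_\nu(g_j)\rangle$, and the integer weight $r$ can be found by specializing $\varepsilon$ and then choosing a rational point in the open cone inside the rational subspace $H^{\perp}$ orthogonal to the finitely many equality constraints. With that, $A_0=\kk[x]/\langle \operatorname{in}_r(g_j)\rangle=\kk[x]/\operatorname{in}_\nu(J)\cong\operatorname{gr}\,A$. (Two small points to tidy: $\nu$ only gives a preorder on monomials, so you should explicitly work with the refinement $q$ rather than ``the well-ordering induced by $\nu$''; and when approximating $w_0$ by a rational vector you must stay inside $H^{\perp}$, not just perturb freely, since the equalities must be preserved exactly.)

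The real gap is that you silently import the hypothesis that the valuation monoid $C_\nu$ is \emph{finitely generated}: both Theorem~\ref{th:basis}(ii) (existence of a finite Khovanskii basis) and Theorem~\ref{inverse} (weight-vector description) require this. The conjecture makes no such assumption, and the paper explicitly records finite generation of $C_\nu$ as an open problem even for $A=\kk[x_1,\dots,x_m]$. Under the finite-generation hypothesis the degeneration you describe is essentially known (cf.\ \cite{Kaveh-Manon}, \cite{Sturmfels}), which is presumably why the authors state the general case as a conjecture rather than a theorem. So your write-up proves a special case that was likely already understood; the genuine content of the conjecture --- handling valuation monoids that are not finitely generated, where no finite Khovanskii basis and hence no one-parameter Rees degeneration is available --- remains untouched.
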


\begin{problem} Let $E$ be a locally nilpotent derivation of a domain $A$ (see Lemma~\ref{le:nilpotent valuation}). Classify all subalgebras $B$ of $A$ such that $\lambda_E(B\setminus\{0\})\cup \{0\}$ is a subalgebra of $A$.

\end{problem}

\begin{problem} Classify all injective decorated valuations $(\nu,\lambda)$ on $\kk[x_1,\ldots,x_n]$ (see Definition~\ref{decorated valuation}).

\end{problem}

\begin{problem} Given an injective valuation $\nu:\kk[x_1,\ldots,x_m]\setminus \{0\}\to \ZZ_{\ge 0}^m$, is it true that $C_\nu$ is a finitely generated submonoid of $\ZZ_{\ge 0}^m$?

\end{problem}

Given a submonoid $M$ of $\ZZ^m$,  denote   $\overline M:=(\RR_{\ge 0}\otimes M)\cap \ZZ^m$ and refer to it as the {\it saturation } of $M$. By definition, $\overline M$ is a submonoid of $\ZZ^m$ and $M$ is a submonoid of $\overline M$. We say that $M$ is saturated if $\overline M=M$.

\begin{problem} Suppose that $\nu$ is a saturated injective valuation $A\setminus\{0\}\to \ZZ_{\ge 0}^m$. Is it true that $Spec ~A$ is smooth or rational?

\end{problem}

\begin{problem} If $A\subset \kk[x_1,\ldots,x_m]$ and $\dim~A=d<m$. Can $A$ be embedded into $\kk[y_1,\ldots,y_d]$?

\end{problem}

\begin{problem} Describe all injective valuation on $\kk[x_1,\ldots,x_m]$ into $\ZZ_{\ge 0}^m$ whose valuation monoid is finitely generated but not saturated.

\end{problem}

\subsection{Algorithm testing a family of generators of a valuation monoid}

Let $(f_1,\dots,f_m):\kk^n \to \kk^m, m\le n$ be a dominant polynomial map, i.e. the polynomials $f_1,\dots,f_m\in \kk[x_1,\dots,x_n]$ are algebraically independent.

\begin{problem}
 Is the valuation monoid $\nu(\kk[y_1,\dots,y_m]\setminus \{0\})\subset \ZZ_{\ge 0}^n$  finitely-generated?    
\end{problem}

The goal of this subsection is to prove the following proposition.

\begin{proposition}
Let polynomials $f_1,\dots,f_m\in \kk[x_1,\dots,x_n], m\le n$ define an injective homomorphism  $\kk[y_1,\dots,y_m] \hookrightarrow \kk[x_1,\dots,x_n]$. Given a computable injective valuation on $\kk[x_1,\dots,x_n]\setminus \{0\}$ to $\ZZ_{\ge 0}^n$ (e.g., lex or deglex), this provides an injective valuation $\nu:\kk[y_1,\dots,y_m]\setminus \{0\} \to \ZZ_{\ge 0}^n$. 

There is an algorithm which given generators $g_1,\dots,g_p\in \kk[y_1,\dots,y_m]$ of $\kk[y_1,\dots,y_m]$ tests whether the elements
$$\nu(g_1),\dots,\nu(g_p)\in C:=\nu(\kk[y_1,\dots,y_m]\setminus \{0\})\subset \ZZ_{\ge 0}^n$$
\noindent generate the valuation monoid $C$. If not then the algorithm yields an element $c\in C\setminus \ZZ_{\ge 0}\{\nu(g_1),\dots,\nu(g_p)\}$.
\end{proposition}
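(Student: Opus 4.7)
The plan is to treat this as a Khovanskii (SAGBI) basis detection problem and adapt the standard subduction (subalgebra Buchberger) procedure. Through the embedding $\kk[y_1,\dots,y_m]\hookrightarrow \kk[x_1,\dots,x_n]$, $y_i\mapsto f_i$, I identify each $g_j$ with the polynomial $g_j(f_1,\dots,f_m)\in A:=\kk[f_1,\dots,f_m]\subset \kk[x_1,\dots,x_n]$. Writing $M:=\ZZ_{\ge 0}\{\nu(g_1),\dots,\nu(g_p)\}\subset C$, the question becomes precisely whether $\{g_1,\dots,g_p\}$ is a Khovanskii basis of $A$ with respect to $\nu$, i.e.\ whether $M=C$.

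First I would compute a finite Hilbert basis $(a^{(1)},b^{(1)}),\dots,(a^{(s)},b^{(s)})$ of the affine semigroup of toric relations
\[
L:=\{(a,b)\in \ZZ_{\ge 0}^p\times \ZZ_{\ge 0}^p\, :\, \sum_i a_i\nu(g_i)=\sum_i b_i\nu(g_i)\},
\]
which exists and is effectively computable by Gordan's lemma applied to the integer matrix $[\nu(g_1)\mid\cdots\mid \nu(g_p)]$. For each generating pair form the S-element $h_j:= g^{a^{(j)}}-\lambda_j\, g^{b^{(j)}}$, where $\lambda_j\in\kk^*$ is uniquely determined (by injectivity of $\nu$) so that the leading $\nu$-terms of the two monomials cancel.

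Next I would subject each $h_j$ to subduction: compute $\nu(h_j)$ via the given computable valuation on $\kk[x_1,\dots,x_n]$, then decide whether $\nu(h_j)\in M$ (a decidable integer-programming question). If yes, pick nonnegative integers $c_1,\dots,c_p$ with $\sum_i c_i\nu(g_i)=\nu(h_j)$, subtract an appropriate scalar multiple of $\prod_i g_i^{c_i}$ from $h_j$ to strictly lower $\nu(h_j)$, and iterate; if no, output $c:=\nu(h_j)\in C\setminus M$ and halt with ``no''. Each subduction terminates because $\nu$ takes values in the well-ordered monoid $\ZZ_{\ge 0}^n$, so in finitely many steps one either reaches $0$ (success for this $h_j$) or exits with a witness. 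If all $s$ of the $h_j$ subduct to $0$, halt with ``yes''.

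The main obstacle is correctness of the ``yes'' branch, namely the Khovanskii-basis criterion: if every S-element subducts to $0$, then $\nu(g_1),\dots,\nu(g_p)$ actually generate $C$. The argument, which I would carry out in detail, expresses an arbitrary $h\in A\setminus\{0\}$ as a polynomial $P$ in $g_1,\dots,g_p$; if the $\nu$-maximal monomials $g^a$ appearing in $P$ are unique, then automatically $\nu(h)\in M$, while if several exponents $a$ attain the maximum, any two such give a pair in $L$ that can be written in terms of the chosen generators $(a^{(j)},b^{(j)})$, so subducting the associated $h_j$ to $0$ lets one rewrite $P$ with strictly fewer $\nu$-maximal monomials, inductively reducing to the unique case. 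The ``no'' branch is correct by construction, since a witness $h_j$ produced at any subduction step lies in $A$, whence $\nu(h_j)\in C\setminus M$ as required.
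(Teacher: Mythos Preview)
Your approach is correct and is essentially the standard SAGBI/Khovanskii-basis detection procedure (Robbiano--Sweedler; Sturmfels, \emph{Gr\"obner Bases and Convex Polytopes}, Ch.~11; Kaveh--Manon), but it is \emph{not} the route the paper takes. The paper instead presents $\kk[y_1,\dots,y_m]=\kk[g_1,\dots,g_p]/I$, introduces a monomial order $\vartriangleleft$ on $\ZZ_{\ge 0}^p$ that first compares by $\nu$ and breaks ties by deglex, and computes a Gr\"obner basis of $I$ with respect to $\vartriangleleft$. Letting $G$ be the set of standard monomials (the complement of the initial ideal), the test is whether each coset of the plane $H=\{v:\sum v_i\nu(g_i)=0\}$ meets $G$ in exactly one point; this is decided by integer linear programming. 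If so, one falls directly into the hypotheses of Theorem~\ref{quotient}, which certifies that $\nu(g_1),\dots,\nu(g_p)$ generate $C$. If not, one picks two standard monomials $g^I,g^J\in G$ with $\nu(g^I)=\nu(g^J)$ and runs subduction on $g^I-\alpha g^J$; the key observation is that the $\vartriangleleft$-leading monomial of every remainder is $g^I\in G$, so the remainder can never lie in $I$, hence never vanishes, and well-orderedness forces an exit with a witness $c\notin M$. Your S-pair/Hilbert-basis approach avoids computing the relation ideal $I$ and its Gr\"obner basis entirely, working purely with the toric-relation semigroup $L$; the price is that you must supply the SAGBI criterion yourself (your sketch is right, but note that the induction is really on the well-ordered pair $(d(P),|T|)$, and one should observe that the substitution $g^{a}=\mu g^{a'}+\text{(lower)}$ follows from expanding $\prod_j(\lambda_j g^{b^{(j)}}+\text{lower})^{n_j}$ after writing $(a,a')=\sum_j n_j(a^{(j)},b^{(j)})$). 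The paper's approach, by contrast, gets the ``yes'' branch for free from Theorem~\ref{quotient} and simultaneously produces an adapted monomial basis of $A$, at the cost of a Gr\"obner-basis computation in $p$ variables.
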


\begin{proof}
We have $\kk[y_1,\dots,y_m]=\kk[g_1,\dots,g_p]/I$ for a suitable ideal $I$. Consider a (non-strict) linear ordering $\prec$ on monomials in $g_1,\dots,g_p$ according to $\nu$. Denote by $H\subset \RR^p$ a (rational) $(p-m)$-dimensional plane such that $\nu(g_1^{i_1}\cdots g_p^{i_p})=\nu(g_1^{j_1}\cdots g_p^{j_p})$
iff $(i_1-j_1,\dots,i_p-j_p)\in H$ (cf. the proof of Theorem~\ref{th:basis}), in other words, $(i_1,\dots,i_p)\preceq (j_1,\dots,j_p)\preceq (i_1,\dots,i_p)$ (slightly abusing the notations we identify  a monomial $g_1^{i_1}\cdots g_p^{i_p}$ with the vector $(i_1,\dots,i_p)$). Define a linear ordering $\vartriangleleft$ on monomials in $g_1,\dots,g_p$ as follows: $g_1^{i_1}\cdots g_p^{i_p} \vartriangleleft g_1^{j_1}\cdots g_p^{j_p}$ iff either $g_1^{i_1}\cdots g_p^{i_p} \prec g_1^{j_1}\cdots g_p^{j_p}$ or $\nu(g_1^{i_1}\cdots g_p^{i_p})=\nu(g_1^{j_1}\cdots g_p^{j_p})$ and the vector $(i_1,\dots,i_p)$ is less than $(j_1,\dots,j_p)$ in deglex (again cf. the proof of Theorem~\ref{th:basis}).

The algorithm constructs a Gr\"obner basis of $I$ with respect to $\vartriangleleft$. Denote by $G\subset \ZZ_{\ge 0}^p$ the complement to the monomial ideal of leading monomials of the Gr\"obner basis. Then $G$ is a finite (disjoint) union of sets of the form 
$$u+\{(u_1,\dots,u_p)\, :\, u_i \vdash_i 0\}$$
\noindent where each $\vdash_i, 1\le i\le p$ is either $=$ or $\ge$.

Consider a plane $H_0\subset \RR^p$ parallel to $H$ which has a common point with $\ZZ_{\ge 0}^p$. Then $H_0\cap G\neq \emptyset$. Indeed, otherwise for any point $g_1^{i_1}\cdots g_p^{i_p}\in H_0$ we get (taking into account properties of Gr\"obner bases) that 
$$g_1^{i_1}\cdots g_p^{i_p}=\sum_L \alpha_L g^L, \alpha_L\in \kk^\times, \nu(g^L)\prec \nu(g_1^{i_1}\cdots g_p^{i_p})\, {\text{for each}}\, L,$$ 
\noindent which contradicts to the subadditivity of the valuation.

First, assume that for every $H_0$ parallel to $H$ such that $H_0\cap \ZZ_{\ge 0}^p \neq \emptyset$ it holds $|G\cap H_0|=1$. Then we fall in the conditions of Theorem~\ref{quotient}, and thus $\nu(g_1),\dots,\nu(g_p)$ constitute a family of generators of $\nu(\kk[y_1,\dots,y_m]\setminus \{0\})=\ZZ_{\ge 0}^p/H_{\ZZ}$.

Now on the contrary, assume that $|G\cap H_0|\ge 2$ for some $H_0$ parallel to $H$. In this case the algorithm can find a pair of different monomials $g_1^{i_1}\cdots g_p^{i_p}, g_1^{j_1}\cdots g_p^{j_p} \in G\cap H_0$ for some $H_0$ invoking integer linear programming. If the algorithm fails, it means that $|G\cap H_0|=1$ for all $H_0$ (see the previous case). Since the valuation $\nu$ is injective, there exists $\alpha \in \kk^\times$ such that
$$c_0:= \nu(g_1^{i_1}\cdots g_p^{i_p} -\alpha g_1^{j_1}\cdots g_p^{j_p})\prec \nu(g_1^{i_1}\cdots g_p^{i_p})=\nu(g_1^{j_1}\cdots g_p^{j_p}).$$
\noindent If $\ZZ_{\ge 0}^p\ni c_0\notin \ZZ_{\ge 0}\{\nu(g_1),\dots,\nu(g_p)\}$ then $c:=c_0$ meets the requirements of the Proposition.

Otherwise, if $c_0 \in \ZZ_{\ge 0}\{\nu(g_1),\dots,\nu(g_p)\}$ there exists a monomial $g_1^{l_1}\cdots g_p^{l_p}$ such that $\nu(g_1^{l_1}\cdots g_p^{l_p})=c_0$. Again due to the injectivity there exists $\beta \in \kk^\times$ for which it holds 
$$\nu(g_1^{i_1}\cdots g_p^{i_p} -\alpha g_1^{j_1}\cdots g_p^{j_p}-\beta g_1^{l_1}\cdots g_p^{l_p})\prec c_0.$$
\noindent Observe that $g_1^{i_1}\cdots g_p^{i_p} -\alpha g_1^{j_1}\cdots g_p^{j_p}-\beta g_1^{l_1}\cdots g_p^{l_p}\neq 0$, because otherwise this would contradict to that $g_1^{i_1}\cdots g_p^{i_p}, g_1^{j_1}\cdots g_p^{j_p} \in G$. Continuing in a similar way, the algorithm eventually arrives at a required element $c\in C \setminus \ZZ_{\ge 0}\{\nu(g_1),\dots,\nu(g_p)\}$ since $C$ is well-ordered.
\end{proof}

\begin{remark}
 In the proof of the latter Proposition we used $f_1,\dots,f_m$ only to be able to compute $\nu(g)$ for $g\in \kk[y_1,\dots,y_m]$. In fact, one could stick with an arbitrary computable injective valuation.   
\end{remark}

\subsection{The space of injective valuations on a domain}

For a $\kk$-domain $A$ we consider the space $V:=V(A)$ of all injective valuations $\nu:A\setminus \{0\} \to \RR_{\ge 0}$. Given a basis $\bf B$ of $A$ endowed with a linear order $\prec$ one can consider a set $V_{{\bf B}, \prec}$ of mappings $\nu:{\bf B} \to \RR_{\ge 0}$ such that for any $b,b_0\in {\bf B}$ a relation $b\prec b_0$ implies $\nu(b)<\nu(b_0)$, and for any $b_1,b_2\in {\bf B}$ for which 
$$b_1b_2=\alpha_{b_0}b_0+\sum_{b\in \bf B} \alpha_b b,\, \alpha_{b_0}, \alpha_b \in \kk,\, b\prec b_0$$
\noindent it holds $\nu(b_0)=\nu(b_1)+\nu(b_2)$. Then $\nu$ induces an injective valuation on $A$ with an adapted basis $\bf B$: namely, for any $a=\alpha_{b_0}b_0+\sum_{b\in \bf B} \alpha_b b,\, b\prec b_0$ we define $\nu(a):=\nu(b_0)$. One can define a topology on $V$ with open basic sets $V_{{\bf B}, \prec}$ for all ${\bf B}, \prec$.
\vspace{1mm}

{\bf Question}. Is $V$ connected?
\vspace{1mm}

Recall (see the proof of Theorem~\ref{inverse}) that for any valuation $\nu:A\setminus \{0\} \twoheadrightarrow C$ onto a well-ordered finitely-generated monoid $C$ one can find generators $X_1,\dots,X_m$ of $A$ and elements $w_1,\dots,w_m \in \RR_{\ge 0}[\varepsilon]$ such that $\nu(X_1),\dots,\nu(X_m)$ generate $C$, and the order on monomials $i_1\nu(X_1)+\cdots +i_m\nu(X_m)$ is determined by $w_1i_1+\cdots +w_mi_m$. Choosing a basis $\bf B$ among monomials of the form $X_1^{i_1}\cdots X_m^{i_m}$ and defining $\nu(X_1^{i_1}\cdots X_m^{i_m}):=w_1i_1+\cdots +w_mi_m$, one obtains that $\nu \in V$, provided that $w_1,\dots,w_m\in \RR_{\ge 0}$.

\subsection{Injective well-ordered valuations of 2-dimensional algebras}

Let $A:=\kk[x,y,z]/(f)$ be a 2-dimensional algebra where $f\in \kk[x,y,z]$. Consider a valuation $\nu : A\setminus \{0\} \to \ZZ_{\ge 0}^2$ studied in Theorem~\ref{quotient}. Then $\nu$ is induced by an edge $e$ of the Newton polytope $N(f)\subset \RR_{\ge 0}^3$ and a 2-dimensional plane $Q\subset \RR^3$ containing $e$. Note that in the notations of Theorem~\ref{quotient} $H$ is the line passing through $e$, and $Q\in Trop(f)$.  Moreover, the proof of Theorem~\ref{quotient} and Remark~\ref{finite-condition}
imply that if $\nu$ is injective then the endpoints of $e$ (up to a permutation of the coordinates $x,y,z$) are $(p,0,0), (0,q,r)$ where $p,q,r \in \ZZ_{\ge 0}$ have no common divisor.

\begin{lemma}\label{vertex_common}
If two edges $e_1, e_2$ of the Newton polytope $N(f)\subset \RR_{\ge 0}^3$ induce injective valuations then $e_1, e_2$ have a common vertex located on a coordinate line.    
\end{lemma}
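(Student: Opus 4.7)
The plan is a case analysis on the endpoint locations of $e_1$ and $e_2$, using the characterization stated just before the lemma. After a coordinate permutation (possibly different for the two edges), each $e_i$ has endpoints $A_i=(p_i,0,0)$ and $B_i=(0,q_i,r_i)$ with $p_i>0$, $(q_i,r_i)\neq(0,0)$, $\gcd(p_i,q_i,r_i)=1$, and the saturation condition $\min=0$ in every coordinate. Thus $A_i$ is always ``axial'' (it sits on a coordinate axis $\ell_i$), while $B_i$ lies on the coordinate plane opposite $\ell_i$ and is itself axial exactly when $q_i=0$ or $r_i=0$. So every injective-valuation edge has at least one axial vertex, and the problem is to force a shared one.

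First I handle the case $\ell_1=\ell_2$: either $A_1=A_2$ and we are done; or $A_1\neq A_2$ lie on a common axis $\ell$, in which case $\ell\cap N(f)=[A_1,A_2]$ is the intersection of two coordinate-plane faces of $N(f)$, hence is itself a face, hence an edge of $N(f)$ along $\ell$. Applying the saturation conditions for $e_1$ and $e_2$ (each requiring the $\min$ over corresponding coordinates to vanish) to this axial segment shows that the ``lower'' of $A_1,A_2$ must be the origin, which then shares an axis with both edges and supplies the required common axial vertex on $\ell$.

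The second case is $\ell_1\neq\ell_2$. I split further according to whether the $B_i$ are axial. If $B_1$ (resp.\ $B_2$) lies on some coordinate axis $\ell$, then since $N(f)\cap\ell$ is a single segment whose nonzero extreme point is uniquely determined, $B_i$ must coincide with a vertex of the other edge, yielding a shared axial vertex. The essential subcase is when both $B_1$ and $B_2$ lie in the strict interior of their respective coordinate planes (so $q_i,r_i>0$). Here I invoke Theorem~\ref{quotient}: injectivity of $\nu_i$ requires a prop hyperplane $Q_i\in ETrop(f)$ containing $H_i=\RR\,\vec{e_i}$, and since $H_i\cap\RR_{\ge 0}^3=\{0\}$ in this subcase, the prop condition forces the normal of $Q_i$ to have all three components of the same strict sign. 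Matching the two resulting strictly positive supporting functionals on $N(f)$ will pin $e_1$ and $e_2$ together at an axial vertex.

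The main obstacle is this last subcase: translating ``prop with all-positive normal to $Q_i$'' into a convex-geometric constraint strong enough to rule out all configurations of $(A_i,B_i)$ with no shared vertex. The guiding heuristic is that a strictly positive linear functional on $\RR_{\ge 0}^3$ singles out a unique ``farthest'' axial vertex of $N(f)$ as its extremum direction; the support of each $Q_i$ must be an edge $e_i$, and the only way two such edges can both be cut out by strictly positive supporting hyperplanes is for them to share the common farthest axial vertex of $N(f)$. Making this rigorous via the explicit support inequalities derived from the coefficients of $\vec{e_i}$ and the saturation data is the crux of the argument.
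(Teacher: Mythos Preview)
Your proposal has a genuine gap: the subcase you call ``the crux'' (both $B_i$ strictly interior to their coordinate planes) is explicitly left unproven. You offer only a heuristic about strictly positive functionals singling out a ``farthest axial vertex,'' but this is not correct as stated---a strictly positive linear functional on $N(f)$ can attain its maximum at a non-axial vertex, so the prop condition on $Q_i$ alone does not force $e_1,e_2$ to share an endpoint. Your first case ($\ell_1=\ell_2$, $A_1\neq A_2$) is also unjustified: the saturation conditions constrain the edges $e_i$, not the axial segment $[A_1,A_2]$, so the deduction that the lower endpoint is the origin does not follow.

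The paper's proof is entirely different and sidesteps all case analysis. It introduces the \emph{roof} of $N(f)$---the set of $v\in N(f)$ that are farthest from the origin along the ray $\RR_{\ge 0}\cdot v$---and observes that any edge inducing an injective valuation (via a prop supporting plane) lies on the roof. One then radially projects the roof from the origin onto a supporting plane $Q_1\supset e_1$. This projection is injective on the roof; it fixes $e_1$ and sends $e_2$ to another segment. If $e_1,e_2$ had no common axial vertex, their images in $Q_1$ would have to meet at a point interior to one of them, contradicting injectivity of the radial projection. The whole argument is a few lines. In particular your case $\ell_1=\ell_2$, $A_1\neq A_2$ is vacuous from this viewpoint: two distinct points on the same coordinate axis cannot both be on the roof.
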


\begin{proof}We say that a point $v\in N(f)$ belongs to a {\it roof} of $N(f)$ if on a ray emanating from the origin $(0,0,0)$ and passing through $v$, the latter is the last point from $N(f)$ on the ray. Then $e_1, e_2$ lie on the roof. 

Consider a 2-dimensional plane $Q_1\subset \RR^3$ which contains $e_1$ and supports $N(f)$. The projection of the roof to $Q_1$ by means of the rays contains the projections of $e_1$ and of $e_2$. If $e_1, e_2$ had no common vertex located on a coordinate line then the projections of $e_1, e_2$ would have a common point being internal in  the projection of either $e_1$ or $e_2$. The obtained contradiction completes the proof. 
\end{proof} 

\begin{corollary}\label{edges_injective} All the edges of the Newton polytope $N(f)$ inducing injective valuations either

i) have a common vertex located on a coordinate line or

ii) form a triangle with its vertices on the coordinate lines, and in this case the roof of $N(f)$ consists of this triangle.
    \end{corollary}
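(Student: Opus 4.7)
The strategy is to extract a clean combinatorial statement from Lemma~\ref{vertex_common} and then analyze the geometry of the surviving triangular configuration. For each edge $e$ in the family $E$ of edges of $N(f)$ that induce injective valuations, let $A(e)$ denote the set of endpoints of $e$ lying on coordinate axes. The normal form $(p,0,0),(0,q,r)$ with $\gcd(p,q,r)=1$ recorded just before Lemma~\ref{vertex_common} guarantees $|A(e)|\in\{1,2\}$, and Lemma~\ref{vertex_common} says $A(e)\cap A(e')\neq\emptyset$ for all $e,e'\in E$.

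I would then invoke the elementary combinatorial fact that any family of subsets of size at most $2$ of a fixed set, with pairwise non-empty intersection, is either a ``star'' (all sets share a common element) or consists of exactly three sets of the form $\{x,y\},\{y,z\},\{z,x\}$. Applied to $\{A(e)\}_{e\in E}$, this produces two possibilities: either all edges of $E$ share a single coordinate-axis vertex, giving case~(i); or $|E|=3$ and $e_1,e_2,e_3$ form a triangle $T$ whose three vertices $V_1,V_2,V_3$ are axis points. In the latter case I would also check that $V_1,V_2,V_3$ lie on three \emph{distinct} axes, since otherwise a side of $T$ would be contained in a single axis, contradicting the $\gcd(p,q,r)=1$ normal form of its endpoints.

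For case~(ii) it remains to show that $T$ is the entire roof of $N(f)$. After relabeling, write $V_1=(p_1,0,0)$, $V_2=(0,p_2,0)$, $V_3=(0,0,p_3)$. Three edges of the $1$-skeleton of a $3$-dimensional convex polytope forming a cycle always bound a genuine $2$-face, so $T$ is a $2$-face of $N(f)$; and since each of its three sides is a roof edge, $T$ itself is a roof face, forcing $N(f)\subseteq\{x/p_1+y/p_2+z/p_3\le 1\}$. For any nonzero $v=(v_1,v_2,v_3)\in N(f)$ with $\alpha_v:=v_1/p_1+v_2/p_2+v_3/p_3<1$, the scaled point $v/\alpha_v$ has non-negative coordinates and lies on the plane $\{x/p_1+y/p_2+z/p_3=1\}$, hence in $N(f)\cap(\text{that plane})=T\subseteq N(f)$; since $1/\alpha_v>1$, the ray from the origin through $v$ continues past $v$ inside $N(f)$, so $v$ is not a roof point. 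I expect the main obstacle to be the geometric step that identifies $T$ as a $2$-face of $N(f)$ situated on the roof side rather than on the side facing the origin; once this is secured, the scaling argument above disposes of all remaining points of $N(f)$ in a single line.
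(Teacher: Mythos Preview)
Your combinatorial reduction is correct and is essentially how the paper intends the corollary to follow from Lemma~\ref{vertex_common} (the paper gives no separate proof). The Helly-type argument for pairwise-intersecting $\le 2$-element sets cleanly yields the star/triangle dichotomy, and the observation that $|A(e)|=2$ in the triangle case forces $|E|=3$ is right.

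The gap is in the roof argument. The assertion that ``three edges of the $1$-skeleton of a $3$-dimensional convex polytope forming a cycle always bound a genuine $2$-face'' is false: in the triangular bipyramid the equatorial cycle $ABC$ consists of three honest edges but bounds no face. The roof hypothesis does not rescue this. Take an irreducible polynomial with the Newton polytope of $F = 1 + x^2 + y^3 + z^5 + xyz$, so that $N(F)=\mathrm{conv}\bigl(0,(2,0,0),(0,3,0),(0,0,5),(1,1,1)\bigr)$. By the hypersurface remark preceding Example~\ref{cusp}, each edge $V_iV_j$ satisfies the prop and saturation conditions and hence induces an injective valuation; each also lies on the roof in the sense of Lemma~\ref{vertex_common}, since rays from the origin through points of $V_iV_j$ stay in the corresponding coordinate plane and terminate on that edge. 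Yet $W=(1,1,1)$ lies strictly above the plane $x/2+y/3+z/5=1$, so the triangle $T=V_1V_2V_3$ is \emph{not} a face of $N(F)$ and the roof is the union of the three upper triangles $V_iV_jW$, not $T$. Your scaling argument, which is valid once $N(f)\subseteq\{x/p_1+y/p_2+z/p_3\le 1\}$ is established, therefore cannot be reached. This suggests the roof clause in case~(ii) either needs an additional hypothesis or a different justification than the one you propose; you correctly flagged this step as the main obstacle, but the particular polytope fact you invoke does not hold.
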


\begin{remark}\label{cusp_JH}
i) Observe that in case i) of Corollary~\ref{edges_injective} when all the edges have a common vertex $(p,0,0)$, a common adapted basis of all the injective valuations induced by the edges, is $\{x^iy^jz^k\, :\, 0\le i<p, 0\le j,k<\infty\}$ (cf. the proof of Theorem~\ref{quotient},  Remark~\ref{finite-condition} and Theorem~\ref{th:basis}). 

ii) One can verify that in case ii) of Corollary~\ref{edges_injective} three injective valuations do not
possess a common adapted basis.

For example, let $f:= z+x^2+y^3$. Then $A:=\kk[x,y,z]/(f)\simeq \kk[x,y]$. Denote by $\nu_x$ the injective valuation induced by the edge $(z,y^3)$ with respect to lex ordering in which $y>x$, and an adapted basis $\{y^ix^j\, :\, 0\le i,j<\infty\}$. Denote by $\nu_y$ the injective valuation induced by the edge $(z,x^2)$ with respect to lex ordering in which $x>y$, and an adapted basis $\{x^iy^j\, :\, 0\le i,j<\infty\}$. Finally, denote by $\nu_z$ the injective valuation induced by the edge $(y^3,x^2)$ with respect to lex ordering in which $x,y>z$, and $\nu_z(x)=(3,0), \nu_z(y)=(2,0), \nu_z(z)=(0,1)$. An adapted basis of $\nu_z$ is $\{y^iz^j, xy^iz^j\, :\, 0\le i,j<\infty\}$ (cf. Examples~\ref{skew}, ~\ref{cusp}).

One can compute all three JH-bijections 
(see Theorem~\ref{th:generalized JH}):

$${\bf K}_{\nu_z,\nu_y}(2j,i)=(2i,j), {\bf K}_{\nu_z,\nu_y}(2j+1,i)=(2i+3,j);$$

$${\bf K}_{\nu_z,\nu_x}(3j,i)=(3i,j), {\bf K}_{\nu_z,\nu_x}(3j+1,i)=(3i+2,j), {\bf K}_{\nu_z,\nu_x}(3j+2,i)=(3i+4,j);$$

$${\bf K}_{\nu_x,\nu_y}(j,i)=(i,j).$$
\end{remark}

One can generalize Corollary~\ref{edges_injective} (in one direction) to hypersurfaces of arbitrary dimensions.

\begin{remark}
Let $A:=\kk[x_1,\dots,x_n]/(f)$ where $f\in \kk[x_1,\dots,x_n]$. An edge $e$ of the Newton polytope $N(f)\subset \RR_{\ge 0}^n$ induces an injective valuation $\nu$ on $A\setminus \{0\}$ iff the endpoints of $e$ are $v=(v_1,\dots,v_n), u=(u_1,\dots,u_n)\in \ZZ_{\ge 0}^n$ such that $\min\{v_i,u_i\}=0, 1\le i\le n$, and the integers $\max\{v_i,u_i\}, 1\le i\le n$ have no common divisor (cf. Theorem~\ref{quotient} and Remark~\ref{finite-condition}). Denote by $H\subset \RR^n$ the line containing $e$.

Denote by $\prec$ a linear ordering of the valuation cone (being a subset of $\ZZ^{n-1}$) of $\nu$. Following the proof of Theorem~\ref{th:basis}~ii) one can extend $\prec$ to a linear ordering $q$ on $\ZZ_{\ge 0}^n$ in two different ways according to an ordering (direction) in $H$. Then according to one of these two choices of $q$ either $u$ or $v$ becomes a leading monomial of $f$ with respect to $q$. Any of these two choices of $q$ provides an adapted basis of $A$ with respect to $\nu$ being a complement of a principal monomial ideal in $\kk[x_1,\dots,x_n]$ (see Theorem~\ref{th:basis}).

If edges $e_1,\dots, e_s$ of $N(f)$ induce injective valuations $\nu_1,\dots,\nu_s$, respectively, of $A$ and have a common vertex in $N(f)$ then $\nu_1,\dots,\nu_s$ possess
a common adapted basis being a complement of a principal monomial ideal in $\kk[x_1,\dots,x_n]$. 
\end{remark}

\begin{example}\label{3-dimensional_valuation}
Now we give an example of a pair of injective valuations on a 3-dimensional algebra $A:=\kk[x,y,z,t]/(f:=x^2+y^3+z^5+t^7)$ to $\ZZ_{\ge 0}^3$ endowed with the lexicographical ordering. Each pair of monomials of $f$ provides an injective valuation of $A\setminus \{0\}$ (see Theorem~\ref{quotient} and Proposition~\ref{curve_valuation}). In particular, a pair of monomials $x^2, y^3$ provides an injective valuation $\nu_1$ for which it holds
$$\nu_1(x)=(3,0,0), \nu_1(y)=(2,0,0), \nu_1(z)=(0,1,0), \nu_1(t)=(0,0,1).$$
\noindent In its turn, a pair of monomials $z^5, t^7$ provides an injective valuation $\nu_2$ for which it holds
$$\nu_2(z)=(7,0,0), \nu_2(t)=(5,0,0), \nu_2(x)=(0,1,0), \nu_2(y)=(0,0,1).$$
\noindent Denote $w:= x^2+y^3$. Then $\nu_1, \nu_2$ have a common adapted basis 
$$\{y^it^jw^lx^kz^m\ :\ i,j,l\ge 0, 0\le k\le 1, 0\le m\le 4\},$$
\noindent and $\nu_1(w)=\nu_1(-z^5-t^7)=(0,5,0),\ \nu_2(w)=(0,2,0)$.

One can generalize this construction to a polynomial of the form $f:=\sum_{1\le i\le n} x_i^{q_i}$ where $q_i, 1\le i\le n$ are pairwise relatively prime. Each pair of monomials of $f$ provides an injective valuation of $\kk[x_1\dots,x_n]/(f)$ into $\ZZ_{\ge 0}^{n-1}$.
\end{example}

\subsection{Enumerating injective well-ordered valuations of a hypersurface of a prime degree at a main variable}

\begin{remark}\label{long}
Let a polynomial $f=y^d+f_1\in \kk[y,x_1,\dots,x_n]$ be normalized with respect to $y$, i.e. $deg_y (f_1)<d$ (one can reduce to this situation invoking Noether normalization). Denote $A:=\kk[y,x_1,\dots,x_n]/(f)$. We say that an edge of Newton polytope $N_f\subset \RR^{n+1}$ is {\it long} if its endpoints are $(d,0,\dots,0),\, (0,i_1,\dots,i_n)$ and $GCD(d,i_1,\dots,i_n)=1$. 

Then the domain $A\setminus \{0\}$ admits an injective well-ordered valuation into $\QQ_{\ge 0}^n$ with an adapted basis 
$$\{y^kx_1^{j_1}\cdots x_n^{j_n}\, :\, 0\le k<d, 0\le j_1,\dots,j_n<\infty\}$$
\noindent and 
$$\nu(x_l)=e_l, 1\le l\le n,\, \nu(y)=\frac{i_1e_1+\cdots +i_ne_n}{d}$$
\noindent where $e_l=(0,\dots,0,1,0,\dots,0) \in \ZZ^n, 1\le l\le n$ is an ort vector (cf. Proposition~\ref{partition}).

Observe that for any valuation (not necessary injective or well-ordered) on $A\setminus \{0\}$ for which $\nu(x_l)=e_l, 1\le l\le n$ there exists an edge of $N_f$ with endpoints $(d_1,j_1,\dots,j_n)$,  $(d_2,k_1,\dots,k_n)$ such that
\begin{eqnarray}\label{50}
 \nu(y)=\frac{(k_1-j_1)e_1+\cdots +(k_n-j_n)e_n}{d_1-d_2}.   
\end{eqnarray}
\end{remark}

As a more general setting than in Remark~\ref{long} we assume that a domain $A$ is a finite integral extension of $\kk[x_1,\dots,x_n]$ of a rank $d$. We study injective well-ordered valuations $\nu$ on $A\setminus \{0\}$ into $\QQ_{\ge 0}^n$ for which $\nu(x_l)=e_l, 1\le l\le n$.

The following construction is valid for a valuation $\nu$ not necessary injective or well-ordered. Denote by $G(\nu)\subset \QQ^n/\ZZ^n$ the image $\nu(A\setminus \{0\})/\ZZ^n$. Note that $G(\nu)$ is an abelian semigroup. Moreover, for every element $a\in A\setminus \{0\}$ there exists a polynomial $h\in \kk[z,x_1,\dots,x_n]$ such that $h(a)=0$ and $deg_z (h)\le d$. As in Remark~\ref{long} there is an edge of Newton polytope $N_{h}\subset \RR^{n+1}$ with endpoints $(d_1,j_1,\dots,j_n), (d_2,k_1,\dots,k_n)$ such that
$$\nu(a)=\frac{(k_1-j_1)e_1+\cdots +(k_n-j_n)e_n}{d_1-d_2}$$
\noindent (see \eqref{50}). Hence $(d_1-d_2)\nu(a)$ is the unit element in $G(\nu)$, thus $G(\nu)$ is a group. Moreover,
$$G(\nu)\subset \frac{\ZZ^n}{LCM\{1,\dots, d\}}/\ZZ^n,$$
\noindent in particular, $G(\nu)$ is finite. We call $G(\nu)$ the {\it group of the valuation}.

\begin{lemma}\label{group_valuation}
Let a domain $A$ be a free  $\kk[x_1,\dots,x_n]$-module of a rank $d$. Let $\nu$ be a  well-ordered valuation of $A\setminus \{0\}$ being an extension into $\QQ_{\ge 0}^n$ of a 
valuation on $\kk[x_1,\dots,x_n]\setminus \{0\}$ with $\nu(x_l)=e_l, 1\le l\le n$. Then for the size $s$ of the group of the valuation $G(\nu)$ holds

i)  $s\le d$;

ii) if $\nu$ 
is injective and archimedian then $s=d$.
\end{lemma}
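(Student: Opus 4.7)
The approach is to realize the $s$ cosets of $G(\nu) = \nu(A\setminus\{0\})/\ZZ^n$ by explicit elements of $A$ and study their $\kk[x_1,\ldots,x_n]$-span.

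For (i), I pick $a_1,\ldots,a_s \in A\setminus\{0\}$ whose valuations represent the $s$ distinct cosets of $G(\nu)$, and show they are $\kk[x_1,\ldots,x_n]$-linearly independent, giving $s \le \operatorname{rank}_{\kk[x_1,\ldots,x_n]} A = d$. Indeed, in any putative relation $\sum_i p_i a_i = 0$ with $p_i \in \kk[x_1,\ldots,x_n]$ not all zero, the nonzero terms $p_i a_i$ have valuations $\nu(p_i)+\nu(a_i)$ in distinct $\ZZ^n$-cosets (since $\nu(p_i) \in \ZZ_{\ge 0}^n$), hence pairwise distinct; Definition~\ref{valuation_partial}(ii) then forces a unique maximum to survive, contradicting $\sum p_i a_i = 0$.

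For (ii), the plan is to upgrade the $s$ elements of (i) to a genuine $\kk[x_1,\ldots,x_n]$-basis of $A$, which will force $s = d$. For each $g \in G(\nu)$ with fractional representative $\tilde g \in [0,1)^n$, the fiber $S_g := \nu(A\setminus\{0\}) \cap (g+\ZZ^n)$ sits inside $\tilde g + \ZZ_{\ge 0}^n$ (since $\nu(A\setminus\{0\}) \subset \QQ_{\ge 0}^n$) and is closed under adding each $e_l$; by Dickson's lemma $S_g$ has finitely many $\le_{\ZZ^n}$-minimal elements, and I claim there is exactly one, call it $c_g$. Granted this, fix $a_g \in A$ with $\nu(a_g) = c_g$; by (i) the family $\{a_g\}$ is $\kk[x_1,\ldots,x_n]$-linearly independent, and for spanning, given $a \in A\setminus\{0\}$ with $\nu(a) \in S_g$ the uniqueness forces $\nu(a) - c_g \in \ZZ_{\ge 0}^n$, so injectivity of $\nu$ yields $\lambda \in \kk^*$ with $\nu(a - \lambda x^{\nu(a)-c_g} a_g) < \nu(a)$; well-orderedness of $\nu(A\setminus\{0\})$ terminates the descent. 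Hence $\{a_g\}$ is a basis of cardinality $s$, so $s = d$.

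It remains to prove uniqueness of the componentwise minimum of $S_g$. Suppose for contradiction that $c_1, c_2 \in S_g$ are two incomparable minimals and pick $a_i \in A$ with $\nu(a_i) = c_i$. Writing $c_2 - c_1 = \gamma^+ - \gamma^-$ with disjoint-support $\gamma^\pm \in \ZZ_{\ge 0}^n$ (both nonzero by incomparability), the elements $x^{\gamma^-} a_2$ and $x^{\gamma^+} a_1$ share the valuation $c_1 + \gamma^+$. Injectivity of $\nu$ produces $\lambda \in \kk^*$ with either (a) $x^{\gamma^-} a_2 = \lambda x^{\gamma^+} a_1$ in $A$, or (b) $\nu(u) < c_1+\gamma^+$ for $u := x^{\gamma^-} a_2 - \lambda x^{\gamma^+} a_1 \ne 0$. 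In case (a), $a_2/a_1 = \lambda x^\gamma \in \kk(x_1,\ldots,x_n)^*$ inside $L = \operatorname{Frac}(A)$, so $\kk[x_1,\ldots,x_n] a_1 + \kk[x_1,\ldots,x_n] a_2 \subset A$ is a rank-$1$ torsion-free (hence free over the PID $\kk[x_1,\ldots,x_n]$) submodule with some generator $a' \in A$; from $a_i = p_i a'$ with $p_i \in \kk[x_1,\ldots,x_n]$ we get $c_i \ge_{\ZZ^n} \nu(a')$ with $\nu(a') \in S_g$, so minimality of $c_1, c_2$ forces $\nu(a') = c_1 = c_2$, a contradiction. In case (b), I iterate the injective-reduction step on $u$ against $a_1$ or $a_2$ as applicable; the archimedean property ensures that only finitely many elements of $\nu(A\setminus\{0\})$ lie below $c_1 + \gamma^+$, and together with well-ordering this forces the descent to terminate, landing either in case (a) or producing an element of $S_g$ strictly $\le_{\ZZ^n} c_1$ or $c_2$, again contradicting minimality.

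The main obstacle I expect is case (b) of the uniqueness argument: controlling the iterative descent so that it cannot stray indefinitely into other cosets of $G(\nu)$, and pinning down precisely how archimedean-ness combined with injectivity forces a return to $S_g$ and ultimate contradiction.
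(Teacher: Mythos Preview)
Your argument for (i) is correct and essentially matches the paper: both show that the representatives $a_1,\dots,a_s$ of the distinct cosets are $\kk[x_1,\dots,x_n]$-linearly independent (you state this directly; the paper phrases it as an asymptotic $\kk$-dimension count, but the underlying reason is the same valuation-in-distinct-cosets observation).

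For (ii), your route diverges from the paper and has real gaps. First, $\kk[x_1,\dots,x_n]$ is \emph{not} a PID when $n\ge 2$, so your case~(a) argument fails as written: the rank-one module $Ra_1+Ra_2$ need not be free, and in fact is isomorphic to the non-principal monomial ideal $(x^{\gamma^-},x^{\gamma^+})$ when both $\gamma^{\pm}\ne 0$. Case~(a) \emph{can} be repaired by a different argument---expand $a_1=\sum p_ib_i$ in the given free $R$-basis of $A$; then $x^{\gamma^-}a_2=\lambda x^{\gamma^+}a_1$ forces $x^{\gamma^-}\mid x^{\gamma^+}p_i$ in the UFD $R$, hence $x^{\gamma^-}\mid p_i$, so $a_1=x^{\gamma^-}a_1'$ with $\nu(a_1')=c_1-\gamma^-\in S_g$ strictly below $c_1$, contradicting minimality---but this is not what you wrote.

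More seriously, case~(b) is the heart of the matter and you have not resolved it. Once $\nu(u)$ drops below $c_1+\gamma^+$, nothing forces $\nu(u)$ to remain in the coset $g$, so ``reducing $u$ against $a_1$ or $a_2$'' is undefined at that step; the archimedean bound only tells you the descent is finite, not where it lands. Worse, your overall strategy (unique componentwise minimum in each $S_g$ $\Rightarrow$ the $a_g$ span $A$ $\Rightarrow$ $s\ge d$) is uncomfortably close to circular: uniqueness follows easily \emph{from} $s=d$ (an extra incomparable minimum in some $S_g$ would give $s+1$ elements that are $R$-independent by the coset argument, impossible in a free rank-$d$ module), but you need the implication in the other direction. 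The paper avoids all of this with a direct counting argument: using the linear form defining the archimedean order, injectivity gives $d|V_N|$ distinct valuations in $\nu(b_1V_N+\cdots+b_dV_N)$, while this set is contained in $\bigsqcup_{i=1}^s (g_i+V_{N+c})$ of size at most $s|V_{N+c}|$, whence $d\le s$ as $N\to\infty$. No structural claim about the shape of $S_g$ is needed.
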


\begin{proof}
i) Let $b_1,\dots,b_d\in A$ be a free $\kk[x_1,\dots,x_n]$-basis of $A$. Pick $t_1,\dots,t_s\in A\setminus \{0\}$ such that $\nu(t_l)-\nu(t_j)\notin \ZZ^n$ for $1\le l\neq j\le s$. Express $t_j=\sum_{1\le i\le d} h_{j,i}b_i$ for appropriate polynomials $h_{j,i}\in \kk[x_1,\dots,x_n]$. Denote 
$$M_N:=\{x_1^{l_1}\cdots x_n^{l_n}\, :\, 0\le l_1+\cdots +l_n\le N\},\, W_N:= t_1M_N+\cdots +t_sM_N$$
\noindent for an integer $N$. Then
$\dim_{\kk} W_N = s|M_N|$ due to the valuation property. On the other hand,
$$W_N\subset b_1M_{N+c}+\cdots +b_dM_{N+c}$$
\noindent for a suitable constant $c\in \ZZ_{\ge 0}$. Therefore, considering sufficiently big $N$, we obtain that $s\le d$. 
\vspace{2mm}

ii) Denote $$V_N:= \{(i_1,\dots,i_n)\in \ZZ_{\ge 0}^n\, :\, \nu(i_1,\dots,i_n)\le N\}$$
\noindent (we identify the archimedian valuation with defining it linear form). Then $|V_N|\sim c_0\cdot N^n$ for an appropriate $0<c_0\in \RR$. Observe that $\nu$ on $\QQ_{\ge 0}^n$ is defined by the same linear form as $\nu$ is.

Denote by 
$g_1,\dots,g_s$ 
the unique representatives of the elements of $G(\nu)$ in the cube $[0,1)^n$. Then 
$$\nu(b_1\cdot V_N+\cdots+b_d\cdot V_N) \subset g_1\cdot V_{N+c} \bigsqcup \cdots \bigsqcup g_s\cdot V_{N+c}$$ 
\noindent for a suitable constant $c$. Since $\nu$ is injective this implies that $s\ge d$ taking into account that $\dim (b_1\cdot V_N+\dots+b_d\cdot V_N)=d|V_N|$.
\end{proof}

\begin{remark}\label{finite_dimension}
One can literally extend Lemma~\ref{group_valuation} to a domain $A\supset \kk[x_1,\dots,x_n]$ such that $\kk(x_1,\dots,x_n)$-dimension of $A\otimes _{\kk[x_1,\dots,x_n]} \kk(x_1,\dots,x_n)$ equals $d$.     
\end{remark}

It would be interesting to clarify whether Lemma~\ref{group_valuation} and Remark~\ref{finite_dimension} are true for not necessary archimedian valuations.

\begin{corollary}\label{prime}
Under the conditions of Lemma~\ref{group_valuation} or Remark~\ref{finite_dimension} in case of a square-free $d$ the group $G(\nu)$ is cyclic of size $d$ and every its generating element has a form $\frac{i_1e_1+\cdots +i_ne_n}{d}$ where $GCD(d,i_1,\dots,i_n)=1$.    
\end{corollary}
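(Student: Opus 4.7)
The plan is to combine Lemma~\ref{group_valuation}(ii) (or Remark~\ref{finite_dimension}) with the elementary structure theory of finite abelian subgroups of $\QQ^n/\ZZ^n$. First, Lemma~\ref{group_valuation}(ii) gives $|G(\nu)|=d$, and by construction $G(\nu)$ is an abelian subgroup of $\QQ^n/\ZZ^n$.

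Next I would use that every finite abelian group of square-free order is cyclic. Write $d=p_1\cdots p_k$ with distinct primes $p_j$. For each $j$, the $p_j$-Sylow subgroup of $G(\nu)$ has order $p_j$ and is therefore cyclic of order $p_j$; since the orders $p_j$ are pairwise coprime, $G(\nu)$ is the direct product of these cyclic Sylow subgroups, and by CRT this product is cyclic of order $d$. Hence $G(\nu)\cong \ZZ/d\ZZ$.

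It remains to describe the generators. An element $g\in \QQ^n/\ZZ^n$ of order $d$ has a unique representative of the form $\frac{(i_1,\ldots,i_n)}{d}\in [0,1)^n$, and its order $d$ is precisely the smallest positive integer $m$ with $mg\in \ZZ^n$. This forces $\gcd(d,i_1,\ldots,i_n)=1$, since any common divisor $e>1$ of $d,i_1,\ldots,i_n$ would give $(d/e)g\in\ZZ^n$, contradicting that $d$ is the order. Conversely, any such $\frac{i_1e_1+\cdots +i_ne_n}{d}$ with $\gcd(d,i_1,\ldots,i_n)=1$ is an element of order exactly $d$, hence (when it lies in $G(\nu)$) generates the cyclic group $G(\nu)$.

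There is no real obstacle here: the corollary is essentially a repackaging of Lemma~\ref{group_valuation}(ii) together with the fact that finite abelian groups of square-free order are cyclic; the only thing to verify is the normal-form statement for generators of cyclic subgroups of $\QQ^n/\ZZ^n$, which is immediate from minimality of the order.
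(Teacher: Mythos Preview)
Your proposal is correct and matches the paper's intended (implicit) argument: the corollary is stated without proof precisely because it is an immediate consequence of Lemma~\ref{group_valuation}(ii) (giving $|G(\nu)|=d$) together with the elementary fact that a finite abelian group of square-free order is cyclic, plus the straightforward identification of order-$d$ elements in $\QQ^n/\ZZ^n$. Your observation that $d\cdot G(\nu)=0$ forces every element to lie in $\frac{1}{d}\ZZ^n/\ZZ^n$, and hence that generators are exactly those with $\gcd(d,i_1,\ldots,i_n)=1$, is the only computation needed and you have carried it out correctly.
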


Now let $A:=\kk[y,x_1,\dots,x_n]/(f)$ be as in Remark~\ref{long} and $d$ be a prime. Our goal is to design an algorithm which enumerates all injective well-ordered archimedian valuations on $A\setminus \{0\}$ into $\QQ_{\ge 0}^n$ (with $\nu(x_l)=e_l, 1\le l\le n$, cf. Corollary~\ref{prime}). 

The algorithm produces a finite tree $T$ by recursion. Some leaves of $T$ correspond to injective well-ordered valuations on $A\setminus \{0\}$. As a base of recursion a root of $T$ is produced.

As a recursive hypothesis assume that at a vertex $v$ of a depth $s$ of $T$ a constructible set $U_v\subset \kk^s$ and a set of monomials $m_1,\dots, m_s \in \ZZ_{\ge 0}^n$ are produced (we identify monomials in the variables $x_1,\dots,x_n$ with $\ZZ_{\ge 0}^n$). We suppose that $m_s$ does not belong to the monomial ideal generated by $m_1,\dots,m_{s-1}$. In addition, the algorithm produces a polynomial $f_v=y^d+f_{v,1}\in \kk[y,x_1,\dots,x_n,z_1,\dots,z_s]$ (where $deg_y (f_{v,1})< d$) such that for any point $(\alpha_1,\dots, \alpha_s)\in U_v$ it holds
\begin{eqnarray}\label{41}
f_v(y-\alpha_1 m_1 - \cdots - \alpha_s m_s, x_1,\dots, x_n, \alpha_1,\dots, \alpha_s)=0    
\end{eqnarray}
and that Newton polytopes $N_{f_v} \subset \RR^{n+1}$ are the same for all the points $(\alpha_1,\dots, \alpha_s)\in U_v$.

Now we proceed to the description of the recursive step of the algorithm. First assume that Newton polytope $N_{f_v}$ has a long edge with endpoints $(d,0,\dots,0), (0,i_1,\dots,i_n)$, denote by $g(z_1,\dots,z_n)\in \kk[z_1,\dots, z_n]$ the coefficient of $f_v$ at the monomial $x_1^{i_1}\cdots x_n^{i_n}$. The algorithm verifies (invoking linear programming) whether there exists a linear archimedian ordering $\succ$ (compatible with addition) on $\QQ_{\ge 0}^n$ such that $m_1\succ \cdots \succ m_s \succ \frac{i_1e_1+\dots +i_ne_n}{d}$ (otherwise, the algorithm ignores the long edge under consideration). If such an ordering does exist then Remark~\ref{long} provides an injective well-ordered valuation $\nu$ on $A$ such that 
$$\nu (y-\alpha_1 m_1 -\cdots - \alpha_s m_s)=\frac{i_1e_1+\dots +i_ne_n}{d}.$$
\noindent Thus, as an adapted basis of $A$ with respect to $\nu$ one can take 
$$\{(y-\alpha_1 m_1 -\cdots - \alpha_s m_s)^k \cdot x_1^{j_1}\cdots x_n^{j_n}\},\, 0\le k<d, 0\le j_1,\dots, j_n <\infty.$$
\noindent The algorithm produces a vertex being a son of $v$ and a leaf in $T$ which corresponds to $\nu$.

Now we consider a not long edge of $N_{f_v}$ with endpoints $(d_1, j_1,\dots, j_n),\, (d_2, k_1,\dots, k_n)$ (obviously, $0\le d_1\neq d_2\le d$). Denote
$$m_{s+1}:= \frac{(k_1-j_1)e_1+\cdots +(k_n-j_n)e_n}{d_1-d_2}$$
\noindent (cf. \eqref{50}, \eqref{41}), provided that $m_{s+1} \in \ZZ_{\ge 0}^n$. Observe that if $m_{s+1} \notin \ZZ_{\ge 0}^n$ then for no element $a\in A$ it holds $\nu(a)=m_{s+1}$ for an injective well-ordered archimedian valuation $\nu$ because of Corollary~\ref{prime}.

The algorithm verifies (invoking linear programming) whether there exists a linear archimedian ordering $\succ$ on $\ZZ_{\ge 0}^n$ such that $m_1\succ \cdots \succ m_s \succ m_{s+1}$ (otherwise, the algorithm ignores the edge of $N_{f_v}$ under consideration). The latter is necessary because the algorithm looks for an injective well-ordered valuation $\nu$ such that $\nu(y-\alpha_1 m_1 - \cdots - \alpha_s m_s)=\nu(m_{s+1})(=m_{s+1})$. Note that in particular, the existence of a suitable linear ordering $\succ$ implies that $m_{s+1}$ does not belong to the monomial ideal generated by $m_1,\dots,m_s$.

The algorithm calculates a polynomial $g\in \kk[y,x_1,\dots, x_n,z_1,\dots,z_s,z_{s+1}]$ such that
$$g(y-\alpha_1 m_1 -\cdots - \alpha_s m_s -z_{s+1} m_{s+1},x_1,\dots, x_n,\alpha_1,\dots,\alpha_s,z_{s+1})=0$$
\noindent for any point $(\alpha_1,\dots, \alpha_s)\in U_v$ (cf. \eqref{41}). Note that it still holds $g=y^d+g_1$ where $deg_y (g_1)< d$. For different values of $z_{s+1}$ in $\kk$ there is a finite number of possible shapes of Newton polytopes $N_g\subset \RR^{n+1}$ (which are determined by their vertices). For each fixed shape the algorithm produces a vertex $w$ being a son of $v$ in $T$ together with a constructible set $U_w\subset \kk^{s+1}$ assuring the fixed shape. We put a polynomial $f_w:= g$. This completes the description of the recursive step of the algorithm. \vspace{2mm}

Observe that the tree $T$ is finite since along every its path the monomial ideal generated by $m_1,\dots, m_s$ strictly increases and therefore, the path is finite due to Hilbert's Idealbasissatz (also we make use of K\"onig's Lemma). Summarizing, we have established the following proposition.

\begin{proposition}
There is an algorithm which for a polynomial $f=y^d+f_1\in \kk[y,x_1,\dots,x_n]$ with a prime $d$ where $deg_y (f_1)< d$, enumerates all injective well-ordered archimedian valuations on $(\kk[y,x_1,\dots,x_n]/(f))\setminus \{0\}$.    
\end{proposition}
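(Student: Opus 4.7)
The plan is to verify that the algorithm described immediately before the proposition (a) terminates, (b) outputs only valid injective well-ordered archimedean valuations, and (c) captures every such valuation. I would split the argument into these three checks and treat them as the skeleton of the proof.

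For termination, I would observe that along any root-to-leaf path in the tree $T$ the algorithm successively produces monomials $m_1, m_2, \ldots \in \ZZ_{\ge 0}^n$ with the property that $m_{s+1}$ does not lie in the monomial ideal generated by $m_1,\ldots, m_s$ (this was ensured at the recursive step by the existence of a linear archimedean order $m_1 \succ \cdots \succ m_{s+1}$). Hence the ascending chain of monomial ideals $\langle m_1\rangle \subsetneq \langle m_1,m_2\rangle \subsetneq \cdots$ stabilizes by Hilbert's basis theorem, bounding the depth; branching at each vertex is finite because a Newton polytope has only finitely many edges and for fixed $z_{s+1}$ only finitely many Newton polytope shapes occur. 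König's lemma then yields finiteness of $T$.

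For soundness, I would invoke Remark~\ref{long}: whenever the algorithm reaches a vertex $v$ whose associated polynomial $f_v$ has a long edge compatible with an archimedean order extending $m_1\succ\cdots\succ m_s$, substituting $y \mapsto y - \alpha_1 m_1 - \cdots - \alpha_s m_s$ reduces $A$ to $\kk[y',x_1,\ldots,x_n]/(f_v|_{z=\alpha})$ with a long edge, which carries an explicit injective well-ordered archimedean valuation with the displayed adapted basis. Compatibility with $\nu(x_l)=e_l$ and archimedean extendibility of the order on $\QQ_{\ge 0}^n$ were checked by linear programming before the leaf was emitted, so the output is indeed valid.

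The main obstacle, and the place where I expect to spend most of the work, is completeness. Given an injective well-ordered archimedean valuation $\nu$ on $A\setminus\{0\}$, I would construct inductively a path in $T$ that captures it. By Corollary~\ref{prime}, $G(\nu)$ is cyclic of prime order $d$, generated by $(i_1e_1+\cdots+i_ne_n)/d$ with $\gcd(d,i_1,\ldots,i_n)=1$; by (\ref{50}) this fraction is realized by some edge of $N_{f_v}$ at each stage. The recursive step is: if the realizing edge is long then Remark~\ref{long} shows $\nu$ is exactly the valuation produced at the corresponding leaf; if not, $\nu(y - \alpha_1 m_1 - \cdots - \alpha_s m_s) = m_{s+1} \in \ZZ_{\ge 0}^n$ for a unique $\alpha_{s+1}\in\kk$ (uniqueness by injectivity of $\nu$), and the required $m_{s+1}$ strictly precedes $m_s$ in the archimedean order induced by $\nu$. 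The delicate point is to show that this process terminates along the distinguished path, which follows because the ascending chain of monomial ideals $\langle m_1,\ldots,m_s\rangle$ must stabilize, and once stabilization is reached a further refinement would contradict injectivity of $\nu$ on the adapted basis. Combining (a)--(c) gives the claimed algorithm.
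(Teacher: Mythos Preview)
Your proposal is correct and follows the paper's approach: the paper's ``proof'' is essentially the algorithm description itself together with the one-line termination argument (Hilbert's Idealbasissatz plus K\"onig's lemma), and you have simply made the implicit soundness and completeness checks explicit. Your decomposition into (a) termination, (b) soundness, (c) completeness is a cleaner presentation of what the paper leaves embedded in the recursive description.

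One small point worth tightening in part (c): when the realizing edge is not long, you assert $m_{s+1}\in\ZZ_{\ge 0}^n$ but do not say why. This is exactly where primality of $d$ enters (and is the content of the paper's remark ``if $m_{s+1}\notin\ZZ_{\ge 0}^n$ then for no element $a\in A$ \ldots\ because of Corollary~\ref{prime}''): a non-long edge has slope with denominator $d_1-d_2<d$, hence coprime to $d$, so it can only realize a value in $\frac{1}{d}\ZZ^n$ if that value is already integral. Also, your sentence ``$\nu(y-\alpha_1 m_1-\cdots-\alpha_s m_s)=m_{s+1}$ for a unique $\alpha_{s+1}$'' is garbled; you mean that $m_{s+1}$ is this value, and then injectivity of $\nu$ gives a unique $\alpha_{s+1}$ with $\nu(y-\alpha_1 m_1-\cdots-\alpha_{s+1} m_{s+1})\prec m_{s+1}$. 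Finally, the separate termination argument for the distinguished path in (c) is unnecessary: once (a) shows $T$ is finite, the path following $\nu$ is automatically finite.
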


It would be interesting to generalize the latter proposition to arbitrary affine algebras. The next example demonstrates that it is not possible to generalize it directly even to domains being free $\kk[x_1,\dots,x_n]$-modules of composite ranks.

\begin{example}\label{2x2}
A domain $A=\kk[x^{1/2}, y^{1/2}]$ is $4$-dimensional free $\kk[x,y]$-module. Then $\nu(x^{1/2})=e_1/2, \nu(y^{1/2})=e_2/2$ and the group $G(\nu)$ is isomorphic to $\ZZ_2 \times \ZZ_2$. Note that an element $z:=x^{1/2}+y^{1/2}$ has a minimal polynomial of degree 4, namely $z^4-2(x+y)z^2+x^2+y^2-4xy=0$ whose Newton polytope has no long edge (cf. Remark~\ref{long}).     
\end{example}

\begin{remark}
Let a domain $A$ be a 
$\kk[x_1,\dots,x_n]$-module, 
admitting an injective well-ordered valuation 
$\nu$ satisfying the conditions of Remark~\ref{finite_dimension} such that $d$ is square-free. Due to Corollary~\ref{prime} the group $G(\nu)$ is cyclic of the size $d$. Pick $y\in A$ such that $\nu(y)/\ZZ^n$ is a generator of $G(\nu)$. Denote by $f\in \kk[x_1,\dots,x_n,y]$ the minimal polynomial of $y$ over $\kk[x_1,\dots,x_n]$. Then $deg_y f=d$ since Newton polytope of $f$ contains an edge with a denominator equal to $d$, and $f=qy^d+\cdots, q\in \kk[x_1,\dots,x_n]$.
The domain $A_0:=\kk[x_1,\dots,x_n,qy]$ is a free $\kk[x_1,\dots,x_n]$-module with a basis $1, qy,\dots, (qy)^{d-1}$, and Newton polytope of the minimal polynomial $q^{d-1}f$ of $qy$ contains a long edge. According to Remark~\ref{long} this long edge provides on $A_0\setminus \{0\}$ the valuation coinciding with the restriction of $\nu$.

Furthermore, this restriction is extended uniquely to $A\setminus \{0\}$. Namely, for any $a\in A\setminus \{0\}$ there exists $p\in \kk[x_1,\dots,x_n]$ such that $pa\in A_0\setminus \{0\}$, therefore $\nu(a)=\nu(pa)-\nu(p)$.
\end{remark}

\subsection{Convexity of the extended Jordan-H\"older bijection for   valuations in an archimedian monoid}

Let $\nu:A\setminus \{0\} \to \ZZ_{\ge 0}^n$ be an injective  valuation in a monoid with archimedian linear order (cf. Remark~\ref{monoid_archimedian}). 
We say that $\nu$ is {\bf finitary} if the ordering $\prec$ on vectors $v=(v_1,\dots,v_n)\in \ZZ_{\ge 0}^n$ is determined by a linear function $\alpha(v):=\alpha_1 v_1+\cdots+\alpha_n v_n$ where positive reals $\alpha_1,\dots,\alpha_n$ are $\QQ$-linearly independent. Then the ordering $\prec$ is isomorphic to $\ZZ_{\ge 0}$, i.e. the monoid $\ZZ_{\ge 0}^n$ is archimedian. 
For instance, $lex$ is not finitary. 
In particular, for any $c\in \nu (A\setminus \{0\})$ the $\kk$-vector space $A_{\le c}:= \{a\in A\setminus \{0\}\, :\, \nu(a)\preceq c\} \cup \{0\}$ is finite-dimensional.

Let $\nu_0 : A\setminus \{0\} \to \ZZ_{\ge 0}^n$ be another injective valuation (not necessary archimedian). Denote the valuation cones $C:=\nu(A\setminus \{0\}),\, C_0:=\nu_0(A\setminus \{0\})$. Consider a convex cone $C_0^{(\QQ)}:= C_0 \otimes \QQ_{\ge 0}$ and denote
$$S_0:= C_0^{(\QQ)} \cap \{(v_1,\dots,v_n)\in \QQ_{\ge 0}^n\, :\, v_1+\cdots +v_n=1\}.$$

The following map 
$${\bf K}(c_0):= \min_{\prec} \{\nu (\nu_0^{-1}(c_0))\} \in C$$
\noindent is a bijection ${\bf K} :C_0 \widetilde \to C$ called generalized JH-bijection (see Theorem~\ref{th:generalized JH}).
Denote a function ${\overline {\bf K}} := \alpha \circ {\bf K} : C_0 \to \RR_{\ge 0}$. One can define ${\overline {\bf K}}$ on $S_0$ (so, on rational points) as follows. For a point $u:=(u_1,\dots,u_n)\in S_0$ we have ${\overline {\bf K}}((p+q)u) \le {\overline {\bf K}}(pu) + {\overline {\bf K}} (qu)$, provided that $pu,qu \in C_0$, since the subadditivity ${\overline {\bf K}}(w_1+w_2) \le {\overline {\bf K}}(w_1)+{\overline {\bf K}}(w_2)$ holds for any elements $w_1,w_2 \in C_0$. Therefore, due to Fekete's subadditivity lemma \cite{S} there exists the limit
$${\underline {\bf K}}(u):= \lim_{p\to \infty,\, pu\in C_0}  
\frac{{\overline {\bf K}}(pu)}{p}.$$

\begin{proposition}
Function 
${\underline {\bf K}}:S_0 \to \RR_{\ge 0}$ is convex.
\end{proposition}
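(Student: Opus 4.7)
The plan is to establish convexity of $\underline{\bf K}$ on $S_0$ by combining two ingredients already built into the construction: the subadditivity of $\overline{\bf K}$ on $C_0$ and Fekete's subadditive lemma used to define $\underline{\bf K}$ as a limit. Since $S_0$ consists of rational points, it suffices to verify
$$\underline{\bf K}(\lambda u_1 + (1-\lambda)u_2) \le \lambda \underline{\bf K}(u_1) + (1-\lambda) \underline{\bf K}(u_2)$$
for $u_1, u_2 \in S_0$ and a rational $\lambda = p/q \in [0,1]$, the case $\lambda \in \{0,1\}$ being trivial.

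First I would reconfirm the subadditivity $\overline{\bf K}(w_1+w_2) \le \overline{\bf K}(w_1) + \overline{\bf K}(w_2)$ for all $w_1,w_2 \in C_0$. Pick $a_i \in A\setminus\{0\}$ realizing ${\bf K}(w_i) = \nu(a_i)$ and $\nu_0(a_i) = w_i$; since $\ZZ_{\ge 0}^n$ is entire, $a_1 a_2 \ne 0$, and $\nu_0(a_1 a_2) = w_1+w_2$, $\nu(a_1 a_2) = \nu(a_1)+\nu(a_2) = {\bf K}(w_1)+{\bf K}(w_2)$. Hence ${\bf K}(w_1+w_2) \preceq {\bf K}(w_1)+{\bf K}(w_2)$ in the order on $\ZZ_{\ge 0}^n$ used for $\nu$; applying $\alpha$, which is strictly order-preserving on $\ZZ_{\ge 0}^n$ since $\nu$ is finitary, gives the scalar subadditivity.

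Now fix $u_1, u_2 \in S_0$ and $\lambda = p/q$, and put $u := \lambda u_1 + (1-\lambda) u_2 \in S_0$. Choose $M \in \ZZ_{>0}$ clearing denominators so that $Mu_1, Mu_2 \in C_0$; then for every $r \ge 1$ the elements $rMp\,u_1$ and $rM(q-p)\,u_2$ lie in $C_0$ and their sum equals $rMq\,u \in C_0$. Applying subadditivity to this decomposition and dividing through by $rMq$ yields
$$\frac{\overline{\bf K}(rMq\,u)}{rMq} \le \lambda\,\frac{\overline{\bf K}(rMp\,u_1)}{rMp} + (1-\lambda)\,\frac{\overline{\bf K}(rM(q-p)\,u_2)}{rM(q-p)}.$$
Letting $r\to\infty$, the three ratios converge to $\underline{\bf K}(u)$, $\underline{\bf K}(u_1)$, $\underline{\bf K}(u_2)$ respectively, and the desired inequality follows.

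The one delicate point is confirming that these cofinal subsequences really recover the Fekete limit. For any $v \in S_0$ the set $\{k\in \ZZ_{>0}\,:\,kv\in C_0\}$ is a subsemigroup of $\ZZ_{>0}$ containing all sufficiently large multiples of some positive integer, and Fekete's lemma applied to the subadditive function $k\mapsto \overline{\bf K}(kv)$ on it shows that $\overline{\bf K}(kv)/k$ converges (to its infimum) along the entire subsemigroup; any cofinal sub-subsequence, in particular $\{rMp\}$, $\{rM(q-p)\}$, $\{rMq\}$ once $M$ has been chosen appropriately, must then yield the same limit. This is the obstacle to watch, but once $M$ is fixed at the start, the remainder is a routine subadditivity-plus-Fekete computation.
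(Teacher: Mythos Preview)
Your proof is correct and follows essentially the same approach as the paper's: clear denominators, apply the subadditivity of $\overline{\bf K}$ to the scaled convex combination, divide, and pass to the limit using Fekete. The only differences are cosmetic---the paper treats arbitrary finite rational convex combinations in one stroke rather than reducing to the two-point case, and you are more explicit than the paper about why the cofinal subsequences recover the Fekete limit.
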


\begin{proof}  Consider a convex combination $w=\sum_i \lambda_i w_i$ of points $w,w_i \in S_0$ where $0< \lambda_i \in \QQ,\, \sum_i \lambda_i =1$ for all $i$. Then for a suitable $0<s\in \ZZ$ it holds $sw,sw_i \in C_0$ for all $i$. Denote by $q$ the common denominator of all $\lambda_i$. Hence ${\overline {\bf K}}(pqsw)\le \sum_i {\overline {\bf K}}(pqs\lambda_iw_i)$ for any $p\in \ZZ_{\ge 0}$ because of the subadditivity of ${\overline {\bf K}}$. Dividing the both sides of the latter inequality by $pqs$ and tending $p$ to infinity, we conclude that ${\underline {\bf K}}(w)\le \sum_i \lambda_i {\underline {\bf K}}(w_i)$, which completes the proof.

The proposition is proved. 
\end{proof} 

\begin{remark}
We have taken an ordering $\prec$ to be archimedian since otherwise one can't assure an inequality after tending to a limit. For example, in $lex$ ordering each element of a sequence $(1-1/p,1) \in \QQ_{\ge 0}^2$ is less than $(1,0)$, while their limit against $p$ is not. 
\end{remark}

\begin{proposition}
One can extend ${\underline {\bf K}}$ (from rational points) to real points in the interior $int(S_0\otimes \RR_{\ge 0})$ being a (continuous) convex function.
\end{proposition}

\begin{proof}  ${\underline {\bf K}}$ is locally Lipschitz in $int(S_0)$ (cf. e.g., \cite{NK}), hence it can be (uniquely) extended to a continuous function (moreover, locally Lipschitz) on $int(S_0 \otimes \RR_{\ge 0})$ which is also convex. 

The proposition is proved. \end{proof}

\vspace{2mm}

\subsection{Examples of Jordan-H\"older bijections}
\label{subsec:Examples Jordan-Holder bijections}

\begin{definition}\label{over_JH}
Let $\nu_1, \nu_2: A\setminus \{0\} \to \ZZ_{\ge 0}^l$ be injective valuations of an algebra of $\dim A=l$. Assume that  $\nu_1, \nu_2$ have a common adapted monomial basis
$${\bf B}=\{y_1^{i_1}\cdots y_L^{i_L}\ |\ (i_1,\dots,i_L)\in P\},$$
\noindent where $P\subset \ZZ_{\ge 0}^L$ is a coideal semigroup, $y_1,\dots,y_L\in A\setminus \{0\}$. Then there are unique linear maps $\widehat{\nu_1}, \widehat{\nu_2} : \QQ^L \to \QQ^l$ such that $\nu_i(y_1^{i_1}\cdots y_L^{i_L})=\widehat{\nu_i}(i_1,\dots,i_L),\ i=1,2$. Hence $\widehat{\nu_1}, \widehat{\nu_2}$ are both surjective due to Proposition~\ref{rank_partial}. Therefore there are bijections
$$\widehat{\nu_i}: y^P \widetilde \to C_{\nu_i},\ i=1,2$$
\noindent and it holds 
$${\bf K}_{\nu_2,\nu_1} \circ \widehat{\nu_1} | _{y^P} = \widehat{\nu_2} | _{y^P}.$$
\end{definition}

\begin{example}
\label{ex:mod 2}
Let  $\varphi$ and $\psi$ are injective homomorphisms $\kk[z_1,z_2]\to \kk[t_1,t_2]$ given respectively by:
$\varphi(z_1)=t_1,~\varphi(z_2)=t_2,~\psi(z_1)=t_1,~\psi(z_2)=t_1^2+t_2$.
Clearly, $C_\varphi=C_\psi=\ZZ_{\ge 0}^2$.
One can easily see that the set 
$${\bf B}=\{b^\varepsilon_{\bf d}=z_1^\varepsilon z_2^{d_1}(z_2-z_1^2)^{d_2}\, |\, {\bf d}=(d_1,d_2)\in \ZZ_{\ge 0}^2,~\varepsilon\in \{0,1\}\}$$ 
is a basis of $\kk[z_1,z_2]$ adapted to both $\nu_\varphi$ and $\nu_\psi$ (i.e., is an JH-basis in $\kk[z_1,z_2]$) and
$$\nu_\varphi(b^\varepsilon_{\bf d})=(\varepsilon+2d_2,d_1),\nu_\psi(b^\varepsilon_{\bf d})=(\varepsilon+2d_1,d_2)$$
for all ${\bf d}=(d_1,d_2)\in \ZZ_{\ge 0}^2,~\varepsilon\in \{0,1\}$.
Therefore,
$${\bf K}_{\varphi,\psi}(a_1,a_2)=(2a_2+(a_1)_2,\left\lfloor\frac{a_1}{2}\right\rfloor)$$
for all $a_1,a_2\in \ZZ_{\ge 0}$.
where $(a)_2=a-2\left\lfloor\frac{a}{2}\right\rfloor$ is the parity of $a$. Moreover,  ${\bf K}_{\varphi,\psi}$ is an involution on $\ZZ_{\ge 0}^2$.

\end{example}

\begin{example}
\label{ex:free mod 2}  
Let  $\varphi$ and $\psi$ be  isomorphisms $\kk<z_1,z_2>\to \kk<t_1,t_2>$ given respectively by:
$\varphi(z_1)=t_1,~\varphi(z_2)=t_2,~\psi(z_1)=t_1,~\psi(z_2)=t_1^2+t_2$.
Clearly, $C_\varphi=C_\psi=F_2$, the free semigroup of rank $2$. We assume that $F_2$ is endowed with the linear ordering deglex in which $z_1\succ z_2$ and $t_1\succ t_2$, respectively (cf. Lemma~\ref{deglex}).

Denote $b_0:=z_2, b_1:=z_1z_2-z_2z_1, b_2:=z_1^2-z_2$ and by $P:=\langle b_0, b_1, b_2\rangle \subset \kk\langle z_1, z_2\rangle$ the semigroup generated by $b_0, b_1, b_2$. We claim that ${\bf B}:= P \sqcup P\cdot z_1$ is a basis of $\kk\langle z_1, z_2\rangle$  adapted to both valuations $\nu_{\varphi}, \nu_{\psi}$. 

Indeed, it holds that $\nu_{\varphi}(b_0)=t_2, \nu_{\varphi}(b_1)=t_1t_2, \nu_{\varphi}(b_2)=t_1^2, \nu_{\varphi}(z_1)=t_1$. Moreover, the semigroup $\nu_{\varphi}(P)=\langle t_1^2, t_1t_2, t_2\rangle$ is freely generated by $t_1^2, t_1t_2, t_2$, and it consists of all the words $t_1^{i_1} \cdot t_2^{j_1}\cdots t_1^{i_s} \in \langle t_1, t_2\rangle$ for which $i_s$ is even. Therefore $\nu_{\varphi}$ is injective on $\bf B$ and $\nu_{\varphi}({\bf B})=\langle t_1, t_2\rangle$. Similarly $\nu_{\psi}(b_0)=t_1^2, \nu_{\psi}(b_1)=t_1t_2, \nu_{\psi}(b_2)=t_2, \nu_{\psi}(z_1)=t_1$, also $\nu_{\psi}$ is injective on $\bf B$ and $\nu_{\psi}(P)=\nu_{\varphi}(P),\ \nu_{\psi}({\bf B})=\nu_{\varphi}({\bf B})$.

To justify that $\bf B$ is a basis of $\kk\langle z_1, z_2\rangle$ it suffices to verify that $\bf B$ spans $\kk\langle z_1, z_2\rangle$. We prove (by induction on $\prec$) that any monomial $M\in \langle z_1, z_2\rangle$ belongs to the linear span of $\bf B$. The base of induction for $M=z_1, z_2$ is clear. Let $M$ have the length greater than 1. If $M=z_2M_1$ then we apply the inductive hypothesis to the monomial $M_1$. If $M=z_1z_2M_2$ then $M=(z_1z_2-z_2z_1)M_2+z_2z_1M_2$, and we apply the inductive hypothesis to the monomials $M_2, z_2z_1M_2 \prec M$. Finally, if $M=z_1^2M_3$ then $M=(z_1^2-z_2)M_3+z_2M_3$, and we apply the inductive hypothesis to the monomials $M_3, z_2M_3 \prec M$.

Thus, ${\bf K}_{\psi, \varphi}(t_2)=t_1^2, {\bf K}_{\psi, \varphi}(t_1t_2)=t_1t_2, {\bf K}_{\psi, \varphi}(t_1^2)=t_2, {\bf K}_{\psi, \varphi}(t_1)=t_1$.

\end{example}

\begin{example}\label{2_noncommutative}
Let $\varphi, \psi: \kk\langle x_{11}, x_{12}, x_{21}, x_{22}\rangle \to \kk\langle t_{11}, t_{12}, t_{21}, t_{22}\rangle$ be homomorphisms defined by 
$$\varphi(x_{11})=t_{11}, \varphi(x_{12})=t_{12}, \varphi(x_{21})=t_{21}, \varphi(x_{22})=t_{22};$$ $$\psi(x_{11})=t_{11}t_{12}t_{21}, \psi(x_{12})=t_{11}t_{12}, \psi(x_{21})=t_{11}t_{21}, \psi(x_{22})=t_{11}+t_{22}.$$
 Then the adapted basis of the algebra $\kk\langle x_{11}, x_{12}, x_{21}, x_{22}\rangle$ for both valuations $\nu_{\varphi}, \nu_{\psi}$ is the free semigroup generated by $x_{11}, x_{12}, x_{21}, x_{22}$. The semigroup $C_{\varphi}$ is freely generated by $t_{11}, t_{12}, t_{21}, t_{22}$, the semigroup $C_{\psi}$ is freely generated by $t_{11}t_{12}t_{21}, t_{11}t_{12}, t_{11}t_{21}, t_{11}$. The orders $\succ$ on $C_{\varphi}, C_{\psi}$ are induced by deglex on a free semigroup  (cf. Lemma~\ref{deglex}) with $t_{11} \succ t_{12} \succ t_{21} \succ t_{22}$. JH-bijection ${\bf K}:={\bf K}_{\psi, \varphi}$ is given by
 $${\bf K}_{\psi, \varphi}(t_{11})=t_{11}t_{12}t_{21}, {\bf K}(t_{12})=t_{11}t_{12}, {\bf K}(t_{21})=t_{11}t_{21}, {\bf K}(t_{22})=t_{11}.$$
\end{example}

\begin{example}\label{2.13 non-commutative}
Denote by $\kk\langle t_1,t_2,t_3\rangle$ the free algebra endowed with a well-ordering $\prec$ on monomials defined as follows. If a monomial $m_1$ is shorter than a monomial $m_2$ then $m_1\prec m_2$. Otherwise, if their lengths coincide then $\prec$ is determined by lex with respect to $t_3\prec t_2\prec t_1$.

Consider homomorphisms 
$$\varphi_i: A:=\kk\langle x_1,x_2,x_3\rangle \to \kk\langle t_1,t_2,t_3\rangle, i=1,2;$$
$$\varphi_1(x_1)=t_1+t_3, \varphi_1(x_2)=t_2, \varphi_1(x_3)=t_1t_2;\, \varphi_2(x_1)=t_2, \varphi_2(x_2)=t_1+t_3, \varphi_2(x_3)=t_2t_3.$$
\noindent Denote $m:=x_1x_2-x_3$. We claim that a set ${\bf B}\subset A$ of monomials in $x_1,x_2,x_3,m$ without submonomials 
$x_1x_2$ constitutes a common adapted basis of $A$ with respect to valuations $\nu_{\varphi_1}, \nu_{\varphi_2}$.

First, $\bf B$ spans $A$ since in any monomial in $x_1,x_2,x_3$ one can replace each occurrence of submonomial $x_1x_2$ by $m+x_3$.

Now we observe that given a monomial $b\in \bf B$ in order to compute the leading monomial of $\varphi_1(b) \in \kk\langle t_1,t_2,t_3 \rangle$ one has to replace each occurrence of $x_1,x_2,x_3$ and of $m$ as follows:
$$x_1\to t_1,\, x_2\to t_2,\, x_3\to t_1t_2,\, m\to t_3t_2.$$
\noindent Note that these leading monomials are pairwise distinct for different monomials from $\bf B$ since each such monomial $T$ in $t_1,t_2,t_3$ can be uniquely represented in the following way. Between any pair of adjacent occurrences in $T$ of submonomials of the form either $t_1t_2$ or $t_3t_2$ the submonomial of $T$ has a form $t_2\dots t_2t_1\dots t_1$.
In other words, one can describe the set of leading monomials of $\varphi_1({\bf B})$ as the monoid $C_{\varphi_1}\subset \langle t_1,t_2, t_3\rangle$ generated by $t_1, t_2, t_3t_2$.

This implies that the elements of $\bf B$ are linearly independent, so $\bf B$ is a basis of $A$, in addition that $\varphi_1$ is a monomorphism, and $\bf B$ is an adapted basis with respect to the valuation $\nu_{\varphi_1}$.

In a similar manner, the leading monomial of $\varphi_2(b)$ is obtained by means of the following replacements:
$$x_1\to t_2,\, x_2\to t_1,\, x_3\to t_2t_3,\, m\to t_2t_1.$$
\noindent Again, these leading monomials are distinct for different elements $b\in \bf B$. Any leading monomial $T_2$ can be 
uniquely represented as follows. Between an adjacent occurrences of a pair of submonomials of the form either $t_2t_3$ or $t_2t_1$ the submonomial of $T_2$ coincides with $t_1\dots t_1t_2\dots t_2$. Hence $\varphi_2$ is also a monomorphism. Thus, $\bf B$ is a common adapted basis with respect to both valuations $\nu_{\varphi_1}, \nu_{\varphi_2}$. The set of leading monomials of $\varphi_2 ({\bf B})$ equals the monoid $C_{\varphi_2} \subset \langle t_1,t_2,t_3\rangle$ generated by $t_1,t_2,t_2t_3$.

Note that 
it holds $C_{\varphi_1} \neq C_{\varphi_2}$: for instance, $t_3t_2 \in C_{\varphi_1} \setminus C_{\varphi_2}$, while $t_2t_3 \in C_{\varphi_2} \setminus C_{\varphi_1}$. One obtains the JH-bijection ${\bf K}_{\nu_{\varphi_2}, \nu_{\varphi_1}}$ as follows. In a monomial $T\in C_{\varphi_1}$ (see the notations above) we replace each occurrence of $t_1t_2$ by $t_2t_3$, respectively, each occurrence of $t_3t_2$ by $t_2t_1$, in addition, in a submonomial of the form $t_2\dots t_2t_1\dots t_1$ between a pair of adjacent occurrences of either $t_1t_2$ or $t_3t_2$, we replace $t_2$ by $t_1$ and $t_1$ by $t_2$, thereby we get a submonomial $t_1\dots t_1t_2\dots t_2$. The resulting monomial is ${\bf K}_{\nu_{\varphi_2}, \nu_{\varphi_1}}(T)\in C_{\varphi_2}$. 
\end{example}

\begin{remark}\label{JH_homomophism}
i) Let $\nu : A\setminus \{0\} \to \ZZ_{\ge 0}^d$ be an injective valuation of an algebra $A$ with $\dim A=d$ (cf. Theorem~\ref{th:basis}), and
$${\bf B}:=\{y_1^{i_1}\cdots y_L^{i_L}\ |\ (i_1,\dots, i_L)\in P\subset \ZZ_{\ge 0}^L\}$$
\noindent be a monomial basis of $A$ adapted to $\nu$, where $P$ is a coideal of $\ZZ_{\ge 0}^L$ (see Definition~\ref{over_JH}). Then $\bf B$ is a finite union of (coordinate) subsemigroups of the form 
$${\bf B}_T:=\{y_{j_1}^{q_1}\cdots y_{j_t}^{q_t}\ |\ q_1,\dots, q_t\in \ZZ_{\ge 0}\}$$
\noindent for some subsets $T=\{j_1,\dots,j_t\} \subset \{1,\dots, L\}$, and of some their cosets of the form ${\bf B}_T\cdot y_{r_1}^{s_1}\cdots y_{r_{L-t}}^{s_{L-t}}$, where $\{r_1,\dots,r_{L-t}\}=\{1,\dots,L\}\setminus T,\ s_1,\dots, s_{L-t}\in \ZZ_{\ge 0}$. Then the cone
$$\RR_{\ge 0} \otimes \nu({\bf B})= \bigcup_T \RR_{\ge 0} \otimes \nu({\bf B}_T),$$
\noindent where $T$ ranges over subsets $T\subset \{1,\dots, L\}$ such that a coordinate subsemigroup ${\bf B}_T \subset {\bf B}$. Observe that $\RR_{\ge 0} \otimes \nu({\bf B}_T)$ is a simplicial cone of dimension $t$ with the elements $\nu(y_{j_1}),\dots, \nu(y_{j_t})$ lying on its extremal rays. Moreover, it holds
$$\RR_{\ge 0}\otimes \nu({\bf B}_{T_1}) \bigcap \RR_{\ge 0}\otimes \nu({\bf B}_{T_2})= \RR_{\ge 0}\otimes \nu({\bf B}_{T_1 \cap T_2}).$$

ii) Let $\nu: A\setminus \{0\} \twoheadrightarrow P,\ \nu':A\setminus \{0\} \twoheadrightarrow P'$ be injective valuations of an algebra $A$. Let $\bf B \subset A$ be a basis of $A$ common adapted to $\nu, \nu'$. Assume that $S\subset \bf B$ is a subsemigroup. Then
$${\bf K}_{\nu',\nu}|_{\nu(S)} : \nu(S) \twoheadrightarrow \nu'(S)$$
\noindent is an isomorphism of semigroups.

Now assume that in addition it holds that a coset $bS\subset \bf B$. Then
$${\bf K}_{\nu',\nu}|_{\nu(bS)}=\nu'(b)+{\bf K}_{\nu',\nu}|_{\nu(S)}.$$

Thus, one can extend ${\bf K}:={\bf K}_{\nu',\nu}$ to a piece-wise linear map 
$$\RR_{\ge 0} \otimes {\bf K} : \RR_{\ge 0} \otimes \nu({\bf B}) \to \RR_{\ge 0} \otimes \nu'({\bf B}),$$
\noindent being linear on $\RR_{\ge 0} \otimes \nu({\bf B}_T)$ for each coordinate subsemigroup ${\bf B}_T \subset {\bf B}$.

In  Example~\ref{ex:mod 2}  one can take $S:= \{b^0_{\bf d} |\, {\bf d}=(d_1,d_2)\in \ZZ_{\ge 0}^2\}$, and $z_1S\subset \bf B$ is a coset. In Example~\ref{ex:free mod 2} one can take $S=P$.
In Example~\ref{2_noncommutative}, $\bf K$ is an isomorphism of semigroups, thus one can take $S:=\langle x_{11},x_{12}, x_{21}, x_{22}\rangle$.
In Example~\ref{2.13 non-commutative} one can take $S$ the monoid either of monomials in $x_1, x_3, m$ or of monomials in $x_2, x_3, m$.
\end{remark}

\begin{definition} Let $\nu_\bullet$ and $\nu_\circ$ be injective valuations on an algebra $A$. Suppose that a basis ${\bf B}$ is adapted to both valuations. This turns ${\bf B}$ into  ordered partial semigroups $({\bf B},\circ,\preceq^\circ)$ and $({\bf B},\bullet,\preceq^\bullet)$, see Remark~\ref{basis_semigroup}.

We say that $\nu_\bullet$ and $\nu_\circ$ are polar with respect to ${\bf B}$ if any $b''$ occurring in $bb'\ne 0$ satisfies 
$$b\bullet b'\preceq^\circ b'', b\circ b'\preceq^\bullet b''.$$
\end{definition}

\begin{remark}
i) For the algebra ${\bf K}[x,y,z]/(z+x^2+y^3)$ constructed in Remark~\ref{cusp_JH}, each pair among its injective valuations $\nu_x, \nu_y, \nu_z$ is polar with respect to the produced common adapted basis. For instance, for the common basis $\{y^iz^jx^k\ :\ i,j\ge 0, 0\le k\le 1\}$ of $\nu_y, \nu_z$ it holds $x\cdot x=-y^3-z$, hence $\nu_y(x^2)=\nu_y(z)\succ \nu_y(y^3), \nu_z(x^2)=\nu_z(y^3)\succ \nu_z(z)$. 

In a similar way, for the algebra $\kk[x,y,z,t]/(x^2+y^3+z^5+t^7)$ constructed in Example~\ref{3-dimensional_valuation}, its valuations $\nu_1,\nu_2$ are polar with respect to the produced their common adapted basis $\{y^it^jw^lx^kz^m\ :\ i,j,l\ge 0, 0\le k\le 1, 0\le m\le 4\}$. Indeed, $x\cdot x=w-y^3, z^2\cdot z^3=z \cdot z^4=w-t^7$, and $\nu_1(x^2)=\nu_1(w)\succ \nu_1(y^3), \nu_2(x^2)=\nu_2(y^3)\succ \nu_2(w), \nu_1(z^5)=\nu_1(t^7)\succ \nu_1(w), \nu_2(z^5)=\nu_2(w)\succ \nu_2(t^7)$, therefore the polar condition holds also for the decomposition in the basis of the products $xz^2\cdot xz^3=xz \cdot xz^4=-wt^7+w^2+y^3t^7-y^3w$. \vspace{2mm} 

ii) Observe that the injective valuations produced in Example~\ref{ex:mod 2}  
are polar with respect to the basis $\bf B$. 
 \end{remark}

\subsection{Valuations of quantum groups and their JH-bijections}\label{3.13}

Let $\gg$ be a semisimple or symmetrizable Kac-Moody Lie algebra with the $I\times I$ Cartan matrix $A$ and its symmetrization $C=(c_{ij})=DA$.

Let $U_q(\nn)$ be the quantized enveloping algebra over $\kk:=\QQ(q^{\frac{1}{2}})$ of the nilpotent part $\nn$ of a symmetrizable Kac-Moody algebra $\gg$, it is generated by $E_i$, $i\in I$ over $\kk$ subject to $q$-Serre relations (see e.g., \cite{BG})

\begin{equation}
\label{eq:qserre}
\sum\limits_{r,s\ge 0,\,r+s=1-a_{ij}}\mskip-8mu (-1)^s  E_i^{\langle r\rangle}E_j E_i^{\langle s\rangle}= 0
\end{equation}
for all distinct $i,j\in I$,  where $q_i=q^{d_i}$, 
$E_i^{( k)}:=\big(\prod\limits_{s=1}^k (s)_{q_i}\big)^{-1} E_i^k$ and $(s)_v=\frac{v^s-v^{-s}}{v-v^{-1}}$.

Following \cite[Section 1.2.13]{lu}, define $q$-derivations $\partial_i$ of $U_q(\nn)$  
via $\partial_i(E_j)=\delta_{ij}$
and 
\begin{equation}
\label{eq:partial U_q(n)}
\partial_i(xy)=\partial_i(x)y+K_i(x)\partial_i(y)
\end{equation} 
where $K_i$ is an automorphism of $U_q(\nn)$ given by $K_i(E_j)=q^{-c_{ij}}E_j$.

Denote by $\overline{\cdot}$ the $\QQ$-linear anti-involution on $U_q(\nn)$ such that $\overline E_i=E_i$, $\overline{q^{\frac{1}{2}}}=q^{-\frac{1}{2}}$. 

For  any sequence $\ii=(i_1,\ldots,i_m)$ and any nonzero $x\in U_q(\nn)$ consider a string valuation $\underline \nu_\ii=\nu_{\partial_{i_1},\ldots,\partial_{i_m}}$ in the notation Theorem \ref{th:string valuations general}.

\begin{proposition}
\label{pr:string nuii}
For any sequence $\ii$ the assignments $x\mapsto \underline \nu_\ii(x)$ define a $\kk$-valuation $\underline \nu_\ii :U_q(\nn)\setminus \{0\}\to \ZZ_{\ge 0}^m$.
    
\end{proposition}

\begin{proof} It is known (cf. \cite{EK}, Corollary~3.5) that $U_q(\nn)$ is a flat deformation of $U(\nn)$. The celebrated   Poincar\'e-Birkhoff-Witt (PBW) theorem  states that $gr~U(\gg)=S(\gg)$ for any Lie algebra $\gg$, therefore $U(\nn)$ is a domain. Hence $U_q(\nn)$ is a domain as well. Furthermore, the automorphisms $\varphi_k:=K_{i_k}$ and the $\varphi_k$-derivations $\partial_k:=\partial_{i_k}$ (with a slight abuse of notation) satisfy Theorem \ref{th:string valuations general} because
$$\partial_pK_j =q^{c_{pj}}K_j\partial_p$$
for all $p,j\in I$ (hence $q_{k\ell}=q^{c_{i_k,i_\ell}}$ for $k,\ell=1,\ldots,m$).

Also, it is immediate that the involved derivations are locally nilpotents.
Thus, all hypotheses of Theorem \ref{th:string valuations general} are satisfied and the proposition is proved.
\end{proof}

For any $w\in W$ let $U_q(w)$ be a subalgebra of $U_q(\nn)$ defined in \cite[Equation (1.1)]{BG} (it is referred to as the quantum Schubert cell). For any reduced word $\ii$ of $w$ in \cite[Section 4.1]{BG} we constructed a PBW-basis $X_\ii^{\bf a}$, ${\bf a}\in \ZZ_{\ge 0}^m$ of $U_q(w)$.

\begin{proposition}

\label{pr:PBW valuation nuii}
In the notation of \cite[Section 4.1]{BG} the assignments $X_\ii^{\bf a}\mapsto {\bf a}$
define an injective $\kk$-valuation  $\underline \nu^\ii:U_q(w)\setminus\{0\}\to \ZZ_{\ge 0}^m$ (with respect to the lexicographic order on $\ZZ_{\ge 0}^m$).
    
\end{proposition}

\begin{proof} Recall that the partial order $\prec$ on $\ZZ_{\ge 0}^m$ was introduced in \cite[Section 4.2]{BG}. It is easy to see that ${\bf a}\prec {\bf b}$ implies that ${\bf a}< {\bf b}$, where $<$ denotes the lexicographic order. 

Then   \cite[Corollary 4.14]{BG} implies that $X_\ii^{\bf a}\cdot X_\ii^{\bf b}\in \kk^\times\cdot X_\ii^{{\bf a+b}}+\sum\limits_{{\bf c}<{\bf a+b}}\kk\cdot  X_\ii^{{\bf c}}$ for any ${\bf a},{\bf b}\in \ZZ_{\ge 0}$.

The proposition is proved.
\end{proof}

The rest of the section is devoted to lifting of the aforementioned valuations and the above JH-bijections to the appropriate quantum octants $P_\ii$ and $P^\ii$ (see below).

For any sequence $\ii=(i_1.\ldots,i_m)\in I^m$ denote by
$A_\ii$  an algebra over $\kk=\QQ(q^{ \frac{1}{2}})$ generated by $t_1,\ldots,t_m$  subject to \begin{equation}
\label{eq:quantum plane}   
t_\ell t_k=q^{c_{i_k,i_\ell}}t_kt_\ell
\end{equation}
for $1\le k\le \ell\le m$, where $C=(c_{ij})$ is the symmetrized Cartan matrix of $\gg$. Also denote by $\overline{\cdot}$  a $\QQ$-linear anti-involution on $A_\ii$ such that $\overline t_k=t_k$, $\overline{q^{\frac{1}{2}}}=q^{-\frac{1}{2}}$.

The following is well-known (see e.g., \cite{ber,beru,im,jo}).

\begin{theorem} [Feigin's homomorphism] 
\label{Feigin's homomorphism}
For any sequence $\ii=(i_1,\ldots,i_m)\in I^m$ the assignments 
$$E_i\mapsto \sum_{k:i_k=i} t_k$$
define  a homomorphism $\Phi_\ii:U_q(\nn)\to A_\ii$.

\end{theorem}

The following is an immediate consequence of Theorem \ref{Feigin's homomorphism}.

\begin{corollary} 
\label{cor:equivariant Feigin}
$\Phi_\ii$ is $\overline{\cdot}$-equivariant.
    
\end{corollary}

The following is an immediate consequence of Theorem \ref{th:generalized Feigin} for $A=U_q(\nn)$,  $\partial_k:=\partial_{i_k}$, $q_{kl}=q^{c_{i_k,i_l}}$ (see also \cite[Equation 1.7]{ber}).

\begin{corollary}
\label{cor:explicit Feigin}

$\Phi_\ii(x)=\sum\limits_{a\in \ZZ_{\ge 0}^m} \varepsilon(\partial_{i_m}^{(a_m)} \cdots \partial_{i_1}^{(a_1)}(x))t_1^{a_1}\cdots t_m^{a_m}$ for any $x\in U_q(\nn)$ and any $\ii\in I^m$,
where $\varepsilon:U_q(\nn)\to \kk$ is the counit homomorphism given by $\varepsilon(1)=1$, $\varepsilon(E_i)=0$ for $i\in I$.

\end{corollary}

The following is an immediate consequence of Corollary \ref{cor:explicit Feigin}

\begin{corollary}
\label{cor:leading term Feigin} $\Phi_\ii(x)\in \kk^\times \cdot t^{\underline \nu_\ii(x)}+\text{lex lower terms}$
for any $x\in U_q(\nn)$ such that $\Phi_\ii(x)\ne 0$.

\end{corollary} 
 
The following is the main result of the section.

\begin{theorem} 
\label{th:Feigin}
For any reduced word $\ii$ of any $w\in W$ the restriction of $\Phi_\ii$ to $U_q(w)$ is injective.
  
\end{theorem}

\begin{proof} For any sequence $\ii=(i_1,\ldots,i_m)$ denote by $U_\ii\subset U_q(\nn)$ the $\kk$-linear span of $E_{i_1}^{a_1}\cdots E_{i_m}^{a_m}$, $a_1,\ldots,a_m\in \ZZ_{\ge 0}$.

In the notation of \eqref{eq:partial U_q(n)} and Corollary \ref{cor:explicit Feigin} define a pairing $(\cdot,\cdot)$ 
on $U_q(\nn)$ by
$$(E_{j_1}\cdots E_{j_\ell},x)=\varepsilon(\partial_{j_1}\cdots \partial_{j_\ell}(x))$$
for any  $i_1,\ldots,j_\ell\in I$, $x\in U_q(\nn)$. Clearly, the pairing satisfies
$$(uE_i,x)=(u,\partial_i(x))$$
and
$$(u,xE_i)=(\partial_i^*(u),x)$$ 
where $\partial_i^*$ is determined by $\partial_i^*(E_j)=\delta_{ij}$
and 
\begin{equation*}
\partial_i^*(uv)=\partial_i^*(u)v+K_i^*(u)\partial^*_i(v)
\end{equation*} 
and $K_i^*$ is an automorphism of $U_q(\nn)$ given by $K_i^*(E_j)=q^{c_{ij}}E_j$.

Denote by $U_\ii^\perp$, the orthogonal complement of $U_\ii$.

The following is well-known,  see e.g.,  \cite[Section 4.4]{lu}, \cite{lu2},  \cite[Lemma 0.4]{ber}

\begin{lemma}  (a) $U_\ii^\perp$ is an ideal of $U_q(\nn)$ for any $\ii$.

(b) $U_\ii=U_{\ii'}$ for any reduced words $\ii,\ii'$ of $w$ (so we denote this space by $U_w$).

\end{lemma}

The following is an immediate consequence of Corollary \ref{cor:explicit Feigin}.

\begin{corollary}  $U_{\ii^{op}}^\perp=Ker~\Phi_\ii$ for any $\ii=(i_1,\ldots,i_m)\in I^m$. 
    
\end{corollary}

Denote by $\tilde U_q(\ii):=U_q(\nn)/U_\ii^\perp$. 
Fix $w\in W$.  Then, by the above lemma:

$\bullet$ $\tilde U_q(\ii)$ is an algebra such that $\tilde U_q(\ii)=\tilde U_q(\ii')$ for any reduced words $\ii,\ii'$ of $w$ (so we denote it $\tilde U_q(w)$)

$\bullet$ The assignments $u+U_w^\perp\mapsto \Phi_\ii(u)$ is a well-defined injective homomorphism of algebras 
\begin{equation}
\label{eq:Feigin on quotient}
    \tilde \Phi_\ii:\tilde U_q(w)\hookrightarrow A_\ii
    \end{equation}

We need the following result of Kimura.

\begin{lemma} (\cite[Theorem 5.13]{ki}) 
\label{kimura}
The restriction of the canonical surjective homomorphism $U_q(\nn)\twoheadrightarrow \tilde U_q(w)$ to $U_q(w)$ is an injective homomorphism of algebras $U_q(w)\hookrightarrow \tilde U_q(w)$.
    
\end{lemma}

Combining \eqref{eq:Feigin on quotient} with Lemma \ref{kimura}, we finish the proof of Theorem~\ref{th:Feigin}.
\end{proof}

We abbreviate $\lambda_\ii:=\lambda_{\bf \partial}$, where ${\bf \partial}=(\partial_{i_1},\ldots, \partial_{i_m})$ in the notation of \eqref{eq:partial U_q(n)} and \eqref{eq:lambdaE}.

The following is an immediate consequence of Corollary \ref{pr:string nuii}, Theorem \ref{th:Feigin},  Proposition \ref{pr:injective subalgebra}, and of the grading on $U_q(w)$.

\begin{corollary}
\label{cor:injective restriction w}
In the assumptions of Proposition \ref{pr:string nuii} one has:

(a) the restriction of $\underline \nu_\ii$ to $U_q(w)$ is an injective valuation $U_q(w)\setminus \{0\}\to \ZZ_{\ge 0}^m$.

(b) $\lambda_\ii(U_q(w)\setminus \{0\})=\kk^\times$.
\end{corollary}

Combining this with Proposition \ref{pr:PBW valuation nuii}, we define the respective JH-bijections
\begin{equation}
\label{eq:ii,iip JH}
\underline {\bf K}^+_{\ii',\ii}:= {\bf K}_{\underline\nu^{\ii'},\underline \nu^\ii}:\ZZ_{\ge 0}^m\widetilde \to \ZZ_{\ge 0}^m,
~\underline {\bf K}^-_{\ii',\ii}:={\bf K}_{\underline\nu_{\ii'},\underline\nu_\ii}:\underline  C_\ii \widetilde \to \underline C_{\ii'},~\underline {\bf K}_{\ii',\ii}:= {\bf K}_{\underline\nu_{\ii'},\underline\nu^\ii}:\ZZ_{\ge 0}^m\widetilde \to \underline C_{\ii'}
\end{equation}
for any reduced words $\ii,\ii'$ for $w$, where we abbreviate $\underline C_\ii:=C_{\underline\nu_\ii}:=\underline\nu_\ii(U_q(w)\setminus \{0\})$.

\begin{remark} 
\label{rem:thin thick}
$\underline C_\ii$ can be thought as a ``thin" string cone in the sense that it is much smaller than a ``thick" string cone $\underline \nu_\ii(U_q(\nn)\setminus\{0\})$ if $\gg$ is infinite-dimensional (in the notation Proposition \ref{pr:string nuii}), which is much more complicated than the ``thin" one (see e.g., \cite{NZ,Jo}). At the same time, if $\gg$ is finite-dimensional and  $\ii$ is a reduced word for $w_0$, then $U_q(\nn)=U_q(w_0)$ and the ``thick" and ``thin" cones coincide.
    
\end{remark}

Denote $\AA:=\ZZ[q^{\frac{1}{2}},q^{-\frac{1}{2}}]$
and let $U^A(\nn)\subset U_q(\nn)$ be the $\AA$-subalgebra of $U_q(\nn)$ defined in \cite[Section 2.4]{BG} and denote $U^\AA(w):=U_q(w)\cap U^\AA(\nn)$ in notation of \cite[Section 2.4]{BG}.

By the very construction, $U_q(\nn)=\kk\otimes U^\AA(\nn)$. It follows from \cite[Proposition 40.2.1]{lu} and \cite[Theorem 4.5]{BG} that $U_q(w)=\kk\otimes U^\AA(w)$ as well.

Thus, Theorem \ref{th:Feigin} is equivalent to its $\AA$-weakening.

\begin{corollary} 
\label{th:Feigin A}
For any reduced word $\ii$ of any $w\in W$ the restriction of $\Phi_\ii$ to $U^\AA(w)$ is injective.
  
\end{corollary}

Denote by $P_\ii$ the monoid generated by $t_1,\ldots,t_m$, and invertible central element $q^{\frac{1}{2}}$ subject to \eqref{eq:quantum plane} (it is immediate from Example \ref{ex:quantum plane} that $A_\ii=\kk\otimes \QQ P_\ii$),  Clearly, it is naturally ordered as in Example \ref{ex:quantum plane}, where the order on $\Gamma=q^{\frac{1}{2}\ZZ}\cong \ZZ$ is natural. In the notation of Example \ref{ex:quantum plane} we abbreviate $t^{[a]}:=(a,1)$ in $P_\ii$. By definition, $t^{[a]}=q_{\ii,a}t_1^{a_1}\cdots t_m^{a_m}$, where $q_{\ii,a}=q^{f_\ii(a)}$ and $f_\ii(a)=\frac{1}{2}\sum\limits_{1\le k< \ell\le m} c_{i_k,i_\ell}a_ka_\ell$.
Then
\begin{equation}
\label{eq:Lambda_ii}
t^{[a]}t^{[a']}=q^{\frac{1}{2}\Lambda_\ii(a,a')}t^{[a+a']}
\end{equation}
where $\Lambda_\ii$ is a skew-symmetric form on $\ZZ_{\ge 0}^m$ given by $\Lambda_\ii(e_k,e_\ell)=-c_{i_k,i_\ell}$ for $1\le k<\ell \le m$.

The following is an immediate consequence of Theorem \ref{th:Feigin}, Proposition \ref{pr:injective subalgebra},  and Corollary \ref{cor:equivariant Feigin}.  

\begin{corollary}
\label{cor:nu_ii}
In the assumptions of Theorem \ref{th:Feigin} the assignments $\displaystyle{x\mapsto \nu_{P_\ii}(\Phi_\ii(x))}$ define an injective $\overline{\cdot}$-equivariant valuation $\nu_\ii:U^\AA(w)\setminus\{0\}\to P_\ii$.
    
\end{corollary}

Denote by $P^\ii$ the  monoid generated by $X_1,\ldots,X_m$ and an invertible central elements $q^{ \frac{1}{2}}$ subject to 
$$X_\ell X_k=
q^{(\alpha_\ii^{(k)},\alpha_\ii^{(\ell)})} X_k X_\ell$$
where we abbreviated $\alpha^{(k)}_\ii:=s_{i_1}\cdots s_{i_{k-1}}(\alpha_{i_k})$ as in \cite[Section 4.1]{BG} and $(\cdot,\cdot)$ is the inner product on the root space given by $(\alpha_i,\alpha_j)=c_{ij}$. Clearly $P^\ii$ (similarly to $P_\ii$) is naturally ordered as in Example \ref{ex:quantum plane}, where the order on $\Gamma=q^{\frac{1}{2}\ZZ}\cong \ZZ$ is natural. 
In the notation of Example \ref{ex:quantum plane} (similarly to $P_\ii$) we abbreviate $X^{[d]}:=(d,1)$ in $P^\ii$. By definition, $X^{[d]}=q_{\ii,d}X_1^{d_1}\cdots X_m^{d_m}$, where $q_{\ii,d}=q^{f^\ii(d)}$ and $f^\ii(d)=\frac{1}{2}\sum\limits_{1\le k< \ell\le m} (\alpha_\ii^{(k)},\alpha_\ii^{(\ell)})d_k d_\ell$.
Then
\begin{equation}
\label{eq:Lambda^ii}
X^{[d]}X^{[d']}=q^{\frac{1}{2}\Lambda^\ii(d,d')}X^{[d+d']}\ ,
\end{equation}
where $\Lambda^\ii$ is a skew-symmetric form on $\ZZ_{\ge 0}^m$ given by $\Lambda^\ii(e_k,e_\ell)=-(\alpha_\ii^{(k)},\alpha_\ii^{(\ell)})$ for $1\le k<\ell \le m$.

\begin{theorem} 
\label{th:Qii}
In the assumptions of \cite[Theorem 4.5]{BG} the assignments $X_\ii^a\mapsto X^{[a]}$
define an injective  
 $\overline{\cdot}$-equivariant valuation $\nu^\ii:U^\AA(w)\setminus\{0\}\to P^\ii$.
    
\end{theorem}

\begin{proof} Recall that for any $\ii=(i_1,\ldots,i_m)$ one has a partial order $\prec:=\prec_\ii$ on $\ZZ^m$ defined in \cite[Section 4.2]{BG}. It is immediate that for any $a,a'\in \ZZ_{\ge 0}^m$ the inequality $a\prec_\ii a'$ implies that $a<a'$ in the lexicographic (as well as in the inverse lexicographic) order.

 Theorem \cite[Theorem  4.5]{BG} implies that this defines an $\ZZ_{\ge 0}^m$-filtration on $U_q(w)$ via  $\deg X_\ii^{a}=a$ for all $a\in \ZZ_{\ge 0}^m$.  
 
Clearly, $gr~U_q(w)=\kk\otimes \QQ P^\ii$ by Theorem \cite[Theorem  4.5]{BG}.

Finally, the $\overline{\cdot}$-equivariance assertion follows from Corollary \ref{cor:equivariant Feigin}.  

The theorem is proved. 
\end{proof}

\begin{remark} 
\label{rem:commutation with q}
By definition, all $\nu_\ii$ and $\nu^\ii$ commute with multiplication by  $q^{\frac{1}{2}}$.
    
\end{remark}

\begin{remark} 
\label{rem:beru}
According to \cite[Equation (4.10)]{beru} for any $\ii=(i_1,\ldots,i_m)\in I^m$ there is an element $\psi_\ii\in GL_m(\ZZ)$ such that $\Lambda_\ii(\psi_\ii(d),\psi_\ii(d'))=\Lambda^\ii(d,d')$ for any $d,d'\in \ZZ^m$. 

This implies, in particular, that the groups generated by $P_\ii$ and $P^\ii$ are isomorphic.

This also implies that for any $m$-dimensional linearity domain $\underline C$ (in notation of Remark \ref{rem:linearity domain}) the composition $(\underline {\bf K}_{\ii,\ii}\circ \psi_\ii^{-1})|_{\psi_\ii(\underline C)}$ is an element $g\in GL_m(\ZZ)$ such that $\Lambda_\ii(g(a),g(a'))=\Lambda_\ii(a,a')$ for all $a,a'\in \ZZ^m$, in other words, it is an automorphism of $\Lambda_\ii$, i.e.,  an `integral symplectomorphism." Based on numerous examples (e.g., Examples \ref{ex:quantum A2} and \ref{ex:A3} below), we expect that such a $\underline C$ always exists, and $\underline {\bf K}_{\ii,\ii}$ is a restriction of a piecewise linear continuous bijection $\RR^m\widetilde \to \RR^m$.
    
\end{remark}

Extending \eqref{eq:ii,iip JH} to $U^\AA(q)$, we abbreviate 
\begin{equation}
\label{eq:ii,iip JHq}
{\bf K}^+_{\ii',\ii}:= {\bf K}_{\nu^{\ii'},\nu^\ii}:P^\ii\widetilde \to P^{\ii'},
~{\bf K}^-_{\ii',\ii}:={\bf K}_{\nu_{\ii'},\nu_\ii}: C_\ii \widetilde \to C_{\ii'},~{\bf K}_{\ii',\ii}:= {\bf K}_{\nu_{\ii'},\nu^\ii}:P^\ii\widetilde \to C_{\ii'}
\end{equation}
for any reduced words $\ii,\ii'$ for $w$, where we abbreviate $C_\ii:=C_{\nu_\ii}:=\nu_\ii(U^\AA(w)\setminus \{0\})$, which  is a quantum cone extending $\underline C_\ii$ (according to Remark \ref{rem:commutation with q}, all these bijections commute with multiplication by $q^{\frac{1}{2}}$).

Combining this with Theorem \ref{sub-multiplicative} and Corollary \ref{JH_multiplicativity} we obtain the following spectacular

\begin{corollary} 
\label{cor:Lambda-equivariange}
For any reduced words $\ii$, $\ii'$
 of any $w\in W$ one has

(a) $\Lambda_{\ii'}(a,a')=\Lambda_\ii({\bf K}^-_{\ii,\ii'}(a), {\bf K}^-_{\ii,\ii'}(a'))$ for all $a,a'\in \underline  C_{\ii'}$ such that ${\bf K}^-_{\ii,\ii'}(a+a')={\bf K}^-_{\ii,\ii'}(a)+{\bf K}^-_{\ii,\ii'}(a')$.

(b) $\Lambda^{\ii'}(d,d')=\Lambda^\ii({\bf K}^+_{\ii,\ii'}(d), {\bf K}^+_{\ii,\ii'}(d'))$ for all $d,d'\in\ZZ_{\ge 0}^m$ such that ${\bf K}^+_{\ii,\ii'}(d+d')={\bf K}^+_{\ii,\ii'}(d)+{\bf K}^+_{\ii,\ii'}(d')$.

(c) $\Lambda^{\ii'}(d,d')=\Lambda_\ii({\bf K}_{\ii,\ii'}(d), {\bf K}_{\ii,\ii'}(d'))$ for all $d,d'\in\ZZ_{\ge 0}^m$ such that ${\bf K}_{\ii,\ii'}(d+d')={\bf K}_{\ii,\ii'}(d)+{\bf K}_{\ii,\ii'}(d')$.
    
\end{corollary}

Denote by ${\bf B}^{up}$  the dual canonical basis of $U_q(\nn)$. It is well-known (see e.g., \cite[Propositions 40.2.1 and 41.1.4]{lu}, 
\cite[Section 5.1]{ki}, and  \cite[Corollary 1.3]{BG}) that the intersection ${\bf B}_w:=U_q(w)\cap {\bf B}^{up}$ is a basis of $U_q(w)$ for any $w\in W$.

The following is well-known (see e.g., \cite[Lemma 2.13]{BG}).

\begin{proposition} 
\label{pr:top of basis}
For any  $\ii\in I^m$, $m\ge 1$.

(a) $\lambda_\ii({\bf B})\subset {\bf B}$. 

(b) The assignments $b\mapsto (\underline \nu_\ii(b),\lambda_\ii(b))$ define an injective map ${\bf B}\hookrightarrow \ZZ_{\ge 0}^m\times {\bf B}$.

\end{proposition}

Combining Proposition \ref{pr:top of basis}(a) with Corollary \ref{cor:injective restriction w}(b) we see that $\lambda_\ii({\bf B}_w)=\{1\}$ for any reduced word $\ii$ for $w\in W$ (because ${\bf B}_w\cap \kk^\times=\{1\}$).

In turn, this and Proposition \ref{pr:top of basis}(b) imply the following

\begin{corollary} 
\label{cor:adapted nu_ii}
For all $w\in W$ the dual canonical basis ${\bf B}_w$ of $U^\AA(w)$ is adapted to all valuation $\nu_\ii$,  $\ii$ runs over all reduced words of $W$.
    
\end{corollary}

It turns our that we can replace $\nu_\ii$ with $\nu^\ii$.
\begin{theorem} 
\label{th:adapted nu^ii}
${\bf B}_w$ is  adapted to the valuation $\nu^\ii$,  $\ii$ runs over all reduced words of $W$.
    
\end{theorem}

\begin{proof} We need the following.

\begin{lemma} 
\label{le:triangular basis}
For each $b\in {\bf B}_w$ there exists a unique $d=\nu^\ii(b)\in \ZZ_{\ge 0}^m$ such that 
$$b\in X_\ii^d+\sum_{d'<d}\kk\cdot X_\ii^{d'}$$
where $<$ is the lexicographic order on $\ZZ_{\ge 0}^m$.
\end{lemma}

\begin{proof} It follows from the proof of \cite[Theorem 1.1]{BG} that for each $b\in {\bf B}_w$ there exists a unique $d=d(b)\in \ZZ_{\ge 0}^m$ such that 
$b\in X_\ii^d+\sum\limits_{d'\prec d}\kk\cdot X_\ii^{d'}$, 
where $\prec$ is the partial order on the monoid $\ZZ_{\ge 0}^m$ defined in \cite[Section 4.2]{BG}. As we argued in the proof of Proposition \ref{pr:PBW valuation nuii}, $\prec$ is stronger than $<$, i.e.,  $d\prec d'$ implies $d<d'$ for any $d,d'\in \ZZ_{\ge 0}$. In turn, this implies that $U_q(\nn)_{\prec d}\subset U_q(\nn)_{< d}$. 

The lemma is proved. 
\end{proof}

Thus, $\nu^\ii(b)=\nu^\ii(X_\ii^{d(b)})=X_\ii^{[d(b)]}$, which finishes the proof of the theorem.
\end{proof}

Combining Corollary \ref{cor:adapted nu_ii} with Theorem \ref{th:adapted nu^ii}
we obtain the following immediate
\begin{corollary} For any reduced words $\ii,\ii',\ii''$ of any $w\in W$ one has:
$${\bf K}^-_{\ii,\ii'}\circ {\bf K}^-_{\ii',\ii''}={\bf K}^-_{\ii,\ii''},~{\bf K}^+_{\ii,\ii'}\circ {\bf K}^+_{\ii',\ii''}={\bf K}^+_{\ii,\ii''},~{\bf K}_{\ii,\ii'}\circ {\bf K}_{\ii',\ii''}={\bf K}_{\ii,\ii''}$$
$${\bf K}_{\ii,\ii'}^-\circ {\bf K}_{\ii',\ii'}={\bf K}_{\ii,\ii'},~{\bf K}_{\ii,\ii}\circ {\bf K}_{\ii,\ii'}^+={\bf K}_{\ii,\ii'}\ .$$
\end{corollary}

This, in particular, implies that to recover 
all  ${\bf K}_{\ii,\ii'}$, it suffice to know only ${\bf K}_{\ii,\ii}$ and ${\bf K}^-_{\ii,\ii'}$.

\subsection{Examples of JH-bijections for quantum groups}

\begin{example} 
\label{ex:quantum A2}
Let $A=C=\begin{pmatrix} 2 & -1\\
-1 & 2   
\end{pmatrix}$. Let $\ii=(1,2,1)$, $\ii'=(2,1,2)$,
 
Then $P_\ii=P_{\ii'}$ is generated by 
$t_1,t_2,t_3$  and the central element $q^{\frac{1}{2}}$ subject to the relations
$$t_3t_1=q^2t_1t_3,~t_2t_1=q^{-1}t_1t_2,t_3t_2=q^{-1}t_2t_3.$$

Then $U_q(\nn)=U_q(w_0)$ is a $\kk$-algebra (here $\kk=\QQ(q^{\frac{1}{2}})$)
 generated by $E_1,E_2$ subject to the quantum Serre relations
$$E_i^2E_j-(q+q^{-1})E_iE_jE_i+E_jE_i^2=0$$
for $\{i,j\}=\{1,2\}$.

Denote $E_{ij}:=\frac{q^{\frac{1}{2}}E_iE_j-q^{-\frac{1}{2}}E_jE_i}{q-q^{-1}}$
for $\{i,j\}=\{1,2\}$.

It is also clear that $E_iE_{ij}=qE_{ij}E_i$, $E_{ij}E_j=qE_jE_{ij}$ and $E_{12}E_{21}=E_{21}E_{12}$ and $\overline E_{ij}=E_{ij}$.
Also
\begin{equation}
\label{eq:Eij}
    q^{\frac{1}{2}}E_1E_2=qE_{12}+E_{21},~q^{\frac{1}{2}}E_2E_1=qE_{21}+E_{12}
\end{equation}

Recall that $\AA=\QQ[q^\frac{1}{2},q^{-\frac{1}{2}}]$ and clearly, $U^\AA(w_0)$
is generated by $E_1,E_2,E_{12},E_{21}$.

The monoids $P_{\ii}$ and $P_{\ii'}$ are generated by  $q^{\frac{1}{2}},q^{-\frac{1}{2}},$ $t_1,t_2,t_3$ and $q^{\frac{1}{2}},q^{-\frac{1}{2}},$ $t_1',t_2',t_3'$ respectively
subject to
$$t_2t_1=q^{-1}t_1t_2,~t_3t_2=q^{-1}t_2t_3,t_3t_1=q^2t_1t_3$$
and 
$$t'_2t'_1=q^{-1}t'_1t'_2,~t'_3t'_2=q^{-1}t'_2t'_3,t'_3t'_1=q^2t'_1t'_3$$
respectively
(These are relations \eqref{eq:quantum plane}). Also $A_\ii=\kk \otimes \QQ P_\ii$, $A_{\ii'}=\kk \otimes \QQ P_{\ii'}$ are algebras over $\kk$ generated by $t_1,t_2,t_3$ and $t'_1,t'_2,t'_3$ respectively subject to the above relations.

Then Feigin's homomorphisms $\Phi_\ii:U_q(\nn)\to A_\ii$ and $\Phi_{\ii'}:U_q(\nn)\to A_{\ii'}$ are given by
$$\Phi_\ii(E_1)=t_1+t_3,\Phi_\ii(E_2)=t_2, \Phi_{\ii'}(E_1)=t_2,\Phi_{\ii'}(E_2)=t_1+t_3\ .$$

Both $\Phi_\ii$ and $\Phi_{\ii'}$ commute with the anti-involution  $\overline{\cdot}$
fixing the generators and  such that $\overline {q^{\frac{1}{2}}}=q^{-\frac{1}{2}}$. 
Clearly, $\Phi_\ii(E_{12})=q^{-\frac{1}{2}}t_1t_2$, $\Phi_\ii(E_{21})=q^{-\frac{1}{2}}t_2t_3$, $\Phi_{\ii'}(E_{12})=q^{-\frac{1}{2}}t_2t_3$, 
$\Phi_{\ii'}(E_{21})=q^{-\frac{1}{2}}t_1t_2$.

It is well-known (see e.g.,\cite{BG}) that
$${\bf B}=\{b_{\bf m}:=q^{\frac{1}{2}(m_1-m_2)(m_3-m_4)}E_1^{m_1}E_2^{m_2}E_{12}^{m_3}E_{21}^{m_4}~ m_i\in \ZZ_{\ge 0},~\min(m_1,m_2)=0 \}$$
is the dual canonical basis of $U_q(\nn)$. In particular, each element of ${\bf B}$ is fixed by $\overline{\cdot}$.

Denote $t_\ii^{[a]}:=q^{a_1a_3-\frac{a_2}{2}(a_1+a_3)}t_1^{a_1}t_2^{a_2}t_3^{a_3}$ and $t_{\ii'}^{[a]}:=q^{a_1a_3-\frac{a_2}{2}(a_1+a_3)}t_1^{'a_1}t_2^{'a_2}t_3^{'a_3}$
for any $a=(a_1,a_2,a_3)\in \ZZ_{\ge 0}^3$. Clearly, $\overline {t_\ii^{[a]}}=t_{\ii}^{[a]}$ because both are equal to $(a,1)$ in notation of Example \ref{ex:quantum plane}.

We equip $P_\ii$ and $P_{\ii'}$ with the natural lexicographic order $q^{\frac{1}{2}\ZZ}\prec t_3\prec t_2\prec t_1$ and $q^{\frac{1}{2}\ZZ}\prec t'_3\prec t'_2\prec t'_1$. Then the valuation 
$\nu_\ii$ from Corollary \ref{cor:nu_ii} is given by  
$$\nu_\ii(b_{\bf m})=q^{\frac{1}{2}(m_1-m_2)(m_3-m_4)}t_1^{m_1}t_2^{m_2}(q^{-\frac{1}{2}}t_1t_2)^{m_3}(q^{-\frac{1}{2}}t_2t_3)^{m_4}$$
$$=
q^{a_1a_3-\frac{a_2}{2}(a_1+a_3)}t_1^{a_1}t_2^{a_2}t_3^{a_3}=t_\ii^{[a]}\ .$$
where $a_1=m_1+m_3$, $a_2=m_2+m_3+m_4$,
$a_3=m_4$.

Likewise, the valuation 
$\nu_{\ii'}$ from Corollary \ref{cor:nu_ii} is given by  
$$\nu_{\ii'}(b_{\bf m})=q^{\frac{1}{2}(m_1-m_2)(m_3-m_4)}t_2^{m_1}t_1^{m_2}(q^{-\frac{1}{2}}t_2t_3)^{m_3}(q^{-\frac{1}{2}}t_1t_2)^{m_4}$$
$$=
q^{a'_1a'_3-\frac{a'_2}{2}(a'_1+a'_3)}t_1^{'a'_1}t_2^{'a'_2}t_3^{'a'_3}=t_{\ii'}^{[a']}\ .$$
where $a'_1=m_2+m_4$, $a'_2=m_1+m_3+m_4$,
$a'_3=m_3$.

In particular, ${\bf B}$ is a basis adapted to both $\nu_\ii$ and $\nu_{\ii'}$.

Therefore, the JH-bijection ${\bf K}^-:={\bf K}^-_{\ii',\ii}:\hat C_\ii\widetilde \to \hat C_{\ii'}$ from \eqref{eq:ii,iip JHq} is given by 
$${\bf K}^-(q^r t_\ii^{[a]})=q^r t_{\ii'}^{\underline {\bf K}^-(a)}\ ,$$
where $\underline {\bf K}^-:=\underline {\bf K}^-_{\ii',\ii}$ from \eqref{eq:ii,iip JH} is an involution given by
\begin{equation}
\label{eq:underline K-}    
\underline {\bf K}^-(a_1,a_2,a_3)=(\max(a_3,a_2-a_1),a_1+a_3,\min(a_1,a_2-a_3))
\end{equation}
for any $a\in C_\ii=C_{\ii'}=\{a\in \ZZ_{\ge 0}^3:a_2\ge a_3\}$.

Clearly, the forms $\Lambda_\ii$ and $\Lambda_{\ii'}$ from \eqref{eq:Lambda_ii}    are given respectively by:
$$\Lambda_\ii(a,a')=\Lambda_{\ii'}(a,a')
=a_1a'_2-a_2a'_1+a_2a'_3-a_3a'_2-2a_1a'_3+2a_3a'_1
$$

One can show that 
$\Lambda_{\ii'}(\underline {\bf K}^-(a),\underline {\bf K}^-(a'))=\Lambda_\ii(a,a')$ iff $\underline {\bf K}^-(a+a')=\underline {\bf K}^-(a)+\underline {\bf K}^-(a')$,
i.e., either $a_1+a_3\le a_2$ and $a'_1+a'_3\le a'_2$,  or $a_1+a_3\ge a_2$ and $a'_1+a'_3\ge a'_2$ (cf. Corollary~\ref{cor:Lambda-equivariange}).

Furthermore, we claim that $X_1:=E_1$, $X_2:=E_{21}$, $X_3:=E_2$,  generate the quantum Schubert cell $U_q(w)=U_q(\nn)$ (and $U_q^\AA(w)$) subject to
$$X_2X_1=qX_1X_2, X_3X_2=qX_2X_3, q^{\frac{1}{2}}X_3X_1-q^{-\frac{1}{2}}X_1X_3=(q-q^{-1})X_2\ .$$

Likewise,
 $X'_1:=E_2$, $X'_2=E_{12}$, $X'_3:=E_1$ generate $U_q(w)=U_q(\nn)$ (and $U_q^\AA(w)$) subject to
$$X'_2X'_1=qX'_1X'_2, X'_3X'_2=qX'_2X'_3, q^{\frac{1}{2}}X'_3X'_1-q^{-\frac{1}{2}}X'_1X'_3=(q-q^{-1})X'_2\ .$$

The monoids $P^\ii$ and $P^{\ii'}$ from Theorem \ref{th:Qii} are generated by $X_1,X_2,X_3, q^{\pm \frac{1}{2}}$ and $X'_1,X'_2,X'_3, q^{\pm \frac{1}{2}}$ respectively subject to
$$X_2X_1=qX_1X_2, X_3X_2=qX_2X_3, X_3X_1=q^{-1}X_1X_3$$
$$X'_2X'_1=qX'_1X'_2, X'_3X'_2=qX'_2X'_3, X'_3X'_1=q^{-1}X'_1X'_3\ .$$

$$X_\ii^{[d]}=q^{\frac{1}{2}(d_1d_2-d_1d_3+d_2d_3)}X_1^{d_1}X_2^{d_2}X_3^{d_3},~X_{\ii'}^{[d]}=q^{\frac{1}{2}(d_1d_2-d_1d_3+d_2d_3)}X_1^{'d_1}X_2^{'d_2}X_3^{'d_3}$$
respectively in $P^\ii$ and $P^{\ii'}$. Clearly, $\overline{X_\ii^{[d]}}=X_\ii^{[d]}$ 
because both are equal to $(d,1)$ in the notation of Example \ref{ex:quantum plane} and the valuations $\nu^i$, $i=1,2$ are $\overline{\cdot}$-equivariant.

Then $\nu^\ii(E_{12})=\nu^\ii(q^{-\frac{1}{2}}X_1X_3-q^{-1}X_2)=q^{-\frac{1}{2}}X_1X_3$ by \eqref{eq:Eij} and
$$\nu^\ii(b_{\bf m})=q^{\frac{1}{2}(m_1-m_2)(m_3-m_4)}X_1^{m_1}X_3^{m_2}(q^{-\frac{1}{2}}X_1X_3)^{m_3}X_2^{m_4}$$
$$=q^{\frac{1}{2}(d_1d_2-d_1d_3+d_2d_3)}X_1^{d_1}X_2^{d_2}X_3^{d_3}
=X_\ii^{[d]}\ ,$$
where we abbreviated $d_1:=m_1+m_3$, $d_2:=m_4$, $d_3:=m_2+m_3$.

Likewise, $\nu^{\ii'}(E_{21})=\nu^{\ii'}(q^{-\frac{1}{2}}X'_1X'_3-q^{-1}X'_2)=q^{-\frac{1}{2}}X'_1X'_3$ by \eqref{eq:Eij} and
$$\nu^{\ii'}(b_{\bf m})=q^{\frac{1}{2}(m_1-m_2)(m_3-m_4)}X_3^{'m_1}X_1^{'m_2}X_2^{'m_3}(q^{-\frac{1}{2}}X'_1X'_3)^{m_4}$$
$$=q^{\frac{1}{2}(d'_1d'_2-d'_1d'_3+d'_2d'_3)}X_1^{'d_1}X_2^{'d_2}X_3^{'d_3}
=X_{\ii'}^{[d']}$$
where we abbreviated $d'_1:=m_2+m_4$, $d'_2:=m_3$, $d'_3:=m_1+m_4$.

Therefore, $\bf B$ is also  adapted to both $\nu^\ii$ and $\nu^{\ii'}$, and  the JH-bijection ${\bf K}^+:={\bf K}^+_{\ii',\ii}:\hat \ZZ_{\ge 0}^3\widetilde \to \hat \ZZ_{\ge 0}^3$ from \eqref{eq:ii,iip JHq} is given by 
$${\bf K}^+(q^r X_\ii^{[d]})=q^r X_{\ii'}^{\underline {\bf K}^+(d)}\ ,$$
where $\underline {\bf K}^+:=\underline {\bf K}^+_{\ii',\ii}$ from \eqref{eq:ii,iip JH} is an involution on $\ZZ_{\ge 0}^3$ given by
\begin{equation}
\label{eq:underline K+}
\underline {\bf K}^+(d_1,d_2,d_3)= (d_2+\max(0,d_3-d_1),\min(d_1,d_3),d_2+\max(0,d_1-d_3))
\end{equation}
for any $d\in \ZZ_{\ge 0}^3$.

Clearly, the forms $\Lambda^\ii$ and $\Lambda^{\ii'}$ from \eqref{eq:Lambda^ii}    are given  by:
$$\Lambda^\ii(d,d')=\Lambda^{\ii'}(d,d')
=-d_1d'_2+d_2d'_1-d_2d'_3+d_3d'_2+d_1d'_3-d_3d'_1
$$

One can show that 
$\Lambda^{\ii'}(\underline {\bf K}^+(d),\underline {\bf K}^+(d'))=\Lambda^\ii(d,d')$ iff $\underline {\bf K}^+(d+d')=\underline {\bf K}^+(d)+\underline {\bf K}^+(d')$,
i.e., either $d_1\le d_3$ and $d'_1\le d'_3$,  or either $d_1\ge d_3$ and $d'_1\ge d'_3$ (cf. Corollary~\ref{cor:Lambda-equivariange}(a)).

The JH-bijection ${\bf K}:={\bf K}_{\ii,\ii}:\hat \ZZ_{\ge 0}^3\widetilde \to \hat C_\ii$ from \eqref{eq:ii,iip JHq} is given by 
$${\bf K}(q^r X_\ii^{[d]})=q^r t_\ii^{\underline {\bf K}(d)}\ ,$$
where $\underline {\bf K}:=\underline {\bf K}_{\ii,\ii}$ from \eqref{eq:ii,iip JH} is a linear map on $\ZZ_{\ge 0}^3\to C_{\ii'}$ given by
$$\underline {\bf K}(d_1,d_2,d_3)= (d_1,d_2+d_3,d_2)$$
for any $d\in \ZZ_{\ge 0}^3$.

One can show that
$\Lambda_\ii(\underline {\bf K}(d),\underline {\bf K}(d'))=\Lambda^\ii(d,d')$ for all $d, d'\in \ZZ_{\ge 0}^3$ (cf. Corollary~\ref{cor:Lambda-equivariange}).

\end{example}

\begin{example} 
\label{ex:A3}
Let $A=C=\begin{pmatrix} 2 & -1 & 0\\
-1 & 2 & -1\\
0 & -1 & 2
\end{pmatrix}$, $\ii=(2,1,3,2)$.

In this case, $U_q(\nn)$ is generated by $E_1,E_2,E_3$ subject to $E_3E_1=E_1E_3$ and to
$$E_1^2E_2-(q+q^{-1})E_1E_2E_1+E_2E_1^2=0$$
$$E_2^2E_1-(q+q^{-1})E_2E_1E_2+E_1E_2^2=0$$
$$E_2^2E_3-(q+q^{-1})E_2E_3E_2+E_3E_2^2=0$$
$$E_3^2E_2-(q+q^{-1})E_3E_2E_3+E_2E_3^2=0$$
Expanding Example \ref{ex:quantum A2} denote $E_{ij}:=\frac{q^{\frac{1}{2}}E_iE_j-q^{-\frac{1}{2}}E_jE_i}{q-q^{-1}}$
whenever $|i-j|=1$. 

The monoid $P_\ii$ is generated by  $q^{\frac{1}{2}},q^{-\frac{1}{2}}$ $t_{11},t_{12},t_{21},t_{22}$ subject to
$$t_{12}t_{11}=q^{-1}t_{11}t_{12},~t_{21}t_{11}=q^{-1}t_{11}t_{21},t_{22}t_{12}=q^{-1}t_{12}t_{22}\ ,$$
$$t_{22}t_{21}=q^{-1}t_{21}t_{22},t_{21}t_{12}=t_{12}t_{21},t_{22}t_{11}=q^2t_{11}t_{22}$$
(these are relations \eqref{eq:quantum plane}). Also, $A_\ii=\kk \otimes \QQ P_\ii$ is an algebra over $\kk=\QQ(q^{\frac{1}{2}})$ generated by $t_{ij}$, $i,j=1,2$ subject to the above relations.

Then Feigin's homomorphism $\Phi_\ii:U_q(\nn)\to A_\ii$ is given by
$$\Phi_\ii(E_1)=t_{12},\Phi_\ii(E_3)=t_{21}, \Phi_\ii(E_2)=t_{11}+t_{22}$$

Denote $X_{22}=E_2$, $X_{12}=E_{21}$, $X_{21}=E_{23}$, $X_{11}=\frac{q^{\frac{1}{2}}E_{21}E_3-q^{-\frac{1}{2}}E_3E_{21}}{q-q^{-1}}=\frac{q^{\frac{1}{2}}E_{23}E_1-q^{-\frac{1}{2}}E_1E_{23}}{q-q^{-1}}$.

We claim that the quantum Schubert cell $U_q(w)$, $w=s_2s_1s_3s_2\in W=S_4$ is generated by $X_{ij}$, $i,j=1,2$ and has a presentation 
$$X_{12}X_{11}=qX_{11}X_{12},X_{21}X_{11}=qX_{11}X_{21},X_{22}X_{12}=qX_{12}X_{22},X_{22}X_{21}=qX_{21}X_{22}\ ,$$
$$X_{21}X_{12}=X_{12}X_{21},X_{11}X_{22}-X_{22}X_{11}=(q^{-1}-q)X_{12}X_{21}$$

That is, $U_q(w)$ the algebra of quantum $2\times 2$ matrices (see e.g., \cite[Equation (3)]{Man}).
 
One can easily show
that
$$\Phi_\ii(X_{12})=q^{-1/2}t_{11}t_{12},\Phi_\ii(X_{21})=q^{-1/2}t_{11}t_{21},~\Phi_\ii(X_{11})=q^{-1}t_{11}t_{12}t_{21} $$
Denote $D:=X_{11}X_{22}-q^{-1}X_{12}X_{21}=X_{22}X_{11}-qX_{21}X_{12}$.

It is central in $U_q(w)$, fixed by $\overline{\cdot}$, and easy to see that $\Phi_\ii(D)=q^{-1}t_{11}t_{12}t_{21}t_{22}$.

It is well-known (see e.g.,\cite{BG}) that
$${\bf B}=\{b_{\bf m}:=q^{\frac{(m_1+m_4)(m_2+m_3)}{2}}X_{11}^{m_1}X_{12}^{m_2}X_{21}^{m_3}X_{22}^{m_4}D^{m_5}:\text{all}~ m_i\in \ZZ_{\ge 0},~\min(m_1,m_4)=0 \}$$
is the dual canonical basis of $U_q(w)$. In particular, each element of ${\bf B}$ is fixed by $\overline{\cdot}$.

We equip $P_\ii$ with the natural lexicographic order $q^{\frac{1}{2}\ZZ}\prec t_{22}\prec t_{21}\prec t_{12}\prec t_{11}$. Then the valuation $\nu_\ii$ from Corollary \ref{cor:nu_ii} is given by  
$\nu_\ii(b_{\bf m})=$
$$q^{\frac{(m_1+m_4)(m_2+m_3)}{2}}(q^{-1}t_{11}t_{12}t_{21})^{m_1}(q^{-1/2}t_{11}t_{12})^{m_2}(q^{-1/2}t_{11}t_{21})^{m_3}t_{11}^{m_4}(q^{-1}t_{11}t_{12}t_{21}t_{22})^{m_5}$$
$$=q^{\frac{2a_{11}a_{22}-(a_{11}+a_{22})(a_{12}+a_{21})}{2}}
t_{11}^{a_{11}}t_{12}^{a_{12}}t_{21}^{a_{21}}t_{22}^{a_{22}}=t^{[a]}\ .$$
where $a_{11}:=m_1+m_2+m_3+m_4+m_5$, $a_{12}:=m_1+m_2+m_5$,
$a_{21}:=m_1+m_3+m_5$, $a_{22}=m_5$.

The monoid $P^\ii$ from Theorem \ref{th:Qii} is generated by $q^{\pm \frac{1}{2}}$ and $X_{ij}$, $i,j=1,2$ subject to 
$$X_{12}X_{11}=qX_{11}X_{12},X_{21}X_{11}=qX_{11}X_{21},X_{22}X_{12}=qX_{12}X_{22},X_{22}X_{21}=qX_{21}X_{22}\ ,$$
$$X_{21}X_{12}=X_{12}X_{21},X_{11}X_{22}=X_{22}X_{11}$$
Then $\nu^\ii(D)=X_{11}X_{22}$ and 
$$\nu^\ii(b_{\bf m})=q^{\frac{(m_1+m_4)(m_2+m_3)}{2}}X_{11}^{m_1}X_{12}^{m_2}X_{21}^{m_3}X_{22}^{m_4}(X_{11}X_{22})^{m_5}=$$
$$q^{\frac{(m_1+m_4+2m_5)(m_2+m_3)}{2}}X_{11}^{m_1+m_5}X_{12}^{m_2}X_{21}^{m_3}X_{22}^{m_4+m_5}=q^{\frac{(d_{11}+d_{22})(d_{12}+d_{21})}{2}}X_{11}^{d_{11}}X_{12}^{d_{12}}X_{21}^{d_{21}}X_{22}^{d_{22}}=X^{[d]}$$
where we abbreviated $d_{11}:=m_1+m_5$, $d_{12}=m_2$, $d_{21}:=m_3$, $d_{22}:=m_4+m_5$.

Therefore, $\bf B$ is  adapted to both $\nu^\ii$ and $\nu_\ii$, and JH
bijection ${\bf K}:={\bf K}_{\ii,\ii}:P^\ii\widetilde \to P_\ii$ is given by 
$${\bf K}(q^rX^{[d]})= q^rt^{[\underline {\bf K}(d)]}$$
for all $r\in \frac{1}{2}\ZZ$ and $d\in Mat_2(\ZZ_{\ge 0})$, 
where $\underline {\bf K}:Mat_2(\ZZ_{\ge 0})\to P_2$, where $P_2=\{
\begin{pmatrix}
a_{11} & a_{12} \\
a_{21} & a_{22}
\end{pmatrix}\in Mat_2(\ZZ_{\ge 0}) : a_{11}\ge \max(a_{12},a_{21}),\min(a_{12},a_{21})\ge a_{22}\}$, is given by 

\begin{equation}
\label{eq:underline K}
\underline {\bf K}(d)=
\begin{pmatrix}
\max(d_{11},d_{22})+d_{12}+d_{21} & d_{11}+d_{12} \\
d_{11}+d_{21} & \min(d_{11},d_{22})
\end{pmatrix}
\end{equation}
for $d=\begin{pmatrix}
d_{11} & d_{12} \\
d_{21} & d_{22}
\end{pmatrix}\in Mat_2(\ZZ_{\ge 0})$. 

The inverse ${\bf K}^{-1}:P_\ii\to P^\ii$ is given by
$${\bf K}^{-1}(q^r t^{[a]})= q^r X^{[\underline {\bf K}^{-1}(a)]}$$
where 
$\underline {\bf K}^{-1}(a)
=\begin{pmatrix}
\max(a_{22},a_{12}+a_{21}-a_{11}) & \min(a_{11}-a_{21},a_{12}-a_{22}) \\
\min(a_{11}-a_{12},a_{21}-a_{22}) & \max(a_{22},a_{11}+2a_{22}-a_{12}-a_{21})
\end{pmatrix}$ for 
$a=\begin{pmatrix}
a_{11} & a_{12} \\
a_{21} & a_{22}
\end{pmatrix}\in P_2$.

Clearly, the forms $\Lambda_\ii$ and $\Lambda^\ii$ from \eqref{eq:Lambda_ii} and \eqref{eq:Lambda^ii}  are given respectively by:
$$\Lambda_\ii(a,a')
=(a_{11}-a_{22})(a'_{1,2}+a'_{21})- (a_{12}+a_{21})(a'_{11}-a'_{22})-2a_{1,1}a'_{22} + 2a_{22}a'_{11}\ ,$$
$$\Lambda^\ii(d,d')=(d_{22}-d_{11})(d'_{12}+d'_{21})+(d_{12}+d_{21})(d'_{11}-d'_{22})\ .$$

One can show that 
$\Lambda_\ii(\underline {\bf K}(d),\underline {\bf K}(d'))=\Lambda^\ii(d,d')$ iff $\underline {\bf K}(d+d')=\underline {\bf K}(d)+\underline {\bf K}(d')$,
i.e., either $d_{11}\le d_{22}$, $d'_{11}\le d'_{22}$ or $d_{11}\ge d_{22}$, $d'_{11}\ge d'_{22}$ (cf. Corollary~\ref{cor:Lambda-equivariange}).

\end{example}

\section{Appendix: Valuations of vector spaces and Jordan-H\"older bijections}
\label{sec:Results on valuations}

\subsection{Valuations on vector spaces}

\label{subsec:valuations}

Given a vector space $S$
over a field $\kk$ and a totally ordered set
$(C,<)$, following \cite{Kaveh-Khovanskii,Ku,KuKu}, we say that a map $\nu:S\setminus\{0\}\to C$ is a
{\it valuation} if $\nu(\kk^\times \cdot x)=\nu(x)$ for all nonzero
$x\in S$ and
$\nu(x+y)\le \max(\nu(x),\nu(y))$ for $x+y \neq 0$
(this implies that $\nu(x+y)=\max(\nu(x),\nu(y))$ whenever $\nu(x)\ne \nu(y)$).

Denote by $C_\nu$ the image $\nu({S}\setminus \{0\})$.

One can construct  valuations on another vector space (resp. integral domain)  $S'$ by importing a given
valuation $\nu$ on $S$ via any injective $\kk$-linear map $f:S'\hookrightarrow S$ (resp. an injective homomorphism
of $\kk$-algebras). Namely, the composition $\nu\circ f$ is a valuation
on $S'$.

Each valuation $\nu:S\setminus\{0\}\to C$ defines a  filtration $S_{\le}$ of subspaces on  $S$ via
$$S_{\le a}:=\{0\}\cup \{x\in S\setminus \{0\}:\nu(x)\le a\}$$
for $a\in C_\nu$ (if $S$ is an integral domain, this is a filtration on a $\kk$-algebra). We also abbreviate $S_{<a}:=\sum\limits_{a'<a} S_{\le a'}$ and denote $S_a:=S_{\le a}/S_{<a}$ for $a\in C$  ($S_a$ is called in \cite{Kaveh-Khovanskii} the {\it leaf at
$a$}).

Conversely, if $C$ is a well-order, then any increasing filtration $S_{\le a}$, $a\in C$ of $S$  
defines a (well-ordered) valuation $\nu:S\setminus\{0\}\to C$ via
$$\nu(x)=\min\{a\in C:x\in S_{\le a}\}$$
for all $x\in S\setminus \{0\}$.

\begin{definition}
\label{def:adapted}    

We say that  ${\bf B}\subset S$ is {\it adapted} to a valuation $\nu:S\setminus\{0\}\to C$ if $\nu({\bf B})=C_\nu$ and
for each $a\in C$ and nonzero  $v$ in the $\kk$-linear span of ${\bf B}_a=\{b\in {\bf B}\,|\,\nu(b)=a\}$ one has
$$\nu(v)=a\ .$$
(this is slightly different  from \cite{BSch, Kaveh-Manon}).

\end{definition}

\begin{remark}
Such a (necessarily independent) subset ${\bf B}$ of $S$ is called {\it valuation-independent} in \cite{Ku}, \cite{KuKu}, but we prefer terminology of \cite[Definition 2]{Kaveh-Manon}. 
If we  denote $gr~S:=\bigoplus\limits_{a\in C} S_a$, then clearly any adapted subset of $S$ defines a basis of $gr~S$.

\end{remark}

The following is immediate.

\begin{lemma} Let $\nu:S\setminus \{0\}\to C$ be a valuation. Then $\nu$ admits an adapted basis iff $S=\bigoplus\limits_{a\in C_\nu} S_a$ such that $\nu(S_a\setminus \{0\})=\{a\}$ for all $a\in C_\nu$.
    
\end{lemma}

We say that $\nu$ is {\it locally finite} iff there exists an isomorphism $f:S\widetilde \to gr~S =\bigoplus \limits_{a\in C} S_a$ of $\kk$-vector spaces such that $f(S_{\le a})=\bigoplus\limits_{a'\le a} S_{a'}$ for each $a\in C$. 

The following is immediate.

\begin{lemma} 
\label{le:valuation grading}
Let $S$ be a $\kk$-vector space and $\nu:S\setminus\{0\}\to C$ be any valuation. Then: 

(a) For any basis $\underline {\bf B}$ of $gr~S$ such that any $\underline {\bf B}_a:=S_a\cap \underline {\bf B}$ a basis of $S_a$ one can construct an adapted set ${\bf B}$ as follows.
$${\bf B}_a=\iota_a(\underline {\bf B}_a)\ ,$$
where and $\iota_a:S_a\hookrightarrow S_{\le a}$ is any simultaneous splitting of canonical projections $\pi_a:S_{\le a}\twoheadrightarrow S_a$.

(b)  $\nu$ is locally finite iff $S$ admits an adapted basis. 

(c) If $C$ is a well-order, then $\nu$ is locally finite, moreover, any adapted subset of $S$ is a basis.

\end{lemma}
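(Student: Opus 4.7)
I would organize the argument as three tightly linked steps: (a) is a direct verification from the definitions, (b) translates between adapted bases of $S$ and filtration-respecting isomorphisms $S\widetilde\to gr\,S$ in both directions, and (c) uses the well-order on $C$ via a minimal-counterexample argument.

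Part (a) is essentially tautological: a splitting $\iota_a$ of $\pi_a\colon S_{\le a}\twoheadrightarrow S_a$ satisfies $\pi_a\circ\iota_a=\mathrm{id}_{S_a}$, so the restriction of $\pi_a$ to ${\bf B}_a:=\iota_a(\underline{\bf B}_a)$ is a bijection onto the basis $\underline{\bf B}_a$ of $S_a$, and ${\bf B}:=\bigsqcup_a {\bf B}_a$ then meets the adaptedness condition verbatim. For (b), the ``if'' direction starts from an adapted basis ${\bf B}$ and defines $f\colon S\to gr\,S$ by $f(b):=\pi_{\nu(b)}(b)$, extended linearly; adaptedness makes $f({\bf B}_a)=\pi_a({\bf B}_a)$ a basis of $S_a$ for every $a$, so $f$ is a linear isomorphism. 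The crux here is the identity $S_{\le a}=\mathrm{span}\{b\in {\bf B}:\nu(b)\le a\}$, proved by expanding $x\in S_{\le a}$ as $\sum c_i b_i$ and noting that if the maximum $\nu(b_i)$ were some $a'>a$, then $\pi_{a'}(x)$ would be simultaneously $0$ (since $x\in S_{<a'}$) and a nontrivial combination of the basis $\pi_{a'}({\bf B}_{a'})$ of $S_{a'}$, a contradiction. This identity then gives $f(S_{\le a})=\bigoplus_{a'\le a} S_{a'}$, proving local finiteness. The converse runs the same logic backwards: given $f$, pick any grading-homogeneous basis $\underline{\bf B}=\bigsqcup_a\underline{\bf B}_a$ of $gr\,S$ and set ${\bf B}:=f^{-1}(\underline{\bf B})$; the equality $f(S_{<a})=\bigoplus_{a'<a}S_{a'}$ forces $\nu(f^{-1}(\underline b))=a$ for every $\underline b\in \underline{\bf B}_a$, and the induced isomorphism $S_{\le a}/S_{<a}\to f(S_{\le a})/f(S_{<a})=S_a$ identifies $\pi_a({\bf B}_a)$ with $\underline{\bf B}_a$.

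Part (c) is where the real work lies and where well-orderness is decisive. To show any adapted ${\bf B}$ is a basis I run a minimal-counterexample argument enabled by the well-order on $C$: if $V:=\mathrm{span}({\bf B})\neq S$, take the least $a\in C$ with $S_{\le a}\not\subset V$ and any $x\in S_{\le a}\setminus V$; if $\nu(x)<a$ then $x\in V$ by minimality, so $\nu(x)=a$, and the basis $\pi_a({\bf B}_a)$ of $S_a$ gives $\pi_a(x)=\sum c_i\pi_a(b_i)$ with $b_i\in {\bf B}_a$, whence $x-\sum c_i b_i\in S_{<a}\subset V$, forcing $x\in V$ -- contradiction. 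Linear independence of ${\bf B}$ follows from the same top-$\nu$-value truncation already used in (b). Combined with (a), any choice of splittings $\iota_a$ now produces an adapted basis, and local finiteness follows from the ``if'' direction of (b). The principal technical subtlety throughout is keeping track of how the projections $\pi_a$ interact with the isomorphism $f$ on the associated graded; the well-order in (c) is precisely what makes the descent argument go through without any extra hypothesis on $C$.
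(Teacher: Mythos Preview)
Your proposal is correct. The paper declares this lemma ``immediate'' and gives no proof, so you have supplied the details the authors omit; your descent argument for (c) via a minimal counterexample is equivalent to the iterative reduction the paper uses elsewhere (e.g., in the proof of Theorem~\ref{filtration}(ii),(iv)), and parts (a) and (b) are handled exactly as one would expect.
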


\begin{example} 
\label{ex:non-injective valuation}
Let $S=\kk((t^{-1}))$ be the algebra of all formal Laurent series in $t^{-1}$ over $\kk$. Then setting $\nu(f)=n$ for each $f=\sum\limits_{m=-\infty}^n a_mt^m$ with $a_n\ne 0$  defines a valuation $S\setminus\{0\}\to \ZZ$. This valuation is {\bf not} locally finite, in particular,  the subset ${\bf B}=\{t^m,m\in \ZZ\}$ is adapted to $\nu$, however, it is not a basis of $S$. In fact, there is {\bf no} adapted bases in $S$. 
At the same time, the restriction of $\nu$ to the  subalgebra $S_0=\kk[t,t^{-1}]$ of Laurent polynomials is a locally-finite valuation on $S_0$.
\end{example}

It turns out that we can always propagate valuations to tensor products  without assuming that they are well-ordered.

\begin{proposition}
\label{pr:lex tensor product}
Let $S$ be $\kk$-vector space and  $\nu:S\setminus \{0\}\to C$ be a valuation. Then for any $\kk$-vector space $S'$ we have:

(a) There exists a unique  a valuation $\nu^{S'}:S\otimes S'\setminus\{0\}\to C$ such that
\begin{equation}
\label{eq:lex tensor product}
\nu^{S'}(x\otimes y)=\nu(x)
\end{equation}
for all $x\in S\setminus\{0\}$, $y\in S'\setminus\{0\}$ so that the associated filtration on $S\otimes S'$ is
$$(S\otimes S')_{\le a}=S_{\le a}\otimes S'$$
for $a\in C$. 

(b) For any valuation  $\nu':S'\setminus\{0\}\to C'$ there exists a unique valuation $\nu\otimes \nu':S\otimes S'\setminus \{0\}\to C\times C'$ such that   
\begin{equation}
\label{eq:lex tensor product injective}
(\nu\otimes \nu')(x\otimes y)=(\nu(x),\nu'(y))
\end{equation}
for $x\in S\setminus 0$, $y\in S'\setminus 0$ 
(where we equip $C\times C'$ with the
lexicographic ordering, i.e., $(a,a')<(\tilde a,\tilde a')$ whenever
either $a<\tilde a$ or $\tilde a=a$ and $a'<\tilde a'$) so that the associated filtration on $S\otimes S'$ is
$$(S\otimes S')_{\le (a,a')}=S_{\le a}\otimes S'_{\le a'}+S_{<a}\otimes S'$$
for $(a,a')\in C\times C'$. 
\end{proposition}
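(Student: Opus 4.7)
The plan is to define both valuations through the prescribed filtrations and verify the axioms, exploiting the fact that tensoring over the field $\kk$ preserves inclusions (i.e., $S'$ is flat).

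For part (a), I would fix a basis $\{e_\alpha\}_{\alpha\in A}$ of $S'$, so that every $z\in S\otimes S'$ has a unique expansion $z=\sum_\alpha x_\alpha\otimes e_\alpha$ with finitely many $x_\alpha$ nonzero, and set
$$\nu^{S'}(z):=\max\{\nu(x_\alpha):x_\alpha\ne 0\},$$
which exists as the maximum of a finite subset of the totally ordered set $C$. Using that $S_{\le a}\otimes S'=\bigoplus_\alpha S_{\le a}\otimes\kk e_\alpha$ (via flatness), one checks that $z\in S_{\le a}\otimes S'$ iff each $x_\alpha\in S_{\le a}$, so $\nu^{S'}(z)=\min\{a\in C:z\in S_{\le a}\otimes S'\}$ is independent of the chosen basis. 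The $\kk^\times$-invariance is automatic, the ultrametric inequality is inherited coordinatewise from $\nu$, and on a simple tensor $x\otimes y=\sum_\alpha (c_\alpha x)\otimes e_\alpha$ (where $y=\sum c_\alpha e_\alpha$) one gets $\nu^{S'}(x\otimes y)=\nu(x)$. Uniqueness is forced by the requirement that the filtration coincides with $S_{\le a}\otimes S'$.

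For part (b), I would bootstrap from $\nu^{S'}$. Given $z\ne 0$ set $a:=\nu^{S'}(z)$, so $z\in S_{\le a}\otimes S'$ but $z\notin S_{<a}\otimes S'$; by flatness of $S'$, the sequence $0\to S_{<a}\otimes S'\to S_{\le a}\otimes S'\to S_a\otimes S'\to 0$ is exact, so $z$ descends to a nonzero $\bar z\in S_a\otimes S'$. Applying part (a) with the roles of $S,\nu$ played by $S',\nu'$ (and the factor $S_a$ absorbed analogously to the basis $\{e_\alpha\}$) gives a canonical $a':=\min\{a'\in C':\bar z\in S_a\otimes S'_{\le a'}\}$, and I define $(\nu\otimes\nu')(z):=(a,a')$.

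The central verification is that this reproduces the prescribed filtration
$$(S\otimes S')_{\le(a,a')}=S_{\le a}\otimes S'_{\le a'}+S_{<a}\otimes S'.$$
The inclusion $\supseteq$ follows from part (a) (giving the first coordinate $\le a$) together with the observation that when equality holds, the image $\bar z$ already lies in $S_a\otimes S'_{\le a'}$. Conversely, if $(\nu\otimes\nu')(z)=(b,b')\le_{lex}(a,a')$, then either $b<a$ and $z\in S_{\le b}\otimes S'\subseteq S_{<a}\otimes S'$, or $b=a$ and $b'\le a'$, and then $\bar z\in S_a\otimes S'_{\le a'}$ lifts (via the surjection $S_{\le a}\otimes S'_{\le a'}\twoheadrightarrow S_a\otimes S'_{\le a'}$, again by flatness) to some $z_1\in S_{\le a}\otimes S'_{\le a'}$ with $z-z_1\in S_{<a}\otimes S'$. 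Once the filtration is identified, the ultrametric inequality for lex order, the $\kk^\times$-invariance, and the value on simple tensors follow formally; uniqueness is again forced by the filtration condition.

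The one point requiring care is the lifting step in the $\subseteq$ direction of the filtration identification: one needs that $\bar z\in S_a\otimes S'_{\le a'}$ admits a lift to $S_{\le a}\otimes S'_{\le a'}$, rather than only to $S_{\le a}\otimes S'$. This is the main obstacle and is precisely what flatness of $S'_{\le a'}\hookrightarrow S'$ (trivial over a field) delivers, ensuring the surjection $S_{\le a}\otimes S'_{\le a'}\twoheadrightarrow S_a\otimes S'_{\le a'}$ with the expected kernel $S_{<a}\otimes S'_{\le a'}$; without this, the two-term sum defining the filtration could genuinely exceed what is detectable from $(a,a')$ alone.
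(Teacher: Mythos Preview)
Your proposal is correct and follows essentially the same strategy as the paper: for (a) both define $\nu^{S'}(z)$ as the maximum of $\nu$ over the ``first-factor support'' of $z$, and for (b) both pass to $\bar z\in S_a\otimes S'$ (with $a=\nu^{S'}(z)$) and read off the second coordinate there. The only cosmetic difference is that the paper works basis-free, defining $V(z)\subset S$ as the smallest subspace with $z\in V(z)\otimes S'$ and setting $\nu^{S'}(z):=\max\{\nu(x):x\in V(z)\setminus\{0\}\}$ (and similarly $V'(z)\subset S'$ for the second coordinate), whereas you pick a basis of $S'$ and take $\max_\alpha \nu(x_\alpha)$; these are visibly the same number. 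Your write-up is in fact more explicit than the paper's on the filtration identity in (b), where you correctly isolate the lifting step $S_{\le a}\otimes S'_{\le a'}\twoheadrightarrow S_a\otimes S'_{\le a'}$ as the only nontrivial point.
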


We prove Proposition \ref{pr:lex tensor product} in Section \ref{subsec:Proofs of results of Section 4}.

By definition, $C_{\nu^{S'}}=C_\nu$,  $\nu^{S'}(s\otimes (S'\setminus \{0\}))=\nu(s)$ for any $s\in S\setminus \{0\}$ and $\nu^{S'}((S\setminus \{0\})\otimes s')=C_\nu$ for any $s'\in S'\setminus \{0\}$ in Proposition \ref{pr:lex tensor product}(a).

\subsection{Decorated valuations}

\label{subsec:String valuations}

In this section we generalize valuations by taking into account their leading coefficients. 

\begin{definition}\label{decorated valuation} Given $\kk$-vector spaces $S, S'$ and a valuation $\nu:S\setminus \{0\}\to C$, we say a map $\lambda:S\setminus \{0\}\to S'\setminus \{0\}$  is  a {\it leading coefficient} of $\nu$ if $\lambda(c x)=c\lambda(x)$ for all $c\in \kk$, $x\in S\setminus \{0\}$ and 
$
\lambda(x+y)=\begin{cases} 
\lambda(x)+\lambda(y) &\text{if $\nu(x)=\nu(y)=\nu(x+y)$}\\
\lambda(x) &\text{if $\nu(x)>\nu(y)$}\\
\lambda(y) &\text{if $\nu(x)<\nu(y)$}
\end{cases}
$
for any $x,y\in S$ such that $x+y\ne 0$. Sometimes we will refer to the pair $(\nu,\lambda)$ 
as a {\it decorated valuation}.

\end{definition}

If both $S$ and $S'$ are $\kk$-algebras, we say that a leading coefficient $\lambda$ of $\nu$ is {\it multiplicative} if
\begin{equation}
\label{eq:homomorphism of multiplicative monoids}
\lambda(x y)=\lambda(x)\lambda(y)
\end{equation}
for all $x,y\in S\setminus \{0\}$ (i.e.,  $\lambda$ is a homomorphism of multiplicative semigroups).

The following are immediate. 

\begin{lemma} 
\label{le:decorated valuation projections}
For any decorated valuation $(\nu,\lambda)$ on a vector space $S$ one has  

(a) $$\lambda(x_1+\cdots +x_r)=\sum\limits_{\substack{j\in [1,r]:\
\nu(x_j)=\max (\nu(x_1),\ldots,\nu(x_r))}}\lambda(x_j) 
$$
\noindent whenever it holds $x_1+\cdots +x_r\ne 0$ and $\nu(x_1+\cdots +x_r)=\max (\nu(x_1),\ldots,\nu(x_r))$.

(b) For any subspace $S_0$ of $S$ the restriction of $(\nu,\lambda)$ to $S_0\setminus \{0\}$ is a decorated valuation on $S_0$.

(c) For any injective linear map $f:S'\hookrightarrow S''$ the pair $(\nu,f\circ \lambda)$ is a decorated valuation on $S$.  
\end{lemma}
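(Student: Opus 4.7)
The plan is to handle the three parts in order, with (a) being the only one requiring any real work and (b), (c) being essentially checks of definitions.

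For part (a), I would argue by induction on $r$. The base case $r=1$ is trivial and $r=2$ is exactly the three-case formula in Definition \ref{decorated valuation}. For the inductive step, let $m=\max(\nu(x_1),\ldots,\nu(x_r))$, set $y:=x_1+\cdots+x_{r-1}$, and split into two cases based on whether $\nu(x_r)<m$ or $\nu(x_r)=m$. In the first case the hypothesis $\nu(x_1+\cdots+x_r)=m$ forces $\nu(y)=m$, so the $\nu(x)>\nu(y)$ case of the defining formula gives $\lambda(y+x_r)=\lambda(y)$, and we invoke induction on $y$. In the second case, one has two sub-possibilities: $\nu(y)<m$ (in which case $\lambda(x_1+\cdots+x_r)=\lambda(x_r)$ and by induction the inner sum for $y$ is empty, matching the claim), or $\nu(y)=m$, where the equality $\nu(y+x_r)=m$ allows us to use the ``both equal'' clause $\lambda(y+x_r)=\lambda(y)+\lambda(x_r)$ and then apply the inductive hypothesis to $y$. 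The only subtle point, and the one I expect to be the mild obstacle, is to make sure the inductive hypothesis applies to $y$; this requires $y\ne 0$ and $\nu(y)=\max_{j<r}\nu(x_j)$, which can be arranged (e.g., by reordering so that an index $j$ with $\nu(x_j)=m$ is not equal to $r$ unless it is the unique such index, in which case the argument is immediate anyway).

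For part (b), one simply observes that all three defining conditions for $(\nu,\lambda)$ — the scaling $\lambda(cx)=c\lambda(x)$, the piecewise additivity, and (if $S$ is an algebra and $S_0$ a subalgebra) the multiplicativity \eqref{eq:homomorphism of multiplicative monoids} — are universal identities over pairs (resp.\ triples) of nonzero elements of $S$ whose sum (resp.\ product) is nonzero. They therefore remain valid when restricted to any pair (resp.\ triple) of nonzero elements of $S_0$, since the inclusion $S_0\hookrightarrow S$ preserves sums, products, and zero. The value $\nu(x)$ used to choose the case in Definition \ref{decorated valuation} is the same whether $x$ is considered in $S$ or in $S_0$.

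For part (c), the composition $f\circ\lambda$ lands in $S''\setminus\{0\}$ because $\lambda(x)\ne 0$ for all $x\in S\setminus\{0\}$ and $f$ is injective with $f(0)=0$. Scaling is clear since $f$ is $\kk$-linear: $(f\circ\lambda)(cx)=f(c\lambda(x))=c(f\circ\lambda)(x)$. For the additive case distinction, apply $f$ to each of the three clauses in Definition \ref{decorated valuation}; linearity of $f$ turns $\lambda(x)+\lambda(y)$ into $f\circ\lambda(x)+f\circ\lambda(y)$, and the other two clauses transfer verbatim. This establishes that $(\nu,f\circ\lambda)$ is a decorated valuation on $S$, completing the lemma.
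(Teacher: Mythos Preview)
The paper gives no proof; it simply records the lemma as ``immediate.'' Your plan is a perfectly reasonable way to make part (a) explicit, and parts (b) and (c) are indeed pure definition-checking exactly as you say.

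One point in your induction for (a) deserves tightening. In your sub-case 2a ($\nu(x_r)=m$ and $\nu(y)<m$) you assert that ``by induction the inner sum for $y$ is empty.'' That is only true when no $j<r$ has $\nu(x_j)=m$; in general there may be several such indices, and then the inductive hypothesis (which requires $\nu(y)=\max_{j<r}\nu(x_j)$) does not apply to $y$, nor is the sum over those indices obviously zero. Your reordering trick \emph{does} repair this, but for a slightly different reason than you indicate: once you arrange that $\nu(x_r)=m$ only when $r$ is the unique maximal index, Case~2 collapses entirely to that single immediate situation, so sub-case~2b becomes vacuous and the ``inner sum'' in 2a really is empty. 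So after reordering the case split is just: Case~1 ($\nu(x_r)<m$, inductive hypothesis applies to $y$ as you argue) and the unique-max case (immediate). Alternatively, one can avoid reordering by proving simultaneously with (a) the companion statement that if $\sum x_j=0$ or $\nu(\sum x_j)<\max_j\nu(x_j)=m$, then $\sum_{\nu(x_j)=m}\lambda(x_j)=0$; the $r=2$ case of this follows by writing $-x_2=x_1+\bigl(-(x_1+x_2)\bigr)$ and applying the $\nu(x)>\nu(y)$ clause of Definition~\ref{decorated valuation}. Either route closes the gap cleanly.
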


\begin{lemma}
\label{le:lex product valuation}
 
Let $\nu:S\setminus \{0\}\to C$ and $\nu':S'\setminus \{0\}\to C'$ be valuations and $\lambda: S\setminus \{0\}\to S'\setminus \{0\}$ be a leading coefficient of $\nu$. Then the assignments $x\mapsto (\nu(x),\nu'(\lambda(x))$ define a valuation $\nu\times_\lambda \nu': S\setminus \{0\}\to C\times C'$, with the lexicographic order on $C\times C'$.

\end{lemma}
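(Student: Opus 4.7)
The plan is to verify directly the two axioms of a valuation for the map $\mu:=\nu\times_\lambda\nu':S\setminus\{0\}\to C\times C'$ defined by $\mu(x)=(\nu(x),\nu'(\lambda(x)))$, where $C\times C'$ carries the lexicographic order.

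First I would handle scalar invariance: for $\alpha\in\kk^\times$ and $x\in S\setminus\{0\}$, one has $\nu(\alpha x)=\nu(x)$ because $\nu$ is a valuation, and $\lambda(\alpha x)=\alpha\lambda(x)$ by the homogeneity requirement in Definition~\ref{decorated valuation}; then $\nu'(\lambda(\alpha x))=\nu'(\alpha\lambda(x))=\nu'(\lambda(x))$ since $\nu'$ is a valuation. Thus $\mu(\alpha x)=\mu(x)$.

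Next I would prove the sub-max inequality $\mu(x+y)\le_{\mathrm{lex}}\max(\mu(x),\mu(y))$ whenever $x+y\ne 0$, by case analysis on the comparison of $\nu(x)$ and $\nu(y)$. If $\nu(x)\ne\nu(y)$, say $\nu(x)>\nu(y)$, then $\nu(x+y)=\nu(x)$ and $\lambda(x+y)=\lambda(x)$ by Definition~\ref{decorated valuation}, hence $\mu(x+y)=\mu(x)=\max_{\mathrm{lex}}(\mu(x),\mu(y))$. If $\nu(x)=\nu(y)=:c$ but $\nu(x+y)<c$, then the first coordinate of $\mu(x+y)$ is strictly less than $c$, which is the first coordinate of both $\mu(x)$ and $\mu(y)$, so lex order immediately gives $\mu(x+y)<_{\mathrm{lex}}\max(\mu(x),\mu(y))$ regardless of $\nu'(\lambda(x+y))$. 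Finally, if $\nu(x)=\nu(y)=\nu(x+y)=c$, then Definition~\ref{decorated valuation} gives $\lambda(x+y)=\lambda(x)+\lambda(y)$; since $\lambda$ is valued in $S'\setminus\{0\}$, this sum is nonzero, and the valuation property of $\nu'$ yields
\[
\nu'(\lambda(x+y))=\nu'(\lambda(x)+\lambda(y))\le\max(\nu'(\lambda(x)),\nu'(\lambda(y))).
\]
Because the first coordinates agree, this inequality on second coordinates translates into $\mu(x+y)\le_{\mathrm{lex}}\max(\mu(x),\mu(y))$.

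There is no real obstacle here; the only delicate point is the third subcase, where one must invoke the fact that $\lambda(x)+\lambda(y)\ne 0$ (which is forced by the codomain of $\lambda$ together with the case of Definition~\ref{decorated valuation} that applies) in order to legitimately apply the valuation axiom for $\nu'$. The three cases together exhaust every possibility with $x+y\ne 0$, so $\mu$ satisfies both axioms and is therefore a valuation $S\setminus\{0\}\to C\times C'$.
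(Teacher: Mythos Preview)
Your proof is correct and is precisely the straightforward case analysis the paper has in mind: the paper does not spell out a proof of this lemma (it is grouped among results labeled ``immediate''), and your direct verification of the two valuation axioms via the three subcases on $\nu(x)$ versus $\nu(y)$ is exactly the intended argument. Your observation that $\lambda(x)+\lambda(y)\ne 0$ in the third subcase is the only point requiring a moment's care, and you handle it correctly.
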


The following immediate result gives various characterizations of decorated valuations in terms of adapted bases.

\begin{lemma} 
\label{le:decorated valuation tensor}
Let $S$ and $S'$ be $\kk$-vector spaces, $\nu:S\setminus\{0\}\to C$ be a valuation, and ${\bf B}$ be a basis of $S$ adapted to $\nu$. Then 

(a) Any map $f:{\bf B}\to S'\setminus \{0\}$ uniquely extends to a leading coefficient $\lambda=\lambda_{{\bf B},f}:S\setminus \{0\}\to S'\setminus \{0\}$.

(b) The assignments $b\otimes s'\mapsto s'$, $b\in {\bf B}$, $s'\in S'$ define a leading coefficient   $\lambda^{\bf B}:S\otimes S'\setminus \{0\}\to  S'$ of the valuation $\nu^{S'}$ (in the notation of Proposition \ref{pr:lex tensor product}(a)).

(c) For any leading coefficient $\lambda:S\setminus \{0\}\to S'\setminus \{0\}$ of $\nu$ there exists a unique injective linear map $\delta=\delta_{{\bf B},\lambda}:S\hookrightarrow S\otimes S'$ such that $\lambda=\lambda^{\bf B}\circ \delta$ (in fact, $\delta$ is given by $\delta(b)=b\otimes \lambda(b)$ for all $b\in {\bf B}$).
\end{lemma}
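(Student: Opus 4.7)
\proof [Plan] The three parts all rest on the fact that, since ${\bf B}$ is adapted to $\nu$, any nonzero $x\in S$ has a unique expansion $x=\sum_{b\in {\bf B}}c_b b$ with finite support, and $\nu(x)=\max\{\nu(b):c_b\ne 0\}$. The maximum is attained in the sense that the image $\overline{x}\in S_{\nu(x)}$ equals $\sum_{\nu(b)=\nu(x)} c_b\,\overline{b}$, and this is nonzero because $\{\overline{b}:b\in {\bf B}_{\nu(x)}\}$ is a basis of $S_{\nu(x)}$ by adaptedness.

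For (a), I would \emph{define} $\lambda_{{\bf B},f}(x):=\sum_{b:\,\nu(b)=\nu(x)} c_b\,f(b)$, and then check the three conditions of Definition~\ref{decorated valuation} by case analysis on whether $\nu(x)<\nu(y)$, $\nu(x)>\nu(y)$, or $\nu(x)=\nu(y)$. In the first two cases only the higher-valuation expansion contributes; in the third case the coefficients in the expansions of $x$, $y$ at basis vectors of valuation $\nu(x)$ simply add, giving $\lambda(x)+\lambda(y)$ (whether or not $\nu(x+y)$ drops, the formula for $\lambda(x+y)$ is only invoked when the maximum is attained). Uniqueness is immediate from Lemma~\ref{le:decorated valuation projections}(a): any leading coefficient $\lambda$ with $\lambda(b)=f(b)$ must satisfy precisely this formula on each basis expansion.

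For (b), I would apply (a) to the vector space $S\otimes S'$. Fix a basis $\mathcal{E}$ of $S'$; then $\widetilde{\bf B}:=\{b\otimes e:b\in {\bf B},\,e\in \mathcal{E}\}$ is a basis of $S\otimes S'$, and by the filtration description in Proposition~\ref{pr:lex tensor product}(a) it is adapted to $\nu^{S'}$ with $\nu^{S'}(b\otimes e)=\nu(b)$. Applying (a) with $f:b\otimes e\mapsto e$ produces the stated map $\lambda^{\bf B}$; in coordinates, if $z=\sum_{b\in {\bf B}} b\otimes s'_b$ then $\lambda^{\bf B}(z)=\sum_{b:\,\nu(b)=\nu^{S'}(z)}s'_b\in S'$, which is independent of the auxiliary choice of $\mathcal{E}$.

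For (c), I would set $\delta(b):=b\otimes \lambda(b)$ and extend linearly. Injectivity of $\delta$ is clear because $\{b\otimes \lambda(b):b\in {\bf B}\}$ is linearly independent in $S\otimes S'$ (the $b$'s are distinct basis vectors of $S$ and each $\lambda(b)\ne 0$). To verify $\lambda=\lambda^{\bf B}\circ \delta$, take $x=\sum c_i b_i$: then $\delta(x)=\sum_i b_i\otimes (c_i\lambda(b_i))$, and the argument from the first paragraph shows $\nu^{S'}(\delta(x))=\nu(x)$; applying the formula for $\lambda^{\bf B}$ gives $\sum_{\nu(b_i)=\nu(x)}c_i\lambda(b_i)$, which equals $\lambda(x)$ by Lemma~\ref{le:decorated valuation projections}(a). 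Uniqueness of $\delta$ as a linear map satisfying $\lambda^{\bf B}\circ \delta=\lambda$ reduces, after picking a basis, to matching on each $b\in {\bf B}$, where the only possibility is $b\otimes \lambda(b)$.

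The main nuisance is the consistent case analysis in (a) (particularly that when $\nu(x+y)<\max(\nu(x),\nu(y))$ the formula only has to recover what Definition~\ref{decorated valuation} prescribes, namely additivity of the top-slot coefficients); once this is done carefully, (b) and (c) are essentially bookkeeping in the adapted basis.
\endproof
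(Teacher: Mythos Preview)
The paper declares this lemma ``immediate'' and gives no proof; your direct verification via the unique expansion $x=\sum_b c_b b$ in the adapted basis ${\bf B}$ is precisely the intended argument, and your treatment of (a), (b), and the existence portion of (c) is correct.

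One small gap: your uniqueness argument in (c) asserts that ``the only possibility is $b\otimes\lambda(b)$,'' but the condition $\lambda^{\bf B}(\delta(b))=\lambda(b)$ alone does not force this --- for instance, adding to $\delta(b)$ any element of $S_{<\nu(b)}\otimes S'$ leaves $\lambda^{\bf B}(\delta(b))$ unchanged. The uniqueness claim as written in the statement really needs an additional constraint (such as $\delta(b)\in b\otimes S'$ for all $b\in{\bf B}$, which the parenthetical formula suggests is intended); with that reading your argument goes through, but you should make the constraint explicit rather than gloss over it.
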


The following  provides an example of decorated valuations ``in nature.''
\begin{lemma} 
\label{le:nilpotent valuation} Let $\kk$ be of characteristic $0$, $S$ be a $\kk$-vector space, and $E$ be a locally nilpotent linear map $S\to S$, i.e., for each nonzero $x\ne 0$ there is a unique number $\nu_E(x)\in \ZZ_{\ge 0}$ such that $E^{\nu_E(x)}(x)\ne 0$ and $E^{\nu_E(x)+1}(x)=0$. Then

(a) The assignments $x\mapsto \nu_E(x)$ define a valuation $\nu_E: S\setminus \{0\}\to \ZZ_{\ge 0}$.

(b)  The assignments $x\mapsto E^{(\nu_E(x))}(x)$ define the leading coefficient $\lambda_E:S\setminus \{0\}\to S\setminus\{0\}$, where we abbreviate $E^{(n)}:=c_nE^n$, the $c$-modified $n$-th power (here $c=(c_n)_{n\ge 0}$ is any sequence in $\kk^\times$).

(c) If $S$ is an integral domain over $\kk$ and $E$ is a locally nilpotent derivation of $S$, then $\nu_E$ is an additive valuation on $S$ and $\lambda_E$ is its multiplicative leading coefficient.
\end{lemma}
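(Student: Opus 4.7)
The plan is to verify each of the three properties directly from the definition of $\nu_E$ and the Leibniz-style behavior of $E$.

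For part (a), I would start by observing that $\nu_E$ is well-defined on $S\setminus\{0\}$ by the local nilpotence hypothesis. The scaling property $\nu_E(cx)=\nu_E(x)$ for $c\in\kk^\times$ is immediate from linearity of $E$. For the ultrametric inequality, the plan is to set $n=\max(\nu_E(x),\nu_E(y))+1$ and note that $E^n(x+y)=E^n(x)+E^n(y)=0$, which forces $\nu_E(x+y)\le\max(\nu_E(x),\nu_E(y))$.

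For part (b), I would first verify the scaling $\lambda_E(cx)=c\lambda_E(x)$, which is clear. Then I would treat the three cases from Definition \ref{decorated valuation}. If $\nu_E(x)>\nu_E(y)$, then $n:=\nu_E(x)$ satisfies $E^{(n)}(y)=0$ (because $n>\nu_E(y)$, so even $E^n(y)=0$), whereas $E^{(n)}(x)\ne 0$; combined with the argument of part (a) this forces $\nu_E(x+y)=n$ and $\lambda_E(x+y)=E^{(n)}(x+y)=E^{(n)}(x)=\lambda_E(x)$. The symmetric case $\nu_E(y)>\nu_E(x)$ is identical. The remaining case is the equality $\nu_E(x)=\nu_E(y)=\nu_E(x+y)=:n$, where $\lambda_E(x+y)=E^{(n)}(x+y)=E^{(n)}(x)+E^{(n)}(y)=\lambda_E(x)+\lambda_E(y)$ by linearity of $E^{(n)}$.

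For part (c), the essential input is the divided-power Leibniz rule
\[
E^{(n)}(xy)=\sum_{k=0}^{n} E^{(k)}(x)\,E^{(n-k)}(y),
\]
which holds for any derivation $E$ in characteristic $0$. Setting $m=\nu_E(x)$ and $p=\nu_E(y)$, every term with $k>m$ or $n-k>p$ vanishes; hence for $n>m+p$ all terms vanish, while for $n=m+p$ only the $k=m$ term survives and gives $E^{(m+p)}(xy)=E^{(m)}(x)\,E^{(p)}(y)=\lambda_E(x)\lambda_E(y)$. Since $S$ is an integral domain, this product is nonzero, so $\nu_E(xy)=m+p$ and $\lambda_E(xy)=\lambda_E(x)\lambda_E(y)$, establishing simultaneously that $\nu_E$ is additive and that $\lambda_E$ is multiplicative.

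The only genuinely delicate point is the bookkeeping in part (c) — making sure the divided-power form of Leibniz is used (which is where characteristic $0$ enters, to invert the factorial denominators) and checking that exactly one term in the sum survives at $n=m+p$. Once that identity is in hand, everything else is a direct verification from the definitions.
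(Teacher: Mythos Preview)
Your proof is correct and complete. The paper does not supply its own proof of this lemma---it is stated as a routine example of a decorated valuation and left to the reader---so your direct verification from the definitions (ultrametric from linearity of $E^n$, the three leading-coefficient cases, and the divided-power Leibniz rule for part (c)) is exactly the intended argument.
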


\begin{remark} 
\label{rem:factorials}
If $S$ is an algebra and $E$ is a derivation, we usually take $c_n=\frac{1}{n!}$.
    
\end{remark}

More generally,  let ${\bf E}=(E_1,\ldots,E_m)$ be a family of locally nilpotent linear maps $S\to S$. Define the map $\lambda_{\bf E}:S\setminus\{0\}\to S\setminus \{0\}$ by 
\begin{equation}
\label{eq:lambdaE}
\lambda_{\bf E}:=\lambda_{E_m}\circ \cdots \circ \lambda_{E_1}\ ,
\end{equation}
where $\lambda_E:S\setminus\{0\}\to S\setminus \{0\}$ is
as in Lemma \ref{le:nilpotent valuation} (with the convention $\lambda_\emptyset=Id_{S\setminus \{0\}}$).

Then define the map $\nu_{\bf E}:S\setminus\{0\}\to \ZZ_{\ge 0}^m$  by  
\begin{equation}
\label{eq:string valuation vector space}
\nu_{\bf E}(x)=(a_1,\ldots,a_m)\in \ZZ_{\ge 0}^m\ ,
\end{equation}
where 
$a_k=\nu_{E_k}(\lambda_{(E_1,\ldots,E_{k-1})}(x))$ 
for $k\in [m]$ 
(actually, $a_1=\nu_{E_1}(x)$).

The following is a generalization of Lemma \ref{le:nilpotent valuation}.
\begin{corollary} 
\label{cor:nilpotent valuation and leading coefficient} 

Let $\kk$ be of characteristic $0$, $S$ be a $\kk$-vector space. Then for any family ${\bf E}=(E_1,\ldots,E_m)$ of locally nilpotent linear maps $S\to S$ one has:

(a) $\nu_{\bf E}:S\setminus \{0\}\to \ZZ_{\ge 0}^m$ is a valuation and $\lambda_{\bf E}:S\setminus \{0\}\to S\setminus\{0\}$ is its leading coefficient.

(b) If $S$ is an integral domain over $\kk$,  each $E_k$ is a locally nilpotent derivation of $S$,  then $\nu_{\bf E}$ is a valuation of the algebra $S$. Moreover, if  $c_n=\frac{1}{n!}$ for $n\ge 0$ (see Remark \ref{rem:factorials}) 
then $\lambda_{\bf E}$ is multiplicative.

\end{corollary}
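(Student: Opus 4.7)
The plan is to prove Corollary \ref{cor:nilpotent valuation and leading coefficient} by induction on $m$, using Lemma \ref{le:nilpotent valuation} as the base case and Lemma \ref{le:lex product valuation} as the key structural tool. The base case $m=1$ is Lemma \ref{le:nilpotent valuation} verbatim (and $m=0$ is vacuous, with $\nu_\emptyset$ the trivial valuation to $\ZZ_{\ge 0}^0$ and $\lambda_\emptyset = \mathrm{Id}$).

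For the inductive step with ${\bf E}' = (E_1,\ldots,E_{m-1})$, I would first observe that by the very definition of $\nu_{\bf E}$ and $\lambda_{\bf E}$ one has
$$\nu_{\bf E}(x)=\bigl(\nu_{{\bf E}'}(x),\,\nu_{E_m}(\lambda_{{\bf E}'}(x))\bigr),\qquad \lambda_{\bf E}(x)=\lambda_{E_m}(\lambda_{{\bf E}'}(x))$$
for all $x\in S\setminus\{0\}$, where we identify $\ZZ_{\ge 0}^{m-1}\times \ZZ_{\ge 0}$ with $\ZZ_{\ge 0}^m$ via the lex order. By the inductive hypothesis, $\nu_{{\bf E}'}$ is a valuation with leading coefficient $\lambda_{{\bf E}'}:S\setminus\{0\}\to S\setminus\{0\}$, and by Lemma~\ref{le:nilpotent valuation}, $\nu_{E_m}$ is a valuation on $S$. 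Applying Lemma~\ref{le:lex product valuation} with $\nu=\nu_{{\bf E}'}$, $\lambda=\lambda_{{\bf E}'}$, $\nu'=\nu_{E_m}$ (and $S'=S$) yields at once that $\nu_{\bf E}=\nu_{{\bf E}'}\times_{\lambda_{{\bf E}'}}\nu_{E_m}$ is a valuation on $S\setminus\{0\}$.

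Next I would verify directly that $\lambda_{\bf E}=\lambda_{E_m}\circ\lambda_{{\bf E}'}$ is a leading coefficient of $\nu_{\bf E}$ (the step I expect to be the main obstacle, as Lemma~\ref{le:lex product valuation} yields the valuation but not the leading coefficient). Scalar homogeneity is immediate from each factor being $\kk$-homogeneous. For the sum property, given $x,y$ with $x+y\ne 0$, I would do a case analysis on the lex comparison of $\nu_{\bf E}(x)$ and $\nu_{\bf E}(y)$. If $\nu_{\bf E}(x)>\nu_{\bf E}(y)$, then either $\nu_{{\bf E}'}(x)>\nu_{{\bf E}'}(y)$---whence $\lambda_{{\bf E}'}(x+y)=\lambda_{{\bf E}'}(x)$ by induction and applying $\lambda_{E_m}$ gives $\lambda_{\bf E}(x+y)=\lambda_{\bf E}(x)$---or $\nu_{{\bf E}'}(x)=\nu_{{\bf E}'}(y)$ with $\nu_{E_m}(\lambda_{{\bf E}'}(x))>\nu_{E_m}(\lambda_{{\bf E}'}(y))$; in the latter subcase, the valuation property for $\nu_{\bf E}$ forces $\nu_{{\bf E}'}(x+y)=\nu_{{\bf E}'}(x)$, so $\lambda_{{\bf E}'}(x+y)=\lambda_{{\bf E}'}(x)+\lambda_{{\bf E}'}(y)$, and the leading-coefficient property of $\lambda_{E_m}$ kills the $y$-summand. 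If instead $\nu_{\bf E}(x)=\nu_{\bf E}(y)=\nu_{\bf E}(x+y)$, then componentwise equality forces $\nu_{{\bf E}'}(x)=\nu_{{\bf E}'}(y)=\nu_{{\bf E}'}(x+y)$ and $\nu_{E_m}(\lambda_{{\bf E}'}(x))=\nu_{E_m}(\lambda_{{\bf E}'}(y))=\nu_{E_m}(\lambda_{{\bf E}'}(x+y))$, so two successive applications of the leading-coefficient sum rule yield $\lambda_{\bf E}(x+y)=\lambda_{\bf E}(x)+\lambda_{\bf E}(y)$.

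For part (b), under the assumption that $S$ is an integral domain and each $E_k$ is a locally nilpotent derivation, Lemma~\ref{le:nilpotent valuation}(c) supplies additivity of each $\nu_{E_k}$ and multiplicativity of each $\lambda_{E_k}$. A straightforward induction combined with the formulas above yields
$$\nu_{\bf E}(xy)=\bigl(\nu_{{\bf E}'}(x)+\nu_{{\bf E}'}(y),\ \nu_{E_m}(\lambda_{{\bf E}'}(x)\lambda_{{\bf E}'}(y))\bigr)=\nu_{\bf E}(x)+\nu_{\bf E}(y)$$
and $\lambda_{\bf E}(xy)=\lambda_{E_m}(\lambda_{{\bf E}'}(x)\lambda_{{\bf E}'}(y))=\lambda_{\bf E}(x)\lambda_{\bf E}(y)$, completing the induction.
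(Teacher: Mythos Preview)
Your proof is correct and follows precisely the approach the paper has in mind: the paper gives no detailed proof of this corollary, but the remark immediately following it states that $\nu_{(E_1,\ldots,E_m)}=\nu_{(E_1,\ldots,E_k)}\times_{\lambda_{(E_1,\ldots,E_k)}}\nu_{(E_{k+1},\ldots,E_m)}$, which is exactly the recursive factorization via Lemma~\ref{le:lex product valuation} that you use. Your explicit verification that $\lambda_{\bf E}$ is a leading coefficient of $\nu_{\bf E}$ (the case analysis on the lex comparison) fills in a detail the paper leaves implicit, and your argument there is sound.
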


\begin{remark} The decorated valuations $(\nu_{\bf E},\lambda_{\bf E})$  generalize {\it string} valuations and their leading coefficients introduced by Andrei Zelevinsky and the first author in \cite{bz}.

\end{remark} 

\begin{remark} In fact, all valuations $\nu_{\bf E}$ factor (and thus can be defined recursively) as in Lemma \ref{le:lex product valuation}: 
$\nu_{(E_1,\ldots,E_m)}=\nu_{(E_1,\ldots,E_k)}\times_{\lambda_{(E_1,\ldots,E_k)}}\nu_{(E_{k+1},\ldots,E_m)}$ for any $k\in [1,m-1]$.

\end{remark}

\subsection{Injective valuations}

\label{subsec:injective valuations}

Our main focus is on the class of what we call {\it injective valuations}, i.e., locally finite valuations such that
$S_a=S_{\le a}/S_{<a}$ is one-dimensional for each $a\in C_\nu$ (such valuations were called {\it valuations with one-dimensional leaves} in \cite{Kaveh}). Note, however, that the valuation on $\kk((t^{-1}))$ in Example \ref{ex:non-injective valuation} has one-dimensional leaves, but is not locally finite, hence not injective. 

The following is an  immediate consequence of Lemma \ref{le:valuation grading}(c).

\begin{corollary}
\label{cor:injective definition}
A well-ordered valuation $\nu:S\setminus\{0\}\to C$ is injective iff there exists a basis ${\bf B}$ of $S$ such that the restriction of $\nu$ to ${\bf B}$ is an injective map ${\bf B}\hookrightarrow C$.
\end{corollary}
 
As in Section \ref{subsec:valuations}, we refer to any basis ${\bf B}$ satisfying Corollary \ref{cor:injective definition} as {\it adapted} to  $\nu$ and denote by ${\bf A}_\nu$ the set of all bases of $S$ adapted to $\nu$  (in \cite{Ku},
\cite{KuKu} each $\bf B\in {\bf A}_\nu$ is referred to as a {\it valuation basis}).

One can easily show that for any basis ${\bf B}$ adapted to (an injective valuation) $\nu$ one has $\nu({\bf B})=C_\nu$ and $S_{\le a}=\bigoplus\limits_{b\in {\bf B}:\nu(b)\le a}\kk\cdot b$, $S_{< a}=\bigoplus\limits_{b\in {\bf B}:\nu(b)< a}\kk\cdot b$ for all $a\in C_\nu$.

The following result establishes a convenient criterion of injectivity of a valuation.

\begin{proposition} [Euclidean property]
\label{pr:injective} The following are equivalent for a given
well-ordered valuation $\nu:S\setminus \{0\}\to C$.

(a) $\nu$ is injective.

(b)  For any non-zero $x,y\in S$ such that $\nu(x)=\nu(y)$ and $x\notin \kk \cdot y$ there exists (a unique)
$c \in \kk^\times$ such that  $\nu(x-c y)<\nu(x)$.

\end{proposition}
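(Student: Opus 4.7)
The plan is to prove the two implications via the associated graded space and its leaves $S_a = S_{\le a}/S_{<a}$, with canonical projections $\pi_a : S_{\le a} \twoheadrightarrow S_a$.

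For (a)$\Rightarrow$(b), I would use that injectivity of $\nu$ is equivalent (by Lemma~\ref{le:injective definition} and the definition of injective valuation) to $\dim_\kk S_a = 1$ for every $a \in C_\nu$. Given nonzero $x, y$ with $\nu(x) = \nu(y) = a$ and $x \notin \kk \cdot y$, both $\pi_a(x)$ and $\pi_a(y)$ are nonzero in the one-dimensional space $S_a$, so there is a unique $c \in \kk^\times$ with $\pi_a(x) = c \cdot \pi_a(y)$. Then $x - cy \in S_{<a}$, and since $x \neq cy$ (as $x \notin \kk \cdot y$), this gives $\nu(x - cy) < a = \nu(x)$. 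The uniqueness of $c$ can be read off directly from the valuation axiom: if $\nu(x - c'y) < \nu(x)$ as well for $c' \neq c$, then subtracting yields $\nu((c-c')y) \le \max(\nu(x - cy), \nu(x - c'y)) < \nu(x) = \nu(y)$, contradicting $\nu((c - c')y) = \nu(y)$.

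For (b)$\Rightarrow$(a), I would show directly that $\dim S_a \le 1$ for each $a \in C_\nu$. Take any two nonzero elements $x, y \in S_{\le a}$ with $\nu(x) = \nu(y) = a$; I want $\pi_a(x)$ and $\pi_a(y)$ to be proportional. If $x \in \kk \cdot y$, this is clear. Otherwise (b) furnishes $c \in \kk^\times$ with $\nu(x - cy) < a$, i.e. $x - cy \in S_{<a}$, so $\pi_a(x) = c \cdot \pi_a(y)$. Either way the nonzero vectors $\pi_a(x), \pi_a(y) \in S_a$ are linearly dependent, so $\dim S_a \le 1$; since $a \in C_\nu$ forces $\dim S_a \ge 1$, we get $\dim S_a = 1$. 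To conclude that $\nu$ is injective (not merely with one-dimensional leaves), I would invoke Lemma~\ref{le:valuation grading}(c): $C$ is well-ordered, so $\nu$ is automatically locally finite, and selecting any lift $b_a \in \pi_a^{-1}(S_a \setminus \{0\})$ for each $a \in C_\nu$ produces an adapted set which, by the same lemma, is a basis of $S$; by construction $\nu|_{\bf B}$ is a bijection onto $C_\nu$, which is exactly the injectivity criterion of Lemma~\ref{le:injective definition}.

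The main obstacle is essentially bookkeeping: both directions reduce to the one-line observation that $\nu(x - cy) < \nu(x)$ encodes exactly the statement $\pi_a(x) = c \cdot \pi_a(y)$ in $S_a$. The only subtlety is the uniqueness clause in (b), which requires the valuation axiom applied to the difference of two hypothetical witnesses, and the use of well-orderedness in (b)$\Rightarrow$(a) to upgrade ``one-dimensional leaves'' to the existence of an adapted basis.
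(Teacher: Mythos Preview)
Your proposal is correct and follows essentially the same approach as the paper. Both directions hinge on the observation that $\nu(x-cy)<\nu(x)$ is equivalent to $\pi_a(x)=c\,\pi_a(y)$ in $S_a$, and both invoke Lemma~\ref{le:valuation grading}(c) for the well-ordered case; your organization of (b)$\Rightarrow$(a) (first showing $\dim S_a\le 1$, then lifting) is slightly more direct than the paper's (which picks an adapted basis first and derives a contradiction from a hypothetical failure of injectivity of $\nu|_{\bf B}$), but the content is the same.
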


We prove Proposition \ref{pr:injective} in Section \ref{subsec:Proofs of results of Section 4}.

Proposition~\ref{pr:injective} is well-known for finite-dimensional $S$ (see e.g., \cite{Kaveh-Khovanskii}),
for infinite-dimensional $S$ we could not find it in the literature.

\begin{corollary}
\label{adapted_arbitrarily}
For a given
well-ordered injective valuation $\nu:S\setminus \{0\}\to C$  any $\nu$-adapted set is an (adapted) basis of $S$.  
\end{corollary}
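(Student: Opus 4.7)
The plan is to verify linear independence and spanning separately, using Proposition \ref{pr:injective} (the Euclidean property) together with the well-ordering of $C$.

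First I would unpack what ``$\nu$-adapted'' means under the hypothesis that each leaf $S_a$ is one-dimensional. Since $\pi_a({\bf B}_a)$ must be a basis of the one-dimensional space $S_a$ for every $a\in C_\nu$, each fiber ${\bf B}_a$ has exactly one element; equivalently, $\nu|_{\bf B}$ is a bijection ${\bf B}\to C_\nu$. This is the only structural feature of ${\bf B}$ I will use.

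For linear independence, suppose $\sum_{i=1}^n c_i b_i=0$ with distinct $b_1,\dots,b_n\in{\bf B}$ and $c_i\in\kk^\times$. Since the values $\nu(b_i)$ are pairwise distinct in $C$, the well-ordering lets me select the maximum, say $\nu(b_1)$. Then, iterating the strict equality $\nu(u+v)=\max(\nu(u),\nu(v))$ for $\nu(u)\ne\nu(v)$, I obtain $\nu(\sum c_ib_i)=\nu(b_1)\in C_\nu$, contradicting $\sum c_ib_i=0$.

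For spanning, given $x\in S\setminus\{0\}$, I would build, by (possibly transfinite) recursion on the well-ordered set $C$, a sequence of scalars $c_1,c_2,\dots$ and elements $b_1,b_2,\dots\in{\bf B}$ as follows. Set $x_0:=x$. If $x_k\ne 0$, let $a_k:=\nu(x_k)\in C_\nu$, let $b_{k+1}$ be the unique element of ${\bf B}$ with $\nu(b_{k+1})=a_k$, and apply the Euclidean property (Proposition \ref{pr:injective}) to choose the unique $c_{k+1}\in\kk^\times$ such that either $x_{k+1}:=x_k-c_{k+1}b_{k+1}=0$ or $\nu(x_{k+1})<a_k$. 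The sequence $a_0>a_1>\cdots$ is strictly decreasing in $C$, hence by well-ordering it cannot be infinite, so the recursion terminates with some $x_N=0$, yielding $x=\sum_{k=1}^N c_kb_k$ in the span of ${\bf B}$.

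The main subtlety is simply ensuring that the ``Gaussian elimination'' recursion really terminates; this is where well-ordering of $C$ is indispensable, exactly parallel to Example \ref{ex:non-injective valuation} where the analogous procedure fails for the non-well-ordered valuation on $\kk((t^{-1}))$. No other obstacle appears: linear independence is automatic from distinctness of valuations, and the existence/uniqueness of each $c_{k+1}$ is exactly the content of Proposition \ref{pr:injective}(b).
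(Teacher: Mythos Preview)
Your proof is correct and follows essentially the same approach the paper uses repeatedly (see, e.g., the proof of Theorem \ref{filtration}(ii) and of Proposition \ref{valuation_injective_partial}): distinctness of valuations gives linear independence, and the Euclidean property from Proposition \ref{pr:injective}(b) drives a descending-chain argument that terminates by well-ordering to give spanning. The paper does not spell out a separate proof for Corollary \ref{adapted_arbitrarily}, treating it as immediate from Proposition \ref{pr:injective} and Lemma \ref{le:valuation grading}(c), which is exactly the content you have written out.
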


\begin{remark} 
\label{rem:adapted is not basis}
We demonstrate that the conclusion of Corollary~\ref{adapted_arbitrarily} is not valid without the assumption of well-orderness. Consider a space $S$ with a basis $\{e_i\ :\ 0\le i<\infty\}$ and an injective valuation $\nu : S\setminus \{0\} \to \ZZ_{\le 0}$ such that $\nu(e_i)=-i$. Then a set $R:= \{e_i+e_{i+1}\ :\ 0\le i< \infty\}$ is adapted, while it is not a basis of $S$: for instance, $e_1$ does not belong to the span of $R$.
\end{remark}

We can build new injective valuations out of
existing ones by the following immediate consequence of the injectivity criterion in Proposition \ref{pr:injective}(b).

\begin{corollary}
\label{cor:new valuations} 
Let $\nu:S\setminus \{0\}\to C$ be a well-ordered injective valuation. Then for any subspace
$\underline S$ of $S$ the restriction $\underline \nu:=\nu|_{\underline S\setminus \{0\}}$ is an injective valuation on $\underline S$.

\end{corollary}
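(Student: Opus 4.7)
The plan is to reduce everything to the Euclidean criterion of Proposition~\ref{pr:injective}(b). The restriction $\underline{\nu}$ is automatically a valuation on $\underline{S}$: the homogeneity condition $\underline{\nu}(\alpha x) = \underline{\nu}(x)$ and the ultrametric inequality $\underline{\nu}(x+y) \preceq \max(\underline{\nu}(x),\underline{\nu}(y))$ are inherited from $\nu$ because both $x, y, x+y$ lie in $\underline{S} \setminus \{0\} \subset S \setminus \{0\}$. Moreover, the image $C_{\underline{\nu}} = \underline{\nu}(\underline{S} \setminus \{0\})$ is a subset of the well-ordered set $C_\nu$, hence itself well-ordered. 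So the only substantive point is the injectivity of $\underline{\nu}$.

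To establish injectivity, I would verify condition (b) of Proposition~\ref{pr:injective} for $\underline{\nu}$. Let $x, y \in \underline{S} \setminus \{0\}$ with $\underline{\nu}(x) = \underline{\nu}(y)$ and $x \notin \kk \cdot y$. Then $x, y \in S \setminus \{0\}$ satisfy $\nu(x) = \nu(y)$ with $x \notin \kk \cdot y$, so by the injectivity of $\nu$ together with Proposition~\ref{pr:injective}(a)$\Rightarrow$(b), there exists a (unique) $c \in \kk^\times$ such that $\nu(x - cy) < \nu(x)$. Since $\underline{S}$ is a subspace, $x - cy \in \underline{S}$; and $x - cy \ne 0$ because otherwise $x \in \kk \cdot y$, contrary to assumption. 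Therefore $\underline{\nu}(x - cy) = \nu(x - cy) < \nu(x) = \underline{\nu}(x)$, which is exactly the Euclidean property for $\underline{\nu}$.

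Applying the reverse direction (b)$\Rightarrow$(a) of Proposition~\ref{pr:injective} to the well-ordered valuation $\underline{\nu}$, we conclude that $\underline{\nu}$ is injective. Since there is no real obstacle here — the argument is purely formal — the only thing worth being careful about is justifying that the relevant hypotheses transfer to the restriction, in particular the well-orderedness of $C_{\underline{\nu}}$ (trivial as a subset of $C_\nu$) and the nonvanishing of $x - cy$ (which uses $x \notin \kk \cdot y$).
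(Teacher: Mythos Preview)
Your proof is correct and follows exactly the route the paper indicates: the corollary is stated there as an immediate consequence of the Euclidean criterion in Proposition~\ref{pr:injective}(b), and you have simply written out that verification in full.
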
 

\begin{remark} 
\label{rem: subspace injective}
It is interesting whether an analog of Corollary \ref{cor:new valuations} holds without assumption of well-orderness of $C$.
    
\end{remark}
 
Given a valuation $\nu:S\setminus\{0\}\to C$, we say that a family $S_i$, $I\in I$ of subspaces of $S$ is  {\it $\nu$-compatible} if $\nu(\bigcap\limits_{i\in I} S_i\setminus \{0\})=\bigcap\limits_{i\in I} \nu(S_i\setminus \{0\})$ (clearly, for any $\nu$ the left hand side is always a subset of the right hand side).

\begin{proposition} 
\label{pr:ensamble}
Suppose that a valuation $\nu$ on a space $S$ is well-ordered injective.
Then a family of subspaces $\{S_i, i\in I\}$ of $S$ is  $\nu$-compatible iff there exists an adapted with respect to $\nu$ basis ${\bf B}$ in $S$ such that ${\bf B}\cap S_i$ is a basis in $S_i$ for each $i\in I$. In addition, in this case ${\bf B}\cap \bigcap_{j\in J}S_j$ is a basis in $\bigcap_{j\in J}S_j$ for each $J\subseteq I$ and
$\nu((\sum_{j\in J}S_j)\setminus \{0\})=\bigcup_{j\in J} \nu(S_j\setminus \{0\})$ for every subset $J\subseteq I$. 
    
\end{proposition}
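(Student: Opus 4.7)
The plan is to prove the biconditional by handling the easy direction first, then constructing an adapted basis in the harder direction, and finally reading off the ``in addition'' consequences from this construction. For the easy direction, assume such a basis ${\bf B}$ exists. The inclusion $\nu(\bigcap_{i \in I} S_i \setminus \{0\}) \subseteq \bigcap_{i \in I} \nu(S_i \setminus \{0\})$ is automatic, and the reverse follows from injectivity of $\nu$ on ${\bf B}$: given $c \in \bigcap_{i \in I} \nu(S_i \setminus \{0\})$, for each $i$ there is exactly one element $b \in {\bf B}$ with $\nu(b) = c$, and by the assumption that ${\bf B} \cap S_i$ is a basis of $S_i$, this $b$ lies in every $S_i$, hence in the intersection.

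For the harder direction I would build ${\bf B}$ element by element indexed by $C_\nu$. For each $c \in C_\nu$ set $J_c := \{i \in I : c \in \nu(S_i \setminus \{0\})\}$. Using the hypothesis that the sub-family $\{S_i : i \in J_c\}$ is $\nu$-compatible (see the obstacle below), one obtains $c \in \nu\bigl(\bigcap_{i \in J_c} S_i \setminus \{0\}\bigr)$, so I can pick $b_c \in \bigcap_{i \in J_c} S_i$ with $\nu(b_c) = c$. The set ${\bf B} := \{b_c : c \in C_\nu\}$ is $\nu$-adapted by construction (injectivity of $\nu|_{\bf B}$ is immediate), and Corollary~\ref{adapted_arbitrarily}, which applies because $C_\nu$ is well-ordered, upgrades it to a basis of $S$. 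For each $i \in I$, the intersection ${\bf B} \cap S_i$ equals $\{b_c : c \in \nu(S_i \setminus \{0\})\}$ (since $c \notin \nu(S_i \setminus \{0\})$ forces $b_c \notin S_i$), which is a $\nu|_{S_i}$-adapted set inside $S_i$; applying Corollary~\ref{cor:new valuations} and Corollary~\ref{adapted_arbitrarily} inside $S_i$ shows it is a basis of $S_i$.

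The ``in addition'' clause splits in two. For any $J \subseteq I$ and any nonzero $x \in \bigcap_{j \in J} S_j$, expanding $x$ uniquely in ${\bf B}$ and then re-expanding it in each basis ${\bf B} \cap S_{j_0}$ of $S_{j_0}$ for $j_0 \in J$, and invoking uniqueness of expansion in ${\bf B}$, forces every basis vector appearing with nonzero coefficient to lie in every $S_{j_0}$; hence ${\bf B} \cap \bigcap_{j \in J} S_j$ spans the intersection and is a basis of it. For the sum statement, the span of $\bigcup_{j \in J}({\bf B} \cap S_j)$ contains every $S_j$, so it equals $\sum_{j \in J} S_j$; the $\nu$-value of any nonzero element of the sum is then realized by one of the basis vectors $b_c \in S_j$ for some $j \in J$, placing $\nu(x)$ in $\bigcup_{j \in J} \nu(S_j \setminus \{0\})$ (the reverse inclusion being trivial).

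The main obstacle is that the literal definition of $\nu$-compatibility concerns only the full intersection $\bigcap_{i \in I} S_i$, whereas the construction of ${\bf B}$ manifestly requires $\nu$-compatibility of every sub-family $\{S_i : i \in J\}$, and a small example (three one-dimensional subspaces spanning a two-dimensional space, with two of them sharing a valuation) shows that compatibility of the whole family need not propagate to sub-families. I would therefore read the hypothesis of the proposition as ``every sub-family is $\nu$-compatible''---which is precisely the condition that the ``if'' direction, combined with the ``in addition'' clause, forces to hold---so that the biconditional becomes clean and the construction above goes through verbatim.
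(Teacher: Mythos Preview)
Your approach is essentially identical to the paper's: both construct ${\bf B}$ by choosing, for each $c\in C_\nu$, an element $b_c$ from $\bigcap_{j\in J_c}S_j$ where $J_c=\{i\in I:c\in\nu(S_i\setminus\{0\})\}$, and both derive the ``in addition'' statements from the resulting description ${\bf B}\cap S_i=\{b_c:c\in\nu(S_i\setminus\{0\})\}$.

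The obstacle you flag is real and worth emphasizing: the paper's own proof writes ``due to $\nu$-compatibility there exists a vector $b_c\in\bigcap_{j\in J}S_j$'' for a \emph{proper} subset $J\subsetneq I$, which is exactly the subfamily compatibility you point out is not covered by the literal definition. Your three-line counterexample (three lines in a plane, two sharing the same $\nu$-value) shows that full-family compatibility genuinely does not imply subfamily compatibility, so the proposition as literally stated is false in the direction ``compatible $\Rightarrow$ basis exists.'' Your proposed fix---reading the hypothesis as ``every subfamily is $\nu$-compatible''---is precisely what the paper's argument tacitly assumes and what the ``in addition'' clause forces on the other side; with that reading, both your proof and the paper's go through cleanly.
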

We prove Proposition \ref{pr:ensamble} in Section \ref{subsec:Proofs of results of Section 4}.

\begin{remark}
If $\{S_i, i\in I\}$  is a $\nu$-compatible family, $|I|<\infty, \dim(S_i)<\infty, i\in I$ then for any subset $J\subseteq I$ it holds
$$\dim(\sum_{j\in J}S_j)= \sum_{L\subseteq J}(-1)^{|L|+1} \dim(\bigcap_{l\in L} S_l).$$
\end{remark}

We can also construct injective valuations on the quotients as follows.

\begin{proposition} 
\label{pr:surjective nu}
Let  $S$ be a $\kk$-vector space  and let $\nu:S\setminus\{0\}\to C$ is an (injective) valuation for some well-order $C$. Then for any  subspace  $J\subset S$ the assignments
$$\nu'(v+J):=\min \{\nu(v+J)\}$$
for all non-zero $v+J\in S/J$ define  an (injective) valuation  $\nu':S/J\setminus\{0\}\to C$.  
\end{proposition}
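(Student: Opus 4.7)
The plan is to verify the valuation axioms directly from the definition, exploiting the well-order on $C$ to guarantee that the minimum in $\nu'(v+J)=\min \nu(v+J)$ always exists for $v\notin J$.  Scalar invariance under $\kk^\times$ is immediate from $c(v+J)=cv+J$ together with $cJ=J$ for $c\neq 0$: multiplication by $c$ sets up a bijection of $v+J$ with $cv+J$ preserving the values of $\nu$, so the minima coincide.

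For sub-additivity, given nonzero cosets $v+J,w+J$ with $(v+w)+J$ also nonzero, I will pick minimizers $j_1,j_2\in J$ achieving $\nu'(v+J)=\nu(v+j_1)$ and $\nu'(w+J)=\nu(w+j_2)$.  Since $(v+j_1)+(w+j_2)\in (v+w)+J$, the definition of $\nu'$ as a minimum together with the sub-additivity of $\nu$ yields
\[
\nu'((v+w)+J)\;\le\;\nu((v+j_1)+(w+j_2))\;\le\;\max(\nu(v+j_1),\nu(w+j_2))\;=\;\max(\nu'(v+J),\nu'(w+J)),
\]
establishing the axiom.

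For the parenthetical injectivity claim, I will invoke the Euclidean criterion of Proposition~\ref{pr:injective}.  Assume $v+J,w+J$ are nonzero with $\nu'(v+J)=\nu'(w+J)=a$ and $v+J\notin \kk\cdot(w+J)$.  Choose minimizers $j_1,j_2\in J$ with $\nu(v+j_1)=\nu(w+j_2)=a$.  The coset-independence hypothesis forces $v+j_1\notin \kk\cdot(w+j_2)$, since any relation $v+j_1=c(w+j_2)$ would give $v-cw\in J$, contradicting the assumption.  By the Euclidean property of the injective $\nu$, there exists $c\in \kk^\times$ with $\nu((v+j_1)-c(w+j_2))<a$, hence $\nu'((v+J)-c(w+J))\le \nu((v+j_1)-c(w+j_2))<a$.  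This is exactly the Euclidean property for $\nu'$, so Proposition~\ref{pr:injective} implies $\nu'$ is injective.

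The argument is essentially routine once one notes that well-ordering of $C$ makes the defining minimum attained.  The only subtle step is the last one, where one must verify that linear independence transfers correctly from cosets to chosen representatives; but this reduces to a one-line manipulation with the defining relation $v+j_1=c(w+j_2)$, so no genuine obstacle arises.
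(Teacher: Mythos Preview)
Your proof is correct and follows the same approach as the paper. The paper does not give a standalone proof of Proposition~\ref{pr:surjective nu}; the argument appears instead inside the proof of the algebra version, Theorem~\ref{graded_partial}(i), where sub-additivity is verified exactly as you do (pick minimizers and add them), and injectivity is checked by the same Euclidean step: from $\nu(v+j_1)=\nu(w+j_2)$ one finds $c\in\kk^\times$ with $\nu((v+j_1)-c(w+j_2))$ strictly smaller (or the element vanishes, which forces the cosets to be proportional). One tiny point you left implicit: to write $\nu'((v+J)-c(w+J))$ you need $(v-cw)+J\neq 0$, but this is guaranteed by the hypothesis $v+J\notin\kk\cdot(w+J)$, so there is no gap.
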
 

\begin{remark} Note, however, that if  $S$ is a commutative integral domain, $J$ a prime ideal,  $C$ a monoid and $\nu(ab)=\nu(a)+\nu(b)$ for all $a,b\in S\setminus\{0\}$ in Proposition \ref{pr:surjective nu}, then 
$\nu'(ab+J)\le \nu'(a+J)+\nu'(b+J)$
for all $a,b\in S\setminus J$ because of the inequality $\min\{\nu(X\cdot Y)\}\le \min\{\nu(X)\}+\min\{\nu(Y)\}$ for any subsets $X,Y\subset S\setminus \{0\}$ (here $X\cdot Y$ is the $\kk$-linear span of $\{xy\,|\,x\in X,y\in Y\}$).
\end{remark}

It turns out that any valuation can be assembled out of injective ones as follows.

\begin{proposition} 
\label{pr:injective restriction} 
Let $S$ be a  $\kk$-vector space and $\nu:S\setminus \{0\}\to C$ be a locally finite valuation (see Section \ref{subsec:valuations}). Then there are $\kk$-vector spaces $\underline S$ and $S'$, an injective valuation $\underline \nu:\underline S\setminus \{0\}\to C$ and a $\kk$-linear embedding 
$\jj:S\hookrightarrow \underline S\otimes S'$ such that $C_{\underline \nu}=C_\nu$ and
$$\nu=\underline \nu^{S'}\circ\jj$$
in the notation \eqref{eq:lex tensor product}.

We prove Proposition \ref{pr:injective restriction} in Section \ref{subsec:Proofs of results of Section 4}.

\end{proposition}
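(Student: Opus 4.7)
The plan is to build $\underline S$ as the ``tautological'' space carrying the order type of $C_\nu$, use $S'$ to absorb the multiplicities within each leaf of $\nu$, and encode an adapted basis of $S$ diagonally in $\underline S\otimes S'$. First I would invoke Lemma~\ref{le:valuation grading}(b) to produce a basis ${\bf B}$ of $S$ adapted to $\nu$ (which exists precisely because $\nu$ is locally finite), so ${\bf B}=\bigsqcup_{a\in C_\nu}{\bf B}_a$ with $\nu(b)=a$ for $b\in {\bf B}_a$ and $\pi_a({\bf B}_a)$ a basis of $S_a$.

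Next, I would take $\underline S$ to be the $\kk$-vector space with basis $\{e_a:a\in C_\nu\}$, and define $\underline\nu\bigl(\sum_a c_a e_a\bigr):=\max\{a:c_a\neq 0\}$ (the maximum exists since the sum is finite and $C$ is totally ordered). This is a valuation whose restriction to $\{e_a\}$ is an injective map onto $C_\nu$; by Lemma~\ref{le:injective definition} this is an injective valuation with $C_{\underline\nu}=C_\nu$ and $\underline S_{\le a}=\bigoplus_{a'\le a}\kk\cdot e_{a'}$. Then I would set $S'$ to be the $\kk$-vector space with basis $\{f_b:b\in {\bf B}\}$ and define ${\bf j}:S\to \underline S\otimes S'$ on the basis ${\bf B}$ by
\[
{\bf j}(b):=e_{\nu(b)}\otimes f_b,\qquad b\in {\bf B},
\]
extended $\kk$-linearly. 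Since the vectors $\{e_{\nu(b)}\otimes f_b:b\in {\bf B}\}$ form a subset of the tensor-product basis $\{e_a\otimes f_b:a\in C_\nu,\,b\in {\bf B}\}$ (with distinct second components), they are linearly independent, so ${\bf j}$ is injective.

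Finally, I would verify the key identity $\nu=\underline\nu^{S'}\circ {\bf j}$. Fix $x=\sum_b c_b b\in S\setminus\{0\}$ and let $a=\nu(x)=\max\{\nu(b):c_b\neq 0\}$. Splitting ${\bf j}(x)$ by the value of $\nu(b)$ yields
\[
{\bf j}(x)=e_a\otimes\Bigl(\sum_{b\in {\bf B}_a}c_b f_b\Bigr)+\sum_{b:\nu(b)<a}c_b\, e_{\nu(b)}\otimes f_b.
\]
The first summand lies in $\underline S_{\le a}\otimes S'$ and is nonzero modulo $\underline S_{<a}\otimes S'$ because $\sum_{b\in {\bf B}_a}c_b f_b\neq 0$ (some $c_b$ with $\nu(b)=a$ is nonzero, and the $f_b$ are linearly independent), while the second summand lies in $\underline S_{<a}\otimes S'$. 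In view of the filtration $(\underline S\otimes S')_{\le a}=\underline S_{\le a}\otimes S'$ from Proposition~\ref{pr:lex tensor product}(a), this gives $\underline\nu^{S'}({\bf j}(x))=a=\nu(x)$, as required.

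The construction is really just bookkeeping, and the ``main obstacle'' is less a genuine difficulty than a choice: one must separate the two roles played by an adapted basis, namely enumerating the values of $\nu$ (handled by $\underline S$) and enumerating the multiplicity of each value (handled by $S'$). Local finiteness is essential here: without it, Lemma~\ref{le:valuation grading}(b) fails (cf.\ Example~\ref{ex:non-injective valuation}), and no diagonal embedding into a tensor product of the above type can recover $\nu$, since the codomain of $\underline\nu^{S'}$ is controlled by the adapted basis of $\underline S$.
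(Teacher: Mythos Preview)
Your proof is correct and follows the same essential strategy as the paper: use an adapted basis to embed $S$ diagonally into a tensor product where one factor carries an injective valuation on $C_\nu$ and the other absorbs the multiplicities of the leaves. The paper's implementation is more elaborate: it realizes $\underline S=\kk{\bf B}^0$ as an actual subspace of $S$ (where ${\bf B}^0\subset{\bf B}$ selects one representative per value), and builds $S'=\kk{\bf I}$ for a well-ordered index set ${\bf I}$ obtained by transfinitely peeling off layers ${\bf B}^\ii\subset{\bf B}$, so that ${\bf j}(b)=b^0\otimes \ii$ for $b\in{\bf B}^\ii$. Your choices $\underline S=\kk C_\nu$ and $S'=\kk{\bf B}$ with ${\bf j}(b)=e_{\nu(b)}\otimes f_b$ are more direct and bypass the transfinite induction entirely; the price is only a larger $S'$ and the fact that $\underline S$ is no longer a subspace of $S$, neither of which matters for the statement as written.

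One small citation issue: Lemma~\ref{le:injective definition} is stated only for well-ordered $C$, so it does not literally justify injectivity of your $\underline\nu$. But this is harmless: each leaf $\underline S_a=\kk\cdot e_a$ is visibly one-dimensional, and the basis $\{e_a\}$ witnesses local finiteness via Lemma~\ref{le:valuation grading}(b), so $\underline\nu$ is injective straight from the definition.
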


\begin{proposition}\label{commutative_injective} In the notations of Corollary~\ref{cor:nilpotent valuation and leading coefficient}  assume that 
\begin{eqnarray}\label{999}
E_i\circ E_j=p_{ij} E_j\circ E_i
\end{eqnarray}
for some $p_{ij}\in \kk^\times,\ 1\le i,j\le m$ and that
\begin{eqnarray}\label{98}
\bigcap_{1\le i\le m} \ker E_i=\kk.
\end{eqnarray}
Then the valuation $\nu_{\bf E}$ is injective. 
    
\end{proposition}

\begin{proof}
Take an adapted basis ${\bf B}\subset S$ to $\nu_{\bf E}$ with respect to lex ordering $\prec$ in $\ZZ_{\ge 0}^m \supset \nu_{\bf E} (S)$ (see Lemma~\ref{le:valuation grading}). We claim that the basis $\bf B$ is injective. Suppose the contrary. Then for a pair of different elements $b_1, b_2 \in \bf B$ it holds $\nu_{\bf E} (b_1)=\nu_{\bf E} (b_2):=(i_1,\dots,i_m)$. Denote $\beta_1:= E_m^{i_m}\circ \cdots \circ E_1^{i_1} (b_1)$. Then
$$E_j(\beta_1)=\gamma E_m^{i_m}\circ \cdots \circ E_{j+1}^{i_{j+1}}\circ E_j^{i_j+1}\circ E_{j-1}^{i_{j-1}}\circ \cdots E_1^{i_1} (b_1)=0,\ 1\le j\le m$$
\noindent for suitable $\gamma \in \kk^\times$ due to \eqref{999}. Therefore $\beta_1 \in \kk^\times$ because of \eqref{98}. Similarly, $\beta_2:=E_m^{i_m}\circ \cdots \circ E_1^{i_1} (b_2)\in \kk^\times$. Hence $E_m^{i_m}\circ \cdots \circ E_1^{i_1} (\beta_2 b_1 - \beta_1 b_2)=0$, i.e. $\nu_{\bf E} (\beta_2 b_1 - \beta_1 b_2) \prec (i_1,\dots,i_m)$ which contradicts to that $\bf B$ is an adapted basis.
\end{proof}

\subsection{Jordan-H\"older bijections}
For any  valuations $\nu,\nu':S\setminus \{0\}\to C$ such that $\nu'$ is   well-ordered   
define a map ${\bf K}_{\nu',\nu}: C_\nu\to C_{\nu'}$ by
\begin{equation}
\label{eq:Knu'nu} {\bf K}_{\nu',\nu}(a)=\min\{\nu'(\nu^{-1}(a))\}
\end{equation}
for all $a\in C_\nu$.

Our first result  provides an ``industry" for establishing combinatorial bijections.

\begin{theorem}
\label{th:generalized JH} For any well-ordered injective valuations $\nu$ and $\nu'$ on $S$  the maps ${\bf K}_{\nu',\nu}:C_\nu\to C_{\nu'}$ and
${\bf K}_{\nu,\nu'}:C_{\nu'}\to C_\nu$ are well-defined and
mutually inverse bijections. Moreover, there exists a basis ${\bf B}_{\nu,\nu'}$ of $S$ adapted to both $\nu$ and $\nu'$ such that
${\bf K}_{\nu',\nu}(\nu(b))=\nu'(b)$ for all $b\in {\bf B}_{\nu,\nu'}$. 
\end{theorem}

We prove Theorem \ref{th:generalized JH} in Section \ref{subsec:Proofs of results of Section 4}.

We refer to ${\bf K}_{\nu',\nu}$
as a {\it  Jordan-H\"older
bijection (and abbreviate JH-bijection)} and call any  basis ${\bf B}_{\nu,\nu'}$  as an {\it JH-basis}.

\begin{remark}\label{flag} In fact,  Theorem \ref{th:generalized JH}  generalizes well-known facts that any two complete flags in $\kk^n$ have a canonical relative position $w$, which is a permutation of $\{1,\ldots,n\}$, and admit a common basis. Namely, an injective valuation $\nu:S\setminus\{0\}\to C$ defines a complete flag $\FF_\nu$ indexed by $C_\nu$ via $(\FF_\nu)_{\le a}=\{v\in S\setminus\{0\}:\nu(v)\le a\}$, $a\in C_\nu$ (see Sections \ref{subsec:valuations} and \ref{subsec:injective valuations} for details). Conversely, any complete flag $\FF$ on $S$ is of the form $\FF_\nu$. If the indexing sets for flags $\FF_\nu$ and $\FF_{\nu'}$ are well-ordered, then Theorem \ref{th:generalized JH}  asserts that there exist a canonical relative position ${\bf K}_{\nu',\nu}$ of $\FF_\nu$ and $\FF_{\nu'}$ and a common (JH) basis.  This can be also reformulated in terms of generalized Jordan-H\"older correspondence developed by Abels in 1991, see, e.g., Section 2.3 of \cite{BGW}. 

\end{remark}

The following result is an immediate consequence of Theorem \ref{th:generalized JH}.
\begin{corollary}
\label{cor:biinjective} In the assumptions of Theorem \ref{th:generalized JH}
the set  ${\bf A}_\nu\cap {\bf A}_{\nu'}$ is nonempty.
\end{corollary}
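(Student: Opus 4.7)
The plan is to simply invoke Theorem~\ref{th:generalized RSK} directly: that theorem already asserts the existence of a basis ${\bf B}_{\nu,\nu'}$ of $S$ which is simultaneously adapted to $\nu$ and to $\nu'$. By the definition of ${\bf A}_\nu$ (the set of bases adapted to $\nu$), this basis lies in ${\bf A}_\nu$; analogously, it lies in ${\bf A}_{\nu'}$. Hence ${\bf B}_{\nu,\nu'}\in {\bf A}_\nu \cap {\bf A}_{\nu'}$, and the intersection is nonempty.

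There is essentially no obstacle here beyond unpacking notation, since the corollary is weaker than the second assertion of the theorem (it forgets the compatibility condition ${\bf K}_{\nu',\nu}(\nu(b))=\nu'(b)$ and retains only the existence of a common adapted basis). If one wanted to present an argument independently of the full strength of Theorem~\ref{th:generalized RSK}, one could proceed by transfinite recursion along the well-order $C_\nu$: for each $a\in C_\nu$, having chosen adapted vectors $b_{a'}$ for all $a'<a$, apply the Euclidean property (Proposition~\ref{pr:injective}) to pick a representative of the one-dimensional leaf $S_a$ (under $\nu$) whose image under $\nu'$ is minimal in $\nu'(\nu^{-1}(a))$; Corollary~\ref{adapted_arbitrarily} then guarantees that the resulting $\nu$-adapted set is automatically a basis of $S$, and minimality ensures it is simultaneously adapted to $\nu'$. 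But since Theorem~\ref{th:generalized RSK} is already available, the one-line deduction above suffices.
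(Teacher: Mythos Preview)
Your proposal is correct and matches the paper's treatment exactly: the paper states the corollary as ``an immediate consequence of Theorem~\ref{th:generalized RSK}'' with no further argument, precisely because the basis ${\bf B}_{\nu,\nu'}$ produced there lies in ${\bf A}_\nu\cap {\bf A}_{\nu'}$.
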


The following result is a reverse of Theorem \ref{th:generalized JH}, however, we do not assume that valuations are well-ordered.

\begin{proposition}
\label{pr:common} Let $\nu$ and $\nu'$ be (not necessarily well-ordered) injective valuations on $S$ such that ${\bf A}_\nu\cap {\bf A}_{\nu'}$ is
nonempty. Then the assignments \eqref{eq:Knu'nu}
define a bijection ${\bf K}_{\nu',\nu}:C_\nu\widetilde \to C_{\nu'}$ so that $\nu'(b)={\bf K}_{\nu',\nu}(\nu(b))$ for any
 ${\bf B}\in {\bf A}_\nu\cap {\bf A}_{\nu'}$ and all $b\in {\bf B}$.
\end{proposition}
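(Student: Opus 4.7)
The plan is to fix a common adapted basis ${\bf B}\in {\bf A}_\nu\cap {\bf A}_{\nu'}$ (which exists by hypothesis) and to show, directly, that for every $a\in C_\nu$ the minimum in \eqref{eq:Knu'nu} not only exists despite $\nu'$ being possibly not well-ordered, but is attained at the unique element of ${\bf B}$ with $\nu$-value $a$. Once this is established, the claimed formula $\nu'(b)={\bf K}_{\nu',\nu}(\nu(b))$ is immediate, and bijectivity of ${\bf K}_{\nu',\nu}$ follows from the fact that ${\bf B}$ is simultaneously in bijection with $C_\nu$ via $\nu$ and with $C_{\nu'}$ via $\nu'$.

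First I would note that because $\nu$ and $\nu'$ are injective and ${\bf B}$ is adapted to both, the restrictions $\nu|_{\bf B}:{\bf B}\hookrightarrow C_\nu$ and $\nu'|_{\bf B}:{\bf B}\hookrightarrow C_{\nu'}$ are injections whose images are all of $C_\nu$ and $C_{\nu'}$ respectively (cf.\ the discussion after Lemma~\ref{le:injective definition}). So for each $a\in C_\nu$ there is a unique $b_a\in {\bf B}$ with $\nu(b_a)=a$, and similarly for $\nu'$. Thus the composition $a\mapsto \nu'(b_a)$ is already a bijection $C_\nu\widetilde\to C_{\nu'}$; the task is to identify it with ${\bf K}_{\nu',\nu}$.

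Next, take any nonzero $x\in S$ with $\nu(x)=a$ and expand $x=\sum_{b'\in {\bf B}} c_{b'}b'$ with finite support. Since ${\bf B}$ is adapted to $\nu$, one has $\nu(x)=\max\{\nu(b')\colon c_{b'}\ne 0\}$, and because $\nu|_{\bf B}$ is injective the unique $b'$ in the support with $\nu(b')=a$ must be $b_a$. In particular $b_a\in\operatorname{supp}(x)$. Since ${\bf B}$ is also adapted to $\nu'$, we get
\[
\nu'(x)=\max\{\nu'(b')\colon b'\in\operatorname{supp}(x)\}\succeq \nu'(b_a),
\]
with equality when $x=b_a$. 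Hence $\min\{\nu'(\nu^{-1}(a))\}$ exists in $C_{\nu'}$ and equals $\nu'(b_a)$; this is precisely the assertion ${\bf K}_{\nu',\nu}(\nu(b_a))=\nu'(b_a)$, and the bijectivity of ${\bf K}_{\nu',\nu}$ now follows from bijectivity of $b\mapsto \nu(b)$ and $b\mapsto \nu'(b)$ on ${\bf B}$.

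There is no real obstacle here: the only point requiring care is that without a well-ordering on $C_{\nu'}$ one cannot a priori take a minimum of an arbitrary subset, but the argument above circumvents this by exhibiting a concrete element $b_a\in\nu^{-1}(a)$ at which the value $\nu'(b_a)$ is a lower bound for $\nu'$ on all of $\nu^{-1}(a)$, and is itself attained. This is exactly where the hypothesis ${\bf A}_\nu\cap {\bf A}_{\nu'}\ne\emptyset$ is used, replacing the role played in Theorem~\ref{th:generalized RSK} by the well-ordering assumption.
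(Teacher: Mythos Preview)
Your proof is correct and follows essentially the same approach as the paper: fix ${\bf B}\in {\bf A}_\nu\cap {\bf A}_{\nu'}$, observe that any $x\in\nu^{-1}(a)$ expands in ${\bf B}$ with $b_a$ in its support, and use adaptedness of ${\bf B}$ to $\nu'$ to conclude $\nu'(x)\ge\nu'(b_a)$ with equality at $x=b_a$. Your write-up is in fact slightly more explicit than the paper's about why $\nu'(s)\ge\nu'(b_a)$ follows from adaptedness to $\nu'$, and you correctly highlight that the existence of the minimum is the only delicate point absent well-ordering.
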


We prove Proposition \ref{pr:common} in Section \ref{subsec:Proofs of results of Section 4}.

\begin{example} Let $S=\kk[t]$ and let 
$\nu, \nu':S\setminus\{0\} \to -\ZZ_{\ge
0}$ be valuations given by 
$$\nu(t^k)=\nu'(t^k+1)=-k$$
for $k\in \ZZ_{\ge 0}$.
These valuations are obviously injective, and are adapted respectively to the bases ${\bf B}=\{t^k,k\in \ZZ_{\ge 0}\}$, 
${\bf B}'=\{1+t^k,k\in \ZZ_{\ge 0}\}$,  however, $\nu({\bf B}')=\nu'({\bf B})=\{0\}$.

Denote  ${\bf B}''=\{t^k-t^{k+1},k\in \ZZ_{\ge 0}\}$. Clearly, $\nu({\bf B}'')=\nu'({\bf B}'')=-\ZZ_{\ge 0}$
because 
$$\nu(t^k-t^{k+1})=-k,~\nu'(t^k-t^{k+1})=\max(\nu'(1+t^k),\nu'(1+t^{k+1}))=-k$$
for $k\in \ZZ_{\ge 0}$. 
However ${\bf B}''$ is not a basis of $S$, moreover, ${\bf A}_\nu\cap {\bf A}_{\nu'}=\emptyset$ 
In particular, Proposition~\ref{pr:common} and Theorem~\ref{th:generalized JH} are not
applicable to $(\nu,\nu')$ (note that $-\ZZ_{\ge 0}$ endowed with the natural order is
not well-ordered) and thus  ${\bf K}_{\nu',\nu}$ is undefined. 
\end{example}

In some cases, we can obtain injective valuations by utilizing leading coefficients of valuations on their ambient spaces (see Section \ref{subsec:String valuations}).

\begin{proposition} 
\label{pr:injective leading}
Let $(\nu, \lambda)$ be a decorated valuation where $\nu:S\setminus \{0\}\to C$ is a well-ordered valuation and $\lambda:S\setminus \{0\}\to S'\setminus \{0\}$ is its leading coefficient. Let $S_0$ be a subspace of $S$ such that $\lambda(S_0)=\kk^\times \cdot s'$ for some $s'\in S'$. Then the restriction of $\nu$ to $S_0$ is an injective valuation on $S_0$.

\end{proposition}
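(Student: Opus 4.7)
\textbf{Proof proposal for Proposition \ref{pr:injective leading}.} My plan is to verify the Euclidean property of Proposition~\ref{pr:injective}(b) for the restriction $\nu_0 := \nu|_{S_0\setminus \{0\}}$, exploiting the fact that all leading coefficients $\lambda(x)$, $x\in S_0\setminus\{0\}$, lie on a single line $\kk^\times\cdot s'$. First, observe that $\nu_0$ is automatically a valuation, since the defining inequality and the scalar-invariance of $\nu$ are inherited by restriction to any subspace; moreover $C_{\nu_0}=\nu(S_0\setminus\{0\})\subseteq C_\nu$ is well-ordered as a subset of the well-ordered set $C_\nu$. Thus the only thing to check is injectivity of $\nu_0$.

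By Proposition~\ref{pr:injective}(b), it suffices to show: for any nonzero $x,y\in S_0$ with $\nu(x)=\nu(y)$ and $x\notin \kk\cdot y$, there exists $c\in \kk^\times$ with $\nu(x-cy)<\nu(x)$. Here is how to cook up $c$. By hypothesis, $\lambda(x)=\alpha\, s'$ and $\lambda(y)=\beta\, s'$ for some $\alpha,\beta\in \kk^\times$. Set $c:=\alpha/\beta\in \kk^\times$, and let $z:=x-cy$. Since $x\notin \kk\cdot y$, we have $z\ne 0$, so $\lambda(z)\in S'\setminus \{0\}$ is defined.

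Now argue by contradiction: suppose $\nu(z)=\nu(x)$. Then $\nu(x)=\nu(-cy)=\nu(z)$, and the defining property of a leading coefficient (Definition~\ref{decorated valuation}) gives
\[
\lambda(z)=\lambda(x+(-cy))=\lambda(x)+\lambda(-cy)=\lambda(x)-c\lambda(y)=\alpha s'-\tfrac{\alpha}{\beta}\cdot\beta\, s'=0,
\]
using $\lambda(-cy)=-c\,\lambda(y)$ from the scalar property of $\lambda$. But $\lambda$ takes values in $S'\setminus\{0\}$, a contradiction. Hence $\nu(z)<\nu(x)$, as required, and Proposition~\ref{pr:injective} concludes that $\nu_0$ is injective.

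The only place that demands a bit of care is ensuring that one is entitled to invoke the addition rule for $\lambda$ in the equality $\lambda(z)=\lambda(x)+\lambda(-cy)$: this requires $\nu(x)=\nu(-cy)=\nu(z)$, which is exactly the contrary hypothesis of the case being analyzed. Everything else is formal; there is no serious obstacle, and the argument extends verbatim to any (not necessarily well-ordered) $\nu$ for which the Euclidean characterization of injectivity remains available.
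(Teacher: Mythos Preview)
Your proof is correct and follows essentially the same route as the paper: both verify the Euclidean criterion of Proposition~\ref{pr:injective}(b) by setting $c=\lambda(x)/\lambda(y)$ (after identifying $\lambda(S_0)$ with $\kk^\times$) and deriving the contradiction $\lambda(x-cy)=0$ under the assumption $\nu(x-cy)=\nu(x)$. The only difference is cosmetic: the paper first reduces WLOG to $S_0=S$, $S'=\kk$, $s'=1$, whereas you keep the general notation throughout.
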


\begin{proof} Without loss of generality, we consider the case when $S_0=S$, $S'=\kk$, $s'=1$.  It suffices to verify the condition (b) of Proposition \ref{pr:injective}. Indeed, let $x,y\in S\setminus \{0\}$ be such that $y\notin \kk x$ and $\nu(x)=\nu(y)$. Denote $c:=\frac{\lambda(x)}{\lambda(y)}$. Suppose, by contradiction, that $\nu(x-cy)=\nu(x)$. Then 
$\lambda(x-cy)=\lambda(x)+\lambda(-cy)=\lambda(x)-c\lambda(y)=0$, which is  impossible.

The contradiction finishes the proof.
\end{proof}

We can apply this result to integral domains as follows.
Given a commutative integral domain $B$ over $\kk$ and a subalgebra $A$, denote by $B_A$ the set of all $x\in B$ such that 
$A\cdot x \cap (A\setminus \{0\})$ is nonempty. Clearly, $B_A$ is a subalgebra of $B$ (we will sometimes refer to  it as the {\it localization} of $A$ in $B$).

\begin{theorem} 
\label{th:ore density}
Let $B$ be an integral domain over $\kk$, $M$ be a well-ordered monoid, $(\nu, \lambda)$ be a decorated valuation where $\nu$ is a  valuation $B\setminus \{0\}\to M$, and $\lambda:B\to C$ is its multiplicative leading coefficient (here $C$ is an integral domain over $\kk$). Suppose that  $A$ is a subalgebra of $B$ such that  $\lambda(A\setminus \{0\})= \kk^\times$. Then

(a) $\lambda(B_A\setminus \{0\})= \kk^\times$.

(b) The restriction of $\nu$ to  $B_A$ is an injective additive valuation $B_A\setminus \{0\}\to M$.

\end{theorem}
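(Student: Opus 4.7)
\smallskip\noindent\textbf{Proof proposal.}
The plan is to reduce both assertions to the general criterion provided by Proposition~\ref{pr:injective leading}, using the hypothesis $\lambda({\mathcal A}\setminus\{0\})=\kk^\times$ as the starting point and propagating it to the localization ${\mathcal B}_{\mathcal A}$ via the multiplicativity of $\lambda$.

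First I would dispose of (a). Since $1\in {\mathcal A}\setminus\{0\}$, we have ${\mathcal A}\subseteq {\mathcal B}_{\mathcal A}$, so $\lambda({\mathcal B}_{\mathcal A}\setminus\{0\})\supseteq \lambda({\mathcal A}\setminus\{0\})=\kk^\times$. For the reverse inclusion, take an arbitrary $x\in {\mathcal B}_{\mathcal A}\setminus\{0\}$. By the definition of ${\mathcal B}_{\mathcal A}$ there exists $a\in {\mathcal A}\setminus\{0\}$ with $ax\in {\mathcal A}\setminus\{0\}$. Since $\lambda$ is multiplicative by hypothesis (see \eqref{eq:homomorphism of multiplicative monoids}), and since $\lambda(a),\lambda(ax)\in \kk^\times$ by assumption, we obtain
$$\lambda(x)=\lambda(a)^{-1}\lambda(ax)\in \kk^\times\ ,$$
which is well defined inside ${\mathcal C}$ because ${\mathcal C}$ is an integral domain and the values lie in $\kk^\times\subset {\mathcal C}^\times$. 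This gives $\lambda({\mathcal B}_{\mathcal A}\setminus\{0\})\subseteq \kk^\times$, completing (a).

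For (b), I would simply invoke Proposition~\ref{pr:injective leading} with $S={\mathcal B}$, $S'={\mathcal C}$, $s'=1$, and $S_0={\mathcal B}_{\mathcal A}$. Part (a) says exactly that $\lambda(S_0\setminus\{0\})=\kk^\times\cdot 1$, which is the hypothesis needed. Since $\nu$ is well-ordered (as $M$ is), the proposition yields that $\nu|_{{\mathcal B}_{\mathcal A}\setminus\{0\}}$ is an injective valuation. Additivity is automatically inherited from ${\mathcal B}$: for $x,y\in {\mathcal B}_{\mathcal A}\setminus\{0\}$ with $xy\ne 0$ (which holds since ${\mathcal B}$ is a domain), $\nu(xy)=\nu(x)+\nu(y)$ by the additivity of the ambient valuation.

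The only point that requires any care is the verification of (a); everything else is an application of already established results. I do not anticipate a serious obstacle, since the multiplicativity of $\lambda$ together with the division identity $\lambda(x)=\lambda(a)^{-1}\lambda(ax)$ inside the domain ${\mathcal C}$ immediately transfers the property $\lambda(\cdot)\in \kk^\times$ from ${\mathcal A}$ to its localization ${\mathcal B}_{\mathcal A}$. In other words, the essence of the theorem is that ``having trivial leading coefficients'' is preserved under the Ore-type localization ${\mathcal A}\rightsquigarrow {\mathcal B}_{\mathcal A}$, after which injectivity of the restricted valuation is a formal consequence of Proposition~\ref{pr:injective leading}.
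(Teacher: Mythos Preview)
Your proposal is correct and follows essentially the same approach as the paper: use multiplicativity of $\lambda$ and the identity $\lambda(ax)=\lambda(a)\lambda(x)$ with $\lambda(a),\lambda(ax)\in\kk^\times$ to establish (a), then invoke Proposition~\ref{pr:injective leading} for (b). You are in fact slightly more careful than the paper, explicitly noting the inclusion ${\mathcal A}\subseteq{\mathcal B}_{\mathcal A}$ for the reverse containment and that additivity is inherited from the ambient valuation.
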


\begin{proof} Indeed, let $b\in B_A\setminus \{0\}$. That is, $xb=y$ for some $x,y\in A\setminus \{0\}$. Therefore, 
$$\lambda(y)=\lambda(xb)=\lambda(x)\lambda(b)$$
since $\lambda$ is multiplicative. Hence $\lambda(b)\in \kk^\times$ because  $\lambda(x),\lambda(y)\in \kk^\times$ by the assumption. This proves (a). 

Part (b) follows from (a) and Proposition  \ref{pr:injective leading}.

The theorem is proved.
\end{proof}

\subsection{Well-ordered submonoids of \texorpdfstring{$\ZZ^m$}~}
\label{subsec:Well-ordered sub-monoids}

For  $M\subset \ZZ^m$ and $k\in [m-1]$ denote by  $M_k$ the image of $M$ under the standard projection $\ZZ^m\to \ZZ^k$ ($a_1,\ldots,a_m)\mapsto (a_1,\ldots,a_k)$). 

\begin{proposition} 
\label{pr:tame twist} Let $m\ge 1$ and $M\subset \ZZ^m$. Then the following are equivalent:

(a) $M$ is well-ordered with respect to the lexicographic order on $\ZZ^m$. 

(b) For $k=0,\ldots, m-1$ there exist functions  $f_k:M_k\to \ZZ$ such that: 
$$a_1+f_0\ge 0, a_2+f_1(a_1)\ge 0, a_3+f_2(a_1,a_2)\ge 0,\ldots,a_m+f_{m-1}(a_1,\ldots,a_{m-1})\ge 0$$
for all $a=(a_1,\ldots,a_m)\in M$.
\vspace{1mm}

If $M$ is a monoid one can  additionally require in (b) that $f_0=f_1(0)=\cdots = f_{m-1}(0,\dots,0)=0$.

\end{proposition}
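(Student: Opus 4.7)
The plan is to prove both implications directly, then handle the monoid refinement.

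For $(b)\Rightarrow (a)$, suppose functions $f_k$ exist. Given a non-empty subset $S\subseteq M$, I build its lex-minimum by iterating on coordinates. Let $a_1^* := \min\{a_1 : (a_1,\ldots) \in S\}$; this minimum exists in $\ZZ$ because $a_1\ge -f_0$ for all elements of $M$. Having chosen $a_1^*,\ldots,a_k^*$ so that the set $S_k$ of elements of $S$ starting with these coordinates is non-empty, let $a_{k+1}^*$ be the minimum $(k+1)$-th coordinate among elements of $S_k$; this exists since $(a_1^*,\ldots,a_k^*)\in M_k$ and hence $a_{k+1}\ge -f_k(a_1^*,\ldots,a_k^*)$ on $S_k$. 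The tuple $(a_1^*,\ldots,a_m^*)$ is the lex-minimum of $S$.

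For $(a)\Rightarrow (b)$, for each $(a_1,\ldots,a_k)\in M_k$ consider the set
$$T_k(a_1,\ldots,a_k) := \{a_{k+1}\in\ZZ : \exists\, (a_{k+2},\ldots,a_m) \text{ with } (a_1,\ldots,a_m)\in M\}.$$
The key observation is that $T_k(a_1,\ldots,a_k)$ is bounded below: otherwise one could pick elements of $M$ sharing the first $k$ coordinates with $(k+1)$-th coordinates tending to $-\infty$, producing an infinite strictly lex-descending chain in $M$ and contradicting (a). Thus one may define $f_k(a_1,\ldots,a_k) := -\min T_k(a_1,\ldots,a_k)\in\ZZ$, and the required inequality $a_{k+1}+f_k(a_1,\ldots,a_k)\ge 0$ holds by construction for every $a\in M$.

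For the monoid case, note that $0=(0,\ldots,0)\in M$, so $(0,\ldots,0)\in M_k$ for every $k$. I claim the $f_k$ built above satisfies $f_k(0,\ldots,0)=0$. On one hand, $a_{k+1}=0$ is realized by $0\in M$, so $\min T_k(0,\ldots,0)\le 0$. On the other hand, if $(0,\ldots,0,a_{k+1},a_{k+2},\ldots,a_m)\in M$ with $a_{k+1}<0$, then by the monoid property all multiples $n\cdot(0,\ldots,0,a_{k+1},\ldots,a_m)$ lie in $M$ for $n\ge 0$, giving an infinite strictly lex-descending sequence and contradicting well-orderedness. Hence $\min T_k(0,\ldots,0)=0$, as required.

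The main obstacle is the $(a)\Rightarrow (b)$ direction, specifically the boundedness of each fiber set $T_k(a_1,\ldots,a_k)$; everything else (the minimum construction in (b)$\Rightarrow$(a) and the monoid refinement) reduces to short arguments once this observation is in hand.
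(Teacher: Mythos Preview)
Your proof is correct and follows essentially the same approach as the paper: the same boundedness-of-fibers argument for $(a)\Rightarrow(b)$, and the same multiples argument for the monoid refinement. The only minor difference is in $(b)\Rightarrow(a)$, where you directly construct the lex-minimum of an arbitrary non-empty subset coordinate by coordinate, while the paper instead argues by contradiction from an infinite descending chain; both are standard and equivalent ways to verify well-orderedness.
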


\begin{proof} First assume (a). For any $0\le k<m$ fix a point $(a_1\dots, a_k)\in M_k$. There exists an integer $N$ such that $a_{k+1}\ge N$ for any $a_{k+1}$ such that $(a_1,\dots, a_k, a_{k+1})\in M_{k+1}$. We put $f_k(a_1,\dots,a_k):= -N$. \vspace{1mm}

Conversely, assume (b) and that (a) is false. Then there exists an infinite decreasing sequence of elements of $M$. Therefore for a suitable maximal possible $0\le k<m$  all elements of the sequence starting with some point have the same prefix $a_1,\dots, a_k$ for appropriate $(a_1,\dots, a_k) \in M_k$. Since $a_{k+1}+f_k(a_1,\dots, a_k)\ge 0$ we get a contradiction with the maximality of $k$. \vspace{1mm}

When $M$ is a monoid and an element $a:=(0,\dots, 0, a_k,\dots, a_m)\in M$ where $a_k\neq 0$, it holds $a_k>0$ because otherwise $a>2a>3a>\dots$. This implies the last statement of
the proposition.
\end{proof}

\begin{example} Given $r\in \QQ_{>0}$, then $M_r=\{(a_1,a_2)\in \ZZ^2:a_1\ge 0,~a_2+r a_1^2\ge 0\}$ is a well-ordered submonoid of $\ZZ^2$.

\end{example}

We say that $g\in GL_m(\QQ)$  is {\it tame} if $g(e_j)\in e_j+\sum\limits_{i=1}^{j-1}\QQ_{\ge 0}\cdot e_i$ for $j\in [m]$, where $\{e_1,\ldots,e_m\}$ is the standard basis of $\ZZ^m$.

\begin{corollary} 
\label{cor:tame linear twist}
A finitely generated submonoid $M\subset \ZZ^m$  is well-ordered (with respect to the lexicographic order on $\ZZ^m$) iff $M\subset g^{-1}(\ZZ_{\ge 0}^m)$ for some tame $g\in GL_m(\ZZ)$. 

\end{corollary}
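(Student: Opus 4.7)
The plan is to prove both directions by direct reduction to Proposition~\ref{pr:tame twist}, using the tame matrix to provide \emph{linear} (rather than merely function-theoretic) lower bounds on the coordinates.

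For the ``$\Leftarrow$'' direction, suppose $M\subset g^{-1}(\ZZ_{\ge 0}^m)$ for a tame $g\in GL_m(\ZZ)$. Write $g=I+N$, where $N$ is strictly upper triangular with $N_{ij}\in\QQ_{\ge 0}$ for $i<j$. The condition $g(a)\in \ZZ_{\ge 0}^m$ for every $a=(a_1,\ldots,a_m)\in M$ unpacks, coordinate by coordinate, into the system
$$a_i + \sum_{j>i} N_{ij}\,a_j \ge 0, \qquad i=1,\ldots,m.$$
Read in the appropriate coordinate order (starting from the bound $a_m\ge 0$ and expressing each $a_i$ in terms of the coordinates indexed above it), this is exactly condition~(b) of Proposition~\ref{pr:tame twist}, with $f_k$ taken to be the corresponding linear forms. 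Hence $M$ is well-ordered in lex.

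For the ``$\Rightarrow$'' direction, I would induct on $m$. The base $m=1$ is trivial: a well-ordered submonoid of $\ZZ$ contains $0$ and so lies in $\ZZ_{\ge 0}$, with $g=1$ tame. For the inductive step, let $v_1,\ldots,v_r$ generate $M$; each $v_k$ is lex-non-negative (otherwise $\{n v_k\}_{n\ge 1}$ is an infinite descending sequence in $M$), so in particular its lex-leading coordinate is $\ge 0$. Partition the generators into the set $T$ of those whose leading coordinate vanishes and the set $S$ of those with strictly positive leading coordinate. Project the generators $\{v_k:k\in T\}$ into $\ZZ^{m-1}$ by discarding the leading coordinate; they generate a finitely generated submonoid $M'$ of $\ZZ^{m-1}$ which is well-ordered in lex (any infinite descending chain in $M'$ would lift to one in $M$ via a zero leading coordinate). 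By the inductive hypothesis there is a tame $h\in GL_{m-1}(\ZZ)$ with $M'\subset h^{-1}(\ZZ_{\ge 0}^{m-1})$; extend it trivially on the leading coordinate to a tame $\tilde h\in GL_m(\ZZ)$. For $k\in S$, the image $\tilde h(v_k)$ has strictly positive leading coordinate but possibly negative entries elsewhere. Apply an elementary tame transformation $g_1$ which adds sufficiently large non-negative integer multiples of the leading basis vector to each other basis vector; by finite generation, a uniform choice of these multiples (maxima of ceilings over the finite set $S$) works. Then $g:=g_1\tilde h$ is tame (products of upper-unitriangular matrices with non-negative integer entries are again of this form, and lie in $GL_m(\ZZ)$) and satisfies $g(v_k)\in \ZZ_{\ge 0}^m$ for every~$k$, hence $M\subset g^{-1}(\ZZ_{\ge 0}^m)$.

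The main obstacle is the inductive step: one must simultaneously handle the two classes $T$ and $S$ of generators and combine the two tame transformations without losing tameness or integrality. The delicate points are (i) checking that the $T$-projection inherits well-ordering (handled by lifting descending sequences), and (ii) ensuring the corrective multiples for $S$ can be taken to be \emph{integers}. The latter is where the hypothesis of finite generation is essential: it permits a single maximum over finitely many ceiling ratios to work for all generators simultaneously. Without finite generation one could easily construct well-ordered submonoids of $\ZZ^m$ not of the form $g^{-1}(\ZZ_{\ge 0}^m)$ for any tame~$g$.
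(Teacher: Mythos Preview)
Both directions of your argument have the same underlying orientation error, and as written neither goes through.

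In the $\Leftarrow$ direction, with $g=I+N$ upper unitriangular and $N_{ij}\ge 0$, the condition $g(a)\in\ZZ_{\ge 0}^m$ reads $a_i+\sum_{j>i}N_{ij}a_j\ge 0$, which bounds each $a_i$ below by a linear form in the \emph{later} coordinates $a_{i+1},\ldots,a_m$. Proposition~\ref{pr:tame twist}(b), however, bounds $a_{k+1}$ by a function of the \emph{earlier} coordinates $a_1,\ldots,a_k$. These are not interchangeable, and ``reading in the appropriate coordinate order'' does not help: reversing the coordinates reverses the lex order. Concretely, the monoid $M=\ZZ_{\ge 0}\cdot(-1,1)\subset\ZZ^2$ lies in $g^{-1}(\ZZ_{\ge 0}^2)$ for the tame $g$ with $N_{12}=1$, yet $(-1,1)>(-2,2)>\cdots$ in lex, so $M$ is not well-ordered.

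In the $\Rightarrow$ direction, your corrective $g_1$ sends $e_j\mapsto e_j+n_je_1$ for $j>1$, so $g_1(a)=(a_1+\sum_{j>1}n_ja_j,\,a_2,\ldots,a_m)$ modifies only the first coordinate and cannot repair negative entries in positions $2,\ldots,m$ of $\tilde h(v_k)$ for $k\in S$. And indeed, the well-ordered monoid $M=\langle(1,-1),(0,1)\rangle$ admits no tame $g$ with $g(M)\subset\ZZ_{\ge 0}^2$, since the second coordinate of $g(1,-1)$ is always $-1$.

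Both problems trace to the paper's definition of \emph{tame}: for the corollary to match Proposition~\ref{pr:tame twist} one needs $g(e_j)\in e_j+\sum_{i>j}\QQ_{\ge 0}\,e_i$ (lower unitriangular), not $i<j$. With that reading your outline is essentially correct, once $g_1$ is taken to send $e_1\mapsto e_1+\sum_{j>1}n_je_j$ (fixing $e_2,\ldots,e_m$); then $(g_1\tilde h(v))_j=(\tilde h(v))_j+n_j(v)_1$ for $j>1$, and a uniform integer choice $n_j\ge\max_{k\in S}\lceil-(\tilde h(v_k))_j/(v_k)_1\rceil$ handles all of $S$ while leaving the $T$-images (with first coordinate zero) untouched.
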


\subsection{Tame valuations on the Laurent polynomial ring}\label{tame_valuations}
In this section we will view each  $\RR^n$ as a totally ordered set
with respect to the lexicographic ordering.

We say that a valuation
$\nu:\kk[x_1^{\pm 1},\ldots,x_m^{\pm 1}]\setminus \{0\}\to \RR^n$ is {\it tame}
if it is completely determined by its values $\nu(x_i)=v_i\in
\RR^n$, $i=1,\ldots,n$. Clearly, $\nu$ is tame iff it is of the form
$\nu_{\bf v}$,  ${\bf v}=(v_1,\ldots,v_m)\in (\RR^n)^m$:
$$\nu_{\bf v}(\sum\limits_{d\in \ZZ^m} c_d x^d)=\max\limits_{d\in \ZZ^m:c_d\ne 0} \{d_1v_1 +\cdots d_m v_m\}\ .$$

The following is obvious.

\begin{lemma} A tame valuation $\nu=\nu_{\bf v}$ is injective iff the vectors $v_1,\ldots,v_m$ are linearly independent (in particular, $n\ge m$).

\end{lemma}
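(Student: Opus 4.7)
The plan is to verify both directions by explicitly analyzing the valuation $\nu_{\bf v}$ on the monomial basis of $\kk[x_1^{\pm 1},\ldots,x_m^{\pm 1}]$. By the definition of $\nu_{\bf v}$, each monomial $x^d$ with $d=(d_1,\ldots,d_m)\in\ZZ^m$ satisfies $\nu_{\bf v}(x^d)=d_1v_1+\cdots+d_mv_m$. Moreover, for any nonzero Laurent polynomial $f=\sum c_d x^d$, the set of $d$ with $c_d\neq 0$ is finite, and $\nu_{\bf v}(f)$ equals the lex-maximum of these finitely many points $\sum d_iv_i$ in $\RR^n$.

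For the ``if'' direction, I would assume $v_1,\ldots,v_m$ are linearly independent (in particular, the $\ZZ$-linear map $d\mapsto\sum d_iv_i$ from $\ZZ^m$ into $\RR^n$ is injective). Then distinct monomials have distinct valuations, so the monomial basis ${\bf B}=\{x^d:d\in\ZZ^m\}$ is mapped injectively into $\RR^n$ by $\nu_{\bf v}$. Since $\bf B$ is a $\kk$-basis of $\kk[x_1^{\pm 1},\ldots,x_m^{\pm 1}]$, Lemma~\ref{le:injective definition} yields that $\nu_{\bf v}$ is injective. The constraint $n\ge m$ is the standard dimension bound for linearly independent vectors in $\RR^n$.

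For the ``only if'' direction, I would argue by contrapositive: suppose $v_1,\ldots,v_m$ are linearly dependent, hence $\ZZ$-linearly dependent (clearing denominators), so there exists $d=(d_1,\ldots,d_m)\in\ZZ^m\setminus\{0\}$ with $\sum d_iv_i=0$. Then $\nu_{\bf v}(x^d)=0=\nu_{\bf v}(1)$ while $x^d\notin\kk\cdot 1$. If $\nu_{\bf v}$ were injective, the Euclidean property of Proposition~\ref{pr:injective}(b) would provide a unique $c\in\kk^\times$ with $\nu_{\bf v}(x^d-c)<0$. However, computing directly from the definition, $\nu_{\bf v}(x^d-c)=\max\{\sum d_iv_i,0\}=0$, a contradiction. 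Hence $\nu_{\bf v}$ cannot be injective.

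No step presents a real obstacle; the content is essentially unpacking the definitions combined with the Euclidean characterization of injective valuations (Proposition~\ref{pr:injective}). The only mild subtlety is noting that linear dependence over $\RR$ of integer-combination-producing vectors $v_i\in\RR^n$ reduces to $\ZZ$-linear dependence, so the obstruction to injectivity can be exhibited by an honest Laurent monomial $x^d$.
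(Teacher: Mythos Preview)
Your argument has a minor citation issue and a substantive gap.

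The minor point: Lemma~\ref{le:injective definition} and Proposition~\ref{pr:injective} are both stated for \emph{well-ordered} valuations, whereas $\nu_{\bf v}$ on Laurent polynomials is never well-ordered (for instance $\nu_{\bf v}(x_1^{-k})=-kv_1$ descends indefinitely in the lex order). The implications you actually use do hold without well-ordering, but you should argue them directly: for the ``if'' direction, verify that the monomial basis is adapted (so Lemma~\ref{le:valuation grading}(b) gives local finiteness) and that each leaf $S_a$ is one-dimensional; for the ``only if'' direction, one-dimensional leaves give the Euclidean property immediately, with no well-order hypothesis needed.

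The substantive gap is the step ``linearly dependent, hence $\ZZ$-linearly dependent (clearing denominators)''. This is false over $\RR$: take $m=2$, $n=1$, $v_1=1$, $v_2=\sqrt 2$. These are $\RR$-dependent but $\ZZ$-independent, so no nonzero $d\in\ZZ^2$ satisfies $d_1v_1+d_2v_2=0$ and your obstruction monomial $x^d$ does not exist. In fact, for this ${\bf v}$ the map $d\mapsto d_1+d_2\sqrt 2$ is injective on $\ZZ^2$, the monomial basis is adapted with one-dimensional leaves, and $\nu_{\bf v}$ \emph{is} injective---so the lemma as literally stated (the parenthetical ``in particular $n\ge m$'' forces the reading of $\RR$-linear independence) appears to be false. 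The correct criterion is $\QQ$-linear (equivalently $\ZZ$-linear) independence of $v_1,\ldots,v_m$; under that reading your clearing-denominators step is valid and the remainder of your proof goes through. The paper itself offers no argument beyond calling the lemma obvious, so there is nothing further to compare against.
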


Since the monomials form an adapted basis to a tame valuation  one
can apply Proposition~\ref{pr:common} to any pair of tame valuations
on the Laurent polynomial algebra $\kk[x_1^{\pm 1},\ldots,x_m^{\pm 1}]$ and get

\begin{corollary}
Any pair of injective tame valuations on  $\kk[x_1^{\pm 1},\ldots,x_m^{\pm 1}]$ has an adapted basis.
\end{corollary}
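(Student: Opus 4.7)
The plan is to observe that the set of Laurent monomials $\{x^d : d\in \ZZ^m\}$ is in fact adapted to \emph{every} injective tame valuation simultaneously, so it serves as a common adapted basis without further modification. First I would note that this set is of course a $\kk$-basis of $\kk[x_1^{\pm 1},\ldots,x_m^{\pm 1}]$.

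Next I would verify that for any injective tame valuation $\nu = \nu_{\bf v}$ with ${\bf v}=(v_1,\ldots,v_m)$, the restriction of $\nu$ to monomials is the map $d=(d_1,\ldots,d_m)\mapsto d_1 v_1+\cdots+d_m v_m\in \RR^n$. By the preceding lemma, injectivity of $\nu_{\bf v}$ is equivalent to $\RR$-linear independence of $v_1,\ldots,v_m$, so this restriction is an injective map $\ZZ^m\hookrightarrow \RR^n$. By the definition of a tame valuation (namely $\nu_{\bf v}(\sum c_d x^d) = \max\{d_1 v_1 + \cdots + d_m v_m : c_d \neq 0\}$), the monomials clearly satisfy the condition of Lemma~\ref{le:injective definition}: distinct monomials have distinct $\nu$-values and any element's valuation is realized by a unique leading monomial. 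Hence $\{x^d : d\in \ZZ^m\}$ is adapted to $\nu_{\bf v}$.

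Applying this to both $\nu$ and $\nu'$, I obtain that the set of Laurent monomials lies in ${\bf A}_\nu \cap {\bf A}_{\nu'}$, so this intersection is nonempty. Invoking Proposition~\ref{pr:common} then yields the desired JH bijection ${\bf K}_{\nu',\nu}: C_\nu \widetilde\to C_{\nu'}$ determined on monomials by $d_1 v_1+\cdots+d_m v_m \mapsto d_1 v'_1+\cdots+d_m v'_m$, where ${\bf v}' = (v'_1,\ldots,v'_m)$ describes $\nu'$.

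There is no genuine obstacle here: the statement reduces to noting that tame valuations are, by design, controlled by their values on generators, and the monomial basis works universally for all of them. The only subtle point worth remarking on is that Proposition~\ref{pr:common} does not require well-orderedness of the target, which is essential because the image of a tame valuation in $\RR^n$ (ordered lexicographically) is typically \emph{not} well-ordered, so Theorem~\ref{th:generalized RSK} is not directly applicable and one must route the argument through Proposition~\ref{pr:common} instead.
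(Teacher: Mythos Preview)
Your proposal is correct and follows the paper's approach exactly: the paper's entire argument is the sentence preceding the corollary, namely that the monomials form an adapted basis for any tame valuation, so one can apply Proposition~\ref{pr:common}. One small caveat: your citation of Lemma~\ref{le:injective definition} is slightly off, since that lemma is stated only for well-ordered valuations (as you yourself note later, tame valuations on Laurent polynomials need not be well-ordered); but your direct verification from the definition of $\nu_{\bf v}$ already shows that each leaf $S_a$ is one-dimensional and spanned by the unique monomial with that value, which is precisely the general definition of an adapted basis, so no harm is done.
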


A tame valuation provides a total well ordering of the monomials.
\cite{R} and
Theorem 9 from \cite{Kh} state that all the total well orderings of
the monomials are exhausted by the tame ones.

One can view $\bf v$ as $n\times m$ matrix. Then lex ordering
corresponds to the unit matrix, and deglex ordering corresponds to
$m\times m$ matrix with ones on the diagonal and in the first row
with zeroes at the rest entries.

\subsection{Algorithms computing Jordan-H\"older bijections}

Consider a pair $\nu,\nu':S\setminus\{0\} \to (C,<)$ of injective well-ordered valuations.
Assume that there are given algorithms mapping $C_{\nu}$ (respectively,
$C_{\nu'}$) to an adapted basis ${\bf B} \in {\bf A}_{\nu}$
(respectively, ${\bf B'} \in {\bf A}_{\nu'}$). Also assume that
$(C_{\nu},<)$ is isomorphic to $\ZZ_{\ge 0}$ and there is given an
algorithm exhibiting this isomorphism. Note that deglex on the
polynomial ring fulfills the latter feature. Then one can compute the
JH-bijection ${\bf K}_{\nu',\nu}:C_{\nu} \to C'_{\nu'}$ and a common adapted
basis from ${\bf A}_{\nu} \cap {\bf A}_{\nu'}$.

Indeed, for any $a\in C_{\nu}$ the algorithm produces $b_a\in {\bf
B}$ with $\nu(b_a)=a$ and all $b_i\in {\bf B},\, i\in I$ such that
$\nu(b_i)<a$. The algorithm expands each $b_a,b_i,\, i\in I$ in
basis ${\bf B'}$ and an element $b_a+\sum_{i\in I}c_i\cdot b_i$ with
indeterminate coefficients $c_i,\, i\in I$
$$b_a+\sum_{i\in I}c_i\cdot b_i=\sum_{1\le j\le p} A_j\cdot b_j'$$
\noindent for suitable linear functions $A_j,\, 1\le j\le p$ in
$c_i,\, i\in I$ and $b_j'\in {\bf B'},\, 1\le j\le p$. Let
$\nu'(b_1')>'\nu'(b_2')>'\cdots >'\nu'(b_p')$.

Consecutively, for $l=1,2,\dots,p$ the algorithm tests whether a
linear in $c_i,\, i\in I$ system $A_1=A_2=\cdots=A_{l-1}=0$ has a
solution. Consider maximal $l$ satisfying the latter property. Then  ${\bf K}_{\nu',\nu}(a)=\nu'(b_l')$. Pick any solution $c_i,\, i\in I$
of the linear system $A_1=A_2=\cdots=A_{l-1}=0$. then
$$\{b_a+\sum_{i\in I}c_i\cdot b_i\}_{a\in C_{\nu}}$$
constitute a common adapted basis for ${\bf A}_{\nu} \cap {\bf A}_{\nu'}$.

This algorithm computes the JH-bijection ${\bf K}_{\nu',\nu}(a)$ for an
arbitrary input $a\in C_{\nu}$ in a general case of a vector space.
Since in case of a polynomial algebra, the JH-bijection is more rigid than in
general, one is able to design a partial algorithm for computing the JH-bijection
and a common adapted basis for both $\nu,\nu'$ in a finite form (we
call this form a piece-wise monoidal representation), provided that
the partial algorithm terminates. Moreover, the partial algorithm
terminates iff the JH-bijection admits a piece-wise monoidal representation.
Below we assume that $C_{\nu}=\ZZ_{\ge 0}^m$.

We accomplish the algorithm from the beginning of this subsection
for computing the JH-bijection ${\bf K}_{\nu',\nu}(a)$ step by step for
increasing $a\in C_{\nu}$ by recursion. Thus, we assume as a
recursive hypothesis that ${\bf K}_{\nu',\nu}(a)$ is already
computed for all $a<a_0$ for some $a_0$. After each step the result
of the algorithm can be given as the following {\it piecewise
monoidal representation}. Polynomials $f_1,\dots,f_n \in
{\bf K}[x_1,\dots,x_m]$ are given together with a partition of $\RR_{\ge
0}^m$ into simplicial cones generated by vectors $a_1:=\nu
(f_1),\dots,a_n:=\nu(f_n)\in  \ZZ_{\ge 0}^m$. Consider one (with
some dimension $p\le m$) of these cones generated by vectors
$a_{i_0},\dots,a_{i_p}$ and denote by $M \subset \ZZ _{\ge 0}^m$ the
monoid generated by vectors $a_{i_0},\dots,a_{i_p}$. In addition, to
each integer point $a$ from the parallelotope $P=\{\alpha_0\cdot
a_{i_0}+\cdots+\alpha_p \cdot a_{i_p}:\, 0\le
\alpha_0,\dots,\alpha_p<1\}\subset \RR_{\ge 0}^m$ generated by
$a_{i_0},\dots,a_{i_p}$ is attached a polynomial $f_a\in \kk
[x_1,\dots,x_m]$ with $\nu (f_a)=a$. Then the monoid of all integer
points from $(M\otimes \RR_{\ge 0}) \cap \ZZ_{\ge 0}^m$ is a
disjoint union of shifted monoids $M+a$ for all $a\in P\cap 
\ZZ_{\ge 0}^m$.

These data determine a basis $\bf B$ of $\kk[x_1,\dots,x_m]$
adapted for $\nu$. Namely, for any point $v=c_0\cdot a_{i_0}+\cdots
c_p\cdot a_{i_p}+ a \in M+a$ where $c_0,\dots,c_p\in \ZZ_{\ge 0}$
put $b_v:=f_{i_0}^{c_0}\cdots f_{i_p}^{c_p}\cdot f_a \in {\bf B}$,
hence $\nu (b_v)=v$. Also we define map ${\bf K}:C_\nu \to
C_{\nu'}$ by ${\bf K}(v):=\nu'(b_v)=c_0\cdot \nu'(f_{i_0})+\cdots+
 c_p\cdot \nu'(f_{i_p})+\nu' (f_a)$. Thereby, ${\bf K}$ is linear on
each shifted monoid $M+a$. Clearly, ${\bf K}_{\nu',\nu}\le' {\bf K}$
holds point-wise.

Now we produce an algorithmic criterion whether the partial
algorithm terminates at the current step of recursion. It terminates
iff for every pair of distinct points $v,v_0\in \ZZ_{\ge 0}^m$ it
holds $\nu'(b_v)\neq \nu'(b_{v_0})$. The latter condition is
equivalent to non-solvability of a suitable integer programming
problem. If the partial algorithm terminates then $\bf B$ is a
common adapted basis for both $\nu, \nu'$ and ${\bf K}={\bf
K}_{\nu',\nu}$ (see Proposition~\ref{pr:common}).

Otherwise, if the partial algorithm does not terminate at the
current recursive step, the algorithm described at the beginning of
this subsection accomplishes the next step for computing the JH-bijection at a
greater (wrt the ordering $<$ on $C_{\nu}$) point. Assume (for the
sake of simplicity) that the algorithm at this step computes just
${\bf K}_{\nu',\nu}(a_0)$ and $f_0\in \kk[x_1,\dots,x_m]$ satisfying
${\bf K}_{\nu',\nu}(c_0) <' {\bf K} (a_0)$ such that $\nu(f_0)=a_0$
and $\nu'(f_0)={\bf K}_{\nu',\nu}(a_0)$. Then at the current
recursive step the partial algorithm adds $f_0$ to $f_1,\dots,f_n$.

Let $a_0$ belong to a $p$-dimensional simplicial cone $T$ generated
by vectors $a_{i_0},\dots,a_{i_p}$ for some $p\le m$. The partition
of $T$ into simplicial cones $T_j,\, 0\le j\le p$ generated by
$a_{i_0},\ldots,a_{i_{j-1}},a_0,a_{i_{j+1}},\dots,a_{i_p}$ induces
the partition of $\ZZ_{\ge 0}^m$ into the  union of shifted monoids
(we keep from the previous recursive step the partitions of all the
cones not containing $a_0$), and thereby, we get a piecewise
monoidal representation after the current recursive step. To define
(the modified after the current recursive step) ${\bf K}':C_{\nu}\to
C_{\nu'}$ on a shifted monoid $M_j'+a$ where monoid $M_j'$ is
generated by
$a_{i_0},\dots,a_{i_{j-1}},a_0,a_{i_{j+1}},\dots,a_{i_p}$, and an
integer point $s$ belongs to the parallelotope generated by the same
vectors $a_{i_0},\dots,a_{i_{j-1}},a_0,a_{i_{j+1}},\dots,a_{i_p}$,
we take polynomial $f_a:=b_a\in {\bf B}$ constructed at the previous
recursive step.

This completes the description of a piecewise monoidal
representation of ${\bf K}'$ at the current recursive step and the
design of the partial algorithm.

\begin{proposition}
The designed partial  algorithm terminates and in this case
yields a piece-wise monoidal representation of ${\bf
K}_{\nu,\nu'}$ (provided that $C_{\nu}=\ZZ_{\ge 0}^m$) together with
a common adapted basis for both $\nu,\nu'$ iff ${\bf K}_{\nu',\nu}$
admits a piece-wise monoidal representation (in particular, ${\bf
K}_{\nu',\nu}$ is linear on each of the shifted monoids from the
representation).
\end{proposition}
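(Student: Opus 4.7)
The plan is to address the two implications separately. The forward direction (termination implies a piecewise monoidal representation) is essentially built into the algorithm. The reverse direction (existence of such a representation forces termination) is the substantive content, and I expect the main difficulty there.

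Assume first that the partial algorithm terminates at some recursive step, producing generators $f_1,\dots,f_n$, a simplicial subdivision of $\RR_{\ge 0}^m$ by cones with extremal rays $\nu(f_{i_0}),\dots,\nu(f_{i_p})$, and data $(f_a)$ attached to the integer points of each parallelotope. The termination criterion is precisely that for any two distinct $v,v_0\in\ZZ_{\ge 0}^m$ one has $\nu'(b_v)\neq \nu'(b_{v_0})$, where the $b_v$ form the basis ${\bf B}$ adapted to $\nu$ induced by the piecewise monoidal data. Then $\nu'|_{\bf B}$ is injective, so ${\bf B}$ is also adapted to $\nu'$; by Proposition~\ref{pr:common}, ${\bf K}(v)=\nu'(b_v)={\bf K}_{\nu',\nu}(v)$ for every $v\in C_\nu$. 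Hence the algorithm's output is a piecewise monoidal representation of ${\bf K}_{\nu',\nu}$ and ${\bf B}$ is a common adapted basis.

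For the converse, suppose ${\bf K}_{\nu',\nu}$ admits a piecewise monoidal representation on some finite simplicial decomposition of $\RR_{\ge 0}^m$, with explicit generators $f^*_1,\dots,f^*_N\in\kk[x_1,\dots,x_m]$ on the extremal rays and fixed ``coset'' polynomials on the parallelotopes. The plan is to track a Noetherian invariant of the current decomposition that strictly decreases at each nontrivial recursive step, forcing termination. Concretely, I would use the following monovariant: for the current decomposition $\Delta$ and current map ${\bf K}$, let $\Phi(\Delta,{\bf K})$ be the (finite) set of vertices of the target decomposition that do not yet appear among $\{\nu(f_i)\}$, together with the (finitely many) integer points of the target parallelotopes whose attached polynomial has not yet been recovered. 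Each nontrivial recursive step produces some $f_0$ with $\nu(f_0)=a_0$ and $\nu'(f_0)={\bf K}_{\nu',\nu}(a_0)<'{\bf K}(a_0)$; because the target representation has ${\bf K}_{\nu',\nu}$ linear on each of its cones, $a_0$ must lie in $\Phi(\Delta,{\bf K})$, and the algorithm's subdivision strictly removes $a_0$ from the invariant without adding new obstructions (apart from possibly finitely many new parallelotope points already constrained by the target representation).

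The main obstacle will be verifying that the algorithm's specific subdivision rule---replacing a cone $T$ with extremal rays $a_{i_0},\dots,a_{i_p}$ by the $p+1$ sub-cones obtained by substituting $a_0$ for each $a_{i_j}$---is compatible with the target decomposition, in the sense that it never produces a cone strictly spanning two distinct target cones in an irreparable way. I expect to handle this by induction on the dimension of $T$: the hypothesis that ${\bf K}_{\nu',\nu}$ is linear on every target cone combined with the strict inequality ${\bf K}(a_0)>'{\bf K}_{\nu',\nu}(a_0)$ forces $a_0$ to lie strictly inside $T$ (not on its boundary), which in turn forces the target decomposition to already separate $T$ through $a_0$. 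Once $\Phi$ is exhausted, no further subdivision is possible, the piecewise linear map carried by the algorithm equals ${\bf K}_{\nu',\nu}$ point-wise, and hence the integer-programming test of the termination criterion is satisfied, completing the proof.
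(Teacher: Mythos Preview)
Your forward direction is fine and matches the paper: once the termination criterion holds, ${\bf B}$ is adapted to both valuations and Proposition~\ref{pr:common} gives ${\bf K}={\bf K}_{\nu',\nu}$.

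For the converse, however, your monovariant strategy has a genuine gap and is also much more laborious than what is actually needed. The claim that at each nontrivial step the point $a_0$ ``must lie in $\Phi(\Delta,{\bf K})$'' is unjustified: the algorithm processes points in the well-order on $C_\nu=\ZZ_{\ge 0}^m$, and any $a_0$ with ${\bf K}(a_0)>'{\bf K}_{\nu',\nu}(a_0)$ triggers a subdivision, whether or not $a_0$ is a vertex or parallelotope point of the \emph{target} decomposition. Piecewise linearity of ${\bf K}_{\nu',\nu}$ on the target cones says nothing about where the algorithm's current piecewise-linear ${\bf K}$ exceeds it; if the two decompositions are not yet compatible, the discrepancy can occur at interior points. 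So your invariant $\Phi$ need not decrease, and the ``main obstacle'' you flag (compatibility of subdivisions) is not a technicality to be cleaned up later but precisely the missing content of your argument.

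The paper bypasses all of this with a two-line sandwich. One inequality is built into the construction: since ${\bf K}(v)=\nu'(b_v)$ with $b_v\in\nu^{-1}(v)$, one always has ${\bf K}_{\nu',\nu}\le'{\bf K}$ pointwise. For the other inequality, simply wait until the algorithm has stepped past every vertex $a_1,\dots,a_n$ and every parallelotope point of the target representation (finitely many points, hence finitely many steps). At that moment the algorithm possesses, for each such point $a$, a polynomial with $\nu$-value $a$ and $\nu'$-value exactly ${\bf K}_{\nu',\nu}(a)$; linearity of the target representation on each shifted monoid then forces ${\bf K}\le'{\bf K}_{\nu',\nu}$ everywhere. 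The two inequalities give ${\bf K}={\bf K}_{\nu',\nu}$, so the injectivity test passes and the algorithm halts. No tracking of subdivisions, no Noetherian invariant, no compatibility analysis is needed.
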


{\bf Proof}. We have already shown that if the designed partial
algorithm terminates then ${\bf K}={\bf K}_{\nu',\nu}$.

Conversely, suppose that ${\bf K}_{\nu',\nu}$ admits a piece-wise
monoidal representation with vectors $a_1,\dots,a_n\in C_{\nu}$.
After that the partial algorithm computes ${\bf K}_{\nu',\nu}(a)$
for $a\in \{a_1,\ldots,a_n\}$ and for all integer points $a$
belonging to the parallelotopes from the latter piece-wise monoidal
representation generated by vectors $a_1,\ldots,a_n$,
the resulting ${\bf K}\le' {\bf K}_{\nu',\nu}$ since ${\bf K}$ is
determined by these values ${\bf K}_{\nu',\nu}(a)$.
On the other hand, always holds ${\bf K}\ge'{\bf K}_{\nu',\nu}$, therefore ${\bf K}={\bf K}_{\nu',\nu}$
and the Proposition is proved. \endproof

It would be interesting to understand, whether the designed partial
algorithm always terminates when say, $\nu$ is deglex valuation and
$\nu'=\nu_{\varphi}$ for any injective homomorphism.
$\tau:\kk[x_1,\dots,x_m]\to \kk[x_1,\dots,x_m]$?

\subsection{Proofs of results of Section \ref{sec:Results on valuations}}
\label{subsec:Proofs of results of Section 4}
$ ~~$

\noindent {\bf Proof of Proposition} \ref{pr:lex tensor product}. Prove (a). Let $S$ and $S'$ be $\kk$-vector spaces,  
for any nonzero $z\in S\otimes S'$ 
denote by $V(z)\subset S$ the smallest (by inclusion) subspace of $S$ such that 
$z\in V(z)\otimes S'$.

The following is obvious. 

\begin{lemma} 
\label{le:V(z)}
For  each nonzero $z\in S\otimes S'$ one has:

(a) $V(z)\ne 0$ and  $V(\kk^\times \cdot z)=V(z)$,

(b) $V(z+z')\subseteq V(z)+V(z')$ for any nonzero $z'\in S\otimes S'\setminus\{0,-z\}$.

(c) $V(z)$ is finite dimensional, moreover, it is the
$\kk$-linear span of $\{x_1,\ldots,x_m\}$ for any expansion  
\begin{equation}
\label{eq:m(z)}
z=x_1\otimes y_1+\ldots x_m\otimes y_m
\end{equation}
with minimal possible $m$ (such an $m$ was called {\it rank} of $z$ in \cite{Gri}).

\end{lemma}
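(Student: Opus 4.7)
My plan is to first verify that $V(z)$ is well-defined, since the statement presumes the existence of a unique smallest subspace of $S$ containing $z$ in its tensor product with $S'$. The key step is showing that if $z\in V_1\otimes S'$ and $z\in V_2\otimes S'$, then $z\in (V_1\cap V_2)\otimes S'$. This I would prove by extending a basis of $V_1\cap V_2$ to a basis of $V_1+V_2$ and then to a basis $\{e_j\}_{j\in J}$ of $S$. Any $z\in S\otimes S'$ has a unique expression $z=\sum_j e_j\otimes z_j$ with $z_j\in S'$ almost all zero, and the hypothesis $z\in V_i\otimes S'$ forces $z_j=0$ whenever $e_j\notin V_i$. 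Intersecting the two constraints gives $z_j=0$ unless $e_j\in V_1\cap V_2$, which proves the claim. Hence $V(z)=\bigcap\{V\subseteq S:z\in V\otimes S'\}$ is itself of this form, so it exists.

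Parts (a) and (b) then follow almost immediately. For (a), if $V(z)=0$ then $z\in 0\otimes S'=\{0\}$ contradicting $z\ne 0$; and $V(\kk^\times z)=V(z)$ since the condition $z\in V\otimes S'$ is invariant under scaling by $\kk^\times$. For (b), since $z\in V(z)\otimes S'$ and $z'\in V(z')\otimes S'$, we get $z+z'\in (V(z)+V(z'))\otimes S'$, and the minimality of $V(z+z')$ yields the inclusion $V(z+z')\subseteq V(z)+V(z')$.

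The main content is (c). Given a minimal-length expansion $z=\sum_{i=1}^m x_i\otimes y_i$, I would first show that both $\{x_1,\dots,x_m\}$ and $\{y_1,\dots,y_m\}$ are linearly independent: if, say, $y_m=\sum_{i<m}c_iy_i$, then $z=\sum_{i<m}(x_i+c_ix_m)\otimes y_i$ contradicts minimality of $m$, and symmetrically for the $x_i$. Hence $z\in U\otimes S'$ where $U:=\mathrm{span}\{x_1,\dots,x_m\}$ has dimension exactly $m$, giving $V(z)\subseteq U$ and $\dim V(z)\le m<\infty$. For the reverse inclusion, write $z=\sum_k f_k\otimes g_k$ with $\{f_k\}$ a basis of $V(z)$; the linear independence of $\{f_k\}$ in $S$ and of a basis of $S'$ forces this to be an expansion of length $\dim V(z)$, so by minimality $m\le \dim V(z)\le m$, giving $\dim V(z)=m$ and therefore $V(z)=U$.

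The only place that requires any care is the well-definedness of $V(z)$ and the linear-independence reductions in (c); everything else is a formality once a basis-expansion framework is adopted. I do not anticipate any deeper obstacle.
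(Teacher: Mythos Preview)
Your proof is correct. The paper itself declares this lemma ``obvious'' and gives no argument, so there is no approach to compare; you have simply supplied the standard linear-algebra verification that the authors omitted.

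One very minor remark: your well-definedness argument shows closure under \emph{binary} intersection, and you then pass to the full intersection $\bigcap\{V:z\in V\otimes S'\}$. This is fine, but the justification is that $z$ always admits some finite expansion, hence lies in $U\otimes S'$ for a finite-dimensional $U$; then among subspaces of $U$ the minimal one exists by dimension and your pairwise argument. You clearly have this in mind (it reappears in part (c)), but it is worth saying explicitly when you write up the final version. Everything else---the independence of the $x_i$ and $y_i$ in a minimal expansion, and the dimension count $m\le\dim V(z)\le m$---is exactly right.
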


Furthermore, given a valuation $\nu:S\setminus\{0\}\to C$. Then, clearly, for  any finite-dimensional subspace $S_0\subset S$ the set   
$$\{\nu(x)\,|\,x\in S_0\setminus\{0\}\}$$
is a finite subset of $C$; denote by $\nu(S_0)$ its maximal element.

Furthermore, in the notation of Lemma \ref{le:V(z)}, for each nonzero  $z\in S\otimes S'$, denote 
\begin{equation}
\label{eq:nu{S'}}
\nu^{S'}(z):=\nu(V(z))\ .
\end{equation}

Clearly, $V(x\otimes y)=\kk \cdot x$ for any nonzero $x\in S$, $y\in S'$, hence $\nu^{S'}(x\otimes y)=\nu(x)$, as in \eqref{eq:lex tensor product}. This and Lemma \ref{le:V(z)} imply that the assignment $z\mapsto \nu^{S'}(z)$ is the desired valuation on $S\otimes S'$. 
This finishes the proof of Proposition \ref{pr:lex tensor product}(a).

Prove (b). For any nonzero $z\in S\otimes S'$ denote by $\underline V(z)$ the smallest (by inclusion) subspace of $S_{\le a}/S_{<a}$, $a=\nu^{S'}(z)$ such that
$$z+S_{<a}\in \underline V(z)\otimes S'$$ 
in $(S_{\le a}/S_{<a})\otimes S'=(S_{\le a}\otimes S')/(S_{<a}\otimes S')$ (that is, 
$\underline V(z)$ is the image of $V(z)$ under the canonical projection $S_{\le a}\twoheadrightarrow S_{\le a}/S_{<a}$).
Furthermore, denote by $V'(z)$ the smallest (by inclusion) subspace of $S'$ such that 
$$z+S_{< a}\in \underline V(z)\otimes V'(z)$$ in 
$(S_{\le a}/S_{<a})\otimes S'$, where $a=\nu^{S'}(z)$.

The following is obvious.

\begin{lemma} 
\label{le:V'(z)}For each nonzero $z\in S\otimes S'$, the subspace $V'(z)$ is finite-dimensional. Moreover, $\dim \underline V(z)=\dim V'(z)$ and for any expansion \eqref{eq:m(z)} with smallest possible $m$, one has
$$\underline V(z)=\bigoplus_{i\in [1,m]:\nu(x_i)=a} \kk \cdot (x_i+V(z)_{< a}),~V'(z)=\bigoplus_{i\in [1,m]:\nu(x_i)=a} \kk \cdot y_i\ .$$ 

\end{lemma}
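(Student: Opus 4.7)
The strategy is to produce an explicit adapted minimal-rank expansion of $z$ and extract both claimed formulas, by applying Lemma~\ref{le:V(z)}(c) twice: once in $S\otimes S'$ and once in the quotient tensor product $(S_{\le a}/S_{<a})\otimes S'$, where $a:=\nu^{S'}(z)=\nu(V(z))$.

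First, given any minimal expansion $z=\sum_{i=1}^m x_i\otimes y_i$, Lemma~\ref{le:V(z)}(c) tells us $\{x_1,\ldots,x_m\}$ is a basis of $V(z)$. An invertible change of basis inside $V(z)$, compensated by the contragredient change on the $y_i$'s (which preserves $z$ and the rank), lets us assume that $\{x_i\}_{i\in J}$ is a basis of the subspace $V(z)_{<a}:=V(z)\cap S_{<a}$ and that $\{x_i+V(z)_{<a}\}_{i\in I}$ is a basis of the quotient $V(z)/V(z)_{<a}$, where $I=[1,m]\setminus J=\{i:\nu(x_i)=a\}$. By construction the resulting expansion is still minimal of rank $m$.

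Second, modulo $S_{<a}\otimes S'$ the terms indexed by $J$ lie in $V(z)_{<a}\otimes S'\subseteq S_{<a}\otimes S'$ and vanish, so
$$
z+S_{<a}\otimes S'=\sum_{i\in I}(x_i+S_{<a})\otimes y_i
$$
in $(S_{\le a}/S_{<a})\otimes S'$. The vectors $x_i+S_{<a}$, $i\in I$, are linearly independent by our adapted choice. Applying the analog of Lemma~\ref{le:V(z)}(c) in the ambient space $S_{\le a}/S_{<a}$ gives that this is a rank-$|I|$ minimal expansion, that $\underline V(z)=\bigoplus_{i\in I}\kk\cdot(x_i+S_{<a})$, and that $\{y_i\}_{i\in I}$ is linearly independent in $S'$ and spans $V'(z)$; hence $V'(z)=\bigoplus_{i\in I}\kk\cdot y_i$ is finite-dimensional, and $\dim V'(z)=|I|=\dim\underline V(z)$.

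The invariant content (finite-dimensionality of $V'(z)$ and the dimension equality) is independent of the choice of minimal expansion, since any two minimal expansions differ by an element of $\mathrm{GL}(V(z))$; the displayed explicit formulas apply to any minimal expansion whose basis is adapted to the filtration, which can always be arranged as in the first step. The main obstacle I expect is verifying the linear independence of $\{y_i\}_{i\in I}$ in the reduced expansion: if a nontrivial relation $\sum_{i\in I}c_iy_i=0$ held in $S'$, then combining it with the adapted expansion of $z$ would allow expressing $z$ with fewer than $m$ simple tensors (by absorbing one $x_i$, $i\in I$, into the $J$-indexed part), contradicting the minimality of $m$ guaranteed by Lemma~\ref{le:V(z)}(c).
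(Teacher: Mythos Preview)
The paper declares this lemma ``obvious'' and gives no proof, so your argument is the only one on the table. It is correct and is precisely the natural verification: adapt a minimal-rank expansion to the filtration $V(z)_{<a}\subset V(z)$ by a change of basis in $\mathrm{GL}(V(z))$, reduce modulo $S_{<a}\otimes S'$, and apply Lemma~\ref{le:V(z)}(c) in $(S_{\le a}/S_{<a})\otimes S'$.

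Two minor remarks. First, your final paragraph's justification that $\{y_i\}_{i\in I}$ is independent can be shortened: since the full expansion has rank $m$ and $\{x_i\}_{i=1}^m$ is a basis of $V(z)$, the set $\{y_i\}_{i=1}^m$ is automatically independent (this is the standard symmetry of tensor rank), hence so is any subset. Second, your caveat that the displayed $\bigoplus$-formulas are only literally valid for \emph{adapted} minimal expansions is a genuine sharpening of the paper's phrasing: as written, the lemma says ``for any expansion with smallest possible $m$,'' but for a non-adapted minimal expansion one can have, say, $\nu(x_1)=\nu(x_2)=a$ with $x_1-x_2\in S_{<a}$, in which case the right-hand side is not a direct sum. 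The invariant claims (finite-dimensionality and $\dim\underline V(z)=\dim V'(z)$) are of course unaffected.
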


Furthermore for each nonzero  $z\in S\otimes S'$, denote 
\begin{equation}
\label{eq:nu tensor nu'}
(\nu\otimes \nu')(z):=(\nu^{S'}(z),\nu'(V'(z)))\ .
\end{equation}

Clearly, $V'(x\otimes y)=\kk \cdot y$ for any nonzero $x\in S$, $y\in S'$. Since $\nu^{S'}(x\otimes y)$,  we obtain $(\nu\otimes \nu')(x\otimes y)=(\nu(x),\nu'(y))$, as in \eqref{eq:lex tensor product injective}. This and Lemma \ref{le:V'(z)} imply that the assignment $z\mapsto (\nu\otimes \nu')(z)$ is the desired valuation on $S\otimes S'$. 
This finishes the proof of Proposition \ref{pr:lex tensor product}(b).

The proposition is proved.
\endproof

\noindent {\bf Proof of Proposition~\ref{pr:injective}}.
Prove (a)$=>$(b). Indeed, let $x,y\in S\setminus \{0\}$ with
$a=\nu(x)=\nu(y)$. Then $S_{\le a}=\kk x+S_{<a}= \kk y+S_{<a}$ which implies
that $x-cy\in S_{<a}$ for some (unique) nonzero scalar $c\in \kk$.
This proves the implication (a)$=>$(b).

Prove (b)$=>$(a).
Choose  ${\bf B}\in \tilde {\bf A}_\nu$ in the notation of Section \ref{subsec:valuations}. By Lemma \ref{le:valuation grading}(c), this is a basis of $S$ such that the restriction of $\nu$ to ${\bf B}$ is a surjective map ${\bf B}\twoheadrightarrow C_\nu$ and ${\bf B}_{<a}=S_{<a}\cap {\bf B}$ is a basis of $S_{<a}$ for all $a\in C_\nu$. It remains to establish injectivity of $\nu|_{\bf B}$, which we will do by contradiction. Suppose $b,b'\in {\bf B}$ be such that $b\ne b'$, $\nu(b)=\nu(b')$. Then there exists $c\in \kk^\times$ such that $b'-cb\in S_{<a}$, where $a=\nu(b)$, which implies that $b'$ is a linear combination of elements of ${\bf B}$. The resulting contradiction proves that $\nu|_{\bf B}:{\bf B}\to C_\nu$ is a bijection. In view of Corollary \ref{cor:injective definition}, this proves the
implication (b)$=>$(a).

The proposition is proved. \endproof

\vspace{2mm}

\noindent {\bf Proof of Proposition \ref{pr:ensamble}}.
Let $\bf B$ be an adapted basis of $S$ such that ${\bf B}\cap S_i$ is a basis in $S_i$ for each $i\in I$. Then ${\bf B}\cap S_i=\{b\in {\bf B}\, :\, \nu(b)\in \nu(S_i\setminus \{0\})\}$ (cf. Corollary~\ref{cor:new valuations}). Therefore, if for $b\in \bf B$ it holds $\nu(b)\in \bigcap_{j\in J} \nu(S_j\setminus \{0\})$ for some $J\subseteq I$ then $b\in \bigcap_{j\in J}S_j$, hence $\nu(b)\in \nu(\bigcap_{j\in J}S_j\setminus \{0\})$, this justifies that the family $\{S_i, i\in I\}$ is $\nu$-compatible. Moreover, this implies that ${\bf B}\cap \bigcap_{j\in J}S_j=\{b\in {\bf B}\, :\, \nu(b)\in \nu(\bigcap_{j\in J}S_j\setminus \{0\})\}$ is a basis of $\bigcap_{j\in J}S_j$.

In addition, $\sum_{j\in J}S_j$ is contained in the linear span of the vectors $\{b\in {\bf B}\, :\, \nu(b)\in \bigcup_{j\in J}\nu(S_j\setminus \{0\})\}$. Thus, $\nu((\sum_{j\in J}S_j)\setminus \{0\})\subseteq \bigcup_{j\in J}\nu(S_j\setminus \{0\})$. The opposite inclusion is obvious.

Now conversely, assume that the family $\{S_i, i\in I\}$ is $\nu$-compatible. For each $c\in \nu(S\setminus \{0\})$ there exists a unique subset $J\subseteq I$ such that 
$$c\in \bigcap_{j\in J}\nu(S_j\setminus \{0\}) \setminus \bigcup_{l\notin J}\nu(S_l\setminus \{0\}).$$
\noindent The case $J=\emptyset$ means that $c\notin \bigcup_{i\in I}\nu(S_i\setminus \{0\})$. Due to $\nu$-compatibility there exists a vector $b_c\in \bigcap_{j\in J}S_j$ such that $\nu(b_c)=c$ (the case $J=\emptyset$ means that $b_c\in S$). Observe that for any $c\in \bigcap_{j\in J}\nu(S_j\setminus \{0\})$ it  holds that the vector $b_c\in \bigcap_{j\in J}S_j$. Hence $\{b_c\, :\, c\in \bigcap_{j\in J}\nu(S_j\setminus \{0\})\}$ is a basis of $\bigcap_{j\in J}S_j$ since $\nu$ is injective (cf. Corollary~\ref{cor:new valuations}). Hence the basis ${\bf B}:=\{b_c\, :\, c\in \nu(S\setminus \{0\})\}$ is required.
\endproof

{\bf Proof of Proposition} \ref{pr:injective restriction}. By Lemma \ref{le:valuation grading}(b), ${\bf A}_\nu$ is non-empty, so fix ${\bf B}\in {\bf A}_\nu$. 
Then ${\bf B}_{\le a}:=S_{\le a}\cap {\bf B}=\bigsqcup\limits_{a'\le a} {\bf B}_{a'}$ is a basis in $S_{\le a}$, where 
${\bf B}_{a'}=\{b\in {\bf B}:\nu(b)=a'\}$. 

Furthermore, choose a well-ordering of each ${\bf B}_a$. Denote by ${\bf B}^0\subset {\bf B}$ the set which consists of all minimal (with respect to the chosen well-ordering) elements of all ${\bf B}_a$. By the construction, $|{\bf B}^0\cap {\bf B}_a|=1$ for all $a\in C_\nu$ and the restriction of $\nu$ to ${\bf B}^0$ is a bijection ${\bf B}^0\widetilde \to C_\nu$. 

Using transfinite induction, we repeat this procedure and obtain the following.

\begin{lemma} For each ${\bf B}\in {\bf A}_\nu$ there is a well-ordered set ${\bf I}$ with the minimal element ${\bf 0}$ such that 

$\bullet$ ${\bf B}=\bigsqcup\limits_{\ii\in {\bf I}} {\bf B}^\ii$.

$\bullet$ The restriction of $\nu$ to ${\bf B}^\ii$ is an injective map ${\bf B}^0\hookrightarrow C_\nu$.

$\bullet$ $\nu({\bf B}^\ii)\subset \nu({\bf B}^{\ii'})$ if $\ii'\le \ii$ and  $\nu({\bf B}^0)=C_\nu$

\end{lemma}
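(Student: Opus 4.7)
The plan is to carry out the transfinite recursion alluded to in the paragraph preceding the lemma. I would fix an adapted basis ${\bf B} \in {\bf A}_\nu$; by hypothesis each fiber ${\bf B}_a = \{b \in {\bf B} : \nu(b) = a\}$ is a non-empty, well-ordered set, the well-orderings having been chosen in the preceding paragraph. Write $\alpha_a$ for the order type of ${\bf B}_a$ and let $b_a^\ii$ denote the $\ii$-th element of ${\bf B}_a$ for each ordinal $\ii < \alpha_a$. Then take
\[
{\bf I} := \{\ii : \ii < \sup_{a \in C_\nu} \alpha_a\},
\]
which is itself a well-ordered set (as an initial segment of the ordinals) with minimum element ${\bf 0}$.

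For each $\ii \in {\bf I}$ I would define
\[
{\bf B}^\ii := \{b_a^\ii : a \in C_\nu,\ \alpha_a > \ii\}.
\]
Since $\nu(b_a^\ii) = a$ by construction, the restriction of $\nu$ to ${\bf B}^\ii$ is automatically an injection into $C_\nu$ with image exactly $\{a \in C_\nu : \alpha_a > \ii\}$. The three bulleted claims then fall out mechanically: if $\ii' \le \ii$ then $\alpha_a > \ii$ forces $\alpha_a > \ii'$, hence $\nu({\bf B}^\ii) \subseteq \nu({\bf B}^{\ii'})$; specializing to $\ii' = {\bf 0}$ and using that each $\alpha_a \ge 1$ gives $\nu({\bf B}^{\bf 0}) = C_\nu$; and every $b \in {\bf B}$ lies in a unique fiber ${\bf B}_{\nu(b)}$ and occupies a unique position in that fiber's well-ordering, so $b$ belongs to exactly one ${\bf B}^\ii$, yielding the disjoint union ${\bf B} = \bigsqcup_{\ii \in {\bf I}} {\bf B}^\ii$.

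No substantive obstacle arises: once the fiber well-orderings are fixed (which uses only the axiom of choice, or is supplied outright in the preceding paragraph), the indexed family ${\bf B}^\ii$ is defined uniformly in $\ii$ and all required properties follow formally. The only mild care needed is to observe that ${\bf I}$ is genuinely a well-ordered set, and to note that the ``transfinite induction'' language in the surrounding text amounts simply to this simultaneous slicing of all fibers by ordinal position rather than to a genuinely step-by-step construction.
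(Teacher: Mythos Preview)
Your proof is correct and follows essentially the same approach the paper sketches: the paper's ``repeat this procedure by transfinite induction'' amounts precisely to your slicing of each fiber ${\bf B}_a$ by ordinal position, and your direct definition ${\bf B}^\ii = \{b_a^\ii : \alpha_a > \ii\}$ simply makes that recursion explicit in one stroke. The only cosmetic difference is that you index by ordinals below $\sup_a \alpha_a$ from the outset, whereas the paper phrases the construction as peeling off successive layers of minimal elements; the resulting decomposition is identical.
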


Using this, we obtain an injective map $\underline \jj:{\bf B}\hookrightarrow {\bf B}^0\times {\bf I}$ given by
\begin{equation}
\label{eq:iota embedding}
\underline \jj(b)=(b^0,\ii)
\end{equation}
for each $b\in {\bf B}^\ii$, where $b^0$ is the only element of ${\bf B}^0$ such that $\nu(b)=\nu(b^0)$.

Linearizing, we obtain an injective $\kk$-linear map $\jj=\kk\underline \jj:S\to \underline S\otimes S'$, where 
$\underline S=\kk {\bf B}^0\subset S$ and $S'=\kk {\bf I}$. 

By the very construction, the restriction of $\nu$ to $\underline S$ is an injective valuation $\underline \nu:\underline S\setminus \{0\}$. Also, for each nonzero  $x\in S$ written as $x=\sum\limits_{\ii\in {\bf I},b\in {\bf B}^\ii} c_b^\ii b^\ii$ one has
$$\jj(x)=\sum\limits_{\ii\in {\bf I},b\in {\bf B}^\ii} c_b^\ii (b^0,\ii)$$
in the notation \eqref{eq:iota embedding}.

In particular, $\nu(x)=\max\limits_{\ii\in I,b\in {\bf B}^\ii:c_b^\ii\ne 0}\{\nu(b)\}=\max\limits_{\ii\in I,b\in {\bf B}^\ii:c_b^\ii\ne 0}\{\nu(b^0)\}=\underline \nu^{S'}(x)$.

The proposition is proved.
\endproof

\noindent {\bf Proof of Theorem~\ref{th:generalized JH}}. Let $\nu$
and $\nu'$ be injective valuations on $S$.
Fix any $a\in C_{\nu}$. Then choose $x\in S\setminus \{0\}$ such
that $\nu(x)=a$ and $\nu'(x)=\min\{\nu'(\nu^{-1}(a))\}$ and $y\in
S\setminus \{0\}$ such that
$\nu(y)=\min\{\nu(\nu'^{-1}(\nu'(x)))\}$, hence $\nu'(y)=\nu'(x)$.
By definition, $\nu'(x)={\bf K}_{\nu', \nu}(a)$ and $\nu(y)={\bf
K}_{\nu, \nu'} (\nu'(x))={\bf K}_{\nu, \nu'} ({\bf K}_{\nu',\nu}
(a))$. Note that $x\in \nu'^{-1}(\nu'(x))$ hence $\nu(y)\le \nu(x)$.

Using Proposition \ref{pr:injective}(b), choose $c\in \kk$ such that
$\nu'(x-cy)<\nu'(x)$. Thus
$${\bf K}_{\nu',\nu} (a)=\nu'(x)>\nu'(x-cy) \ .$$
Since $\nu'(x)\le \nu'(z)$ for all $z\in S\setminus \{0\}$ with
$\nu(z)=\nu(x)$, this implies that $\nu(x-cy)\ne \nu(x)$. In turn,
this and  the inequality  $\nu(y)\le \nu(x)$ imply that
$\nu(y)=\nu(x)$.

This proves that ${\bf K}_{\nu, \nu'} ({\bf K}_{\nu',\nu} (a))=a$
for all $a\in C_\nu$, i.e., ${\bf K}_{\nu, \nu'}\circ {\bf K}_{\nu',\nu}=Id_{C_\nu}$.
Switching $\nu$ and $\nu'$ in the above
argument, we also obtain ${\bf K}_{\nu', \nu}\circ {\bf K}_{\nu,\nu'}=Id_{C_{\nu'}}$.

This proves the first assertion of the theorem.

Prove the second assertion now.
For each $a\in C_\nu$ denote by $S_a$ the set of all $b\in S\setminus \{0\}$ such that $\nu(b)=a$.
Furthermore,  let $S_a^{min}$ be the set of all $b\in S_a$ such that
$\nu'(b)=\min\{\nu'(b'):b'\in S_a\}$.  Then, well-ordering of $\nu$
implies that $S_a^{min}$ is non-empty and then $\nu'(b)={\bf K}_{\nu', \nu}(a)$ for each $b\in S_a^{min}$ by \eqref{eq:Knu'nu}.
Finally, for each $a\in C_\nu$ choose a single element $b_a\in
S_a^{min}$. Clearly, ${\bf B}=\{b_a:\, a\in C_{\nu}\}$ is adapted to $\nu$ because the restriction of $\nu$ to ${\bf B}$ is a
bijection ${\bf B}\widetilde \to C_\nu$ ($b_a\mapsto \nu(b_a)=a$).
Hence ${\bf B}\in {\bf A}_\nu$ is a basis of $S$ by Lemma \ref{le:valuation grading}(c). Finally, injectivity of ${\bf K}_{\nu',\nu}$
implies that ${\bf B}$ is adapted to $\nu'$ because for any distinct
$a,a'\in C_\nu$ one has $\nu'(b_a)={\bf K}_{\nu',\nu}(a)\ne {\bf
K}_{\nu',\nu}(a')=\nu'(b_{a'})$.
  
The theorem is proved. \endproof

\noindent {\bf Proof of Proposition~\ref{pr:common}}. Let ${\bf B}\in {\bf A}_\nu\cap {\bf A}_{\nu'}$
and for each $d\in C_\nu$ denote by $b_d$ the only element of ${\bf B}$ with $\nu(b_d)=d$.

Clearly, for any $a\in C_\nu$ each $s\in \nu^{-1}(a)$ can be uniquely written as:
$$s=c_a\cdot b_a+\sum_{\tilde a<a} c_{\tilde a}b_{\tilde a}$$
where  $c_a\ne 0$.

Therefore, $\nu'(s)\ge \nu'(b_a)$ for all $s\in \nu^{-1}(a)$, i.e.,
$\nu'(\nu^{-1}(a))\ge \nu'(b_a)$ in $C'$.

On the other hand, $b_a\in v^{-1}(a)$, therefore the minimum in
$\min \{\nu'(\nu^{-1}(a))\}$ (see \eqref{eq:Knu'nu}) is attained and
equals $\nu'(b_a)\in \nu'(\nu^{-1}(a))$, i.~e. ${\bf K}_{\nu',\nu}(a)=\nu'(b_a)$ is defined. The proposition is proved.
\endproof

\end{document}